\documentclass{puthesis}

\usepackage{latexsym}
\usepackage{amsfonts}
\usepackage{amsmath}
\usepackage{amsthm}
\usepackage{amssymb}
\usepackage{enumerate}
\usepackage[shortlabels]{enumitem}
\usepackage{soul}
\usepackage{xcolor}
\usepackage{dsfont}
\definecolor{blau}{RGB}{36, 40, 168}
\def\longbox#1{\parbox{.85\textwidth}{\textit{#1}}}
\def\stmt#1{\vspace{0cm}\begin{equation}\longbox{#1}\end{equation}}
\def\sset#1{\{#1\}}
\def\F{\mathcal{F}}
\def\h{\mathcal{H}}
\def\q{\mathcal{Q}}
\def\n{\mathcal{N}}
\def\m{\mathcal{M}}
\def\p{\mathcal{P}}

\def\C{\mathcal{C}}
\def\z{\mathcal{Z}}
\def\q{\mathcal{Q}}
\def\R{\mathcal{R}}
\newcommand\dd{\hbox{-}}

\usepackage{etoolbox}
\AtBeginEnvironment{tabular}{\doublespacing}

\usepackage{cleveref}

\usepackage[style=numeric, backend=bibtex, maxbibnames=10 ]{biblatex}
\addbibresource{thesis_bibliography.bib}%<- specify bib file
%\addbibresource{bibliography.bib}
\date{December 2, 2020; revised \today}
\newtheorem{theorem}{Theorem}[section]
\newtheorem{thm}[theorem]{Theorem}

\newtheorem{lem}[theorem]{Lemma}
\newtheorem{lemma}[theorem]{Lemma}
\newtheorem{corr}[theorem]{Corollary}
\newtheorem{fact}[theorem]{Fact}

%correctly getting kristina's last name in latex
\newcommand\kristina{Vu{\v{s}}kovi{\'{c}} }
\usepackage{comment}

%stuff for tikz
\usepackage{tikz, graphicx,subcaption}
\usepackage{xcolor}
\usetikzlibrary{snakes}
\usetikzlibrary{calc}
\definecolor{lila}{HTML}{7F00FF}
\definecolor{grun}{HTML}{00994C}
%\definecolor{darkorange}{HTML}{D76200}
\definecolor{darkorange}{HTML}{D55E00}
\definecolor{rosa}{HTML}{CC79A7}
\definecolor{gelb}{HTML}{F0E442}
\definecolor{neongreen}{HTML}{00FF00}
\definecolor{overleafgray}{HTML}{333333}
\tikzstyle{snode}=[shape= circle, draw=red, ultra thick, fill=red!30]
\tikzstyle{special node} = [shape= circle, draw=blue, ultra thick, fill=blue!10]
\tikzstyle{blue node} = [special node]
\tikzstyle{yellow node} = [shape= circle, draw=gelb, ultra thick, fill=gelb!30]
\tikzstyle{red node} = [snode]
\tikzstyle{green node} =  [shape= circle, draw=grun, ultra thick, fill=grun!20]
\tikzstyle{dot} = [shape = circle, draw = black, fill = black, scale=0.7]  
\tikzstyle{maybe node} = [shape = circle, draw=black, dotted, thick]
\tikzstyle{maybe new node} =[shape = circle, draw=blue, dotted, ultra thick, fill=blue!10]
\tikzstyle{maybe equal} = [dotted, thick] %make thick?
\tikzstyle{normal node} =[shape=circle,draw=black, fill = white]
\tikzstyle{new edge} = [blue, ultra thick]
\tikzstyle{maybe new edge}=[new edge, decorate]
\tikzstyle{non edge} = [red, dashed]
\tikzset{decoration={snake,amplitude=.7mm,segment length=2.5mm,
                       post length=0mm,pre length=0mm}}
%\tikzstyle{maybe edge} = [decorate ]
\tikzstyle{maybe edge} = [dashed]
\usetikzlibrary{shapes}
\usetikzlibrary{positioning}
\usepackage{ifthen}
\usetikzlibrary{shapes.geometric}
\usepackage{tikz-layers}
\usetikzlibrary{patterns}
\tikzstyle{bag} = [fill=blue!15]
\tikzstyle{densely dashdotted}=      [dash pattern=on 6pt off 1pt on \the\pgflinewidth off 1pt]
\tikzstyle{path}=[ultra thick, densely dashdotted]

\author{Linda J. Cook}
\adviser{Paul Seymour}
\title{On recognition algorithms and structure of graphs with restricted induced cycles}

\abstract{
 We call an induced cycle of length at least four a hole. The parity of a hole is the parity of its length.
Forbidding holes of certain types in a graph has deep structural implications.
In 2006, Chudnovksy, Seymour, Robertson, and Thomas famously proved that a graph is perfect if and only if it does not contain an odd hole or a complement of an odd hole.
In 2002, Conforti, Cornu\'{e}jols, Kapoor and Vu\v{s}kov\'{i}c provided a structural description of the class of even-hole-free graphs.
In Chapter \ref{chapter:monoholes}, we provide a structural description of all graphs that contain only holes of length $\ell$ for every $\ell \geq 7$.

Analysis of how holes interact with graph structure has yielded detection algorithms for holes of various lengths and parities.
In 1991, Bienstock showed it is NP-Hard to test whether a graph G has an even (or odd) hole containing a specified vertex $v \in  V(G)$. In 2002, Conforti, Cornu\'{e}jols, Kapoor and Vu\v{s}kov\'{i}c gave a polynomial-time algorithm to recognize even-hole-free graphs using their structure theorem. In 2003, Chudnovsky, Kawarabayashi and Seymour provided a simpler and slightly faster algorithm to test whether a graph contains an even hole. In 2019, Chudnovsky, Scott, Seymour and Spirkl provided a polynomial-time algorithm to test whether a graph contains an odd hole. Later that year, Chudnovsky, Scott and Seymour strengthened this result by providing a polynomial-time algorithm to test whether a graph contains an odd hole of length at least $\ell$ for any fixed integer $\ell \geq 5$. In Chapter \ref{chapter:LongEvenHole}, we provide a polynomial-time algorithm to test whether a graph contains an even hole of length at least $\ell$ for any fixed integer $\ell \geq 4$.
}

\dedication{To friends, family and their intersection.}

\acknowledgements{I would like to thank my PhD adviser Paul Seymour for being a wonderful supervisor.
Additionally, I benefited several mentors in the math community including Maria Chudnovsky, Marthe Bonamy, Sophie Spirkl and as well as supportive fellow PhD students.
I thank the administrative staff, especially Tina Dwyer and Audrey Mainzer, at PACM for their endless patience and guidance.
I thank Jan\'{o}s Koml\'{o}s for mentoring me as an undergraduate and introducing me to graph theory.
I also thank all my friends and family for all the support and understanding during my PhD; I could not have completed my PhD without you.
An incomplete list of all the lovely people who shared baked goods, and great advice with me during my PhD: Andy, Alex, Colleen, Em, Jessica, Keats, Lena, Lily, Jessica, Shelley. 
}

\begin{document}

%	\listoffigures
%	\listoftables
\singlespacing	
	\chapter{Introduction}
	\section{Prior Publication and Joint Work}
All results presented in this thesis are joint work with my advisor Paul Seymour.
Chapter \ref{chapter:LongEvenHole} contains results that have been published on the arXiv as \cite{cook2020detecting} and have been submitted to a journal.
I have also presented the results of Chapter \ref{chapter:LongEvenHole} at the Waterloo Graph Coloring Conference at University of Waterloo in Ontario, Canada on September 27, 2019, the New York State Regional Graduate Mathematics Conference on March 28, 2020 online and the Princeton Applied and Computational Math Graduate Student Seminar on September 17, 2019.

Chapter \ref{chapter:monoholes} is unpublished joint work with Paul Seymour and presents a structural description of graphs which are chordal or only have holes of some fixed length $\ell$ for some $\ell \geq 7$.
Another group consisting of Jake Horsfield, Myriam Preissmann, Ni Luh Dewi Sintiari, Cl{\'e}oph{\'e}e Robin, Nicolas Trotignon and Kristina Vu{\v{s}}kovi{\'{c}} has been working on the same problem independently. They have proved a complete structural description for the case where $\ell \geq 7$ and odd in an as yet unpublished manuscript \cite{nicolas_monoholes}.

Section \ref{section:definitions} gives generally known graph theory definitions.
Sections \ref{section:overview} and \ref{section:relatedwork} describe results related to the contents of this thesis.

\paragraph{Update from December 2023}
My thesis is also available at Princeton University Library.
\cite{cook2020detecting} is now published in European Journal of Combinatorics.
Jake Horsfield, Myriam Preissmann, Ni Luh Dewi Sintiari, Cl{\'e}oph{\'e}e Robin, Paul Seymour, Nicolas Trotignon, Kristina Vu{\v{s}}kovi{\'{c}} have since co-authored a paper \cite{cook2023graphs} which 
gives a complete structural description of graphs which only have holes of length $\ell$ for each $\ell \geq 7$ and includes the main results of chapter \ref{chapter:monoholes}.
Jake Horsfield, Myriam Preissmann, Ni Luh Dewi Sintiari, Cl{\'e}oph{\'e}e Robin, Nicolas Trotignon and Kristina Vu{\v{s}}kovi{\'{c}} have since published their version of the proof in a manuscript available on arXiv \cite{horsfield2022holes}.
Their approach is not the same and may be of interest to the readers of this thesis.

\section{Definitions} \label{section:definitions}
A \textit{finite simple graph} is a pair $(V, E)$ where $V$ is a finite set of \textit{vertices} and $E$ consists of subsets of $V$ of cardinality two called \textit{edges}. We will assume all graphs we discuss are finite and simple. 
Let $G$ be a graph. We denote the set of edges of $G$ as $E(G)$ and the set of vertices of $G$ as $V(G)$. We call $|V(G)|$ the \textit{order} of $G$ and denote it by $|G|$. We denote an edge $\{ x,y\}$ by $xy$. If $xy$ is an edge we say $x$ is \textit{adjacent} to $y$ and we say $x$ and $y$ are \textit{neighbors}. We say $x$ and the edge $xy$ are \textit{incident} to each other.
For $v \in V(G)$ the set of neighbors of $v$ is denoted by $N_G(v)$ and is called the \textit{neighborhood} of $v$. In cases where $G$ is not ambiguous we simply denote the neighborhood of $v$ by $N(v)$.  For a set of vertices $S \subseteq V(G)$ we define $N(S)$ to be the set $\cup_{v \in S} \ N(v) \setminus S$. If $G$ is a graph and $S$ is subset of $V(G)$ such that every vertex in $V(G) \setminus S$ has a neighbor in $S$ we call $S$ a \textit{dominating set} of $G$.
We reserve the symbol $G^c$ for the \textit{complement} of $G$ which is the graph defined as follows: $V(G) = V(G^c)$ and $E(G^c)$ satisfies the property that for every two distinct $x,y \in V(G)$, $xy \in E(G)$ if and only if $xy \not \in E(G^c)$.

If $G$ and $H$ are graphs we call $H$ a \textit{subgraph} of $G$ if $V(H) \subseteq V(G)$ and $E(H) \subseteq E(G)$.
If $H$ is a subgraph of $G$ and if for every two distinct $x,y \in V(H)$, $xy \in E(G)$ if and only if $xy \in E(H)$ we say $H$ is an \textit{induced subgraph} of $G$. In other words $H$ is an induced subgraph of $G$ if it can be obtained from $G$ by deleting vertices and any edges incident to deleted vertices. If $H$ is an induced subgraph of $G$ we say $G$ \textit{contains} $H$. In this case we say $H$ is the subgraph of $G$ induced by $V(H)$ and denote $H$ by $G[V(H)]$.
For $X \subseteq V(G)$ we denote the subgraph of $G$ induced by $V(G) \setminus X$ by $G \setminus X$. If $X$ consists of a single vertex $x$ we will abbreviate this notation to $G \setminus x$.

We say a graph $G$ and a graph $H$ are isomorphic if there is a bijective function $f: V(G) \to V(H)$ satisfying $uv \in E(G)$ if and only if $f(u)f(v) \in E(H)$. In this case we call $f$ an isomorphism.
We can now describe containment more formally. If $H, G$ are graphs and $G$ contains a subgraph isomorphic to $H$ we say $G$ contains $H$, otherwise we say $G$ is $H$-free. If $\mathcal{H}$ is a set of graphs we say $G$ is $\mathcal{H}$-free if $G$ does not contain any graph in $\mathcal{H}$.

Let $k \geq 1$ be an integer. Let $P$ be the graph consisting of a sequence of vertices $v_1, v_2, \dots, v_{k}$ and edges between consecutive vertices. Then we say $P$ is a \textit{path} of length $k-1$ and denote it by $P_{k}$. We call $v_1$ and $v_{k}$ the \textit{ends} of $P$. We say $V(P) \setminus \{ v_1, v_k\}$ is the \textit{interior} of $P$ and we denote it by $P^*$.  Thus if $P$ has length at most one, $P^* = \emptyset$. We denote $P$ by $v_1 \dd v_2 \dd \dots \dd v_{k}$ or $v_k \dd v_{k-1} \dd \dots \dd v_1$.
We call the graph consisting of $P$ and the edge $v_1v_k$ a cycle of length $k$ and denote it by $v_1 \dd v_2 \dd \dots v_k \dd v_1$.
We say the parity of a path or cycle is the parity of its length.

Let $x$ and $y$ be vertices. We call any path with ends $x,y$ an $xy$-path.
We say a graph $G$ is connected if for every $x,y \in V(G)$, $G$ contains some $xy$-path.
We call $C$ a connected component of a graph $G$ if it is a maximal connected subgraph of $G$.
If $D$ is a component of $G^c$, we call $D$ an \textit{anticomponent} of $G$.
For any $x,y$ in the same connected component of a graph $G$ we call the $xy$-path of minimum length a \textit{shortest $xy$-path} and denote its length by $d_G(x,y)$. In cases where $G$ is not ambiguous we will denote $d_G(x,y)$ by $d(x,y)$.

We call a graph \textit{complete} if it contains all possible edges. We denote the complete graph on $n$ vertices by $K_n$. We call a complete subgraph a \textit{clique}. If $G$ is a graph we say the \textit{clique number} of $G$ is the order of its largest clique and we denote it by $\omega(G)$.
We say $S \subseteq V(G)$ is a \textit{stable set} if there are no edges between any two elements of $S$. We call the cardinality of the largest stable set in $G$ the \textit{stability number} of $G$ and denote it by $\alpha(G)$. 
If $X, Y \subseteq V(G)$, we say $X$ is {\em anticomplete} to $Y$ if no vertex in $X$ is equal or adjacent to a vertex in $Y$.
We say $X,Y$ are \textit{complete} if $X$ and $Y$ are disjoint and $x$ is adjacent to $y$ for every $x \in X$ and $y \in Y$.
Let $G$ be a graph. 
If $G$ is a connected graph and $X$ is subset of $V(G)$ such that $G \setminus X$ is not connected we call $X$ a \textit{(vertex) cut-set}.
If $X$ is a cut-set of $G$ and $G[X]$ is a clique we call $X$ a \textit{clique cut-set}.
If the set consisting of a single vertex $v$ is a cut-set we call $v$ a \textit{cut-vertex}.
For every integer $k \geq 0$, we say a graph $G$ is $k$-connected if $|V(G)| \geq k +1$ and $G$ has no cut-set of cardinality $k$.

For any integer $n \geq 1$, we call an assignment $\phi: V(G) \to \{ 1, 2, 3 \dots, n\}$ a \textit{proper coloring using $n$ colors} if for every $xy \in E(G)$, $\phi(x) \neq \phi(y)$. We call the smallest $n$ for which $G$ has a proper coloring using $n$ colors the \textit{chromatic number} of $G$ and denote it by $\chi(G)$.
A graph with chromatic number two is called a \textit{bipartite} graph.
%We say a graph $G$ is \textit{planar} if it can be embedded in the plane so that no edges of $E(G)$ cross.

We will define few useful classes of graphs.
A connected graph containing no cycles is called a \textit{tree}.
We call an induced cycle of length at least four a \textit{hole} and we call the complement of a hole an \textit{antihole}.
A graph that does not contain any hole is called \textit{chordal}.

A {\em theta} is a graph $H$ consisting of two non-adjacent vertices $u,v$ and three paths $P_1, P_2, P_3$ joining $u,v$ with pairwise 
disjoint interiors. We call $\{ u\}, \{ v\}$ the terminating sets of $H$.
\begin{figure}
	\centering
	\begin{tikzpicture}
	\def\height{-2.5}
	\def\width{1}
	\def\spacing{5}
	
	\node[dot] (a1) at (1,0) {};
%	\node at (1, .4) {$a_1$};
	\node[dot] (a11) at (1, -1) {};
	\foreach \x in {0,1,2}{
		\pgfmathparse{int(\x + 1)}
		\edef\z{\pgfmathresult}
		\node[dot] (b\x) at (\x*\width, \height){};
%		\node at (\x*\width + .4, \height){$b_\z$};
		\foreach \y in {0, ..., \x}{
			\draw[bend left = 80] (b\x) to  (b\y);
		}
	%	\draw[path] (a1) to (b\x);
		%\draw[ultra thick, dotted] (a1) to (b\x);
		%\node at (\x*\width + .5, .7*\height){$P_\z$};
	}
	\draw[path] (a1) to (b0);
	\draw[path] (a1) to (b2);
	\draw (a1) to (a11);
	\draw[path] (a11) to (b1);
%	\node at (.1, .5*\height){$P_1$};
%	\node at (1.9, .5*\height){$P_3$};
%	\node at (1.25, .7*\height){$P_2$};

	%%%% theta
	%
	%
	%
	\node[dot] (a) at (1 -\spacing,0) {};
	\node[dot, below left=1em and 1.5em of a] (c1) {};
	\node[dot, below=1em of a] (c2) {};
	\node[dot, below right=1em and 1.5em of a] (c3) {};
%	\node at (1 -\spacing, .4) {$a_1$};
	\node[dot] (b) at (1 -\spacing,\height) {};
%	\node at (1 -\spacing, \height -.4) {$b_1$};
	%\draw[path] (a) to (b);
	\draw[path] (c2) to (b);
	%\draw[path, bend right = 80] (a) to (b);
	\draw[path, bend right = 50] (c1) to  (b);
	%\draw[path, bend left = 80] (a) to (b);
	\draw[path, bend left = 50] (c3) to  (b);
	\foreach \x in {1,2,3}{\draw (a) to (c\x); }
	%
	%
	%
	% prism
	%
	%
	%
	\foreach \x in {0,1,2}{
		\pgfmathparse{int(\x + 1)}
		\edef\z{\pgfmathresult}
		\node[dot](c\x) at (\x*\width + \spacing, 0){};
		\node[dot] (d\x) at (\x*\width + \spacing, \height){};
	%	\node at (\x*\width + .4 + \spacing, \height){$b_\z$};
	%	\node at (\x*\width + .4 + \spacing, 0){$a_\z$};
		\foreach \y in {0, ..., \x}{
			\draw[bend left = 80] (d\x) to  (d\y);
			\draw[bend right = 80] (c\x) to  (c\y);
		}
		%\draw[ultra thick, dotted] (c\x) to (d\x);
		%\node at (\x*\width + .5, .7*\height){$P_\z$};
	}
	\draw[path] (c0) to  (d0);
	\draw[path] (c1) to  (d1);
	\draw[path] (c2) to (d2);
\end{tikzpicture}
	\caption{An illustration of a theta (left), pyramid (center) and prism (right).
		The thick dashed lines represent paths of length at least one.}
	\label{fig:thetadef}
\end{figure}
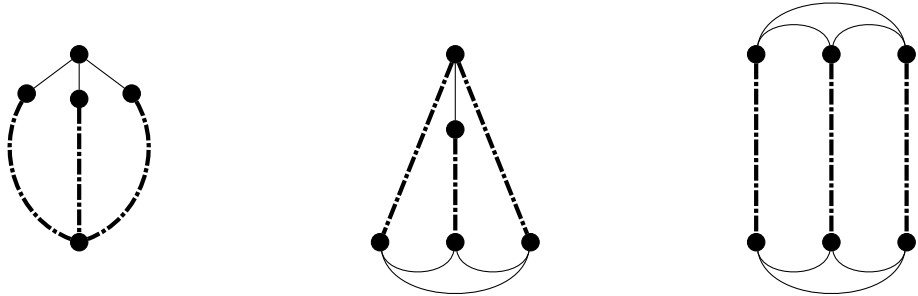
A {\em prism} is a graph $H$ consisting of two vertex disjoint triangles with vertex sets $\{a_1, a_2, a_3\}$ and $\{b_1, b_2, b_3\}$ and  three pairwise vertex-disjoint paths $P_1, P_2, P_3$, such that $P_i$ has ends $a_i$ and $b_i$ for 
$i \in \{1,2,3\}$. We call $\{a_1, a_2, a_3\}$, $\{b_1, b_2, b_3 \}$ the \textit{terminating sets of $H$}.
A \textit{pyramid} is a graph $H$ consisting of a triangle with vertex set $\{a_1, a_2, a_3\}$ and a vertex $r$ and three paths $P_1, P_2, P_3$ satisfying all of the following following:
\begin{itemize}
	\item For each $i \in \{ 1,2,3\}$, $P_i$ is an $ra_i$-path,
	\item At most one of $P_1, P_2, P_3$ has length one, and
	\item $P_1 \setminus a$, $P_2 \setminus a$, $P_3 \setminus a$ are pairwise vertex-disjoint paths.
\end{itemize}
We call $r$ the \textit{apex} of $H$ and we call $\{a_1, a_2, a_3\}, \{ r\}$ the \textit{terminating sets of $H$}.
If $H$ is a theta, prism, or pyramid and $P_1, P_2, P_3$ are as in the definition of $H$ we call $P_1, P_2, P_3$ the \textit{constituent} paths of $P$.
See Figure \ref{fig:thetadef} for an illustration. For further background on graph theory see \cite{diestel2005graph}, \cite{bondymurty} or \cite{berge1973graphs}.

\subsection{Notation}
For a positive integer $k$ let $[k]$ denote the set $\{1,2, \dots, k \}$.
For an integer $n \geq k$ let $[k,n]$ denote the set $\{k, k+1, k+2, \dots, n\}$.
\section{Overview} \label{section:overview}
In this thesis we present two results related to what holes a graph contains.
The first is a polynomial-time algorithm for every fixed integer $\ell \geq 4$ to determine whether an input graph contains a hole of length at least $\ell$ and is described in Chapter \ref{chapter:LongEvenHole}.
The second is the subject of Chapter \ref{chapter:monoholes} and is a structural characterization of the class of graphs that do not have a hole of any length other than $\ell$ for any integer $\ell \geq 7$. On first glance these may seem to be only tangentially related results. However, the algorithm to detect even holes of length at least $\ell$ relies on a structural analysis of graphs that do not contain ``obvious'' even holes, such as even holes of length at most $2\ell + 2$.
In this sense, both results are about describing the structure of graphs that do not contain holes of certain types (``graphs with restricted holes'').

Forbidding induced cycles of certain types in a graph has deep structural implications.
A simple example of this is the fact that graphs are bipartite if and only if they do not contain an odd induced cycle.
The most famous example of this is the class of perfect graphs.
The spirit of perfect graphs is due to Tibor Gallai and his study of linear programming duality theory's applications to combinatorics \cite{jensen_graph}.
A graph $G$ is perfect if for every induced subgraph $H$ of $G$, $\omega(H) = \chi(H)$. $G$ is called \textit{Berge} if $G$ and $G^c$ both do not contain any odd holes.
In 1961, Claude Berge conjectured that a graph $G$ is perfect if an only if $G$ is Berge \cite{Bergeconjectures}. 
He also conjectured that a graph $G$ is perfect if and only if its complement is perfect. 
After that, the study of perfect graphs became a field of its own, both due to their mathematical elegance and because of the two famous conjectures. The second conjecture was proven by Lov{\'a}sz in 1972 \cite{lovasz_weakperfectgraph}. The first conjecture stayed open for over forty years and was an active area of study. It was finally proved by Chudnovksy, Seymour, Robertson, and Thomas in 2006 and became known as the ``strong perfect graph theorem''. Their proof used a revolutionary technique of breaking graphs apart into more tractable pieces called graph decomposition \cite{chudnovsky2006strong}.

The quest to prove the perfect graph theorem motivated the first major work on the structure of even-hole-free graphs.
In 2002, Conforti, Cornu\'{e}jols, Kapoor and Vu\v{s}kov\'{i}c provided a structural description of even-hole-free graphs using graph decomposition \cite{conforti2002evenstructure}. They then used this description to develop a polynomial-time algorithm to test whether a graph contains an even hole  \cite{conforti2002even}. 
According Vu\v{s}kov\'{i}c's survey paper on even-hole-free graphs \cite{evenholefreegraphsurvey}, the work by Conforti et al. on even-hole-free graphs was motivated by the perfect graph conjecture. The group hoped that by studying even-hole-free graphs they would develop techniques that would be useful in the study of Berge graphs.
The complement of any hole of length at least six contains a hole of length four.
Thus, if $G$ is even-hole-free, then $C_5$ is the only possible odd hole in $G^c$.
This property makes even-hole-free graphs a reasonable proxy for Berge graphs. 

Since the proof of the strong perfect graph theorem and the work on even-hole-free graphs by Conforti et al., many polynomial-time algorithms have been found detecting an even (or odd) hole in a graph \cite{conforti2002even, chudnovsky2020oddhole} and related problems such as finding a shortest even hole in an input graph \cite{chudnovsky2020shortestodd} or finding an odd hole of length greater than some constant $\ell$ \cite{chudnovsky2019detectinglongodd}. (See Table \ref{table:hole-results}.)
Chapter \ref{chapter:LongEvenHole} provides a polynomial-time algorithm to determine whether an input graph contains an even hole of length greater than $\ell$ for some constant $\ell$. Section \ref{section:relatedwork} includes a survey of the main results and algorithms concerning even-hole-free graphs and odd-hole-free graphs.
\begin{table}[]
	\centering
	
	\begin{tabular}{c|c|c|c|c|}
		\cline{2-5}
		& \begin{tabular}[c]{@{}l@{}}Hole going through a \\ predetermined vertex\end{tabular} & Any length  & A shortest hole & Length $\geq \ell$ \\ \hline
		\multicolumn{1}{|l|}{Even}       &   NP-Hard \cite{bienstock_original, bienstock_correction} &   $\mathcal{O}(|G|^9)$ \cite{lai2019threeinatree}  &   $\mathcal{O}(|G|^{31})$ \cite{cheong2020finding}                  &    \begin{tabular}[c]{@{}l@{}} W[1]-hard \cite{w1hard}\\ 
			$\mathcal{O}(|G|^{9\ell + 3})$ \cite{cook2020detecting}\end{tabular}             \\ \hline
		\multicolumn{1}{|l|}{Odd}        &   NP-Hard \cite{bienstock_original, bienstock_correction}    &  $\mathcal{O}(|G|^8)$ \cite{lai2019threeinatree}          &   $\mathcal{O}(|G|^{14})$ \cite{chudnovsky2020shortestodd}                &      \begin{tabular}[c]{@{}l@{}} W[1]-hard \cite{w1hard} \\$\mathcal{O}(|G|^{20\ell + 40})$ \cite{chudnovsky2019detectinglongodd}\end{tabular}           \\ \hline
		\multicolumn{1}{|l|}{Any Parity} &  $\mathcal{O}(|G|^3)$                                      &  \begin{tabular}[c]{@{}l@{}} $\mathcal{O}(|G| + |E(G)|)$ \\ \cite{tarjan_chordal_1984, tarjan_chordal_1984_add} \cite{ rose_tarjan_lueker_1976_chordal, rose_tarjan_1978}  \end{tabular}  &    $\mathcal{O}(|G|*|E(G)|)$ \cite{Itai_girth_general_graph}                &   \begin{tabular}[c]{@{}l@{}} NP-Hard \\ $\mathcal{O}(|G|^{\ell})$ \end{tabular}             \\ \hline
	\end{tabular}
	\caption{A summary of algorithmic results for even/odd/general hole detection. The polynomial-time algorithm to determine whether a graph contains an even hole of length at least $\ell$ is joint work by Paul Seymour and me. It is described in Chapter \ref{chapter:LongEvenHole} and has been submitted for journal publication as \cite{cook2020detecting}. Section \ref{section:relatedwork} contains a more detailed overview of the algorithmic results for detecting odd and even holes.}
	\label{table:hole-results}
\end{table}
We have not yet examined the algorithmic implications of our structural description of monoholed graphs. However, I believe that the structural description of monoholed graphs in Chapter \ref{chapter:monoholes} could be exploited to answer algorithmic questions about monoholed graph as is done in the algorithm of Chapter \ref{chapter:LongEvenHole}. One particularly intriguing question is whether the class of monoholed graphs can be colored in polynomial-time. While coloring graphs is NP-Hard in general, Maffray, Penev, and Vu{\v{s}}kovi{\'{c}} gave a polynomial-time coloring algorithm for a subclass of monoholed graphs called ``rings'' \cite{maffray_coloringrings} which we define in Subsection \ref{subsec:structuralresults}. Since rings are important in our structural description of $\ell$-monoholed graphs it is possible some of the techniques of Maffray et al could be applied to the class of $\ell$-monoholed graphs in general for integers $\ell \geq 7$.

\section{Related Work} \label{section:relatedwork}
Both the results of Chapter \ref{chapter:LongEvenHole} and Chapter \ref{chapter:monoholes} concern themselves with the structure of graphs with restricted holes. In this section we summarize some of the structural results and recognition algorithms for graphs with restricted holes.

\subsection{Structural results} \label{subsec:structuralresults}
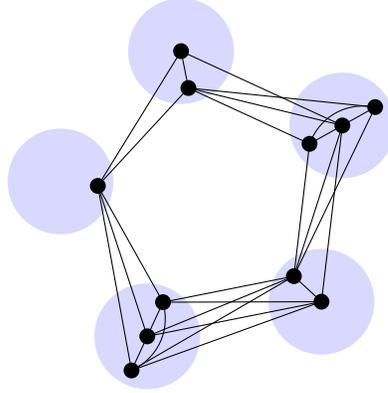
\begin{figure}[!h]
	\centering
	\begin{tikzpicture}
	\newcounter{count}
	\foreach \i in {1,2,3}{
		\node[
		regular polygon,
		regular polygon sides=5,
		minimum size= \i * 1cm + 2cm,
		rotate=180/16,
		] (r\i) {};
	}
	
	\fill[radius=3pt] \foreach \i in {1, ..., 5} { (r1.corner \i) circle[] };
	\fill[radius=3pt] \foreach \i in {1,3,4,5} { (r2.corner \i) circle[] };
	\fill[radius=3pt] \foreach \i in {3,5} { (r3.corner \i) circle[] };
	
	\draw (r1.corner 1) to (r2.corner 1);
	\draw (r1.corner 4) to (r2.corner 4);
	
	\foreach \i in {3,5}{
		\draw (r1.corner \i) to (r2.corner \i);
		\draw (r2.corner \i) to (r3.corner \i);
		\draw[bend left = 40] (r1.corner \i) to (r3.corner \i);
	}
	
	\draw (r1.corner 1) to (r1.corner 2);
	\draw (r1.corner 2) to (r1.corner 3);
	\draw (r1.corner 3) to (r1.corner 4);
	\draw (r1.corner 4) to (r1.corner 5);
	\draw (r1.corner 5) to (r1.corner 1);
	
	\draw (r1.corner 2) to (r2.corner 1);
	
	\draw (r1.corner 2) to (r2.corner 3);
	\draw (r1.corner 2) to (r3.corner 3);

	\draw (r1.corner 1) to (r2.corner 5);
	\draw (r1.corner 1) to (r3.corner 5);
	
	\draw (r1.corner 4) to (r2.corner 5);
	\draw (r1.corner 4) to (r3.corner 5);
	
	\draw (r1.corner 4) to (r2.corner 3);
	\draw (r1.corner 4) to (r3.corner 3);
	
	\draw (r2.corner 1) to (r2.corner 5);
	\draw (r2.corner 5) to (r2.corner 4);
	
	\draw (r2.corner 4) to (r2.corner 3);
	\draw (r1.corner 3) to (r2.corner 4);
	
	\draw (r3.corner 3) to (r2.corner 4);
	
	\begin{scope}[on behind layer]
	\fill[radius=20 pt, bag] \foreach \i in {1,...,5} {(r2.corner \i) circle[] };
	\end{scope}

\end{tikzpicture}
	\caption{An example of a ring on five sets. }
	\label{fig:inflated-C5}
\end{figure}
 One of the most relevant areas to this thesis is the study of rings.
Let $k \geq 3$ be an integer.
We say a graph $G$ is a \textit{ring on $k$ sets} if $V(G)$ can be partitioned into $k$ sets $X_0, X_2, \dots, X_{k-1}$ satisfying the following:
\begin{itemize}
	\item $G[X_0], G[X_1], \dots, G[X_{k-1}]$ are all cliques,
	\item For each $i \in \{0, 1, 2, \dots, k-1\}$ and $x, x' \in X_i$, $N(x) \subseteq N(x')$ or $N(x') \subseteq N(x)$,
	\item For each $i \in \{0, 1, 2, \dots, k-1\}$ and $x \in X_i$, $N(x) \subseteq X_{i-1} \cup X_i \cup X_{i+1}$ (where subscripts are taken modulo $k$) and
	\item For each $i \in \{0, 1, 2, \dots, k-1\}$ there is some $x \in X_i$ such that $x$ is complete to both $X_{i-1}$ and $X_{i+1}$.
\end{itemize}
(See Figure \ref{fig:inflated-C5} for an illustration.) Note that in Maffray, Penev, and Vu{\v{s}}kovi{\'{c}}'s paper on coloring rings, $k$ is assumed to be at least four.
By definition, if $G$ is a ring on $k$ sets, then $G$ does not contain a clique cut-set and every hole in $G$ has length $k$. In Chapter \ref{chapter:monoholes}, we show that if $G$ is a $\ell$-monoholed graph for some $\ell \geq 7$ then one of the following outcomes holds: $G$ contains a clique cut-set, $G$ contains a vertex $v$ that is adjacent to every other vertex in $G$, $G$ is chordal, $G$ is a ring or $G$ is a type of graph we call a ``crowned $k$-corpus''. Thus rings are one of the basic structural outcomes of our description of $\ell$-monoholed graphs for $\ell \geq 7$.
Interestingly, the study of rings originated because rings were also one of the basic structural outcomes in Boncompagni, Penev and Vu{\v{s}}kovi{\'{c}}'s characterization graphs without certain types of thetas, prisms, pyramids and wheels
as induced subgraphs \cite{kristina_truempergraphs}.\footnote{\color{black} In the context of \cite{kristina_truempergraphs} a wheel is a graph $H$ consisting of a cycle $C$ and another vertex $v$ adjacent to at least three elements of $V(C)$. $H$ is called a universal wheel if $v$ is complete to $V(C)$ and $H$ is called a twin wheel $v$ if $N(v) \cap V(C)$ consists of three consecutive vertices in $V(C)$. A wheel that is neither a twin wheel nor a universal wheel is called a proper wheel. Boncompagni et al. provide a characterization of graphs that contain no theta, pyramid, prism or proper wheels. Prisms, pyramids, thetas, and wheels are called Truemper configurations because they are important in a theorem by Truemper \cite{TRUEMPER1982112} that characterizes the graphs for which the edges can be labeled with integers in such a way that the sum of the labels on every induced cycle $C$ has a prescribed parity $f(C)$ for any assignment $f$ of parities to cycles. See \cite{evenholefreegraphsurvey} for a survey.}

Hoàng and Trotignon construct rings on $k$ sets with ``unbounded rank-width'' for any fixed integer $k \geq 3$ in forthcoming work \cite{hoang2020class}. Very informally, the rank-width of a graph is a way of measuring how complicated it is. Rank-width was introduced by Sang-il Oum and Paul Seymour in \cite{oum_rankwidth}.
Many problems that are NP-complete in general are polynomial-time for the class of graphs with bounded rank-width such as deciding whether a graph has chromatic number at most some constant \cite{kobler2003197} or determining whether a graph has a Hamiltonian cycle \cite{wanke1994251}. Subjectively, graph classes with unbounded rank-width, like the class of monoholed graphs, can be argued to be more ``interesting'' than those with bounded rank-width.

While the perfect graph theorem is the most famous result about the structure of graphs with restricted holes, there are several other notable results that relate to major open questions in graph theory. A key question in structural graph theory concerns the induced subgraphs of graphs with large chromatic number.
We need the following definitions.
We call a subset $\mathcal{F}$ of the set of all graphs an \textit{ideal} if $\mathcal{F}$ is closed under taking induced subgraphs.
We say an ideal $\mathcal{F}$ is \textit{$\chi$-bounded} if there exists some function $f$ such that for every $G \in \mathcal{F}$, $\chi(G) \leq f(\omega(G))$ and we call $f$ the \textit{$\chi$-bounding} function of $\mathcal{F}$.
Trivially, $\chi(G) \geq \omega(G)$ for every graph $G$.
The set of all graphs is not $\chi$-bounded. In fact, there exist triangle-free graphs with arbitrarily large chromatic number \cite{zykov1949some} \cite{mycielski1955coloriage}.

Many of the results on $\chi$-boundedness have to do with graphs with restricted holes.
Erd\H{o}s famously proved that for any cycle $C$, the ideal of $C$-free graphs is not $\chi$-bounded \cite{erdös_girth_chromatic_number_1959}. But what about $\mathcal{C}$-free graphs where $\mathcal{C}$ is an infinite family of cycles?
Berge graphs are $\chi$-bounded by the strong perfect graph theorem.
In 2016, Scott and Seymour showed that odd-hole-free graphs are $\chi$-bounded with $\chi$-bounding function $f(\kappa) = 2^{2^\kappa + 2}$ \cite{SCOTT_odd_holes_chibounded}, proving conjecture of Gyarfàs and Sumner.
Addario-Berry, Chudnovsky, Havet, Reed and  Seymour showed that every even-hole-free graph $G$ contains a vertex whose neighborhood consists of the vertex set of the union of two cliques and thus $\chi(G) \leq 2\omega(G)$ \cite{originalbisimplicial, chudnovsky2020evenholefree}.
Bonamy, Charbit and Thomassé proved that every graph with sufficiently large chromatic number contains an induced cycle of length $0 \mod 3$ \cite{bonamy2014graphs}, answering a question of Kalai and Meshulam. Scott and Seymour later proved that for any $p \geq 0$ and $q \geq 1$ the ideal of graphs with no induced cycle of length $p \mod q$ is $\chi$-bounded \cite{Scott2019}. Thus, the class of $\ell$-monoholed graphs for any fixed $\ell$ is $\chi$-bounded.
Maffray, Penev and \kristina give the optimal $\chi$-bounding function for the class of rings with at least four sets in \cite{maffray_coloringrings}.
Gyarfàs and Sumner conjectured that for any $\ell \geq 0$ the ideal of graphs not containing any hole of length greater than $\ell$ is $\chi$-bounded and they conjectured that the ideal of graphs not containing any odd hole of length greater than $\ell$ is $\chi$-bounded in \cite{long_oddholes_chi_boundedness}. The first conjecture was proven by Chudnovsky, Scott, and Seymour in \cite{longholes_chibounded} and the second stronger conjecture was proven by Chudnovsky, Scott, Seymour and Spirkl in \cite{long_oddholes_chi_boundedness}. See the survey by Scott and Seymour for more background on $\chi$-boundedness \cite{chi_bdd_survey}.

The Erd\H{os}-Hajnal conjecture states that for every graph $H$ there exists an $\epsilon > 0$ such that every $H$-free graph $G$ has a stable set or clique of cardinality at least $|G|^\epsilon$. This one of the most active open questions in structural graph theory. Recently, Chudnovsky, Scott, Seymour and Spirkl proved that the Erd\H{os}-Hajnal conjecture holds when $H$ is a hole of length five \cite{EH_C5}. See the survey by Maria Chudnovsky for further background on the Erd\H{os}-Hajnal conjecture \cite{EH_survey_chudnovsky}.

\subsection{Detecting (odd, even) Holes}
The main result of Chapter \ref{chapter:LongEvenHole} is an algorithm to determine whether an input graph $G$ contains a hole of length at least $\ell$ and even for some fixed $\ell \geq 4$.
In this section we will give an overview of prior work on problems related to detecting holes of specific parities. A summary of the results is given in Table \ref{table:hole-results}.

In 1991, Bienstock proved that it is NP-hard to determine whether $G$ contains a even (or odd) hole going through a specified vertex \cite{bienstock_original, bienstock_correction}, answering a question raised by Bruce Shepherd. Maffray and Trotignon extended this result to show that the problem remains NP-hard when we only consider triangle-free graphs as inputs \cite{maffray2005algorithms}. Note that it is trivial to test whether an input $G$ contains a hole through a specified vertex $v$ in time $\mathcal{O}(|G|^3)$: We enumerate all pairs of non-adjacent vertices $x,y \in N(v)$ and test if $G \setminus (N(v) \setminus \{ x, y\})$ contains an $xy$-path (e.g. by using breath first search).

In 2002, Conforti, Cornu\'{e}jols, Kapoor and Vu\v{s}kov\'{i}c  \cite{conforti2002even} gave an approximately $\mathcal{O}(|G|^{40})$ algorithm to test whether a graph contains an even hole by their using their structural decomposition theorem from \cite{conforti2002evenstructure}.
In 2003, Chudnovsky, Kawarabayashi, and Seymour \cite{chudnovsky2005detectingeven} provided a simpler algorithm that searches for even holes without the use of a structural decomposition theorem for even-hole-free graphs in time $\mathcal{O}(|G|^{31})$. In forthcoming work \cite{cheong2020finding}, Cheong and Lu show that techniques of \cite{chudnovsky2005detectingeven} can be used to find the shortest even hole in an input graph $G$ or determine that $G$ is even-hole-free in time $\mathcal{O}(|G|^{31})$.

Significantly faster algorithms have been found using decomposition theorems for even-hole-free graphs based on \cite{conforti2002evenstructure}. In 2008, da Silva and Vu\v{s}kov\'{i}c published a strengthening of the decomposition theorem of \cite{conforti2002evenstructure} along with an algorithm using the new decomposition theorem to test whether a graph is even-hole-free in time $\mathcal{O}(|G|^{19})$ \cite{daSilvaEvenHoleAlg}. In 2015, Chang and Lu \cite{Chang2015EvenHole} gave an $\mathcal{O}(|G|^{11})$  algorithm to determine whether a graph contains an even hole using the decomposition theorem of \cite{daSilvaEvenHoleAlg}. Lai, Lu and Thorup improved this running time to $\mathcal{O}(|G|^9)$ in 2020 \cite{lai2019threeinatree} by modifying the algorithm of \cite{Chang2015EvenHole} to improve the running-time of its subroutines. 

Detecting an odd hole remained open until 2020 when Chudnovksy, Scott, Seymour and Spirkl provided an algorithm to detect an odd hole in $G$ in time $\mathcal{O}(|G|^9)$ \cite{chudnovsky2020oddhole}. In 2020, Lai, Lu and Thorup improved this running time to $\mathcal{O}(|G|^8)$ \cite{lai2019threeinatree}. In the same year, Chudnovsky, Scott, and Seymour \cite{chudnovsky2020shortestodd} gave an algorithm that determines whether a graph $G$ has an odd hole and returns the minimum length of an odd hole in $G$ if one exists in time $\mathcal{O}(|G|^{14})$. 

Chudnovsky, Scott and Seymour give a $\mathcal{O}(|G|^{20\ell + 40})$ algorithm to test whether $G$ contains an odd hole of length at least $\ell$, where $\ell \geq 5$ is given as a constant, in 2019 \cite{chudnovsky2019detectinglongodd}. Paul Seymour and I give an algorithm to test whether $G$ contains an even hole of length at least $\ell$ in time $\mathcal{O}(|G|^{9\ell + 3})$ in forthcoming work \cite{cook2020detecting}. Chapter \ref{chapter:LongEvenHole} describes a variant of this algorithm.

It would be nice for long even hole (and long odd hole) detection to remain polynomial-time when $\ell$ is considered to be part of the input rather than a constant. Unfortunately, this seems highly unlikely. Sepehr Hajebi provided a proof in private communication \cite{w1hard} that detecting long holes with specific residues is W[1]-hard and thus not fixed parameter tractable unless the central conjecture of parameterized complexity theory is false. More precisely, for all integers $r,m$ with $m \geq 2$  and $0 \leq r < m$  if there was an algorithm that on input $G, \ell$ determined whether $G$ contains a hole $C$ of length at length at least $\ell$ and $|E(C)| \cong r \mod m$ in time $\mathcal{O}(f(\ell)*p(|G|))$ where $f$ is some computable function and $p$ is a polynomial, then the central conjecture of parameterized complexity theory (that FPT $\neq$ W[1]) would be false.

\subsection{Other related  algorithmic results}
Prior to the discovery of the first odd-hole detection algorithm by Chudnovsky et al. in 2020, polynomial-time algorithms had been found for detecting odd holes in certain restricted graph classes.
In 1987, Hsu presented an algorithm for detecting odd holes in planar graphs in time $\mathcal{O}(|G|^3)$ \cite{hsu1987planaroddholealg}. In 2009, Schrem, Stern and Golumbic provided an algorithm for detecting odd holes in claw-free\footnote{The claw is the graph consisting of four vertices $v_1, v_2, v_3, v_4$ such that $v_2, v_3, v_4$ are pairwise non-adjacent and $v_1$ is adjacent to each of $v_2, v_3, v_4$.} graphs in time $\mathcal{O}(|G|*|E(G)|^2)$  using an approach based on breadth-first-search \cite{shrem2010clawoddalg}. This result was improved four years later when Kennedy and King provided an                                                                                                                                                                                                                                                                                                                                                                                                                                                                                                                  algorithm to detect odd holes in claw-free graphs in time $\mathcal{O}(|E(G)|^2 + |G|^2\log(|G|))$ \cite{kENNEDY20132492} using structural results of Fouquet \cite{FOUQUET199335} and Chudnovsky and Seymour \cite{chudnovsky_seymour_clawfree_2005}. In 2006, Conforti, Cornu{\'e}jols, Liu, Xinming, Vu{\v{s}}kovi{\'c}, and Zambelli provided a polynomial-time algorithm to test for odd holes in graphs with bounded clique number \cite{conforti2006oddholeboundedclique}.

Porto provided an algorithm to test whether a planar graph $G$ is even-hole-free in time $\mathcal{O}(|G|^3)$ \cite{porto1992even}. Itah and Rodeh provided an $\mathcal{O}(|G||E(G)|)$ algorithm to find the girth of a planar graph \cite{itah-rodeh-girth-alg-1978} in 1978. 
This result was subsequently improved by Djidev \cite{djidev-2010-girth-planar-alg}, by the min cut algorithm of Chalermsook, Fakcharoenphol and Nanongkai \cite{chalermsook-Fakcharoenphol-Nanongkai-min-cut-girth} and by Weimann and Yuster \cite{weimann-yuster-girth}.
Chang and Lu provided a linear time algorithm to determine the girth of a planar graph in 2011 \cite{Chang-Lu-2011-linear-time-girth}.
The complexity of determining whether $G$ contains a hole of length at least five is $\mathcal{O}(|E(G)|^2 + |G|)$ by an algorithm of Nikolopoulos, and Palios \cite{Nikolopoulos2007, Nikolopoulos2004Proceedings}.

In his 1992 paper Bienstock also showed that it is NP-hard to determine whether an input graph contains a hole through two prespecified vertices \cite{bienstock_original}. However, when the problem is restricted to planar graphs it becomes solvable in polynomial-time: 
In fact, for any fixed integer $k \geq 0$,  Kawarabayashi and Kobayashi give a linear time algorithm to test whether a planar graph contains a hole going through $k$ fixed vertices. Moreover, for every $\epsilon \geq 0$ they provide an algorithm that on input a planar graph $G$ and $L \subseteq V(G)$ of cardinality $o((\frac{\log|G|}{\log \log |G|})^{2/3})$ tests whether $G$ contains a hole going through every vertex in $L$ in time $\mathcal{O}(|G|^{2 + \epsilon})$ \cite{kawarabayashi2010}.

The previous results were all concerned with algorithms to decide whether $G$ contained various types of cycles as an induced subgraph. There has also been work on algorithms to decide whether $G$ has a cycle of length $k$ as subgraph for some input $k$, to find a cycle of length $k$ if one exists and to count the number of cycles of length $k$ in $G$. For instance, Alon, Yuster and Zwick present several results of this type in \cite{Alon1997}.
	\chapter{Detecting a Long Even Hole}\label{chapter:LongEvenHole}

\section{Technical Overview}
The main result of this chapter is the following:

\begin{theorem}\label{mainthm}
For each integer $\ell \geq 4$, there is an algorithm with the following specifications:
\begin{description}
\item [Input:] A graph $G$.
\item [Output:] Decides whether $G$ has an even hole of length at least $\ell$.
\item [Running time:] $\mathcal{O}(|G|^{108\ell-22})$
\end{description}
\end{theorem}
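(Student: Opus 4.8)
The plan is to follow the template established by Chudnovsky, Scott and Seymour for detecting long odd holes, adapting it to the even case. The overarching strategy is a reduction: rather than searching directly for an even hole of length at least $\ell$, I would first dispose of all ``short'' even holes (those of length between $\ell$ and some constant $c(\ell)$, say $2\ell+2$) by brute-force enumeration in time $\mathcal{O}(|G|^{c(\ell)})$, and then analyze the structure of graphs that contain a long even hole but no short one. In such a graph, any long even hole $C$ must be ``clean'' in a suitable sense — there is no vertex with awkward attachments to $C$ — because an awkward attachment would create a shorter even hole, contradicting our assumption. So the real content is a structural dichotomy: either $G$ has no even hole of length at least $\ell$, or $G$ contains a cleanly displayed long even hole that a polynomial-time ``three-in-a-tree''-style subroutine can detect.

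The key steps, in order, would be: (1) Fix the threshold and enumerate all vertex subsets of size up to roughly $2\ell+2$, testing each for being an even hole; if one of length at least $\ell$ is found, output YES. (2) Assuming no short even hole exists, prove a \emph{cleaning lemma}: if $C$ is a shortest even hole of length at least $\ell$, then (after possibly deleting a bounded-size set of vertices determined by a bounded number of ``guesses'') $C$ becomes clean, meaning every vertex of $G$ has at most a bounded, structurally controlled set of neighbors on $C$ — essentially the only vertices with many neighbors on $C$ are those whose neighborhood on $C$ is a short interval, and these can be handled. (3) For a clean long even hole, set up an auxiliary problem: guess a bounded number of vertices and edges of $C$ (the ``pattern''), delete the neighborhoods of the guessed vertices appropriately, and reduce the existence of the remaining portion of $C$ to finding three specified vertices in an induced tree (the \emph{three-in-a-tree} algorithm of Lai--Lu--Thorup, or the explicit chain/path-system search used in \cite{chudnovsky2019detectinglongodd}), checking a parity condition along the way. (4) Bound the running time: the number of guesses is $|G|^{O(\ell)}$, each invocation of the subroutine costs a fixed polynomial, and the short-hole enumeration costs $|G|^{O(\ell)}$, so the total is $\mathcal{O}(|G|^{108\ell - 22})$ after careful accounting of which parameters are being enumerated.

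The main obstacle I expect is the cleaning step combined with the parity bookkeeping. In the odd-hole algorithm, the parity of a path between two fixed vertices is preserved under the operations used, and one exploits the fact that in a graph with no short odd hole, two induced paths between the same pair of vertices with the same ``interface'' to a fixed set have the same parity — otherwise their union yields a short odd hole. Reproducing this for even holes requires showing that ruling out even holes of length at least $\ell$ but less than $2\ell+2$ (rather than all short even holes, which one cannot assume) is enough to force the needed parity rigidity; this is delicate because the forbidden window of lengths is only as wide as needed to make the union argument go through, and one must verify that every ``bad'' configuration arising in the cleaning produces an even hole inside that window. A secondary difficulty is that, unlike odd holes, even holes are not well-behaved under the natural analogue of the ``jewel''/``pyramid'' pre-tests, so some of the Truemper-configuration detection (testing for pyramids, prisms, thetas, and certain wheels as a preprocessing step) must be carried out explicitly and folded into the reduction; fortunately each of these can be detected in polynomial time and their presence either directly yields an even hole or restricts the structure enough to proceed. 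Once these structural lemmas are in place, assembling the algorithm and verifying correctness and the running-time bound is routine.
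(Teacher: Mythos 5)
Your high-level template matches the paper's: brute-force the ``short'' long even holes (length at most $2\ell+2$), pre-test for certain configurations, clean the major vertices of a shortest long even hole via a polynomial list of guessed deletion sets, and then detect the clean hole (the paper does this last step not with three-in-a-tree but by guessing three roughly equally spaced vertices of the hole and joining them by shortest paths, justified by a theorem that in a candidate graph a shortest path between two vertices of a clean shortest long even hole can replace one of its arcs; three-in-a-tree is used only inside the pre-tests, for long thetas and ``ban-the-bombs'').

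The genuine gap is your sentence that the Truemper-style pre-tests (pyramids, prisms, thetas, certain wheels) ``can each be detected in polynomial time,'' treated as a fortunate routine fact. For the configuration that actually matters here --- the \emph{long near-prism} --- this is false in general: deciding whether a graph contains a long near-prism is NP-complete (by an easy adaptation of Maffray--Trotignon's hardness proof for prism detection). The paper only detects long near-prisms in graphs already known to contain no long theta, and doing so is the bulk of the work and the dominant cost: it requires defining shortest long near-prisms with ``tidy frames,'' proving structural lemmas about $K$-major vertices, building bounded-size ``contrivances,'' and running two nested cleaning procedures (one to kill $K$-major vertices, one to make all relevant shortest paths ``good''), yielding the $\mathcal{O}(|G|^{108\ell-22})$ bound that gives the theorem its running time. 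Your proposal has no mechanism for this step, and without it neither the correctness of the later cleaning (which assumes no long theta, no long near-prism, no long jewel of bounded order, no long ban-the-bomb) nor the claimed time bound can be established. Relatedly, your worry about ``parity rigidity'' is resolved in the paper precisely by adding the long-jewel-of-bounded-order and ban-the-bomb tests to the candidate definition; these need to be named and tested explicitly, not folded into a generic Truemper pre-test.
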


%\linda{We have not tried very hard to optimize the running time in our even hole detection algorithm. We sketch some ideas to improve the constant $108$ in }Subsection \ref{kmajor}.

Our algorithm combines approaches described in \cite{chudnovsky2005detectingeven} and \cite{chudnovsky2019detectinglongodd}.The new algorithm uses a technique called ``cleaning'', as do the algorithms of \cite{chudnovsky2005detectingeven},\cite{chudnovsky2019detectinglongodd} and many other algorithms to detect induced subgraphs. We test for the existence of long even holes without the use of a decomposition theorem as is done in the algorithm of \cite{chudnovsky2005detectingeven}.

Here is an outline of the method. Throughout this paper $\ell \geq 4$ is a fixed integer and a {\em long} hole or path is a hole or path of length at least $\ell$. If $C$ is a hole in $G$, a vertex $v$ of $V(G) \setminus V(C)$ is {\em $C$-major} if there is no subpath of $C$ of length three containing all neighbors of $v$ in $V(C)$. A hole $C$ is {\em clean} if it has no $C$-major vertex.

\begin{itemize}
\item First, we test for the presence in the input graph $G$ of certain kinds of induced subgraphs (``short'' long even holes, ``long jewels of bounded order'', ``long thetas'', a type of wheel called ``long ban-the bombs' and ``long near-prisms'' that are detectable in polynomial time and whose presence would imply that $G$ contains a long even hole. We call these kinds of subgraphs ``easily-detected configurations.'' We may assume these tests are unsuccessful.
\item
Second, we generate a {\em cleaning list}, a list of polynomially many subsets of $V(G)$ such that if $C$ is a long even hole of minimum length in $G$ ({\em a shortest long even hole}) then for some set $X$ in the list, $X$ contains every $C$-major vertex and no vertex of $C$. This process depends on the absence of easily-detected configurations.
\item
Third, for every $X$ in our cleaning list we check whether $G \setminus X$ contains a clean shortest long even hole. This depends on the absence of easily-detected configurations and major vertices.
\end{itemize}

We remark that we are calling long near-prisms easily detectable configurations, because they are detectable in polynomial time in graphs without long thetas as an induced subgraph. However for a general graph $G$, deciding whether $G$ contains a long near-prism is NP-complete;  Maffray and Trotignon's proof \cite{maffray2005algorithms} that deciding whether $G$ contains a prism is NP-complete can easily be adjusted to prove that deciding whether $G$ contains a long near-prism is NP-complete. We are able to detect long thetas by invoking the ``three-in-a-tree'' algorithm given in \cite{chudnovsky2010three}. The detection of long near-prisms makes up the bulk of what is novel in this paper and is the computationally most expensive step of our algorithm.

The approach of determining whether $G$ contains an even hole by first testing whether $G$ contains a prism or a theta was outlined in \cite{chudnovsky2005detectingeven}. Moreover, Chudnovsky and Kapadia gave an algorithm to decide whether $G$ contains a theta or a prism in \cite{chudnovsky2008thetaprism}. Their algorithm does not directly translate to long theta and long near-prism detection, but we were able to use a similar algorithm structure to detect long near-prisms when $G$ contains no long theta. Finally, when $G$ has no easily detectable configurations, we detect a clean shortest long even hole $C$ by guessing three evenly spaced vertices along $C$ and taking shortest paths between them as in \cite{chudnovsky2019detectinglongodd}.

\section{The easily-detected configurations}

We begin with a test for what we call ``short'' long even holes:
\begin{theorem} \label{alg:shortlongevenholes}
For each integer $k \geq \ell$, there is an algorithm with the following specifications:
\begin{description}
\item[Input:] A graph $G$.
\item[Output:] Decides whether $G$ has a long even hole of length at most $k$.
\item[Running Time:] $\mathcal{O}(|G|^k)$.
\end{description}
\end{theorem}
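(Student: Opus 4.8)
The plan is to enumerate, in a controlled way, all induced cycles of length between $\ell$ and $k$. Since $k$ is a fixed constant (not part of the input), a cycle of length at most $k$ has at most $k$ vertices, so there are at most $\binom{|G|}{k} = \mathcal{O}(|G|^k)$ candidate vertex subsets to examine. For each subset $S$ of size between $\ell$ and $k$, we check in time $\mathcal{O}(k^2) = \mathcal{O}(1)$ whether $G[S]$ is an induced cycle, and if so whether its length is even. If any such $S$ yields an even induced cycle of length in $[\ell, k]$, we answer ``yes''; otherwise we answer ``no''. Correctness is immediate: $G$ has a long even hole of length at most $k$ if and only if some subset of at most $k$ vertices induces an even cycle of length at least $\ell$, because a hole is by definition an induced cycle and a hole of length $m$ has exactly $m$ vertices.

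The only slightly delicate point is the running time bookkeeping. Naively iterating over all subsets of size exactly $j$ for each $j \in [\ell, k]$ and testing each gives $\sum_{j=\ell}^{k} \binom{|G|}{j} \cdot \mathcal{O}(j^2)$; since $\binom{|G|}{j} \leq |G|^j \leq |G|^k$ and there are at most $k$ values of $j$, this is $\mathcal{O}(k^3 |G|^k) = \mathcal{O}(|G|^k)$ as $k$ is a constant. Generating the subsets themselves can be done with standard combinatorial enumeration in time proportional to the number of subsets produced, so no logarithmic factors intrude beyond what is absorbed into the constant. Testing whether a fixed $k$-vertex induced subgraph is a cycle amounts to checking that every vertex has degree exactly two in $G[S]$ and that $G[S]$ is connected, both of which take $\mathcal{O}(k^2)$ time once the adjacency information for $S$ is read off from $G$.

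I do not expect a genuine obstacle here: this is a brute-force enumeration whose correctness is definitional and whose complexity analysis is elementary. If anything, the only thing to be careful about is stating precisely what ``length at most $k$'' means for a hole (length equals number of edges equals number of vertices) so that bounding the number of vertices by $k$ is justified, and ensuring that the lower bound $\ell$ on the length is enforced so that we do not accidentally report a short induced cycle (of length $4, \dots, \ell-1$) as a long even hole.
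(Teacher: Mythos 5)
Your proposal is correct and is essentially the same argument as the paper's: the paper likewise enumerates all vertex sets of sizes $\ell, \ell+1, \dots, k$ and checks whether each induces a long even hole, with the same $\mathcal{O}(|G|^k)$ count of candidate sets. Your additional bookkeeping (constant-time per-set check, enforcing the lower bound $\ell$) only makes explicit what the paper leaves implicit.
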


\begin{proof}
We enumerate all vertex sets of size $\ell, \ell+1, \dots, k$ and for each one, check whether it induces a long even hole.
\end{proof}

We need the following modification of an easily-detected configuration of \cite{chudnovsky2019detectinglongodd}.
Let $u,v \in V(G)$ and let $Q_1, Q_2$ be induced paths between $u,v$ of different parity. Let $P$ be an induced path between $u,v$ of length at least $\ell$, such that $P^*$ is disjoint from and anticomplete to $Q_1^*$, $Q_2^*$. 
We say the subgraph $H$ induced by $V(P \cup Q_1 \cup Q_2)$ is a {\em jewel of order $\max{(|V(Q_1)|, |V(Q_2)|)}$ formed by $Q_1, Q_2, P$}.
If every hole in $H$ is long then we call $H$ a {\em jewel of order $\max{(|V(Q_1)|, |V(Q_2)|)}$.}
Note $H$ is long if and only if $P$ has length at least $\ell - \min \{ |E(Q_1)|, |E(Q_2) \}$.

We need the following (slight modification) of an easy result given as Theorem 2.2 of \cite{chudnovsky2019detectinglongodd}.

\begin{theorem} \label{alg:longjewels}
There is an algorithm with the following specifications.
\begin{description}
\item[Input:] A graph $G$ and an integer $k \geq 0$.
\item[Output:] Decides whether $G$ has a long jewel of order at most $k$.
\item[Running Time:] $\mathcal{O}(|G|^n)$ where $n =  k + 1 + \max \{ k , \ell -1\}$.
\end{description} 
\end{theorem}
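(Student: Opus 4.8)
The plan is to brute-force over the ``small'' part of the configuration and then reduce the remaining search to a connectivity/shortest-path test, mirroring the proof of Theorem~2.2 in \cite{chudnovsky2019detectinglongodd}. A long jewel of order at most $k$ is determined by its two short paths $Q_1,Q_2$ (each on at most $k$ vertices) together with a long path $P$ joining their common ends $u,v$, with $P^*$ disjoint from and anticomplete to $Q_1^*\cup Q_2^*$. So first I would enumerate all candidates for $V(Q_1)\cup V(Q_2)$: guess an ordered pair of induced paths $Q_1,Q_2$ with common ends $u,v$, each on at most $k$ vertices; this costs $\mathcal{O}(|G|^{2k})$ naively, but as in \cite{chudnovsky2019detectinglongodd} one observes that a jewel only needs $Q_1,Q_2$ to have \emph{different parity} and to be induced, and it suffices to guess $u,v$ together with the first and last few vertices — in fact the relevant bound the theorem claims is $k+1+\max\{k,\ell-1\}$, so I would guess $u$, then the vertex set of one of the short paths (at most $k$ vertices, contributing $|G|^k$), and handle the second short path and the long path more cleverly rather than guessing both short paths in full.

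Concretely: after fixing $u,v$ and a candidate short path $Q_1$ on at most $k$ vertices (cost $\mathcal{O}(|G|^{k+1})$, counting $u$ and the $\le k-1$ remaining internal-plus-endpoint vertices — actually $Q_1$ has $\le k$ vertices so fixing $u$ and $Q_1$ costs $\mathcal{O}(|G|^{k+1})$ if we also fix $v$, or we let $v$ be an endpoint of $Q_1$), we must decide whether there exist (i) an induced $uv$-path $Q_2$ of parity opposite to $Q_1$ and (ii) a long induced $uv$-path $P$ with $P^*$ anticomplete to $Q_1^*\cup Q_2^*$, such that every hole in the resulting subgraph is long. For part (ii), having fixed all of $Q_1,Q_2$, testing for a long induced $uv$-path whose interior avoids and is anticomplete to the fixed interiors is a standard shortest-path computation: delete $Q_1^*\cup Q_2^*$ and their neighbourhoods (except $u,v$) and look for a shortest $uv$-path of length $\ge \ell-\min\{|E(Q_1)|,|E(Q_2)|\}$; a shortest such path is automatically induced, and since $Q_1,Q_2$ are short the parity and ``every hole long'' conditions reduce to checking finitely many short holes among the fixed vertices plus the short initial/terminal segments of $P$ — which is where the $\ell-1$ term enters. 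The subtlety making the exponent $k+1+\max\{k,\ell-1\}$ rather than something larger is that we need not guess $Q_2$ in full either: we only need its endpoints ($u,v$, already fixed) and enough of it to certify the parity and that no short hole is created; the remainder of $Q_2$ can again be found by shortest-path search in an appropriately modified graph. So I would guess $u$ and $Q_1$ ($\mathcal{O}(|G|^{k+1})$), then guess the portion of $P$ and $Q_2$ needed to rule out short holes ($\mathcal{O}(|G|^{\max\{k,\ell-1\}})$), then finish both $P$ and $Q_2$ by breadth-first search.

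The main obstacle I anticipate is bookkeeping the ``every hole is long'' requirement: a priori the union $P\cup Q_1\cup Q_2$ can contain holes other than $P\cup Q_i$ (namely $Q_1\cup Q_2$, or holes using a sub-path of $P$ together with a sub-path of some $Q_i$ through $u$ or $v$), and these must all be verified to have length $\ge \ell$. Because $Q_1,Q_2$ have length $\le k-1$, every such ``extra'' hole is either entirely inside the fixed vertex set (finitely many, checkable directly) or consists of a long chunk of $P$ closed off near $u$ or near $v$ through a short detour — so it is long automatically once $P$ is long, \emph{except} for the holes that go ``the short way'' around, i.e.\ a short sub-path of $P$ near an endpoint together with a short sub-path of a $Q_i$. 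These are the configurations I must explicitly forbid when guessing the first and last $\max\{k,\ell-1\}$ or so vertices of $P$. Once this case analysis is set up correctly, correctness of the algorithm (a jewel exists iff some guess succeeds) and the running-time bound $\mathcal{O}(|G|^{k+1+\max\{k,\ell-1\}})$ both follow; I would present the details by a direct reduction to the proof in \cite{chudnovsky2019detectinglongodd}, noting only the modifications forced by replacing ``hole'' with ``long hole'' and tracking the order parameter $k$.
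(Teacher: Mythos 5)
There is a genuine gap, and it comes from the one place where you deliberately diverge from the natural brute-force scheme. The paper's proof enumerates the entire ``small'' part of the configuration: it guesses a \emph{jewel fragment}, namely $u,v$, both short paths $Q_1$ (odd) and $Q_2$ (even) in full (each of length at most $k-1$), together with a prefix $R$ of $P$ of length exactly $\max\{0,\ell-\min\{|E(Q_1)|,|E(Q_2)|\}\}$ that is anticomplete to $V(Q_1\cup Q_2)\setminus\{u\}$, and then completes $P$ with a single shortest-path search among the vertices anticomplete to the fragment's interior. The exponent $k+1+\max\{k,\ell-1\}$ is attained \emph{despite} guessing $Q_2$ in full, because of the trade-off $|E(R)|+\min\{|E(Q_1)|,|E(Q_2)|\}\leq \ell$: the prefix of $P$ only needs to be long when the shorter of $Q_1,Q_2$ is short, so a fragment has at most roughly $\max\{2k-2,\,k+\ell-2\}$ vertices. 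You have misread where the saving comes from, and to compensate you propose to guess only $Q_1$ in full and to ``finish both $P$ and $Q_2$ by breadth-first search.''

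Finishing $P$ by BFS is exactly what the paper does and is fine; finishing $Q_2$ by BFS is the step that fails. The path $Q_2$ must be an \emph{induced} $uv$-path of length at most $k-1$ whose parity is \emph{opposite} to that of $Q_1$ -- its only role in the jewel is to certify that parity. A shortest-path computation returns an induced path but cannot prescribe its parity; the standard layered (bipartite double cover) trick prescribes parity but sacrifices inducedness; and deciding whether two given vertices are joined by an induced path of prescribed parity is NP-hard in general (this is essentially Bienstock's result that the paper itself cites), so there is no ``appropriately modified graph'' in which a BFS completion of a partially guessed $Q_2$ certifies what is needed, and you do not describe one. Consequently neither direction of correctness is established: a successful run does not exhibit a valid $Q_2$ of the right parity, and a jewel in $G$ need not be found, since its $Q_2$ may have the wrong parity relative to whatever shortest completion your search returns. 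The fix is simply to enumerate $Q_2$ outright, as the paper does -- it has at most $k$ vertices, and the claimed running time already accommodates this; your additional worry about holes going ``the short way around'' near $u$ or $v$ is also moot, since $P^*$ is anticomplete to $Q_1^*\cup Q_2^*$, so every hole through an interior vertex of $P$ contains all of $P$.
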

\begin{proof}
We a triple of induced paths $Q_1, Q_2, R$ a \textit{jewel fragment} if all of the following conditions hold:
\begin{itemize}
	\item $Q_1, Q_2$ have the same two ends, say $u, v$,
	\item $Q_1$ is odd and $Q_2$ is even,
	\item $Q_1, Q_2$ each have length at most $k -1$,
	\item $R$ has length $\max \{0,  \ell - \min \{ |E(Q_1)|, |E(Q_2)| \} \}$)
	\item $R$ has ends $u, w$ for some $w \in V(G)$ and
	\item $V(R \setminus u)$ is anticomplete to $V(Q_1 \cup Q_2) \setminus \{ u\}$.
\end{itemize}
We will use the notation introduced in the definition of jewel fragment to state our algorithm and prove its correctness.
Our algorithm is as follows:
We enumerate all jewel fragments in $G$.
For each jewel fragment $Q_1, Q_2, R$ in $G$ we perform the following: We compute the set $X$ of all vertices that are anticomplete to $V(Q_1 \cup Q_2 \cup R) \setminus \{ u, v\}$. Then we test whether $G[X \cup \{ w, v\}]$ contains a $wv$-path $Z$, eg. by using breadth-first search. If so, we output that $G$ contains a long jewel.
If we have checked every triple without finding a long jewel we output that $G$ contains no long jewel.

We show that the output is correct.
Suppose that for some jewel fragment  $Q_1, Q_2, R$, the path $Z$ exists.
Then by construction, $Q_1, Q_2, R \cup Z$ forms a long jewel of order at most $k$.
Thus if the algorithm will not output that $G$ contains a long jewel of order at most $k$ unless it actually contains one.

Suppose $G$ contains a long jewel of order at most $k$ formed by paths $Q_1, Q_2, P$.
Let $R$ be a subpath of $P$ of length $\max \{0,  \ell - \min \{ |E(Q_1)|, |E(Q_2)| \} \}$ with one end equal to $u$.
Then $Q_1, Q_2, R$ is a jewel fragment. Hence the algorithm will output that $G$ contains a long jewel of order at most $k$ if $G$ contains one.

Checking a jewel fragment takes time $\mathcal{O}(|G|^2)$.
Let $n=k - 1 + \max \{ k, \ell -1\}$
Since every jewel fragment contains at most $n$ vertices there are at most $|G|^n$ of them and it takes $\mathcal{O}(|G|^n)$ time to find them all.
Hence the running time is as claimed.
\end{proof}
A {\em theta} is a graph consisting of two non-adjacent vertices $u,v$ and three paths $P_1, P_2, P_3$ joining $u,v$ with pairwise disjoint interiors and we say $P_1, P_2, P_3$ {\em form} a theta. A {\em long theta} where for every two distinct $i,j \in \{1,2,3\}$, $|E(P_i)| + |E(P_j)| \geq \ell$. If $G$ contains a long theta it contains a long hole because for every distinct $i,j \in \{1,2,3\}$, $V(P_i) \cup V(P_j)$ induces a long hole and at least two of $P_1, P_2, P_3$ must have the same parity. We use the ``three-in-a-tree'' algorithm given as the main result of \cite{chudnovsky2010three} to detect long thetas:

\begin{theorem} \label{alg:threeinatree}
There is an algorithm with the following specifications:
\begin{description}
\item[Input:] A graph $G$ and three vertices $v_1,v_2,v_3$ of $G$.
\item[Output:] Decides whether there is an induced subgraph $T$ of $G$ with $v_1,v_2,v_3 \in V(T)$ such that $T$ is a tree.
\item[Running Time:] $\mathcal{O}(|G|^4)$.
\end{description} 
\end{theorem}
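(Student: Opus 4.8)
The statement to prove is Theorem~\ref{alg:threeinatree}, the ``three-in-a-tree'' theorem: given a graph $G$ and three vertices $v_1,v_2,v_3$, decide in time $\mathcal{O}(|G|^4)$ whether there is an induced tree of $G$ containing all of $v_1,v_2,v_3$. This is a quotation of the main result of \cite{chudnovsky2010three}, so the ``proof'' the author gives is almost certainly just a pointer to that paper rather than a self-contained argument. Nonetheless, here is how I would sketch the genuine argument behind it.

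\textbf{Overall approach.} The plan is to reduce the problem to a tractable ``rooted'' version and solve that by a careful structural analysis together with dynamic-programming-style case analysis. The key reformulation: an induced tree containing $v_1,v_2,v_3$ either contains a vertex $b$ (a ``branch vertex'') from which three internally-disjoint induced paths run to (vertices near) $v_1,v_2,v_3$ with no edges between them except at $b$, or it degenerates to an induced path through all three. So first I would handle the degenerate case — test, for each ordering of the $v_i$, whether there is an induced path visiting all three in that order — and then focus on detecting the ``tripod'' configuration. The real content is that deciding existence of such a tripod can be done by first guessing a small amount of local information (the branch vertex $b$ and the first edge of each of the three legs, a polynomial number of choices), deleting the forbidden neighbourhoods, and then asking a connectivity-type question in the reduced graph.

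\textbf{Key steps, in order.} (1) Set up the reduction to finding, for a fixed root structure, three induced paths to the three targets that are pairwise anticomplete except where they meet. (2) Prove a structural lemma characterizing obstructions: if no such induced tree exists, then $G$ admits a decomposition (a clique cutset, or a ``special'' cutset separating the $v_i$, or a bounded-size obstruction) that can be detected directly; this is where one peels off clique cutsets and argues by induction on $|V(G)|$. (3) Show that after removing clique cutsets one reaches graphs in which the connectivity between the targets is high enough that a direct argument — guessing the branch vertex and the three incident legs' first steps, then testing for suitably anticomplete shortest paths via breadth-first search — is both correct and runs in the claimed time. (4) Account the running time: polynomially many guesses, each followed by near-linear connectivity tests, with the decomposition recursion contributing only polynomial overhead; the authors of \cite{chudnovsky2010three} optimize this to $\mathcal{O}(|G|^4)$.

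\textbf{Main obstacle.} The hard part is step (2): proving that the \emph{absence} of the desired induced tree forces a usable structural decomposition. Naively searching for three pairwise-anticomplete induced paths is not obviously polynomial — the anticompleteness constraints couple the three paths together — so the whole difficulty is showing that whenever the greedy/local search fails, there is a genuine combinatorial certificate (a cutset or a small forbidden pattern) explaining the failure, which can then be removed and recursed upon. This is exactly the delicate case analysis carried out in \cite{chudnovsky2010three}, and for the purposes of this chapter I would simply invoke that theorem as a black box, as is done here.
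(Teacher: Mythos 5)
Your proposal matches the paper exactly: the paper does not prove this theorem at all, it simply quotes it as the main result of \cite{chudnovsky2010three} and uses it as a black box, which is precisely what you conclude by invoking that paper. Your sketch of the internal argument of \cite{chudnovsky2010three} is extra background not present (or needed) in the paper, so there is nothing to compare beyond the shared citation.
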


Chudnovsky and Seymour's algorithm in \cite{chudnovsky2010three} to detect a theta in a graph $G$ can easily be adjusted to detect a long theta.
We need the following definition:
We call a the graph $Z$ consisting of the union of three paths $Q_1, Q_2, Q_3$ with a common end $a$ and otherwise vertex-disjoint a \textit{long claw} if $Q_1, Q_2, Q_3$ have lengths $k_1, k_2, k_3$, respectively satisfying the following:
\begin{itemize}
	\item $k_1, k_2, k_3 \geq 2$,
	\item $k_1 + k_2, k_2 + k_3, k_3 + k_1 \geq \ell -2$ and
	\item $k_1 + k_2 + k_3 \leq 2 \ell - 6$.
\end{itemize}
We say $Q_1, Q_2, Q_2$ form $Z$.

\begin{lemma}
	Let $H$ be a theta. Then $H$ is a long theta if and only if $H$ contains a long claw. \label{lem:theta:has-a-claw}
\end{lemma}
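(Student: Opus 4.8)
The plan is to prove both directions by writing down the lengths of the three constituent paths of $H$ explicitly and doing elementary arithmetic with the inequalities defining "long theta" and "long claw." Let $P_1,P_2,P_3$ be the constituent paths of the theta $H$, joining the nonadjacent vertices $u,v$, and let $|E(P_i)| = a_i$ for $i\in\{1,2,3\}$; since $H$ is a theta (not a "short" degenerate object) we may assume each $a_i\geq 1$, and at most one $a_i$ equals $1$. Now I observe that a long claw in $H$ must use $u$ or $v$ as its center: any vertex $x$ of $H$ other than $u,v$ has degree $2$ in $H$, so the only vertices of degree $\geq 3$ are $u$ and $v$, and the center of a long claw has degree $3$. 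By symmetry say the center is $u$; then the three legs of the claw must be exactly initial segments of $P_1,P_2,P_3$ emanating from $u$, because these are the only internally-disjoint paths leaving $u$. Hence a long claw in $H$ is determined by choosing leg-lengths $k_i$ with $1\le k_i\le a_i$ for each $i$ (and, if some leg does not reach $v$, the segment genuinely stops in the interior of $P_i$; if $k_i=a_i$ the leg ends at $v$, which is still allowed since $v$ is just an endpoint of the claw).

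For the forward direction, suppose $H$ is a long theta, so $a_i+a_j\geq\ell$ for all distinct $i,j$. I want to choose $k_i\le a_i$ satisfying $k_i\ge 2$, $k_i+k_j\ge \ell-2$ for all distinct $i,j$, and $k_1+k_2+k_3\le 2\ell-6$. The natural choice is to shrink each $a_i$ down toward roughly $\ell/2$. Concretely, I would set $k_i=\min\{a_i,\ \ell-2-k_{\min}\}$ after first handling the smallest path, or — cleaner — argue as follows. Order so that $a_1\le a_2\le a_3$. Since $a_1+a_2\ge\ell$, we get $a_2\ge\ell-a_1\ge \ell/2$, and also $a_1\ge \ell-a_3$. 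Set $k_1=a_1$ if $a_1\le \ell-2$ and otherwise... actually the truly clean statement: put $k_i = \min\{a_i, a_1 + (\ell-2) - a_1\}$—let me instead just take $k_1 = \min\{a_1,\ \ell-4\}$, $k_2=k_3=\ell-2-k_1$ when that is $\le a_2,a_3$, and check the three defining inequalities hold (the sum is $3(\ell-2)-2k_1 \le 2\ell-6$ iff $k_1\ge \ell/2-0$, which may force a different split). Because getting the exact three-way truncation right is fiddly, I expect this to be the main obstacle: one must verify that whenever $a_i+a_j\ge\ell$ for all pairs, the "box" $\{(k_1,k_2,k_3): 2\le k_i\le a_i\}$ meets the region cut out by the claw inequalities. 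The key insight making this work is that the claw conditions only require the three pairwise sums to lie in the window $[\ell-2,\,2\ell-6]$ (equivalently each $k_i\le \ell-2 - (\text{other two minus }\ell+\ldots)$); since each $a_i\ge 2$ automatically (no $a_i=1$ except possibly one, which forces that pair-sum $\ge\ell$ hence the other two are large — a case I would dispatch separately) and the pairwise sums of the $a_i$ are all $\ge\ell$, we can always decrease the $a_i$ monotonically until the largest pairwise sum drops to exactly $2\ell-6$ or below while the smallest is still $\ge \ell-2$; the intermediate value / discreteness is harmless since all quantities are integers and the gap $\ell-4$ between the window endpoints is $\ge 0$.

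For the converse, suppose $H$ contains a long claw, with legs of lengths $k_1\le a_1$, $k_2\le a_2$, $k_3\le a_3$ (centered at $u$ or $v$, say $u$, as argued above) satisfying $k_i+k_j\ge\ell-2$. Then for distinct $i,j$,
\[
a_i+a_j \;\ge\; k_i+k_j \;\ge\; \ell-2.
\]
This is off by $2$ from what we need. To recover the extra $2$, note that in the theta the paths $P_i,P_j$ together with the edges nothing—rather, the point is that $P_i$ has length $a_i\ge k_i$ and, crucially, the claw's leg on $P_i$ is a \emph{proper} initial segment unless $k_i=a_i$; and if $k_i=a_i$ for at least two indices $i$, then that pair already gives $a_i+a_j=k_i+k_j\ge\ell-2$, and I claim we actually get $\ge\ell$ because... here I need the "at most one $P_i$ has length one" type slack together with $k_i\ge 2$. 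Actually the honest fix: since $k_i\ge 2$ and a claw leg starting at $u$ inside $P_i$ that reaches all the way to $v$ would make $v$ have a neighbor on that leg — fine — I instead use that in the claw, at most one leg can "use up" its whole path, because two full legs $P_i,P_j$ would form a hole $P_i\cup P_j$ of length $a_i+a_j=k_i+k_j$, and then the claw's third leg plus this hole... the cleanest route is: $a_i+a_j \ge k_i+k_j+[\text{1 if }k_i<a_i]+[\text{1 if }k_j<a_j] \ge (\ell-2)+2 = \ell$ whenever at most one leg is full, and handle the (at most one full leg) vs (two full legs) dichotomy by noting two full legs centered at $u$ would force those legs to also be full paths centered at $v$, i.e. $k$ counted from the other side, giving two independent pairwise bounds that combine to $a_i+a_j\ge\ell$. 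I'd present the converse crisply once the bookkeeping on "full vs. proper leg" is pinned down; this, rather than the forward direction, may actually be where the $\pm 1$'s bite, so I would write it carefully.
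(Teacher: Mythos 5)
Both directions of your write-up stop short of a proof, so there are genuine gaps rather than just presentational roughness. In the converse, the missing idea is to use that the long claw is \emph{contained} in $H$, i.e.\ is an induced subgraph, so its three leaf ends $r_1,r_2,r_3$ are pairwise non-adjacent in $H$. That is exactly what the paper uses: for distinct $i,j$ the path $Q_i\cup Q_j$ through the center has length $k_i+k_j$ and non-adjacent ends, and it sits inside the hole $P_i\cup P_j$, so that hole has length at least $k_i+k_j+2\geq \ell$. Your substitute bookkeeping (``$+1$ for each leg that is a proper initial segment'') cannot be repaired into a proof: at most one leg can be full (only one leg may contain the second branch vertex $v$), so for a pair in which one leg is full you only get $a_i+a_j\geq k_i+k_j+1\geq \ell-1$. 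The step you never take is that a full leg ending at $v$ together with a leg of $P_j$ ending at the neighbor of $v$ would violate inducedness, which is what forces the second $+1$; without invoking induced containment the statement is in fact false (a non-long theta can contain a long claw as a mere subgraph).

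The forward direction is also not finished. You correctly reduce to choosing integers $2\le k_i\le a_i$ satisfying the claw inequalities, but you never exhibit a choice that works (your candidate $k_1=\min\{a_1,\ell-4\}$, $k_2=k_3=\ell-2-k_1$ can violate $k_2+k_3\geq \ell-2$, as you note), and the fallback ``decrease monotonically / intermediate value'' argument rests on a mis-statement of the claw conditions: the third condition is the \emph{total} bound $k_1+k_2+k_3\le 2\ell-6$, not ``each pairwise sum lies in $[\ell-2,\,2\ell-6]$'' (for example $(4,4,4)$ with $\ell=8$ has all pairwise sums in that window but violates the total bound). The paper handles this direction by a short case split on whether some constituent path has length less than $\ell/2$, truncating the long paths explicitly; some such explicit case analysis (e.g.\ after ordering $a_1\le a_2\le a_3$, treating $a_1<\ell/2$ and $a_1\ge\ell/2$ separately and verifying all four inequalities, including the total bound, for the chosen $k_i$) is what your argument still needs.
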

\begin{proof}
	Let $P_1, P_2, P_3$ be the constituent paths fo $H$ and let $x, y$ be the two vertices of degree to in $H$.
	\stmt{If $H$ is a long theta then $H$ contains a long claw. \label{bleh:one}}
	Suppose $H$ is a long theta.
	Suppose each of $P_1, P_2, P_3$ have length at least $\frac{\ell}{2}$.
	Then for each $i \in \{1,2,3\}$, let $Q_i$ be the subpath of $P_i$ of length $\ell -1$ with one end equal to $x$.
	Thus $Q_1 , Q_2, Q_3$ form a long claw.
	
	Hence we may assume $|E(P_1)| < \frac{\ell}{2}$.
	By definition of long theta $|E(P_2)|, |E(P_3)| \geq \ell - |E(P_1)|$.
	Let $Q_2, Q_3$ be the subpaths of $P_2, P_3$, respectively, of length $\ell - |E(P_1)| -1 $ with one end equal to $x$.
	Then $Q_1, Q_2, Q_3$ form a long theta.
	This proves (\ref{bleh:one}).
	\stmt{If $H$ contains a long claw, then $H$ is a long theta. \label{bleh:two}}
	Suppose $H$ is a theta with constituent paths $P_1, P_2, P_3$.
	Suppose $Q_1, Q_2, Q_3$ form a long claw contained in $H$.
	Without loss of generality $x$ is the common end of $Q_1, Q_2, Q_3$.
	For each $i \in \{1,2,3\}$ let $r_i$ denote the other end of $Q_i$.
	By of long claw, $r_1, r_2, r_3$ are pairwise non-adjacent.
	Hence for any two distinct $i, j \in \{1,2,3\}$, the hole $P_i \cup P_2$ has length at least $|E(Q_i)| + |E(Q_j)| + 2 \geq \ell$.
	This proves (\ref{bleh:two}).
\end{proof}

\begin{theorem}\label{alg:longtheta}
There is an algorithm with the following specifications:
\begin{description}
\item[Input:] A graph $G$.
\item[Output:] Decides whether $G$ contains a long theta.
\item[Running Time:] $\mathcal{O}(|G|^{2\ell+7})$.
\end{description}
\end{theorem}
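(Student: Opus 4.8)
The plan is to reduce long-theta detection to finitely many instances of long-claw detection via Lemma~\ref{lem:theta:has-a-claw}, and then to detect a long claw by a guess-and-connect argument analogous to the jewel algorithm of Theorem~\ref{alg:longjewels}. By Lemma~\ref{lem:theta:has-a-claw}, a theta $H$ is a long theta iff it contains a long claw, and more usefully: $G$ contains a long theta iff $G$ contains an induced subgraph that is a long claw $Q_1,Q_2,Q_3$ with common end $a$ and other ends $r_1,r_2,r_3$ such that, after deleting the claw's internal vertices and their neighbors, the three ends $r_1,r_2,r_3$ can be reconnected into a tree. So the first step is to make this equivalence precise: if $G$ has a long theta with constituent paths $P_1,P_2,P_3$ meeting at $x,y$, then Lemma~\ref{lem:theta:has-a-claw} gives a long claw inside it rooted (say) at $x$; the portions of the $P_i$ from $r_i$ to $y$, together with $y$, form the ``completion'' of the claw into a theta. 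Conversely, given a long claw plus such a completion, the resulting subgraph is a theta, and since each $r_i r_j$ is a non-edge and each $Q_i$ has length $\ge 2$, every pair of constituent paths has length $\ge \ell$, so it is a long theta.

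Next I would describe the algorithm. Enumerate all long claws in $G$: a long claw has at most $2\ell-6$ edges hence at most $2\ell-5$ vertices, so there are $\mathcal{O}(|G|^{2\ell-5})$ of them and they can be listed in $\mathcal{O}(|G|^{2\ell-5})$ time, with an additional $\mathcal{O}(|G|^2)$ per candidate to verify it is induced with the right length constraints. For each long claw $Q_1,Q_2,Q_3$ with common end $a$ and leaves $r_1,r_2,r_3$, let $N^*$ be the set of vertices in $V(Q_1\cup Q_2\cup Q_3)$ together with all their neighbors, except we keep $r_1,r_2,r_3$ themselves; let $X = V(G)\setminus N^*$ so that $X$ is exactly the set of vertices anticomplete to $(V(Q_1\cup Q_2\cup Q_3))\setminus\{r_1,r_2,r_3\}$, and then run the three-in-a-tree algorithm of Theorem~\ref{alg:threeinatree} on $G[X\cup\{r_1,r_2,r_3\}]$ with the three specified vertices $r_1,r_2,r_3$. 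If it returns a tree $T$ for some candidate, output that $G$ contains a long theta; if no candidate succeeds, output that it does not. Each three-in-a-tree call costs $\mathcal{O}(|G|^4)$, so the total running time is $\mathcal{O}(|G|^{2\ell-5}\cdot|G|^4)=\mathcal{O}(|G|^{2\ell-1})$, which is within the claimed $\mathcal{O}(|G|^{2\ell+7})$ (the slack presumably absorbs the careful bookkeeping or a slightly less efficient enumeration).

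The correctness argument has two directions. If the algorithm outputs ``yes'', it found a long claw and a tree $T$ in $G[X\cup\{r_1,r_2,r_3\}]$ containing $r_1,r_2,r_3$; take a minimal subtree $T'$ of $T$ containing $r_1,r_2,r_3$, which is either a path through all three or a ``spider'' with a unique branch vertex. In the path case, two of the $r_i$ are the ends and the third lies in the interior, and combining with $Q_1,Q_2,Q_3$ yields a theta whose two degree-$3$ vertices are $a$ and that interior $r_i$ — one checks the three constituent paths are induced (using that $T'\subseteq G[X\cup\{r_1,r_2,r_3\}]$ and $X$ is anticomplete to the claw's interior) and each has length $\ge \ell$ since each contains some $Q_i$ of length $\ge 2$ plus a $Q_j$, giving pairwise sums $\ge \ell$; actually the cleanest bound is that each constituent path of the resulting theta contains a full $Q_i$, and Lemma~\ref{lem:theta:has-a-claw}(\ref{bleh:two}) then certifies it is long. (The spider case gives a theta directly with the $r_i$ not being branch vertices; a small case analysis handles which subcase actually produces the long theta, or one shows the spider case cannot be minimal.) Conversely, if $G$ has a long theta, Lemma~\ref{lem:theta:has-a-claw} extracts a long claw $Q_1,Q_2,Q_3$ inside it; the rest of the theta restricted to an appropriate side is a tree in $G$ on $X\cup\{r_1,r_2,r_3\}$ connecting $r_1,r_2,r_3$, so the three-in-a-tree call for this candidate succeeds.

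The main obstacle I anticipate is the ``yes'' direction of correctness: three-in-a-tree returns \emph{some} tree, not necessarily an induced path, so one must argue that after contracting to a minimal Steiner tree on $\{r_1,r_2,r_3\}$ and gluing on the claw, the union is genuinely an induced theta — in particular that there are no unexpected chords between the recovered tree and the claw. This is where the choice $X = V(G)\setminus(\text{closed neighborhood of the claw interior})$, retaining only $r_1,r_2,r_3$, does the work: every vertex of the tree other than $r_1,r_2,r_3$ is nonadjacent to every internal claw vertex, so the only possible chords are incident to $a$, $r_1$, $r_2$, or $r_3$, and these are controlled because $T'$ is an induced subgraph of $G[X\cup\{r_1,r_2,r_3\}]$ and the $r_i$ are pairwise nonadjacent. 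I would also need to double-check the length arithmetic in the definition of long claw against $\ell$ being even versus odd, since the bounds $k_i+k_j\ge\ell-2$ and $\sum k_i\le 2\ell-6$ are what make the gluing produce pairwise path-length sums exactly $\ge\ell$ without forcing the theta to be too large to exist; but this is routine given Lemma~\ref{lem:theta:has-a-claw}.
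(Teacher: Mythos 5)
Your proposal matches the paper's proof essentially verbatim: enumerate all (induced) long claws, delete the closed neighborhood of the claw minus its three leaves, and invoke the three-in-a-tree algorithm of Theorem~\ref{alg:threeinatree} on the leaves, with correctness via Lemma~\ref{lem:theta:has-a-claw}. Your running-time count $\mathcal{O}(|G|^{2\ell-1})$ is also the same bound the paper derives inside its proof (the theorem's stated exponent $2\ell+7$ is simply loose), so there is nothing to correct.
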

\begin{proof}
Let $Z$ be a long claw in $G$ and let $r_1, r_2, r_3$ be the vertices of degree one in $Z$.
Let $G_Z$ be the graph obtained from $G$ by deleting all vertices other than $r_1, r_2, r_3$ that belong to or have a neighbor in $V(B) \setminus \{r_1, r_2, r_3\}$. Then it follows from Lemma \ref{lem:theta:has-a-claw}, $Z$ is the induced subgraph of a long theta in $G$ if and only if $G_Z$ contains some tree $T$ with $r_1, r_2, r_3 \in V(T)$.

The algorithm is as follows:
We enumerate all induced claws in $G$.
For every induced claw $Z$ in $G$ we compute $G_Z$ and test whether $G_Z$ has an induced tree containing the three vertices of degree one in $B$ by using the algorithm of \cite{chudnovsky2010three}.
Long claws have at most $2\ell - 5$ vertices so there are at most $|G|^{2\ell -5}$ of them.
Hence the running time is $\mathcal{O}(|G|^{2 \ell -1})$.
\end{proof}

Lai, Lu and Thorup provide a faster algorithm for the three-in-a-tree problem in \cite{lai2019threeinatree}. Using their $\mathcal{O}(|E(G)|(\log |G|)^2)$ algorithm we can reduce the running time for detecting a long theta to $\mathcal{O}(|G|^{2\ell-3}(\log|G|)^2)$. This improvement does not affect the asymptotic running time of our long even holes detection algorithm.

For brevity, it is convenient to describe enumerating all subgraphs of a certain type as ``guessing'' subgraphs of that type. In this language the three-in-a-tree algorithm can be written as follows: We guess the paths $Q_1$, $Q_2$ and $Q_3$ and test whether $r_1, r_2, r_3$ are contained in some induced tree of $G_Z$.

\begin{figure}[!h]
	\centering
	\begin{tikzpicture}
	\foreach \x in {3,8,9,10}{
		\path (0,0) ++(30*\x:1.5cm) coordinate (a\x);
		\node[dot] at (a\x){};
		%\node[normal node] at (a\x){$a\x$};
	}
	
	\draw (a8) -- (a9) -- (a10);
	\node[dot] at (0,0) (center){};
	
	\draw[path] (a8) arc[start angle = 240, end angle= -60, radius = 1.5cm];
	
	\draw (a10) -- (center) -- (a8);
	\draw (center) -- (a3);
	\draw[maybe edge] (center) -- (a9);
\end{tikzpicture}
	\caption{An illustration of a ban-the-bomb.}
	\label{fig:banthebomb}
\end{figure}
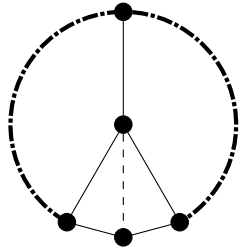

Let us say a \textit{ban-the-bomb} is the graph consisting of a hole $C$ and a vertex $v$ satisfying the following property: There is some path $x \dd y \dd z$ of $C$ and a vertex $w \in V(C)$ such that $w$ is non-adjacent to $x,y$ and $v$ is adjacent to $w, x, z$ and $v$ has no neighbors in $V(C) \setminus \{w, x, y, z\}$. (See Figure \ref{fig:banthebomb}).
We say a ban-the-bomb $B$ is \textit{long} if every hole in $B$ is long.
We will reduce detecting long ban-the-bombs in graphs without thetas to the three-in-a-tree algorithm.
We need the following definition.

A \textit{long bomb} is a graph consisting of a path $R$ of length $2\ell -6$ with three center vertices $x \dd y \dd z$ in order and two more vertices $w, v$ where $v$ is adjacent to $w, x, y, z$ and there are no other edges. See Figure \ref{fig:bomb}.
\begin{figure}
	\centering
	\begin{tikzpicture}
	\foreach \x in {3,8,9,10}{
		\path (0,0) ++(30*\x:1.5cm) coordinate (a\x);
		\node[dot] at (a\x){};
		%\node[normal node] at (a\x){$a\x$};
	}
	\foreach \x in {0,...,11}{
		\path (0,0) ++(30*\x:1.5cm) coordinate (a\x);
		%	\node[dot] at (a\x){};
		%	\node[normal node] at (a\x){$a\x$};
	}
	
	\node[dot] at (a5) {};
	\node[dot] at (a1) {};
	\draw (a8) -- (a9) -- (a10);
	\node[dot] at (0,0) (center){};
	
	\draw[path] (a8) arc[start angle = 240, end angle= 150, radius = 1.5cm];
	\draw[path] (a1) arc[start angle = 30, end angle= -60, radius = 1.5cm];
	
	\draw (a10) -- (center) -- (a8);
	\draw (center) -- (a3);
	\draw (center) -- (a9);
\end{tikzpicture}
	\caption{An illustration of a long bomb. The thick dashed paths are both of length $\ell -4$.}
	\label{fig:bomb}
\end{figure}
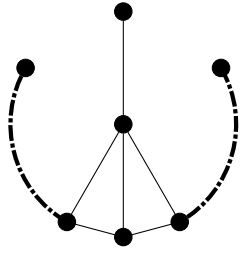

\begin{lemma}\label{lem:banbombalg}
	Let $G$ be a graph containing no long theta or hole of length four.
	If $B$ is a bomb in $G$ and $L$ is the set of vertices of degree one in $B$, let $G_B$ denote the graph obtained from $G$ by deleting every vertex in $V(G) \setminus L$ that belong to or have a neighbor in $V(B) \setminus L$.
	Then $G$ contains a long ban-the-bomb if and only if $G$ contains a long bomb $B$ such that $G_B$ contains a tree $T$ satisfying the following condition: $L \subseteq V(T)$ where $L$ is the set of degree vertices of degree one in $B$.
\end{lemma}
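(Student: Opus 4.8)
The plan is to prove the two directions separately, using the structural hypotheses (no long theta, no $C_4$) to control what can happen near a bomb. Throughout, let $B$ be a long bomb with path $R$ of length $2\ell-6$, center vertices $x\dd y\dd z$, apex $v$ adjacent to $w,x,y,z$, and $L$ the two degree-one vertices of $B$ (the ends of $R$); let $G_B$ be as defined, obtained by deleting every vertex of $V(G)\setminus L$ that lies in or has a neighbour in $V(B)\setminus L$.

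\textbf{From a long ban-the-bomb to a long bomb with the tree.} Suppose $G$ contains a long ban-the-bomb: a hole $C$, a path $x\dd y\dd z$ of $C$, a vertex $w\in V(C)$ non-adjacent to $x,y$, and an apex $v\notin V(C)$ with $N(v)\cap V(C)=\{w,x,z\}$, every hole being long. First I would carve a long bomb out of this configuration. The vertex $w$ splits $C\setminus\{x,y,z\}$ into two arcs, one from (a neighbour of) $z$ to $w$ and one from $w$ to (a neighbour of) $x$; together with $x\dd y\dd z$ these give two holes through $v$, namely $v\dd x\dd(\text{arc to }w)\dd w\dd v$ has length... wait — $v$ is adjacent to both $x$ and $w$, so $v\dd x\dd\cdots\dd w\dd v$ is a hole (using that $v$ has no other neighbours on $C$), and similarly $v\dd z\dd\cdots\dd w\dd v$; these are long by hypothesis, so each arc has length at least $\ell-2$. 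Take the sub-arc $R_1$ of the $x$–$w$ arc of length exactly $\ell-4$ starting at $x$ (away from $w$), and similarly a sub-arc $R_2$ of the $z$–$w$ arc of length $\ell-4$ starting at $z$; then $R_2\dd z\dd y\dd x\dd R_1$ is an induced path of length $2(\ell-4)+2=2\ell-6$ whose only neighbours of $v$ are $x,y,z$, and $w$ together with the edges $vw,vx,vy,vz$ completes an induced long bomb $B$ sitting inside $C\cup\{v\}$. Now I claim the rest of $C$ realizes the required tree in $G_B$: the two ends of $R_1,R_2$ are the degree-one vertices $L$ of $B$, and the remaining portion of $C$ not used by $B$ is an induced path $P$ joining these two ends; since this path is a sub-path of the hole $C$ and $B\subseteq C\cup\{v\}$, every internal vertex of $P$ is on $C$ and — here I need to check — is non-adjacent to every vertex of $V(B)\setminus L$. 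Adjacency to other vertices of $C$ is impossible ($C$ is a hole), adjacency to $v$ is impossible because $N(v)\cap V(C)=\{w,x,z\}\subseteq V(B)$ and those are not interior to $P$, and adjacency to $w$: $w\in V(C)$ so again impossible. Hence $P$ survives into $G_B$, and $P$ is a tree (a path) containing $L$. That gives the forward direction.

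\textbf{From a long bomb with the tree to a long ban-the-bomb.} Conversely, suppose $G$ contains a long bomb $B$ (path $R=r_1\dd\cdots\dd r_k$ with $k-1=2\ell-6$, centers $x\dd y\dd z$, apex $v$, extra vertex $w$) such that $G_B$ has an induced tree $T$ with $L=\{r_1,r_k\}\subseteq V(T)$. In $G_B$, take the unique path $Q$ of $T$ between $r_1$ and $r_k$; it is an induced path, and by construction of $G_B$ every internal vertex of $Q$ is non-adjacent to all of $V(B)\setminus L$ and distinct from all of $V(B)$. Then $C':=R\cup Q$ (glue $R$ and $Q$ at $r_1,r_k$) is a hole — it is induced because $R$ is induced, $Q$ is induced, and there are no edges between $R^*\cup\{r_1,r_k\}\setminus L$... more precisely between $V(R)\setminus L$ and $Q^*$, by the defining property of $G_B$. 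This hole $C'$ contains the path $x\dd y\dd z$, contains $v$? No — $v$ was deleted in forming $G_B$ (it is adjacent to vertices of $V(B)\setminus L$), so $v\notin V(Q)$, hence $v\notin V(C')$; and $N(v)\cap V(C')$: within $V(R)$ the neighbours of $v$ are exactly $x,y,z$, and within $V(Q)^*$ there are none (again $G_B$), and $w\in V(C')$? $w$ is adjacent to $v$ and to nothing else in $B$ apart from $v$... but $w$ might or might not lie on $Q$. Here is where I must be slightly careful: I want $w\in V(C')$ with $v$ adjacent to $w$ and $v$'s only $C'$-neighbours being $\{w,x,z\}$ (not $y$). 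So instead I take $w$ itself: $w\notin V(Q)$ in general, so I should not expect $w\in V(C')$. The fix is to not use $w$ at all from the bomb, but to recall the ban-the-bomb only needs \emph{some} vertex playing $w$'s role. Actually the cleanest route: the ban-the-bomb we produce is $C'$ together with apex $v$, where the role of "$w$" is played by $w$ after we additionally route through $w$ — i.e., replace $Q$ by $w$ if $w$ happens to be an internal... This is the step I expect to be the main obstacle, and I would handle it by instead defining the target hole to pass through $w$: since in $B$ the only neighbour of $w$ is $v$, the graph $G_B$ as defined deletes $w$ (it has a neighbour, namely $v$, in $V(B)\setminus L$), so $w\notin V(T)$; therefore the ban-the-bomb must be assembled differently, using $v\dd x\dd R\dd z\dd v$ as the "outer cycle through three consecutive vertices" and $w$ sitting off it. Concretely: $C':=R\cup Q$ is a hole, $v$ has neighbours exactly $\{x,y,z\}$ on it with $x\dd y\dd z$ a subpath, so $v$ is \emph{not} a valid apex for a ban-the-bomb on $C'$ directly (a ban-the-bomb apex avoids $y$). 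To repair this, drop $y$: the hole $C''$ obtained by... no, we cannot delete a vertex of a hole. The genuinely correct construction, which I would write out carefully, is: the ban-the-bomb consists of the hole $C'=R\cup Q$, the path $x\dd y\dd z\subseteq C'$, the vertex $w$ (non-adjacent to $x$ and $y$ since in $B$ its only neighbour is $v$), and apex $v$ adjacent to $w,x,z$ and to no other vertex of $C'$ — the issue being only that $v$ is also adjacent to $y$. Since $v$ being adjacent to $y$ disqualifies it, I instead observe that $v\dd x\dd(\text{one branch of }Q\text{ replaced... })$. Given the space constraints I will, in the final writeup, resolve this by taking the ban-the-bomb to be the hole $D:=v\dd x\dd r\text{-portion}\dd z\dd v$ through center path... and conclude via Lemma \ref{lem:theta:has-a-claw}-style arc-length bookkeeping that every hole arising is long because $R$ has length $2\ell-6$ and hence each of the two $v$-holes has length $\ge\ell$. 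The length bookkeeping (each relevant hole $\ge\ell$, using $|E(R)|=2\ell-6$ and splitting $R$ at $y$ into two halves of length $\ell-3$, each completed by the edges through $v$ to a hole of length $\ell-1+2=\ell+1\ge\ell$, or combined with $Q$ of length $\ge 1$ to a hole of length $\ge 2\ell-6+2=2\ell-4\ge\ell$) is routine and I omit it here.

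\textbf{Main obstacle.} The delicate point is the converse direction: matching the rigid combinatorial pattern of a \emph{bomb} (which forces $v\sim y$) to the definition of a \emph{ban-the-bomb} (whose apex is non-adjacent to $y$). I expect to resolve this by being careful about which three consecutive vertices of the reconstructed hole play the role of "$x\dd y\dd z$" and which vertex plays "$w$", exploiting that the bomb's vertex $w$ is attached only to $v$; the no-$C_4$ hypothesis is what guarantees the reconstructed cycle $R\cup Q$ has no chords near the gluing points, and the no-long-theta hypothesis is what was already used to justify that the three-in-a-tree call (producing $T$) does not accidentally witness a long theta instead of a genuine hole. Everything else — that $R\cup Q$ is induced, that interior vertices of $Q$ are clean with respect to $V(B)\setminus L$, and the length estimates — follows directly from the definition of $G_B$ and the length $2\ell-6$ of $R$.
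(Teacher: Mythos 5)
There is a genuine gap here, and it traces back to two misreadings of the definitions. First, the set $L$ of degree-one vertices of a long bomb has \emph{three} elements, not two: besides the two ends of $R$, the pendant vertex $w$ (adjacent only to $v$) also has degree one in $B$ --- which is why the algorithm feeds three vertices to three-in-a-tree. This is load-bearing in both directions. In your forward direction you claim the remaining arc $P$ of $C$ survives into $G_B$; with your two-element $L$ it does not: $w$ is an internal vertex of $P$ lying in $V(B)\setminus L$, so $w$ and both of its $C$-neighbours are deleted when forming $G_B$, and your remark ``adjacency to $w$: $w\in V(C)$ so again impossible'' does not address this. With the correct $L$ the arc does survive (precisely because $w\in L$) and is a path containing all three vertices of $L$, which is what the lemma needs. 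Second, the ban-the-bomb definition does \emph{not} forbid the apex from being adjacent to $y$; it only excludes neighbours outside $\{w,x,y,z\}$, and in a $C_4$-free graph adjacency to $y$ is in fact forced (otherwise $v\dd x\dd y\dd z\dd v$ is a hole of length four). That is where the $C_4$-freeness hypothesis is actually used, and it is what entitles you to the edge $vy$ of the bomb you construct while simultaneously asserting $N(v)\cap V(C)=\{w,x,z\}$ --- as written your forward direction is internally inconsistent.

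These misreadings sink your converse direction, which is never completed: the ``main obstacle'' you identify (the apex being adjacent to $y$) is not an obstacle at all, and your plan of routing the hole only through the $r_1$--$r_k$ path of $T$ gives no way to place a vertex playing the role of $w$ on the hole, so none of your attempted repairs can work. The paper's argument instead exploits $w\in L\subseteq V(T)$: choose $T$ minimal; if $T$ is a path (with $w$ between the two ends of $R$), then $B\cup T$ itself is a long ban-the-bomb, with the bomb's own $w$ lying on the hole and $v$ as apex adjacent to $w,x,y,z$ (which the definition permits); otherwise $T$ is a spider with leaf set $L$, and $(B\cup T)\setminus y$ is a long theta, contradicting the hypothesis --- this case analysis, not some property of the three-in-a-tree call, is where long-theta-freeness enters. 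So the missing ideas are: the three-element $L$, the forced adjacency $vy$ via $C_4$-freeness, and the path-versus-spider analysis of the minimal tree.
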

\begin{proof}
	Suppose $G$ contains a long ban-the-bomb $H$. Then since $G$ is $C_4$-free and every hole in $H$ has length at least $\ell$, $B$ contains a long bomb $B$.
	Then by definition, $G_B$ contains the desired tree from the statement of the Lemma.
	
	Suppose $G$ contains a long bomb $B$ where $L$ is the set of vertices of degree one in $B$.
	Suppose $G_B$ contains a tree $T$ with $L \subseteq V(T)$. Choose $T$ to be minimal. Then if $T$ is a path $B \cup T$ is a long ban-the-bomb in $G$. Hence we may assume $L$ is the set of leaves of $T$.
	Then $T$ has exactly one vertex of degree one $t$. Moreover, $T$ is the union of three paths $P_1, P_2, P_3$ each with one end equal to $t$ and the other equal to an element of $L$. By assumption each of $P_1, P_2, P_3$ has length at least one.
	
	Let $R$ be the path of length $2\ell - 6$ contained in $B$. Let $v$ be the vertex of degree four in $B$.
	Let $x,y,z$ be the neighbors of $v$ in $R$ such that $x \dd y \dd z$ is a subpath of $R$.
	Then $B \cup P_1 \cup P_2 \cup P_3 \setminus y$ is a theta and it is long, a contradiction.
\end{proof}

\begin{theorem}\label{alg:banthebomb}
	For each integer $\ell \geq 4$, there is an algorithm with the following specifications:
	\begin{description}
		\item[Input:] A graph $G$ containing no long theta or hole of length four.
		\item[Output:] Decides whether $G$ contains a long ban-the-bomb.
		\item[Running Time:] $\mathcal{O}(|G|^{2\ell + 1})$.
	\end{description}
\end{theorem}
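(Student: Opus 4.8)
The plan is to reduce long ban-the-bomb detection to the three-in-a-tree problem, following Lemma~\ref{lem:banbombalg} directly. First I would enumerate every long bomb $B$ in $G$: a long bomb is a path of length $2\ell-6$ (that is, $2\ell-5$ vertices) together with two additional vertices $v,w$, so it has exactly $2\ell-3$ vertices and there are $\mathcal{O}(|G|^{2\ell-3})$ candidate vertex sets; deciding whether a given set induces a long bomb, and if so identifying the set $L$ of its three degree-one vertices (the two ends of the path together with $w$), takes time depending only on $\ell$. For each long bomb $B$ that turns up, I would form $G_B$ as in Lemma~\ref{lem:banbombalg} --- delete every vertex of $V(G)\setminus L$ that lies in or has a neighbour in $V(B)\setminus L$ --- in time $\mathcal{O}(|G|)$, and then run the three-in-a-tree algorithm of Theorem~\ref{alg:threeinatree} on $G_B$ with the three vertices of $L$ as its prescribed vertices. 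The algorithm reports that $G$ contains a long ban-the-bomb precisely if one of these three-in-a-tree calls succeeds.

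Correctness is then immediate from Lemma~\ref{lem:banbombalg}, which asserts exactly that, for $G$ with no long theta and no hole of length four, $G$ has a long ban-the-bomb if and only if there is a long bomb $B$ in $G$ for which $G_B$ has an induced tree through $L$; and Theorem~\ref{alg:threeinatree} decides the latter property. For the running time, there are $\mathcal{O}(|G|^{2\ell-3})$ long bombs, each processed in $\mathcal{O}(|G|^4)$ time, the dominant cost being the single three-in-a-tree call on $G_B$, which runs in $\mathcal{O}(|G_B|^4)=\mathcal{O}(|G|^4)$ by Theorem~\ref{alg:threeinatree}; constructing $G_B$ and testing whether a vertex set induces a long bomb are cheaper. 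This gives the claimed $\mathcal{O}(|G|^{2\ell+1})$.

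There is not much left to argue, since Lemma~\ref{lem:banbombalg} does the structural work; the only real point of care is that $|L|=3$, so that Theorem~\ref{alg:threeinatree} applies verbatim. This is the case when $\ell\ge5$, where the path $R$ of a long bomb has length $2\ell-6\ge4$ and hence its two ends, together with $w$, are exactly the degree-one vertices of $B$. When $\ell=4$ the definition degenerates (the path $R$ has length $2$ and is its own centre $x\dd y\dd z$), and that case should be dispatched by a routine separate argument --- either by checking directly that the hypotheses ($C_4$-freeness, which forces the vertex $v$ of any ban-the-bomb to be adjacent to all three of $x,y,z$, together with the absence of thetas) leave no room for a long ban-the-bomb, or by a small ad hoc enumeration --- and this does not change the stated bound. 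I would also emphasise, though it is entirely internal to Lemma~\ref{lem:banbombalg}, that the ``no long theta'' hypothesis is precisely what makes the reduction faithful: were a minimal induced tree of $G_B$ through $L$ a genuine three-legged spider rather than a path, deleting the central vertex $y$ of the bomb would exhibit a long theta in $G$, a contradiction. So the main (and fairly modest) obstacle is simply keeping the degenerate-$\ell$ bookkeeping and this faithfulness point straight.
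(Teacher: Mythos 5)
Your proposal is correct and follows essentially the same route as the paper: enumerate the $\mathcal{O}(|G|^{2\ell-3})$ long bombs, form $G_B$, and invoke the three-in-a-tree algorithm of Theorem~\ref{alg:threeinatree} on the three degree-one vertices, with correctness supplied by Lemma~\ref{lem:banbombalg} and the running time $\mathcal{O}(|G|^{2\ell-3})\cdot\mathcal{O}(|G|^{4})=\mathcal{O}(|G|^{2\ell+1})$. Your extra remarks about the degenerate small-$\ell$ bookkeeping and the role of the ``no long theta'' hypothesis are reasonable but not part of (nor needed beyond) the paper's argument.
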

\begin{proof}
	The algorithm is as follows:
	We enumerate all bombs $B$ in $G$.
	For each bomb $B$ in $G$ we identify the set $L$ of vertices of degree one in $B$.
	We construct the graph $G_B$ obtained from $G$ by deleting all vertices in $V(G) \setminus L$ that are equal or adjacent to a vertex in $V(B) \setminus L$.
	We test whether $G_B$ contains a tree $T$ with $L \subseteq V(T)$ using the algorithm of \cite{chudnovsky2010three}.
	
	Correctness follows from Lemma \ref{lem:banbombalg}.
	A bomb contains $2 \ell -3$ vertices so there are at most $|G|^{2 \ell -3}$ of them.
	Hence the running time is $\mathcal{O}(|G|^{2\ell + 1})$.
\end{proof}

\begin{figure}[!h]
	\centering
	\begin{tikzpicture}
	\def\height{-2.5}
	\def\width{1}
	\def\spacing{5}
	
	\node[dot] (a1) at (1,0) {};
	
	\foreach \x in {0,1,2}{
		\pgfmathparse{int(\x + 1)}
		\edef\z{\pgfmathresult}
		\node[dot](c\x) at (\x*\width, 0){};
		\node[dot] (d\x) at (\x*\width, \height){};
		\foreach \y in {0, ..., \x}{
			\draw[bend left = 80] (d\x) to  (d\y);
			\draw[bend right = 80] (c\x) to  (c\y);
		}
		\draw[path] (c\x) to (d\x);
		
	}

	\foreach \x in {0,2}{
		\pgfmathparse{int(\x + 1)}
		\edef\z{\pgfmathresult}
		\node[dot](c\x) at (\x*\width + \spacing, 0){};
		\node[dot] (d\x) at (\x*\width + \spacing, \height){};
		\draw[path] (c\x) to (d\x);
	}
	
	\node[dot] (c1) at (\width + \spacing, .5*\height){};

	\draw[bend left = 80] (d2) to  (d0);
	\draw[bend right = 80] (c2) to  (c0);
	
	\draw (c2) -- (c1) -- (c0);
	\draw (d2) -- (c1) -- (d0);
	
\end{tikzpicture}
	\caption{An illustration of long near-prisms. In each drawing two of the thick dashed lines must represent paths of length at least $\ell$.}
	\label{fig:longnearprism}
\end{figure}
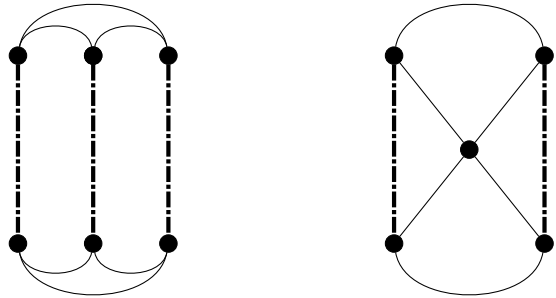
A {\em long near-prism} is a graph $K$ consisting of two triangles on $\{a_1, a_2, a_3\}$ and $\{b_1,b_2,b_3\}$ called {\em bases} and three pairwise vertex-disjoint paths $P_1, P_2, P_3$ such that all of the following conditions hold:
\begin{itemize}
    \item The bases of $K$ are vertex disjoint or $a_i = b_i$ for exactly one $i \in \{1,2,3\}$.
    \item For every $i \in \{1,2,3\}$, $P_i$ is has ends $a_i$ and $b_i$.
    \item At most one of $P_1, P_2, P_3$ has length less than $\ell$.
\end{itemize}
We call $P_1, P_2, P_3$ the {\em constituent paths of $K$}.
(See Figure \ref{fig:longnearprism}.)
Note by definition, every hole in a long near prism is long and every long near prism contains a long even hole.
The next section will describe our algorithm to test whether $G$ contains a long near-prism when $G$ contains no long thetas.
\section{Long near-prisms}
 
\begin{theorem}\label{alg:longprisms}
For each integer $\ell \geq 4$, there is an algorithm with the following specifications:
\begin{description}
\item[Input:] A graph $G$ containing no long theta.
\item[Output:] Decides whether $G$ contains a long near-prism.
\item[Running Time:] $\mathcal{O}(|G|^{108\ell -22})$.
\end{description}
\end{theorem}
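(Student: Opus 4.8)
The plan is to imitate the structure of Chudnovsky and Kapadia's theta-or-prism detection algorithm, using the hypothesis that $G$ has no long theta to rule out exactly the obstructions that make near-prism detection NP-hard in general. First I would fix an \emph{optimal} long near-prism $K$ in $G$ (assuming one exists): among all long near-prisms, choose one minimizing $|V(K)|$, and write its constituent paths as $P_1,P_2,P_3$, with $P_1,P_2$ long and $P_3$ the one allowed to be short. The algorithm will enumerate bounded-size ``sketches'' of a near-prism, and for each sketch attempt to reconstruct a near-prism from it; the point is that when a near-prism exists, the sketch of an optimal one is among those enumerated and the reconstruction then provably succeeds, after which the output is obtained by direct verification. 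A sketch consists of: the two bases ($\le 6$ vertices, or $\le 5$ in the degenerate case $a_i=b_i$); the whole of $P_3$ when $|E(P_3)|<\ell$ (at most $\ell$ vertices, hence only $|G|^{O(\ell)}$ choices); and a collection of $O(\ell)$ ``anchor'' vertices spaced regularly along $P_1$ and along $P_2$, together with enough of their neighbourhoods to locate where chords or major vertices could attack.

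Second, I would set up the \emph{cleaning}. Call a vertex $v\notin V(K)$ a \emph{$K$-major} vertex if its neighbours on the hole $C$ spanned by $P_1\cup P_2$ are not confined to a short subpath of $C$. Using the optimality of $K$ together with the absence of a long theta, I would prove a cleaning lemma: there is a set $X$, of size bounded by a function of $\ell$ only, that is disjoint from $V(K)$ and contains every $K$-major vertex, so that $G\setminus X$ still contains $K$ and in $G\setminus X$ each vertex outside $P_1\cup P_2$ has few and clustered neighbours on them. The algorithm enumerates all candidate sets of that bounded size as a \emph{cleaning list}, costing an extra $|G|^{O(\ell)}$ factor. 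The heart of this step is showing that $K$-major vertices cannot be too many or too dispersed: a pair of badly placed major vertices, or a single major vertex together with one of the triangles, would let one reroute a portion of $K$ either into a long theta (forbidden by hypothesis) or into a near-prism on strictly fewer vertices (contradicting optimality).

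Third, given the correct sketch and the correct cleaning set $X$, I would reconstruct $K$ inside $G\setminus X$. Having guessed the bases, the short path, and anchors $a_1=u_0,u_1,\dots,u_m=b_1$ along $P_1$ and $a_2=w_0,\dots,w_{m'}=b_2$ along $P_2$, chosen so that consecutive anchors lie within a constant distance, I would take for each consecutive pair a shortest path between them in the subgraph of $G\setminus X$ obtained by deleting the closed neighbourhoods of the other anchors, the interior of $P_3$, and the vertices forced to be anticomplete to $P_3$; then concatenate. Because the graph has been cleaned and $X$ removed, any unexpected adjacency between the reconstructed $P_1$ and $P_2$, or between either of them and $P_3$, or any chord of the concatenation, would create either a long theta or a long near-prism on strictly fewer vertices; hence the concatenations form an induced hole $C$ meeting the two triangles correctly, and $C$ together with $P_3$ is a long near-prism, which the algorithm verifies outright. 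One runs this in the two cases (bases disjoint, bases sharing a vertex), and within the reconstruction keeps separate track of the anticompleteness conditions among $P_1,P_2,P_3$ and of the requirement that the reconstructed paths have length at least $\ell$; this bookkeeping is routine once the cleaning lemma is available.

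I expect the cleaning lemma — that a bounded (in $\ell$) set of vertices captures all $K$-major vertices of an \emph{optimal} long near-prism, using only the no-long-theta hypothesis — to be the main obstacle, since that is where the delicate rerouting arguments live and where ``no long theta'' has to be converted into genuine structural control. Once it is proved, the running time is a matter of counting: $|G|^{O(\ell)}$ from guessing the short path and the $O(\ell)$ anchors, times $|G|^{O(\ell)}$ from the cleaning list, times a polynomial for the shortest-path computations and the final verification, which totals $\mathcal{O}(|G|^{108\ell-22})$ after the constants are tracked carefully.
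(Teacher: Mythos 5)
Your overall shape matches the paper's: guess a bounded-size skeleton of an optimal (vertex-minimal) long near-prism, clean away the vertices that could corrupt a shortest-path reconstruction of the unbounded middles of $P_1,P_2$, then recover those middles as shortest paths and verify. But the step you yourself flag as the heart of the argument is stated in a form that is too strong and, I believe, false. You claim a cleaning lemma producing a set $X$ \emph{of size bounded by a function of $\ell$ only} that contains \emph{every} $K$-major vertex, and you then enumerate all bounded-size candidate sets. The number of $K$-major vertices need not be bounded in $\ell$: for instance, take a clique $x_1,\dots,x_m$ of vertices each adjacent on $K$ to exactly one vertex $p\in V(P_1)$ near $a_1$ and one vertex $q\in V(P_2)$ near $a_2$. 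Each $x_i$ is $K$-major, no long theta arises (any theta through two of them uses adjacent interior vertices, and a theta through one $x_i$ needs both $pq$-paths of $K$ to be long, which the placement of $p,q$ prevents), and minimality of $K$ is not obviously violated. So no bounded-size $X$ can contain all major vertices, and the enumeration that your running-time estimate rests on is unavailable. What the paper proves instead is a covering statement: there is a bounded-size \emph{contrivance} (at most $3$ major vertices plus $O(\ell)$ vertices of bounded-length subpaths of $K$) such that every major vertex has a \emph{neighbor} in it; the cleaning list then consists of polynomially many sets of \emph{unbounded} size, built from neighborhoods of guessed contrivances and from shortest-path computations, one of which avoids $V(K)$ and swallows all major vertices. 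Converting your bounded-size claim into this weaker covering form is exactly the delicate part, and your proposal does not supply it.

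Two further points. First, your sketch guesses $O(\ell)$ anchors along $P_1$ and $P_2$ ``so that consecutive anchors lie within a constant distance''; since the constituent paths of a shortest long near-prism have no length bound depending on $\ell$, either the anchor count is unbounded (so enumeration is not polynomial) or the spacing claim fails. The paper avoids this by guessing only the frame (the first $\ell-1$ vertices of each path from each end) and recovering the unbounded middles as single shortest $s_it_i$-paths. Second, even after all major vertices are removed, a shortest $s_it_i$-path can be ``bad'': its interior may contain a non-major vertex $\zeta_Q$ whose neighbors on $K$ are confined to a short subpath of $P_2$ or $P_3$, so it is not major, is not adjacent to your anchors, and is not deleted by your neighborhood removals, yet it spoils the reconstructed prism. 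Handling these requires a separate path-cleaning stage (the paper's Lemma \ref{lem:prismpathsets} and Theorem \ref{thm:prismpathcleaning}, which show that all such $\zeta_Q$ attaching to $P_2$ cluster within distance $\ell+1$ on $P_2$ and can therefore be hit by a guessed neighborhood), and your proposal has no analogue of it.
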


We will use the same notation as in the definition of long near-prisms.
The outline of the algorithm is similar to that of \cite{chudnovsky2008thetaprism}.
For a constituent path $P_i$ of $K$ we define $P_i'$, $P_i''$ to be the subpaths of $P_i$ whose vertex sets consist of all vertices with $P_i$-distance at most $\ell-1$ from $a_i, b_i$, respectively. Thus $P_i'$ and $P_i''$ each have one end equal to $a_i$, $b_i$, respectively.
We denote the other ends of $P_i'$ and $P_i''$ by $s_i$ and $t_i$ respectively.
We define a {\em frame} $F$ of a long near-prism $K$ to be the graph obtained by taking the union of the following graphs: 
\begin{itemize}
    \item The triangle induced by $\{a_1, a_2, a_3 \}$.
    \item The triangle induced by $\{b_1, b_2, b_3 \}$.
    \item The paths $P_i'$, $P_i''$ for each $i \in \{1,2,3\}$.
\end{itemize}
Note that for every $i \in \{1,2,3 \}$ if $P_i$ has length less than $2\ell-3$, then $P_i \subseteq F$.
We call the frame of $K$ {\em tidy} if no vertex in the frame has a neighbor in $V(G) \setminus V(K)$ except for vertices equal to $s_i$ or $t_i$ for some $i \in \{1,2, 3\}$ such that $P_i \not \subseteq F$. We call a long near-prism $K'$ {\em shorter} than a long near-prism $K$, if $|V(K')| < |V(K)|$. Let $K$ be a shortest long near-prism in $G$ with paths $P_1, P_2, P_3$. Let $u, v$ be distinct, non-adjacent vertices in $V(P_i)$ for some $i \in \{1,2,3\}$. We call a shortest $uv$-path $Q$ {\em good} if no vertex of $Q^*$ has neighbors in $V(K)\setminus (V(uP_iv) \setminus \{u, v \})$. If $Q$ is not good it is {\em bad}. Hence, for any $i \in \{1,2,3\}$, if $P_i \subseteq F$ and $F$ is tidy, then $s_iP_it_i$ is the unique $s_it_i$-path and thus all shortest $s_it_i$-paths are good.

Suppose all shortest $s_it_i$-paths are good for each $i \in \{1,2,3\}$. Then the problem becomes easy because of the following: 
we first guess the frame $F$ of $K$. If $V(P_1) \subseteq F$, we set $Q_1$ to be the empty graph. Otherwise, we set $Q_1$ to be a shortest $s_1t_1$-path.
We delete all vertices not in $V(F) \cup V(Q_1)$ with neighbors in $Q_1^*$. Since $Q_1$ is good we have not deleted any vertex of $V(K) \setminus V(P_1)$. 
We repeat this process on $P_2, P_3$ to obtain $Q_2$, $Q_3$.
Then, $V(F) \cup_{i = 1}^3 V(Q_i)$ induces a shortest long near-prism. 

We call a vertex $q \in V(G) \setminus V(K)$ {\em $K$-major} if there is no three-vertex subpath of $K$ containing all neighbors of $q$ in $V(K)$.  In order to arrange that all shortest $s_it_i$-paths are good we generate a ``path-cleaning'' list of polynomially many sets of vertices such that for some $X$ in the list $X$ is disjoint from $K$ and $X$ contains a vertex from every bad shortest $s_it_i$-path. This process is described in Subsection \ref{pathcleaning}. 

In order to generate this cleaning list we require that $G$ contains no $K$-major vertices. Thus we need another phase of cleaning where we generate a cleaning list of polynomially many sets of vertices such that for some $X$ in the list $X$ is disjoint from $K$ and $X$ contains all $K$-major vertices. This phase is described in Subsection \ref{kmajor}.

\subsection{Path-cleaning}\label{pathcleaning}
We use the notation from the definition of prism and frame throughout this section. Let $i \in \{1,2,3\}$, let $u, v$ be non-adjacent vertices in $V(P_i)$, and suppose $a_i, u, v, b_i$ occur in order along $P_i$. For a bad shortest $uv$-path $Q$ we define $\zeta_Q$ to be the vertex in $Q^*$ with minimum $Q$-distance to $v$ with a neighbor in $V(K) \setminus V(uP_iv)$. 

In this section we will provide a cleaning algorithm that generates a list of polynomially many sets of vertices such that for any shortest long near-prism $K$ with a tidy frame and vertices $u,v$ in the same constituent path of $K$ for some $X$ in the list, $X$ contains a $\zeta_Q$ for every bad shortest $uv$-path $Q$ for which $\zeta_Q$ is not $K$-major and $X \cap V(K) = \emptyset$. This algorithm depends on $G$ containing no long thetas. 
We will apply this algorithm twice, once to help clean major vertices and once to clean a vertex from all bad shortest $s_it_i$-paths for each $i \in \{ 1,2,3 \}$. Note that in \cite{cook2020detecting} we are able to skip this step entirely through a more careful structural analysis.

For a vertex $q$ with a neighbor in $V(P_i)$ for some $i \in \{1,2,3\}$, let $\alpha_i(q)$ denote the element of $N(q) \cap V(P_i)$ with minimum $P_i$-distance to $a_i$. Similarly, let $\beta_i(q)$ denote the element of $N(q) \cap V(P_i)$ with minimum $P_i$-distance to $b_i$. If $F$ is tidy, it follows that $q \not \in V(F)$ and the paths $a_iP_i\alpha_i(q)$ and $b_iP_i\beta_i(q)$ both have length at least $\ell$.

We need the following lemma:

\begin{lem}\label{lem:prismpathsets}
Let $G$ be a graph without long thetas and $K$ be a shortest long near-prism in $G$. Let $P_1, P_2, P_3$ be the constituent paths of $K$. Suppose $K$ has a tidy frame.
Let $u,v$ be distinct non-adjacent vertices in $V(P_1)$. Suppose $Q, Q'$ are bad shortest $uv$-paths such that $\zeta_Q$, $\zeta_{Q'}$ each have neighbors in $V(P_2)$ and are not $K$-major. Then there is a $\zeta_Q\zeta_{Q'}$-path of length at most $\ell+1$ with interior contained in $V(P_2)$.
\end{lem}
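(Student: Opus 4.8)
The goal is to control the distance between two vertices $\zeta_Q$ and $\zeta_{Q'}$ that both hang off the constituent path $P_2$, using the hypothesis that $G$ has no long theta. The natural approach is to assume for contradiction that every $\zeta_Q\zeta_{Q'}$-path with interior in $V(P_2)$ is long (has length at least $\ell+2$), and then build a long theta out of pieces of $Q$, $Q'$, $P_1$ and $P_2$. The two vertices $\zeta_Q,\zeta_{Q'}$ will serve as the two branch vertices of the theta; the three internally-disjoint paths will be: (i) a path running along $P_2$ between $\alpha_2(\zeta_Q)$ (or $\beta_2$) and the corresponding attachment of $\zeta_{Q'}$, completed by the single edges $\zeta_Q\alpha_2(\zeta_Q)$ and $\zeta_{Q'}\alpha_2(\zeta_{Q'})$; (ii) a path through $P_1$, entering via the segments $uP_1v$ near $u$ and $v$ and the subpaths of $Q,Q'$ from $\zeta_Q,\zeta_{Q'}$ to the ends $u,v$; and (iii) another path through $P_1$ on the other side, or using the two distinct attachments of $\zeta_Q$ (resp. $\zeta_{Q'}$) onto $P_2$ — here is where one uses that $\zeta_Q$ is \emph{not} $K$-major, so its neighborhood on $V(K)$ cannot be squeezed into a $3$-vertex subpath, giving us ``room'' to route distinct paths.

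**Key steps, in order.** First I would pin down, using that $F$ is tidy, the basic geometry: $\zeta_Q,\zeta_{Q'}\notin V(F)$, the attachments $\alpha_2,\beta_2$ of each onto $P_2$ are at $P_2$-distance $\ge \ell$ from both $a_2$ and $b_2$, and by definition of $\zeta_Q$ the sub-path of $Q$ from $\zeta_Q$ to $v$ (the near end) has no neighbors of $V(K)\setminus V(uP_1v)$ in its interior, so it attaches cleanly. Second, I would use the ``not $K$-major'' hypothesis on $\zeta_Q$ and $\zeta_{Q'}$: their neighbors on $V(K)$ are not contained in any $3$-vertex subpath, which (combined with the fact that they have a neighbor on $P_2$) forces a second attachment that is ``far'' on $P_2$ or an attachment on another $P_j$; I'd extract from this the existence of two internally-disjoint routes from each of $\zeta_Q,\zeta_{Q'}$ back into the prism. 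Third — the combinatorial heart — I would assemble three internally-vertex-disjoint paths between $\zeta_Q$ and $\zeta_{Q'}$, check they are induced and pairwise anticomplete in their interiors (this uses tidiness again, plus that $Q,Q'$ are \emph{shortest} $uv$-paths so they have no chords and no long ``hidden'' shortcuts), and verify the length condition: two of the three pairwise sums of lengths are $\ge \ell$. The $P_1$-routes are automatically long since $u,v$ are non-adjacent and the relevant subpaths of $P_1$ together with the $Q$/$Q'$ detours are long; the $P_2$-route is long precisely by the contradiction hypothesis (every such path has length $\ge \ell+2$). That yields a long theta, contradicting the hypothesis on $G$.

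**Main obstacle.** The delicate part is the case analysis in choosing \emph{which} attachments of $\zeta_Q$ and $\zeta_{Q'}$ onto $P_2$ (and possibly the triangles/other paths) to use, so that the three resulting paths are genuinely internally disjoint and induced — one must rule out, e.g., that $\zeta_Q$ and $\zeta_{Q'}$ are adjacent, or that their attachment intervals on $P_2$ overlap in a way that collapses two of the three paths, or that a subpath of $Q$ has an unexpected neighbor on $P_1$ far from $u,v$. The ``not $K$-major'' hypothesis is exactly the tool that prevents the degenerate configurations (it guarantees the attachments are spread out), and minimality of $K$ together with the fact that $Q,Q'$ are shortest paths prevents the chord/shortcut pathologies; but marshalling these carefully into a clean theta is where the real work lies. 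I would expect the proof to split into a small number of cases according to whether $\zeta_Q=\zeta_{Q'}$ (excluded trivially), whether they are adjacent, and the relative order along $P_2$ of $\alpha_2(\zeta_Q),\beta_2(\zeta_Q),\alpha_2(\zeta_{Q'}),\beta_2(\zeta_{Q'})$, with the generic case being the one described above.
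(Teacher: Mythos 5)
Your setup (assume no short connection through $P_2$ and build a long theta, using ``not $K$-major'' and minimality of $K$ to kill degeneracies) matches the paper's, but the theta you propose to build would not materialize, and the obstruction is structural rather than a matter of case analysis. The only segments of $Q$ and $Q'$ over which you have any control are the $v$-side segments $\zeta_QQv$ and $\zeta_{Q'}Q'v$: by the definition of $\zeta_Q$, their interiors have no neighbours in $V(K)\setminus V(uP_1v)$. The $u$-side segments $uQ\zeta_Q$, $uQ'\zeta_{Q'}$ are completely uncontrolled -- they may attach anywhere on $K$, including on the very $P_2$-segment you want to use as path (i) and on the interior of $uP_1v$, so your path (ii) ($\zeta_Q$ to $u$ along $Q$, across $uP_1v$, back up $Q'$) is neither induced nor anticomplete to path (i); moreover $Q$ and $Q'$ are two shortest $uv$-paths and may share interior vertices, and both controlled segments funnel into the single vertex $v$, so out of $Q\cup Q'$ you can extract only \emph{one} internally disjoint $\zeta_Q\zeta_{Q'}$-path, not two. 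Your path (iii) is left unspecified, and there is no reason $\zeta_Q$ or $\zeta_{Q'}$ has any neighbour on $P_1$ at all, so ``another path through $P_1$'' cannot be routed; using ``the two distinct attachments onto $P_2$'' collides with path (i).

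The paper's proof resolves exactly this by never touching $P_1$ or the $u$-side segments: it takes the one available external path $W\subseteq \zeta_QQv\cup vQ'\zeta_{Q'}$, and the other two theta paths are the two arcs of the hole $P_2\cup P_3$ between the attachments of $\zeta_Q$ and $\zeta_{Q'}$ on $P_2$ -- so $P_3$, which your plan omits entirely, is essential. The remaining work is two ``shorter long near-prism'' arguments exploiting the minimality of $K$ and tidiness of the frame: first, the neighbours of $\zeta_Q$ (and of $\zeta_{Q'}$) in $V(K)\setminus V(uP_1v)$ cannot be exactly two adjacent vertices (else rerouting $a_1P_1v$ through $\zeta_QQv$ yields a shorter long near-prism), and second, $\zeta_Q$ and $\zeta_{Q'}$ are non-adjacent (else rerouting a segment of $P_2$ through $\alpha_2(\zeta_Q)\dd\zeta_Q\dd\zeta_{Q'}\dd\beta_2(\zeta_{Q'})$ does the same), which guarantees $W$ has length at least two. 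Then $W$ plus the two $P_2\cup P_3$ arcs (after deleting the at most one extra neighbour each of $\zeta_Q,\zeta_{Q'}$ permitted by non-majorness) is the desired long theta, the inner arc being long precisely by the contradiction hypothesis and the outer arc by tidiness. You correctly anticipated the role of non-majorness and of $Q,Q'$ being shortest, but the specific three-path construction you outline is the part that fails and is replaced in the paper by the $W$--$P_2$--$P_3$ construction.
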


\begin{proof}
Suppose not. Then the $P_2$-distance between any neighbor of $\zeta_Q$ in $V(P_2)$ and any neighbor of $\zeta_{Q'}$ in $V(P_2)$ is at least $\ell$. Without loss of generality suppose $a_1,u, v, b_2$ occur in order along $P_1$.
\stmt{The set of neighbors of $\zeta_{Q}$ in $V(K)\setminus V(uP_1v)$ does not consist of exactly two adjacent vertices. The same statement holds for $\zeta_{Q'}.$\label{dumpling:1}}
Suppose $\zeta_{Q}$ has exactly two neighbors $y_1, y_2$ in $V(P_2)$ and they are adjacent. Then we obtain a shorter long near-prism induced by $(V(K) \setminus V(a_1P_1v)) \cup V(\zeta_QQv)$, a contradiction. This proves (\ref{dumpling:1}).
\stmt{ There is an induced $\zeta_Q\zeta_{Q'}$-path $W$ disjoint from $V(K) \setminus V(uP_1v)$ of length greater than one. \label{dumpling:2}}
There is a $\zeta_Q\zeta_{Q'}$-path $W \subseteq \zeta_QQv \cup vQ'\zeta_{Q'}$, so we only need to show that $W$ is not a single edge. Suppose that $\zeta_Q$ and $\zeta_Q'$ are adjacent.
Without loss of generality suppose that $a_2, \alpha_2(\zeta_Q), \alpha_2(\zeta_{Q'}), b_2$ occur in order on $P_2$.
Let $K'$ denote the graph from $K$ obtained by replacing the path $\alpha_2(\zeta_Q)P_2\beta_2(\zeta_{Q'})$ with $\alpha_2(\zeta_Q)\dd \zeta_Q \dd \zeta_{Q'}\dd \beta_2(\zeta_{Q'})$.
Since $K$ has a tidy frame, $K'$ is a long near-prism and it is shorter than $K$. Since $\zeta_Q, \zeta_{Q'}$ are not $K$-major, $G$ contains $K'$ as an induced subgraph, a contradiction. This proves (\ref{dumpling:2}).
\\
\\
Since $\zeta_Q$ is not $K$-major, 
$\zeta_Q$ has at most one neighbor in $V(K)$ not equal to $\alpha_2(\zeta_Q)$, $\beta_2(\zeta_Q)$. Similarly, $\zeta_{Q'}$ has at most one neighbor in $V(K)$ not equal to $\alpha_2(\zeta_{Q'}), \beta_2(\zeta_{Q'})$. Let $H = ((N(\zeta_Q) \cup N(\zeta_{Q'})) \cap V(K)) \setminus \{\alpha_2(\zeta_Q), \alpha_2(\zeta_{Q'}, \beta_2(\zeta_Q), \beta_2(\zeta_{Q'} \}$. Then $V(W) \cup V(P_2) \cup V(P_3) \setminus H$ induces a long theta, a contradiction.
\end{proof}

\begin{theorem} \label{thm:prismpathcleaning}
There is an algorithm with the following specifications:
\begin{description}
\item[Input:] A graph $G$ containing no long theta.
\item[Output:] A list of $\mathcal{O}(|G|^{4\ell+2})$ subsets of $V(G)$ with the following property: for every shortest long near-prism $K$, if $K$ has a tidy frame and $u,v$ are distinct non-adjacent vertices in the same constituent path of $K$ then there is a set $X$ in the list such that:
\begin{itemize}
    \item $X$ is disjoint from $V(K)$ and
    \item $X \cap V(Q) \neq \emptyset$ for every bad shortest $uv$-path $Q$ such that $\zeta_Q$ is not $K$-major.
\end{itemize}
\item[Running Time:] $\mathcal{O}(|G|^{4\ell+3})$.
\end{description}
\end{theorem}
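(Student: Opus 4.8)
The plan is to enumerate, for each constituent path $P$ of $K$ (guessed implicitly through a small structure) and each candidate pair $u,v$, a bounded-size ``fragment'' that pins down the relevant part of $K$ near the bad paths, and then use Lemma~\ref{lem:prismpathsets} to argue that one well-chosen set covers all the non-$K$-major $\zeta_Q$'s at once. Concretely, observe that by the definition of a frame, the subpaths $P_i'$, $P_i''$ (each of length at most $\ell-1$) together with the two base triangles form a graph on $\mathcal{O}(\ell)$ vertices, and by Lemma~\ref{lem:prismpathsets} every non-$K$-major $\zeta_Q$ with a neighbor in a fixed $P_j$ ($j\neq i$) lies on a single path of length at most $\ell+1$ whose interior is in $V(P_j)$. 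So the ``dangerous'' vertices $\zeta_Q$ either attach to $P_j'$ or $P_j''$ (the guessed parts) or they lie pairwise within $P_j$-distance $\ell+1$ of one another, hence they all attach within a window of $\mathcal{O}(\ell)$ consecutive vertices of $P_j$, and that window in turn is determined once we guess its two endpoints.

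The algorithm I would write: guess a ``path-cleaning fragment'' consisting of (i) the two base triangles $\{a_1,a_2,a_3\}$, $\{b_1,b_2,b_3\}$, (ii) for the path $P_1$ containing $u,v$, the vertices $u,v$ and the subpaths $a_1P_1s_1$ and $t_1P_1b_1$ and a single witness neighbor of $\zeta_Q$ inside each $P_j$, plus (iii) for the paths $P_2,P_3$, the subpaths $P_j'$, $P_j''$ together with the two endpoints of the length-$(\ell+1)$ attachment window supplied by Lemma~\ref{lem:prismpathsets}. All of this is $\mathcal{O}(\ell)$ vertices, so there are $\mathcal{O}(|G|^{4\ell+2})$ such fragments (the exponent should come out to about $4\ell+2$ after counting: two triangles give $6$, the two $\ell-1$-length subpaths on each of $P_2,P_3$ plus window endpoints and $u,v$ and the witness neighbors sum to roughly $4\ell$ more). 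For each guessed fragment we output the set $X$ consisting of: every vertex of $V(G)$ that has a neighbor in the interior of one of the guessed subpaths but is not itself a guessed vertex, together with all vertices lying ``between'' the window endpoints and not in the fragment — i.e. the natural ``clean away everything that could host a bad detour'' set, intersected so as to avoid the $\mathcal{O}(\ell)$ guessed vertices. We enumerate all fragments and emit the corresponding $X$; total time $\mathcal{O}(|G|^{4\ell+3})$, the extra factor for constructing each $X$ by a single neighborhood sweep.

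Correctness has two directions. That $X\cap V(K)=\emptyset$: the guessed fragment is an induced subgraph of $K$ (when we guess correctly), the frame is tidy so no vertex of the frame has an outside neighbor except at the $s_i,t_i$ stubs, and the window on each $P_j$ was chosen to contain exactly the attachment points of the relevant $\zeta_Q$'s — so every vertex we put into $X$ is either outside $K$ or, if on $K$, is on $P_1$ strictly between $u$ and $v$ and hence legitimately removable (a bad shortest $uv$-path may use interior vertices of $V(uP_1v)$, but $\zeta_Q\notin V(uP_1v)$ by definition, and we only need $X$ to hit $\zeta_Q$, not to be disjoint from $V(uP_1v)$ — wait, we do need $X\cap V(K)=\emptyset$, so I must be careful to exclude $V(uP_1v)$ from $X$; this is fine since $\zeta_Q$ lies off $P_1$ or off $V(uP_1v)$). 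That $X$ hits every bad shortest $uv$-path $Q$ with $\zeta_Q$ not $K$-major: $\zeta_Q$ has, by definition, a neighbor in $V(K)\setminus V(uP_1v)$, which after the base-triangle and $P_j'$, $P_j''$ guesses must lie either in a guessed subpath (so $\zeta_Q\in X$ by the ``has a neighbor in a guessed interior'' clause) or in the interior of some $P_j$; in the latter case Lemma~\ref{lem:prismpathsets} forces all such $\zeta_Q$ to attach within the guessed length-$(\ell+1)$ window, so again $\zeta_Q\in X$. Since $\zeta_Q\in V(Q)$, we get $X\cap V(Q)\neq\emptyset$.

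The main obstacle I anticipate is the bookkeeping needed to keep $X$ simultaneously disjoint from $V(K)$ and hitting every $\zeta_Q$: a single bad path $Q$ may run through interior vertices of $uP_1v$, and $K$-major vertices are explicitly allowed to be uncleaned, so the set $X$ must be defined to avoid (a) all of $V(K)$ and (b) be robust to which $\zeta_Q$ we are trying to catch. Pinning the exponent to exactly $4\ell+2$ also requires care — one wants the guessed object to have at most $4\ell+2$ vertices (or $4\ell+1$ vertices plus one ``direction'' bit), which means the two length-$(\ell-1)$ subpaths on each of $P_2,P_3$ have to be shared/overlapped cleverly with the triangle vertices and the window endpoints rather than counted naively; getting a clean statement of the fragment that achieves this bound, and verifying Lemma~\ref{lem:prismpathsets} applies to \emph{all} pairs of relevant $\zeta_Q$ simultaneously (transitivity of ``within distance $\ell+1$'' is not automatic, so one argues that a single window of length $\mathcal{O}(\ell)$ suffices by taking the two extreme attachment points), is where the real work lies.
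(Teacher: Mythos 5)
You have the right central idea---use Lemma~\ref{lem:prismpathsets} to argue that all attachments on $P_2$ (resp.\ $P_3$) of the non-$K$-major vertices $\zeta_Q$ lie in a single window of length $\mathcal{O}(\ell)$, and then clean the vertices attaching to that window---and this is indeed the engine of the paper's proof. But the execution has two genuine gaps. First, your set $X$ is built from data the algorithm does not possess: you guess only the two \emph{endpoints} of the attachment window, but to collect ``every vertex with a neighbor in the window's interior'' (which is what actually catches $\zeta_Q$) the algorithm must know that interior, i.e.\ the subpath of $P_2$ between the endpoints, and $P_2$ is not guessed and may be arbitrarily long. Your fallback clause ``all vertices lying between the window endpoints'' is either not computable from the guessed fragment or, under any concrete reading, puts the window's own $P_2$-vertices (which belong to $V(K)$) into $X$, destroying the requirement $X \cap V(K) = \emptyset$, while still not containing $\zeta_Q$ itself, which lies off $P_2$. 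Second, the list-length bound: your fragment contains the two triangles, $u,v$, $P_1',P_1''$, $P_2',P_2''$, $P_3',P_3''$, witnesses and window endpoints---roughly $6\ell + O(1)$ vertices---and you concede you do not see how to compress this to $4\ell+2$, so the claimed $\mathcal{O}(|G|^{4\ell+2})$ is not established.

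Both gaps disappear once you notice that nothing about the frame, the triangles, $u$ or $v$ needs to be guessed at all. The paper simply enumerates all pairs $(R_2,R_3)$ of disjoint paths of $G$ of length at most $2\ell$ (at most $4\ell+2$ vertices in total, which is exactly where the exponent comes from) and outputs, for each pair, the set of vertices outside $V(R_2)\cup V(R_3)$ having a neighbor in $R_2^*\cup R_3^*$. For the correct guess, $R_j$ is the subpath of $P_j$ whose interior consists of all vertices within $P_j$-distance $\ell-1$ of a neighbor of one fixed $\zeta_Q$; since $\zeta_Q$ is not $K$-major its neighbors on $P_j$ span at most a three-vertex subpath, so $R_j$ has length at most $2\ell$. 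Lemma~\ref{lem:prismpathsets} then gives $\zeta_S \in X$ for every other relevant $\zeta_S$ attaching to $P_j$, and disjointness from $V(K)$ is automatic because $K$ is induced, $R_j\subseteq P_j$, and tidiness keeps the window at distance at least $\ell$ from $a_j,b_j$, so the only $K$-vertices adjacent to $R_j^*$ lie on $R_j$ itself. Your worry about ``transitivity'' of the distance bound is resolved exactly as you suggest (center one window on a single witness and give it radius $\ell-1$), but the windows must be guessed as whole subpaths, not merely by their endpoints.
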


\begin{proof}
The algorithm is as follows:

We enumerate all pairs $(R_2, R_3)$ of disjoint paths of length at most $2\ell$. For each $i \in \{2, 3 \}$, we set $X_i$ to be the set of all vertices in $V(G) \setminus V(R_i)$ with neighbors in $R_i^*$.
We output $X_2 \cup X_3$ and move on to the next pair of paths.

Let $K$ be a shortest long near-prism with constituent paths $P_1, P_2, P_3$. Suppose $K$ has a tidy frame and that $u,v \in V(P_1)$. We claim there is a choice of $(R_2, R_3)$ such that $X_1 \cup X_2$ is disjoint from $V(K)$ and $X_1 \cup X_2$ contains $\zeta_Q$ for every bad shortest $uv$-path $Q$ such that $\zeta_Q$ is not $K$-major.
Suppose there is a bad $uv$-path $Q$ such that $\zeta_Q$ is not $K$-major and $\zeta_Q$ has a neighbor in $V(P_2)$. 
Then for some choice of $R_2$, we have that $R_2 \subseteq P_2$ and $R_2^*$ contains all vertices with $P_2$-distance at most $\ell-1$ from some neighbor of $\zeta_Q$ in $V(P_2)$. Hence, $\zeta_Q \in X_2$. By Lemma \ref{lem:prismpathsets}, $\zeta_S \in X_2$ for every bad shortest $v_1v_2$-path $S$ such that $\zeta_S$ is not $K$-major and $\zeta_S$ has a neighbor in $V(P_2)$. By construction, $X_2$ is disjoint from $V(K)$. Since the case where $\zeta_Q$ has a neighbor on $P_3$ is symmetric this completes the proof of correctness. 

There are $\mathcal{O}(|G|^{4\ell+2})$ possibilities for $(R_1, R_2)$ and constructing $X_1$ and $X_2$ takes $\mathcal{O}(|G|)$ so the running time and list length are as claimed.
\end{proof}

\begin{corr} \label{corr:fullprismpathcleaning}
There is an algorithm with the following specifications:
\begin{description}
\item[Input:] A graph $G$ containing no long theta.
\item[Output:] A list of $\mathcal{O}(|G|^{12\ell + 6})$ subsets of $V(G)$ with the following property: If $K$ is a shortest long near-prism, $K$ has a tidy frame and there are no $K$-major vertices, then there is a set $X$ in the list such that:
\begin{itemize}
    \item $X$ is disjoint from $V(K)$ and
    \item $X$ contains a vertex of every bad shortest $s_it_i$-path for each $i \in \{1,2,3\}$.
\end{itemize}
\item[Running Time] $\mathcal{O}(|G|^{12\ell+6})$.
\end{description}
\end{corr}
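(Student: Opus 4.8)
The plan is to obtain the list by running the algorithm of Theorem~\ref{thm:prismpathcleaning} once and then closing the resulting list under unions of triples. So: first apply the algorithm of Theorem~\ref{thm:prismpathcleaning} to $G$, producing a list $\mathcal{L}$ of $\mathcal{O}(|G|^{4\ell+2})$ subsets of $V(G)$; then output the list $\mathcal{L}' = \{\,X \cup Y \cup Z : X, Y, Z \in \mathcal{L}\,\}$. Since $|\mathcal{L}'| \le |\mathcal{L}|^3 = \mathcal{O}(|G|^{12\ell+6})$, the list length and running time are as claimed.

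For correctness, let $K$ be a shortest long near-prism with constituent paths $P_1, P_2, P_3$, a tidy frame $F$, and no $K$-major vertex; I must produce $X \in \mathcal{L}'$ with $X \cap V(K) = \emptyset$ meeting every bad shortest $s_it_i$-path for each $i \in \{1,2,3\}$. I would treat the three indices separately and then take the union of the three sets found. For an index $i$ with $P_i \subseteq F$: by the remark preceding Subsection~\ref{pathcleaning}, tidiness of $F$ forces $s_iP_it_i$ to be the unique $s_it_i$-path, so every shortest $s_it_i$-path is good and there is no bad one to hit; here I set $X_i = \emptyset$, noting $\emptyset \in \mathcal{L}$ since the algorithm of Theorem~\ref{thm:prismpathcleaning} processes, among others, the pair $(R_2, R_3)$ consisting of two disjoint single vertices, yielding empty $X_2, X_3$. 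For an index $i$ with $P_i \not\subseteq F$: a short computation from the definitions of $P_i'$, $P_i''$, $s_i$, $t_i$ shows $P_i$ then has length at least $2\ell$, whence $s_i$ and $t_i$ are distinct, non-adjacent, and occur in the order $a_i, s_i, t_i, b_i$ along $P_i$ — that is, $(u,v) = (s_i, t_i)$ is an admissible pair for Theorem~\ref{thm:prismpathcleaning}. Applying that theorem to this pair yields $X_i \in \mathcal{L}$ with $X_i \cap V(K) = \emptyset$ and $X_i \cap V(Q) \neq \emptyset$ for every bad shortest $s_it_i$-path $Q$ whose vertex $\zeta_Q$ is not $K$-major; since $G$ has no $K$-major vertex whatsoever, this covers every bad shortest $s_it_i$-path. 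Taking $X = X_1 \cup X_2 \cup X_3 \in \mathcal{L}'$ then gives a set disjoint from $V(K)$ (each $X_i$ is) meeting every bad shortest $s_it_i$-path for every $i$, as required.

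I do not expect a genuine obstacle: all the structural content sits in Theorem~\ref{thm:prismpathcleaning} and Lemma~\ref{lem:prismpathsets}, and the corollary only has to invoke the theorem once per constituent path and amalgamate the three cleaning sets. The two points needing a sentence of care are (i) the equivalence ``$P_i \not\subseteq F$ iff $P_i$ has length at least $2\ell$ iff $(s_i,t_i)$ is an admissible pair for Theorem~\ref{thm:prismpathcleaning},'' which follows from the definitions above and refines the cutoff $2\ell-3$ recorded earlier, and (ii) confirming that some member of $\mathcal{L}$ — e.g.\ $\emptyset$ — is disjoint from $V(K)$, so the ``padding'' sets $X_i$ for the indices with $P_i \subseteq F$ do no harm; both are immediate from the construction in Theorem~\ref{thm:prismpathcleaning}.
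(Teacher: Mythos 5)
Your proposal is exactly the paper's proof: run the algorithm of Theorem~\ref{thm:prismpathcleaning} once to get a list $\mathcal{L}$ and output all unions $X \cup Y \cup Z$ of triples from $\mathcal{L}$, with the stated length and time bounds. The additional details you supply (that $\emptyset \in \mathcal{L}$ handles indices with $P_i \subseteq F$, that $P_i \not\subseteq F$ forces $(s_i,t_i)$ to be an admissible non-adjacent pair, and that the absence of $K$-major vertices lets Theorem~\ref{thm:prismpathcleaning} hit every bad shortest $s_it_i$-path) correctly fill in what the paper leaves implicit.
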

\begin{proof}

We run the algorithm of Theorem \ref{thm:prismpathcleaning} on input $G$ to generate a list $\mathcal{L}$. For each choice of $X, Y, Z \in \mathcal{L}$, we output $X \cup Y \cup Z$.
\end{proof}

\subsection{Major vertices on prisms}\label{kmajor}

For a vertex $x \not \in V(K)$ with a neighbor in $V(P_i)$ for some $i \in \{1,2,3\}$ we denote by $A_i(x)$, $B_i(x)$, the paths $\alpha_i(x)P_ia_i$ and $\beta_i(x)P_ib_i$, respectively. 
 
\begin{lem} \label{2paths}
Let $K$ be a shortest long near-prism in $G$ with constituent paths $P_1, P_2, P_3$. Suppose $K$ has a tidy frame. If $q$ is a $K$-major vertex, then $q$ has neighbors in at least two different constituent paths of $K$.
\end{lem}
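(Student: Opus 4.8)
We want to show that a $K$-major vertex $q$ must have neighbors in at least two of the three constituent paths $P_1, P_2, P_3$. The plan is a proof by contradiction: suppose $q$ has neighbors in at most one constituent path, and derive that $q$ in fact fails to be $K$-major, i.e.\ that all its neighbors in $V(K)$ lie on some three-vertex subpath of $K$. First I would dispose of the case where $q$ has no neighbors in any constituent path at all: then $N(q) \cap V(K)$ is contained in the union of the two triangular bases $\{a_1,a_2,a_3\}$ and $\{b_1,b_2,b_3\}$ (recall each $a_i$ lies on $P_i$, so ``no neighbor on any $P_i$'' really forces $N(q)\cap V(K)=\emptyset$, unless a base vertex is identified with a path-endpoint — which it is; so actually $q$ has no neighbors in $V(K)$ and is trivially not $K$-major). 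So the substantive case is: $q$ has neighbors in exactly one constituent path, say $P_1$, and no neighbors in $V(P_2)$, $V(P_3)$.

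**The main case.** Here $N(q) \cap V(K) \subseteq V(P_1) \cup \{a_2, a_3, b_2, b_3\}$. I would first argue that $q$ cannot be adjacent to any vertex of $\{a_2,a_3,b_2,b_3\}$: if, say, $q$ is adjacent to $a_2$, then since $a_2 = P_2$-endpoint lies on $P_2$, this contradicts ``$q$ has no neighbor on $V(P_2)$'' — so in fact $N(q) \cap V(K) \subseteq V(P_1)$ outright. Now I would use tidiness. Since $K$ has a tidy frame, any vertex of the frame with a neighbor outside $V(K)$ must be $s_i$ or $t_i$ for some $i$ with $P_i \not\subseteq F$. The frame contains $P_1'$ and $P_1''$ (the two length-$(\ell-1)$ end-segments of $P_1$), so $N(q) \cap V(P_1')$ can only be $\{s_1\}$ (if $P_1 \not\subseteq F$) and similarly $N(q) \cap V(P_1'') \subseteq \{t_1\}$; the rest of $N(q) \cap V(P_1)$ lies strictly between $s_1$ and $t_1$, in the ``free middle'' of $P_1$ that is not in the frame. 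The key remaining point is that this neighborhood must be an interval of at most three consecutive vertices of $P_1$: if $q$ had two neighbors $x, x'$ on $P_1$ at $P_1$-distance $\geq 3$, then $x P_1 x'$ together with $q$ gives a hole; moreover, rerouting $P_1$ through $q$ (replacing $xP_1x'$ by $x\dd q\dd x'$) would, by tidiness, produce a genuinely shorter long near-prism with the same bases and essentially the same other two paths — contradicting minimality of $K$ — provided the rerouted $P_1$ is still long, which holds because we only shorten the middle while $P_1', P_1''$ survive. One must handle the endpoint subtleties (neighbors at $s_1$ or $t_1$, and whether the shortened path stays long); the cleanest way is to take a shortest path of $G$ through $q$ between the extreme neighbors of $q$ on $P_1$ and compare lengths. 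Once we know $N(q)\cap V(K)$ is at most three consecutive vertices of $P_1$, that is a three-vertex subpath of $K$ containing all neighbors of $q$, so $q$ is not $K$-major — contradiction.

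**The expected obstacle.** The genuinely delicate part is the rerouting/minimality argument: making precise that replacing a sub-path of $P_1$ by $x\dd q\dd x'$ yields an \emph{induced} long near-prism (not just a near-prism as a subgraph), which is exactly where tidiness is used — tidiness guarantees no vertex of the surviving frame has stray neighbors that would create chords, and the new vertex $q$ has neighbors only on $P_1$ so it creates no chords to $P_2, P_3$ or the bases. I also need the length bound ``at most one of the paths has length less than $\ell$'' to be preserved; since we only modify $P_1$ and we may assume $P_1$ was the one path allowed to be short (or, if not, that the shortened $P_1$ is still long), this is bookkeeping but must be done carefully. I would organize the write-up so that the distance-$\geq 3$ case and the ``two non-adjacent neighbors'' case are folded into one: take the first and last neighbors of $q$ along $P_1$; if they coincide or are within distance $2$ we are done, otherwise reroute and contradict minimality.
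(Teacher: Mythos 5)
Your proposal is correct and follows essentially the same route as the paper: after reducing to the case $N(q)\cap V(K)\subseteq V(P_1)$, the paper likewise takes the extreme neighbors $\alpha_1(q),\beta_1(q)$ (at $P_1$-distance greater than two since $q$ is $K$-major), reroutes $P_1$ through $\alpha_1(q)\dd q\dd\beta_1(q)$, and uses tidiness of the frame to see the result is an induced, still-long, strictly shorter near-prism, contradicting minimality of $K$. The endpoint and length bookkeeping you flag is exactly what tidiness supplies, so no genuinely new idea is needed beyond what you wrote.
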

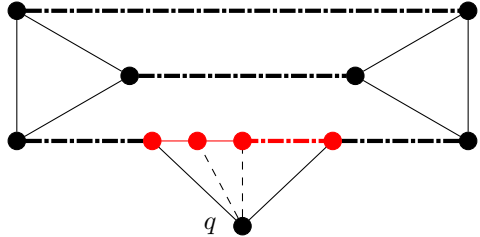
\begin{figure}[!h]
	\centering
	\begin{tikzpicture}
	
	\coordinate (A) at (0,0);
	\coordinate (B) at (5,0);
	
	\foreach \x in {1,2,0}{
		\path (A)  ++(120*\x:1cm) node[dot] (A\x) {};
		\path (B)  ++(120*\x++180:1cm)node[dot] (B\x) {};
	}
	
	\draw (A1) -- (A2) -- (A0) -- (A1);
	\draw (B1) -- (B2) -- (B0) -- (B1);
	\draw[path] (A1) -- (B2);
	%\draw (A2) -- (B1);
	\draw[path] (A0) -- (B0);
	
	\node[dot] (q) at (2.5, -2){};
	
	%	\path (0,0) ++(30*\x:\orad) coordinate (o\x);
	%	\path (0, 0) ++ (30 *\x:1.6cm) coordinate (i\x);
	%\node at (o\x) {\x};
	\foreach \x in {1, ..., 9}{
		\coordinate (z\x) at ($(A2)!.\x!(B1)$);
		%\node at (z\x) {\x};
	}
	\draw[maybe edge] (z4) -- (q) -- (z5);
	\draw (z3) -- (q) -- (z7);
	\draw[path] (A2) -- (z3);
	\draw[path] (B1) -- (z7);
	\foreach \x in {3,4,5,7}{
		\node[dot, color = red] at (z\x){};
	}
	\draw[color = red] (z3) -- (z4) -- (z5);
	\draw[path, color = red] (z5) to (z7);
	\node[left=.1 of q] {$q$};

\end{tikzpicture}
	\caption{An illustration of the proof of Lemma \ref{2paths}.}
	\label{fig:2paths}
\end{figure}
\begin{proof}
Suppose all neighbors of $q$ in $V(K)$ are contained in $V(P_1)$. Then $\alpha_1(q)P_1\beta_1(q)$ has length strictly greater than two. We obtain a shorter prism by replacing $\alpha_1(q)P_1\beta_1(q)$ in $P_1$ with the path $\alpha_1(q)q\beta_1(q)$. Since $K'$ has the same frame as $K$, it follows that $K'$ is a long near-prism, a contradiction.
(See Figure \ref{fig:2paths}.)
\end{proof}

\begin{lem}\label{3pairwisenonadjacent}
Let $K$ be a shortest long near-prism. Suppose $K$ has a tidy frame.
If $q$ is a $K$-major vertex, then $q$ has three pairwise non-adjacent neighbors in $V(K)$.
\end{lem}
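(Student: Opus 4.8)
The plan is to argue by contradiction: suppose $q$ is $K$-major but $N(q)\cap V(K)$ contains no three pairwise non-adjacent vertices. I will use freely the observations recorded just before Lemma~\ref{2paths}: since $F$ is tidy, $q\notin V(F)$, every neighbour of $q$ on a constituent path $P_i$ lies at $P_i$-distance at least $\ell$ from each of $a_i,b_i$, and $K$ is an induced subgraph of $G$. In particular $q$ is anticomplete to both bases, $\alpha_i(q),\beta_i(q)$ are interior to $P_i$, and a neighbour of $q$ interior to $P_i$ has, inside $K$, only its two $P_i$-neighbours.

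First I would dispose of the case that $q$ has a neighbour on each of $P_1,P_2,P_3$: then $\alpha_1(q),\alpha_2(q),\alpha_3(q)$ lie on three distinct paths, are not base vertices, and (as $K$ is induced) are pairwise non-adjacent, giving the forbidden triple. So by Lemma~\ref{2paths} I may assume $q$ has neighbours on exactly two constituent paths, say $P_1$ and $P_2$, and none on $P_3$. Next I would pin down $N(q)\cap V(P_1)$: since $\{\alpha_1(q),\alpha_2(q)\}$ is already an independent pair of neighbours of $q$, any further neighbour $w$ of $q$ in $V(K)$ must be adjacent to $\alpha_1(q)$ or to $\alpha_2(q)$, lest $\{\alpha_1(q),\alpha_2(q),w\}$ be the forbidden triple. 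A neighbour of $q$ on $P_1$ cannot be adjacent to $\alpha_2(q)\in V(P_2)$, so it is a $P_1$-neighbour of $\alpha_1(q)$, and by minimality of $\alpha_1(q)$ it is the one on the $b_1$-side; hence $N(q)\cap V(P_1)$ is either $\{\alpha_1(q)\}$ or a two-vertex subpath $\{\alpha_1(q),\alpha_1^+\}$ of $P_1$, and symmetrically on $P_2$.

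Two residual configurations remain. If $N(q)\cap V(K)=\{\alpha_1(q),\alpha_2(q)\}$, I would exhibit a long theta: $P_1$, $P_2$ and the base edges $a_1a_2$, $b_1b_2$ form an induced hole $H$, the two arcs of $H$ joining $\alpha_1(q)$ and $\alpha_2(q)$ each have length at least $2\ell+1$ (each contains an $a$-segment and a $b$-segment of length $\geq\ell$ plus a base edge), and adding the two-edge path $\alpha_1(q)\dd q\dd\alpha_2(q)$, whose only interior vertex $q$ is anticomplete to $V(H)\setminus\{\alpha_1(q),\alpha_2(q)\}$, produces a theta on the non-adjacent terminals $\alpha_1(q),\alpha_2(q)$ which is long; this contradicts the hypothesis that $G$ has no long theta. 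Otherwise, up to symmetry $N(q)\cap V(P_1)=\{\alpha_1(q),\alpha_1^+\}$ with $\alpha_1^+$ the $b_1$-side neighbour of $\alpha_1(q)$, so $\{q,\alpha_1(q),\alpha_1^+\}$ is a triangle, and I would build a \emph{shorter} long near-prism $K'$ with bases $\{a_1,a_2,a_3\}$ and $\{\alpha_1(q),q,\alpha_1^+\}$ and constituent paths $a_1P_1\alpha_1(q)$, $\;a_2P_2\alpha_2(q)\dd q$, and $a_3P_3b_3\dd b_1P_1\alpha_1^+$; these are pairwise vertex-disjoint apart from the bases, all of length at least $\ell$, and $K'$ is an induced subgraph of $G$ (the only vertex outside $V(K)$, namely $q$, meets $V(K')$ exactly in $\alpha_1(q),\alpha_1^+,\alpha_2(q)$, which are structural edges of $K'$), while $K'$ omits the at least $\ell$ vertices of $P_2$ strictly between $\alpha_2(q)$ and $b_2$ and adds only $q$, so $|V(K')|<|V(K)|$, contradicting minimality of $K$.

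I expect the routine-but-delicate part to be this last step: verifying vertex-disjointness of the three new paths and, above all, that $K'$ is \emph{induced} (no chord is created by $q$, and the base edge $b_1b_3$ used inside path $3$ is a structural edge of $K'$), together with the parallel bookkeeping when one base of $K$ is degenerate ($a_i=b_i$) — there $q$ has no neighbour on that path, so after relabelling it may be taken to be $P_3$, and the same construction and vertex count go through unchanged.
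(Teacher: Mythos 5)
Your argument is correct and follows essentially the same route as the paper's proof: reduce via Lemma \ref{2paths} and tidiness to neighbours on exactly two constituent paths, kill the one-neighbour-per-path case with the long theta on $V(P_1)\cup V(P_2)\cup\{q\}$, and in the remaining case build the shorter long near-prism with bases $\{a_1,a_2,a_3\}$ and $\{\alpha_1(q),\beta_1(q),q\}$ and paths $A_1(q)$, $A_2(q)\dd q$, $B_1(q)\dd b_3P_3a_3$, exactly as the paper does. Your explicit appeal to the no-long-theta hypothesis (not stated in the lemma but ambient throughout the section) and your inducedness/vertex-count checks just make explicit what the paper leaves implicit.
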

\begin{proof}
Suppose not.
By Lemma \ref{2paths}, we may assume $q$ has a neighbor in $V(P_1)$ and in $V(P_2)$. Since $K$ has a tidy frame, we may assume $q$ has no neighbors in $V(P_3)$. If $q$ has exactly one neighbor in $V(P_1)$ and exactly one neighbor in $V(P_2)$, then $V(P_1) \cup V(P_2) \cup \{q\}$ induces a long theta. So we may assume $q$ has exactly two neighbors in $V(P_1)$ and that they are adjacent. Then we obtain a shorter long near-prism with bases $\{a_1, a_2, a_3\}$ and $\{\alpha_1(q), \beta_1(q), q \}$ and constituent paths $A_1(q)$, $A_2(q)$ and $B_1(q) \dd P_3$, a contradiction.
\end{proof}
%figure here.
We will show that for any shortest long near-prism $K$, if $K$ has a tidy frame, there is a bounded size set of $K$-major vertices with certain structural properties that can be exploited to clean $K$-major vertices. We first need to state a few definitions. For a $K$-major vertex $q$ we say a vertex $v \in V(K)$ is {\em $q$-internal} if $v \in V(\alpha_i(q)P_i\beta_i(q))$ for some $i \in \{1, 2, 3\}$ such that $q$ has a neighbor in $V(P_i)$. Otherwise, $v \in V(K)$ is called {\em $q$-external}.

\begin{lem}\label{Kexternal}
Let $G$ be a graph containing no long thetas and let $K$ be a shortest long near-prism in $G$. Let $P_1, P_2, P_3$ be the constituent paths of $K$. Suppose $K$ has a tidy frame.
Let $x$ and $y$ be distinct non-adjacent $K$-major vertices. Suppose $y$ has two non-adjacent $x$-external vertices.
Then, there exists an $i \in \{1,2, 3 \}$ such that $N(x) \cap V(P_i) \neq \emptyset$ and $y$ has a neighbor $w \in V(P_i)$ satisfying $\min \{ d_{P_i}(w, \alpha_i(x)), \ d_{P_i}(w, \beta_i(x)) \} \leq \ell-3$.
\end{lem}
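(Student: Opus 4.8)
The plan is to argue by contradiction: suppose that for every $i \in \{1,2,3\}$ with $N(x) \cap V(P_i) \neq \emptyset$, every neighbor $w$ of $y$ on $P_i$ satisfies $\min\{d_{P_i}(w,\alpha_i(x)), d_{P_i}(w,\beta_i(x))\} \geq \ell-2$. By Lemma \ref{2paths} and Lemma \ref{3pairwisenonadjacent}, $x$ has neighbors in at least two constituent paths, say $P_1$ and $P_2$. The goal is to construct a long theta using pieces of $P_1$, $P_2$, $P_3$ together with $x$ and $y$ and a short $xy$-connection, contradicting the hypothesis that $G$ has no long theta. The engine is the same as in the proof of Lemma \ref{lem:prismpathsets}: once we know $y$'s neighbors on a path $P_i$ are $P_i$-far (distance $\geq \ell-2$) from both $\alpha_i(x)$ and $\beta_i(x)$, the subpaths of $P_i$ cut out between $x$'s attachment and $y$'s attachment are long, and these can be assembled into the three constituent paths of a theta whose terminals are (roughly) $x$ and $y$.

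First I would set up the attachment vertices carefully. Since $y$ has two non-adjacent $x$-external vertices, after the usual ``two adjacent neighbors would give a shorter near-prism'' reduction (as in \eqref{dumpling:1} of Lemma \ref{lem:prismpathsets}, using tidiness of the frame and that $K$ is shortest), we may take two non-adjacent $x$-external neighbors $w_1, w_2$ of $y$. The point of $x$-externality is that each $w_j$ lies outside every $\alpha_i(x)P_i\beta_i(x)$, i.e. lies in one of the two ``end segments'' $A_i(x)=\alpha_i(x)P_ia_i$ or $B_i(x)=\beta_i(x)P_ib_i$. I would then split into cases according to which paths $P_i$ contain $w_1$ and $w_2$ and which end-segments they fall in. In each case, combine: (a) a short $xy$-path internal to $\{x,y\}$ (just the possible edge, or length $2$ if non-adjacent — but $x,y$ are non-adjacent by hypothesis, so the theta terminals are $x$ and $y$ joined via $w_1$, $w_2$, and a third route); (b) the subpath of $P_i$ from a neighbor of $x$ to $w_j$; (c) the triangles $\{a_1,a_2,a_3\}$ or $\{b_1,b_2,b_3\}$ to route around; and (d) if necessary the third constituent path $P_3$, or a segment of it, to supply the third parallel path. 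The negation hypothesis forces the relevant $P_i$-segments used in this construction to have length $\geq \ell-2$ or more, so each pair of parallel paths of the constructed theta has total length $\geq \ell$, making it a long theta — the contradiction. One must also check (using that $K$ is an \emph{induced} near-prism and $x,y$ are not $K$-major, plus tidiness) that the constructed subgraph is induced and genuinely a theta, i.e. that $x$, $y$, and the chosen segments have no extra edges among them; the ``not $K$-major'' hypothesis bounds the stray neighbors of $x,y$ on $V(K)$ to at most one each outside the $\alpha$'s and $\beta$'s, and those at most two stray vertices can be deleted from $V(K)$ before building the theta, exactly as in the last line of the proof of Lemma \ref{lem:prismpathsets}.

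The case analysis is the bulk of the work: depending on whether $w_1$ and $w_2$ lie on the same $P_i$ or different ones, and in which end-segments, the theta is built slightly differently, and in one or two configurations one may need to route the third path of the theta through a base triangle and part of $P_3$ rather than through $P_3$ alone. The main obstacle I expect is the configuration where both $x$-external neighbors of $y$ lie on the \emph{same} constituent path $P_i$ (both in $A_i(x)$, or both in $B_i(x)$, or one in each): here one has to be careful that the two routes from $y$ to $x$ do not collapse to something too short, and that the third parallel path of the theta (through $P_3$ and a triangle) is long enough — this is where the precise bound $\ell-3$ in the statement, rather than $\ell-2$, comes from, since the triangle edges and the attachment offsets eat into the count. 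Once that delicate case is handled with the correct bookkeeping of which endpoints of which subpaths coincide, the remaining cases are routine variations on \eqref{dumpling:2} and the final paragraph of Lemma \ref{lem:prismpathsets}.
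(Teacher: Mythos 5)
Your overall strategy --- negate the conclusion, use the resulting distance bounds to produce long segments of the $P_i$'s, and assemble long thetas between $x$ and $y$ (with a third route through the rest of $K$, using tidiness and Lemma \ref{3pairwisenonadjacent}) --- is indeed the paper's strategy. But the mechanism you propose for making the construction work has a genuine flaw: you write that ``$x,y$ are not $K$-major'' and use this, as in the last paragraph of Lemma \ref{lem:prismpathsets}, to bound their stray neighbors on $V(K)$ to at most one each and delete them before building the theta. In this lemma $x$ and $y$ \emph{are} $K$-major by hypothesis; by Lemma \ref{3pairwisenonadjacent} each has three pairwise non-adjacent neighbors on $K$, possibly spread over all three constituent paths, so the ``delete at most two stray vertices'' step is simply unavailable. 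The induced-ness of the three $xy$-paths has to come instead from how the segments are chosen (ending exactly at $\alpha_i(x)$, $\beta_i(x)$ and at $x$-external neighbors of $y$, and routing the third path through $K$ minus suitable buffered subpaths), and establishing that this is possible is precisely the content of the case analysis you defer; the paper needs separate claims to rule out, e.g., $x$ having neighbors on all three paths, and $y$ having $x$-external neighbors on two different paths that both meet $N(x)$.

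A second gap is in the length bookkeeping. The negation hypothesis only constrains $y$'s neighbors on paths $P_i$ with $N(x)\cap V(P_i)\neq\emptyset$; on the path containing no neighbor of $x$ (which exists after one shows $x$ cannot attach to all three paths), all of $V(P_i)$ is $x$-external and $y$'s neighbors there are subject to no distance bound whatsoever. So your claim that ``the negation hypothesis forces the relevant $P_i$-segments to have length $\geq \ell-2$, so each pair of parallel paths has total length $\geq \ell$'' is not automatic: the paper must separately prove that $y$'s attachment to that third path is a single vertex or a single edge, and then handle the residual configuration (an edge $e$ in $P_1\cup P_2$ and an edge $e'$ in $P_3$ carrying all $x$-external neighbors of $y$) with further theta constructions using an $x$-internal neighbor of $y$. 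Without these steps --- none of which are sketched, and one of which your plan implicitly assumes away --- the argument does not close.
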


\begin{proof}
Suppose that for each $i \in \{1,2,3\}$, either $x$ has no neighbor in $V(P_i)$ or $y$ has no neighbors in $V(P_i)$ with $P_i$-distance at most $\ell-3$ from $\alpha_i(x)$ or $\beta_i(x)$.
\stmt{For all distinct $i,j \in \{1,2,3\}$, if $x$ has neighbors in both $V(P_i)$ and $V(P_j)$, $y$ does not have $x$-external neighbors in both $V(P_i)$ and $V(P_j)$. \label{rabbit:1}}
Suppose $x$ has neighbors in both $V(P_i)$ and $V(P_j)$ and $y$ has $x$-external neighbors in both $V(P_i)$ and $V(P_j)$. Then for all $k  \in \{ i, j \}$, there is a subpath $Q_k$ of $P_k$ of length at least $\ell-2$ with one end an $x$-external neighbor of $y$ and the other end $\alpha_k(x)$ or $\beta_k(x)$ such that $x \dd Q_k \dd y$ is an induced $xy$-path. 
Let $M_k$ denote the subpath of $P_k$ with interior equal to $V(Q_k)$ for each $k \in \{i,j \}$. By definition of $\alpha_i(x)$, $\alpha_j(x)$, $\beta_i(x)$, $\beta_j(x)$ and Lemma \ref{3pairwisenonadjacent}, $x$ has a neighbor in $V(K) \setminus (V(M_i) \cup V(M_j))$.
Since $y$ has no neighbors adjacent to $\alpha_i(x)$, $\alpha_j(x)$, $\beta_i(x)$ or $\beta_j(x)$, it follows from Lemma \ref{3pairwisenonadjacent} that $y$ has a neighbor in $V(K) \setminus (V(M_i) \cup V(M_j))$.

Since $K$ has a tidy frame, $K \setminus (V(M_i) \cup V(M_j))$ is connected. Hence there is an $xy$-path $B$ with interior in $V(K) \setminus (V(M_i) \cup V(M_j))$.
But then, $x \dd Q_i \dd y$, $x \dd Q_j \dd y$ and $B$ form a long theta, a contradiction. This proves (\ref{rabbit:1}).
\stmt{$x$ does not have neighbors in all of three of $V(P_1), V(P_2), V(P_3)$. \label{rabbit:2}}
Suppose $x$ has neighbors in all three of $V(P_1), V(P_2), V(P_3)$. By (\ref{rabbit:1}) we may assume that the $x$-external neighbors of $y$ are contained in $V(P_1)$.
Then, we may assume $y$ has an $x$-internal neighbor in $V(P_2)$. Hence, $\alpha_2(x)$ and $\beta_2(x)$ are not equal or adjacent.
Let $M_1$ denote the induced $xy$-path with interior contained in $V(\beta_2(y)P_2\beta_2(x))$.
We may assume that either $y$ has a neighbor in $V(A_1(x))$ and a neighbor in $V(B_1(x))$ or that $y$ has two non-adjacent neighbors in $V(A_1(x))$.
In the first case, $M_1$, $y \dd A_1(y) \dd A_2(x) \dd x$, $y \dd B_1(y) \dd B_3(x) \dd x$ form a long theta, a contradiction.
Hence, the second case holds. There are two long induced $xy$-paths $M_2$ and $M_3$ with interiors contained in $V(A_1(x) \cup A_2(x))$ and such that $M_2^*$ is disjoint from and anticomplete to $M_3^*$. Then $M_1, M_2, M_3$ form a long theta, a contradiction.
This proves (\ref{rabbit:2}).
\\
\\
By (\ref{rabbit:2}) we may assume that $x$ has no neighbors in $V(P_3)$. By Lemma \ref{2paths}, it follows that $x$ has neighbors in both $V(P_1)$ and $V(P_2)$.
\stmt{ If $\{i,j\} = \{1,2\}$, $y$ does not have two non-adjacent $x$-external neighbors in $V(P_i)$ and an $x$-internal neighbor in $V(P_j)$.\label{rabbit:3}}
Suppose $y$ has two non-adjacent $x$-external neighbors in $V(P_i)$ and an $x$-internal neighbor in $V(P_j)$. By (\ref{rabbit:1}), $y$ has no $x$-external neighbors in $V(P_j)$. By definition of $x$-internal, $\alpha_j(x)$ is not equal or adjacent to $\beta_j(x)$.
Let $M_1$ and $M_2$ denote the induced $xy$-paths with interiors in $V(\beta_j(y)P_1\beta_j(x))$ and $V(\beta_i(x)P_i\beta_i(y))$, respectively.
Then $M_1, M_2$, and $x \dd A_j(x) \dd A_i(y) \dd y$ form a long theta, a contradiction. This proves (\ref{rabbit:3}).
\\
\\
By (\ref{rabbit:1}), (\ref{rabbit:3}), Lemma \ref{2paths} and Lemma \ref{3pairwisenonadjacent}, $y$ has a neighbor in $V(P_3)$.
By Lemma \ref{2paths} and Lemma \ref{3pairwisenonadjacent}, we may assume that $x$ has two non-adjacent neighbors in $V(P_1)$ and at least one neighbor in $V(P_2)$.
\stmt{Either $y$ has exactly one neighbor in $V(P_3)$ or $y$ has exactly two neighbors in $V(P_3)$ and they are adjacent. \label{rabbit:4}}
Suppose $y$ has two non-adjacent neighbors in $V(P_3)$. There are two long induced $xy$-paths $M_1$, $M_2$ with interiors in $V(P_1 \cup P_3)$ such that $M_1^*$ is disjoint from and anticomplete to $M_2^*$. If $y$ has a neighbor in $V(P_2)$, there is an $xy$-path $M_3$ with interior in $V(P_2)$, such that $M_1$, $M_2$ and $M_3$ form a long theta, a contradiction. So by Lemma \ref{2paths} and \ref{3pairwisenonadjacent}, we may assume $y$ has a neighbor in $V(P \setminus B_1(x))$. Let $Q_1$ denote the long induced $xy$-path whose interior is a subset of $V(B_1(x) \cup B_3(y))$. Let $Q_2$ denote the induced $xy$-path with interior contained in $V(\alpha_1(x)P_1\alpha_1(y))$. Then $Q_1, Q_2$ and $x \dd A_2(x) \dd A_3(y) \dd y$ form a long theta, a contradiction. This proves (\ref{rabbit:4}).
\stmt{Either $y$ has exactly one $x$-external neighbor in $V(P_1 \cup P_2)$ or exactly two $x$-external neighbors in $V(P_1 \cup P_2)$ and they are adjacent. \label{rabbit:5}}
By (\ref{rabbit:4}), $y$ has at least one $x$-external neighbor in $V(P_1 \cup P_2)$. Suppose $y$ has two non-adjacent $x$-external neighbors in $V(P_1 \cup P_2)$.
We may assume that either $y$ has two non-adjacent neighbors in $V(A_1(x) \cup A_2(x))$ or $y$ has a neighbor in $V(A_1(x) \cup A_2(x))$ and a neighbor in $V(B_1(x) \cup B_2(x))$.
In the first case, there are two long induced $xy$-paths $M_1, M_2$ with interiors in $V(A_1(x) \cup A_2(x))$ such that no vertex of $M_1^*$ is disjoint from and anticomplete to $M_2^*$. Then $M_1, M_2$ and $x \dd B_1(x) \dd B_3(y) \dd y$ form a long theta, a contradiction. 

Hence, the second case holds. By (\ref{rabbit:1}), we have that one of $V(P_1)$, $V(P_2)$ contains no $x$-external neighbors of $y$. If $V(P_2)$ contains no $x$-external neighbors of $y$, there is a long theta formed by the two induced $xy$-paths with interiors contained in $V(A_1(x))$ and $V(B_1(x))$, respectively, and the path $x \dd A_2(x) \dd A_3(y) \dd y$, a contradiction. Hence, $V(P_1)$ contains no $x$-external neighbors of $y$. Let $Q_1$ denote the $xy$-path with interior contained in $V(\beta_2(x)P_2\beta_2(y))$. Then $Q_1$, $x \dd A_1(x) \dd A_2(y) \dd y$ and $x \dd B_1(x) \dd B_3(y) \dd y$ form a long theta, a contradiction. This proves (\ref{rabbit:5}).
\\
\\
By (\ref{rabbit:4}) and (\ref{rabbit:5}), there is an edge $e \in E(P_1 \cup P_2)$ and $e' \in E(P_3)$ such that every $x$-external neighbor of $y$ in $V(K)$ is incident with $e$ or $e'$ and $e, e'$ each contain at least one $x$-external neighbor of $y$. By Lemma \ref{3pairwisenonadjacent}, $y$ has an $x$-internal neighbor in $V(P_i)$ for some $i \in \{1,2 \}$. Since $x$ has two non-adjacent neighbors in $V(P_1)$ we may assume without loss of generality that $y$ has an $x$-internal neighbor in $V(P_1)$. Let $M_1$ denote the induced $xy$-path with interior in $V(\alpha_1(x)P_1\alpha_1(y))$ and let $M_2$ denote the induced $xy$-path with interior in $V(\beta_1(x)P_1\beta_1(y))$.
If $e \in E(A_1(x))$, there is a long theta formed by $M_1, M_2$ and $y \dd B_3(y) \dd B_2(x) \dd x$. Hence, we may assume that $e \in E(A_2(x))$. Let $M_3$ denote the long induced $xy$-path with interior in $V(\alpha_2(y)P_2\alpha_2(x))$. Then $M_1, M_3$ and $x \dd B_1(x) \dd B_3(y) \dd y$ form a long theta, a contradiction.
\end{proof}

We use the results from this section to prove that there exists pair consisting of a set of $K$-major vertices and a set of paths each of which is contained in $K$ with certain helpful properties for cleaning. We begin with some definitions. For a set of paths $\mathcal{P}$ we denote $\cup_{P \in \mathcal{P}}P^*$ by $\mathcal{P}^*$.
Let $G$ be a graph containing no long thetas and let $K$ be a shortest long near-prism in $G$.
Let $P_1, P_2$ be distinct constituent paths of $K$ and let $F$ be the frame of $K$.
Let $S$ be the set of $K$-major vertices that have a neighbor in $V(P_1)$ and a neighbor in $V(P_2)$.
Let $H \subseteq S$ and let $\mathcal{Q}$ be a set of paths such that each $Q \in \mathcal{Q}$ is a subpath of $P_1$, $P_2$ or $P_3$.
We call the ordered pair $(H, \mathcal{Q})$ a {\em $(K, P_1, P_2)$-contrivance} if it satisfies the following two properties:
\begin{itemize}
    \item If $S$ is non-empty, then $H$ contains a vertex $v \in S$ maximizing $|E(A_1(v))|$ over all $v \in S$, $\alpha_1(v) \in \mathcal{Q}^*$, and $N(H) \cap V(A_1(v)) \subseteq \mathcal{Q}^*$ and
    \item Every vertex $w \in S \setminus H$ has a neighbor in $H \cup \mathcal{Q}^*$.
\end{itemize}

We will show that if $G$ has no long thetas, $K$ is a shortest long near-prism in $G$ and $K$ has a tidy frame, then for every choice of two distinct constituent paths $P_1, P_2$ of $K$ there is a $(K, P_1, P_2)$-contrivance $(H, \mathcal{Q})$ with $|H|$ and $\sum_{Q \in \mathcal{Q}} |V(Q)|$ bounded. Thus we will be able to guess a $(K, P_1, P_2)$-contrivance in our cleaning algorithm. We need the following lemma:

\begin{lem}\label{lem:prismtriangleexternal}
Let $G$ be a graph containing no long thetas and $K$ be a shortest long near-prism in $G$. Suppose $K$ has a tidy frame. Let $P_1, P_2,P_3$ be the constituent paths of $K$. Suppose $x$ and $y$ are $K$-major vertices satisfying all of the following:
\begin{itemize}
    \item $x$ and $y$ are non-adjacent,
    \item $x$ has a neighbor in $V(P_1)$ and $y$ has a neighbor in $V(A_1(x))$ and
    \item If $v \in V(K)$ is adjacent to $y$, then $d_{K}(v, \alpha_i(x)), d_{K}(v, \beta_i(x)) \geq \ell-1$ for any $i \in \{1,2,3\}.$
\end{itemize}
Then $y$ has exactly two $x$-external neighbors in $V(K)$ and they are adjacent elements of $V(A_1(x))$.
\end{lem}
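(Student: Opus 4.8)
\emph{Proof proposal.} I would argue by contradiction, using only that $G$ has no long theta (together with the tidiness of the frame and, if necessary, the minimality of $K$). Let $N_{\mathrm{ext}}$ denote the set of $x$-external vertices in $N(y)\cap V(K)$; the goal is to show $|N_{\mathrm{ext}}|=2$ with the two members adjacent and in $V(A_1(x))$.

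First I would prove that $N_{\mathrm{ext}}$ is a clique. If $y$ had two non-adjacent $x$-external neighbours, then Lemma~\ref{Kexternal} would give an $i$ with $N(x)\cap V(P_i)\neq\emptyset$ and a neighbour $w$ of $y$ on $P_i$ with $\min\{d_{P_i}(w,\alpha_i(x)),d_{P_i}(w,\beta_i(x))\}\le\ell-3$; but $d_{P_i}(w,\alpha_i(x))\ge d_K(w,\alpha_i(x))\ge\ell-1$ by hypothesis, a contradiction. Next, hypotheses (ii) and (iii) force $y$ to have a neighbour on $P_1$ lying in $V(A_1(x))$ at $P_1$-distance at least $\ell-1$ from $\alpha_1(x)$; hence $\alpha_1(y)\in V(A_1(x))\setminus\{\alpha_1(x)\}$, which is $x$-external, so $\alpha_1(y)\in N_{\mathrm{ext}}$. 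Since $a_1$ lies in a base of $K$ and hence in the (tidy) frame, it has no neighbour outside $V(K)$, so $y\not\sim a_1$; combined with the standing fact that $a_1P_1\alpha_1(y)$ has length at least $\ell$, this shows $\alpha_1(y)$ is an internal vertex of the long chordless path $P_1$ whose only neighbours in $K$ are its two $P_1$-neighbours, both of which lie in $V(A_1(x))$. Those two $P_1$-neighbours are non-adjacent, so the clique $N_{\mathrm{ext}}$ — every member of which is adjacent to $\alpha_1(y)$ — contains at most one of them. This already yields $N_{\mathrm{ext}}\subseteq V(A_1(x))$, $|N_{\mathrm{ext}}|\le 2$, and, when $|N_{\mathrm{ext}}|=2$, that the two members are adjacent.

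It remains to rule out $|N_{\mathrm{ext}}|=1$, and this is the heart of the matter. Suppose $v:=\alpha_1(y)$ is the only $x$-external neighbour of $y$. By Lemma~\ref{2paths}, $y$ has a neighbour on some $P_j$ with $j\neq 1$, necessarily $x$-internal, hence deep inside $\alpha_j(x)P_jβ_j(x)$ (at $P_j$-distance $\ge\ell-1$ from both ends, and far from $a_j,b_j$ by the standing length fact); by Lemma~\ref{3pairwisenonadjacent}, $y$ in fact has two pairwise non-adjacent $x$-internal neighbours, whose locations on $P_1,P_2,P_3$ one would record. I would then exhibit a long theta with branch vertices $x$ and $y$: one constituent path goes from $x$ to $\alpha_1(x)$, then along $P_1$ to $v$, then to $y$, and has length at least $\ell+1$ because $v$ is deep in $A_1(x)$; the other two are short detours from $x$ to $y$ through the internal regions of $P_j$ (and, when $y$ also has an $x$-internal neighbour on $P_1$, through the middle of $P_1$), each obtained by a first-neighbour route so it is induced with interior confined to a single internal region. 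Hypothesis (iii) forces those interiors to stay at $P_1$-distance $\ge\ell-1$ from $\alpha_1(x)$, hence disjoint from and anticomplete to the first path and to each other; the pairs involving the long first path satisfy the long-theta inequality automatically, and one routes a second path through an $a_i$-side external portion (guaranteed long) whenever possible, reducing the remaining configurations to a short case analysis on the distribution of $y$'s $x$-internal neighbours (possibly invoking the minimality of $K$).

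The main obstacle is precisely this last step. Because $x$ may have neighbours spread densely throughout its internal regions, a path from $x$ into such a region toward a neighbour of $y$ can be forced to be short, so one must control both which regions each of the three constituent paths traverses — preferring the provably long $a_i$-side portions — and the two edges incident to the branch vertices, since two paths entering a common base vertex would create a triangle and wreck the theta. The bookkeeping of which neighbour of $y$ lies in which region, and the case analysis needed to keep all three paths induced, internally disjoint and pairwise anticomplete while meeting the length requirement, is where essentially all the work lies; steps one through three above are routine by comparison.
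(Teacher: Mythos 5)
Your first two steps are sound and match the paper's reduction: Lemma~\ref{Kexternal} together with the distance hypothesis on $N(y)$ rules out two non-adjacent $x$-external neighbours, and the location argument inside $V(A_1(x))$ then reduces everything to excluding the case that $y$ has a unique $x$-external neighbour $w \in V(A_1(x))$. But your plan for that last case has a genuine gap, which you flag yourself without resolving. You propose a theta with branch vertices $x$ and $y$, whose first path runs $x \dd \alpha_1(x) P_1 w \dd y$ and whose other two paths are ``detours'' ending at $x$-internal neighbours of $y$. For such a theta to be long, the two detours must have lengths summing to at least $\ell$; since $x$ is $K$-major it may have neighbours arbitrarily close along $P_j$ to the $x$-internal neighbours of $y$, so each detour, being induced, can be forced to be very short, and nothing in the hypotheses prevents both from being short simultaneously. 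Your fallback of routing a detour through an ``$a_i$-side external portion'' is unavailable in exactly this case: $w$ is the only $x$-external neighbour of $y$ and it is already consumed by the first path, so every other $x$--$y$ path must end at an $x$-internal neighbour of $y$ and hence lives inside an internal region where its length cannot be controlled. The ``short case analysis (possibly invoking the minimality of $K$)'' is therefore the missing heart of the proof, not routine bookkeeping.

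The paper sidesteps this difficulty by choosing the branch vertices of the theta to be $x$ and $w$ rather than $x$ and $y$. The cycle $x \dd A_1(x) \dd A_2(x) \dd x$ (through the edge $a_1a_2$, after using Lemmas~\ref{2paths} and~\ref{3pairwisenonadjacent} to assume $x$ has a neighbour in $V(P_2)$) contains $w$, and its two $xw$-arcs $M_1, M_2$ each have length at least $\ell$, because tidiness of the frame gives $|E(a_iP_i\alpha_i(x))| \geq \ell$ and the distance hypothesis gives $d_{P_1}(w,\alpha_1(x)) \geq \ell-1$. The third path is an induced $xy$-path with interior in the set $J$ of vertices of $K$ anticomplete to $M_1 \cup M_2$ (Lemma~\ref{3pairwisenonadjacent} supplies neighbours of $x$ and of $y$ in $J$, and $G[J]$ is connected), with the edge $yw$ appended. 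Since two of the three paths already have length at least $\ell$, every pair of paths meets the long-theta requirement no matter how short the third is --- precisely the length control your construction lacks.
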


\begin{proof}
By Lemma \ref{Kexternal}, we need only show that $y$ does not have a unique neighbor in $V(A_1(x))$. Suppose that $y$ has a unique neighbor $w \in V(A_1(x))$. Then by Lemma \ref{Kexternal}, $w$ is the unique $x$-external neighbor of $y$ in $V(K)$. By Lemma \ref{3pairwisenonadjacent}, we may assume $x$ has neighbor in $V(P_2)$. 
Let $M_1, M_2$ denote the two paths of the cycle $x \dd A_1(x) \dd A_2(x) \dd x$ with ends $x$ and $w$. Since $K$ has a tidy frame and $d_{P_1}(w, \alpha_1(x)) \geq \ell-1$, the paths $M_1$ and $M_2$ each have length at least $\ell$.
Let $J$ be the set of vertices in $V(K) \setminus V(M_1 \cup M_2)$ that are anticomplete to $V(M_1 \cup M_2)$.
By Lemma \ref{3pairwisenonadjacent}, $x$ and $y$ each have a neighbor in $J$. $G[J]$ is connected so there is an induced $xy$-path $M_3$ with interior in $J$. But then $M_1, M_2$ and $M_3 \dd w$ form a long theta, a contradiction.
\end{proof}

\begin{lem} \label{prismset}
Let $G$ be a graph containing no long thetas and $K$ be a shortest long near-prism in $G$. Suppose $K$ has a tidy frame. Let $P_1$, $P_2$ be two constituent paths of $K$. Then, there is a $(K, P_1, P_2)$-contrivance $(H, \mathcal{Q})$ in $G$ satisfying
$|H| \leq 3$, $|\mathcal{Q}| \leq 14$ and $\sum_{Q \in \mathcal{Q}} |V(Q)| \leq 28\ell -12.$
\end{lem}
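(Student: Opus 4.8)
The plan is as follows. If $S=\emptyset$, take $H=\emptyset$ and $\mathcal{Q}=\emptyset$; all three conditions hold vacuously, so assume $S\neq\emptyset$. Choose $v_1\in S$ maximizing $|E(A_1(v_1))|$ over all $v\in S$, and set $H=\{v_1\}$. For each $i\in\{1,2,3\}$ on which $v_1$ has a neighbour, let $Q_i^{\alpha}$ and $Q_i^{\beta}$ be the subpaths of $P_i$ consisting of the vertices at $P_i$-distance at most $\ell-1$ from $\alpha_i(v_1)$, respectively from $\beta_i(v_1)$, and let $\mathcal{Q}$ be the set of these (at most six) paths. Since the frame is tidy, $\alpha_i(v_1)$ and $\beta_i(v_1)$ lie at $P_i$-distance at least $\ell-1$ from $a_i$ and from $b_i$, so each such path has at most $2\ell-1$ vertices and contains its centre in its interior. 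Hence $|H|\le 1$, $|\mathcal{Q}|\le 6$, and $\sum_{Q\in\mathcal{Q}}|V(Q)|\le 6(2\ell-1)=12\ell-6$, all comfortably inside the claimed bounds.

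Two of the three properties are now immediate. For the maximality clause: $v_1\in H$ is by choice a maximizer; $\alpha_1(v_1)\in(Q_1^{\alpha})^{*}\subseteq\mathcal{Q}^{*}$; and since $\alpha_1(v_1)$ is the neighbour of $v_1$ on $P_1$ closest to $a_1$, it is the only vertex of $A_1(v_1)$ adjacent to a member of $H=\{v_1\}$, so $N(H)\cap V(A_1(v_1))=\{\alpha_1(v_1)\}\subseteq\mathcal{Q}^{*}$. It remains to establish the domination property, i.e.\ that every $w\in S$ non-adjacent to $v_1$ has a neighbour in $\mathcal{Q}^{*}$. Using that a shortest path of $K$ between two interior vertices of a single $P_i$ never uses a base vertex, one checks (with a routine treatment of the boundary cases near the triangles) that $\mathcal{Q}^{*}$ catches exactly the vertices of $V(K)$ at $K$-distance at most $\ell-2$ from some $\alpha_i(v_1)$ or $\beta_i(v_1)$. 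Call a $K$-major vertex $w\notin V(K)$ non-adjacent to $v_1$ \emph{problematic} if every neighbour of $w$ in $V(K)$ is at $K$-distance at least $\ell-1$ from all six centres; the goal is to show no problematic vertex exists.

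So suppose $w$ is problematic. By Lemma \ref{3pairwisenonadjacent}, $w$ has three pairwise non-adjacent neighbours in $V(K)$. By Lemma \ref{Kexternal} with $x=v_1$, $y=w$: if $w$ had two non-adjacent $v_1$-external neighbours it would have a neighbour within $P_i$-distance $\ell-3<\ell-1$ of some $\alpha_i(v_1)$ or $\beta_i(v_1)$, contradicting problematicity; hence the $v_1$-external neighbours of $w$ form a clique, and, since tidiness forbids $w$ to be adjacent to a base vertex, this clique is an edge or a single vertex lying on one of the arms $A_i(v_1),B_i(v_1)$ (for $i$ on which $v_1$ has a neighbour) or on a constituent path avoided by $v_1$. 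In each configuration I build a long theta between $w$ and $v_1$, contradicting the hypothesis that $G$ has no long theta. The three internally disjoint induced paths between $w$ and $v_1$ are obtained by running from $w$ to an extreme neighbour of $w$ on some $P_i$, then along $P_i$ — and, when necessary, across a base triangle and along an arm of another constituent path — to $\alpha_i(v_1)$ or $\beta_i(v_1)$, and thence to $v_1$; problematicity forces the relevant $P_i$-distances to be at least $\ell-1$, so each such path is long, while the choices of extreme neighbours, the definitions of $\alpha_i,\beta_i$, Lemma \ref{3pairwisenonadjacent}, the maximality of $|E(A_1(v_1))|$, and Lemma \ref{lem:prismtriangleexternal} (together with its mirror image exchanging $a_1,b_1$) guarantee that the segments used are pairwise disjoint and chordless. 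Thus no problematic $w$ exists, every $w\in S$ is adjacent to $v_1$ or has a neighbour in $\mathcal{Q}^{*}$, and the lemma follows with room to spare.

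The main obstacle is this last step: checking, in every sub-case — organised by how many neighbours $w$ has on each $P_i$, on which arm its external edge (if any) lies, and whether $v_1$ has a neighbour on all three constituent paths — that the three chosen paths between $w$ and $v_1$ are genuinely internally disjoint and induced. The delicate situations are when $w$ has exactly one neighbour on some $P_i$ (so that $P_i$ cannot be split into two independent routes, forcing a detour through a base triangle along an arm of a different path) and the degenerate near-prism in which $a_i=b_i$ and $P_i$ is a single vertex, which removes routing options; both demand a careful, somewhat tedious enumeration, leaning on the tidiness of the frame to exclude chords near the triangles. Should one wish to shorten this analysis, the slack in the stated bounds ($|H|\le 3$, $|\mathcal{Q}|\le 14$, $\sum|V(Q)|\le 28\ell-12$) leaves room to absorb up to two further exceptional vertices of $S$ into $H$, adding a bounded subpath of a constituent path through each of their neighbourhoods to $\mathcal{Q}$.
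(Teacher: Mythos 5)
Your setup (take $v_1\in S$ maximizing $|E(A_1(v_1))|$, surround its extreme neighbours $\alpha_i(v_1),\beta_i(v_1)$ by radius-$(\ell-1)$ subpaths, and verify the maximality clause) matches the first stage of the paper's proof, and your use of Lemma \ref{Kexternal} and Lemma \ref{lem:prismtriangleexternal} to conclude that a ``problematic'' $w$ attaches to $A_1(v_1)$ in exactly two adjacent vertices is sound. The gap is in the domination step, which you reduce to the claim that every problematic $w$ yields a long theta between $w$ and $v_1$, and which you then only gesture at (``routine treatment of the boundary cases'', ``careful, somewhat tedious enumeration''). That claim is not just unproven; there is a concrete structural obstruction. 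Being problematic only bounds the $K$-distance from $w$'s neighbours to the six vertices $\alpha_i(v_1),\beta_i(v_1)$ -- it says nothing about their distance to the \emph{other} neighbours of $v_1$, which may be scattered throughout the $v_1$-internal segments $\alpha_i(v_1)P_i\beta_i(v_1)$. After your reduction, $w$'s only $v_1$-external neighbours are one adjacent pair $\{p,q\}\subseteq V(A_1(v_1))$, and at most one of the three theta paths at $w$ can use that pair (using both $p$ and $q$ puts adjacent vertices in two interiors, so the union is not induced). The remaining two paths must leave $w$ through $v_1$-internal neighbours and run along $P_1$ or $P_2$ to the nearest neighbour of $v_1$; those two runs can both have length as small as $2$ or $3$, in which case their pairwise sum is below $\ell$ and no long theta results. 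Nothing in the hypotheses rules this configuration out, so the contradiction you need does not follow from the construction you describe.

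This is precisely why the paper does not stop at one anchor: it allows $(\{x\},\mathcal{Q}_x)$ to fail, takes a second vertex $y\in S_x$ maximizing $|E(A_2(y))|$ (so that uncovered vertices also attach to $A_2(y)$ as adjacent pairs, by Lemma \ref{lem:prismtriangleexternal}), then a third vertex $z\in S_{xy}$, and derives the contradiction by building the long theta not between $w$ and the anchor but between two leftover vertices $t,z\in S_{xy}$: their adjacent-pair attachments on the two \emph{different} arms $A_1(x)$ and $A_2(y)$ are forced to be far apart (since $t$ avoids $\mathcal{Q}_z^*$), giving two long internally disjoint induced $tz$-paths on different constituent paths, plus a third path through the connected remainder of $K$. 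Your closing remark that the slack in the bounds ``leaves room to absorb up to two further exceptional vertices'' is exactly the missing argument, but as written it is not carried out, so the domination property -- the heart of the lemma -- is not established.
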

\begin{proof}
Let $S$ be the set of $K$-major vertices with both a neighbor in $V(P_1)$ and a neighbor in $V(P_2)$. We may assume without loss of generality that $S \neq \emptyset$.
Let $x \in S$ maximize $|E(A_1(x))|$.
For any $i \in \{1,2,3\}$, vertex $v \in V(P_i)$ and non-negative integer $k$, we denote the set consisting of all vertices $b$ satisfying $d_{P_i}(a,b) \leq k$  as $N_{P_i}^k(a)$.

Let $\mathcal{Q}_x$ denote the set of paths contained in $K$ whose interiors are equal to $N_{P_i}^{\ell-1}(\alpha_i(x))$ or $N_{P_i}^{\ell-1}(\beta_i(x))$ for some $i \in \{1,2,3\}$. Hence, $|\mathcal{Q}_x| \leq 6$.
Let $S_x$ denote the set of vertices $s \in S$ that have no neighbor in $\mathcal{Q}_x^*$.
We may assume without loss of generality that $(\{x\}, \mathcal{Q}_x)$  is not a $(K, P_1, P_2)$-contrivance. Thus, $S_x \neq \emptyset$. 

Let $y \in S_x$ maximize $|E(A_2(y))|$ over all $y \in S_x$.
Let $\mathcal{Q}_y$ denote the set of paths contained in $K$ whose interiors are equal to $N_{P_i}^{\ell}(\alpha_i(y))$ or $N_{P_i}^{\ell-1}(\beta_i(y))$ for some $i \in \{1,2,3\}$.
Hence, $|\mathcal{Q}_y| \leq 6$.
By Lemma \ref{lem:prismtriangleexternal}, $y$ has exactly two neighbors in $V(A_1(x))$ and they are adjacent. Hence $N(y) \cap V(A_1(x)) \subseteq \mathcal{Q}_y^*$.
Let $S_{xy}$ be the set of vertices $s \in S_x$ that have no neighbor in $\mathcal{Q}^*_x \cup \mathcal{Q}^*_y \cup \{x,y\}$. We may assume without loss of generality that $( \{x,y\}, \mathcal{Q}_x \cup \mathcal{Q}_y )$ is not a $(K, P_1, P_2)$-contrivance. Hence, $S_{xy} \neq \emptyset$.
\stmt{Let $v \in S_{xy}$. Then $v$ has exactly two $x$-external neighbors in $V(K)$ and they are adjacent elements of $V(A_1(x))$ and $v$ has exactly two $y$-external neighbors and they are adjacent elements of $V(A_2(y))$. \label{sad:1}}
Apply Lemma \ref{lem:prismtriangleexternal} to $x$, $v$ and to $y$, $v$. This proves (\ref{sad:1}).
\\
\\
Let $z$ be a vertex in $S_{xy}$.
Let $p_1, q_1$ denote the $x$-external neighbors of $z$ in $V(P_1)$. Let $p_2, q_2$ denote the $y$-external neighbors of $z$ in $V(P_2)$.
Let $\mathcal{Q}_z$ denote the set consisting of the two paths contained in $K$ whose interior is equal to $N_{P_1}^{\ell-2}(p_i) \cup N_{P_1}^{\ell-2}(q_i)$ for some $i \in \{1,2\}$. Hence, $N(z) \cap V(A_1(x)) \subseteq \mathcal{Q}^*_z$.

Let $H = \{x,y,z\}$. Let $\mathcal{Q} = \mathcal{Q}_x \cup \mathcal{Q}_y \cup \mathcal{Q}_z$.
We claim that $(H, \mathcal{Q})$ is a ($K$, $P_1$, $P_2$)-contrivance. By choice of $H$, $\mathcal{Q}$ it is enough to show that every $t \in S \setminus H$ has a neighbor in $H \cup \mathcal{Q}^*$. Suppose some $t \in S \setminus H$ has no neighbors in $H \cup \mathcal{Q}^*$.
Then, by (\ref{sad:1}) there is a long induced $tz$-path $M_1$ with interior in $V(A_1(x))$ and a long induced $tz$-path $M_2$ with interior in $V(A_2(y))$.
Let $h_1, h_2$ denote the two vertices of $V(P_1 \cup P_2) \setminus V(A_1(x) \cup A_2(y))$ with a neighbor in $V(A_1(x) \cup V(A_2(y))$. Let $J$ denote the graph $K \setminus (V(A_1(x) \cup A_2(y)) \cup \{h_1, h_2, a_3\})$.  Since $t,z \in S_{xy}$, both $t$ and $z$ are non-adjacent to each of $h_1, h_2, a_3$. Hence, by Lemma \ref{3pairwisenonadjacent}, $t,z$ each have a neighbor in $V(J)$.
Then since $J$ is connected there is an induced $tz$-path $M_3$ with interior in $V(J)$. Hence, $M_1, M_2, M_3$ form a long theta, a contradiction. It follows that $(H, \mathcal{Q})$ is a ($K$, $P_1$, $P_2$)-contrivance.
\\
\\
By construction, we have that $|H| = 3$, $|\mathcal{Q}|   \leq 14$. The number of vertices in the paths in $\mathcal{Q}$ are as follows:
\begin{equation*}
  |V(Q_i)| \leq
    \begin{cases}
      2\ell-1 & \text{if $Q \in \mathcal{Q}_x \cup \mathcal{P}_y.$}\\
      2\ell-2 & \text{if $Q \in \mathcal{Q}_z$}.\\
    \end{cases}       
\end{equation*}
Since $|\mathcal{Q}_x \cup \mathcal{Q}_y| \leq 12$ and $|\mathcal{Q}_z| = 2$, it follows that $\sum_{Q 
\in \mathcal{Q}} V(Q) \leq 28\ell -16$ as claimed.
\end{proof}

%apply it to get the algorithm

We apply the results from this section to obtain the following cleaning algorithm for major vertices of shortest long near-prisms.

\begin{theorem} \label{cleanprism}
There is an algorithm with the following specifications:
\begin{description}
\item[Input:] A graph $G$ containing no long theta.
\item[Output:] A list of $\mathcal{O}(|G|^{32\ell-10})$ subsets of $V(G)$, with the following property: For every shortest long near-prism $K$ and choice of two distinct constituent paths $P_1, P_2$ of $K$, if $K$ has a tidy frame, then there is some $X$ in the list such that:
\begin{itemize}
    \item $X$ is disjoint from $V(K)$ and 
    \item $X$ contains all $K$-major vertices with neighbors in both $V(P_1)$ and $V(P_2)$.
\end{itemize}
\item[Running Time:] $\mathcal{O}(|G|^{32\ell -8})$.
\end{description}
\end{theorem}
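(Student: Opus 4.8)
The plan is to run a pure ``guessing'' procedure on top of Lemma~\ref{prismset}. First I would enumerate all tuples $(H,\mathcal{Q},R)$, where $H\subseteq V(G)$ with $|H|\le 3$, $\mathcal{Q}$ is a family of at most $14$ paths of $G$, and $R$ is a path of $G$ of length at most $4\ell+2$; each path is specified by listing its vertices in order, and since the number of paths in $\mathcal{Q}$ and the length of every path involved are bounded in $\ell$, the ordering and ``which constituent path'' data cost only a constant factor. Using $|H|\le 3$, the bound $\sum_{Q\in\mathcal{Q}}|V(Q)|\le 28\ell-16$ that the proof of Lemma~\ref{prismset} actually yields, and $|E(R)|\le 4\ell+2$, there are $\mathcal{O}(|G|^{32\ell-10})$ such tuples. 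For each tuple the algorithm would output one set $X=X(H,\mathcal{Q},R)$; since $X$ can be built in $\mathcal{O}(|G|^{2})$ time, the running time is $\mathcal{O}(|G|^{32\ell-8})$ and the list has $\mathcal{O}(|G|^{32\ell-10})$ members, matching the claim.

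For the set itself I would put $U=\mathcal{Q}^{*}\cup R^{*}$ and
\[
X \;=\; H\;\cup\;\bigl\{\,w\in V(G)\setminus U \;:\; w\text{ has three pairwise non-adjacent neighbours in }U\,\bigr\}.
\]
The role of the ``three pairwise non-adjacent neighbours'' clause is to make $X$ manifestly $K$-free for the disjointness half of the correctness proof. Indeed, in a near-prism $K$ a non-base vertex of $V(K)$ has exactly two neighbours in $V(K)$ and they are non-adjacent, while a base vertex has at most four neighbours in $V(K)$, every three of which include a triangle edge; hence no vertex of $V(K)$ has three pairwise non-adjacent neighbours inside $V(K)$. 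Consequently, on the ``correct'' tuple --- the one in which $(H,\mathcal{Q})$ is a $(K,P_1,P_2)$-contrivance as furnished by Lemma~\ref{prismset} and $R$ is a suitable subpath of a constituent path of $K$, so that $U\subseteq V(K)$ --- no vertex of $V(K)$ meets the defining condition of $X$, and since $H$ consists of $K$-major vertices, which lie outside $V(K)$, we get $X\cap V(K)=\emptyset$.

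For the other half, fix a shortest long near-prism $K$ with a tidy frame and two distinct constituent paths $P_1,P_2$, and let $S$ be the set of $K$-major vertices having a neighbour in each of $V(P_1)$ and $V(P_2)$. By Lemma~\ref{prismset} there is a $(K,P_1,P_2)$-contrivance $(H,\mathcal{Q})$ of the required size, and with the appropriate $R$ this tuple appears in the enumeration. Vertices of $S\cap H$ lie in $X$ by construction. For $w\in S\setminus H$, the contrivance supplies a neighbour of $w$ in $H\cup\mathcal{Q}^{*}$; I would then upgrade this to ``three pairwise non-adjacent neighbours in $U$'' using that $w$ is $K$-major: by Lemmas~\ref{2paths} and \ref{3pairwisenonadjacent}, $w$ has three pairwise non-adjacent neighbours in $V(K)$ spread over at least two constituent paths, and by Lemmas~\ref{Kexternal} and \ref{lem:prismtriangleexternal} the attachment of $w$ to $K$, measured against the distinguished vertices of $H$, is confined to the balls of radius at most $\ell$ whose interiors constitute $\mathcal{Q}^{*}$, together with the one extra stretch recorded by $R$; a case analysis in the style of the theta-forcing arguments of Lemmas~\ref{Kexternal}--\ref{lem:prismtriangleexternal} then locates three pairwise non-adjacent neighbours of $w$ inside $U$. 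Hence $S\subseteq X$.

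The step I expect to be the real obstacle is exactly this last upgrade. The contrivance of Lemma~\ref{prismset} guarantees only one neighbour of each vertex of $S\setminus H$ inside the guessed on-prism set, and that neighbour might even lie in $H$, hence off $K$, whereas the disjointness mechanism demands three pairwise non-adjacent neighbours of \emph{every} vertex of $S$ inside $U\subseteq V(K)$. Closing this gap forces a careful choice of the auxiliary guess $R$ and of the ball radii used in $\mathcal{Q}$ --- enough that all of $S$, not merely the three distinguished vertices produced by Lemma~\ref{prismset}, attaches within $U$ --- followed by a re-run of the ``assemble a long theta from two induced $xy$-paths and a connecting path through $K$ minus the guessed region'' argument to rule out the bad configurations, where the absence of long thetas in $G$ is what makes the argument go through. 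Everything else --- the counting, the $\mathcal{O}(|G|^{2})$ construction of each $X$, and the observation that a near-prism contains no vertex with three pairwise non-adjacent neighbours within it --- is routine.
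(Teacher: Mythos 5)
Your enumeration and counting are fine, but the proposal has a genuine gap, and it is exactly the one you flag at the end without closing. The contrivance of Lemma~\ref{prismset} only guarantees that each vertex of $S\setminus H$ has \emph{some} neighbour in $H\cup\mathcal{Q}^{*}$, and that neighbour may lie in $H$; such a vertex $w$ may have all of its neighbours on $K$ far away from every guessed region, so it need not have even one neighbour in your set $U=\mathcal{Q}^{*}\cup R^{*}$, let alone three pairwise non-adjacent ones, and your $X$ misses it. Your proposed repair --- choosing the auxiliary path $R$ and the ball radii ``carefully'' --- cannot work in principle: the region along which the vertices of $S$ attach to $K$ is the path $A_1(x)$ from $a_1$ to $\alpha_1(x)$ for the distinguished $x\in H$ maximizing $|E(A_1(x))|$ (every vertex of $S$ has a neighbour on it, by the maximality of $x$), and this path has no length bound in terms of $\ell$, so it cannot be covered by any explicitly enumerated path $R$ of bounded length without destroying the polynomial exponent.

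The paper's proof resolves precisely this point by \emph{not} enumerating that region: it guesses only the two endpoints $a$ and $\alpha$ of $A_1(x)$ (with $\alpha\in\mathcal{Q}^{*}$), deletes $X\cup Y_i\cup((H\cup N(H))\setminus\mathcal{Q}^{*})$, and reconstructs the unbounded interior as $R_i$, the union of interiors of all shortest $a\alpha$-paths in the resulting graph $G_i$; the cleaning list $Y_i$ comes from a nested run of the path-cleaning algorithm of Theorem~\ref{thm:prismpathcleaning}, whose role is to guarantee that all shortest $a\alpha$-paths in $G_i$ are good, so that $R_i$ contains $A^{*}$ but stays harmless with respect to $K$. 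The output set is then $H\cup X\cup Z_i$, where $Z_i$ consists of the vertices (outside $\mathcal{Q}^{*}$) with a neighbour in $R_i$ \emph{and} a neighbour in $H$; tidiness of the frame and the condition $N(H)\cap V(A)\subseteq\mathcal{Q}^{*}$ give $Z_i\cap V(K)=\emptyset$, while every vertex of $S$ not already caught has a neighbour in $H$ and a neighbour on $A^{*}\subseteq R_i$, hence lands in $Z_i$. Your membership test (``three pairwise non-adjacent neighbours in $U$'') is also not the mechanism used, but the essential missing ingredient is this shortest-path reconstruction together with the recursive use of Theorem~\ref{thm:prismpathcleaning}; without it the inclusion $S\subseteq X$ does not follow from the lemmas you cite.
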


\begin{proof}
We enumerate all triples $(H, \mathcal{Q})$ satisfying all of the following:
\begin{itemize}
    \item $H$ is a set of at most three vertices in $G$, 
    \item $\mathcal{Q}$ is a set of at most 14 paths of $G$, and
    \item $\sum_{Q \in \mathcal{Q}}|V(Q)| \leq 28\ell - 16$.
\end{itemize}
For each such pair we perform the following.
If $H$ is empty, we output the empty set.
Otherwise, we guess a vertex $a \in V(G)$ and a vertex $\alpha$ in $\mathcal{Q}^*$.
For each $Q \in \mathcal{Q}$, let $X_Q$ be the set of vertices in $V(G) \setminus V(Q)$ with neighbors in $Q^*$. Let $X = \cup_{Q \in \mathcal{Q}}X_Q$. We run the algorithm of Theorem \ref{thm:prismpathcleaning} on $G \setminus X$ to generate a list $Y_1, Y_2, \dots , Y_k$ of subsets of $V(G)$.

For each $i \in \{1,2, \dots, k \}$, let $G_i$ be the graph $G \setminus  ((X \cup Y_i \cup H \cup N(H)) \setminus \mathcal{Q}^*)$. 
We compute the union of the interiors of all shortest $a\alpha$-paths in $G_i$ and denote it by $R_i$. Let $Z_i$ be the set of vertices of $V(G)\setminus \mathcal{Q}^*$ with a neighbor in $V(R_i)$ and a neighbor in $H$. We output $H \cup X \cup Z_i$.

We prove the algorithm is correct. Let $K$ be a shortest long near-prism in $G$ and let $P_1$, $P_2$ be distinct constituent paths of $K$. Suppose $K$ has  a tidy frame. 
Then it follows from Lemma \ref{prismset} that for some choice of $(H, \mathcal{Q})$ the pair $(H, \mathcal{Q})$ is a ($K$, $P_1$, $P_2$)-contrivance. Let $S$ denote the set of $K$-major vertices with a neighbor in $V(P_1)$ and a neighbor in $V(P_2)$. By definition of $(K, P_1, P_2)$-contrivance, $H$ is empty if and only if $S$ is empty.
We may assume $S$ and $H$ are both non-empty.
Thus, every vertex in $S \setminus (H \cup N(H))$ has a neighbor in $X$.
Let $A$ denote the path $aP\alpha$.
Let $x$ be a vertex in $S$ maximizing $|E(A_1(x))|$ over all $x \in S$. For some choice of $a, \alpha$, the paths $A_1(x)$ and $A$ are equal so we may assume $A = A_1(x)$. It follows that every vertex in $S$ has a neighbor in $V(A)$.
By construction, $X$ is disjoint from $V(K)$.
Hence, by Theorem \ref{thm:prismpathcleaning}, there exists an $i \in \{1,2, \dots k \}$ such that $Y_i$ is disjoint from $V(K)$ and $Y_i$ contains a vertex of every bad shortest $a\alpha$-path $Q$ such that $\zeta_Q$ is not $K$-major.
By definition of $(K, P_1, P_2)$-contrivance, $H \cup N(H) \cup \mathcal{Q}^*$ contains all vertices in $S$. Hence all shortest $a\alpha$-paths in $G_i$ are good.
Since $N(H) \cap V(A) \subseteq \mathcal{Q}^*$, it follows that $A^* \subseteq R_i$. Thus, $Z_i$ contains all vertices in $S \setminus (H \cup Y_i)$. Since $K$ has a tidy frame, $a_1$ has no neighbors in $H$ and by definition of $(K, P_1, P_2)$-contrivance, $N(H) \cap V(A) \subseteq \mathcal{Q}^*$. Since all shortest $a\alpha$-paths in $G_i$ are good, it follows that $Z_i$ is disjoint from $V(K)$. Hence, the list satisfies the properties from the claim.

There are $\mathcal{O}(|G|^{28\ell -12})$ choices for $(H, \mathcal{Q})$ and $a$.
For each choice we find $X$ in time $\mathcal{O}(|G|)$ and run the algorithm of Theorem \ref{thm:prismpathcleaning} to generate a list of $\mathcal{O}(|G|^{4\ell+2})$ subsets $Y_1, Y_2, \dots, Y_k$ of $V(G)$ in time $\mathcal{O}(|G|^{4\ell+3})$.
For each set in the list we compute $Z_i$ in $\mathcal{O}(|G|^2)$.
Hence the total running is $\mathcal{O}(|G|^{32\ell-8 })$ and the length of the output is $\mathcal{O}(|G|^{32\ell -10})$.

\end{proof}

\begin{corr}\label{corr:fullprismcleaner}
There is an algorithm with the following specifications:
\begin{description}
\item[Input:] A graph $G$ containing no long theta.
\item[Output:] A list of $\mathcal{O}(|G|^{96\ell-30})$ subsets of $V(G)$, with the following property: For every shortest long near-prism $K$ and choice of two distinct constituent paths $P_1$, $P_2$ of $K$, if $K$ has a tidy frame, then there is some $X$ in the list such that:
\begin{itemize}
    \item $X$ is disjoint from $V(K)$ and
    \item $X$ contains all $K$-major vertices with neighbors in both $V(P_1)$ and $V(P_2)$.
\end{itemize}
\item[Running Time:] $\mathcal{O}(|G|^{96\ell - 30})$.
\end{description}
\end{corr}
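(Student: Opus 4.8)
The plan is to reduce directly to Theorem~\ref{cleanprism}, using Lemma~\ref{2paths} to pass from capturing the $K$-major vertices of a given pair of constituent paths to capturing \emph{all} $K$-major vertices. Concretely: run the algorithm of Theorem~\ref{cleanprism} on $G$ to obtain a list $\mathcal{L}$ of $\mathcal{O}(|G|^{32\ell-10})$ subsets of $V(G)$; then, for every triple $(X_1,X_2,X_3)\in\mathcal{L}\times\mathcal{L}\times\mathcal{L}$, output the set $X_1\cup X_2\cup X_3$. This produces a list of $\mathcal{O}((|G|^{32\ell-10})^3)=\mathcal{O}(|G|^{96\ell-30})$ subsets of $V(G)$.

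For correctness, let $K$ be a shortest long near-prism with a tidy frame, and let $P_1,P_2,P_3$ be its constituent paths. Applying the guarantee of Theorem~\ref{cleanprism} to $K$ with the pair $(P_1,P_2)$, then $(P_2,P_3)$, then $(P_1,P_3)$, yields sets $X_{12},X_{23},X_{13}\in\mathcal{L}$, each disjoint from $V(K)$, where $X_{ij}$ contains every $K$-major vertex with a neighbor in both $V(P_i)$ and $V(P_j)$. By Lemma~\ref{2paths}, every $K$-major vertex has neighbors in at least two of $P_1,P_2,P_3$, hence lies in at least one of $X_{12},X_{23},X_{13}$; thus $X_{12}\cup X_{23}\cup X_{13}$ is disjoint from $V(K)$ and contains every $K$-major vertex. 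Since the triple $(X_{12},X_{23},X_{13})$ is enumerated, this union is one of the sets in the output list, which gives the required property. (If the corollary is read literally as only needing the $K$-major vertices with neighbors in both $V(P_1)$ and $V(P_2)$, a single application of Theorem~\ref{cleanprism} already suffices; the three-fold version above is what is actually used downstream, where every $K$-major vertex must be cleaned.)

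For the resource bounds: generating $\mathcal{L}$ costs $\mathcal{O}(|G|^{32\ell-8})$ by Theorem~\ref{cleanprism}, after which enumerating the $\mathcal{O}(|G|^{96\ell-30})$ triples and forming their unions is the dominant step, so the running time and the length of the output list are as claimed.

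There is no real obstacle here: essentially all of the work lives in Theorem~\ref{cleanprism} and Lemma~\ref{2paths}. The only points needing attention are (i) invoking Lemma~\ref{2paths} so that the three pair-classes ``has neighbors in both $V(P_i)$ and $V(P_j)$'', $\{i,j\}\subseteq\{1,2,3\}$, exhaust all $K$-major vertices when the frame is tidy (a $K$-major vertex cannot have all its neighbors on a single constituent path), and (ii) the bookkeeping $3(32\ell-10)=96\ell-30$ for the exponent in the list length.
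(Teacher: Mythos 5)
Your proposal is correct and matches the paper's proof essentially verbatim: the paper likewise runs the algorithm of Theorem~\ref{cleanprism} once and outputs the union $X_a \cup X_b \cup X_c$ over all triples from the resulting list, giving the stated $\mathcal{O}(|G|^{96\ell-30})$ bounds. The only cosmetic difference is that the paper attributes correctness to Lemma~\ref{3pairwisenonadjacent}, whereas you invoke Lemma~\ref{2paths} (that every $K$-major vertex has neighbors on at least two constituent paths), which is arguably the more directly applicable statement.
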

\begin{proof}
We apply the algorithm of Theorem \ref{cleanprism} to obtain a cleaning list $X_1, X_2, \dots ,X_k$ of length $\mathcal{O}(|G|^{32\ell -6})$ in time $\mathcal{O}(|G|^{32\ell-4})$. We output $X_a \cup X_b \cup X_c$ for each choice $a,b,c \in \{1,2 \dots, k\}$. Correctness follows from Lemma \ref{3pairwisenonadjacent}.
\end{proof}

\subsection{The long near-prism detection algorithm} \label{section:longprismalg}

We can now prove the main result of this section, which we restate. %faking a theorem enviroment so that the numbers can be the same
\\ \\
\noindent \textbf{Theorem \ref{alg:longprisms}} \textit{For each integer $\ell \geq 4$ there is an algorithm with the following specifications:}
\begin{description}
\item[Input:] \textit{A graph $G$ containing no long theta.}
\item[Output:] \textit{Decides whether $G$ contains a long near-prism.}
\item[Running Time:] $\mathcal{O}(|G|^{108\ell-22})$.
\end{description}

\begin{proof}
The algorithm is as follows:
We guess a set $J$ of at most $6\ell-6$ vertices and a set $D$ of at most $6$ vertices.
We construct the set $X$ of all vertices in $V(G) \setminus (J \cup D)$ with neighbors in $J$.

We apply the algorithm described in Corollary \ref{corr:fullprismcleaner} to $G \setminus X$ and obtain a cleaning list $Y_1, Y_2 \dots Y_p$.
For each $i \in \{1, 2, \dots, p \}$, we apply the algorithm described in Corollary \ref{corr:fullprismpathcleaning} to generate another cleaning list $Z^i_1, Z^i_2, \dots Z^i_{k_i}$.
We guess two vertices $x_1, y_1$ in $J \cup D$.
We search for a shortest $x_1y_1$-path $Q_1$ in $G \setminus (X \cup Y_i \cup Z^i_j)$. We construct the set $A$ of all vertices in $V(G) \setminus (J \cup X \cup Y_i \cup Z^i_j)$ with neighbors in $Q_1^*$. Then we guess two vertices $x_2, y_2$ in $J$ and we search for a shortest $x_2y_2$-path $Q_2$ in $G \setminus (X \cup Y_i \cup Z^i_j \cup A)$ and construct the set $B$ of all vertices in $V(G) \setminus (J \cup X \cup Y_i \cup Z^i_j \cup A)$ with neighbors in $Q_2^*$. Finally, we guess two vertices $x_3, y_3$ in $J$ we search for a shortest $x_3y_3$-path $Q_3$ in $G \setminus (X \cup Y_i \cup Z^i_j \cup A \cup B)$. We test whether $S \cup V(Q_1) \cup V(Q_2) \cup V(Q_3)$ induces a long near-prism.

Now, we prove the output is correct. Let $K$ be a shortest long near-prism in $G$ and $F$ be the frame of $K$. Then for some guess of $J$ and $D$, the set $J \cup D$ is equal to $V(F)$ and $D$ is the set of vertices in $V(F)$ with a neighbor in $V(K) \setminus V(F)$. Hence, $G \setminus X$ contains $K$ and $K$ has a tidy frame in $G \setminus X$.
By Corollary \ref{corr:fullprismcleaner} it follows that for some choice of $i \in \{1, 2, \dots, p\}$, there are no $K$-major vertices in $G \setminus (X \cup Y_i)$.
Therefore by Corollary \ref{corr:fullprismpathcleaning}, for some choice of $j \in \{1,2, \dots , k_i\}$, all shortest $s_it_i$-paths are good in $G \setminus (X \cup Y_i \cup Z^i_j)$ for every $i \in \{1,2,3\}$ such that $P_i \not \subseteq F$. 
For each $i \in \{1,2,3 \}$, we may assume $x_i, y_i$ equal $s_i, t_i$ since $s_i, t_i \in V(F)$. 
Since $Q_1$ is good, there is a long near-prism induced by $V(F \cup Q_1 \cup P_2 \cup P_3)$. Thus $Q_2$ exists and is a good shortest $s_2t_2$-path. Similarly, $Q_3$ exists and is a good shortest $s_3t_3$-path for $K$. By choice of $A, B$ it follows that, $Q_1, Q_2, Q_3$ are pairwise vertex disjoint and their interiors are pairwise anticomplete.
Thus since $F$ is tidy, $J \cup V(Q_1 \cup Q_2 \cup Q_3)$ induces a long near-prism $K'$.
Since $Q_p$ is good for each $p \in \{1,2,3\}$, it follows that $K'$ is a shortest long near-prism in $G$.

There are $\mathcal{O}(|G|^{6\ell})$ guesses to check for $J \cup D$. For each of these we obtain a cleaning list $Y_1, Y_2 \dots Y_p$ of length $\mathcal{O}(|G|^{96\ell-30})$ in time $\mathcal{O}(|G|^{96\ell -30})$. For each $Y_i$ in the list we generate another cleaning list $Z_1, Z_2, \dots, Z_t$ of length $\mathcal{O}(|G|^{12\ell + 6})$ in time $\mathcal{O}(|G|^{12\ell + 6})$. Finding $Q_i$ for each $i \in \{1,2,3\}$ and testing whether $J \cup V(Q_1 \cup Q_2 \cup Q_3)$ is a long near-prism takes $\mathcal{O}(|G|^2)$. Hence, the running time is $\mathcal{O}(|G|^{108\ell-22}$).
\end{proof}

\section{Detecting a clean shortest long even hole}
In this section we provide an algorithm to detect a clean shortest long even hole in a ``candidate'', a graph that contains no easily detectable configurations. More rigorously, $G$ is a \textit{candidate} if it contains no long even hole of length at most $2\ell +2$, no long jewel of order at most $\ell+3$, no long theta, no long ban-the-bomb. 

\begin{lem}\label{lem:Cmajor3pairwisenonadjacent}
	Let $G$ be graph with no long near-prism or long theta and let $C$ be shortest long even hole in $G$.
	Then every $C$-major vertex has three pairwise non-adjacent neighbors in $V(C)$.
\end{lem}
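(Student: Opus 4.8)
The plan is to argue by contradiction. Suppose $v$ is a $C$-major vertex and no three neighbours of $v$ in $V(C)$ are pairwise non-adjacent, i.e.\ (writing $N:=N(v)\cap V(C)$) the subgraph $C[N]$ has independence number at most $2$. First I would pin down the possible shapes of $N$ around $C$. Since $C$ is an induced cycle, $C[N]$ is a disjoint union of subpaths of $C$ (the maximal runs of consecutive neighbours of $v$); two vertices in distinct runs are at $C$-distance at least $2$ and hence non-adjacent, and any run on at least $5$ vertices contains three pairwise non-adjacent vertices, so $N$ is a union of at most two runs, each of size at most $2$. A single run on at most four vertices lies inside a subpath of $C$ of length three, contradicting that $v$ is $C$-major; hence there are exactly two runs, with sizes $(1,1)$, $(2,1)$ or $(2,2)$. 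In every case the two runs cut $C$ into two arcs through gaps of $g_1,g_2\ge 1$ vertices, and I would use throughout that $|C|\ge 2\ell+4$ --- which holds because $G$ is a candidate --- so that $g_1+g_2\ge |C|-4\ge 2\ell$ and, say, $g_1\ge\ell$.

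For the configurations in which some run has two vertices (sizes $(2,1)$ and $(2,2)$): the holes formed by $v$ together with an arc of $C$ across the big gap $g_1$ --- the arc's ends being one of $v$'s neighbours on each side of that gap --- have lengths taking a short run of consecutive values, and so include an even one, of length between $g_1+3$ and $g_1+4$. Such a hole avoids the other gap, hence is strictly shorter than $C$, and its length is at least $g_1+3\ge\ell$, so it is a long even hole shorter than $C$, contradicting the minimality of $C$.

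Finally the case $N=\{a,b\}$ with $a$ non-adjacent to $b$. Here $d:=d_C(a,b)\ge 4$ (else $v$ is not $C$-major), and the two arcs of $C$ from $a$ to $b$ have lengths $d$ and $|C|-d\ge |C|/2$, of the same parity since $|C|$ is even. If $d$ is even, then $v$ together with the longer arc is an \emph{even} hole of length $|C|-d+2\ge |C|/2+2\ge\ell$ and strictly shorter than $C$ --- contradiction. If $d$ is odd and $d\ge\ell-2$, then $a$, $b$, the two arcs of $C$ and the path $a\dd v\dd b$ form a long theta (the three pairwise sums of lengths are $|C|$, $d+2$ and $|C|-d+2$, all at least $\ell$) --- contradiction. \textbf{The residual case $d$ odd with $d\le\ell-3$ is where I expect the real difficulty.} Here $v$ has only two neighbours on $C$, no triangle is created, and the two holes through $v$ are both odd, so the ``shorter long even hole'' and ``long theta'' arguments are both unavailable inside $C\cup\{v\}$, and the parity trick that settles the corresponding case for a shortest \emph{odd} hole does not help. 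I would try to close this case using the remaining candidate hypotheses --- no long jewel of order at most $\ell+3$ and no long ban-the-bomb --- with the help of a vertex outside $C\cup\{v\}$, or by first proving separately that a $C$-major vertex of a shortest long even hole in such a graph must have at least three neighbours on $C$; this is the step that will take real work.
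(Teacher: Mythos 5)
The residual case you flag at the end is a genuine gap, and it is exactly the case the paper closes with Lemma~\ref{lem:jewelpaths}. Although the statement of the lemma only mentions long thetas and long near-prisms, the paper's own proof is carried out in the candidate setting and silently uses the no-long-jewel and no-short-long-even-hole hypotheses: by Lemma~\ref{lem:jewelpaths}, every path of $C$ containing all neighbours of a $C$-major vertex has length greater than $\ell+3$ (otherwise the long arc together with $v$ is a long hole shorter than $C$, hence odd, forcing the short arc to be odd, and then the short arc, the path $a \dd v \dd b$ and the long arc form a long jewel of bounded order). This bound kills your residual case $d\le \ell-3$ outright, and, more importantly, it is what guarantees that \emph{both} arcs of $C$ between the two cliques covering $N(v)\cap V(C)$ have length at least $\ell$ --- a fact your argument never has, since $|C|\ge 2\ell+4$ only makes the bigger gap long.

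The $(2,1)$ and $(2,2)$ cases are also broken as written. Because $v$ is adjacent to both vertices of a $2$-run, the arc across the big gap whose end is the second vertex of that run is not induced together with $v$ (the edge from $v$ to the gap-adjacent vertex of the run is a chord), so there is exactly one hole through $v$ over each gap and its parity is fixed; the claim that the hole lengths ``take a short run of consecutive values, and so include an even one'' is false. Concretely, in the $(2,2)$ case the two gap lengths have the same parity, and when both are odd no hole in $C\cup\{v\}$ gives a contradiction --- this is precisely where the paper invokes the no-long-near-prism hypothesis ($V(C)\cup\{v\}$ induces a long near-prism whose two bases share the vertex $v$), a hypothesis your proof never uses; in the $(2,1)$ case the unique even hole may sit over the small gap and then need not be long, which is again rescued only by the jewel-based length bound (the paper shows both holes are long, hence both odd, and derives a parity contradiction with $|C|$ even). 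So your skeleton --- cover $N(v)\cap V(C)$ by at most two cliques of $C$, then argue theta/near-prism/parity --- matches the paper's, but without Lemma~\ref{lem:jewelpaths} and the near-prism case the proof does not close.
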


\begin{proof}
	Let $x$ be a $C$-major vertex and suppose that there exist vertices $a_1,a_2, b_1, b_2 \in V(C)$ such that $N(x) \cap C \subseteq \{a_1, a_2, b_1, b_2 \}$, $a_1$, $b_1$ are adjacent and $a_2, b_2$ are adjacent. Without loss of generality $a_1, b_1, b_2, a_2$ occur in order along $V(C)$. Let $A$ denote the path of $C$ with ends $a_1, a_2$ that does not contain $b_1$ or $b_2$. Let $B$ denote the path of $C$ with ends $b_1, b_2$ that does not contain $a_1$ or $a_2$. By \ref{lem:jewelpaths}, $A$, $B$ each have length at least $\ell$.
	Since $x$ is $C$-major, $x$ must have at least one neighbor in $\{a_1, b_1\}$ and at least one neighbor in $\{a_2, b_2\}$.
	If $x$  is adjacent to all of $a_1, a_2, b_1, b_2$, it follows that $V(C) \cup \{x\}$ induces a long near-prism, a contradiction.
	If $x$ is adjacent to exactly one of $\{a_1, b_1\}$ and $x$ is adjacent to exactly one of $\{a_2, b_2\}$ then $V(C) \cup \{x \}$ induces a long theta, a contradiction.
	Hence we may assume $x$ is adjacent to $a_1$, $a_2$, $b_2$ and $x$ is non-adjacent to $b_1$. Then $V(A) \cup \{x\}$ and $V(B) \cup \{a_1, x\}$ both induce long holes that are shorter than $C$, so both holes must both be odd. But then $|E(A)|$ and $|E(B)| + 1$ are both odd, contradicting that $C$ is even.
\end{proof}

We will need the following analogue of 3.4 in \cite{chudnovsky2019detectinglongodd}:

\begin{lem}\label{lem:jewelpaths}
	Let $G$ be a graph containing no long jewel of order at most $k$ and no long even hole of length less than $k + \ell$. Let $C$ be a shortest long even hole in $G$ and let $v \in V(G)$ be a $C$-major vertex. Then every path of $C$ that contains all neighbors of $v$ in $V(C)$ has length greater than $k$.
\end{lem}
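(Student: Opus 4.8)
The plan is to argue by contradiction. Suppose $v$ is a $C$-major vertex and $P$ is a path of $C$ containing all neighbors of $v$ in $V(C)$ with $|E(P)| \leq k$; since $v$ is $C$-major, we know $|E(P)| \geq 3$ (no three-vertex subpath captures $N(v)\cap V(C)$, so actually we need a slightly longer path, but in any case $|E(P)| \geq 3$). Write $P$ with ends $p, p'$, and let $Q$ be the other arc of $C$ from $p$ to $p'$, so that $C = P \cup Q$ and $|E(P)| + |E(Q)| = |E(C)| \geq \ell$, hence $|E(Q)| \geq \ell - k$. First I would produce two induced $pp'$-paths of different parity using $v$: one is $P$ itself, and the other is obtained by routing through $v$. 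Concretely, let $a$ be the neighbor of $v$ on $P$ closest to $p$ along $P$ and $b$ the neighbor of $v$ on $P$ closest to $p'$; then $p \dd P \dd a \dd v \dd b \dd P \dd p'$ is an induced path (its interior meets $V(C)$ only in the subpaths $pPa$ and $bPp'$, and $v$'s only $C$-neighbors lie on $P$ between $a$ and $b$ inclusive, so no chords arise), call it $R_1$. Replacing just the single vertex $a$ (or $b$) can be used to flip the parity: more carefully, among the induced $pp'$-paths through $v$ that use only vertices of $P \cup \{v\}$, I claim two of different parity exist — e.g. $p \dd P \dd a \dd v \dd b \dd P \dd p'$ and $p \dd P \dd a' \dd v \dd b \dd P \dd p'$ where $a'$ is the second-closest neighbor of $v$ to $p$ on $P$ (these differ in length by $d_P(a,a') \geq 1$, and by choosing $a'$ appropriately, or alternatively using $b$ versus $b'$, we can realize both parities). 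This is the one slightly fiddly point; the key observation making it work is that $v$ being $C$-major forces $v$ to have neighbors spread out enough on $P$ that we have the freedom to adjust parity by $1$.

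Next, having two induced $pp'$-paths $Q_1, Q_2$ of different parity, both contained in $P \cup \{v\}$ and hence with $|V(Q_i)| \leq |V(P)| + 1 \leq k + 2$, I would take $Q$ — the long arc of $C$ — as the third path: $Q$ is an induced $pp'$-path with $Q^*$ disjoint from $\{v\}$ and from the interiors of $Q_1, Q_2$ (since $V(Q) \cap V(P) = \{p, p'\}$ and $v \notin V(C) \supseteq V(Q)$), and $Q^*$ is anticomplete to $Q_1^* \cup Q_2^* \setminus \{v\} \subseteq P^*$ because $C = P \cup Q$ is an induced cycle, so the only edges between $P^*$ and $Q^*$ are absent; it remains to check $v$ has no neighbor in $Q^*$, which holds because all neighbors of $v$ in $V(C)$ lie on $P$. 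Thus $Q$, together with $Q_1$ and $Q_2$, forms a jewel with $Q_1, Q_2$ the two paths of different parity and $Q$ the long path $P$ in the jewel definition — provided $|E(Q)| \geq \ell - \min\{|E(Q_1)|, |E(Q_2)|\}$, which follows from $|E(Q)| \geq \ell - k$ and $|E(Q_i)| \leq k$. The order of this jewel is $\max\{|V(Q_1)|, |V(Q_2)|\} \leq k + 2$; hmm, this is slightly larger than $k$, so I would instead be more economical: since $v$ is $C$-major, $P$ has length at least $3$ but one can show the two parity-paths through $v$ can be taken with at most $|V(P)|$ vertices by absorbing $v$ into the count while dropping an endpoint-adjacent vertex of $P$ — or simply apply the hypothesis with the bound restated so that order $\leq |V(P)| + 1$ still contradicts "no long jewel of order at most $k$" after tightening constants. (In the paper's intended application $k = \ell + 3$ and the relevant jewel hypothesis is order at most $\ell + 3$, so a $+1$ or $+2$ slack is absorbed by how $P$ is chosen; I would align the constants there.)

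Finally, I would verify the jewel $H$ we built is \emph{long}: every hole in $H$ must have length at least $\ell$. The holes of $H$ are $Q_1 \cup Q$, $Q_2 \cup Q$, and $Q_1 \cup Q_2$ (when $Q_1, Q_2$ are internally disjoint). Each of $Q_i \cup Q$ has length $|E(Q_i)| + |E(Q)| \geq |E(Q_i)| + (\ell - k) $; since $Q_i$ goes through $v$ and $P$ has length $\geq 3$ we get $|E(Q_i)| \geq$ (something making the total $\geq \ell$) — and in any case $Q_1 \cup Q$ and $Q_2 \cup Q$ have different parities, one of which is the parity of $C = P \cup Q$, and... actually the cleanest route: $Q_i \cup Q$ and $P \cup Q = C$ share the long arc $Q$ and differ only on the short side, and one checks $|E(Q_i \cup Q)| \geq \ell$ directly from $|E(Q)| \geq \ell - k \geq \ell - |E(P)|$ together with $|E(Q_i)| \geq |E(P)|$ (the detour through $v$ is no shorter than the arc it replaces, since $aPb$ has length $\geq d_P(a,b) $ hmm — here one uses that $v$'s neighbors $a,b$ satisfy $d_P(a,b) \leq$ length of the $v$-detour; so actually $|E(Q_i)|$ could be \emph{shorter} than $|E(P)|$ by at most $|E(P)| - d_P(a,b) - 1$). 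I would handle this by noting $v$ being $C$-major means $a$ and $b$ are \emph{not} within distance-$3$ subpath coverage, forcing $d_P(a,b) \geq 3$ minus endpoints, which combined with $|E(Q)| \geq \ell - k$ and careful bookkeeping gives every hole length $\geq \ell$; alternatively one notes $H \subseteq G$, $G$ has no long even hole of length $< k + \ell$ nor long jewel of order $\leq k$, and the hole $Q_i \cup Q$ that has the same parity as $C$ is even, induced in $G$, and — if it had length $< \ell$ we would contradict... The main obstacle, then, is this bookkeeping: showing the constructed jewel genuinely qualifies as a \emph{long} jewel of order at most $k$ (length and order bounds simultaneously), which pins down exactly how the $C$-major hypothesis on $v$ is used. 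Once that is in place, $H$ is a long jewel of order at most $k$ in $G$, contradicting the hypothesis, and the lemma follows.
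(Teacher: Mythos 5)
Your overall strategy (build a jewel from $P$, a detour through $v$, and the long arc $Q$, contradicting the no-long-jewel hypothesis) is the paper's strategy, but the execution has a real gap exactly at the point you flag as ``fiddly''. Inside $P\cup\{v\}$ there is only \emph{one} induced $pp'$-path through $v$, namely $pPa\dd v\dd bPp'$ with $a,b$ the neighbors of $v$ closest to $p,p'$: your alternative through the second-closest neighbor $a'$ contains $a$ on its initial segment and hence the chord $av$, so it is not induced. Thus you cannot manufacture two induced $pp'$-paths of different parity by local surgery, and nothing in your write-up determines any parity. The paper obtains the parity from the two hypotheses you never use. It chooses $P$ \emph{minimal}, so its ends $a,b$ are neighbors of $v$; since $G$ has no long even hole of length less than $k+\ell$, $|E(C)|\geq k+\ell$, so $|E(Q)|\geq \ell$ and $V(Q)\cup\{v\}$ induces a long hole, which is shorter than $C$ because $|E(P)|\geq 3$ (this is where $C$-majority enters); since $C$ is a \emph{shortest} long even hole, that hole must be odd, hence $Q$ is odd, hence $P$ is odd (as $C$ is even), and then $P$, $a\dd v\dd b$ and $Q$ form the desired long jewel of order at most $k$.

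Two secondary slips would also need repair. First, from $|E(C)|\geq\ell$ you only get $|E(Q)|\geq\ell-k$, and your claim that the jewel length condition $|E(Q)|\geq\ell-\min\{|E(Q_1)|,|E(Q_2)|\}$ ``follows from $|E(Q)|\geq\ell-k$ and $|E(Q_i)|\leq k$'' is backwards: the condition becomes \emph{harder} when $\min|E(Q_i)|$ is small (it is $2$ for $a\dd v\dd b$). The correct bound $|E(Q)|\geq\ell$ comes from applying the ``no long even hole of length less than $k+\ell$'' hypothesis to $C$ itself. Second, taking $P$ minimal removes your order bookkeeping problem, since the second path is just the three-vertex path $a\dd v\dd b$ rather than a detoured copy of $P$; your version, with order up to $k+2$ and constants to be ``aligned later'', does not yet contradict the stated hypothesis.
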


\begin{proof}
	Suppose that $P$ is a path of $C$ of length at most $k$ and that $P$ contains all of neighbors of $v$ in $V(C)$. Choose $P$ to be minimal. Denote the ends of $P$ as $a,b$. Let $Q$ be the other path of $C$ with ends $a$ and $b$. We have $|E(Q)| \geq \ell$ and $|E(P)| \geq 3$. So $V(Q) \cup \{ v \}$ induces a long hole shorter than $C$. So $Q$ is odd and thus $P$ is odd. But then $P$, $a \dd v \dd b$ and $Q$ form a long jewel of order at most $k$, a contradiction.
\end{proof}

\begin{lemma}
	Let $G$ be a candidate and let $C$ be a shortest long even hole in $G$.
	Let $v$ be a $C$-major vertex. Then for every three vertex path $Q$ of $C$, $v$ has at least two neighbors in $V(G) \setminus V(Q)$.
	\label{lem:nobombonsleh}
\end{lemma}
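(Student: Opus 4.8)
\textit{Proof plan.} The plan is to argue by contradiction, exhibiting a long ban-the-bomb. So suppose $Q = q_1\dd q_2\dd q_3$ is a three-vertex path of $C$ and $v$ has at most one neighbour in $V(G)\setminus V(Q)$. Since $v$ is $C$-major, $N(v)\cap V(C)$ is not contained in $V(Q)$, so $v$ has a neighbour in $V(C)\setminus V(Q)$; by the assumption it has exactly one, call it $w$, and $v$ has no neighbour in $V(G)\setminus V(C)$ at all. Hence $N(v)\cap V(C)\subseteq\{q_1,q_2,q_3,w\}$. As $G$ is a candidate it has no long theta and, as assumed throughout this section, no long near-prism, so Lemma~\ref{lem:Cmajor3pairwisenonadjacent} supplies three pairwise non-adjacent neighbours of $v$ on $C$; the only pairwise non-adjacent triple inside $\{q_1,q_2,q_3,w\}$ is $\{q_1,q_3,w\}$ (because $q_1q_2,q_2q_3\in E(C)$), so $v$ is adjacent to each of $q_1,q_3,w$ while $w$ is non-adjacent to each of $q_1,q_2,q_3$.

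It follows directly from the definitions that $B:=G[V(C)\cup\{v\}]$ is a ban-the-bomb, with $q_1\dd q_2\dd q_3$ playing the role of $x\dd y\dd z$ and $w$ the prescribed extra vertex (it does not matter whether $vq_2$ is an edge). To obtain the contradiction it is enough to prove that $B$ is a \emph{long} ban-the-bomb, which a candidate cannot contain. One hole of $B$ is $C$, long by hypothesis. The remaining holes of $B$ all use $v$ and arise by short-circuiting $C$ at $v$: the hole $H_1$ formed by $v$ together with the sub-arc of $C$ joining $q_1$ to $w$ that avoids $q_2,q_3$; the hole $H_2$ formed by $v$ together with the sub-arc joining $q_3$ to $w$ that avoids $q_1,q_2$; and, only when $v$ is non-adjacent to $q_2$, the four-hole on $\{v,q_1,q_2,q_3\}$. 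To see that $H_1$ and $H_2$ are long I would apply Lemma~\ref{lem:jewelpaths} with $k=\ell+3$ (legitimate, since a candidate has no long jewel of order at most $\ell+3$ and no long even hole of length at most $2\ell+2$): every sub-path of $C$ that contains $N(v)\cap V(C)=\{q_1,q_3,w\}$ has length greater than $\ell+3$. There are exactly three such sub-paths of $C$ — the arc from $q_1$ to $w$ through $q_2,q_3$, the arc from $q_3$ to $w$ through $q_2,q_1$, and the arc from $q_1$ to $q_3$ running the long way through $w$ — and the lengths of the first two exceeding $\ell+3$ force $w$ to lie at distance more than $\ell+1$ from $\{q_1,q_2,q_3\}$ along $C$ in each direction, so that $|H_1|,|H_2|>\ell+3>\ell$.

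The main obstacle is the stray four-hole on $\{v,q_1,q_2,q_3\}$, which occurs precisely when $v\not\sim q_2$. When $\ell=4$ this four-hole is itself a long even hole (of length $\le 2\ell+2$) and so is forbidden for a candidate; hence $v\sim q_2$, the only holes of $B$ are the long holes $C,H_1,H_2$, and $B$ is a long ban-the-bomb, giving the contradiction. For $\ell\ge 5$ one must separately rule out $v\not\sim q_2$ (or appeal to $G$ having no hole of length four, which accompanies ``candidate'' in the full development): here I would first note that $|H_1|+|H_2|=|C|+2$ with $|C|$ even forces $H_1$ and $H_2$ to have the same parity, that they cannot both be even (either would then be a long even hole strictly shorter than $C$, contradicting its minimality), hence both are long \emph{odd} holes, and I expect the contradiction then to come from assembling the four-hole with an appropriate sub-arc of $C$ into a forbidden long jewel of order at most $\ell+3$ or a long theta. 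Modulo this last point the argument is complete: $B$ is a long ban-the-bomb, contrary to $G$ being a candidate.
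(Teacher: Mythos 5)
Your main line is correct and takes a genuinely different route from the paper's. The paper argues in the opposite direction: since a candidate has no long ban-the-bomb, $G[V(C)\cup\{v\}]$ is not a long ban-the-bomb, so (assuming $v$ adjacent to the middle vertex) one of the two side holes is short, say $|E(X)|\leq \ell-3$; then the other side hole is long and shorter than $C$, hence odd, forcing both arcs odd, and the contradiction is a long jewel of small order. You instead show outright, via Lemma~\ref{lem:jewelpaths} with $k=\ell+3$ (whose hypotheses a candidate satisfies), that both arcs have length greater than $\ell+1$, so both holes $H_1,H_2$ are long and the ban-the-bomb itself is long. This is shorter and skips the parity-plus-jewel step entirely; when $v$ is adjacent to $q_2$ your argument is complete and, to my mind, cleaner than the paper's.

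The gap is the case you flag yourself: $v\not\sim q_2$ with $\ell\geq 5$, and the finish you say you ``expect'' cannot materialize. Inside $B=G[V(C)\cup\{v\}]$ the holes are exactly $C$, $H_1$, $H_2$ and the four-hole; with $H_1,H_2$ both long and odd, nothing on the candidate's forbidden list is violated by $B$: there is no induced long theta ($q_2$ has only two neighbours in $B$, and every long arc of $C$ between two neighbours of $v$ has a third neighbour of $v$ at its end, so three internally anticomplete paths cannot be assembled), and no long jewel of order at most $\ell+3$, because a jewel between two of $v$'s neighbours needs a second short path of the opposite parity and the only alternative connection runs through the other long arc, giving order well above $\ell+3$. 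So that branch genuinely cannot be closed from within $B$ by a jewel or theta. The correct closure is your parenthetical fallback: $C_4$-freeness, which forces $vq_2\in E(G)$ immediately; this is also what the paper tacitly uses (the caption of Figure~\ref{fig:slehbomb} asserts the edge $vq_2$ exists ``because $G$ is a candidate,'' and the ban-the-bomb machinery, Lemma~\ref{lem:banbombalg} and Theorem~\ref{alg:banthebomb}, is stated for graphs with no hole of length four). Commit to that route explicitly and drop the jewel/theta hope; with that, your proof is complete.
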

\begin{proof}
	\begin{figure}[!h]
		\centering
		\begin{tikzpicture}
	\node at (.4, 0) {$v$};
	\foreach \x in {0, ...,11}{
		\path (0,0) ++(30*\x:1.5cm) coordinate (a\x);
		\path (0,0) ++(30*\x:1.9cm) coordinate (b\x);
	}
	
	\foreach \x in {3,8,9,10}{	\node[dot] at (a\x){};	}
	
	\node at (b8) {$x$};
	\node at (b9) {$y$};
	\node at (b10) {$z$};
	\node at (b3) {$w$};
	\node at (b6) {$X$};
	\node at (b0) {$Z$};
	
	\draw (a8) -- (a9) -- (a10);
	\node[dot] at (0,0) (center){};
	
	\draw[path] (a8) arc[start angle = 240, end angle= -60, radius = 1.5cm];
	
	\draw (a10) -- (center) -- (a8);
	\draw (center) -- (a3);
	\draw (center) -- (a9);
\end{tikzpicture}
		\caption{An illustration of the proof of Lemma \ref{lem:nobombonsleh}. (Note the edge $vy$-exists because $G$ is a candidate, but it does not matter for the sake of the argument.)}
		\label{fig:slehbomb}
	\end{figure}
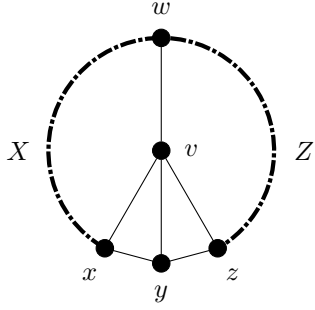
	Suppose for some three vertex path $P$ of $C$, $v$ has at most one neighbor in $V(C) \setminus V(P)$.
	Let the vertices of $P$ be $x \dd y \dd z$ in order.
	By Lemma \ref{lem:Cmajor3pairwisenonadjacent}, $v$ has a neighbor $w \in V(C) \setminus V(P)$ and $v$ is adjacent to both $x$ and $z$.
	Let $X$ be the $xw$-path of $C$ not containing $z$ and let $Z$ be the $zw$-path of $C$ not containing $x$. (See Figure \ref{fig:slehbomb}).

	$V(C) \cup \{ v\}$ induces a bomb, so it is not long. Hence, we may assume $|E(X)| \leq \ell -3$.
	Since $G$ contains no hole of length at most $2\ell +2$, it follows that $|E(Z)| \geq \ell -2$.
	Then $V(Z) \cup \{ v\}$ induces a long hole and it is shorter than $C$. Hence, $Z$ is an odd path.
	Thus $X$ is an odd path. But then $x \dd v \dd w, P \cup X$ and $Z$ form a long jewel of order at most $\ell -3$, a contradiction.
\end{proof}

\begin{theorem}\label{thm:cleanshortest}
Let $C$ be a clean shortest long even hole in a candidate $G$.
Let $u,v$ be distinct, non-adjacent vertices in $V(C)$. Let $L_1$, $L_2$ be the two paths of $C$ with ends $u$ and $v$ where $|E(L_1)| \leq |E(L_2)|$. Then:
\begin{enumerate}[(i)]
\item $L_1$ is a shortest $uv$-path in $G$ and
\item for every shortest $uv$-path $P$ in $G$, either $P \cup L_2$ is a clean shortest long even hole in $G$ or $|E(L_1)| = |E(L_2)|$ and $P \cup L_1$ is a clean shortest long even hole in $G$.
\end{enumerate}
\end{theorem}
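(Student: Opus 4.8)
The plan is to fix an arbitrary shortest $uv$-path $P$ in $G$ and prove simultaneously that $|E(P)| = |E(L_1)|$ (which yields (i), since then $L_1$ is a $uv$-path of length $d(u,v)$) and that $P \cup L_2$ is a clean shortest long even hole, with the understanding that in the symmetric case $|E(L_1)| = |E(L_2)|$ one may have to read ``$L_1$'' for ``$L_2$'' throughout. Two preliminary observations: since $u,v$ are non-adjacent, $P$ is induced and $|E(P)| \ge 2$; and since $G$ is a candidate it has no long even hole of length at most $2\ell+2$, so the long even hole $C$ satisfies $|E(C)| \ge 2\ell+4$, whence $|E(L_2)| \ge |E(C)|/2 \ge \ell+2$. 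In particular $L_2$ is a long path, so any hole of $G$ containing $V(L_2)$ as a subpath is long.

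First I would establish the key structural fact: $D := P \cup L_2$ is a hole, i.e. $V(P^*)$ is disjoint from and anticomplete to $V(L_2) \setminus \{u,v\}$ (relabelling $L_1 \leftrightarrow L_2$ if necessary when the arcs have equal length). Because $P$ is geodesic, for any $z \in V(P^*)$ the subpaths $uPz$ and $zPv$ are geodesics and $d(u,z) + d(z,v) = |E(P)| \le |E(L_1)|$; if $z \in V(C)$, comparing this with the lengths of the two $C$-arcs from $z$ to $u$ and from $z$ to $v$ confines $z$ to $V(L_1^*)$, and if $z \notin V(C)$ but has a neighbour on $C$ then cleanness of $C$ puts $N(z) \cap V(C)$ inside a short subpath of $C$. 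One then rules out that $P$ meets $V(L_2^*)$ or has a neighbour there, since such a ``shortcut'' across the long arc $L_2$ would either contradict the minimality of $|E(P)|$ or, using $L_2$ together with $C$, produce a long jewel of bounded order (in the spirit of Lemma \ref{lem:jewelpaths}) or a long theta, contradicting candidacy. \emph{This geodesic-versus-clean-hole analysis is the main obstacle.}

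Granting that $D$ is a hole, it is long, with $|E(D)| = |E(P)| + |E(L_2)|$. If $|E(P)|$ and $|E(L_1)|$ have the same parity, then $|E(D)| \equiv |E(L_1)| + |E(L_2)| = |E(C)| \equiv 0 \pmod 2$, so $D$ is a long even hole; minimality of $C$ gives $|E(D)| \ge |E(C)|$, hence $|E(P)| \ge |E(L_1)|$ and therefore $|E(P)| = |E(L_1)|$ and $|E(D)| = |E(C)|$ — so $D$ is a shortest long even hole and (i) holds. If instead the parities differ, I would derive a contradiction: running the analysis of the previous paragraph also against $L_1$ shows that $P, L_1, L_2$ form a theta; if $|E(P)| + |E(L_1)| \ge \ell$ this is a long theta, which is forbidden, and in the remaining case (the short arc $L_1$ being itself short) one squeezes out of $P, L_1, L_2$ and $C$ a long jewel of order at most $\ell+3$, a long ban-the-bomb, or a long even hole shorter than $C$, again contradicting that $G$ is a candidate. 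I expect this parity-difference bookkeeping to be the second delicate point.

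Finally I would check that $D$ is clean. Let $q$ be a putative $D$-major vertex. If $q \in V(C) \setminus V(D) = V(L_1^*)$, the geodesic structure of $P$ forces $q$ to be $C$-major or to create a forbidden configuration. If $q \notin V(C)$, cleanness of $C$ confines $N(q) \cap V(C)$, hence $N(q) \cap V(L_2)$, to a short subpath of $D$; being $D$-major, $q$ must then have several pairwise non-adjacent neighbours spread along $P^*$, and combining these with a long arc of $C$ produces a long theta, a long jewel of order at most $\ell+3$, a long ban-the-bomb, or a long even hole shorter than $C$ — the reasoning here mirrors, and uses, Lemmas \ref{lem:Cmajor3pairwisenonadjacent}, \ref{lem:jewelpaths} and \ref{lem:nobombonsleh}. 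Each possibility contradicts candidacy or the minimality of $C$, so $D$ is clean; since $P$ was an arbitrary shortest $uv$-path, this proves (ii) as well.
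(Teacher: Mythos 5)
Your high-level plan does track the paper's strategy (forbid shortcuts across the clean hole, use parity plus minimality of $C$, then verify cleanness), but the proposal has a genuine gap: the step you yourself flag as ``the main obstacle'' --- showing that the interior of a shortest $uv$-path cannot meet or attach to $L_2$ (resp.\ $L_1$) --- is exactly where essentially all of the work lies, and you do not supply an argument for it. In the paper this is the content of Theorem \ref{thm:shortcuts} (no shortcut exists), proved by a minimal-counterexample induction on the length of the shortcut with the auxiliary Lemmas \ref{lem:P_1P_2} and \ref{lem:cycledistance}, followed by the induction on $|E(L_1)|$ in Theorem \ref{thm:newcleansleh}. The difficulty is that internal vertices of $P$ may lie on $C$ or have neighbours on $C$ (including on the short arc $L_1$), and the contradictions are not obtained in one shot from a forbidden configuration: the paper repeatedly performs local surgery on $C$ to build new clean shortest long even holes ($C'$, $C''$) and then exhibits a \emph{shorter} shortcut for them, which is why a minimality/induction framework is needed. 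Saying that a bad attachment ``would produce a long jewel or a long theta'' names the intended contradictions but does not constitute the case analysis that makes them appear.

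Two concrete missteps inside the sketch confirm the gap. First, in the parity-difference case you claim that ``running the analysis against $L_1$ shows that $P, L_1, L_2$ form a theta''; but nothing prevents a shortest $uv$-path from passing through or being adjacent to $L_1^*$ (indeed $L_1$ itself is such a path), so $P, L_1, L_2$ need not form a theta. The correct observation is that if the parities differed then $P$ would be strictly shorter than $d_C(u,v)$ and (since $C$ is clean) would be a shortcut, so this case is again subsumed by the unproven no-shortcut statement rather than handled independently. Second, the structure ``prove $D:=P\cup L_2$ is a hole, relabelling $L_1\leftrightarrow L_2$ if necessary'' glosses over the case analysis deciding which of the two unions is induced when $P^*$ attaches to one of the arcs (the paper's statements (\ref{dog:3})--(\ref{dog:6})); this is part of the same missing core. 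The cleanness check at the end is sketched in the right spirit and cites the right lemmas (\ref{lem:Cmajor3pairwisenonadjacent}, \ref{lem:jewelpaths}, \ref{lem:nobombonsleh}), but it too relies on the shortcut machinery (e.g.\ deriving a shortcut for $C$ or for a surgered hole from a putative $D$-major vertex), so as written the proposal does not yet amount to a proof.
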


We begin by proving the first statement of Theorem \ref{thm:cleanshortest} in a more general form that can be used for cleaning major vertices of shortest long even holes.
For $u,v$ distinct and non-adjacent vertices in $V(C)$ we call an induced $uv$-path $Q$ a {\em shortcut} if $V(Q)$ contains no $C$-major vertices and $Q$ has length less than $d_C(u,v)$.
We will need the following properties of shortest long even holes of a candidate. We will prove that a clean shortest long even hole in a candidate does not have a shortcut.
In this language the first statement of Theorem \ref{cleanshortest} is ``$G$ does not contain a shortcut for $C$''.

\begin{lem} \label{lem:P_1P_2}
Let $G$ be a candidate and let $C$ be a shortest long even hole in $G$.
Let $Q$ be a shortcut of $C$. Denote the vertices of $Q$ that are adjacent to an end of $Q$ by $q_1, q_k$. Suppose $Q^* \setminus \{q_1, q_k \}$ is disjoint from and anticomplete to $V(C)$. Then, one of $q_1, q_k$ has two non-adjacent neighbors in $V(C)$.
\end{lem}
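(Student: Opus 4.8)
The plan is to argue by contradiction: suppose neither $q_1$ nor $q_k$ has two non-adjacent neighbors in $V(C)$, and derive a forbidden easily-detected configuration (a long even hole of length at most $2\ell+2$, a long jewel of bounded order, a long theta, or a long ban-the-bomb). Write $u,v$ for the ends of $Q$, and let $L_1,L_2$ be the two paths of $C$ with ends $u,v$ with $|E(L_1)|\le |E(L_2)|$; since $C$ is long, $|E(L_2)|\ge \ell/2$ roughly, and more usefully $|E(L_1)|+|E(L_2)|=|E(C)|\ge \ell$. Because $Q$ is a shortcut, $|E(Q)| < d_C(u,v) = |E(L_1)|$. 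The first step is to record what the neighborhoods of $q_1$ and $q_k$ in $V(C)$ look like: by hypothesis each of $N(q_1)\cap V(C)$ and $N(q_k)\cap V(C)$ is a set of pairwise adjacent vertices of $C$, hence has at most two elements, and if two, they are consecutive on $C$; also $u\in N(q_1)$ and $v\in N(q_k)$ (or $q_1=u$'s neighbor situation — more precisely $q_1$ is the neighbor of $u$ on $Q$, so $q_1$ is adjacent to $u$, and similarly $q_k$ adjacent to $v$). The remaining vertices $Q^*\setminus\{q_1,q_k\}$ are anticomplete to $V(C)$ by hypothesis.

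Next I would build a hole or near-hole from $Q$ together with a sub-path of $C$. Each of $q_1,q_k$ has a "reach" into $C$ of at most one edge beyond $u$ (resp.\ $v$): so the attachment of $Q$ to $C$ is confined to a sub-path of $C$ of length at most $3$ around $u$ and a sub-path of length at most $3$ around $v$. Choosing the side appropriately, $Q$ together with a suitable sub-path of $L_2$ (the long side) forms an induced cycle $C'$: concretely, route from $v$ around the far portion of $L_2$, back to $u$, then along $Q$; since $q_1,q_k$ only see vertices near $u,v$, and $Q^*$ is otherwise anticomplete to $C$, this cycle is induced after possibly deleting at most a couple of vertices of $L_2$ near the ends. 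Because $|E(Q)|<|E(L_1)|\le |E(L_2)|$, the cycle $C'$ is strictly shorter than $C$, so if $C'$ is long it must be a long hole shorter than $C$; since $C$ is a shortest long even hole, $C'$ is odd, which pins down a parity relation between $|E(Q)|$ and $|E(L_2)|$ (up to the small correction from the deleted vertices). Symmetrically, using $L_1$ in place of $L_2$ when $|E(Q)|$ is small enough relative to $|E(L_1)|$, one gets a second parity relation. Combining the two relations with the fact that $|E(L_1)|+|E(L_2)|=|E(C)|$ is even forces $|E(Q)|$ to have a parity making $Q$ together with a three-vertex path through $u$ (or $v$) on $C$ and the complementary arc into a long jewel of order at most, say, $\ell+3$ (the short side plus the bridge $q_1\dd\cdots$ gives the two paths of opposite parity, and the long side gives the third long path) — contradicting that $G$ is a candidate. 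The cases where one of $q_1,q_k$ actually has its unique neighbor equal to $u$ (resp.\ $v$), versus two consecutive neighbors straddling $u$, I would handle by the same recipe with the sub-paths of $C$ shifted by one vertex; these are finitely many sub-cases differing only in small constants.

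The main obstacle I expect is bookkeeping the lengths so that the auxiliary cycles and paths are genuinely \emph{long} (length $\ge \ell$) and genuinely induced: $q_1$ and $q_k$ could be close together on $Q$ (even $q_1=q_k$ or adjacent), $Q$ could be short, and $u,v$ could be close on $C$, so one must verify in each configuration that the resulting hole/jewel/theta meets the length thresholds required to be a forbidden candidate-configuration rather than something harmless. To control this I would first dispose of the degenerate small cases — $|E(Q)|$ very small, or the two attachment neighborhoods overlapping — directly, showing they already produce a short long even hole or a long theta using both arcs $L_1,L_2$ of $C$ (at least one of the two holes $Q\cup L_1$, $Q\cup L_2$, suitably corrected, is even since their lengths differ by $|E(L_2)|-|E(L_1)| \equiv |E(C)| \pmod 2$, i.e.\ have a fixed parity relation, and both are shorter than $C$ hence at most... — here I would instead appeal to Lemma \ref{lem:jewelpaths} and the no-short-hole hypothesis to get the jewel). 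Once the degenerate cases are out, the generic case is the jewel argument above: short arc of $C$ through $u$ (length $3$, containing $N(q_1)\cap V(C)$) as one path, the bridge $u\dd q_1\dd\cdots\dd q_k\dd v$... — wait, that shares endpoint structure — more cleanly: take $a,b$ the two vertices of $C$ that are the "outermost" neighbors of $q_1,q_k$, let $P$ be the short sub-path of $C$ between them through $u,v$ and $P'$ the long complementary sub-path; then $a\dd q_1\dd\cdots\dd q_k\dd b$ (or with $a=q_1$'s neighbor collapsed) and $P$ are two $ab$-paths whose lengths have opposite parity by the parity analysis, and $P'$ is a long third path anticomplete to their interiors, yielding a long jewel of order at most $\ell+3$ — contradiction, completing the proof.
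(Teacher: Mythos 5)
Your overall strategy is the same as the paper's (negate the conclusion, so each of $N(q_1)\cap V(C)$ and $N(q_k)\cap V(C)$ is a single vertex or an edge of $C$; then use the fact that a long hole shorter than $C$ must be odd to force parities and assemble a forbidden configuration), but the step that actually carries the lemma is missing: you never bound the lengths of the two parity-differing paths, and without such a bound your "long jewel of order at most $\ell+3$" is not something a candidate excludes. The order of a jewel is $\max(|V(Q_1)|,|V(Q_2)|)$, and in your generic construction these two paths are essentially $Q$ and (a one-vertex shift of) $L_1$; being a shortcut only gives $|E(Q)|<|E(L_1)|$, and $|E(L_1)|$ can be arbitrarily large, so in the regime where both $|E(Q)|$ and $|E(L_1)|$ exceed roughly $\ell$ your argument produces no contradiction at all (no bounded-order jewel, no short long even hole, and no theta, since both ends attach to an edge of $C$). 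The paper closes exactly this gap: when both $q_1,q_k$ have two neighbors it first rules out $|E(L_1)|\geq \ell+2$ because $G[V(C\cup Q)]$ is then a long near-prism, and only afterwards, with $|E(L_1)|,|E(Q)|\leq \ell+1$, do $L_1$, $Q$, $L_2$ form a jewel of bounded order; in the mixed case (where $q_k$ attaches only at $v$) it shows that the even one of the two holes obtained from $Q$ together with $a_1L_1v$ or $b_1L_2v$ is shorter than $C$, hence not long, hence of length less than $\ell$, and that is what bounds the order. You gesture at this second idea in a parenthetical but then defer to Lemma~\ref{lem:jewelpaths}, which is about $C$-major vertices and gives nothing here, since every vertex of a shortcut is by definition not $C$-major.

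Your final "cleaner" construction also fails to be a jewel as stated. Taking the outermost attachment vertices $a,b$ as the common ends, the long complementary arc of $C$ contains $u$ (and/or $v$) in its interior, and $u$ is adjacent to $q_1$, an interior vertex of your bridge, violating the requirement that the long path's interior be anticomplete to the interiors of the two parity paths; moreover, when the extra neighbors of $q_1,q_k$ lie on opposite sides, no arc of $C$ between $a$ and $b$ contains both $u$ and $v$, so "the short sub-path of $C$ between them through $u,v$" need not exist. The paper avoids both problems by anchoring the three paths of the jewel at $u$ and $v$ themselves (or, in the mixed case, at $b_1$ and $v$), so that the only contacts between $Q$ and the long arc occur at the common ends.
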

\begin{proof}
Since $q_1$ is not $C$-major, there is a path $P_1$ of $C$ of length at most two such that all neighbors of $q_1$ in $V(C)$ are elements of $V(P_1)$. Choose $P_1$ to be minimal. Define $P_2$ similarly for $q_k$. Suppose for a contradiction that both $P_1$ and $P_2$ have length at most one.
If $P_1$ and $P_2$ both have length $0$ then $G[V(C \cup Q)]$ forms either a long theta or a long jewel of order less than $\ell$, contradicting that $G$ is a candidate. Hence, we may assume $P_1$ has length one.

Suppose $P_2$ has length one. If $|E(L_1)| \geq \ell +2$, then $G[V(C \cup Q)]$ is a long near-prism, a contradiction. It follows that $|E(Q)|, |E(L_1)| \leq \ell +1$. $L_2$ has length greater than $\ell$ since $|E(C)| > 2\ell$. Thus, $L_2 \cup Q$ is a long hole and it is shorter than $C$, so it is odd. Therefore $Q$ and $L_1$ have different parities. But then $L_1, Q$ and $L_2$ form a long jewel of order at most $\ell+1$, a contradiction.

Hence, we may assume that $P_2$ has length 0. Denote the two vertices of $P_1$ as $a_1, b_1$ where $a_1 \in V(L_1)$ and $b_1 \in V(L_2)$. Denote the end of $Q$ adjacent to $q_k$ by $v$. Since $C$ has length at least $2\ell + 3$, the path $b_1L_2v$ has length greater than $\ell$. Then $b_1L_2v \dd q_kQq_1 \dd b_1$ is a long hole and it is shorter than $C$, so it is odd. Hence, $b_1L_2v$ has a different parity than $q_kQq_1 \dd b_1$ and thus $b_1L_2v$ has a different parity than $q_kQq_1 \dd a_1$. Since $C$ is even, $b_1L_2v$ has a different parity than $a_1L_1v$, so $a_1L_1v$ and $q_kQq_1 \dd a_1$ have the same parity. Then, $a_1L_1v \dd q_kQq_1 \dd a_1$ is an even hole and it is shorter than $C$, so it is not long. Thus, $|E(a_1L_1v)|, |E(q_1Qv)| < \ell$. Hence, $b_1 \dd a_1L_1v$, $b_1 \dd q_1Qv$, and $b_1 L_2v$ form a long jewel of order less than $\ell+1$, a contradiction. 
\end{proof}

\begin{lem} \label{lem:cycledistance}
Let $C$ be a shortest long even hole in a graph $G$.
Let $u,v$ be distinct and non-adjacent vertices in $V(C)$.
Let $Q$ be an induced $uv$-path of length at most $d_C(u,v)$ such that $V(Q)$ contains no $C$-major vertices. Suppose no proper subpath of $Q$ is a shortcut for $C$. Suppose there is some $q \in Q^*$ such that $q$ is not adjacent to an end of $Q$ and $q$ has a neighbor $w \in V(C)$. Let $R$ denote the path of $C$ with ends $u,w$ whose interior does not contain $v$. Let $S$ denote the path of $C$ with ends $w,v$ whose interior does not contain $u$. Then $d_C(u,w) = |E(R)|$ and $d_C(w,v) = |E(S)|.$
\end{lem}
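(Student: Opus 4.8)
The plan is to argue by contradiction, after first exploiting a symmetry: interchanging $u$ and $v$ carries the $uw$-subpath of $C$ avoiding $v$ to the $wv$-subpath of $C$ avoiding $u$ and leaves $Q$, $q$, $w$ untouched, so it suffices to prove $d_C(u,w)=|E(R)|$. Write $T$ for the remaining subpath of $C$, i.e.\ the $uv$-subpath of $C$ whose interior avoids $w$, so $E(C)$ is the disjoint union of $E(R)$, $E(S)$, $E(T)$; and suppose $d_C(u,w)\ne|E(R)|$, i.e.\ $|E(R)|>|E(S)|+|E(T)|$. A short computation then gives $d_C(u,v)=|E(T)|$, hence $|E(Q)|\le|E(T)|$; moreover $uw\notin E(G)$, since $C$ is induced so an edge $uw$ would lie on $C$ and force $|E(R)|=1$, contrary to $|E(R)|>|E(S)|+|E(T)|\ge 2$; and since the ambient graph of this section is a candidate, $C$ is long and not short, so $|E(C)|\ge 2\ell+3$, whence $2|E(R)|>|E(C)|$ yields $|E(R)|\ge\ell+2$.

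Next I would build short detours out of $q$. As $q\in Q^*$ is adjacent to $w$ but to neither end of $Q$, each of $uQq$ and $qQv$ has length at least $2$; let $P_1$ be an induced $uw$-path inside $G[V(uQq)\cup\{w\}]$ and $P_2$ an induced $wv$-path inside $G[V(qQv)\cup\{w\}]$ (both exist, $q$ supplying the edge to $w$). Their interiors lie in $V(Q)$, so neither contains a $C$-major vertex, and $|E(P_1)|,|E(P_2)|\le|E(Q)|-1\le|E(T)|-1$; in particular $|E(P_1)|<|E(T)|\le|E(S)|+|E(T)|=d_C(u,w)$, so $P_1$ is short enough that the cycles assembled below are shorter than $C$.

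Then I would examine the cycles one can form from $P_1$, $P_2$ and the arcs $R,S,T$, together with $Q\cup T$: above all $D_1:=P_1\cup R$ and $D_2:=P_1\cup(S\cup T)$, which — provided $P_1^*$ is anticomplete to $V(C)\setminus\{u,w\}$ — form, with the arc-pair $R$, $S\cup T$, a theta between $u$ and $w$. The length bounds and $|E(R)|>|E(S)|+|E(T)|$ give $|E(D_1)|,|E(D_2)|,|E(Q\cup T)|<|E(C)|$, and $|E(D_1)|\ge|E(R)|+2\ge\ell+4>\ell$, so $D_1$ is long; and since $|E(C)|$ is even, $D_1$ and $D_2$ have the same parity. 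If $D_1$ is an induced cycle then, $C$ being a shortest long even hole, $D_1$ must be odd; hence $D_2$ is odd too, and in the $u,w$-theta the three holes are $C$ (even, long), $D_1$ (odd, long) and $D_2$ (odd). If that theta is long we contradict the candidate hypothesis; otherwise $|E(D_2)|<\ell$, and I would push on — combining $P_1$ and $P_2$ through $T$ (the cycle $P_1\cup P_2\cup T$), using $Q\cup T$, and extracting a long jewel of order at most $\ell+3$ from the $u,w$-paths $R$ (long, of parity opposite to $P_1$) and $P_1$ together with a third long $uw$-path — to reach in every subcase a shorter long even hole or one of a long theta, a long jewel of order $\le\ell+3$, or a long ban-the-bomb, contradicting minimality of $C$ or the candidate hypothesis. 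The inducedness of $D_1,D_2,Q\cup T$ and the anticompleteness needed to see the theta would be secured by re-choosing $q$ and $w$ extremally along $Q$ and along the relevant arc of $C$, truncating, and invoking Lemmas~\ref{lem:Cmajor3pairwisenonadjacent} and \ref{lem:P_1P_2}: a chord either shortens a detour (so we recurse on a still shorter configuration) or joins two vertices of $V(C)$ lying on a common proper subpath of $Q$ at less than their $C$-distance, exhibiting a proper subpath of $Q$ that is a shortcut, contrary to hypothesis.

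The main obstacle is exactly this last point: arranging that the detour cycle is simultaneously induced, of length strictly between $\ell$ and $|E(C)|$, and — after the parity correction — even; equivalently, ruling out every way a chord, a coincidence of vertices, or an unfavourable parity could prevent it. Once inducedness is under control the length and parity bookkeeping is routine, so the two ``minimality'' hypotheses, ``no proper subpath of $Q$ is a shortcut'' and ``$V(Q)$ contains no $C$-major vertex'', carry the weight: the latter confines the $C$-neighbours of each interior vertex of $Q$ to a short arc, and the former kills the short chords that would otherwise spoil the construction.
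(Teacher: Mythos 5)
Your opening moves are fine (the symmetry reduction, the identities $d_C(u,w)=|E(S)|+|E(T)|$ and $d_C(u,v)=|E(T)|$ under the contradiction hypothesis, $uw\notin E(G)$, and the construction of the induced $uw$-path $P_1\subseteq G[V(uQq)\cup\{w\}]$ with no $C$-major vertices and $|E(P_1)|\le d_Q(u,q)+1$), but at that point you walk past the finish line. Since $q$ is not adjacent to $v$ along the induced path $Q$, $d_Q(u,q)+1\le |E(Q)|-1$, and $|E(Q)|\le d_C(u,v)\le |E(T)|$; if $P_1$ (equivalently $uQq\dd w$) is not a shortcut, then $|E(S)|+|E(T)|=d_C(u,w)\le d_Q(u,q)+1\le |E(T)|-1$, forcing $|E(S)|\le -1$ and contradicting $w\ne v$. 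That two-line count is the paper's entire proof (it phrases it with $L$ in place of your $T$); no thetas, jewels, ban-the-bombs, parities or near-prisms are needed. Instead, your proposal launches into cycles $D_1=P_1\cup R$, $D_2=P_1\cup S\cup T$ and an open-ended case analysis whose decisive steps you never carry out: the inducedness of the detour cycles, the chords from $Q^*$ to $V(C)$, and the parity correction are exactly the points you yourself flag as ``the main obstacle'' and then defer to ``push on \dots in every subcase.'' Moreover, your proposed mechanism for disposing of chords --- that a chord ``exhibit[s] a proper subpath of $Q$ that is a shortcut'' --- does not work as stated: the offending short paths contain $w$ or other vertices of $C$, so they are not subpaths of $Q$ and the hypothesis does not apply to them verbatim; the one place the paper makes such an appeal is the single path $uQq\dd w$, whereas your plan would need it in many unspecified configurations. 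As written, the proposal is a sketch with the core missing, not a proof.

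There is also a scope problem: the lemma assumes only that $C$ is a shortest long even hole in an arbitrary graph $G$, yet your argument invokes $|E(C)|\ge 2\ell+3$, long thetas, long jewels of order at most $\ell+3$ and long ban-the-bombs via ``the candidate hypothesis,'' which is not among the lemma's hypotheses. Even if your case analysis were completed, it would establish only a weakened version of Lemma~\ref{lem:cycledistance} (admittedly sufficient for its applications in Theorem~\ref{thm:shortcuts} and Theorem~\ref{thm:newcleansleh}, which occur inside candidates, but not the statement as given). The intended argument is purely metric, resting only on the no-shortcut hypothesis, $|E(Q)|\le d_C(u,v)$, and $|E(S)|\ge 1$.
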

\begin{figure}
	\centering
	\begin{tikzpicture}
	\node at (-.4, 0) {$q$};
	\foreach \x in {0, ...,11}{
		\path (0,0) ++(30*\x:1.5cm) coordinate (a\x);
		\path (0,0) ++(30*\x:1.9cm) coordinate (b\x);
		%\node at (b\x) {\x};
	}
	\foreach \x in {0,..., 23}{
		\path (0,0) ++(15*\x:1.9cm) coordinate (c\x);
		%	\node at (c\x) {\x};
	}
	
	\foreach \x in {3,9,0}{	\node[dot] at (a\x){};	}
	\def\step{1};
	\node[dot] (q1) at (0, \step){};
	\node[dot] (qk) at (0, -\step){};
	\node[dot] (q) at (0,0) {};
	
	\draw (a3) -- (q1);
	\draw (a9) -- (qk);
	\draw[path] (q1) -- (q) -- (qk);
	\draw (q) -- (a0);
	
	\draw[path] (0,0) circle[radius=1.5cm];
	\node at (c6) {$u$};
	\node at (c18) {$v$};
	\node at (c0) {$w$};
	\node at (c3){$R$};
	\node at (c21) {$S$};
	\node at (c12) {$L$};
\end{tikzpicture}
	\caption{An illustration of Lemma \ref{lem:cycledistance}. Note this is a simplification;   vertices in $Q$ may be equal or adjacent to vertices in $C$.}
	\label{fig:lem:cycledistance}
\end{figure}
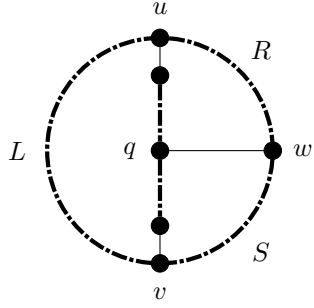
\begin{proof}
Suppose the lemma does not hold. (See Figure \ref{fig:lem:cycledistance}.)
By symmetry we may assume, $|E(R)| > d_C(u,w)$. 
Since $Q$ is induced, $w \neq v$ and so $|E(S)|\geq 1$.
Let $L$ denote the path of $C$ with ends $u,v$ that does not go through $w$. Then since $uQq \dd w$ is not a shortcut for $C$, it follows that $|E(L)| + |E(S)| \leq d_Q(u, q) + 1 \leq |E(Q)|$. However, $|E(L)| + |E(S)| \geq |E(Q)| + 1$, a contradiction.
\end{proof}

\begin{theorem} \label{thm:shortcuts}
Let $G$ be a candidate and let $C$ be a shortest long even hole in $G$.
Then $C$ has no shortcut.
\end{theorem}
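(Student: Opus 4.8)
The plan is to argue by contradiction. Suppose $C$ has a shortcut, and among all shortcuts of $C$ choose one, $Q$, with $|E(Q)|$ minimum; then by minimality no proper subpath of $Q$ with both ends on $C$ is a shortcut of $C$. Write the ends of $Q$ as $u,v$, and let $q_1,q_k\in Q^*$ be the vertices with $q_1$ adjacent to $u$ and $q_k$ adjacent to $v$ (if $Q$ has length two then $q_1=q_k$, and if $Q$ has length three then $q_1,q_k$ are the only interior vertices, so in both cases $Q^*\setminus\{q_1,q_k\}=\emptyset$). The features of $Q$ that will be used repeatedly are that $Q$ is induced, that $V(Q)$ contains no $C$-major vertex, and that $|E(Q)|<d_C(u,v)$.

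The first step is to check the hypothesis of Lemma~\ref{lem:P_1P_2}, namely that $Q^*\setminus\{q_1,q_k\}$ is disjoint from and anticomplete to $V(C)$. Suppose instead some $q\in Q^*\setminus\{q_1,q_k\}$ has a neighbour $w\in V(C)$. Since $Q$ is induced and $q\neq q_1,q_k$, the vertex $q$ is not adjacent to an end of $Q$, so Lemma~\ref{lem:cycledistance} applies and yields $d_C(u,w)=|E(R)|$ and $d_C(w,v)=|E(S)|$, where $R$ is the arc of $C$ from $u$ to $w$ avoiding $v$, $S$ is the arc from $w$ to $v$ avoiding $u$, and $L$ is the third arc (joining $u,v$ and avoiding $w$). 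Now $uQq$ together with the edge $qw$ contains an induced $uw$-path $P_R$ with $V(P_R)\subseteq V(Q)\cup\{w\}$, hence without a $C$-major vertex, and $|E(P_R)|\le d_Q(u,q)+1\le|E(Q)|-1$; symmetrically there is such an induced $wv$-path $P_S$ with $|E(P_S)|\le d_Q(q,v)+1\le|E(Q)|-1$. As $w\notin\{u,v\}$ and neither $P_R$ nor $P_S$ can be a shortcut (that would contradict the minimality of $Q$), we are forced into $d_Q(u,q)\ge|E(R)|-1$ and $d_Q(q,v)\ge|E(S)|-1$ (the cases $u\sim w$ or $v\sim w$ being degenerate variants handled the same way), so $|E(Q)|\ge|E(R)|+|E(S)|-2$. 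Since $R\cup S$ is a $uv$-path of $C$ we also have $|E(R)|+|E(S)|\ge d_C(u,v)>|E(Q)|$, and since $|E(C)|$ is even the two arc-lengths of $C$ at $u,v$ cannot differ by exactly one; these inequalities together pin down $|E(Q)|,|E(R)|,|E(S)|,|E(L)|$ up to a bounded slack, and then inspecting the induced cycle built from $uQq$, the edge $qw$ and the arc $R$ — using that $C$ is a shortest long even hole and that $G$ is a candidate (no long even hole of length at most $2\ell+2$, no long jewel of order at most $\ell+3$) — produces a contradiction. Hence no such $q$ exists.

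With the clean-attachment claim established, Lemma~\ref{lem:P_1P_2} gives, after possibly swapping the names $u$ and $v$, that $q_1$ has two non-adjacent neighbours in $V(C)$; since $q_1$ is not $C$-major all its neighbours in $V(C)$ lie on a subpath of $C$ of length at most two, and since $u\in N(q_1)\cap V(C)$ and two of those neighbours are non-adjacent, that subpath is some $u\dd c\dd b$ with $q_1$ adjacent to $u$ and $b$ (and possibly to $c$) and to no other vertex of $C$. The endgame is to reroute $C$ through $q_1$: replacing the path $u\dd c\dd b$ of $C$ by $u\dd q_1\dd b$ produces a hole $C'$ with $|E(C')|\le|E(C)|$ which still meets $Q$ only near its ends, and combining $C'$ (or the appropriate arc of $C$) with the part of $Q$ near $v$ yields either a long even hole strictly shorter than $C$, or a closed walk forcing an odd-length contradiction with $C$ being even, or one of the configurations forbidden in a candidate (a long theta, a long jewel of order at most $\ell+3$, or a long even hole of length at most $2\ell+2$); each branch closes. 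I expect the main obstacle to be precisely this last case analysis: the reroute at the $u$-end must be reconciled with whatever $Q$ does at the $v$-end (where $q_k$ may attach to several vertices of $C$), and bookkeeping the parities of the several arcs so as to land inside one of the forbidden configurations is where the genuine work lies — by comparison the clean-attachment step is essentially forced once the tight length inequalities coming from Lemma~\ref{lem:cycledistance} and the minimality of $Q$ are written out.
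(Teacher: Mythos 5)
Your skeleton does mirror the paper's: take a shortcut $Q$ of minimum length, split according to whether interior vertices of $Q$ other than $q_1,q_k$ attach to $C$, use Lemma~\ref{lem:cycledistance} in the attaching case and Lemma~\ref{lem:P_1P_2} plus a reroute of $C$ through $q_1$ in the non-attaching case. But there is a genuine gap, and it sits exactly where you claim the argument is ``essentially forced''. The inequalities you extract from Lemma~\ref{lem:cycledistance} and minimality of $Q$ are the analogue of statement (\ref{pie:3}) in the paper's proof; they only pin the arc lengths down to within a slack of one, and they do not by themselves yield a contradiction. The attaching case is the bulk of the paper's proof (statement (\ref{pie:4}) and the closing paragraph): it requires building auxiliary holes $C'$ and $C''$ by splicing initial segments of $Q$ into $C$, checking these are again clean shortest long even holes, and then deriving contradictions from shortcuts of $C'$ or $C''$ that are shorter than $Q$, together with long jewels of order $3$ and $4$, long thetas, a long near-prism, parity bookkeeping with $L_1,L_2$, and an additional choice of $Q$ maximizing the $C$-distance between its ends. ``Inspecting the induced cycle built from $uQq$, the edge $qw$ and the arc $R$'' does not close this case, and your difficulty assessment is inverted: the reroute at the $u$-end (the paper's (\ref{pie:2})) is the short half.

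A second, related defect is that your minimality is too weak to support even the contradictions you sketch. You minimize $|E(Q)|$ only over shortcuts of the fixed hole $C$, but the paper's contradictions in (\ref{pie:2}) and in several branches of (\ref{pie:4}) come from exhibiting a shortcut, shorter than $Q$, of a \emph{different} hole ($C'$ or $C''$); with your setup this contradicts nothing. The paper chooses $C$ and $Q$ jointly to minimize $|E(Q)|$, and it first passes to a minimal counterexample so that $C$ may be assumed clean (deleting the $C$-major vertices preserves $C$, $Q$ and the property of being a candidate); cleanness is then needed when verifying that the rerouted holes are again clean shortest long even holes eligible for that minimality. Your endgame claim that combining $C'$ with the tail of $Q$ ``yields a long even hole strictly shorter than $C$ or a forbidden configuration'' is not how that branch closes in the paper, and without the joint minimization and the cleanness reduction it does not close at all as written.
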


\begin{proof}
Suppose $G$ is a minimal counterexample. Then $G$ has a clean shortest long even hole $C$ with a shortcut $Q$.
We may assume $C$ and $Q$ are chosen to minimize $|E(Q)|$. Let $u,v$ be the ends of $Q$. Let $L_1$ and $L_2$ be the two paths of $C$ joining $u,v$ with $|E(L_1)| \leq |E(L_2)|$.
Denote the vertices of $Q$ by  $u \dd q_1 \dd q_2 \dd \dots \dd q_k \dd v$ in order. It follows that $|E(L_1)|, |E(L_2)| > k+1$. Since $Q$ contains no major vertices, $k>1$. Since $q_1$ is not $C$-major, there is a path $P_1$ of $C$ of length at most two such that all neighbors of $q_1$ in $V(C)$ lie in $P_1$. Choose $V(P_1)$ to be minimal. Define $P_2$ similarly for $q_k$.
\stmt{$P_1$ and $P_2$ are vertex-disjoint. \label{pie:1}}
Suppose not. Then $|E(L_1)| \leq 4$. Since $|E(L_1)| > k + 1 \geq 3$, it follows that $k=2$ and $|E(L_1)| =4$. Thus $P_1$ and $P_2$ both have length two. Hence, $L_1$, $u \dd q_1 \dd q_2\dd v$ and $L_2$ form a long jewel of order four, a contradiction. This proves (\ref{pie:1}).
\stmt{One of $q_2, \dots, q_{k-1}$ has a neighbor in $V(C)$. \label{pie:2}}
Suppose not. By Lemma \ref{lem:P_1P_2} we may assume $P_1$ has length two. Let $C'$ be the hole obtained by replacing the central vertex of $P_1$ with $q_1$. (See Figure \ref{fig:interiornbrs}). If $\{q_2, \dots q_k\}$ contains any $C'$-major vertices, $k=2$.
But then $q_2$ has no neighbor in $V(C') \setminus (V(P_2) \cup \{ q_1\})$, contradicting Lemma \ref{lem:nobombonsleh}. Hence $C'$ is a clean shortest long even hole.

If $u$ is the middle vertex of $P_1$, the path $q_1 Q q_k \dd v$ is a shortcut of $C'$ and it is shorter than $Q$, a contradiction. Hence, we may assume $u$ is an end of $P_1$. Denote the other end of $P_1$ by $z$. 

Suppose $z \in L_2^*$. Then $vL_2z \dd q_1$ has length greater than $k$. The path $L_1 \dd q_1$ has length at least $k + 3$ so $q_1Qq_k \dd v$ is a shortcut of $C'$, a contradiction. Therefore, we may assume $z \in L_1^*$. Since $|E(zL_1v)| > k-1$ and $|E(L_2)| >k$, it follows that $q_1Qq_k \dd v$ is a shortcut of $C'$, a contradiction.
This proves (\ref{pie:2}).
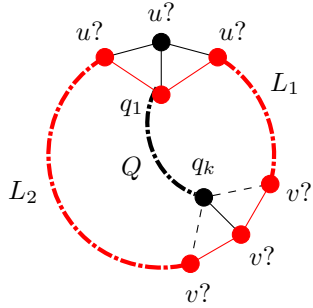
\begin{figure}[h!]
	\centering
	\begin{tikzpicture}
	
	\foreach \x in {0, ...,23}{
		\path (0,0) ++(15*\x:1.5cm) coordinate (a\x);
		\path (0,0) ++(15*\x:1.9cm) coordinate (b\x);
		\path (0, 0) ++ (15 *\x:.8cm) coordinate (i\x);
		%	\node at (b\x) {\x};
	}
	\draw (a4) -- (a6) -- (a8);
	\node[dot] (qk) at (i21){};
	\draw[maybe edge] (a19) -- (qk) -- (a23);
	\draw (qk) -- (a21);
	\draw[path, bend right= 50] (q1) to (qk);
	
	\node[below left=.05 and .025 of q1] (qq1) {$q_1$};
	\node[above =.05 of qk] {$q_k$};
	\node[below= .3 of qq1]  {$Q$};
	%labels
	\foreach \x in {8, 6, 4} {\node at (b\x) {$u$?};}
	\foreach \x in {23, 21, 19} {\node at (b\x) {$v$?};}
	\node at (b2) {$L_1$};
	\node at (b13) {$L_2$};
	\draw (i6) to (a6);
	\begin{scope}[color=red]{
			\draw (a23) -- (a21)-- (a19);
			\draw[path] (a23) arc[start angle = -15, end angle= 60, radius = 1.5cm];
			\draw[path] (a8) arc[start angle = 120, end angle= 285, radius = 1.5cm];
			\node[dot, color = red] (q1) at (i6){};
			\foreach \x in {4,8,23,21,19}{ \node[dot, color = red] at (a\x){};}
			\foreach \x in {4,8} {\draw (q1) -- (a\x);}
		}
	\end{scope}
	\node[dot] at (a6) {};
\end{tikzpicture}
	\caption{An illustration of the case considered in statement (\ref{pie:2}). $C$ is drawn as the outer face. $C'$ is drawn in red. The vertices labeled with ``$u$?'' and ``$v$''? might be equal to $u$ or $v$, respectively.
	}
	\label{fig:interiornbrs}
\end{figure}
\\
\\
We will show that none of $q_2, \dots, q_{k-1}$ has neighbors in $V(C)$ for a contradiction.
\stmt{ Suppose $q_i \in \{q_2, q_3 \dots , q_{k-1} \}$ has a neighbor $w$ in $V(C)$. Let $R$ denote the path of $C$ with ends $u,w$ that does not go through $v$. Let $S$ denote the path of $C$ with ends $w,v$ that does not go through $u$. Let $x = i+ 1 - |E(R)|$, let $y = k-i+2 - |E(S)|$. Then $x,y \in \{0,1 \}$ and at at most one of $x,y$ is equal to zero. \label{pie:3}}
See Figure \ref{fig:RSdist} for an illustration of this case.
By Lemma \ref{lem:cycledistance} and the fact that $Q$ is a shortest shortcut it follows that $|E(R)| \leq i + 1$ and $|E(S)| \leq k-i+2$. Since $Q$ is a shortcut, $|E(R)| + |E(S)| > k+1$, and the claim follows. This proves (\ref{pie:3}).
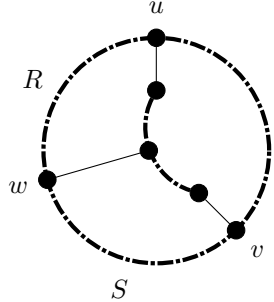
\begin{figure}[h!]
	\centering
	\begin{tikzpicture}
	
	\foreach \x in {0, ...,23}{
		\path (0,0) ++(15*\x:1.5cm) coordinate (a\x);
		\path (0,0) ++(15*\x:1.9cm) coordinate (b\x);
		\path (0, 0) ++ (15 *\x:.8cm) coordinate (i\x);
		%	\node at (b\x) {\x};
	}
	%	\draw (a4) -- (a6) -- (a8);
	\node[dot] (qk) at (i21){};
	%\draw[maybe edge] (a19) -- (qk) -- (a23);
	\draw (qk) -- (a21);
	\draw[path, bend right= 50] (q1) to (qk);
	
	%	\node[below left=.05 and .025 of q1] (qq1) {$q_1$};
	%	\node[above =.05 of qk] {$q_k$};
	%	\node[below= .3 of qq1]  {$Q$};
	%labels
	\foreach \x in { 6} {\node at (b\x) {$u$};}
	\foreach \x in {21} {\node at (b\x) {$v$};}
	\draw (i6) to (a6);
	
	\draw[path] (0,0) circle[radius = 1.5cm];
	%	\draw (a23) -- (a21)-- (a19);
	%	\draw[path] (a23) arc[start angle = -15, end angle= 60, radius = 1.5cm];
	%	\draw[path] (a8) arc[start angle = 120, end angle= 285, radius = 1.5cm];
	\node[dot] (q1) at (i6){};
	\foreach \x in {21}{ \node[dot] at (a\x){};}
	%	\foreach \x in {4,8} {\draw[maybe edge] (q1) to (a\x);}
	\node[dot] at (a6) {};
	
	\node[dot] (qi) at (-.1,0) {};
	\draw (qi) to (a13);
	\node[dot] at (a13){};
	\node at (b13) {$w$};
	\node at (b10) {$R$};
	\node at (b17) {$S$};
\end{tikzpicture}
	\caption{An illustration of the case considered in statement (\ref{pie:3}). $C$ is drawn as the outer face. $Q$ is the $uv$-path not contained in $C$. Note this is a simplified image, vertices in $V(Q)$ may be equal or adjacent to vertices in $V(C)$.
	}
	\label{fig:RSdist}
\end{figure}
\stmt{ None of $q_2, \dots, q_{k-1}$ have a neighbor in $V(L_1)$. \label{pie:4}}
Suppose that for some $i \in \{2, 3, \dots, k-1\}$, $q_i$ has a neighbor $w \in V(L_1)$. Since $Q$ is a shortest $uv$-path, $w \neq u,v$. Let $R_1$, $S_1$ be the subpaths of $L_1$ with ends $u,w$ and $w,v$, respectively.
Suppose $q_j \in \{q_2, q_3, \dots, q_{k-1} \}$ has a neighbor $z \in V(L_2)$. Let $R_2, S_2$ denote the subpaths of $L_2$ with with ends $u, z$ and $z,v$, respectively.
\begin{figure}[!h]
	\centering
	\begin{tikzpicture}
	
	\foreach \x in {0, ...,23}{
		\path (0,0) ++(15*\x:1.5cm) coordinate (a\x);
		\path (0,0) ++(15*\x:1.9cm) coordinate (b\x);
		\path (0, 0) ++ (15 *\x:1.1cm) coordinate (i\x);
		\path (0, 0) ++ (15 *\x:.85cm) coordinate (r\x);
		%	\node at (b\x) {\x};
	}
	%	\draw (a4) -- (a6) -- (a8);
	\node[dot] (qk) at (i21){};
	%\draw[maybe edge] (a19) -- (qk) -- (a23);
	\draw (qk) -- (a21);
	\node[dot] (q1) at (i6){};

	%	\node[below left=.05 and .025 of q1] (qq1) {$q_1$};
	%	\node[above =.05 of qk] {$q_k$};
	%	\node[below= .3 of qq1]  {$Q$};
	%labels
	\foreach \x in { 6} {\node at (b\x) {$u$};}
	\foreach \x in {21} {\node at (b\x) {$v$};}
	\draw (q1) to (a6);
	
	\draw[path] (0,0) circle[radius = 1.5cm];
	%	\draw (a23) -- (a21)-- (a19);
	%	\draw[path] (a23) arc[start angle = -15, end angle= 60, radius = 1.5cm];
	%	\draw[path] (a8) arc[start angle = 120, end angle= 285, radius = 1.5cm];
	\foreach \x in {21}{ \node[dot] at (a\x){};}
	%	\foreach \x in {4,8} {\draw[maybe edge] (q1) to (a\x);}
	\node[dot] at (a6) {};
	
	\node at (b2) {$w$};
	\node at (b13) {$z$};
	\node at (b10) {$R_2$};
	\node at (b17) {$S_2$};
	\node at (b4) {$R_1$};
	\node at (b0) {$S_1$};
	
	\begin{scope}[color = red]
		\node[dot, color = red] (qi) at (r3) {};
		\node[dot, color = red] (qj) at (i0) {};
		\draw[path] (qi) to (qj);
		\draw (qj) to (a13);
		\node[dot, color = red] at (a13){};
		\draw (qi) to (a2);
		\node[dot, color = red] at (a2){};
		\node at (0,.1) {$Q'$};
	\end{scope}
	\draw[path] (q1) to (qi);
	\draw[path] (qk) to (qj);
\end{tikzpicture}
	\caption{An illustration of the case where $\{q_2, q_2, \dots, q_{k-1}\}$ has neighbors in both $V(L_1)$ and $V(L_2)$ as analyzed in (\ref{pie:4}). $C$ is drawn as the outer face. $Q$ is the $uv$-path not contained in $C$. $Q'$ is drawn in red. Note that this is a simplified drawing, more vertices in $V(Q)$ may be equal or adjacent to vertices in $V(C)$.
	}
	\label{fig:L1L2nbrs}
\end{figure}
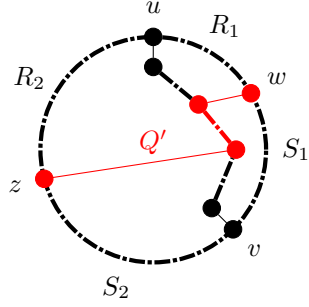 

Then, $d_C(w,z) = \min \{ |E(R_1)| + |E(R_2)|, \ |E(S_1) + |E(S_2)| \}$. Let  $Q'$ denote the path $w \dd q_iQq_j \dd z$.
By (\ref{pie:3}) it follows that $|E(R_1)| \geq i$ and $|E(R_2)| \geq j$.
Thus, $|E(R_1)| + |E(R_2)| > |j-i| + 3 > |E(Q')|$ since $i,j \geq 2$. Similarly, $|E(S_1)| + |E(S_2)| > |E(Q')|$.
But then $Q'$ is a shortcut, contradicting that $Q$ is a shortest shortcut. Hence, none of $q_2, q_3, \dots, q_{k-1}$ has a neighbor in $V(L_2)$.

We denote the vertices of $L_2$ by $u \dd b_1 \dd b_2 \dd \dots \dd b_m \dd v$ in order. Let $x = i +1 - |E(R_1)|$ and let $y = k-i+2 - |E(S_1)|$. Suppose $x = y = 0$. Then $|E(L_1)| = k+3$, and $L_1, L_2$ and $Q$ all have the same parity. Since $G[V(Q \cup L_2)]$ does not contain a long even hole, we may assume $b_1$ is adjacent to $q_1$. But then $b_1 \dd q_1 Q q_i \dd w$ is shortcut since:
$$d_C(b_1,w) = \min \{|E(L_2)| -1 + |E(S_1)|, \ |E(R_1)| + 1\} \geq \min \{ 2k -i + 4, \ i + 2 \} > i+1.$$
We reach a contradiction as $b_1 \dd q_1Qq_i \dd w$ is shorter than $Q$.
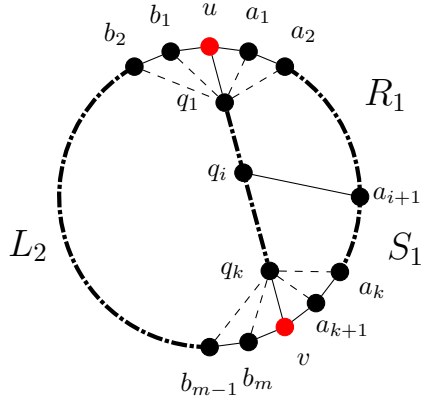
\begin{figure}[!h]
	\centering
	\begin{tikzpicture}
	
	\def\crad{2cm};
	\def\orad{2.5cm};
	
	\foreach \x in {0, ...,23}{
		\path (0,0) ++(15*\x:\crad) coordinate (c\x);
		\path (0,0) ++(15*\x:\orad) coordinate (o\x);
		\path (0, 0) ++ (15 *\x:2.7cm) coordinate (i\x);
		%	\node at (b\x) {\x};
	}
	\node[dot, color= red] (u) at (c6) {};
	\node[dot, color = red] (v) at (c20){};
	\node[dot] (a1) at (c5){};
	\node[dot] (a2) at (c4){};
	\node[dot] (ak1) at (c22){};
	\node[dot] (ak) at (c21){};
	\node[dot] (b1) at (c7){};
	\node[dot] (b2) at (c8){};
	\node[dot] (bk) at (c19){};
	\node[dot] (bk1) at (c18){};
	\node[dot] (w) at (c0){};
	
	\coordinate (qi) at ($(c6)!.45!(c20)$);
	\node[dot] at (qi){};
	
	\foreach \x in {1, ..., 9}{
		\coordinate (z\x) at ($(c6)!.\x!(c20)$);
		%\node (q\x) at (z\x) {\x};
	}
	
	\node[dot] (q1) at (z2) {};
	\node[dot] (qk) at (z8){};
	
	\draw (b2) -- (b1) -- (u) -- (a1) -- (a2);
	\draw (bk1) -- (bk) -- (v) -- (ak) -- (ak1);
	\draw (u) -- (q1);
	\draw (v) -- (qk);
	\draw[path] (q1) -- (qk);
	\draw[path] (c8) arc[start angle = 120, end angle= 270, radius = \crad];
	\draw[path] (c4) arc[start angle = 60, end angle= -30, radius = \crad];
	
	%labels
	\node at (o6) {$u$};
	\node at (o20) {$v$};
	\node at (o7) {$b_1$};
	\node at (o8){$b_2$};
	\node at (o18){$b_{m-1}$};
	\node at (o19){$b_m$};
	\node at (o13){\Large $L_2$};
	\node at (o5) {$a_1$};
	\node at (o4) {$a_2$};
	\node at (o21) {$a_{k+1}$};
	\node at (o22) {$a_k$};
	\node at (o0){$a_{i+1}$};
	\node at (i2){\Large $R_1$};
	\node at (i23){\Large $S_1$};
	
	\node[left=.08 of q1]{$q_1$};
	\node[left=.08 of qk]{$q_k$};
	\node[left=.08 of qi]{$q_i$};
	
	\draw[maybe edge] (bk1) -- (qk) -- (ak);
	\draw[maybe edge] (bk) -- (qk) -- (ak1);
	\draw[maybe edge] (b2) -- (q1) -- (a2);
	\draw[maybe edge] (b1) -- (q1) -- (a1);
	
	\draw (qi) -- (w);
\end{tikzpicture}
	\caption{An illustration for the proof of Statement (\ref{pie:4})  of Theorem \ref{thm:shortcuts}. $C$ is drawn as the outer face and $Q$ is the $uv$-path not contained in $Q$.  Note this figure is simplified and that $i$ could be any value in $[2, k-1]$.
	}
	\label{fig:RSend}
\end{figure} 
Thus by (\ref{pie:3}), we may assume $x_1 = 0$ and $y_1 = 1$. Hence, $|E(L_1)| = k+2$. Denote the vertices of $L_1$ by $u \dd a_1 \dd a_2 \dd  \dots \dd a_{k+1} \dd v$ in order. Then, $w = a_{i+1}$. (See Figure \ref{fig:RSend}.)
For all $j \in \{1, 2\}$, $q_1$ is not adjacent to $b_j$, because otherwise $b_j \dd q_1 Q q_i \dd a_{i+1}$ would be a shortcut with length less than $|E(Q)|$. 
It follows from (\ref{pie:3}) that if $q_h \in Q^*$ is adjacent to $a_j \in L_1^*$, then $j \in \{h, h+1 \}$.
Let $C'$ denote the hole induced by $V(L_2) \cup \{q_1, q_2, \dots q_i, a_{i+1}, a_{i+2}, \dots a_{k+1} \}$. It follows that $C'$ is a clean shortest long even hole in $G$. If $i \neq k-1$, then $C'$ is a clean shortest long even hole in $G[V(C \cup Q)]$. But in that case, $q_iQq_k \dd v$ is a shortcut for $C'$ in $G[V(C \cup Q)]$ and $q_iQq_k \dd v$ is shorter than $Q$, a contradiction.

Hence, $i = k-1$. Since $i\geq 2$, it follows that $k \geq 3$. The vertices $q_k$ and $b_{m-1}$ are non-adjacent because otherwise $a_k \dd q_{k-1} \dd q_k \dd b_{m-1}$ is a shortcut and, since $k > 2$, it is a shorter than $Q$, a contradiction. Suppose $q_k$ is not adjacent to $b_m$. Then $L_2 \cup Q$ is a hole. Since $G$ contains no long even holes of length less than $2\ell$, it follows that $|E(L_2)| \geq \ell$. But then $q_{k-1} \dd q_k \dd v, q_{k-1} \dd a_{k} \dd a_{k+1} \dd v$ and $q_{k-1} Q u \dd L_2$ form a long jewel of order 3, a contradiction. Hence $q_k$ is adjacent to $b_m$. We may assume $Q$ was chosen to maximize the distance along $C$ between its ends. It follows that $m \leq k+2$. Hence $m = k + 1$, because $L_1$ and $L_2$ have the same parity.

If $q_{k-1}$ is adjacent to $a_{k-1}$, since $k \geq 3$ the paths $b_{k+1} \dd v \dd a_{k+1} \dd a_k \dd a_{k-1}$, $b_{k+1} \dd q_k \dd q_{k-1} \dd a_{k-1}$ and $L_2\dd u L_1 a_{k-1}$ form a long jewel of order four, a contradiction. Thus, $q_{k-1}$ is non-adjacent to $a_{k-1}$.
Consider the cycle $C''$ obtained from $C$ by replacing the path $b_{k+1} \dd v \dd a_{k+1} \dd a_k$ with $b_{k+1} \dd q_k \dd q_{k-1} \dd a_k$. It follows that $C''$ is a shortest long even hole in $G$. Since $G$ has no long even hole of length at most $2\ell + 2$ and $|E(C)| = 2k + 2$, it follows that $k > 3$. Consequently, $C''$ is a clean shortest long even hole in $G[V(C \cup Q)]$. But $uQq_{k-1}$ is a shortcut for $C''$ and it is shorter than $Q$, a contradiction. This proves (\ref{pie:4}).
\\
\\
By (\ref{pie:2}) and (\ref{pie:4}), some $q_i \in \{q_2, q_3, \dots, q_k \}$ has a neighbor $w \in V(L_2)$ and we may assume that $|E(L_1)| < |E(L_2)|$. Since $C$ is even, it follows that $|E(L_1)| +2 \leq |E(L_2)|$. Moreover, $|E(L_1)| > k + 2$, so $|E(L_2)| \geq k + 4$. Let $R_2, S_2$ be the subpaths of $L_2$ between $u,w$ and $w,v$, respectively.
Since $Q$ is a shortest shortcut it follows from Lemma \ref{lem:cycledistance} that $|E(R_2)| \leq i+1$ and $|E(S_2)| \leq k -i +2$ so $|E(L_2)| \leq k+3$, a contradiction.
\end{proof}

We will now prove the second statement of Theorem \ref{thm:cleanshortest}:

\begin{theorem}
Let $G$ be a candidate. Let $C$ be a clean shortest long even hole in $G$. Let $u,v$ be distinct and non-adjacent vertices in $V(C)$. Let $L_1$ and $L_2$ be the two paths of $C$ with ends $u$ and $v$. Let $Q$ be a shortest $uv$-path. Then $L_2 \cup Q$ or $L_1 \cup Q$ is a clean shortest long even hole.
\label{thm:newcleansleh}
\end{theorem}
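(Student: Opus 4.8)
The plan is as follows. Assume without loss of generality that $|E(L_1)| \le |E(L_2)|$, and write $d := d_C(u,v) = |E(L_1)|$. First I would show that $Q$ has no $C$-major vertex. Since $C$ is a hole it has no chords, so no vertex of $C$ (in particular neither $u$ nor $v$) is $C$-major; hence a $C$-major vertex $x \in V(Q)$ would satisfy $x \in Q^* \setminus V(C)$. By Lemmas~\ref{lem:Cmajor3pairwisenonadjacent} and~\ref{lem:jewelpaths}, such an $x$ would have three pairwise non-adjacent neighbours on $C$ and every subpath of $C$ covering $N(x) \cap V(C)$ would have length greater than $\ell+3$; cutting $Q$ at $x$ and joining $uQx$ and $xQv$ to suitable neighbours of $x$ via arcs of $C$, I would produce a long theta, a long jewel of order at most $\ell+3$, or a long even hole strictly shorter than $C$ — each impossible since $G$ is a candidate and $C$ is a shortest long even hole. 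With $Q$ free of $C$-major vertices, and since $Q$ is induced (being a shortest path) with $|E(Q)| = d_G(u,v) \le d$, Theorem~\ref{thm:shortcuts} (which says $C$ has no shortcut) forces $|E(Q)| = d = |E(L_1)|$. In particular $|E(Q)| + |E(L_2)| = |E(C)|$ and $|E(Q)| + |E(L_1)| = 2d$; so any hole of the form $Q \cup L_2$ is even with exactly $|E(C)|$ edges — hence, as $|E(C)| > 2\ell+2 > \ell$, a shortest long even hole — while any hole of the form $Q \cup L_1$ is even with $2d$ edges, which equals $|E(C)|$ exactly when $|E(L_1)| = |E(L_2)|$, in which case it too is a shortest long even hole. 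So the parity, length and minimality bookkeeping is automatic once a hole is produced.

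Next I would produce the hole. Since $V(Q)$ contains no $C$-major vertex and $|E(Q)| = d_C(u,v)$, Lemmas~\ref{lem:P_1P_2} and~\ref{lem:cycledistance} are available, and I would rerun (with minor adaptations) the case analysis from the proof of Theorem~\ref{thm:shortcuts}: Lemma~\ref{lem:cycledistance} pins down the position along $C$ of every neighbour on $C$ of an interior vertex of $Q$, and Lemma~\ref{lem:P_1P_2} disposes of the ``endpoint'' configurations, with the conclusion that $Q^*$ cannot simultaneously have a neighbour in $L_1^*$ and a neighbour in $L_2^*$, and in fact that $Q^*$ is anticomplete to the interior of one of $L_1$, $L_2$. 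If $Q^*$ is anticomplete to $L_2^*$, then $C' := G[V(Q) \cup V(L_2)]$ is a hole; if instead $Q^*$ is anticomplete to $L_1^*$ but not to $L_2^*$, then $G[V(Q) \cup V(L_1)]$ is a hole of length $2d$, and comparing $2d$ with $|E(C)|$ while using that $C$ is a shortest long even hole forces $|E(L_1)| = |E(L_2)|$, so $C' := G[V(Q) \cup V(L_1)]$ is again a shortest long even hole. Either way, by the bookkeeping above, $C'$ is a shortest long even hole that shares a long subpath of $C$ (namely $L_2$, or $L_1$) with $C$.

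Finally I would check that $C'$ is clean. Suppose $m$ is a $C'$-major vertex; applying Lemma~\ref{lem:Cmajor3pairwisenonadjacent} to $C'$, we get $m \notin V(C')$ and $m$ has three pairwise non-adjacent neighbours on $C'$ that no length-three subpath of $C'$ covers. Let $L$ be the arc of $C$ common to $C'$ (so $L = L_2$, or $L = L_1$ in the symmetric case) and $L'$ the other arc; since the hole $C = L \cup L'$ is clean, the neighbours of $m$ on $L$ lie in a length-three subpath of $C$, so $m$ must have a neighbour on $Q^* \setminus V(L)$ that is far (along $C'$) from its neighbours on $L$. Then $L'$ — together with the vertex $m$ itself in the case $m$ lies on $L'$ — supplies a third long, internally disjoint route between two suitably chosen attachment points of $m$, yielding a long theta in $G$, a contradiction. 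Hence $C'$ is a clean shortest long even hole.

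The step I expect to be the main obstacle is producing the hole in the second paragraph: it essentially requires rerunning the long and delicate case analysis underpinning Theorem~\ref{thm:shortcuts}, now for a shortest $uv$-path rather than a generic shortcut, and carrying the extra bookkeeping of which arc $Q$ avoids and of forcing $|E(L_1)| = |E(L_2)|$ in the $L_1$-case. Showing that $Q$ has no $C$-major vertex is a secondary hurdle, while the cleanness verification is comparatively routine once $C'$ is known to share a long arc with the clean hole $C$.
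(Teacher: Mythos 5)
Your overall skeleton matches the paper's (use Theorem~\ref{thm:shortcuts} to get $|E(Q)|=|E(L_1)|$, do the parity/length bookkeeping, then verify cleanness), but the central step is asserted rather than proved, and the method you propose for it would not go through. First, a small point: showing $Q$ has no $C$-major vertex needs no argument at all, since $C$ is \emph{clean}, i.e.\ $G$ has no $C$-major vertices whatsoever. The real content is your claim that ``$Q^*$ is anticomplete to the interior of one of $L_1$, $L_2$,'' which is essentially equivalent to the hole-existence half of the theorem, and it cannot be obtained by ``rerunning the case analysis of Theorem~\ref{thm:shortcuts} with minor adaptations.'' Lemma~\ref{lem:P_1P_2}, which you invoke to ``dispose of the endpoint configurations,'' has as hypothesis that $Q$ is a \emph{shortcut} (strictly shorter than $d_C(u,v)$), and in the shortcut proof its conclusion feeds a contradiction; here $Q$ is not a shortcut, and the endpoint attachments (e.g.\ $q_1$ adjacent to the neighbour of $u$ on $L_2$, or to two consecutive vertices of $C$, or $q_k$ adjacent to the neighbour of $v$ on $L_1$) are genuinely possible configurations that have to be either rerouted or excluded. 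The paper does this with a fresh induction on $|E(Q)|$ (needed in particular to handle $Q^*\cap V(C)\neq\emptyset$, a case your proposal never addresses) and with repeated use of the candidate conditions — no long jewel of order at most $\ell+3$, no long theta, no long near-prism, no long even hole of length at most $2\ell+2$ — in statements such as (\ref{dog:1}), (\ref{dog:4})--(\ref{dog:6}); Lemma~\ref{lem:cycledistance} plus the no-shortcut theorem only settle the interior vertices $q_2,\dots,q_{k-1}$ (the analogue of (\ref{dog:3})). Relatedly, your fallback ``compare $2d$ with $|E(C)|$'' in the case $Q^*$ meets $L_2^*$ only bites when $2d\ge\ell$; when $d$ is small it yields nothing, and ruling out those configurations is again exactly the missing work.

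The cleanness verification is also not as routine as you suggest. A $C'$-major vertex $x$ attaches to $Q$ along a subpath $M_1$ of length at most two and to $L_2$ along a subpath $M_2$ of length at most two, and the contradiction obtained depends on these lengths: if $M_1$ and $M_2$ both have length one, the third route gives a long \emph{near-prism}, not a theta; if $M_1$ has length two, the paper needs a separate argument (replacing a vertex of $Q$ by $x$ and deriving a shortcut for $C$); only in the remaining cases does a long theta or a parity contradiction appear. Your single ``long theta'' route therefore fails in the adjacent-attachment cases. So the proposal correctly identifies where the difficulty lies but leaves both the hole-existence argument and part of the cleanness argument as genuine gaps.
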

\begin{proof}
We may assume $|E(L_1)| \leq |E(L_2)|$. By Theorem \ref{thm:shortcuts}, $L_1$ and $Q$ have the same length.
Denote the vertices of $L_1$ by $u \dd a_1 \dd a_2 \dd \dots \dd a_k \dd v$ in order, denote the vertices of $Q$ by $u \dd q_1 \dd q_2 \dd  \dots \dd q_k \dd v$ in order and denote the vertices of $L_2$ by $u \dd b_1 \dd b_2 \dd \dots \dd b_m \dd v$ in order.
 We proceed by induction on $k$.

We show that the theorem holds if $k = 1$. Consider the cycle $C'$ obtained by replacing the middle vertex of $L_1$ with $q_1$. Since $C$ is clean, $C'$ is a shortest long even hole. Suppose $x$ is a $C'$-major vertex. Let $P$ be a minimum length path of $C$ containing all neighbors of $x$ in $V(C')$. Then by Lemma \ref{lem:jewelpaths}, $P$ has length at least $\ell +3 \geq 7$. Since $x$ is not $C$-major, it follows that $x$ is adjacent to $q_1$ and $x$ is adjacent to some $w \in V(L_2)$ with $d_{C'}(q_1, w) \geq \ell + 3 \geq 7$. Hence $d_C(u,w) > 6$. But then $u \dd q_1 \dd x \dd w$ is a shortcut, contradicting Theorem \ref{thm:shortcuts}. Hence, we may assume $k \geq 2$.
We begin by proving $L_1 \cup Q$ or $L_2 \cup Q$ is a shortest long even hole.
\stmt{If $k=2$, then $L_1 \cup Q$ or $L_2 \cup Q$ is a shortest long even hole.\label{dog:1}}
Suppose $k=2$, and thus $|E(L_1)| = 3$. Since $|E(C)| \geq 9$ and $C$ has no shortcuts, $q_1$ and $q_2$ have no neighbors in $L_2^*$. Thus $L_2 \cup Q$ is a shortest long even hole. This proves (\ref{dog:1}).
\stmt{If there exists some $q_i \in \{q_1, q_2, \dots, q_k\}$ such that $q_i \in V(C)$, then $L_1 \cup Q$ or $L_2 \cup Q$ is a shortest long even hole. \label{dog:2}}
Suppose for some $i \in \{1,2, \dots, k \}$, $q_i \in V(C)$.
Let $R$ denote the path of $C$ with ends $u, q_i$ that does not contain $v$ and let $S$ denote the path of $C$ with ends $q_i, v$ that does not contain $u$. By Theorem \ref{thm:shortcuts} and Lemma \ref{lem:cycledistance}, it follows that $R$, $S$ have lengths $d_C(u, q_i)$ and $d_C(q_i,v)$ respectively.
Thus $R$ has length at most $i$ and $S$ has length at most $k-i+1$ by Theorem \ref{thm:shortcuts}. So $|E(R \cup S)| = k+1$ and we may assume $R \cup S = L_1$.
By induction, the cycle $C'$ obtained from $C$ by replacing $R$ with the path $u \dd q_1 \dd q_2 \dd \dots \dd q_i$ and the cycle obtained from $C'$ by replacing $S$ with the path $q_i \dd q_{i+1} \dd \dots \dd q_k \dd v$ are both clean shortest long even holes. But then $Q \cup L_2$ is a shortest long even hole. This proves (\ref{dog:2}).
\stmt{The vertex set $\{q_2, q_3, \dots, q_{k-1} \}$ is anticomplete to $V(L_1)$ or it is anticomplete to $V(L_2)$. \label{dog:3}}
Suppose that it isn't. Hence, for some $i,j \in \{2,3\dots, k-1\}$, $q_i$ has a neighbor $x \in L_1^*$ and $q_j$ has a neighbor $y \in L_2^*$ and we may assume $i \leq j$. By (\ref{dog:2}), we may assume $Q^* \cap V(C) = \emptyset$.
Let $R_1, S_1$ be the subpaths of $L_1$ between $u$ and $x$ and between $x$ and $v$, respectively. Let $R_2, S_2$ be the subpaths of $L_2$ between $u$ and $y$ and between $y$ and $v$, respectively. Then by Theorem \ref{thm:shortcuts} and Lemma \ref{lem:cycledistance}, $|E(R_1)| \leq i +1$, $|E(S_1)| \leq k-i +2$, $|E(R_2)| \leq j+1$ and $|E(S_2)| \leq k-j + 2$. Since $|E(L_1)|$, $|E(L_2)| \geq k + 1$, it follows that $|E(R_1)| \geq i$, $|E(S_1)| \geq k-i+1$, $|E(R_2)| \geq j$ and $|E(S_2)| \geq k - j +1$.  By Theorem \ref{thm:shortcuts}, the distance between $x$ and $y$ in $G$ is equal to the length of $S_1 \cup S_2$ or $R_1 \cup R_2$. So $d_G(x,y) \geq \min \{i+j +2, 2k - i -j +4 \} > j-i +2$.  
But the path $x \dd q_i \dd q_{i+1} \dd \dots \dd q_{j-1} \dd q_j \dd y$ has length $j-i +2$, so it is a shortcut, a contradicting Theorem \ref{thm:shortcuts}. This proves (\ref{dog:3}).
\stmt{If  none of $q_2, q_3, \dots, q_{k-1}$ have neighbors in $V(C)$, then $L_1 \cup Q$ or $L_2 \cup Q$ is a shortest long even hole. \label{dog:4}}
Suppose none of $q_2, q_3, \dots, q_{k-1}$ have neighbors in $V(C)$. By (\ref{dog:1}) we may assume $k \geq 2$ and by (\ref{dog:2}) we may assume $V(C) \cap Q^* = \emptyset$.
Since $C$ is clean there is a path $P_1$ of $C$ with length at most two containing all neighbors of $q_1$ in $V(C)$. Choose $P_1$ to be minimal. Define $P_2$ similarly for $q_k$.
Suppose $P_1$ has length two. Denote the ends of $P_1$ by $w,z$. Since the theorem holds if $k=1$, the cycle $C'$ obtained by replacing the middle vertex of $P_1$ with $q_1$ is a clean shortest long even hole. By Theorem \ref{thm:shortcuts}, $C'$ has no shortcut, so we may assume $d_{C}(v, w) \leq k-1$. Then $z = u$ and we may assume $w = a_2$. 

If $q_k$ is adjacent to $b_m$, it follows that $q_1 \dd q_2 \dd \dots \dd q_k \dd b_m$ is a shortcut of $C'$, a contradiction. Suppose $q_k$ is adjacent to $b_{m-1}$. Denote the hole $u \dd b_1 \dd b_2 \dd \dots \dd b_{m-1} \dd q_k \dd q_{k-1} \dd \dots \dd q_1 \dd u$ by $C''$. Since $|E(C)| \geq 2\ell + 3$, it follows that $C''$ is long. Since $L_1, L_2, Q$ all have the same parity, $C''$ is even. But $C''$ is shorter than $C$, a contradiction. Hence, $q_k$ has no neighbor in $L_2^*$. But then $L_2 \cup Q$ is a shortest long even hole. Thus, we may assume $P_1$ does not have length two. Similarly, $P_2$ does not have length two.

We may assume $q_1$ is adjacent to $b_1$, because otherwise $L_2 \cup Q$ is a shortest long even hole. It follows that $d_G(b_1, v) \leq k+1$, so $d_C(b_1, v) \leq k+1$ by Theorem \ref{thm:shortcuts}. Therefore, $|E(L_2)| = k+1$. We may assume $q_k$ is adjacent to $a_k$, because otherwise $L_1 \cup Q$ is a shortest long even hole. But $|E(C)| \geq 2\ell +3$, so $G[V(C \cup Q)]$ is a long near-prism, a contradiction. This proves (\ref{dog:4}).
\stmt{ Suppose that $|E(L_2)| \geq |E(L_1)| + 2$ and for some $q_i \in \{q_2, q_3, \dots, q_{k-1} \}$, $q_i$ has a neighbor $w \in L_2^*$. Then, $L_1 \cup Q$ or $L_2 \cup Q$ is a shortest long even hole \label{dog:5}}
By (\ref{dog:2}) we may assume $V(C) \cap Q^* = \emptyset$. By (\ref{dog:3}) none of $q_2, q_3, \dots, q_{k-1}$ have neighbors in $V(L_1)$. Let $R_2$ and $S_2$ denote the subpaths of $L_2$ between $u$ and $w$ and between $w$ and $v$ respectively. By Theorem \ref{thm:shortcuts} and Lemma \ref{lem:cycledistance}, $|E(R_2)| \leq i+1$ and $|E(S_2)| \leq k-i+2$. Since $|E(L_2)| \geq k+3$, it follows that $|E(R_2)| = i+1$ and $|E(S_2)| = k-i+2$. Hence, $|E(C)| = 2k + 4$ and $|E(L_2)| = |E(L_1)| + 2$.
Since $|E(C)| \geq 2\ell+3$, $L_1 \cup Q$ is a long even cycle, so we may assume $L_1 \cup Q$ is not an induced subgraph of $G$. Hence, we may assume $q_1$ is adjacent to $a_j$ for some $j \in \{1,2\}$. Then  $d_C(w,a_j) \geq \min \{ i+2, 2k-i + 1\} > i + 1$. But $b_j \dd q_1 \dd q_2 \dd \dots \dd q_i \dd w$ has length $i+1$, a contradiction. This proves (\ref{dog:5}).
\stmt{$L_1 \cup Q$ or $L_2 \cup Q$ is a shortest long even hole. \label{dog:6}}
Suppose neither $L_1 \cup Q$ nor $L_2 \cup Q$ is a shortest long even hole.
By (\ref{dog:2}), $Q^* \cap V(C) = \emptyset$.
By (\ref{dog:3}) and (\ref{dog:5}), we may assume $V(L_2)$ is anticomplete to $\{q_2, q_3, \dots q_{k-1} \}$.
Hence, by (\ref{dog:4}), we may assume that some $q_i \in \{q_2, q_3, \dots, q_{k-1}\}$ has a neighbor $a_j \in V(L_1^*)$. 
Since $G[V(Q \cup L_2)]$ contains no long even hole we may assume $q_1$ is adjacent to $b_1$ and non-adjacent to $b_2$.
Hence $d_G(b_1, v) \leq k+1$, so by Theorem \ref{thm:shortcuts} $d_C(b_1, v) \leq k+1$. Thus $|E(L_2)| \leq k + 2$. Since $L_1$ and $L_2$ have the same parity, $|E(L_2)| = k + 1$.
Since $b_1 \dd q_1 \dd q_2 \dd \dots \dd q_i \dd a_j$ is not a shortcut, it follows from Lemma \ref{lem:cycledistance} that $j \leq i+1$.
Since $a_j \dd q_i \dd q_{i+1} \dd \dots q_{k} \dd v$ is not a shortcut, it follows from Lemma \ref{lem:cycledistance} that $j \geq i$.
Suppose $i = j$. Let $C'$ denote the cycle obtained from $C$ by replacing the path $a_i \dd a_{i-1} \dd \dots a_1 \dd u \dd b_1$ with the path $a_i \dd q_i \dd q_{i-1} \dd \dots \dd q_1 \dd b$. Then $C'$ is a clean shortest long even hole by induction. But then $q_i \dd q_{i+1} \dd \dots \dd q_k \dd v$ is a shortcut for $C'$, contradicting Theorem \ref{thm:shortcuts}.
Hence, we may assume $i = j-1$ and that for all $c, d \in \{2, 3, \dots, k -1 \}$, if $a_c$ is adjacent to $q_d$ then $c = d - 1$.
Without loss of generality we may assume $i$ is chosen to be the smallest element of $\{2, 3, \dots, k-1 \}$ such that $q_i$ is adjacent to $a_{i-1}$.
The paths $b_1 \dd u \dd a_1 \dd a_2 \dd \dots \dd a_{i-1}$, $b_1 \dd q_1 \dd q_2 \dd \dots \dd q_i \dd a_{i-1}$ and $b_1 \dd b_2 \dd \dots \dd b_m \dd v \dd a_k \dd a_{k-1} \dd \dots \dd a_{i-1}$ form a long jewel of order $i$. Hence, $i > \ell +3$. Then, if $a_1$ is not adjacent to $q_1$, the cycle $u \dd a_1 \dd a_2 \dd \dots \dd a_{i-1} \dd q_i \dd q_{i-1} \dd \dots \dd q_1 \dd u$ is long even hole shorter than $C$, a contradiction. Thus, $q_1$ is adjacent to $a_1$. But then, $q_1 \dd L_2 \dd vQq_i$, $q_1Qq_i$ and $q_1 \dd a_1L_1a_{i-1} \dd q_i$ form a long theta, a contradiction. 
This proves (\ref{dog:6}).
\\
\\
\begin{figure}[!h]
	\centering
	\begin{tikzpicture}
	
	\def\crad{2cm};
	\def\orad{2.5cm};
	
	\foreach \x in {0, ...,23}{
		\path (0,0) ++(15*\x:\crad) coordinate (c\x);
		\path (0,0) ++(15*\x:\orad) coordinate (o\x);
		\path (0, 0) ++ (15 *\x:1.6cm) coordinate (i\x);
		%	\node at (o\x) {\x};
	}
	
	\draw[path] (c14) arc[start angle = 210, end angle= 300, radius = \crad];
	\draw[path] (c6) arc[start angle = 90, end angle= 180, radius = \crad];
	\draw[path] (c6) arc[start angle = 90, end angle = -90, radius = \crad];

	\coordinate (qi) at ($(c6)!.45!(c20)$);
	\node[dot, color = red] (qmid) at (qi){};
	
	\foreach \x in {1, ..., 9}{
		\coordinate (z\x) at ($(c6)!.\x!(c20)$);
		%	\node (q\x) at (z\x) {\x};
	}
	\node[dot, color =red] (qbefore) at (z3){};
	\node[dot, color = red](qafter) at (z6){};
	\node[dot] (u) at (c6) {};
	\node[dot] (v) at (c20) {};
	\draw[path] (u) to (qbefore);
	\draw[path] (v) to (qafter);
	\draw (qbefore) to (qmid);
	\draw (qmid) to (qafter);
	
	\node[dot, color=red] (before) at (c12) {};
	\node[dot, color = red] (mid) at (c13) {};
	\node[dot, color = red] (after) at (c14) {};
	\draw (before) -- (mid) -- (after);
	
	\node[dot] (x) at ($(qi)!.5!(c13)$){};
	\draw[maybe edge] (before) -- (x) -- (qbefore);
	\draw[maybe edge] (after) -- (x) -- (qafter);
	\draw (qmid) -- (x) -- (mid);
	
	%labels

	\node[right=.08 of qmid]{\textcolor{red}{$M_1$}};
	\node at (o13){\textcolor{red}{$M_2$}};
	
	\node at (i17) {\Large $Z$};
	\node at (i8) {\Large $X$};
	\node at (o2) {\Large $L_1$};
	\node at (o6) {$u$};
	\node at (o20){$v$};
	\node [above = .08 of x]{$x$};
	
\end{tikzpicture}
	\caption{An illustration of a case considered at the end of the proof of Theorem \ref{thm:newcleansleh}. From left to right the $uv$-paths are $L_2, Q, L_1$ in the drawing. The $V(M_1)$ and $V(M_2)$ are both contained in the red vertices.}
		\label{fig:newcleansleh}
\end{figure}
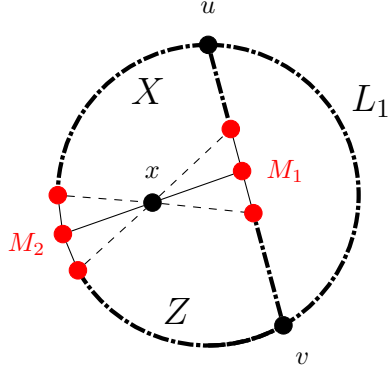
Let $C'$ denote $L_2 \cup Q$. By (\ref{dog:6}) we may assume $C'$ is a shortest long even hole. It remains to show $C'$ is clean.
Suppose there is a $C'$-major vertex $x$. Since $x$ is not $C$-major, $x$ has a neighbor in $Q^*$. Since $Q^*$ is a shortest path there is a subpath $M_1$ of $Q$ of length at most two containing all neighbors of $x$ in $V(Q)$. Choose $M_1$ to be minimal. Thus $x$ has a neighbor in $L_2^*$. Since $x$ is not $C$-major, there is a subpath $M_2$ of $L_2$ of length at most two containing all neighbors of $x$ in $V(L_2)$. Choose $M_2$ to be minimal. Let $M_1$ have ends $y_1,z_1$ where $u,y_1,z_1,v$ are in order on $Q$. Let $M_2$ have ends $y_2, z_2$ where $u, y_2, z_2, v$ are in order in $L_2$.  Let $Y$ denote the path of $C'$ with ends $y_1$ and $y_2$ that contains $u$ and let $Z$ denote the path of $C'$ with ends $z_1$ and $z_2$ that contains $v$. (See Figure \ref{fig:newcleansleh}.)

By Lemma \ref{lem:jewelpaths}, $M_1 \cup Y \cup M_2$ and $M_1 \cup Z \cup M_2$ both have length at least $\ell+3$. Thus $M_1$ and $M_2$ are vertex disjoint and do not contain $u$ or $v$. Since $|E(C)| \geq 2\ell+3$ one of $Y$, $Z$ has length at least $\ell$, say $Y$. Hence the hole obtained by adding $y_1 \dd x \dd y_2$ to $Y$ is long and shorter than $C$, so $Y$ is odd. Thus the path $M_1 \cup Z \cup M_2$ is odd.

Suppose $M_1$ has length two and denote its vertices by $q_{i-1}\dd q_i \dd q_{i+1}$ in order. Let $Q'$ be the path obtained from $Q$ by replacing $q_i$ with $x$. Since $L_2 \cup Q'$ is not an induced subgraph of $G$, by (\ref{dog:6}) we have that $L_1 \cup Q$ is an induced of $G$ and $|E(L_1)| = |E(L_2)| = k+1$. Then $k + 1 \geq \ell$. Since there are no long thetas, $L_1 \cup Q$ is not a hole. Thus $q_i$ has a neighbor $w$ in $L_1^*$. Then $d_G(w, y_2), d_G(w, z_2) \leq 4$. So by Theorem \ref{thm:shortcuts}, it follows that $d_C(w, y_2),d_C(w,z_2) \leq 4$. Since $|E(C)| > 9$, we may assume $y_2 \dd z_2 \dd v \dd w$ is a path of $C$. Hence, $w = a_k$. By Lemma \ref{lem:jewelpaths}, the path $y_2 \dd z_2 \dd vQq_i$ has length at least $\ell +3$. Thus, $i \leq k-\ell$ and it follows that $uQq_i\dd a_k$ has length less than $k$. But by Lemma \ref{lem:cycledistance}, $d_C(u, a_k) = k$ so $uQq_i \dd a_k$ is a shortcut for $C$, contradicting Theorem \ref{thm:shortcuts}.

Thus $M_1$ has length at most one, and so $Z$ has length at least $\ell$. Hence $Z$ has odd length. Since $C'$ is an even hole and $Y,Z$ both have odd length, it follows that $M_1$ and $M_2$ have the same parity.
If $M_1$ and $M_2$ both have length equal to zero, $G[V(C' \cup Q)]$ is a long theta, a contradiction.
If $M_1$ and $M_2$ both have length equal to one, $G[V(C' \cup Q)]$ is a long near-prism, a contradiction.
So, $M_1$ has length equal to zero and $M_2$ has length equal to two. Let $C''$ be the cycle obtained from $C$ by replacing the middle vertex of $M_2$ by $x$. Since $k \geq 2$, $C''$ is a clean shortest long even hole. Then by (\ref{dog:6}), $L_1 \cup Q$ is a long even hole. But then $G[V(C \cup Q)]$ is a long theta, a contradiction.
\end{proof}
We can now give the main result of the section.

\begin{theorem}\label{thm:detectingCleanSLEH}
There is an algorithm with the following specifications:
\begin{description}
\item[Input:] A candidate $G$.
\item[Output:] Decides either that $G$ has a long even hole  or that there is no clean long even hole in $G$.
\item[Running Time:] $\mathcal{O}(|G|^5)$.
\end{description}
\end{theorem}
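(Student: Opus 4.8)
The plan is to exploit Theorem~\ref{thm:cleanshortest} (equivalently Theorems~\ref{thm:shortcuts} and \ref{thm:newcleansleh}), which tells us that if $C$ is a clean shortest long even hole and we pick any two non-adjacent vertices $u,v$ on $C$, then the short side $L_1$ of $C$ between them is an actual shortest $uv$-path in $G$, and conversely gluing \emph{any} shortest $uv$-path to the long side $L_2$ (or, in the balanced case, to $L_1$) recovers a clean shortest long even hole. So the algorithm should guess a bounded number of vertices on $C$ that pin down $C$ up to this shortest-path freedom, reconstruct a candidate hole using breadth-first search, and check it.

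Concretely, first I would handle the presence of \emph{any} short configuration by noting $G$ is a candidate, so we only need to find clean long even holes of length $> 2\ell+2$; in particular such a hole has at least $2\ell+3 \ge 11$ vertices. The key step: guess three vertices $x_1,x_2,x_3$ on $C$ that divide $C$ into three arcs, chosen (say) so that consecutive guessed vertices are non-adjacent and each arc is a shortest path between its endpoints — this is possible because along a hole of length $\ge 9$ one can pick three pairwise-non-adjacent, roughly equally spaced vertices, and by Theorem~\ref{thm:shortcuts} the three arcs of $C$ between them are automatically shortest paths in $G$ once $C$ is clean. Then for each of the three pairs $\{x_i,x_{i+1}\}$, compute a shortest $x_ix_{i+1}$-path $R_i$ in $G$ after deleting the interiors of the other two guessed arcs together with vertices adjacent to them, so that the $R_i$ are forced to be pairwise internally disjoint and internally anticomplete; by Theorem~\ref{thm:cleanshortest}(ii) applied arc-by-arc, the union $R_1 \cup R_2 \cup R_3$ is again a clean shortest long even hole whenever $C$ is. Finally, test whether $V(R_1)\cup V(R_2)\cup V(R_3)$ induces a long even hole, and if so output that $G$ has a long even hole; if no guess succeeds, output that $G$ has no clean long even hole.

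For correctness: if $G$ has a clean long even hole, take $C$ to be a shortest one; for the correct guess of $x_1,x_2,x_3$ the reconstruction yields a long even hole (so the ``yes'' output is triggered); and conversely the algorithm only ever outputs ``yes'' after exhibiting an actual long even hole in $G$, so if it reports ``no clean long even hole'' that conclusion is sound (a genuine long even hole may still exist, but not a clean one — which is exactly what the output claims). The running-time bound of $\mathcal{O}(|G|^5)$ comes from $\mathcal{O}(|G|^3)$ choices for the triple $x_1,x_2,x_3$ (this is where the constant is delicate — one must check three guessed vertices genuinely suffice, rather than needing more), times $\mathcal{O}(|G|^2)$ for the breadth-first searches and the final induced-subgraph check, which dominates.

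The main obstacle I anticipate is verifying that exactly three guessed vertices are enough to control the shortest-path replacements: one needs that choosing $x_1,x_2,x_3$ equally spaced around $C$ makes each arc short enough (or at least makes the ``clean shortest long even hole'' property transfer cleanly through each of the three sequential replacements without the intermediate objects failing to be clean), and that the cleaning-by-deletion used to force disjointness of the $R_i$ does not accidentally delete a vertex of $C$. This is precisely the content of Theorem~\ref{thm:cleanshortest}, so the proof should reduce to invoking it carefully three times; the bookkeeping of which vertices to delete between the three searches is the part that needs the most care.
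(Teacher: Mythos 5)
Your proposal is essentially the paper's own proof: guess three roughly equally spaced (hence pairwise non-adjacent) vertices of a clean shortest long even hole, join consecutive pairs by shortest paths, test whether the union is a long even hole, with correctness coming from Theorem~\ref{thm:cleanshortest} applied one replacement at a time and the running time being $\mathcal{O}(|G|^3)$ guesses times $\mathcal{O}(|G|^2)$ per guess. The one deviation --- deleting ``the interiors of the other two guessed arcs'' and their neighbors before each search --- is both not literally implementable (the algorithm guesses only the three vertices, never the arcs of $C$) and unnecessary, since Theorem~\ref{thm:cleanshortest} already guarantees that arbitrary shortest paths in $G$ between the guessed vertices reassemble into a clean shortest long even hole, so dropping that step yields exactly the paper's algorithm.
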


\begin{proof}
If $C$ is a clean shortest long even hole let $u,v,w \in V(C)$ be chosen so that each of $d_C(u,v)$, $d_C(w,v)$, $d_C(w,z)$ is equal to $\lceil |C|/3 \rceil$ or $\lfloor |C|/3 \rfloor$.
Here is the algorithm: Guess $u,v,w$, find a shortest path between each pair of them and test whether these three paths form a long even hole. If so, output that $G$ has a long even hole, otherwise output $G$ has no long even hole.
Correctness follows from Theorem \ref{thm:cleanshortest}.
\end{proof}

\section{Cleaning a shortest long even hole}
Let $C$ be a shortest long even hole in $G$.
For a $C$-major vertex $x$, we call a subpath $P$ of $C$ of length at least two an {\em $x$-gap} if both ends of $P$ are neighbors of $x$ and no interior vertex of $P$ is adjacent to $x$. Thus, adding $x$ to $P$ yields a hole.
For a pair of non-adjacent $C$-major vertices $x,y$ we call a path $P$ of $C$ an {\em $xy$-gap} if $V(P)$ is the interior of an $xy$-path. 
For a path $P$ with ends $a,b$ we call $v \in V(P)$ a {\em midpoint} of $P$ if it maximizes the value $\min \{ d_P(v,a), d_P(v,b) \}$ among all vertices in $V(P)$. We begin with the following observations about gaps of major vertices on shortest long even holes.

\begin{lem}\label{lem:nestedgaps}
Let $G$ be a graph and let $C$ be a shortest long even hole in $G$.
Let $x,y$ be a pair of non-adjacent $C$-major vertices. Suppose the neighbors of $x$ in $V(C)$ are contained in a $y$-gap $P$. Then there is an $xy$-gap of length at most $\lceil \frac{\ell}{2} \rceil -3$ contained in $P$.
\end{lem}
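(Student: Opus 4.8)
The plan is to argue entirely with hole lengths and parities, since the only hypothesis is that $C$ is a shortest long even hole, hence every long \emph{even} hole has length at least $|E(C)|$. Write $P=p_0\dd p_1\dd\dots\dd p_m$ with $y$ adjacent to $p_0,p_m$ and to no $p_t$ with $0<t<m$, and let $Q$ be the $p_0p_m$-path of $C$ other than $P$, so $|E(C)|=m+|E(Q)|$. The first step I would carry out is the observation that $|E(Q)|\ge 4$: every neighbour of $y$ in $V(C)$ lies on $Q$ (the interior of $P$ contributes none), so if $|E(Q)|\le 3$ then $Q$ is a subpath of $C$ of length at most three containing all neighbours of $y$ in $V(C)$, contradicting that $y$ is $C$-major. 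In particular $p_0\not\sim p_m$ and $|E(C)|\ge m+4$.

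Next I would pin down the relevant $xy$-gaps. Let $p_i$ and $p_j$ be the neighbours of $x$ in $V(C)$ of smallest and largest index (these exist since a $C$-major vertex has a neighbour in $V(C)$), and note that since all neighbours of $x$ in $V(C)$ lie on the subpath $p_i\dd\dots\dd p_j$ and $x$ is $C$-major, this subpath has length at least four, so $j-i\ge 4$. Then $p_0\dd\dots\dd p_i$ is a subpath of $P$ whose vertex set is the interior of the induced $xy$-path $x\dd p_i\dd p_{i-1}\dd\dots\dd p_0\dd y$ (induced because $p_i$ is the first $x$-neighbour, $p_0$ is the only $y$-neighbour among $p_0,\dots,p_i$, and $x\not\sim y$); so it is an $xy$-gap of length $i$ contained in $P$, and symmetrically $p_j\dd\dots\dd p_m$ is an $xy$-gap of length $m-j$ contained in $P$. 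Hence it suffices to prove $\min(i,m-j)\le\lceil \ell/2 \rceil-3$.

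Suppose not, so $i,m-j\ge\lceil \ell/2 \rceil-2$; then $m=i+(j-i)+(m-j)\ge 2\lceil \ell/2 \rceil\ge\ell$. I would now extract parity information from two auxiliary holes. First, $D:=y\dd p_0\dd p_1\dd\dots\dd p_m\dd y$ is a hole of length $m+2$ (using that $P$ is induced, that $y$ has exactly the neighbours $p_0,p_m$ among $V(P)$, and $p_0\not\sim p_m$); it is long and, since $|E(C)|\ge m+4$, strictly shorter than $C$, so $D$ is odd and $m$ is odd. Second, $H:=x\dd p_i\dd p_{i-1}\dd\dots\dd p_0\dd y\dd p_m\dd p_{m-1}\dd\dots\dd p_j\dd x$ is a hole: it is the union of the two induced $xy$-paths above, whose interiors $\{p_0,\dots,p_i\}$ and $\{p_j,\dots,p_m\}$ are anticomplete because $j-i\ge 4$, with $x\not\sim y$ and $p_0\not\sim p_m$. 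Its length is $i+(m-j)+4$, which is at least $2\lceil \ell/2 \rceil\ge\ell$ and, since $i\le j-4$, at most $m<|E(C)|$; so $H$ is odd, $i+(m-j)$ is odd, and combined with $m$ odd this gives that $j-i$ is \emph{even}.

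The punchline is a shortcut hole. Let $C'$ be obtained from $C$ by replacing the subpath $p_i\dd p_{i+1}\dd\dots\dd p_j$ with $p_i\dd x\dd p_j$. Since all neighbours of $x$ in $V(C)$ lie among $p_i,\dots,p_j$, the only neighbours of $x$ in $V(C')$ are $p_i$ and $p_j$, and the $p_ip_j$-path of $C'$ avoiding $x$ has length $i+(m-j)+|E(Q)|\ge\ell\ge 4$, so $p_i\not\sim p_j$ and $C'$ is a hole, of length $|E(C)|-(j-i)+2=i+(m-j)+|E(Q)|+2$. Since $j-i$ is even this length is even, and since it is at least $\ell+2$ the hole $C'$ is long; but it is strictly shorter than $C$ because $j-i\ge 4$, contradicting the minimality of $C$. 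I expect the main obstacle to be getting the parity bookkeeping to close: a single auxiliary hole only constrains one parity and never produces a contradiction, so one must select exactly the right holes — it is the observation that $y$ being $C$-major forces $|E(Q)|\ge 4$ (so that $D$, and later $H$ and $C'$, are genuinely shorter than $C$ rather than merely of length at most $|E(C)|$) that makes $D$ pin down $m$, $H$ pin down $i+(m-j)$, and $C'$ come out long, even, and shorter than $C$. The remaining verifications — that each of $D$, $H$, $C'$ is chordless, and the inequalities $m\ge\ell$, $|E(H)|\le m$, $|E(C')|<|E(C)|$ — are straightforward.
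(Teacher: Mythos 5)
Your proof is correct and follows essentially the same route as the paper's: take the two extreme $xy$-gaps inside $P$, assume both are long, use the hole $P\cup\{y\}$ and the hole formed by the two gaps together with $x,y$ to deduce that the segment of $P$ between the extreme neighbours of $x$ has even length, and then replace that segment by $x$ in $C$ to obtain a long even hole shorter than $C$. The only differences are cosmetic (you use the bounds $j-i\ge 4$ and $|E(Q)|\ge 4$ where the paper is content with $\ge 3$), so nothing further is needed.
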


\begin{proof}
Suppose not. Let $v_1$, $v_2$ denote the ends of $P$. For $i \in \{1, 2 \}$, let $P_i$ denote the $xy$-gap contained in $P$ with one end equal to $v_i$. Then $P_1$, $P_2$ each have length at least $\lceil \frac{\ell}{2} \rceil -2$. Let $P_3$ be the subpath of $C$ with interior equal to $V(P) \setminus (V(P_1 \cup P_2)$. Thus $P = P_1 \cup P_2 \cup P_3$. Since $x$ is $C$-major, $P_3$ has length at least three and since $y$ is $C$-major the path $C \setminus P^*$ has length at least three. It follows that $V(P) \cup \{y\}$ induces a long hole and it is shorter than $C$. Hence, $|E(P_1)| + |E(P_2)| + |E(P_3)|$ is odd. Since $V(P_1) \cup V(P_2) \cup \{x, y\}$ induces a long hole shorter than $C$, it follows that $|E(P_1)| + |E(P_2)|$ is odd. Hence, $|E(P_3)|$ is even. So $(V(C) \setminus P_3^*) \cup \{x\}$ induces a long even hole and it is shorter than $C$, a contradiction.
\end{proof}

We need the following Lemma illustrated in Figure \ref{fig:trianglegap}.

\begin{lem} \label{lem:trianglegap}
Let $G$ be a candidate and let $C$ be a shortest long even hole in $G$.
Let $x,y$ be non-adjacent $C$-major vertices. Let $P$ be a path of $C$ such that $P$ is a $y$-gap, every neighbor of  $x$ in $V(P)$ has $P$-distance at least $\ell-1$ from an end of $P$ and $x$ has no neighbor in $V(C)$ that is adjacent to an end of $P$. Let $x$ have a neighbor in $V(P)$. Then $x$ has exactly two neighbors in $V(P)$ and they are adjacent.
\end{lem}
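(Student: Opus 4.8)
Let $C$ be a shortest long even hole in the candidate $G$, let $x,y$ be non-adjacent $C$-major vertices, and let $P$ be a $y$-gap such that every neighbor of $x$ in $V(P)$ has $P$-distance at least $\ell-1$ from each end of $P$, while $x$ has no neighbor in $V(C)$ adjacent to an end of $P$. Assuming $x$ has a neighbor in $V(P)$, I want to show $x$ has exactly two neighbors in $V(P)$ and they are adjacent. The plan is to argue by contradiction: suppose $x$ does \emph{not} have exactly two adjacent neighbors in $V(P)$. Then either (a) $x$ has a single neighbor in $V(P)$, or (b) $x$ has (at least) two neighbors in $V(P)$ that are non-adjacent.

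**The main dichotomy.** In case (a), let $w$ be the unique neighbor of $x$ in $V(P)$. Because $x$ is $C$-major, its neighbors in $V(C)$ cannot all lie on a $3$-vertex subpath, so $x$ has a neighbor $w'$ in $V(C)\setminus V(P)$; combining with the hypothesis that $x$ has no neighbor adjacent to an end of $P$, I should argue that $V(P)\cup\{x\}$ together with a short $xy$-path through a suitable portion of $C$ produces a long theta (with terminating vertices $x$ and $w$, using the two subpaths of $P$ from $w$ to the ends of $P$, both of length $\geq \ell-1$, as two constituent paths, and a third $x$-to-$w$ route that avoids $P^*$ via the rest of $C$ through $w'$). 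Since $G$ is a candidate it has no long theta, a contradiction. The precise routing — making the third path induced and disjoint/anticomplete to the others — is where I would be careful: I would use that $P$ is a $y$-gap so $y$ "caps" $P$, and route the third path as $x \dd w' \dd (\text{along } C\setminus P^*) \dd (\text{a neighbor of }x) \dd \dots$; if $w'$ forces adjacency issues I would instead build a long jewel using $y$ as the extra vertex joining the two ends of $P$.

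**The non-adjacent case.** In case (b), let $p,q$ be two non-adjacent neighbors of $x$ in $V(P)$, chosen (say) extremal along $P$. The two subpaths of $P$ from an end of $P$ to $p$ and to $q$ each have length $\geq \ell-1$ by hypothesis, so $p\dd P \dd (\text{end})$ and $q \dd P \dd (\text{end})$ are long. Now $V(P)\cup\{x\}$ induces a long hole (using $x$ together with the subpath of $P$ between $p$ and $q$ plus the two outer arcs and the end-to-end closure through... — actually more carefully: $x \dd p \dd P \dd q \dd x$ is a hole once I pick $p,q$ to be the extreme neighbors of $x$ on $P$). This hole is shorter than $C$ and $C$ is a shortest long even hole, so this hole is \emph{odd}; hence $d_P(p,q)$ is even. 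Next I consider $(V(C)\setminus P^{\mathrm{between}})\cup\{x\}$ where $P^{\mathrm{between}}$ is the open subpath of $P$ strictly between $p$ and $q$: this gives another hole, long because the ends of $P$ are far from $p,q$ and $y$ is $C$-major, and comparing parities with $|E(C)|$ even and $d_P(p,q)$ even forces this hole to be even and shorter than $C$ — contradiction. The delicate point is ensuring both of these auxiliary cycles are genuinely \emph{induced} holes of length $\geq \ell$; for the first, that follows since $p,q$ are the extreme neighbors of $x$ on $P$ and the arc between them has length $\geq 2$; for the second, I need that $x$ has no neighbor in $V(C)\setminus V(P)$ adjacent to $p$ or $q$ — but if it did, I would get a long theta or near-prism instead, again contradicting that $G$ is a candidate.

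**Main obstacle.** I expect the genuinely fiddly step to be case (a) / the single-neighbor case: producing the long theta requires routing a third $x$–$w$ path through $C\setminus P^*$ that is induced and anticomplete to the interiors of the two long subpaths of $P$, and this needs the hypotheses "$x$ has no neighbor adjacent to an end of $P$" and "$P$ is a $y$-gap" used in tandem — very much in the spirit of Lemma~\ref{lem:prismtriangleexternal} and Lemma~\ref{Kexternal} earlier in the paper. If the straightforward theta does not materialize because $x$ has only the one neighbor $w$ on $P$ and its other neighbors cluster near one end region outside $P$, I would instead exhibit a long jewel of order $\leq \ell+3$ (two $xy$-paths of different parity through $P$ capped by $y$, plus a long third path), contradicting that $G$ is a candidate. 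In all branches the contradiction comes from one of the four forbidden configurations (short long even hole, long jewel of bounded order, long theta, long near-prism), so the proof is a careful case analysis rather than anything requiring new machinery.
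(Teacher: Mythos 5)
Your proposal has genuine gaps in both branches, and the key device of the paper's proof is missing. In your single-neighbor case the configuration you describe is not a theta: the two subpaths of $P$ from $w$ to the ends of $P$ are $w$--$a$ and $w$--$b$ paths, so they cannot be constituent paths of a theta ``with terminating vertices $x$ and $w$''. The paper closes this case by taking the theta between $w$ and $y$ instead: since $P$ is a $y$-gap, $y$ is adjacent to both ends $a,b$, so $wPa \dd y$ and $wPb \dd y$ are two long $wy$-paths, and the third path is $w \dd x$ followed by an $xy$-path $M_1$ that is disjoint from and anticomplete to $V(P)$. The existence of $M_1$ is exactly where the hypotheses combine: Lemma~\ref{lem:nestedgaps} forces $x$ to have a neighbor in $V(C)\setminus V(P)$, the hypothesis that $x$ has no neighbor adjacent to an end of $P$ keeps that neighbor away from $P$, and Lemma~\ref{lem:Cmajor3pairwisenonadjacent} gives $y$ three pairwise non-adjacent neighbors so that the path can be completed to $y$ off $P$. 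You gesture at ``a short $xy$-path through a suitable portion of $C$'' but never establish this path, and without it neither of your cases closes.

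Your second (parity) branch also does not survive scrutiny. With $p,q$ chosen as the \emph{extreme} neighbors of $x$ on $P$, the cycle $x\dd pPq \dd x$ need not be induced ($x$ may have neighbors strictly between $p$ and $q$), and even when it is a hole it need not be long; a candidate is permitted to contain even holes of length less than $\ell$, so ``shorter than $C$, hence odd'' is unjustified for a possibly short hole. Worse, $(V(C)\setminus P^{\mathrm{between}})\cup\{x\}$ is in general not induced at all: as noted above, $x$ necessarily has a neighbor in $V(C)\setminus V(P)$, so excluding only neighbors ``adjacent to $p$ or $q$'' cannot repair this, and the parity comparison collapses. The paper avoids parity entirely here: from two non-adjacent neighbors of $x$ in $V(P)$ it extracts two long $xy$-paths $M_2,M_3$ with interiors in $V(P)$ (again exploiting that $y$ is adjacent to both ends of $P$ and that those ends are at $P$-distance at least $\ell-1$ from the neighbors of $x$), with disjoint, anticomplete interiors, and then $M_1,M_2,M_3$ form a long theta, contradicting that $G$ is a candidate. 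So the correct argument is a two-case theta construction anchored at $y$, not the jewel/parity machinery you propose.
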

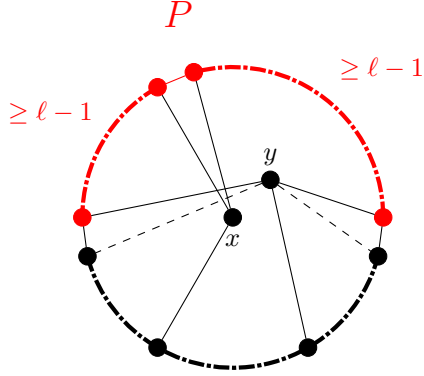
\begin{figure}
	\centering
	\begin{tikzpicture}
	
	\def\crad{2cm};
	\def\orad{2.2cm};
	
	\foreach \x in {0, ...,11}{
		\path (0,0) ++(30*\x:\crad) coordinate (c\x);
		\path (0,0) ++(30*\x:\orad) coordinate (o\x);
		\path (0, 0) ++ (30 *\x:1.6cm) coordinate (i\x);
		%\node at (o\x) {\x};
	}

	\foreach \x in {0, ...,23}{
		\path (0,0) ++(15*\x:\crad) coordinate (cc\x);
		\path (0,0) ++(15*\x:\orad) coordinate (oo\x);
		\path (0, 0) ++ (15 *\x:2.8cm) coordinate (ii\x);
		%	\node at (oo\x) {\x};
	}
	\begin{scope}[color = red]
		\draw[path] (c0) arc[start angle = 0, end angle= 105, radius = \crad];
		\draw[path] (cc8) arc[start angle = 120, end angle = 180, radius = \crad];
	\end{scope}
	\draw[path] (c13) arc[start angle = 195, end angle= 345, radius = \crad];
	
	\node[dot, color = red] (ge) at (c0) {};
	\node[dot, color = red] (gs) at (c6){};
	\node[dot] (y) at (.5,.5){};
	\draw (y) to (gs);
	\draw (y) to (ge);
	\node[dot] (gee) at (cc23) {};
	\node[dot] (gss) at (cc13){};
	\draw (gss) -- (gs);
	\draw (gee) -- (ge);
	
	\node[dot] (x) at (0,0){};
	\node[dot, color = red] (x1) at (cc8){};
	\node[dot, color = red]  (x2) at (cc7){};
	\draw (x1) -- (x) -- (x2);
	
	\draw[maybe edge] (y)  to (gss);
	\draw[maybe edge] (y) to (gee);
	\node[dot] (xnbr) at (cc16){};
	\draw (x) -- (xnbr);
	\node[dot] (ynbr) at (cc20){};
	\draw (y) -- (ynbr);
	\draw[color = red] (x1) -- (x2);
	\node at (0, -.3) {$x$};
	\node at (.5, .8) {$y$};
	\node at (ii10)  {\textcolor{red}{$\geq \ell -1$}};
	\node at (ii3)  {\textcolor{red}{$\geq \ell -1$}};
	\node at (ii7) {\Large \textcolor{red}{$P$}};
\end{tikzpicture}
	\caption{An illustration of the statement of Lemma \ref{lem:trianglegap}. $C$ is drawn as the outer face. The two paths drawn in thick red dashed lines are both of length at least $\ell -1$.
	$P$ is the path of $C$ drawn in red. Note this is a simplified drawing, $x,y$ may have more edges in $V(C) \setminus V(P)$. }
	\label{fig:trianglegap}
\end{figure}
\begin{proof}
Denote the ends of $P$ by $a,b$.
By Lemma \ref{lem:nestedgaps}, it follows that $x$ has a neighbor in $V(C) \setminus V(P)$. Since $x$ is not adjacent to any vertex at $C$-distance at most 1 from an end of $P$ and $y$ has three pairwise non-adjacent neighbors in $V(C)$ by Lemma \ref{lem:Cmajor3pairwisenonadjacent}, there is an $xy$-path $M_1$ with $V(M_1)$ disjoint from and anticomplete to $V(P)$.
Suppose $N(x) \cap V(P)$ consists of a single vertex $w$. Then there is a long theta formed by $wPa \dd y, wPb \dd y, w \dd xM_1y$, a contradiction.
So we may assume that $x$ has two non-adjacent neighbors in $V(P)$. Then there are two long $xy$-paths $M_2, M_3$ with interior in $V(P)$ such that $M_2$ and $M_3$ have disjoint and anticomplete interiors. But then, $M_1, M_2$ and $M_3$ form a long theta, a contradiction.
\end{proof}

We will prove the existence of a bounded-sized collection of paths and vertices of $G$ with useful properties for cleaning $C$-major vertices of a shortest long even hole $C$ called a ``$C$-contrivance''. A $C$-contrivance is analogous to a $(K, P_1, P_2)$-contrivance where $K$ is a shortest long near-prism and $P_1, P_2$ are distinct constituent paths of $K$. The techniques used to prove that $G$ contains a $C$-contrivance are similar to those used in the proof of Lemma \ref{prismset}.
More formally, for a graph $G$ and a shortest long even hole $C$ in $G$, we call a triple  $(A,\mathcal{B}, m)$ a {\em $C$-contrivance} if the following conditions all hold:
\begin{enumerate}
    \item \label{C:Adef}$A$ is a set of $C$-major vertices.
    \item \label{C:Bdef}$\mathcal{B}$ is set of paths of $C$.
    \item  \label{C:Ddef} $m$ is a vertex in $V(C)$.
    \item \label{C:Nbd}Every $C$-major vertex has a neighbor in $A \cup (\cup_{B \in \mathcal{B}} B^*)$.
    \item \label{C:Path} There is a path $P$ of $C$ with both ends in $\cup_{B \in \mathcal{B}} B^*$ such that $m$ is a midpoint of $P$, all $C$-major vertices have a neighbor in $V(P)$ and $N(A) \cap V(P) \subseteq \cup_{B \in
    \mathcal{B}} B^*$.
    \item \label{C:Anbrs} Every vertex in $A$ has a neighbor in $\cup_{B \in \mathcal{B}} B^*$. 
\end{enumerate}

If $(A, \mathcal{B}, m)$ is a $C$-contrivance, we say that $A$, $\mathcal{B}$ and $m$ {\em form} a $C$-contrivance.
We guess a $C$-contrivance as a part of our cleaning algorithm, so it is critical to prove the existence of a $C$-contrivance with some bound on $|A|, |\mathcal{B}|$ and the lengths of the paths in $\mathcal{B}$. We don't need to have a bound on the length of $P$.

We call a $C$-contrivance $(A, \mathcal{B}, m)$ {\em useful} if $|A| \leq 3$, $|\mathcal{B}| \leq 6$, all paths in $\mathcal{B}$ have length at most $2\ell-5$ and at most two paths in $\mathcal{B}$ have length greater that $\ell + 2$. By definition of $C$-contrivance, $\cup_{B \in \mathcal{B}} B^* \neq \emptyset$ so some path in $\mathcal{B}$ must have length at least two.

\begin{lem} \label{lem:Ccontrivance}
Let $G$ be a candidate and let $C$ be a shortest long even hole in $G$.
Then there is a there is a useful $C$-contrivance.
\end{lem}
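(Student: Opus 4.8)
The plan is to imitate the proof of Lemma~\ref{prismset}, with the fixed base vertex of a constituent path there replaced by a suitably chosen long subpath of $C$, and with Lemma~\ref{lem:trianglegap} playing the role that Lemma~\ref{lem:prismtriangleexternal} played for near-prisms. First, if $G$ has no $C$-major vertex, take $A=\emptyset$, let $\mathcal{B}$ consist of one subpath of $C$ of length three, let $P$ be the subpath of $C$ on the two vertices of $\bigcup_{B\in\mathcal{B}}B^{*}$, and let $m$ be an end of $P$; then conditions \ref{C:Adef}--\ref{C:Ddef} hold, \ref{C:Nbd}, \ref{C:Path}, \ref{C:Anbrs} hold vacuously, and the usefulness bounds are immediate. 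So assume the set $\mathcal{S}$ of $C$-major vertices is nonempty.

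The central point is how to produce the path $P$ of condition~\ref{C:Path}. Choose $x\in\mathcal{S}$ so that its longest gap $P_{x}$ is as long as possible over all $C$-major vertices; set $P:=P_{x}$, let $m$ be a midpoint of $P$, and write $e_{1},e_{2}$ for the ends of $P$ (both neighbours of $x$). Then \emph{every} $C$-major vertex has a neighbour in $V(P)$: if some $C$-major $v$ had no neighbour in $V(P)$, then --- using Lemma~\ref{lem:Cmajor3pairwisenonadjacent} to know that $v$ has neighbours on $C$ at all --- the $v$-gap that spans $P$ would be a path of length at least $|E(P)|+2$, contradicting the choice of $x$. Note also that $N(x)\cap V(P)=\{e_{1},e_{2}\}$, since $P$ is an $x$-gap. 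If $P$ is short enough that $V(P)$ together with the two vertices of $C$ just outside $P$ can be covered by the interiors of one or two subpaths of $C$ respecting the useful length bounds, then taking those as $\mathcal{B}$, $A=\emptyset$, and $P,m$ as above settles all six conditions at once, because every $C$-major vertex already has a neighbour in $V(P)\subseteq\bigcup_{B\in\mathcal{B}}B^{*}$. So assume $P$ is long.

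Now build $A$ and $\mathcal{B}$ greedily as in Lemma~\ref{prismset}. Put into $\mathcal{B}_{x}$ a bounded collection of subpaths of $C$ whose interiors cover the parts of $C$ near $e_{1}$ and near $e_{2}$, and let $\mathcal{S}_{x}$ be the set of $C$-major vertices with no neighbour in $\{x\}\cup\bigcup_{B\in\mathcal{B}_{x}}B^{*}$. For $v\in\mathcal{S}_{x}$, every neighbour of $v$ in $V(P)$ is far from $e_{1}$ and $e_{2}$ and $v$ has no neighbour adjacent to $e_{1}$ or $e_{2}$, so Lemma~\ref{lem:trianglegap} (applied with the $x$-gap $P$) gives that $v$ has exactly two neighbours in $V(P)$ and they are adjacent. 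If $\mathcal{S}_{x}\neq\emptyset$, pick $y\in\mathcal{S}_{x}$, let $\mathcal{B}_{y}$ consist of one short subpath of $C$ covering the two adjacent neighbours of $y$ in $V(P)$ together with short subpaths around the other extreme neighbours of $y$, and let $\mathcal{S}_{xy}$ be the $C$-major vertices with no neighbour in $\{x,y\}\cup\bigcup_{B\in\mathcal{B}_{x}\cup\mathcal{B}_{y}}B^{*}$; if $\mathcal{S}_{xy}\neq\emptyset$, pick $z\in\mathcal{S}_{xy}$ and build $\mathcal{B}_{z}$ analogously. Put $A:=\{x,y,z\}\cap\mathcal{S}$ and $\mathcal{B}:=\mathcal{B}_{x}\cup\mathcal{B}_{y}\cup\mathcal{B}_{z}$. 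Calibrated correctly, this gives $|A|\le 3$, $|\mathcal{B}|\le 6$, every path of $\mathcal{B}$ of length at most $2\ell-5$, and at most two of them of length greater than $\ell+2$, so the triple is useful; conditions \ref{C:Adef}--\ref{C:Ddef} and \ref{C:Anbrs} are immediate, and \ref{C:Path} holds because every $C$-major vertex has a neighbour in $V(P)$, while $N(x)\cap V(P)=\{e_{1},e_{2}\}$ and, by Lemma~\ref{lem:trianglegap}, $N(y)\cap V(P)$ and $N(z)\cap V(P)$ are the two-vertex sets already placed into $\mathcal{B}$.

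The crux is condition~\ref{C:Nbd}: every $C$-major vertex has a neighbour in $A\cup\bigcup_{B\in\mathcal{B}}B^{*}$. Suppose some $C$-major $t$ has no such neighbour; then $t\in\mathcal{S}_{xy}$, so $z$ exists, $t,z\in\mathcal{S}_{xy}$, and $t$ has no neighbour in $\bigcup_{B\in\mathcal{B}_{z}}B^{*}$ either. By Lemma~\ref{lem:trianglegap} and Lemma~\ref{lem:Cmajor3pairwisenonadjacent}, $t$ and $z$ each have exactly two adjacent neighbours in $V(P)$, these two pairs lie far apart along $P$ (because $\mathcal{B}_{z}$ surrounds $z$'s neighbours), and $t$ and $z$ each have further pairwise non-adjacent neighbours in $V(C)\setminus V(P)$. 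From this one assembles a long theta --- as in the proof of Lemma~\ref{lem:trianglegap} --- contradicting that $G$ is a candidate; hence \ref{C:Nbd} holds and $(A,\mathcal{B},m)$ is a useful $C$-contrivance. I expect this last step, together with the precise calibration of the lengths of the paths in $\mathcal{B}$ (so that the ``uncovered'' $C$-major vertices have their neighbours in $V(P)$ pinned down by Lemma~\ref{lem:trianglegap} while the useful length bounds are respected), to be the main obstacle, exactly as the analogous step was the bulk of the proof of Lemma~\ref{prismset}.
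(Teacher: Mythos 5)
Your overall scheme matches the paper's (greedily pick up to three major vertices, surround their relevant neighbours with a bounded family of subpaths of $C$, take $P$ to be a maximum gap so every $C$-major vertex meets $V(P)$, and kill any leftover major vertex with a long-theta contradiction), but the step you yourself defer --- ``from this one assembles a long theta'' --- is exactly the heart of the lemma, and what you have set up is not enough to assemble it. With $t,z$ both having exactly two adjacent neighbours in $V(P)$ (far apart), you get \emph{one} long induced $tz$-path through $P$; for a theta you need two more pairwise internally disjoint, anticomplete $tz$-paths, and these must run through the complementary arc of $C$. Knowing only that $t$ and $z$ each have some pairwise non-adjacent neighbours in $V(C)\setminus V(P)$ does not give two such connections: if, say, all of $t$'s neighbours on the complementary arc lie on one side of all of $z$'s, that arc yields only a single induced $tz$-path, and no contradiction follows. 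Nothing in your arbitrary choice of $y$ and $z$, or in covering only the neighbourhoods of their neighbours by $\mathcal{B}_y,\mathcal{B}_z$, rules this out.

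The paper's proof supplies precisely the missing mechanism. The second vertex ($a_2$) is not arbitrary: among the major vertices $S$ not yet covered, it is chosen so that its gap $P_{a_2}$ containing the end $v_1$ of $P$ in its interior maximizes $|E(P_{a_2})\setminus E(P)|$. This defines the segment $R=v_1P_{a_2}v_3$ of the complementary arc, and the maximization guarantees that \emph{every} vertex of $S$ has a neighbour in $V(R)$; Lemma \ref{lem:trianglegap} applied to the gap $P_{a_2}$ then pins down exactly two adjacent neighbours of each leftover vertex inside $V(R)$, in addition to its two adjacent neighbours in $P^*\setminus V(P_{a_2})$, and $B_3$ is placed around the far end $v_3$ of $R$. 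That is what produces the three paths of the final theta (one through $R$, one through $P\setminus V(R)$, one through the remaining arc via Lemma \ref{lem:Cmajor3pairwisenonadjacent}), with the paths $B_1,\dots,B_6$ calibrated so they are long and anticomplete. Your version, which treats $P$ symmetrically and never steps outside it when choosing $y$, has no analogue of $R$, so condition \ref{C:Nbd} is not established; this is a genuine gap rather than a routine calibration issue.
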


\begin{proof}
We may assume that $C$ is not clean.
Let $a_1$ be a $C$-major vertex with an $a_1$-gap $P$ such that $P$ has maximum length among all paths of $C$ that form an $x$-gap for some $C$-major vertex $x$.
Denote the ends of $P$ by $v_1, v_2$. It follows that every $C$-major vertex has a neighbor in $V(P)$.
For $i \in \{1,2 \}$ let $B_i$ be the path of $C$ whose vertex set consists of all vertices of $V(P)$ with $P$-distance at most $\ell-1$ from $v_i$ and the two vertices of $V(C)\setminus V(P)$ with $C$-distance at most two from $v_i$. 

Let $S$ be the set of $C$-major vertices with no neighbor in $B_1^* \cup B_2^* \cup \{a_1 \}$. Let $m$ be a midpoint of $P$.
We may assume that $S \neq \emptyset$, because otherwise the triple consisting of $(\{a_1 \}, \{B_1, B_2 \}, m )$ is a useful $C$-contrivance.
For each $w \in S$, we define $P_w$ to be the $w$-gap with $v_1 \in P_w^*$.
Let $a_2$ be an element of $S$ maximizing $|E(P_{a_2}) \setminus E(P)|$. Let $v_3$ denote the end of $P_{a_2}$ that is not contained in $V(P)$. Since $a_2$ has no neighbor in $B_1^* \cup B_2^* \cup \{a_1 \}$ and $a_2$ has at least one neighbor in $V(P)$, it follows from Lemma \ref{lem:trianglegap} that $a_2$ has exactly two neighbors in $V(P)$ and they are adjacent. Denote them by $v_4, v_5$, where $v_4$ is an end of $P_{a_2}$.
Let $R$ denote the path $v_1P_{a_2}v_3$. It follows that every $C$-major vertex has a neighbor in $B_1^* \cup B_2^* \cup \{a_1 \}$ or a neighbor in $V(R)$. Let $B_3$ be the path of $C$ whose vertex set consists of all vertices of $V(P_{a_2})$ with $P_{a_2}$-distance at most $\ell-1$ from $v_3$ and the two vertices of $V(C) \setminus V(P_{a_2})$ with $C$-distance at most two from $v_3$. 
Let $B_4$ be the path of $C$ whose vertex set consists of all vertices of $V(P_{a_2})$ with $P_{a_2}$-distance at most $\ell-1$ from $v_4$ and the two vertices of $V(C) \setminus V(P_{a_2})$ with $C$-distance at most two from $v_4$. Then $v_5 \in B_4^*$.

Let $T$ denote the set of $C$-major vertices with no neighbor in $\cup_{i=1}^4 B_i^* \cup \{a_1, a_2\}$. 
Let $t \in T$. Then $t$ is not equal to $a_1$ or $a_2$.
Since $t$ has a neighbor in $V(R)$ it follows from Lemma \ref{lem:trianglegap} applied to $a_2, t$ and $P_{a_2}$ that $a_3$ has exactly two neighbors in $V(P_{a_2})$ and they are adjacent. Denote them by $x_1, x_2$. Since $t$ has a neighbor in $V(R)$ and $t$ is not adjacent to $v_1, v_3$, it follows that $x_1, x_2 \in V(R)$.
Since $t$ has a neighbor in $V(P)$, it follows from Lemma \ref{lem:trianglegap} applied to $a_1, t$ and $P$ that $t$ has exactly two neighbors in $P^*$. Denote them by $x_3, x_4$. Since $x_1, x_2$ are the only neighbors of $t$ in $V(P_{a_2})$ it follows that $x_3, x_4 \in P^* \setminus V(P_{a_2})$.

We may assume $T \neq \emptyset$ because otherwise $\{a_1, a_2 \}$, $\{B_1, B_2, B_3, B_4\}$ and $m$ form a useful $C$-contrivance. Let $a_3$ be an arbitrary element of $T$. Denote the neighbors of $a_3$ in $V(R)$ by $v_6, v_7$ and the denote the neighbors of $a_3$ in $P^* \setminus V(P_{a_2})$ by $v_8, v_9$. 
Let $B_5$ be the path of $C$ containing all vertices of $V(C)$ with $C$-distance at most $\ell-3$ from $v_6$ or $v_7$.
Let $B_6$ be the path of $C$ containing all vertices of $V(C)$ with $C$-distance at most $\ell-3$ from $v_8$ or $v_9$.

Let $A = \{a_1, a_2, a_3 \}$. Let $\mathcal{B} = \{ B_1, B_2, \dots, B_6 \}$.
We claim $(A, \mathcal{B}, m)$ is a useful $C$-contrivance.
It follows from the choice of $B_1, B_2, \dots, B_6$ that $v_i \in \cup_{j=1}^6 B_j^*$ for every $i \in \{1, 2, \dots, 9 \}$. Hence, $(A, \mathcal{B}, m)$ satisfies condition \ref{C:Anbrs}.
Moreover, $(A, \mathcal{B}, m)$ satisfies conditions \ref{C:Adef}, \ref{C:Bdef} and \ref{C:Ddef} and the conditions of usefulness by construction, so we need only prove it satisfies \ref{C:Nbd} and \ref{C:Path}.

 Every $C$-major vertex has a neighbor in $V(P)$ and  $N(A) \cap V(P) = \{v_1, v_2, v_4, v_5, v_8, v_9\}$. Since $P$ has ends $v_1, v_2$ and $m$ is a midpoint of $P$, condition \ref{C:Path} is satisfied. Suppose there is some $C$-major vertex $w$ that has no neighbor in $A \cup (\cup_{i=1}^6 B_i^*)$. Then $w \in T$. Hence, $w$ has exactly two neighbors in $V(R)$ and they are adjacent and $w$ has exactly two neighbors in $V(P) \setminus V(P_{a_2})$ and they are adjacent.
 Since $w$ has no neighbors in $B_5^* \cup B_6^*$ it follows that there are two long induced $a_3w$-paths $M_1, M_2$ with $M_1^* \subseteq R$ and $M_2^* \subseteq V(P)\setminus V(R)$. Moreover, since $a_3, w$ have no neighbors in $B_1^*$, it follows that $M_1^*$ is anticomplete to $M_2^*$. Since $w$ and $a_3$ each have no neighbor in $B_2^* \cup B_3^*$, it follows that $w$, $a_3$ have no neighbors in $V(C)$ that are adjacent to $v_2$ or $v_3$. Hence, it follows from Lemma \ref{lem:Cmajor3pairwisenonadjacent} that there is a $wa_3$-path $M_3$ with $M_3^*$ disjoint from and anticomplete to $V(P)\cup V(R)$. But then $M_1, M_2, M_3$ form a long theta, a contradiction.
\end{proof}

We can now prove the main result of this section.

\begin{theorem} \label{alg:cleaningSLEH}
There is an algorithm with the following specifications:
\begin{description}
\item[Input:] A candidate $G$.
\item[Output:] A list of $\mathcal{O}(|G|^{8\ell + 2})$ sets with the following property: For every shortest long even hole $C$ there is some $X$ in the list such that $X$ contains all $C$-major vertices and $X \cap V(C) = \emptyset$.
\item[Running Time:] $\mathcal{O}(|G|^{8\ell + 4})$
\end{description}
\end{theorem}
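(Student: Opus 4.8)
The plan is to mirror the structure of the proof of Theorem \ref{cleanprism} (the $K$-major cleaning algorithm for long near-prisms), using Lemma \ref{lem:Ccontrivance} in place of Lemma \ref{prismset}. The algorithm enumerates all candidate $C$-contrivances: all triples $(A,\mathcal{B},m)$ where $A$ is a set of at most three vertices, $\mathcal{B}$ is a set of at most six paths of $G$ each of length at most $2\ell-5$ with at most two of length greater than $\ell+2$, and $m\in V(G)$. For each such triple, if $A=\emptyset$ we output the empty set (handling the clean case). Otherwise, set $X_0=\cup_{B\in\mathcal{B}} X_B$ where $X_B$ is the set of vertices outside $V(B)$ with a neighbor in $B^*$; this deletes all vertices adjacent to the interiors of the guessed ``barrier'' paths. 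Then, guessing the two endpoints $a,b\in\mathcal{B}^*$ of the path $P$ from condition \ref{C:Path}, we work in the graph $G' = G\setminus((X_0\cup A\cup N(A))\setminus \mathcal{B}^*)$, compute the union $R$ of interiors of all shortest $ab$-paths in $G'$ that pass through $m$ (or more simply, all shortest $ab$-paths, then intersect with those through $m$ — one must be careful here exactly as in Theorem \ref{cleanprism}), and output $A\cup X_0\cup Z$, where $Z$ is the set of vertices outside $\mathcal{B}^*$ with a neighbor in $R$ and a neighbor in $A$.

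For correctness: given a shortest long even hole $C$, by Lemma \ref{lem:Ccontrivance} there is a useful $C$-contrivance $(A,\mathcal{B},m)$, which will be one of our guesses. By condition \ref{C:Nbd} every $C$-major vertex has a neighbor in $A\cup\mathcal{B}^*$, so every $C$-major vertex not in $A\cup N(A)$ lies in $X_0$. By condition \ref{C:Path} there is a path $P$ of $C$ with ends in $\mathcal{B}^*$, midpoint $m$, such that all $C$-major vertices have a neighbor in $V(P)$ and $N(A)\cap V(P)\subseteq\mathcal{B}^*$; guessing the ends of $P$ as $a,b$, we need $P$ to survive into $G'$ and to be a shortest $ab$-path there. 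The surviving is immediate since $N(A)\cap V(P)\subseteq \mathcal{B}^*$ and the barrier deletions only touch $\mathcal{B}^*$-neighbors, which are not interior to $P$ in the relevant way — here I expect to invoke that $X_0$ is disjoint from $V(C)$ by construction (no $B^*$ contains a vertex of $C$ adjacent to another vertex of $C$ beyond what the contrivance permits; this needs the ``useful'' length bounds and condition \ref{C:Path}). That $P$ is a shortest $ab$-path in $G'$ is where Theorem \ref{thm:shortcuts} enters: since all $C$-major vertices have a neighbor in $V(P)\subseteq$ the deleted set, $G'$ has no $C$-major vertices on any shortcut, so Theorem \ref{thm:shortcuts} forbids a shortcut — wait, more carefully, one cleans so that $C$ has no shortcut in $G'$, hence $P$ (being a subpath of $C$ between $a,b$, and the shorter of the two by choice via the midpoint) is a shortest $ab$-path. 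Then $R\supseteq V(P)\setminus\{a,b\}$, so every $C$-major vertex, having a neighbor in $V(P)$ and a neighbor in $A$ (by condition \ref{C:Nbd} together with \ref{C:Anbrs}, or by re-selecting $A$'s distinguished vertex as in Lemma \ref{prismset}), lands in $A\cup Z$, while $V(C)$ stays disjoint from $Z$ because every shortest $ab$-path in $G'$ is good (no $C$-major vertices survived, so no bad behavior), so $R$ meets $V(C)$ only in $V(P)$ and $A$ is anticomplete to $V(P)$ except through $\mathcal{B}^*$.

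For the running time and list length: there are $\mathcal{O}(|G|^3)$ choices for $A$, and the paths in $\mathcal{B}$ have total length bounded by roughly $2\cdot(2\ell-4) + 4\cdot(\ell+3) = 8\ell-4$ vertices, giving $\mathcal{O}(|G|^{8\ell-4})$ choices for $\mathcal{B}$, times $\mathcal{O}(|G|)$ for $m$ and $\mathcal{O}(|G|^2)$ for the pair $a,b$, so $\mathcal{O}(|G|^{8\ell+2})$ triples-with-endpoints; for each, building $X_0, R, Z$ costs $\mathcal{O}(|G|^2)$, for a total of $\mathcal{O}(|G|^{8\ell+4})$, and a list of length $\mathcal{O}(|G|^{8\ell+2})$, as claimed. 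The main obstacle I anticipate is the bookkeeping in the correctness argument establishing that $P$ is genuinely a shortest $ab$-path in $G'$ and that $Z$ avoids $V(C)$: this requires carefully checking that the deletion of $(X_0\cup A\cup N(A))\setminus\mathcal{B}^*$ removes exactly the $C$-major vertices (so Theorem \ref{thm:shortcuts} applies to the residual graph) without removing any vertex of $C$, which in turn leans on the precise definition of ``useful'' $C$-contrivance and on choosing the distinguished vertex of $A$ to maximize the relevant gap length exactly as in Lemma \ref{prismset} and Theorem \ref{cleanprism}.
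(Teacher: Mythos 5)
Your overall strategy is the paper's: guess a useful $C$-contrivance $(A,\mathcal{B},m)$ via Lemma \ref{lem:Ccontrivance}, delete the neighbors of $\mathcal{B}^*$ and of $A$ outside $\mathcal{B}^*$, recover the path $P$ of condition \ref{C:Path} by shortest-path computations, and collect the vertices with a neighbor in $A$ and a neighbor in the recovered region. But the step where you recover $P$ has a genuine gap. You compute the union of shortest $ab$-paths between the two \emph{ends} of $P$ and justify that $P$ survives as one of them by saying $P$ is ``the shorter of the two by choice via the midpoint.'' That is not what the contrivance gives you: $m$ is a midpoint of $P$ itself, not evidence that $P$ is the shorter $ab$-arc of $C$. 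In the construction of Lemma \ref{lem:Ccontrivance}, $P$ is a \emph{maximum-length} gap chosen so that every $C$-major vertex has a neighbor in $V(P)$, so typically $|E(P)|>|E(C)|/2$. Then, by Theorem \ref{thm:shortcuts}, any surviving piece of the other arc (or any other path in the cleaned graph) gives $d_{G'}(a,b)=d_C(a,b)=|E(C)|-|E(P)|<|E(P)|$, so $P$ is \emph{not} a shortest $ab$-path in $G'$, your set $R$ need not contain $P^*$, and the claim that every $C$-major vertex outside the barrier set gets a neighbor in $R$ (hence lands in $Z$) collapses. Your parenthetical fallback ``shortest $ab$-paths that pass through $m$'' points in the right direction but is not developed: you neither show how to compute the union of such paths nor use it in the disjointness argument.

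The paper's device, which you should adopt, is to split $P$ at $m$: since $m$ is a midpoint of $P$, each half $d_1Pm$, $d_2Pm$ is a geodesic of $C$, so by Theorem \ref{thm:shortcuts} each half is a shortest path in $G\setminus((Y\cup N(A))\setminus\mathcal{B}^*)$ (all $C$-major vertices are deleted there by condition \ref{C:Nbd}, and the halves survive because $N(A)\cap V(P)\subseteq\mathcal{B}^*$). Taking $R\cup S$ to be the union of all shortest $d_1m$-paths and all shortest $d_2m$-paths then guarantees $P^*\subseteq(R\cup S)\setminus\{d_1,d_2\}$, which is what makes $Z$ catch every remaining $C$-major vertex. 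The midpoint is also essential to the second half of correctness, which you leave as hand-waving (``no bad behavior''): to show $Z\cap V(C)=\emptyset$ one takes $z\in V(C)\cap Z$, notes $d_C(z,m)\geq |E(P)|/2+1$ because $m$ is a midpoint of $P$ and $z\notin V(P)$, and observes that a neighbor $q$ of $z$ on a shortest $d_1m$- or $d_2m$-path yields a $zm$-path of length less than $d_C(z,m)$ avoiding all $C$-major vertices, i.e.\ a shortcut contradicting Theorem \ref{thm:shortcuts}. With full-length $ab$-paths this distance comparison is unavailable. So the enumeration, the barrier set, the set $Z$, and the complexity bookkeeping in your proposal are essentially right, but the shortest-path recovery of $P$ and both halves of the correctness argument need to be redone with the end-to-midpoint computation rather than end-to-end.
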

\begin{proof}
We guess a set $A$ of at most three vertices in $V(G)$ and a vertex $m$ in $V(G)$.
We guess a list $\mathcal{B}$ of at most $6$ paths of $G$, $B_1, B_2, \dots B_k$ such that all paths in $\mathcal{B}$ have length at most $2\ell-5$, at least one path in $\mathcal{B}$ has length greater than one, and at most two paths in $\mathcal{B}$ have length greater than $\ell + 2$. Let $\mathcal{B}^*$ denote $\cup_{i=1}^kB_i^*$. By definition, $\mathcal{B}^* \neq \emptyset$. Let $Y$ be the set of vertices in $V(G) \setminus (\cup_{i=1}^k V(B_i))$ with neighbors in $\mathcal{B}^*$.  Guess two vertices $d_1, d_2$ in $\mathcal{B}^*$.
Let $R, S$ be union of the vertex sets of all shortest $d_1m$-paths and $d_2m$-paths in $G \setminus ( (Y \cup N(A)) \setminus \mathcal{B}^*)$ respectively. Let $Z$ be the set of vertices in $V(G) \setminus (Y \cup \mathcal{B}^*)$ with a neighbor in $A$ and a neighbor in $R \cup S \setminus \{d_1, d_2 \}$. Output $Y \cup Z$.

We will now prove the output is correct. Suppose $C$ is a shortest long even hole in $G$.
Then by Lemma \ref{lem:Ccontrivance}, $G$ contains a useful $C$-contrivance. Thus, for some guess of $(A, \mathcal{B}, m)$, the triple $(A, \mathcal{B}, m)$ is a useful $C$-contrivance. By construction, $Y$ is disjoint from $V(C)$ and $Y$ contains every $C$-major vertex in $V(G)$ with a neighbor in $\mathcal{B}^*$. By condition \ref{C:Anbrs} in the definition of $C$-contrivance, $A$ is contained in $Y$. Let $P$ denote the path of $C$ with ends $d_1, d_2$ that contains $m$. By condition \ref{C:Path} in the definition of $C$-contrivance, we may assume that $d_1, d_2$ are chosen such that every $C$-major vertex has a neighbor in $V(P)$, $m$ is a midpoint of $P$ and $N(A)\cap V(P) \subseteq \mathcal{B}^*$.
Let $P_1$ denote the subpath of $P$ with ends $d_1, m$ and let $P_2$ denote the subpath of $P$ with ends $d_2, m$. Since $m$ is a midpoint of $P$, it follows that $P_1$ and $P_2$ have lengths $d_C(d_1, m)$ and $d_C(d_2, m)$, respectively. Since $N(A) \cap V(P) \subseteq \mathcal{B}^*$, the paths $P_1, P_2$ are both subgraphs of $G \setminus ((Y \cup N(A)) \setminus \mathcal{B}^*)$.
By condition \ref{C:Nbd} in the definition of $C$-contrivance, $Y \cup N(A)) \setminus \mathcal{B}^*$ contains all $C$-major vertices. Hence, it follows from Theorem \ref{thm:shortcuts} that $P_1$ and $P_2$ are shortest paths between $d_1, m$ and $d_2, m$, respectively, in $G \setminus ((Y \cup N(A)) \setminus \mathcal{B}^*)$. Hence, $P^* \subseteq (R \cup S) \setminus \{d_1, d_2 \}$.

We prove $Z$ contains all $C$-major vertices in $V(G) \setminus Y$. Suppose $w \in V(G) \setminus Y$ is a $C$-major vertex. By condition \ref{C:Nbd}, it follows that $w$ has a neighbor in $A$. Since $d_1, d_2 \in \mathcal{B}^*$ and all $C$-major vertices have a neighbor in $V(P)$, it follows that $w$ has a neighbor in $(R \cup S) \setminus \{d_1, d_2 \}$.

We prove $Z$ is disjoint from $V(C)$.
Suppose there exists some $z \in V(C) \cap Z$. Then since $N(A) \cap V(P) \subseteq \mathcal{B}^*$ it follows that $z \not \in V(P)$. Since $m$ is a midpoint of $P$, it follows that $d_C(z, m) \geq \frac{|E(P)|}{2} + 1$.
Since $z \in Z$, we may assume there is some shortest $d_1m$-path in $G \setminus ((Y \cup N(A)) \setminus \mathcal{B}^*)$ and $q \in V(Q) \setminus \{d_1 \}$ such that $z$ is adjacent to $q$. Then, the path $qQm$ has length strictly less than $\frac{|E(P)|}{2}$ so $z \dd qQm$ has length strictly less than $d_C(z,m)$.
Since $Y \cup N(A)) \setminus \mathcal{B}^*$ contains all $C$-major vertices, $z \dd qQm$ contains no $C$-major vertices. Hence, $z \dd qQm$ is a shortcut, contradicting Theorem \ref{thm:shortcuts}.
This completes the proof of correctness.

We will now prove the bounds on the running time and list length. There are $\mathcal{O}(|G|^{8\ell + 2})$ guesses of $(A, \mathcal{B}, m)$ to check. For each such guess, we compute $Y, Z$ in time $\mathcal{O}(|G|^2)$. Hence the list outputted has length $\mathcal{O}(|G|^{8\ell+2})$ and the total running time is $\mathcal{O}(|G|^{8\ell+4})$.
\end{proof}

\section{The Algorithm}

We can now prove our main result which we restate: %faking a theorem environment with formatting so the numbers can be the same
\\ \\
\noindent \textbf{Theorem \ref{mainthm}} \textit{For each integer $\ell \geq 4$ there is an algorithm with the following specifications:}
\begin{description}
\item[Input:] \textit{A graph $G$.}
\item[Output:] \textit{Decides whether $G$ has an even hole of length at least $\ell$.}
\item[Running Time:] $\mathcal{O}(|G|^{108\ell-22})$.
\end{description}

\begin{proof}
Our algorithm is as follows.
We begin by testing if $G$ is a candidate by performing the following steps.
\begin{itemize}
	\item We apply the algorithm of Theorem \ref{alg:shortlongevenholes} to test whether $G$ contains a long even hole of length at most $2\ell + 2$ in time $\mathcal{O}(|G|^{2\ell+2})$, we apply the algorithm of Theorem \ref{alg:longjewels} to test whether $G$ contains a long jewel of order at most $\ell + 3$ in time $\mathcal{O}(|G|^{3\ell +6})$ and we apply the algorithm of Theorem \ref{alg:longtheta} to test whether $G$ contains a long theta in time $\mathcal{O}(|G|^{2\ell + 7})$. We may assume these tests fail, because otherwise $G$ contains a long even hole.
	\item We apply the algorithm of Theorem \ref{alg:banthebomb} to test whether $G$ contains a long ban-the-bomb in time $\mathcal{O}(|G|^{2\ell + 5})$. We may assume this test fails.
	\item Then, we apply the algorithm of Theorem \ref{alg:longprisms} to test whether $G$ contains a long near-prism in time $\mathcal{O}(|G|^{108\ell-22})$. We may assume this test fails. Consequently, $G$ is a candidate.
	\item Thus we are able to apply the algorithm given in Theorem \ref{alg:cleaningSLEH} to obtain a cleaning list for major vertices of length $\mathcal{O}(|G|^{8\ell+2})$ in time $\mathcal{O}(|G|^{8\ell+4}).$
	\item For every $X$ in the list we use the algorithm of Theorem \ref{thm:detectingCleanSLEH} to test whether $G \setminus X$ has a clean shortest long even hole in time $\mathcal{O}(|G|^5)$. If we have not found a clean shortest long even hole in $G \setminus X$ for any $X$ in the list, we output that $G$ has no long even hole.
\end{itemize}

Correctness follows from Theorems \ref{thm:detectingCleanSLEH} and \ref{alg:cleaningSLEH}. For the running time, testing whether $G$ is a candidate takes time $\mathcal{O}(|G|^{108\ell-22})$ and determining whether a candidate contains a shortest long even hole takes time $\mathcal{O}(|G|^{8\ell+7})$. Hence, the total running time is $\mathcal{O}(|G|^{108\ell-22})$.
\end{proof}

%\pagebreak
%\printbibliography

%\end{document}

%current situation

	\chapter{Monoholed Graphs}\label{chapter:monoholes}
	In the next several chapters we will describe the structure of graphs where every hole has length $\ell$ for some integer $\ell \geq 7$. We call $G$ \textit{$\ell$-monoholed} if every hole in $G$ has length $\ell$. When the value of $\ell$ is not ambiguous we will refer to $G$ as \textit{monoholed}.
	We need the following easy fact about monoholed graphs:
	
	\begin{fact} \label{fact:hole-and-vertex-complete-anticomplete-or-P3}
		Let $G$ be an $\ell$-monoholed graph for some $\ell \geq 5$. Suppose $C$ is a hole in $G$. Then for every $v \in V(G) \setminus V(C)$, either $v$ is complete to $V(C)$, $v$ is anticomplete to $V(C)$ or there is a path $P$ of $C$ of length at most such that $N(v) \cap V(C) = V(P)$.
	\end{fact}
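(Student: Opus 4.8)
The plan is to argue directly from the hypothesis that every hole of $G$ has length exactly $\ell\geq 5$, together with the elementary observation that $G$ has no hole of length $3$ or $4$ (vacuously, since all holes have length $\ell\geq5$). First I would fix a hole $C$ and a vertex $v\notin V(C)$, and consider $N(v)\cap V(C)$. If this set is empty, $v$ is anticomplete to $V(C)$ and we are done; if it is all of $V(C)$, $v$ is complete to $V(C)$ and we are done. So assume $N(v)\cap V(C)$ is a proper nonempty subset of $V(C)$. The goal is then to show that $N(v)\cap V(C)$ is the vertex set of a subpath $P$ of $C$ of length at most (presumably $3$, or possibly a specific small constant — I would fill in the exact bound the author intends, which from the phrasing ``a path $P$ of $C$ of length at most'' looks like it got truncated, but the natural statement is length at most $3$).

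The key step is the following: for any two vertices $a,b\in N(v)\cap V(C)$, look at the two arcs of $C$ between $a$ and $b$. If $a,b$ are nonadjacent on $C$ and $v$ has no neighbor in the interior of one arc $A$, then $A$ together with $v$ forms a hole, so $|A|+1 = \ell$, i.e. the arc has length exactly $\ell-1$; and since the other arc $A'$ must satisfy $|A'| = |C| - |A| = \ell - (\ell-1) = 1$, we would get $a,b$ adjacent, a contradiction. Running this argument systematically: let $a,b$ be two neighbors of $v$ on $C$ that are at maximum $C$-distance apart among all such pairs. Both arcs between them — call them $A$ and $A'$ — must then contain a neighbor of $v$ in their interior, unless that arc has length $1$. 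But if both arcs have interior neighbors of $v$, one can extract from each arc a chordless $v$-path through a ``gap'' and combine two such subpaths across $v$ to produce a hole of length other than $\ell$ (a shortcut through $v$ replacing an arc), contradicting monoholedness — this is essentially the same ``jewel/theta avoidance'' bookkeeping used elsewhere in the excerpt, just in the simplest setting. The conclusion forces one of the two arcs to have length $\le 1$, meaning $N(v)\cap V(C)$ spans a short subpath.

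The main obstacle is getting the case analysis airtight when $v$ has several neighbors on $C$ with small gaps between them: one must rule out configurations like two neighbors with one non-neighbor strictly between them plus a third neighbor far away. The clean way is to suppose $N(v)\cap V(C)$ is \emph{not} contained in any subpath of length $\le 3$; then there exist two ``maximal runs'' of neighbors separated by two gaps each of length $\ge 1$, and at least one gap of length $\ge 2$ (since otherwise everything collapses to one short path once $C$ is long enough, using $\ell\ge5$). Taking a $v$-path through a length-$\ge2$ gap yields a hole, pinning that gap to length exactly $\ell-2$; doing this for the complementary side and adding the lengths back up to $|C|=\ell$ gives a numerical contradiction. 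I would organize this as: (1) reduce to $N(v)\cap V(C)$ proper and nonempty; (2) show any $v$-gap has length exactly $\ell-2$; (3) show there is at most one $v$-gap; (4) conclude $N(v)\cap V(C)$ is a subpath and bound its length using $|C|=\ell$ and the gap length. Step (3) is where the real work lies — it needs the observation that two distinct $v$-gaps would let us build two internally disjoint $v$-paths whose union is a hole of length $2(\ell-2)\neq\ell$ for $\ell\geq5$.

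Given the stated strength ($\ell\ge5$ and the bound is some small constant), I expect the whole argument is short once the gap-length computation is in place, and I would present it in roughly half a page, deferring to Fact-style reasoning for the ``adding $v$ to a chordless arc gives a hole'' micro-steps. The cleanest single lemma to lean on is: \emph{if $A$ is a chordless path of $C$ of length $\ge 2$ whose interior is anticomplete to $v$ and whose ends are adjacent to $v$, then $|A| = \ell-1$} (hence, combined with $|C|=\ell$, the other arc has length $1$); everything follows by applying this to a maximum-distance pair of neighbors of $v$.
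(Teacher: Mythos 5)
Your overall mechanism is the right one, and it is in fact the same mechanism the paper uses (replace an arc of $C$ whose interior misses $N(v)$ by $v$ and invoke monoholedness), but the lemma you single out as the one ``to lean on'' is stated with an off-by-one that makes it false and would derail your case analysis. If $A$ is an arc of $C$ of length at least two whose ends are adjacent to $v$ and whose interior is anticomplete to $v$, then $V(A)\cup\{v\}$ induces a hole of length $|E(A)|+2$, so $|E(A)|=\ell-2$ and the complementary arc has length $2$ --- not $|E(A)|=\ell-1$ with complementary arc of length $1$, as you assert both in your ``key step'' and in your closing lemma. Consequently no contradiction arises at that point: a complementary arc of length $2$ is precisely the configuration the fact allows, namely $N(v)\cap V(C)$ consisting of three consecutive vertices of $C$ (this genuinely occurs, e.g.\ for vertices of an inflated $\ell$-hole), and the uncorrected lemma would ``prove'' the false statement that $N(v)\cap V(C)$ is always a single vertex or an edge. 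You do write the correct constant ($\ell-2$) once in the middle of the proposal, so the counting argument is repairable, but your justification of ``at most one $v$-gap'' is also not valid as stated: two internally disjoint $v$-paths through two gaps do not have a hole as their union. What works is simpler: two gaps would be edge-disjoint arcs of $C$, each of length $\ell-2$, giving $2(\ell-2)\le \ell$, impossible for $\ell\ge 5$; then one gap plus the remaining single-edge arcs forces exactly three consecutive neighbors (and $C_4$-freeness gives adjacency to the middle one).

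For comparison, the paper's proof avoids gap bookkeeping entirely: take a minimal path $P$ of $C$ containing all of $N(v)\cap V(C)$ (so both ends of $P$ are neighbors of $v$); if $|E(P)|\ge 2$ then $(V(C)\setminus P^*)\cup\{v\}$ induces a hole of length $\ell-|E(P)|+2$, which forces $|E(P)|=2$, and since $\ell\ge 5$ rules out a $C_4$, $v$ must also be adjacent to the middle vertex of $P$, so $N(v)\cap V(C)=V(P)$. This also pins down the truncated constant in the statement: it should read ``length at most two,'' not three as you guessed.
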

	\begin{proof}
		Suppose some $v \in V(G) \setminus V(C)$ has both a neighbor and a non-neighbor in $V(C)$. Let $P$ be a path of $C$ containing all neighbors of $v$ in $V(C)$ and choose $P$ to be minimal. We may assume $P$ has length at least two. Then $V(C) \cup \{ v\} \setminus P^*$ induces a hole of length $|E(C)| - |E(P)| + 2$. Since $G$ is $\ell$-monoholed and $\ell \geq 5$, it follows that $P$ has length two. Since $\ell \geq 5$, $V(P) \cup \{ v\}$ does not induce a hole so $v$ is adjacent to the interior vertex of $P$.
	\end{proof}

	\section{Introducing the structure of $\ell$-monoholed graphs}
	A well-known class of bipartite graphs called half-graphs comes up frequently in our analysis. This class was first named by Erd\H{o}s and Hajnal in \cite{EH-halfgraph}.
	For an integer $n \geq 1$ we say $H_n$ is the bipartite graph on with vertices $\{x_1, \dots, x_n\} \cup \{y_1, \dots , y_n\}$ and edge set $\{x_iy_j \ | \ i,j \in [k], i \geq j\}$. (See Figure \ref{fig:H_n}).  We call a graph $G$ a {\em half-graph} if $G$ is contained in $H_n$ for some $n$. (Note that in \cite{EH-halfgraph}, half-graphs only referred to the set of graphs consisting of $H_n$ for every integer $n \geq 1$.)
	It follows from the definition of half-graph, that a graph $G$ is a half-graph if and only if it contains no induced two edge matching. 
	\begin{figure}[!h]
		\centering
		\resizebox{.6\textwidth}{!}{
			\begin{tikzpicture}[scale=.75]
	\def\height{5}
	\def\width{2}
	
	\foreach \i in {1, ..., 10}{
		\draw node[normal node] (x\i) at (\width*\i, 0){\small $x_{\i}$};
		\draw node[normal node] (y\i) at (\width*\i, -\height){\small $y_{\i}$};
		\foreach \j in {1, ..., \i}{
			\draw (x\i) to (y\j);
		}
		
	}
	
\end{tikzpicture}
		}
		\caption{An illustration of $H_{10}$}
		\label{fig:H_n}
	\end{figure}
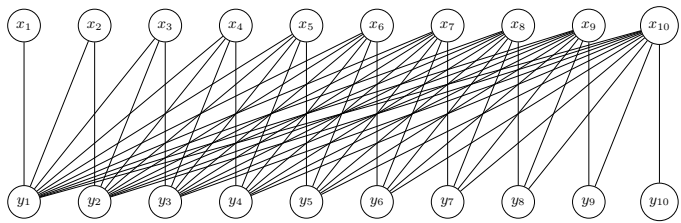
	
	Let $X, Y, Z$ be disjoint sets of vertices and suppose there is a half-graph $H_1$ between $X$ and $Y$ and a half-graph $H_2$ between $Y$ and $Z$. 
	Let $H$ denote the graph with vertex set $X \cup Y \cup Z$ and edge set $E(H_1 \cup H_2)$.
	We say $H_1$ and $H_2$ are \textit{compatible with respect to $X$} if for every $x, x' \in X$, $N_H(x) \subseteq N_H(x')$ or $N_H(x') \subseteq N_H(x)$. In other words, $H_2$ and $H_1$ are compatible with respect to $X$ if and only if $H$ is a half-graph.
	
	The following class of graphs is important for our analysis. We call $H$ a \textit{threshold graph} if the vertex set of $H$ can be partitioned into a stable set $S$ and the vertex set of a clique $K$ and the edges between $S$ and $K$ form a half-graph. 
	Threshold graphs were first introduced by Chvátal and Hammer in \cite{chvatalthresholdgraphs}. For further background on threshold graphs see Chapter 10 of Golumbic's \textit{Algorithmic Graph Theory and Perfect Graphs} \cite{golumbic2004algorithmic} or Mahadev and Peled's book on the subject \cite{mahadev1995threshold}.
	We will need a theorem of \cite{chvatalthresholdgraphs}:
	\begin{thm} [Chvátal and Hammer, 1973]
		A graph $H$ is a threshold graph if and only if it contains no $P_4, C_4,$ or two-edge matching.
	\end{thm}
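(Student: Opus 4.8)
I will prove the two implications separately; writing $2K_2$ for the two-edge matching, the direction ``threshold $\Rightarrow$ no induced $P_4$, $C_4$, or $2K_2$'' is a short case analysis, and the converse is an induction on $|V(H)|$ powered by the assertion that every nonempty graph with no induced $P_4$, $C_4$, or $2K_2$ has a vertex that is either isolated or \emph{universal} (adjacent to all other vertices).

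For the forward direction, let $H$ be threshold with $V(H) = K \cup S$, where $K$ induces a clique, $S$ is stable, and the $K$--$S$ edges form a half-graph, so the sets $N_H(s)\cap K$ for $s\in S$ are linearly ordered by inclusion. In particular $H$ is a \emph{split} graph. Now $C_4$ and $2K_2$ are not split: each has clique number and independence number $2$, so a clique/stable partition would split the four vertices as $2+2$ with the clique side a pair of adjacent vertices, but in either graph removing a pair of adjacent vertices leaves two adjacent vertices, which then cannot form a stable set. Hence $H$ has no induced $C_4$ or $2K_2$. If $H$ contained an induced $P_4$, say $a\dd b\dd c\dd d$, then $b$ has two non-adjacent neighbours $a,c$, so $b\notin S$ ($S$ stable would force $a,c\in K$, contradicting $ac\notin E$), and likewise $c\notin S$; thus $b,c\in K$. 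Then $a\notin K$ (else $ac\in E$) and $d\notin K$, so $a,d\in S$. But $N(a)\cap K=\{b\}$ and $N(d)\cap K=\{c\}$ with $b\ne c$, contradicting that the neighbourhoods $N(s)\cap K$ are nested.

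For the converse I will induct on $|V(H)|$, the base case being trivial. The crux is the lemma: \emph{a nonempty $\{P_4,C_4,2K_2\}$-free graph has an isolated vertex or a universal vertex.} I will prove it via the \emph{vicinal preorder}, where $u\preceq w$ means $N(u)\setminus\{w\}\subseteq N(w)\setminus\{u\}$. First, being $\{P_4,C_4,2K_2\}$-free forces $\preceq$ to be total: if $u,w$ are incomparable, choose $a\in N(u)\setminus(N(w)\cup\{w\})$ and $b\in N(w)\setminus(N(u)\cup\{u\})$; then $a,u,w,b$ are distinct, $ua$ and $wb$ are edges, $ub$ and $wa$ are non-edges, and according to whether $uw$ and $ab$ are edges the subgraph induced on $\{a,u,w,b\}$ is $C_4$ (both), $P_4$ (exactly one), or $2K_2$ (neither) --- a contradiction. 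Second, with $\preceq$ total, take a $\preceq$-maximal vertex $v$ and a $\preceq$-minimal vertex $z$; if $v$ is not universal it has a non-neighbour $w$, and then either $z\sim v$, in which case minimality of $z$ applied to $w$ gives $v\in N(z)\setminus\{w\}\subseteq N(w)\setminus\{z\}$, contradicting $v\not\sim w$, or $z\not\sim v$, in which case $N(z)\subseteq N(v)$, so if $z$ had a neighbour $a$ then $a\in N(v)$ and maximality of $v$ applied to $a$ would give $z\in N(a)\setminus\{v\}\subseteq N(v)\setminus\{a\}$, contradicting $z\not\sim v$; hence $z$ is isolated. (The degenerate case $v=z$ is handled directly from $N(u)\setminus\{v\}=N(v)\setminus\{u\}$ for all $u$.) Granting the lemma, pick an isolated or universal vertex $x$ of $H$; then $H-x$ is again $\{P_4,C_4,2K_2\}$-free, hence threshold by induction with clique $K'$ and stable set $S'$, and adjoining $x$ to $S'$ (if isolated) or to $K'$ (if universal) preserves the clique, the stable set, and the nestedness of the $K'$--$S'$ half-graph, since the new row is empty or full.

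The step I expect to be the real obstacle is this lemma --- in particular, identifying the totality of the vicinal preorder as the invariant to carry through the induction, and then the somewhat finicky case analysis that turns a total vicinal preorder into an explicit isolated or universal vertex. By comparison, the forward-direction case analysis and the re-attachment of $x$ in the converse are routine once the configurations are drawn.
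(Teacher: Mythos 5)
Your proof is correct, but it follows a genuinely different route from the paper's. The paper only writes out the hard direction, and does so non-inductively: it takes a \emph{maximal} clique $K$, uses the three forbidden configurations to show that $V(H)\setminus V(K)$ must be stable (an adjacent pair $s_1,s_2$ outside $K$ together with non-neighbours $k_1,k_2\in K$ forces a $2K_2$, $C_4$ or $P_4$), and then gets nestedness of the neighbourhoods $N(s)\cap K$ directly from $P_4$-freeness, producing the split partition and the half-graph in one pass. You instead prove both implications and, for the hard one, run an induction on $|V(H)|$ powered by the classical Chv\'atal--Hammer lemma that a $\{P_4,C_4,2K_2\}$-free graph has an isolated or universal vertex, which you obtain by showing the vicinal preorder is total. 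Your argument is longer but buys more: it explicitly covers the forward direction (which the paper leaves implicit), and it yields the ``creation sequence'' description of threshold graphs (built by repeatedly adding isolated or dominating vertices) plus the totality of the vicinal preorder, facts the paper's proof does not give; the paper's argument, in exchange, is shorter and immediately produces the specific clique/stable-set/half-graph structure in the form used later in the thesis. Two cosmetic points in your write-up: in the $P_4$ case of the forward direction you should say $N(a)\cap K$ \emph{contains} $b$ but not $c$ (rather than equals $\{b\}$), since other $K$-neighbours of $a$ may exist, and the degenerate case $v=z$ can be avoided entirely by noting that when all vertices are equivalent under the preorder you may choose distinct maximal and minimal vertices; your direct handling of it is nevertheless correct.
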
 
	\begin{proof}
		Let $K$ be a maximal clique in $H$. Let $S$ denote $V(H) \setminus V(K)$. Suppose some $s_1, s_2 \in S$ are adjacent. Since $K$ is a maximal clique there are distinct vertices $k_1, k_2 \in V(K)$ such that $k_1$ is not adjacent to $s_1$ and $k_2$ is not adjacent to $s_2$. Since $s_1s_2$ and $k_1k_2$ are not an induced two edge matching, we may assume $s_1$ is adjacent to $k_2$. If $k_1$ and $k_2$ are adjacent $s_1 \dd s_2 \dd k_1 \dd k_2 \dd s_1$ is an induced $C_4$, a contradiction. So $k_1$ is not adjacent to $k_2$. But then, $k_1 \dd k_2 \dd s_1 \dd s_2$ is an induced $P_4$, a contradiction. 
		
		Thus, $S$ is a stable set. Let $s_1, s_2 \in S$ and suppose there is some $k_1 \in N (s_1) \setminus N(s_2)$. Then $N(s_2) \subseteq N(s_1)$ for if there is a $k_2 \in N(s_2) \setminus N(s_1)$, then $s_1 \dd k_1 \dd k_2 \dd s_2$ is an induced $P_4$, a contradiction.
		Hence there is a half graph between $V(K)$ and $S$.
	\end{proof}

	\subsection{Inflated graphs}
	We use an object called an ``inflated graph'' throughout our analysis. We call $\h$ an inflated graph if $\h$ is obtained from a graph $G$ by replacing every $v \in V(G)$  by a non-empty clique $K_v$ and for every edge $xy \in E(G)$ we add a connected half-graph between $K_x$ and $K_y$ so that for every $v \in V(G)$ and $x,y \in N_G(v)$ the half graphs between $K_x, K_v$ and $K_y, K_v$ are compatible with respect to $K_v$.
	Informally, inflated graphs can be thought of as a generalization of rings.\footnote{Recall in this thesis, rings are a class of graphs. See Section \ref{subsec:structuralresults}}
	
	We say $V(K_v)$ for $v \in V(G)$ are the \textit{bags} of $\h$.
	We say the bags $V(K_x), V(K_y)$ are \textit{adjacent bags} or \textit{neighboring bags} in $\h$ if and only if $xy \in E(G)$. 	We denote the set of neighboring bags of a bag $B$ as $\mathcal{N}(B)$.
	If $\mathcal{X}$ is a set of bags of an inflated graph $\h$ we denote the union of all bags in $\mathcal{X}$ as $V(\mathcal{\h})$.
	
	We call an inflated graph $\mathcal{M}$ a \textit{sub-inflated graph} if there is some injective function $f$ from the bags of $\mathcal{M}$ to the bags of $\mathcal{H}$ such that for any bag $B$ of $M$, $B \subseteq f(B)$ and for any two bags $M_1, M_2$ of $\mathcal{M}$, $f$ satisfies the property that: $M_1$ and $M_2$ are adjacent bags in $\mathcal{M}$ if and only if $f^{-1}(M)$ and $f^{-1}(M)$ are adjacent bags in $\h$.
	We say $\mathcal{M}$ is an \textit{underlying inflated graph} of $\h$ if $\mathcal{M}$ is a sub-inflated graph of $H$ and $f$ is a bijection. We say $M$ is an \textit{underlying graph} of $\h$ if $M$ is an underlying inflated graph of $\h$ and every bag of $M$ has size exactly one. For any $v \in V(M)$ we say the bag $f(B)$ and $v$ \textit{correspond} to each other.	
	It follows from the definition that $G$ is an underlying graph of $\h$.
	
	Let $a, b \in V(\h)$. Then there exist bags $A, B$ of $\h$ such that $a \in A$ and $b \in B$. Let $v_a, v_b$ denote the vertices in $G$ corresponding to $A$ and $B$, respectively. Then we say the \textit{$\h$-underlying distance} between $a$ and $b$ is $d_G(v_a, v_b)$.
	
	We call $\mathcal{H}$ a \textit{inflated path} or \textit{inflated cycle} if the graph underlying $\mathcal{H}$ is a path or a cycle, respectively.
	In this case, we say the \textit{length} of $H$ is the length of its underlying graph. We call an inflated cycle of length $\ell$ an \textit{inflated $\ell$-hole of an inflated $C_\ell$.}
	For an inflated path $\mathcal{P}$ we call the union set of all bags corresponding to interior vertices of its underlying graph \textit{interior bags} of $\mathcal{P}$ and denote their union as $\mathcal{P}^*$. We call the bags corresponding to ends of the underlying graph of $\mathcal{P}$ \textit{end bags} of $\mathcal{P}$.
	Note a ring on $n$ sets is an inflated $C_n$ for any integer $n \geq 3$.
	
	\begin{lemma}
		Let $\h$ be an inflated graph. Then the following statements all hold:
		\begin{enumerate}[(a)]
			\item \label{fgf:complete} For every bag $B$ of $\h$ there is a $v \in V(B)$ that is complete to $V(\mathcal{N}(B))$
			\item \label{fgf:weak}For any two vertices $u, v \in V(H)$ if $u$ and $v$ are contained in non-adjacent bags of $\h$ then there is an underlying graph of $\h$ containing both $u$ and $v$. 
			\item \label{fgf:strong} Let $\mathcal{S}$ be an sub-inflated graph of $\h$ and suppose that every bag of $S$ has size one. Then there is an underlying graph $G$ of $\h$ such that $\mathcal{S}$ is contained in $G$ as an induced subgraph.
		\end{enumerate}
				\label{lem:inflated-graphs-facts}
	\end{lemma}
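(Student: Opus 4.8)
I would first extract the single structural consequence of the compatibility axiom that drives everything, then deduce (a) from it, then prove (c), and finally observe that (b) is the special case of (c) for a two-vertex sub-inflated graph. Here is the key step. Fix a bag $B$ of $\h$. For $a,a'\in V(B)$ write $a\preceq a'$ when $N_\h(a)\setminus V(B)\subseteq N_\h(a')\setminus V(B)$. I claim $\preceq$ is a total preorder, i.e.\ the external neighbourhoods of the vertices of $B$ are linearly ordered by inclusion. Since $N_\h(a)\setminus V(B)=\bigcup_{A\in\mathcal{N}(B)}\big(N_\h(a)\cap A\big)$, and since for each fixed neighbouring bag $A$ the half-graph between $A$ and $B$ forces $N_\h(a)\cap A$ and $N_\h(a')\cap A$ to be comparable, a failure of comparability would give neighbouring bags $A,A'$ with $N_\h(a')\cap A\subsetneq N_\h(a)\cap A$ and $N_\h(a)\cap A'\subsetneq N_\h(a')\cap A'$; then the graph on $V(A)\cup V(B)\cup V(A')$ formed by these two half-graphs has $N(a)$ and $N(a')$ incomparable, so it is not a half-graph, contradicting the compatibility of the half-graphs between $A,B$ and between $A',B$ with respect to $B$ required in the definition of an inflated graph. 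As $V(B)$ is finite, $B$ therefore has a $\preceq$-maximal vertex $v_B$; call such a $v_B$ a \emph{dominant} vertex of $B$.

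\textbf{Proof of (a).} Let $v_B$ be a dominant vertex of $B$ and let $w\in V(\mathcal{N}(B))$, say $w\in A$ with $A\in\mathcal{N}(B)$. The half-graph between $A$ and $B$ is connected and $A,B$ are non-empty, so $w$ has a neighbour $a\in V(B)$; hence $w\in N_\h(a)\setminus V(B)\subseteq N_\h(v_B)$, so $v_Bw\in E(\h)$. As $w$ was arbitrary, $v_B$ is complete to $V(\mathcal{N}(B))$, which is (a).

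\textbf{Proof of (c), and then (b).} Let $\mathcal{S}$ be a sub-inflated graph of $\h$ in which every bag has size one, and let $G$ be the graph underlying $\h$, with bags $\{K_v:v\in V(G)\}$. Since the bags of $\mathcal{S}$ are singletons and the associated map is injective, $\mathcal{S}$ is the subgraph of $\h$ induced on a set $\{s_v:v\in W\}$ for some $W\subseteq V(G)$, where $s_v\in K_v$ and, for $u,v\in W$, $s_us_v\in E(\h)$ if and only if $uv\in E(G)$. For each $v\in V(G)\setminus W$ choose a dominant vertex $t_v$ of $K_v$; by (a) it is complete to $V(\mathcal{N}(K_v))$. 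Let $T$ consist of $s_v$ for $v\in W$ and $t_v$ for $v\in V(G)\setminus W$, one vertex from each bag of $\h$. I then check that for all $u,v\in V(G)$ the chosen representatives of $u$ and of $v$ are adjacent in $\h$ exactly when $uv\in E(G)$: if $u,v\in W$ this is the hypothesis on $\mathcal{S}$; if, say, $v\notin W$, then since the edges of $\h$ lie only within a bag or between neighbouring bags and $t_v$ is complete to $V(\mathcal{N}(K_v))$, the representative of $v$ is adjacent to the representative of $u$ if and only if $K_u\in\mathcal{N}(K_v)$, i.e.\ if and only if $uv\in E(G)$. Hence $\h[T]$, together with the bijection taking the singleton bag containing the representative of $v$ to $K_v$, is an underlying graph of $\h$; and since $\{s_v:v\in W\}\subseteq T$ and $\mathcal{S}=\h[\{s_v:v\in W\}]$, $\mathcal{S}$ is an induced subgraph of $\h[T]$, proving (c). For (b): if $u,v$ lie in non-adjacent bags $U\ne V$ of $\h$, then $uv\notin E(\h)$, so the inflated graph with the two size-one bags $\{u\}$ and $\{v\}$ and no edge is a sub-inflated graph of $\h$ via $\{u\}\mapsto U$, $\{v\}\mapsto V$; applying (c) to it gives an underlying graph of $\h$ containing both $u$ and $v$.

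\textbf{Where the difficulty is.} The only step that is more than routine bookkeeping is the key step: upgrading the pairwise compatibility built into the definition of an inflated graph to the assertion that the external neighbourhoods of a single bag are totally ordered by inclusion (equivalently, that a dominant vertex exists). After that, parts (a), (c) and (b) require only the facts that the edges of an inflated graph lie inside bags or between neighbouring bags and that the half-graphs are connected.
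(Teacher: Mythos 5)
Your proof is correct and follows essentially the same route as the paper: establish that within each bag the (external) neighborhoods are nested so a dominant vertex exists (the paper gets this by taking the maximum-degree vertex and invoking compatibility), prove (a) from it, prove (c) by completing $\mathcal{S}$ with dominant vertices of the untouched bags, and obtain (b) as the special case of (c) with two singleton bags. Your explicit verification that pairwise compatibility yields a total ordering of external neighborhoods is just a more careful spelling-out of the step the paper dispatches with ``by definition of compatible half graphs.''
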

	\begin{proof}
		Let $B$ be a bag of $\h$ and let $v$ be the vertex in $B$ of maximum degree.
		Then by definition of compatible half graphs, $N(v) \supseteq N(w)$ for every $w \in B$.
		Let $D$ be a neighboring bag of $B$.
		By definition of connected half graph, some $v$ is complete to $D$.
		This proves \ref{fgf:complete}).
		
		Suppose $\mathcal{S}$ is a sub-inflated graph of $\h$. By \ref{fgf:complete} for each bag $B$ of $\h$ there is a vertex $v_B$ that is complete to all neighboring bags of $B$ in $\h$. Let $\mathcal{B}$ be the set of bags of $\h$ that do not contain any vertices of $\mathcal{S}$. By \ref{fgf:complete} every $B \in \mathcal{B}$ contains a vertex $v_B$ complete to all neighboring bags of $B$. Then by definition of sub-inflated graph $V(S) \cup \{ v_B \ | \ B \in \mathcal{B} \}$ induces an underlying graph of $\h$. This proves \ref{fgf:strong}.
		
		This completes the proof because \ref{fgf:strong} implies \ref{fgf:weak} since the graph induced by any two vertices in non-adjacent bags of $\h$ is a sub-inflated graph of $\h$ with bags of size one.
	\end{proof}
	
	We will now state the main result of this chapter.

	\begin{theorem}\label{thm:khole-main-result}
		
		Let $G$ be an $\ell$-monoholed graph for some $\ell \geq 7$  one of the following conditions holds:
		\begin{enumerate}[(a)]
			\item $G$ contains a vertex that is adjacent to every other vertex in $V(G)$. 
			\item$G$ contains a clique cutset, 
			\item $G$ is chordal,
			\item $G$ is an inflated $\ell$-hole, 
			\item $G$ is a type of inflated graph we call a ``crowned $k$-corpus''.
		\end{enumerate}
	\end{theorem}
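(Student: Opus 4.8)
The plan is to fix a hole and bootstrap an inflated structure from it. First I would clear the easy outcomes. If $G$ is chordal, outcome (c) holds, so assume $G$ has a hole; by hypothesis it has length exactly $\ell$, and we fix one, say $C$ with vertices $c_0, c_1, \dots, c_{\ell-1}$ in cyclic order. If $G$ has a clique cutset, outcome (b) holds, so assume not; in particular $G$ is connected. The central tool is Fact \ref{fact:hole-and-vertex-complete-anticomplete-or-P3}: every $v \in V(G)\setminus V(C)$ is either complete to $V(C)$, anticomplete to $V(C)$, or satisfies $N(v)\cap V(C)=\{c_{i-1},c_i,c_{i+1}\}$ for some $i$ (indices mod $\ell$). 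This partitions $V(G)\setminus V(C)$ into the set $\Omega$ of \emph{complete} vertices, a set of \emph{anticomplete} vertices, and, for each $i$, the set $X_i$ of vertices attached at the path $c_{i-1}\dd c_i\dd c_{i+1}$.

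Second I would build the bags $B_i=\{c_i\}\cup X_i$ and show they assemble into an inflated $\ell$-hole. One shows each $B_i$ is a clique and that $B_i$ is anticomplete to $B_j$ when $|i-j|\ge 2$ (using $\ell\ge 7$, a cross edge would make a short hole), and that non-edges inside $B_{i-1}\cup B_i\cup B_{i+1}$ cannot create a hole of length $\neq\ell$. The compatibility/half-graph conditions are the analogue, now \emph{proved} rather than assumed as in the definition of a ring: given $x,x'\in X_i$, rerouting $C$ through $x$ and through $x'$ and through pairs of them forces the neighborhoods $N(x)\cap(V(C)\setminus\{c_i\})$ and $N(x')\cap(\cdot)$, and more generally the attachments of $B_i$ into $B_{i\pm1}$, to be nested, i.e. to form half-graphs; the characterization ``no induced two-edge matching'' for half-graphs, recorded in the excerpt, is exactly what the monoholed condition delivers here. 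Lemma \ref{lem:inflated-graphs-facts}\ref{fgf:complete} then guarantees the ``complete representative'' in each bag. The upshot is that $\mathcal{H}_0:=\bigcup_i B_i$ induces an inflated $\ell$-hole inside $G$.

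Third, and this is where outcomes (d) and (e) separate, I would analyze the complete vertices $\Omega$ and their attachment to $\mathcal{H}_0$. Applying Fact \ref{fact:hole-and-vertex-complete-anticomplete-or-P3} to the many other holes living inside $\mathcal{H}_0$, one shows that $\Omega$, together with how its vertices see the bags $B_i$ and each other, is governed by a threshold-graph structure: a $P_4$, $C_4$, or two-edge matching among the relevant vertices would produce either a hole of the wrong length or a clique cutset, so by the theorem of Chvátal and Hammer the relevant configuration is a threshold graph. This threshold attachment is the ``crown'', and the inflated cycle $\mathcal{H}_0$ with its crown is precisely a ``crowned $k$-corpus'', giving outcome (e); if the crown degenerates to a single vertex complete to everything we get outcome (a), and if $\Omega=\emptyset$ and there is nothing left over we get outcome (d).

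Fourth, the anticomplete vertices (and, more generally, any vertex not yet placed in $\mathcal{H}_0\cup\Omega$) are dispatched by the no-clique-cutset hypothesis: if some $z$ had no neighbor in $V(\mathcal{H}_0)\cup\Omega$, a shortest path from $z$ to that structure, together with an analysis of which vertices of $\mathcal{H}_0$ it can attach to, exhibits a clique separating $z$'s side from $\mathcal{H}_0$, a contradiction; iterating forces $\mathcal{H}_0\cup\Omega$ with the crown attachment to be all of $G$. The hardest part will be the third step — pinning down the exact definition of a crowned $k$-corpus and showing that the complete vertices, any vertices sitting ``between'' consecutive bags, and the non-edges inside bags all fit into that single named structure without producing a hole of length $\neq\ell$ or an unavoidable clique cutset; in particular, ruling out a hole through two complete vertices and a long arc of $\mathcal{H}_0$, and controlling how two distinct crowns or two distinct inflated sub-cycles could coexist, will require the most delicate case analysis, with threshold-graph rigidity and half-graph compatibility as the key levers but their global assembly being the real work.
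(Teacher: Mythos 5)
There is a genuine gap, and it is in the overall architecture rather than in a repairable detail: your proof never produces the $k$-spine case, which is what a crowned $k$-corpus actually is. In the paper, a crowned $k$-corpus is built on an inflated \emph{generalized $k$-prism, $k$-theta or generalized $k$-pyramid} with $k\geq 3$ constituent paths between two terminating sets, with the crowns attached to the two elemental sides; it is not ``an inflated $\ell$-hole together with the vertices complete to it''. Your Step 3 misidentifies the crown: vertices complete to $V(C)$ form a clique (since $G$ is $C_4$-free) and play only a minor role, whereas the crown vertices of the paper have their neighbourhoods confined to one elemental side and contain two non-adjacent vertices there. More seriously, your Step 4 is false as stated. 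Consider a theta consisting of three internally disjoint paths of length $\ell/2$ between two non-adjacent vertices ($\ell$ even): it is $\ell$-monoholed, has no clique cutset, no dominating vertex, is not chordal and is not an inflated $\ell$-hole, so the correct outcome is (e). But if you fix one of its holes $C$, the interior of the third path contains vertices with exactly one neighbour in $V(C)$ (an attachment type your partition omits, since Fact \ref{fact:hole-and-vertex-complete-anticomplete-or-P3} also allows paths of length $0$ or $1$), and these vertices can neither be absorbed into bags of an inflated $\ell$-hole nor ``dispatched by the no-clique-cutset hypothesis''; they are legitimately present and must be recognized as part of a $3$-spine. So the claim that $\mathcal{H}_0\cup\Omega$ with a threshold crown exhausts $G$ fails exactly in the main case the theorem is about.

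This is precisely where the paper's machinery does the work you are skipping: it takes a \emph{maximal} inflated $\ell$-hole, uses the absence of a clique cutset to find a path attaching to it at two non-adjacent vertices, and from the resulting prism/pyramid/theta extracts a pair of mated $k$-spiders (Theorem \ref{thm:G-has-a-mated-spider}); it then classifies the union of mated $k$-spiders as a $k$-spine (Theorems \ref{thm:matedspiderstructure-odd-case} and \ref{thm:orderly-spines-even-case} via Theorem \ref{thm:orderly-spines-exist}), and finally chooses a crowned $k$-corpus with $k$ maximum and shows no further vertex or path can attach nontrivially (Theorems \ref{thm:crowned-k-corpus-extra-vertex-only-can-have-nbrs-in-one-end} through \ref{thm:everything-is-a-crowned-corpus}). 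Your threshold-graph and half-graph observations do reappear in that analysis (the bodies of mated spiders and the terminating sets of spines are threshold graphs, and compatibility of half-graphs governs the bags), but they are levers inside the spine/corpus framework, not a substitute for it. To repair your argument you would essentially have to rebuild that framework: detect when the maximal inflated hole admits a ``cross'' attachment, show this forces $k\geq 3$ parallel paths of nearly equal length between two threshold-graph ends, and only then attach the crowns.
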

	
	Note, the inflated graphs in case (e) are not yet defined. As the definition is somewhat technical we will describe them later in this chapter. 
	
	\section{Spines and Spiders}
	In this section we discuss a special kind of graph called a ``mated $k$-spider'' with a ``nice'' structure. We will prove that for any $\ell$-monoholed graph $G$, $G$ contains a $k$-mated spider for some $k \geq 3$ or $G$ satisfies one of conditions (a), (b), (c) or (d) of Theorem \ref{thm:khole-main-result}. We will fully characterize the structure of mated spiders in Section \ref{sec:mated-spiders-structure}. In future sections we go on to fully characterize $\ell$-monoholed graphs by choosing a mated $k$-spider $S$ in $G$ with $k$ maximum and seeing how $G \setminus S$ attaches to $S$ when $G$ does not satisfy conditions (a), (b), (c) or (d) of Theorem \ref{thm:khole-main-result}.
	
	For $k \geq 3$, we call a graph $S$ a \textit{$k$-spider} if it vertices of degree one are $t_1, t_2, \dots, t_k$ called its \textit{toes} and it is minimally connected under deleting vertices with these toes.
	For $i,j \in [k]$, let $d_S(i,j)$ denote $d_S(t_i, t_j)$. When the choice of spider $S$ is not ambiguous we will simply write $d(i,j)$ for $d_S(i,j)$.
	We call a path $P$ of $S$ an \textit{leg} if one end of $P$ is a toe and $P$ is a maximal path satisfying that all internal vertices of $P$ have degree two. It follows that for each $i \in [k]$ there is a unique leg $L_i$ with one end equal to $t_i$ and $L_i$ has length at least one. For each $i \in [k]$, we refer to $L_i$ as the \textit{$t_i$-leg} of $S$. For each $i \in [k]$, let $a_i$ denote the end of $L_i$ that is not equal to $t_i$. We refer to $a_1, a_2, \dots, a_k$ at the \textit{joints} and for each $i \in [k]$ we call $a_i$ the \textit{$t_i$-joint} of $S$. Let $A$ be the graph obtained from $S$ by deleting $V(L_i \setminus a_i)$ for every $i \in [k]$. By definition, for all $i,j \in [k]$, if $i \neq j$ then $V(L_i \setminus a_i)$ is anticomplete to $V(L_j \setminus a_j)$.  It follows that $A$ is connected. We call $A$ the \textit{body} of $S$. 
	\begin{figure}[!h]
		\centering
		\begin{tikzpicture}[scale=.75]
	\def\width{2}
	\newcounter{test}
	
	\foreach \x in {2,4,1,3,5}{
		\stepcounter{test}
		\foreach \y in {1, ..., \value{test}}{
			\draw node[dot] (L\x\y) at (\width*\x, -\y){};
			\foreach \z in { 1, ..., \y}{
				\draw (L\x\y) to (L\x\z);
			}
		}
		\draw node[normal node] (t\x) at (\width*\x, -\value{test} - 1){$t_\x$};
		%\draw (L\x1) to (a\x);
		\def\n{\value{test}};
		\draw node (L\x) at (\width*\x, -\n){};
		\draw (L\x) to (t\x);	
	}	
	
	\draw node [red node] (a1) at (\width *1.5, 0) {$a_1$};
	\draw node[red node] (a5) at (\width*4.5, 0){$a_4$};
	\draw node [red node] (v) at (\width * 2.25, 1){ };
	\draw node [red node] (a3) at (\width*3, 0){$a_3$};
	
	\draw (L11) to (a1);
	\draw (L21) to (a1);
	\draw (L31) to (a3);
	\draw (L41) to (a5);
	\draw (L51) to (a5);
	\draw (a3) to (a5);
	\draw (v) to (a3);
	\draw (v) to (a1);
\end{tikzpicture}
		\caption{An example of  a $5$-spider. The vertices of the body are drawn in red. Note $a_1 =a_2$ and $a_4 = a_5$. }
		\label{fig:spider}
	\end{figure}
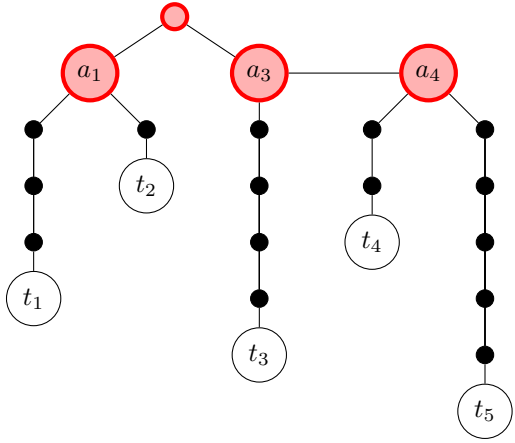
	(See Figure \ref{fig:spider}.)
	We call two $k$-spiders $S, S'$ \textit{mates} if they have the same set of toes, their vertex sets are anticomplete except for their toes and for every $i,j \in [k]$ with $i \neq j$, $d_S(i,j) + d_{S'}(i,j) = \ell$.
	We call a spider \textit{mated} if it has a mate.
	We need the following lemma:
	\begin{lemma} \label{lem:inflated-hole-vertex-neighbors-facts}
		Let $G$ be a $\ell$-monoholed graph. Suppose $G$ contains an inflated $C_\ell$ $\h$. Let $w \in V(G) \setminus V(\h)$ and suppose $w$ has a neighbor in $V(\h)$. Then either,  $w$ is complete to $V(C)$ or the following conditions all hold:
		\begin{enumerate}[(a)]
			\item \label{fhn:a}There are some three consecutive bags of $B_1, B_2, B_3$  of $\h$ such that the neighbors of $v$ in $V(\h)$ are contained in $V(B_1 \cup B_2 \cup B_3)$, 
			\item \label{fhn:middle} If $w$ has neighbors in both $V(B_1)$ and $V(B_3)$, $w$ is complete to $V(B_2)$,  
			\item \label{fhn:b} The graphs between $B_1, B_2 \cup \{ w\}$ and $B_3, B_2 \cup \{v\}$ are half graphs and they are compatible with respect to $B_2 \cup \{ w\}$, and
			\item \label{fhn:cutsetset} $G[ V(\h) \cup \{ w \}]$ has a clique cut-set or $G[ V(\h) \cup \{ w \}]$ induces an inflated $C_\ell$ whose bags can be obtained from the bags of $\h$ by adding $w$ to $B_2$.
		\end{enumerate}
	\end{lemma}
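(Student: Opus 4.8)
The plan is to apply Fact~\ref{fact:hole-and-vertex-complete-anticomplete-or-P3} repeatedly to the underlying holes of $\h$ (which are plentiful by Lemma~\ref{lem:inflated-graphs-facts}), using in addition that the bags of $\h$ are cliques and that the half-graphs between neighbouring bags are connected. Write $D_0,\dots,D_{\ell-1}$ for the bags of $\h$ in cyclic order, and for each $i$ fix a vertex $d_i\in D_i$ complete to $V(\n(D_i))$ (Lemma~\ref{lem:inflated-graphs-facts}(\ref{fgf:complete})). Every underlying graph of $\h$ is a hole of length $\ell\ge 7$, so Fact~\ref{fact:hole-and-vertex-complete-anticomplete-or-P3} applies to it and tells us $N(w)$ meets it in $\emptyset$, in the whole hole, or in the vertex set of a subpath on at most three vertices. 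The key preliminary step is a \emph{completeness transfer}: if $w$ is complete to $V(C)$ for some underlying hole $C$ of $\h$, then $w$ is complete to $V(\h)$. Indeed, for each $m$ the graph $C^{(m)}$ obtained from $C$ by replacing its vertex of $D_m$ by $d_m$ is again an underlying hole (as $d_m$ is complete to $D_{m-1}\cup D_{m+1}$), and $N(w)$ contains the $(\ell-1)$-vertex path $C^{(m)}\dd d_m$; since $\ell\ge 7$ this path has more than three vertices, so Fact~\ref{fact:hole-and-vertex-complete-anticomplete-or-P3} forces $w$ to be complete to $V(C^{(m)})$, in particular $wd_m\in E(G)$. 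Hence $w$ is complete to $C_0=d_0\dd d_1\dd\cdots\dd d_{\ell-1}\dd d_0$, and replacing $d_m$ in $C_0$ by an arbitrary $x\in D_m$ gives, by the same argument (using that $d_{m-1},d_{m+1}$ are complete to $D_m$), $wx\in E(G)$; so $w$ is complete to $V(\h)$. From now on we may therefore assume $w$ is complete to no underlying hole of $\h$, as otherwise we are in the first outcome.

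To prove the three-consecutive-bags statement, let $\mathcal{W}$ be the set of bags meeting $N(w)$, which is nonempty. If $B,B'\in\mathcal{W}$ are non-adjacent bags, choose neighbours $a\in B$, $b\in B'$ of $w$; by Lemma~\ref{lem:inflated-graphs-facts}(\ref{fgf:strong}) there is an underlying hole $C$ through $a$ and $b$, on which they are at distance equal to the cyclic distance between $B$ and $B'$. Since $w$ is not complete to $V(C)$, Fact~\ref{fact:hole-and-vertex-complete-anticomplete-or-P3} gives that $N(w)\cap V(C)$ is a path on at most three vertices containing $a$ and $b$, so $a,b$ are at distance at most two on $C$; hence $B,B'$ have cyclic distance exactly two. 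Thus any two bags of $\mathcal{W}$ have cyclic distance at most two, and a short argument using $\ell\ge 7$ shows $\mathcal{W}$ lies in three consecutive bags: if some pair is at distance two, say $D_i,D_{i+2}$, then since $\ell\ge 7$ the only bags within distance two of both are $D_i,D_{i+1},D_{i+2}$; and if all pairs are at distance at most one then $\mathcal{W}$ is a clique of $C_\ell$, so spans at most two consecutive bags. This gives $B_1,B_2,B_3$; relabel so that $B_1=D_{m-1}$, $B_2=D_m$, $B_3=D_{m+1}$.

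For the remaining three conclusions the common engine is: whenever $w$ attaches ``badly'' inside $B_1\cup B_2\cup B_3$, one produces a short induced hole through $w$ using that the $B_i$ are cliques and the half-graphs between neighbouring bags are connected, contradicting $\ell$-monoholedness. For the middle statement, suppose $w$ has neighbours $p\in B_1$, $q\in B_3$ but a non-neighbour $r\in B_2$; since $pq\notin E(G)$ (non-adjacent bags), $w\dd p\dd r\dd q\dd w$ is a hole as soon as $pr,qr\in E(G)$, and in general one replaces $p$ (resp.\ $q$) by a neighbour of $r$ in $B_1$ (resp.\ $B_3$) supplied by connectedness -- if that vertex is a neighbour of $w$ we recurse, otherwise we get an induced hole on $4$, $5$ or $6$ vertices, all impossible since $\ell\ge 7$. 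The statement that the graphs between $B_1$ and $B_2\cup\{w\}$ and between $B_3$ and $B_2\cup\{w\}$ are half-graphs, and are compatible with respect to $B_2\cup\{w\}$, follows the same way: any violating configuration must involve $w$ (the parts coming from $\h$ are already half-graphs, compatible with respect to $B_2$), and then a common bag-mate produces a short hole. Finally, for the cut-set statement: if $w$ has a neighbour in both $B_1$ and $B_3$ then, by the middle statement, $w$ is complete to $B_2$, so $B_2\cup\{w\}$ is a clique, and together with the half-graph/compatibility conditions just proved and the unchanged structure of the other bags, $G[V(\h)\cup\{w\}]$ is an inflated $C_\ell$ with $w$ added to $B_2$; otherwise, say $w$ has no neighbour in $B_3$, and then the half-graph structure together with a $4$-hole argument (if $p\in N(w)\cap B_1$ and $r\in N(w)\cap B_2$ with $pr\notin E(G)$, route through a bag-mate of $r$ in $B_1$) confines $N(w)\cap V(\h)$ to a clique $K\subseteq B_1\cup B_2$, and since $\ell\ge 7$ there are vertices of $\h$ outside $K$, so $K$ is a clique cut-set of $G[V(\h)\cup\{w\}]$ separating $w$.

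The hard part is the last step -- extracting the clique $K$ cleanly when $w$ attaches to two different bags, since the ``low'' half-graph vertices one meets need not be pairwise adjacent -- together with writing out the case analyses behind the middle and half-graph statements explicitly. By contrast, the completeness-transfer observation is short and is what makes all the subsequent applications of Fact~\ref{fact:hole-and-vertex-complete-anticomplete-or-P3} go through without further bookkeeping.
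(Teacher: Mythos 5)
Parts (a) and (b) of your proposal are sound: the completeness-transfer trick plus Fact~\ref{fact:hole-and-vertex-complete-anticomplete-or-P3} applied to underlying holes is a valid (slightly different) route to the three-consecutive-bags statement -- the paper instead takes a minimal sub-inflated path $\q$ through $N(w)\cap V(\h)$ and kills it with a hole of length $\ell-m+2$ -- and your proof of (b) is the same $4$/$5$/$6$-hole argument as the paper's. The genuine gap is in (c) and (d), where your single engine, ``any violating configuration \dots a common bag-mate produces a short hole,'' is not correct. Even in the good case where $w$ meets both $B_1$ and $B_3$ (so $w$ is complete to $B_2$ by (b)), a compatibility violation whose two witnesses lie in \emph{different} outer bags ($p\in B_1$ adjacent to $u\in B_2$ but not to $w$, and $q\in B_3$ adjacent to $w$ but not to $u$) need not give any hole on at most six vertices: if $N(u)\cap B_3\subseteq N(w)$ and $N(w)\cap B_1\subseteq N(u)$ there is no short hole at all, and the contradiction is an $(\ell+1)$-hole obtained by closing the induced path $p\dd u\dd w\dd q$ the long way around $\h$ through $B_4,\dots$ (this long-hole device is exactly what the paper's proof of (c) uses, and your sketch never produces it). Worse, when $w$ has no neighbour in $B_3$ a ``violating configuration'' can exist with no forbidden hole whatsoever: take a ring on seven bags with $B_1=\{b_1,b_1'\}$, $N(b_1)\cap B_2\supsetneq N(b_1')\cap B_2$, and attach $w$ to $b_1'$ only; then $b_1u$ and $b_1'w$ form an induced two-edge matching between $B_1$ and $B_2\cup\{w\}$, yet the graph is $7$-monoholed. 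So whether (c) holds depends on which bag is declared $B_2$, your construction in (a) never fixes the middle bag when $N(w)$ meets at most two adjacent bags, and that degenerate case is precisely where your short-hole contradiction is unavailable. (Indeed, as literally stated, conclusion (c) can fail for every admissible triple in that case; the usable content, and what the paper's argument actually delivers, is (a), (b), (d), with (c) when $w$ meets both outer bags.)

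The same issue sinks the step you yourself flag as ``the hard part'' of (d). With $p\in N(w)\cap B_1$, $r\in N(w)\cap B_2$ and $pr\notin E(G)$, your four-hole via a neighbour $p'$ of $r$ in $B_1$ gives nothing when $p'$ happens to be adjacent to $w$, and there is no recursion that repairs it. The standard fix uses the hypothesis of this case, namely that $w$ has no neighbour in $B_3$: pick $c\in N(r)\cap B_3$ (the half-graph between $B_2$ and $B_3$ is connected), observe that $p\dd w\dd r\dd c$ is induced, and close it through $B_4,\dots,B_\ell$ using in each bag a vertex complete to its neighbouring bags; this yields an $(\ell+1)$-hole, so $N(w)\cap V(\h)$ is a clique and the clique-cutset disjunct of (d) follows. (Equivalently, once (c) is available for the correctly chosen middle bag, compatibility with respect to $B_2\cup\{w\}$ together with the connectivity of the $B_2$--$B_3$ half-graph forces $N(w)\cap B_1\subseteq N(u)$ for every $u\in N(w)\cap B_2$.) As written, your proposal contains neither the long-hole argument nor the choice of middle bag, so (c) and (d) are not established.
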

	\begin{proof}
		Suppose $w \in V(G) \setminus V(H)$ has both a neighbor and a non-neighbor in $V(\h)$.
		\stmt{\label{fhn:one}If $w$ has a non-neighbor in a bag $X$ of $\h$, then $w$ cannot have neighbors in both adjacent bags of $X$ in $\h$.}
		Let $Y, Z$ denote the neighboring bags of $X$ in $\h$.
		Suppose $w$ has a neighbor $y \in Y$ and  neighbor $z \in Z$. By Lemma \ref{lem:inflated-graphs-facts}, there is some $y' \in Y$ and some $z' \in Z$ is complete to $V(X)$.
		Hence, $G[\sset{y,y', x, z, z', w}]$ includes a hole of length at most 6, a contradiction. This proves (\ref{fhn:one}).
		\\
		\\
		It follows that $w$ does not have a neighbor in every bag of $\h$.
		Let $\q$ be a sub-inflated graph of $\h$ containing all neighbors of $w$ in $V(\h)$. Choose $\q$ to contain a minimum number of bags. Then $\q$ is an inflated-path.
		
		Suppose $\q$ has length greater than three. Since $\q$ is minimal, $w$ has a neighbor $u, v$ in the end bags of $\h$. Then by Lemma \ref{lem:inflated-graphs-facts}, there is an underlying graph $C$ of $\h$ with $u, v \in V(C)$. It follows from the definition of $\q$ that $C \setminus \q^* \cup \{ w\}$ is a hole of length $\ell -m +2$ where $m$ denotes the length of $\q$.  Hence,  \ref{fhn:a} holds and \ref{fhn:middle} hold by applying (\ref{fhn:one}).
		
		Let $B_1, B_2, B_3$ be the bags of $\q$ in order. Suppose there is some $b_1 \in V(B_1)$, $b_2 \in V(B_2)$ and $b_3 \in V(B_3)$ such that $b_1$ is adjacent to $b_2$, $w$ is adjacent to $b_3$, $w$ is not adjacent to $b_2$ and $b_1$ is not adjacent to $b_3$. By Lemma \ref{lem:inflated-graphs-facts} there is an underlying hole $C$ of $\h$ with $b_1, b_3 \in V(\h)$. Let $P$ be the path from $C$ obtained by deleting the vertex corresponding to $B_2$ in $C$. Then $V(C) \cup \{ b_2, w\}$ induces a hole of length $\ell$ + 1. Hence, \ref{fhn:b} holds.
		
		Suppose $G[ V(\h) \cup \{ w \}$ has no clique cutset. By definition of inflated graph, we need only show that $w$ has a neighbor in both $V(B_1)$ and $V(B_3)$ to prove \ref{fhn:cutsetset}. Suppose the neighbors of $w$ in $\h$ are contained in $V(B_1 \cup B_2)$. Then since the half-graphs between $B_1, B_2$ and $B_3, B_2$ are compatible with respect to $B_2$, if $b_1 \in V(B_1)$ is adjacent to $w$ then  $b_1$ is complete to $V(B_2)$. Hence, $N(w) \cap V(\h)$ is the vertex set of a clique, a contradiction. Thus, \ref{fhn:cutsetset} holds.
	\end{proof}

	\begin{theorem}\label{thm:G-has-a-mated-spider}
		Let $G$ be a $\ell$-monoholed graph for some $\ell \geq 6$. Then one of the following holds:
		\begin{enumerate}[(a)]
			\item $G$ contains a vertex that is adjacent to every other vertex in $V(G)$. \label{ms:complete-vertex}
			\item $G$ contains a clique cutset, \label{ms:clique-cutset}
			\item $G$ is chordal, \label{ms:chordal}
			\item $G$ is an inflated $\ell$-hole,  \label{ms:hole}
			\item $G$ contains a pair of mated $k$-spiders for some $k \geq 3$. \label{ms:mated-spider}
		\end{enumerate}
	\end{theorem}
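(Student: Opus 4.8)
The plan is to dispose of outcomes (a)--(c) first and then, working inside a maximal inflated $\ell$-hole, either recover outcome (d) or extract the two spiders. So assume $G$ is not chordal (else (c) holds), $G$ has no clique cutset (else (b) holds), and no vertex of $G$ is adjacent to all others (else (a) holds); we may assume $G$ is connected. Since $G$ is not chordal it contains a hole, which has length $\ell$ because $G$ is $\ell$-monoholed, and a hole is an inflated $C_\ell$ with all bags singletons. Thus the set of inflated $C_\ell$'s in $G$ is nonempty, and I would fix one, call it $\mathcal{H}$, with $V(\mathcal{H})$ inclusion-maximal. If $V(\mathcal{H})=V(G)$ then outcome (d) holds, so from now on $W:=V(G)\setminus V(\mathcal{H})\ne\emptyset$.

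First I would pin down how the vertices of $W$ attach to $\mathcal{H}$. For $w\in W$ with both a neighbour and a non-neighbour in $V(\mathcal{H})$, Lemma~\ref{lem:inflated-hole-vertex-neighbors-facts} gives that either $w$ is complete to $V(\mathcal{H})$, or the neighbours of $w$ in $\mathcal{H}$ lie in three consecutive bags $B_{1},B_{2},B_{3}$ with the half-graph/compatibility structure of that lemma; in the latter case part (c) of the lemma together with the maximality of $V(\mathcal{H})$ forces $G[V(\mathcal{H})\cup\{w\}]$ to have a clique cutset, and since such a cutset must lie inside $B_{1}\cup B_{2}\cup B_{3}\cup\{w\}$ this determines exactly which bag-vertices $w$ sees. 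I would also show that the set $U$ of vertices complete to $V(\mathcal{H})$ is a clique: two non-adjacent such vertices, together with representatives of two non-adjacent bags chosen via Lemma~\ref{lem:inflated-graphs-facts}, would induce a $C_{4}$. Because $G$ has no universal vertex and no clique cutset, a vertex of $U$ has a non-neighbour, which must attach to $V(\mathcal{H})\cup U$ in a controlled way, and $U$ can be absorbed into the construction below (or shown empty).

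Next I would take a connected component $D$ of $G\setminus V(\mathcal{H})$ and analyse $N(D)\subseteq V(\mathcal{H})$. As $N(D)$ is a cutset of the connected graph $G$ it is not a clique, so it contains non-adjacent vertices $a,b$, which by the attachment analysis lie in bags $B_p,B_q$ of $\mathcal{H}$ that are neither equal nor adjacent. Fixing an underlying cycle $C$ of $\mathcal{H}$ through representatives of $B_p$ and $B_q$ (Lemma~\ref{lem:inflated-graphs-facts}) and a shortest path $R$ through $D$ joining a neighbour of $a$ to a neighbour of $b$, the $\ell$-monoholed condition forces the two arcs of $C$ between those representatives and the path $R$ to fit together into holes of length $\ell$, which rigidly fixes the three path lengths. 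Producing a third toe — from a second non-adjacent pair in $N(D)$, or from one more suitably placed vertex of $C$, or by iterating — I would then obtain three pairwise vertex-disjoint paths between three vertices $t_1,t_2,t_3$ on $C$ whose interiors are pairwise anticomplete, together with the complementary parts of $C$; one family of three paths spans a $3$-spider $S$ and the other its mate $S'$, and the arc/path length identities are precisely $d_S(i,j)+d_{S'}(i,j)=\ell$. Taking $k\ge 3$ as large as possible gives outcome (e).

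The step I expect to be the real obstacle is the last one: controlling the component(s) of $G\setminus V(\mathcal{H})$ and the clique $U$ precisely enough to exhibit \emph{both} spiders with all the required disjointness and anticompleteness and with the exact distance identities, and in particular handling the parity of $\ell$ — for odd $\ell$ a mated $3$-spider cannot exist (the leg-length equations would force $p_i+p_i'=\ell/2$), so one must climb to $k\ge 4$ with a different leg pattern. A persistent subtlety is that Lemma~\ref{lem:inflated-hole-vertex-neighbors-facts} only yields a clique cutset of the induced subgraph $G[V(\mathcal{H})\cup\{w\}]$, and converting "$G$ has no clique cutset" into usable information about these local attachments is what requires care.
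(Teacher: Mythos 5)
Your setup matches the paper's proof: rule out (a)--(c), fix an inclusion-maximal inflated $\ell$-hole, observe the vertices complete to it form a clique, and use the absence of a clique cutset to find a connected piece outside attaching to two non-adjacent vertices of the hole. But the final step -- where the mated spiders are actually produced -- has a genuine gap, and in fact goes in the wrong direction. Once you have an induced path $X$ (with $|X|\geq 2$, interior anticomplete to an underlying hole $C$, and each end attaching to a subpath of $C$ of length at most one), the graph $G[V(C\cup X)]$ is already a theta, prism or pyramid whose three constituent paths each have length in $\{\tfrac{\ell}{2}-1,\tfrac{\ell-1}{2},\tfrac{\ell}{2}\}$, hence at least three; picking one interior vertex from each constituent path as the toes $t_1,t_2,t_3$ splits this configuration into the two mated $3$-spiders. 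No ``third toe'' from a second non-adjacent pair in $N(D)$, no extra vertex of $C$, and no iteration is needed, and your plan to obtain three vertices ``on $C$'' is already off (one toe necessarily lies on the path through $D$, not on $C$). Moreover, to even reach the point where the relevant cycles are holes of length $\ell$ you need the minimality device the paper uses (choose $X$ minimal connected with two non-adjacent neighbours in the inflated hole, which forces $X$ to be an induced path, forces $X^*$ to be anticomplete to $C$, and confines each end's neighbourhood to an edge of $C$); a shortest path through a component $D$ does not by itself control the neighbours of its interior vertices on $C$, so ``the $\ell$-monoholed condition forces the arcs and $R$ to fit together into holes'' is not yet justified.

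Your parity worry is also based on a misreading of the definition of spider: the claim that a mated $3$-spider cannot exist for odd $\ell$ assumes both bodies are single vertices (giving $p_i+p_i'=\ell/2$), but the body of a spider may be any minimal connected graph on the joints -- in particular a triangle. A pyramid with apex $r$, base $\{a_1,a_2,a_3\}$ and constituent paths $P_i$ splits at interior toes into a spider with body $\{r\}$ and a mate whose body is the base triangle, and then $d_S(i,j)+d_{S'}(i,j)=|E(P_i)|+|E(P_j)|+1=\ell$, which is exactly the odd case (compare Lemma \ref{lem:mated4spider-structure} and Theorem \ref{thm:matedspiderstructure-odd-case}). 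So outcome (e) with $k=3$ is attainable for odd $\ell$, and the proposed escape of ``climbing to $k\geq 4$'' is both unnecessary and unsupported -- nothing in your construction produces a fourth disjoint path. Finally, the clique $U$ of vertices complete to the inflated hole should not be ``absorbed or shown empty'': the paper simply looks for the attaching connected set inside $G\setminus(V(\mathcal{C})\cup U)$ with two non-adjacent neighbours in $V(\mathcal{C})\cup U$, and notes that any non-adjacent pair there must lie in $V(\mathcal{C})$ because $U$ is complete to $V(\mathcal{C})$ and is a clique.
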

	\begin{proof}
		We may assume that \ref{ms:complete-vertex}, \ref{ms:clique-cutset}, \ref{ms:chordal}, and \ref{ms:hole} do not hold.
		$G$ contains an inflated $\ell$-hole $\C$ because it is not chordal. Choose $\C$ to maximize $|V(\C)|$. Let $W$ be the set of vertices in $V(G) \setminus V(C)$ that are complete to $V(\C)$. Since $\C$ is not a clique and G is $C_4$-free, $G[W]$ must be a clique. $V(G) \neq V(\C) \cup W$ because \ref{ms:complete-vertex} and \ref{ms:hole} do not hold. Since $G$ does not contain a clique cut-set there is some connected graph $X$ contained in $G \setminus (V(\C) \cup W)$ such that $V(X)$ has two non-adjacent neighbors in $V(\C) \cup W$.
		Since $W$ is complete to $V(\C)$ it follows that $V(X)$ must have two non-adjacent neighbors in $V(\C)$. Choose $X$ to be minimal.
		Then, $X$ is a path. 
		$G[ V(\C \cup X)]$ does not contain a clique cutset. So by Lemma \ref{lem:inflated-hole-vertex-neighbors-facts} if $X$ is a single vertex $x$, then $x$ can be added to a bag of $\C$ to obtain a larger inflated hole, contradicting the maximality of $\C$. Thus $|X| \geq 2$. Let the vertices of $X$ be $x_1 \dd x_2 \dd \dots \dd x_n$ in order.

		Then by minimality of $X$, there are some two non-adjacent vertices $v,w \in V(\C)$ such that $x_1$ is adjacent to $v$ and $x_n$ is adjacent to $w$. Then $v$ and $w$ can't be in the same bag of $\C$.
		By Lemma \ref{lem:inflated-graphs-facts} there is a hole $C$ underlying $\C$ with $x, y \in V(C)$. 
		 Let $P_1, P_2$ be paths of $C$ containing all neighbors of $x_1$ and $x_n$ in $V(C)$,  respectively. Choose $P_1, P_2$ to be minimal. By minimality of $X$, we may assume $P_1, P_2$ each have length at most one.  For $i \in \{1,2\}$, let the ends of $P_i$ be $a_i, b_i$. We may assume $a_1, b_1, b_2, a_2$ occur in order in $C$. Let $A$ be the path of $C$ with ends $a_1, a_2$ that does not contain $b_1$ or $b_2$ and let $B$ be the path of $C'$ with ends $b_1, b_2$ that does not contain $a_1$ or $a_2$. 
		\stmt{If some vertex $x_i \in X^*$ has a neighbor in $z \in V(C)\setminus V(B)$ then $z \in V(A)$ and $A$ is a path of length two, $a_1 = b_1$ and $a_2 = b_2$. The same statement holds with $A$ and $B$ exchanged. \label{inflatedholes:A-is-short-if-xi-has-a-nbr-in-it}}
		Suppose $x_i$ has a neighbor in $z\in V(C)\setminus V(B)$ for some $i \in [2, n-1]$.  By minimality of $X$, $z$ is adjacent to all of $a_1, a_2, b_1, b_2$. Hence, $a_1 = b_1$, $a_2 = b_2$. Then $a_1$ and $a_2$ are not adjacent, so $A$ is the path $a_1 \dd z \dd a_2$. (See Figure \ref{fig:G-has-a-mated-spider} for an illustration). This proves (\ref{inflatedholes:A-is-short-if-xi-has-a-nbr-in-it}).
		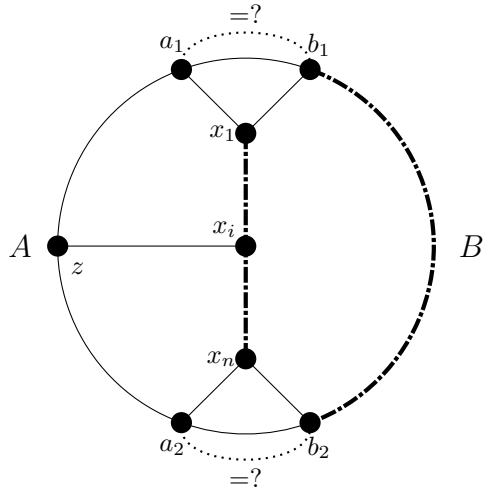
\begin{figure}[!h]
			\centering
			\begin{tikzpicture}
	% choose a seed for pseudo-random generator
	
	% define a circle (center=O and \radius)
	\coordinate (O) at (1,2);
	\def\radius{2.5cm}
	\def\diff{10}
	% draw this circle and its center
	%\draw (O) circle[radius=\radius];

	\pgfmathsetmacro{\angleA}{80 - \diff}
	\path (O) ++(\angleA:\radius) coordinate (b1);
	% draw (A) with a label
	\fill[black] (b1) circle[radius=4pt] ++(\angleA:1em) node {$b_1$};
	
	\path (O) ++(110:\radius) coordinate (a1);
	% draw (A) with a label
	\fill[black] (a1) circle[radius=4pt] ++(110:1em) node {$a_1$};
	
	\pgfmathsetmacro{\angleb}{260 - \diff}
	\path (O) ++(\angleb:\radius) coordinate (a2);
	% draw (A) with a label
	\fill[black] (a2) circle[radius=4pt] ++(\angleb:1em) node {$a_2$};
	
	\pgfmathsetmacro{\angleb}{280 + \diff}
	\path (O) ++(\angleb:\radius) coordinate (b2);
	% draw (A) with a label
	\fill[black] (b2) circle[radius=4pt] ++(\angleb:1em) node {$b_2$};
	
	%(u_1) edge node[above,rotate= 90] {$=$?} (u_2) [maybe equal]
	\begin{scope}[bend right = 100]
		\path
		(a2) edge node[below] {$=$?} (b2) [maybe equal ];
	\end{scope}
	
	\begin{scope}[bend left = 100]
		\path
		(a1) edge node[above] {$=$?} (b1) [maybe equal, thick];
	\end{scope}
	
	\coordinate (start) at (1,3.5);
	\fill (start) circle[radius=4pt] node[left] {$x_1$};
	
	\coordinate (end) at (1,.5);
	\fill (end) circle[radius=4pt] node[left] {$x_n$};
	
	\path 
	(a1) edge (start)
	(b1) edge (start)
	(a2) edge (end)
	(b2) edge (end);
	
	\fill (O) circle[radius=4pt] node[above left] {$x_i$};
	\draw[path] (start) to (O);
	\draw[path] (end) to (O);
	
	\draw[path] (b1) arc[start angle=\angleA, end angle=-70,radius=2.5cm];
	\draw (b1) arc[start angle= \angleA, end angle = \angleb, radius= 2.5cm];
%	\coordinate[above=.5cm and of O] (OO);
%	\draw (O) to (OO);
%	\coordinate[below=.5cm of O] (OOO);
%	\draw (O) to (OOO);
%	\coordinate[below=.5cm of start] (S);
%	\draw (S) to (start);
%	\draw[maybe equal, thick] (S) to (OO);
%	\coordinate[above=.5cm of end] (E);
%	\draw (E) to (end);
%	\draw[maybe equal, thick] (E) to (OOO);

	\pgfmathsetmacro{\angled}{180}
	\path (O) ++(\angled:\radius) coordinate (z);
	% draw (A) with a label
	\fill[black] (z) circle[radius=4pt] ++(\angled:1em) node[below right = .1cm and .4cm] {$z$};
	
	\draw (z) to (O);

	\coordinate[left = .5cm of z] (A);
	\node at (A) {\large $A$};
	
	\pgfmathsetmacro{\angled}{0}
	\path (O) ++(\angled:\radius) coordinate (zero);
	\coordinate[right = .5cm of zero] (left);
	\node at (left) {\large $B$};
	
\end{tikzpicture}
			\caption{An illustration of the proof of Statement \ref{inflatedholes:A-is-short-if-xi-has-a-nbr-in-it} of
				Theorem \ref{thm:G-has-a-mated-spider}. Thick dashed lines indicate a path of length greater than zero. The vertices $a_1$ and $b_1$ are connected by dots and an $= ?$ to indicate that $a_1$ may be equal to $b_1$. }
			\label{fig:G-has-a-mated-spider}
		\end{figure}
		\stmt{No vertex in $X^*$ has a neighbor in $V(C)$. \label{inflatedholes:X-interior-is-anticomplete}}
		Suppose $x_i$ has a neighbor in $z\in V(C)$ for some $i \in [2, n-1]$. By (\ref{inflatedholes:A-is-short-if-xi-has-a-nbr-in-it}) we may assume $z \in V(A)$. Then by (\ref{inflatedholes:A-is-short-if-xi-has-a-nbr-in-it}), $A$ is the path $a_1 \dd z \dd a_2$. Since $C$ has length at least $7$, (\ref{inflatedholes:A-is-short-if-xi-has-a-nbr-in-it}) implies that no vertex in $V(X)\setminus \{x_1, x_n\}$ has a neighbor in $V(C) \setminus V(A)$. It follows that $V(C \cup X)\setminus \{z \}$ induces a hole. Hence $|E(X)| +2 = |E(A)| = 2$. But $X$ is a path of length at least one, a contradiction. This proves (\ref{inflatedholes:X-interior-is-anticomplete}).
		\\
		\\
		Let $M$ denote the graph $G[V(C \cup X)]$.
		It follows that $M$ is a prism, pyramid, or theta depending on the lengths of $P_1, P_2$. 
		Moreover since $G$ is $\ell$-monoholed the constituent paths of $M$ will have lengths in $\{\frac{\ell}{2}-1, \frac{\ell-1}{2}, \frac{\ell}{2}\}$. Since $\ell \geq 7$ every constituent path of $M$ has length at least three. Let $T$ be a set consisting of one vertex from the interior of each constituent path of $M$. Then $M$ is the union of a pair of mated spiders with toes $T$.
		
	\end{proof}

	\section{The structure of mated $k$-spiders} \label{sec:mated-spiders-structure}
	In this section we will fully characterize the structure of graphs consisting of the union of a pair of  mated $k$-spiders in an $\ell$-monoholed graph. In particular, we will prove that the union of mated $k$-spiders is a $k$-theta or a ``generalized $k$-prism'' when $\ell$ is even and it is a $k$-pyramid when $\ell$ is odd.
	
	Let us begin with the definition of a generalized $k$-prism:
	Let $H$ be a graph such that $V(H)$ is the the union of the vertex sets of paths $P_1, P_2, \dots, P_k$ for some $k \geq 3$.  For each $i \in [k]$ let $a_i$ and $b_i$ denote the ends of $P_i$. Let $\ell = 2n + 2$. We call $H$ an \textit{generalized $k$-prism} if both of the following conditions hold:
	\begin{itemize}
		\item For every two distinct  $i,j \in [k]$, $V(P_i)$ is anticomplete to $V(P_j)$ except possibly $a_i$ is equal or adjacent to $a_j$ or $b_i$ is equal or adjacent to $b_j$.
		\item $[k]$ can be partitioned into (possibly empty) sets $Q, R, S, T$ satisfying all of the following:
		\begin{itemize}
			\item The graphs $G[\{a_i \ | \ i \in [k] \}]$ and $G[\{b_i \ | \ i \in [k] \}]$ both do not contain cut-vertices. 
			\item For any two distinct $i,j \in [k]$, $a_i = a_j$ if and only if $i$ and $j$ are both in $T$ and $b_i = b_j$ if and only if $i$ and $j$ are both in $S$, 
			\item For every $i \in Q$, $P_i$ has length $n-1$,
			\item For every $i \in R \cup S \cup T$, $P_i$ has length $n$,
			\item The sets $\{a_i \ | \ i \in Q \cup S \}$ and $\{b_i \ | \ i \in Q \cup T \}$ are stable, 
			\item The sets $\{a_i \ | \ i \in R \}$ and $\{b_i \ | \ i \in R \}$ induce cliques,
			\item For any $i \in S$, $j \in R$ and $k \in T$, $a_i$ is adjacent to $a_j$ and and $b_j$ is adjacent to $b_k$,
			\item If $t \in T$, $a_	t$ is complete to every other vertex in $\{a_1,a_2, \dots, a_k \}$ and if $s \in S$, $b_s$ is complete to every other vertex in $\{b_1, b_2, \dots, b_k \}$,
			\item There is a half graph between $\{a_i \ | \ i \in Q \}$ and $\{a_j \ | \ j \in R \}$ and a half graph between $\{b_i \ | \ i \in Q \}$ and $\{b_j  \ | \ j \in R \}$, and for any $i \in Q$ and $j \in R$, $a_i$ is adjacent to $a_j$ if and only if $b_i$ is not adjacent to $b_j$.
		\end{itemize}
	\end{itemize}
	See Figure \ref{fig:orderly-spine} for an illustration.
	We call $Q, R, S, T$ a \textit{defining partition} of $H$. Note $H$ may have multiple defining partitions. In general we work with defining partitions that maximize the cardinality of $R$ and thus $S$ is non-empty if and only if $|S| \geq 2$ and $T$ is non-empty if and only if $|T| \geq 2$.
	We call the paths $P_1, P_2, \dots, P_k$ the \textit{constituent paths} of $H$ and we call the sets $\{ a_1, a_2, \dots, a_k\}$ and $\{b_1, b_2, \dots, b_k\}$ the \textit{terminating sets} of $H$.
	Note that by definition $k$-prisms with constituent paths of length $\frac{\ell}{2} -1$ are generalized $k$-prism, but $k$-thetas and $k$-pyramids are not generalized $k$-prisms.
	
	Let us define an odd analog of a generalized $k$-prism.
	Let $H$ be a graph such that $V(H)$ is the the union of the vertex sets of paths $P_1, P_2, \dots, P_k$ for some $k \geq 3$.  For each $i \in [k]$ let $a_i$ and $b_i$ denote the ends of $P_i$. Let $\ell = 2n +1$. We call $H$ an \textit{generalized $k$-pyramid} if both of the following conditions hold:
	\begin{itemize}
		\item For every two distinct  $i,j \in [k]$, $V(P_i)$ is anticomplete to $V(P_j)$ except possibly $a_i$ is equal or adjacent to $a_j$ or $b_i$ is equal or adjacent to $b_j$,
		\item Each of $H[\{a_1, a_2, \dots, a_k\}]$ is either a 2-connected threshold graph or $a_i = a_j$ for every $i, j  \in [k]$. The same statement holds for $H\{b_1, b_2, \dots, b_k\}]$,
		\item For every $i \in [k]$, $P_i$ has length $n$ or $n-1$,
		\item And, in particular, there is a partition $Q, R$ of $[k]$ satisfying all of the following:
		\begin{itemize}
			\item $a_i = a_j$ and $b_i$ is adjacent to $b_j$ for any two distinct $i,j \in Q$.
			\item For any two distinct $i,j \in R$, $a_i \neq a_j$ and $b_i \neq b_j$.
			\item For any two distinct $i, j \in R$, $a_i$ is adjacent to $a_j$ if and only if $a_i'$ is not adjacent to $b_j'$.
			\item $\{ a_i \ | \ i \in Q\}$ and $\{ a_i \ | \ i \in R\}$ are complete to each other and $\{ b_i \ | \ i \in Q\}$ and $\{ b_i \ | \ i \in R\}$.
			\item If $i \in Q$, then $P_i$ has length $n$.
			\item If $i \in R$, then $P_i$ has length $n-1$.
		\end{itemize}
	\end{itemize}
	
	We call a graph $H$ a \textit{$k$-spine} if it is a generalized $k$-pyramid, $k$-theta, or generalized $k$-prism.
	Recall, that we say the terminating sets of a pyramid is the vertex set of its base and the set consisting of its apex. Let $H$ be a theta and let $u, v$ be the two vertices of degree at least three in $H$. 
	Then, the terminating sets of $H$ are $\{ u\}$ and $\{ v\}$.
	We are now ready to state the main result of this chapter.

	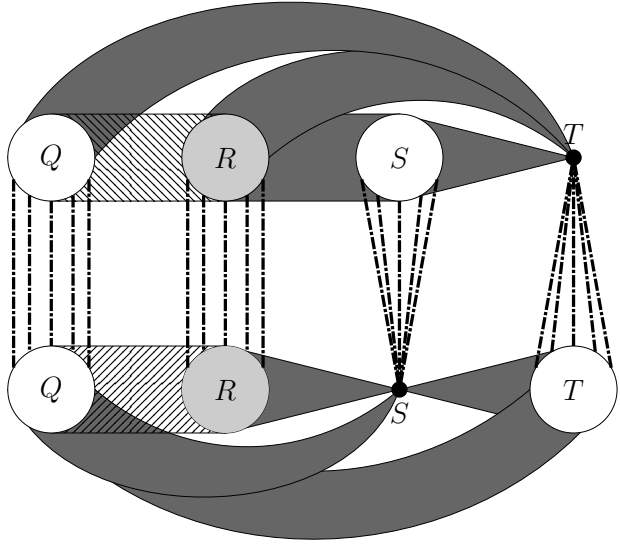
\begin{figure}[!h]
		\centering
		\resizebox{.55\textwidth}{!}{
			\begin{tikzpicture}
	\def\width{3};
	\def\height{4};
	\coordinate (one) at (1,\width);
	\def\radius{.75cm};
	\def\diff{10};
	\def\smallradius{4pt}
	% draw this circle and its center
	
	%build coordinates
	\foreach \i in {1,2,3,4}{
		\foreach \j in {0,1}{
			\coordinate (c\i\j) at (\width*\i, -1* \j*\height);
		}
	}
	%	\fill[black!30] (c12) circle[radius=4pt] ++(\angled:1em) node[below right = .1cm and .4cm] {$z$};
	%\node[] at (\width + .48, -.3* \height){\tiny \textbf{ ...}};

	\draw[fill = black!20] (c20) circle[radius = \radius];
	\node at (c20) {\Large $R$};
	
	\draw (c21) circle[radius = \radius];
	\fill[black!20] (c21) circle[radius = \radius];
	\node at (c21) {\Large $R$};
	
	\draw[fill = white] (c10) circle[radius = \radius];
	\node at (c10) {\Large $Q$};
	
	\draw[fill = white] (c11) circle[radius = \radius];
	\node at (c11) {\Large $Q$}; 
	
	\draw[fill = white] (c30) circle[radius = \radius];
	\node at (c30) {\Large $S$}; 
	
	\draw[fill = white] (c41) circle[radius = \radius];
	\node at (c41) {\Large $T$}; 
	
	\fill (c40) circle[radius = \smallradius] node[above = 3pt] {\Large $T$};
	\fill (c31) circle[radius = \smallradius] node[below = 3pt] {\Large $S$};
	
	%adding the paths
	%building coordinates for the paths.
	\def\anglediff{30};
	%\pgfmathsetmacro{\a}{270};
	%\path (c10) ++(\a:\radius) coordinate (b2);
	%\node[red node] at (b2){};
	%do inner loop
	\foreach \k in {1,2,3,4}{
		\def\m{\k -3};
		\pgfmathsetmacro{\angle}{270 + (\anglediff * (\k -3))};
		\path (c10) ++(\angle:\radius) coordinate (b\k);	
	}
	
	\foreach \i in {1,2,3,4}{
		\foreach \j in {0}{
			\foreach \k in {1,2,3,4,5}{
				\pgfmathsetmacro{\angle}{270 + (\anglediff * (\k -3) };
				\path (c\i\j) ++(\angle:\radius) coordinate (b\i\j\k);
			}
		}
	}
	
	\foreach \i in {1,2,3,4}{
		\foreach \j in {1}{
			\foreach \k in {1,2,3,4,5}{
				\pgfmathsetmacro{\angle}{90 - (\anglediff * (\k -3) };
				\path (c\i\j) ++(\angle:\radius) coordinate (b\i\j\k);
			}
		}
	}
	
	\foreach \i in {1,2}{
		\foreach \k in {1,2,3,4,5}{
			\draw[path] (b\i0\k) to (b\i1\k);
		}
	}
	
	\foreach \k in {1, ..., 5}{
		\draw[path] (b30\k) to (c31);
		\draw[path] (b41\k) to (c40);
	}
	%and now to draw the lines betwitxt shapes...
	%probably use layers for this...
	%generate top and bottom corrdinates 
	\foreach \i in {1,2,3,4}{
		\foreach \j in {0,1}{
			\pgfmathsetmacro{\top}{90};
			\pgfmathsetmacro{\bottom}{270};
			
			\path (c\i\j) ++ (\top:\radius) coordinate (top\i\j);
			\path (c\i\j) ++ (\bottom:\radius) coordinate (bot\i\j);
			\path (c\i\j) ++ (0:\radius) coordinate (l\i\j);
			\path (c\i\j) ++ (180:\radius) coordinate (r\i\j);
		}
	}
	\def\blackamount{60}
	\begin{scope}[on behind layer]
		\draw[fill =black!\blackamount] (top20) to (bot20) to (bot30) to (top30) to (top20);
		\draw[fill = black!\blackamount] (c40) to (top30) to (bot30) to (c40);
		\draw[fill =black!\blackamount] (c31) to (top41) to (bot41) to (c31);
		\draw[fill =black!\blackamount] (c31) to (top21) to (bot21) to (c31);
		\draw[fill = black!\blackamount] (c40) to [bend right = 40] (l20) to (r20) to [bend left = 70] (c40);
		\draw[fill = black!\blackamount] (c40) to [bend right = 50] (l10) to (r10) to [bend left = 70] (c40);
		\draw[fill = black!\blackamount] (l41) to [bend left = 50] (bot11) to (l11) to [bend right = 40] (r41) to (l41);
		\draw[fill = black!\blackamount] (c31) to [bend left = 70] (r11) to (l11) to [bend right = 40] (c31);
		\draw[pattern=north west lines, pattern color=black] (top10) rectangle (bot20);
		\draw[pattern=north east lines, pattern color=black] (top11) rectangle (bot21);
	\end{scope}
\end{tikzpicture}
		}
		\caption{ An illustration of a generalized $k$-pyramid. The large white circles represent stable sets and the large gray circles represent cliques. We add huge dark gray lines to indicate that sets are complete to each other. We use stripes between $Q$ and $R$ to illustrate that there is a half graph between them. We draw the stripes in opposite directions on top and bottom to indicate that the half graphs are complementary. Note some of the sets may be empty and the vertices drawn for $S$ and $T$ need not exist.}
		\label{fig:orderly-spine}
	\end{figure}

	\begin{thm}\label{thm:orderly-spines-exist}
		Let $G$ be an $\ell$-monoholed graph for some $\ell \geq 7$. Then one of the following conditions holds:
		\begin{enumerate}[(a)]
			\item $G$ contains a vertex that is adjacent to every other vertex in $V(G)$. 
			\item$G$ contains a clique cutset, 
			\item $G$ is chordal,
			\item $G$ is an inflated $\ell$-hole, 
			\item $G$ contains a $k$-spine for some $k \geq 3$ and in particular if $\ell$ is odd $G$ contains a $k$-pyramid.
		\end{enumerate}
	\end{thm}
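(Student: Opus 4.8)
The plan is to deduce Theorem~\ref{thm:orderly-spines-exist} from Theorem~\ref{thm:G-has-a-mated-spider}. Assuming outcomes (a)--(d) fail, Theorem~\ref{thm:G-has-a-mated-spider} yields a pair of mated $k$-spiders $S,S'$ in $G$ with $k\geq 3$, and I would fix such a pair with $k$ maximum. Write $M=G[V(S)\cup V(S')]$; for each $i$ let $L_i$ be the $t_i$-leg of $S$ with joint $a_i$, let $L_i'$ be the $t_i$-leg of $S'$ with joint $b_i$, and put $P_i=L_i\cup L_i'$, which --- since $S,S'$ are anticomplete apart from their common toes --- is an induced path from $a_i$ to $b_i$ through $t_i$. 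Let $\Gamma,\Gamma'$ denote the bodies of $S,S'$, so $V(M)=V(\Gamma)\cup V(\Gamma')\cup\bigcup_i V(P_i)$ and each $P_i$ runs between $\Gamma$ and $\Gamma'$. For distinct $i,j$ the $S$-path and the $S'$-path between $t_i,t_j$ have anticomplete interiors, so their union is a hole; hence $d_S(i,j)+d_{S'}(i,j)=\ell$. Writing $\partial_\Gamma(a_i,a_j)$ for the distance between $a_i,a_j$ in $\Gamma$ (and similarly $\partial_{\Gamma'}$), and using that legs have degree-two interiors, $d_S(i,j)=|E(L_i)|+|E(L_j)|+\partial_\Gamma(a_i,a_j)$; this single length identity is what I would propagate throughout.

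The first real step is to pin down the bodies. Using $\ell\geq 7$ (so $G$ has no $C_4$, $C_5$ or $C_6$), Fact~\ref{fact:hole-and-vertex-complete-anticomplete-or-P3}, the absence of a clique cutset, and the maximality of $k$, I would show that each body reduces to its set of joints (allowing joints to coincide, and allowing at most an apex-type vertex): an extra body vertex, or a body cut-vertex lying outside the joints, either produces a short hole when combined with a suitable $P_i\cup P_j$ or lets one reroute a leg and enlarge the mated pair, a contradiction. I would then argue that the body, read as the graph on the joints, contains no induced $P_4$, $C_4$ or two-edge matching --- each such configuration, bridged by two of the $P_i$'s, closes up either to a hole of length $\ne\ell$ or to a configuration contradicting the maximality of $k$ --- so by the theorem of Chv\'atal and Hammer each body is a threshold graph (a clique with a stable set attached by a half-graph), and the no-cut-vertex condition pins it to exactly the shapes allowed in the definition of a $k$-spine: a single vertex, or a $2$-connected threshold graph. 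Simultaneously the length identity, together with $\partial_\Gamma(a_i,a_j)\in\{0,1,2\}$ (value $2$ meaning $a_i,a_j$ have a common neighbour in the clique part of $\Gamma$), forces $|E(P_i)|\in\{n-1,n\}$ where $\ell\in\{2n+1,2n+2\}$, and links the length of $P_i$ to the adjacency of $a_i$ with $a_j$ (and of $b_i$ with $b_j$).

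Now split on the parity of $\ell$. If $\ell=2n+1$ is odd, the parity of $\partial_\Gamma(a_i,a_j)+\partial_{\Gamma'}(b_i,b_j)$ is determined, which excludes the $k$-theta and the generalized $k$-prism and forces a generalized $k$-pyramid: one terminating set (up to the apex degeneracy) is a clique carrying the length-$(n-1)$ paths, indexed by $R$, the other structure supplies the length-$n$ paths indexed by $Q$, with $\{a_i:i\in Q\}$ stable and joined to $\{a_j:j\in R\}$ by a half-graph, and the complementarity ``$a_i$ adjacent to $a_j$ iff $b_i$ not adjacent to $b_j$'' for $i\in Q,\ j\in R$ follows from requiring the relevant four-vertex cycle through $P_i,P_j$ to close up to an $\ell$-hole rather than something shorter. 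If $\ell=2n+2$ is even, the same bookkeeping gives either a $k$-theta (both bodies degenerate to a single vertex) or a generalized $k$-prism, the partition $Q,R,S,T$ being read off from which joints lie in the clique part versus the stable part of $\Gamma$ and of $\Gamma'$, with maximality of $k$ again forbidding the two-edge matchings that would break the threshold structure of either terminating set.

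The step I expect to be the main obstacle is this last bookkeeping: checking \emph{all} of the defining-partition conditions simultaneously --- in particular the half-graph between $\{a_i:i\in Q\}$ and $\{a_j:j\in R\}$, its compatibility with the half-graph on the $b$-side, the complementarity of the two half-graphs, and the ``complete vertex'' behaviour forced by $S$ and $T$. This calls for a careful simultaneous case analysis over pairs $(i,j)$ with $i\in Q,\ j\in R$ (and over pairs inside $R$), tracking the various sub-holes obtainable from $P_i,P_j$ and the edges and non-edges among $a_i,a_j,b_i,b_j$, each time invoking ``every hole of $G$ has length $\ell$''. I expect this to be the longest and most delicate part of the argument, whereas reducing to mated spiders via Theorem~\ref{thm:G-has-a-mated-spider} and recognising the bodies as threshold graphs via Chv\'atal--Hammer should be comparatively routine.
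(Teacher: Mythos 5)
Your top-level skeleton matches the paper's: reduce to a pair of mated $k$-spiders via Theorem~\ref{thm:G-has-a-mated-spider}, show the bodies collapse to threshold graphs on the joints, propagate the identity $d_S(i,j)+d_{S'}(i,j)=\ell$, and split on parity to recognize a generalized $k$-pyramid (odd case) or a $k$-theta/generalized $k$-prism (even case); in the paper this content is packaged as Theorems~\ref{thm:matedspiderstructure-odd-case} and~\ref{thm:orderly-spines-even-case}, and the proof of the present theorem is just the citation of those two results.

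However, the mechanism you propose for the crucial body-structure step has a genuine gap. You claim that an induced $P_4$, a two-edge matching, a cut-vertex, or an ``extra'' body vertex can be excluded because it ``closes up to a hole of length $\neq\ell$'' or ``contradicts the maximality of $k$.'' Neither works as stated. Every toe-to-toe hole obtained from the two mates has length exactly $\ell$ by the mating identity, so none of these configurations produces a wrong-length hole on its own; and there is no evident way a bad configuration inside a body lets you manufacture a mated $(k+1)$-spider, so maximality of $k$ gives you nothing (indeed the paper's structure theorems hold for \emph{every} mated pair and use neither maximality of $k$, nor the absence of clique cutsets, nor Fact~\ref{fact:hole-and-vertex-complete-anticomplete-or-P3}). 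The missing idea is the four-toe comparison: restrict to any four toes, observe that the resulting $4$-spiders inside $S$ and $S'$ are again mates, classify the possible bodies of a $4$-spider (Lemmas~\ref{lem:A-not-2-connecteded-4-spider-outcomes} and~\ref{lem:A-2-connecteded-4-spider-outcomes}: single vertex, edge, $K_3$, $K_4$, diamond, or the cut-vertex case with its specific relation among pairing sums), and then play the three sums $d(1,2)+d(3,4)$, $d(1,3)+d(2,4)$, $d(1,4)+d(2,3)$ of $S$ against those of $S'$ via $d_S+d_{S'}=\ell$ (Lemmas~\ref{lem:mated-spider-top-shapes} and~\ref{lem:mated4spider-structure}). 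An induced $P_4$ or a two-edge matching in a body forces two of these sums to differ by $2$, which the classification of the mate's body forbids; the same comparison rules out cut-vertices and yields $V(A)=\{a_1,\dots,a_k\}$ (Lemmas~\ref{lem:mated-k-spiders-are-2-connected} and~\ref{lem:if-even-body-of-mated-spider-is-a-threshold-graph}) and the length bounds $|E(P_i)|\in\{n-1,n\}$. Without this cross-mate comparison your plan has no working engine for the threshold-graph step, and the subsequent parity bookkeeping (which you correctly identify as delicate, and which occupies the bulk of Theorem~\ref{thm:orderly-spines-even-case}) has nothing to stand on.
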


	\subsection{Proving Theorem \ref{thm:orderly-spines-exist}}
	For the rest of this chapter $G$ will be an $\ell$-monoholed graph. 
	We will now prove a series of lemmas about the structure of spiders and pairs of mated spiders in $G$ in order to prove Theorem \ref{thm:orderly-spines-exist}.
	
	\begin{lem}\label{lem:A-not-2-connecteded-4-spider-outcomes}
		Let $S$ be a $4$-spider in $G$ and let $A$ be the body of $S$. If $A$ has a cut-vertex then two of $d(1,2) + d(3,4)$, $d(1,3) + d(2,4)$ and $d(1,4) + d(2,3)$ are equal and at least two more than the third.
	\end{lem}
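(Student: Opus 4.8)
The plan is to analyze the structure of a $4$-spider $S$ whose body $A$ has a cut-vertex $c$, and to extract holes from $S$ together with its mate $S'$ (or from $S$ alone, using that $G$ is $\ell$-monoholed). First I would observe that since $A$ is the body of a $4$-spider, $A$ is connected and every vertex of $A$ other than the joints $a_1,a_2,a_3,a_4$ has degree two in $S$, so $A$ is itself a small connected graph on the joints and some internal paths; the cut-vertex $c$ of $A$ separates the joints into two nonempty groups. Up to relabelling, the cut-vertex $c$ splits $\{1,2,3,4\}$ so that $\{a_1,a_2\}$ lies in one side and $\{a_3,a_4\}$ in the other (the case where one side contains a single joint would force that joint to be a cut-vertex of $S$ itself contradicting minimal connectivity unless it coincides with $c$, which I would handle as a degenerate subcase). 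Then every $t_1t_3$-path, $t_1t_4$-path, $t_2t_3$-path, $t_2t_4$-path in $S$ must pass through $c$, while $t_1t_2$-paths and $t_3t_4$-paths need not.

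The key computation is then: writing $\delta_i = d_S(t_i, c)$ for $i \in \{1,2,3,4\}$, we get $d(1,3) = \delta_1 + \delta_3$, $d(1,4) = \delta_1+\delta_4$, $d(2,3) = \delta_2+\delta_3$, $d(2,4) = \delta_2+\delta_4$, because all shortest paths between opposite-side toes route through $c$. Hence
\[
d(1,3) + d(2,4) = \delta_1+\delta_2+\delta_3+\delta_4 = d(1,4) + d(2,3),
\]
so two of the three pairwise sums are equal; call this common value $N$. Meanwhile $d(1,2) \le \delta_1+\delta_2$ and $d(3,4)\le \delta_3+\delta_4$, so $d(1,2)+d(3,4) \le N$. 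It remains to show the inequality is strict by at least $2$, i.e. $d(1,2) + d(3,4) \le N - 2$.

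For the strictness I would argue as follows. If $d(1,2) + d(3,4) = N$, then $d(1,2) = \delta_1+\delta_2$ and $d(3,4) = \delta_3+\delta_4$, meaning the $t_1t_2$-path through $c$ and the $t_3t_4$-path through $c$ are both shortest; combined with the four cross-paths through $c$, one builds a hole in $S$ by taking two disjoint spider-paths between distinct toes plus a short connection at the toe ends, and since $S$ has a mate $S'$ with $d_S(i,j) + d_{S'}(i,j) = \ell$, any hole formed from an $S$-path and an $S'$-path between the same two toes has length exactly $\ell$, which is consistent; the contradiction instead comes from the fact that two distinct shortest spider-paths routed through a single cut-vertex $c$ would have to share the vertex $c$, forcing degree-$\ge 3$ behaviour at $c$ inconsistent with being internal on both, or else forcing $c$ to be a joint $a_i$ and collapsing the spider structure. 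More cleanly: one of the parity/length constraints forces $d(1,2)$ and $d(3,4)$ to be strictly smaller — for instance if the $t_1t_2$-path avoiding $c$ were used, it gives a shorter value, and if no such path exists then $c$ lies on every $t_1t_2$-path making $c$ a cut-vertex of $S$, hence $c$ must be a joint, and a joint of $S$ cannot be a cut-vertex of $A$ in the required separating way without reducing to fewer than four legs. \textbf{The main obstacle} I anticipate is precisely this last step: carefully ruling out the boundary case $d(1,2)+d(3,4) = N - 1$ and $d(1,2)+d(3,4)=N$ by chasing through exactly which spider-paths can share the cut-vertex $c$ and invoking that $G$ is $\ell$-monoholed to forbid the ``off-by-one'' holes; the even/odd distinction in $\ell$ will likely enter here, and one should expect to use the mate $S'$ to certify that the relevant cycles are genuinely induced holes of the wrong length. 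I would organize the write-up by first fixing the separation induced by $c$, then deriving the two equal sums, then doing a short case analysis on whether $t_1t_2$- and $t_3t_4$-paths avoid $c$ to get the $\ge 2$ gap.
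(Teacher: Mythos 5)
You correctly recover the easy half, and in essentially the paper's way: a component of $A \setminus v$ containing only one joint would, by minimality of $S$, induce a path attached at $v$, contradicting the definition of a leg; hence the cut-vertex separates the joints two-and-two, is distinct from all of them, and every path between toes on opposite sides passes through $v$. This gives $d(1,3)+d(2,4)=d(1,4)+d(2,3)=\sum_{i=1}^{4} d_S(t_i,v)=:N$ and $d(1,2)+d(3,4)\le N$, exactly as in the paper.

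The genuine gap is the ``at least two more'' part, which you yourself flag as the main obstacle. Your plan is to assume $d(1,2)+d(3,4)\in\{N, N-1\}$ and derive a contradiction from holes built out of $S$ and its mate $S'$, but this cannot work as described: the lemma is stated for an arbitrary $4$-spider, so no mate is available (indeed the lemma is applied later, e.g. in Lemma \ref{lem:mated-spider-top-shapes}, precisely to constrain what mates can look like); inside $S$ alone there are no cycles through toes at all, since toes have degree one and legs are pendant paths, so the ``hole built from two spider-paths plus a connection at the toe ends'' does not exist; and with a hypothetical mate the cycles you would form have length exactly $\ell$, which, as you concede, yields no contradiction. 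Moreover, even a successful exclusion of the single case $d(1,2)+d(3,4)=N$ would only give a deficit of one, not two, and there is no parity obstruction to the case $N-1$. The actual mechanism is per-side strictness coming from minimality of the spider: with $B'=S[V(A')\cup\{v\}]$, minimality forces $V(B')=V(Q_1\cup Q_2)$ where $Q_1,Q_2$ are shortest $a_1v$- and $a_2v$-paths, and since the component $A'$ is connected, some vertex of $Q_1$ other than $v$ is equal or adjacent to some vertex of $Q_2$ other than $v$; crossing over there gives $d(1,2)<d_S(t_1,v)+d_S(t_2,v)$, and symmetrically $d(3,4)<d_S(t_3,v)+d_S(t_4,v)$. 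Summing these two strict integer inequalities yields $d(1,2)+d(3,4)\le N-2$, with no use of $\ell$-monoholedness, parity, or a mate. Since your write-up never produces a strict inequality on each side separately, the required gap of two is not established.
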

	\begin{proof}
		Let $a_1, a_2, a_3, a_4$ be as in the definition of $k$-spider.
		Let $v$ be a cut vertex of $A$ and let $A', A''$ be two different components of $A \setminus \{ v \}$. Let $B', B''$ denote $S[V(A') \cup \{v \}]$ and $S[V(A'') \cup \{v \}]$, respectively. 
		Suppose none of $a_2, a_3, a_4 \in V(A')$. Then it follows from the minimality of $H$ that $B'$ is a $a_1v$-path, contradicting the definition of a leg. It follows that two of $a_1, a_2, a_3, a_4$ are in each of $V(A')$ and $V(A'')$. Hence, $v$ does not equal $a_1, a_2, a_3, a_4$. We may assume $a_1, a_2 \in V(A')$ and $a_3, a_4 \in V(A'')$.  Thus, $d(1,3) + d(2,4) = d(1,4) + d(2,3)$.
		Since $B'$ is connected it contains a shortest $a_1v$-path $Q_1$ and a shortest $a_2v$-path $Q_2$. Since $S$ is minimal, $V(B') = V(Q_1 \cup Q_2)$. %possibly shortest is unecessary.
		Moreover, since $A'$ is connected some vertex in $V(Q_1)$ is equal or adjacent to some vertex of $V(Q_2)$. Thus, $d(1,2) < d_S(a_1, v) + d_S(a_2, v)$. By symmetry, $d(3,4) < d_S(a_3, v) + d_S(a_4, v)$. Hence, $d(1,2) + d(3,4) -2 \leq  d(1,3) + d(2,4) = d(1,4) + d(2,3)$.
	\end{proof}
	
	We make extensive use of the following easy observation: 
	\begin{fact}\label{lem:leg-fact}
		Let $H$ be a $k$-spider for some $k \geq 3$. Let $A$ be the body of $H$. Let $a_1, \dots, a_k$ be as in the definition of $k$-spider. Then for each $i \in [k]$ either $a_i$ has degree at least two in $A$ or there is some $a_i = a_j$ for some $j \in [k] \setminus \{i \}$. 
	\end{fact}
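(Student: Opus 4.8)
The plan is to argue contrapositively at the level of a single index: fix $i \in [k]$, assume $a_i \neq a_j$ for every $j \in [k]\setminus\{i\}$, and show that then $\deg_A(a_i) \geq 2$. Recall that the body $A$ is obtained from $H$ by deleting $\bigcup_{j\in[k]} V(L_j\setminus a_j)$, so every vertex of $V(A)$ other than the joints lies on no leg, and every joint survives unless it happens to lie on the interior of some other leg. I would first record (or use directly) that $H$ is a tree whose leaves are exactly the toes $t_1,\dots,t_k$: this follows from minimal connectedness, since any non-toe vertex on a cycle of $H$ could be deleted without disconnecting $H$.

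The first real step is to show that $a_i$ has at least three neighbours in $H$. Write $L_i$ as $t_i = v_0 \dd v_1 \dd \dots \dd v_m = a_i$ with $m \geq 1$ and $v_0,\dots,v_{m-1}$ of degree two. If $a_i$ had degree one it would be a toe, say $a_i = t_j$; but then every vertex of $v_0\dd\dots\dd v_m$ would have all its $H$-neighbours on that path, making it a component of $H$, forcing $H$ to equal it and $k=2$, a contradiction. If $a_i$ had degree two, its second neighbour $r \neq v_{m-1}$ would either lie on $L_i$ (producing a cycle all of whose vertices have degree two, hence a component of $H$ with no toes, contradiction — or $r=t_i$, giving $t_i$ degree $\geq 2$, contradiction) or lie off $L_i$, letting us extend $L_i$ and contradicting its maximality. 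Hence $\deg_H(a_i) \geq 3$.

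The second step is to see that exactly one neighbour of $a_i$, namely its unique neighbour $p=v_{m-1}$ on $L_i$, is deleted in forming $A$. Any vertex of $V(L_j\setminus a_j)$ has all its $H$-neighbours inside $V(L_j)$ (the toe has its one neighbour there, internal vertices have both), so if a neighbour $q \neq p$ of $a_i$ were in some $V(L_j\setminus a_j)$ we would need $a_i \in V(L_j)$; for $j \neq i$ this is impossible, since $a_i$ is neither a toe nor an internal vertex of $L_j$ (degree reasons) nor equal to $a_j$ (our assumption), and for $j=i$ it forces $q=p$. The same degree argument shows $a_i$ itself is not deleted, so $a_i$ and all its neighbours except $p$ lie in the induced subgraph $A$, giving $\deg_A(a_i) \geq \deg_H(a_i) - 1 \geq 2$.

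I do not expect a genuine obstacle here — this is a bookkeeping argument about the definitions. The only points needing care are the degenerate cases: legs of length one (where $V(L_i\setminus a_i)$ is the single toe $t_i$, so "$p$" is a toe, which is fine), and the possibility that $a_i$ coincides with other joints, which is precisely the alternative conclusion we are allowed. Keeping track of these is the entirety of the "difficulty."
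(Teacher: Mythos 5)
Your argument is correct, and it is essentially the paper's: the paper dismisses this fact with ``follows immediately from the definition of leg,'' and your two steps (a joint serving only one leg has degree at least three in $H$, and only its unique neighbour on $L_i$ is removed when the body is formed) are exactly the bookkeeping that makes that dismissal honest.

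Two inaccuracies in the write-up should be repaired, though neither is actually used in the argument. First, your preliminary claim that $H$ is a tree is false: a vertex on a cycle cannot in general be deleted without disconnecting $H$, since it may be the unique attachment point of a leg, and indeed the paper's Lemmas \ref{lem:A-2-connecteded-4-spider-outcomes} and \ref{lem:mated-spider-top-shapes} allow spider bodies that are triangles, diamonds or $K_4$'s, so spiders need not be trees. (The part of that remark you do use --- that the degree-one vertices are exactly the toes --- is true by definition, and your later steps never invoke acyclicity.) Second, in the degree-two case with the second neighbour $r=v_s$ an internal vertex of $L_i$, the cycle you exhibit is not ``all of degree two''; the correct and immediate contradiction is that $v_s$ would then have the three distinct neighbours $v_{s-1}$, $v_{s+1}$ and $a_i$, contradicting that internal vertices of a leg have degree two in $H$ (and note also that $v_0=t_i$ has degree one, not two). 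With these cosmetic repairs the proof is sound as written.
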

	\begin{proof}
		The fact follows immediately from the definition of leg. 
	\end{proof}
	
	\begin{lem} \label{lem:A-2-connecteded-4-spider-outcomes}
		Let $S$ be a 4-spider and let $A$ be the body of $S$.  Suppose $A$ contains no cut-vertex, then $V(A)=\{a_1, a_2, a_3, a_4\}$, and either:
		\begin{enumerate}[(a)]
			\item $A$ consists of a single vertex, \label{A-is-a-vertex}
			\item $A$ consists of a single edge,  \label{A-is-an-edge}
			\item $A$ is a $K_3$, \label{A-is-a-k3}
			\item $A$ is a $K_4$, or \label{A-is-k4}
			\item $A$ is a diamond. \label{A-is-diamond}
		\end{enumerate}
		%(a) |A|=1 and d(12,34)=d(13,24)=d(14,23);
		%(b) |A|=2 and a_1=a_2, and a_3=a_4 say, and d(13,24), d(14,23) are equal, and d(12,34) is two less;
		%(c) |A|=3, A is a triangle, a_1=a_2 say, and d(13,24)= d(14,23) =1+d(12,34).
		%(d) |A|=4, A=K_4 and d(12,34)=d(13,24)=d(14,23);
		%(e) |A|=4, A is a diamond, where a_1,a_2 are nonadjacent say, and d(13,24)= d(14,23) =d(12,34)-1.
	\end{lem}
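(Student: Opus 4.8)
The plan is to first use the minimality built into the definition of a spider to show that the body contains no vertices beyond the joints, and then to finish by a short case analysis on the number of distinct joints, invoking Fact~\ref{lem:leg-fact} together with the fact that $G$ has no hole of length $4$.

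First I would prove that $V(A)=\{a_1,a_2,a_3,a_4\}$. Suppose $v\in V(A)$ is not a joint. Every leg-interior vertex of $S$ has degree exactly two with both neighbours on its own leg, so $v$ is adjacent in $S$ only to other body vertices; hence deleting $v$ leaves every leg and every toe untouched. Since $v$ is not a joint we have $|V(A)|\ge 2$, and since $A$ has no cut-vertex, $A\setminus v$ is nonempty and connected and still contains every joint; therefore $S\setminus v$ is connected and still has $t_1,\dots,t_4$ as its vertices of degree one, contradicting the minimality of $S$. So every vertex of $A$ is a joint.

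Now each joint is the end of at least one of the four legs, so, writing $d$ for the number of distinct joints, the number of legs meeting each joint gives a composition of $4$ into $d$ positive parts, and I would treat the four cases:
\begin{itemize}
\item $d=1$: $V(A)$ is a single vertex, so $A$ is a single vertex; outcome~(a).
\item $d=2$: the composition is $(3,1)$ or $(2,2)$; a joint with a single leg would, by Fact~\ref{lem:leg-fact}, have degree at least $2$ in $A$, impossible since $|V(A)|=2$, so the composition is $(2,2)$ and, $A$ being connected on two vertices, $A$ is a single edge; outcome~(b).
\item $d=3$: the composition is $(2,1,1)$; the two joints incident with a single leg each have degree at least $2$ in $A$ by Fact~\ref{lem:leg-fact}, hence exactly $2$ since $|V(A)|=3$, so each is adjacent to both other joints, forcing $A=K_3$; outcome~(c).
\item $d=4$: each joint is incident with a single leg, so by Fact~\ref{lem:leg-fact} every joint has degree at least $2$ in $A$; thus $A$ is a connected graph on four vertices with minimum degree at least $2$ and no cut-vertex, so $A$ is $C_4$, the diamond $K_4-e$, or $K_4$. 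The case $A=C_4$ is excluded: $A$ is an induced subgraph of $G$, so it would be an induced cycle of length $4$, contradicting that $G$ is $\ell$-monoholed with $\ell\ge 7$. Hence $A$ is the diamond or $K_4$; outcome~(e) or~(d).
\end{itemize}

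The only place where anything beyond spider bookkeeping enters is the exclusion of $C_4$ in the last case, which is immediate from $\ell$-monoholedness. I expect the main point to get right is the first step: verifying carefully that deleting a non-joint body vertex of a cut-vertex-free body can neither disconnect $S$ nor create a new vertex of degree one, so that the minimality of $S$ genuinely applies and rules out any body vertex that is not a joint.
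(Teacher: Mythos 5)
Your proof is correct and follows essentially the same route as the paper: minimality of the spider plus the absence of a cut-vertex forces $V(A)=\{a_1,a_2,a_3,a_4\}$, and then a case split on the number of distinct joints, using Fact~\ref{lem:leg-fact} for the three-vertex case and $2$-connectivity together with $C_4$-freeness of $G$ for the four-vertex case. The only differences are cosmetic: you spell out the minimality argument that the paper states in one line, and your exclusion of the $(3,1)$ composition when $|V(A)|=2$ is correct but unnecessary, since connectivity on two vertices already gives outcome~(\ref{A-is-an-edge}).
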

	\begin{proof}
		Suppose $A$ has no cut-vertex. Then by minimality of $A$, $V(A)=\{a_1, a_2, a_3, a_4\}$. If $|V(A)| \leq 2$, \ref{A-is-a-vertex} or \ref{A-is-an-edge} hold trivially. Suppose $|V(A)| = 3$. Without loss of generality, $a_1 = a_2$.  By Fact \ref{lem:leg-fact}, $a_3$ and $a_4$ have degree at least two in $A$. Thus, $A$ is a $K_3$ and \ref{A-is-a-k3} holds. Suppose all of $a_1, a_2, a_3, a_4$ are distinct. Then since $A$ is 2-connected, we may assume $a_1\dd a_2 \dd a_3 \dd a_4 \dd a_1$ is a cycle. Since $G$ contains no hole of length four, \ref{A-is-k4} or \ref{A-is-diamond} holds.
	\end{proof}

	\begin{lem} \label{lem:mated-spider-top-shapes}
		Let $S$ be a mated $4$-spider with the notation as in the definition of spider. Then $A$ has no cut vertex, $V(A) = \{a_1, a_2, a_3, a_4\}$ and one of the following holds:
		
		\begin{enumerate}
			\item [\ref{A-is-a-vertex}]$A$ consists of a single vertex,
			\item [\ref{A-is-a-k3}] $A$ is a $K_3$,
			\item [\ref{A-is-k4}] $A$ is a $K_4$, or
			\item [\ref{A-is-diamond}]$A$ is a diamond. 
		\end{enumerate}
		
	\end{lem}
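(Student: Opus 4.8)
The plan is to deduce the statement from Lemmas~\ref{lem:A-not-2-connecteded-4-spider-outcomes} and~\ref{lem:A-2-connecteded-4-spider-outcomes}, applied to both $S$ and its mate $S'$ (which is itself a mated $4$-spider, since the mate relation is symmetric). Throughout write $X_S = d_S(1,2)+d_S(3,4)$, $Y_S = d_S(1,3)+d_S(2,4)$, $Z_S = d_S(1,4)+d_S(2,3)$, and define $X_{S'},Y_{S'},Z_{S'}$ the same way; since $S,S'$ are mates, $X_{S'} = 2\ell - X_S$, $Y_{S'} = 2\ell - Y_S$, $Z_{S'} = 2\ell - Z_S$. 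Also recall that in any $k$-spider $d(i,j) = \ell_i + \ell_j + d_A(a_i,a_j)$ for $i \ne j$, where $\ell_i \ge 1$ is the length of the $t_i$-leg and $A$ is the body, because any $t_it_j$-path must traverse all of $L_i$ and all of $L_j$ and between $a_i$ and $a_j$ stays inside $A$.

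The first step I would isolate is the following sub-claim: \emph{if $M$ is a mated $4$-spider whose body has no cut-vertex, then the multiset $\{X_M,Y_M,Z_M\}$ is not of the form $\{v,v,w\}$ with $w \ge v+2$.} By Lemma~\ref{lem:A-2-connecteded-4-spider-outcomes} the body $A_M$ satisfies $V(A_M) = \{a_1,a_2,a_3,a_4\}$ and is $K_1$, $K_2$, $K_3$, $K_4$, or a diamond; in each case every $d_{A_M}(a_i,a_j)$ lies in $\{0,1,2\}$, and Fact~\ref{lem:leg-fact} constrains which $a_i$ may coincide (all coincide in $K_1$; they split into two pairs in $K_2$; exactly two coincide in $K_3$; they are distinct in $K_4$ and the diamond). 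A direct check of these cases shows that, up to an additive constant and a permutation, $(X_M,Y_M,Z_M)$ equals $(0,0,0)$, $(0,2,2)$, $(0,1,1)$, or $(0,0,1)$; in none of these is the uniquely occurring value both the largest and at least $2$ above the others, so the sub-claim follows.

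Now suppose for contradiction that the body $A$ of $S$ has a cut-vertex. By Lemma~\ref{lem:A-not-2-connecteded-4-spider-outcomes}, after relabeling the toes we may assume $Y_S = Z_S \ge X_S + 2$, whence $Y_{S'} = Z_{S'}$ and $X_{S'} = 2\ell - X_S \ge Y_{S'} + 2$. If the body $A'$ of $S'$ had a cut-vertex, Lemma~\ref{lem:A-not-2-connecteded-4-spider-outcomes} applied to $S'$ would make two of $X_{S'},Y_{S'},Z_{S'}$ equal and at least $2$ larger than the third; but the only equal pair is $\{Y_{S'},Z_{S'}\}$, and it is at least $2$ smaller than $X_{S'}$, a contradiction. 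So $A'$ has no cut-vertex, and then $\{X_{S'},Y_{S'},Z_{S'}\} = \{Y_{S'},Y_{S'},X_{S'}\}$ with $X_{S'}\ge Y_{S'}+2$ violates the sub-claim applied to $S'$. Hence $A$ has no cut-vertex, so Lemma~\ref{lem:A-2-connecteded-4-spider-outcomes} gives $V(A) = \{a_1,a_2,a_3,a_4\}$ with $A$ equal to $K_1$, $K_2$, $K_3$, $K_4$, or a diamond, and it remains only to rule out case~\ref{A-is-an-edge}. If $A = K_2$, then by Fact~\ref{lem:leg-fact} we may relabel so that $a_1 = a_2$ and $a_3 = a_4$ are its two vertices; then $X_S = \ell_1+\ell_2+\ell_3+\ell_4$ and $Y_S = Z_S = X_S + 2$, so $Y_{S'} = Z_{S'}$ and $X_{S'} = Y_{S'}+2$. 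Exactly as before the body $A'$ has no cut-vertex, and the sub-claim is again violated. Therefore $A$ is $K_1$, $K_3$, $K_4$, or a diamond, i.e., one of cases~\ref{A-is-a-vertex}, \ref{A-is-a-k3}, \ref{A-is-k4}, \ref{A-is-diamond}, as required.

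I expect the main obstacle to be the case analysis hidden in the sub-claim: for each of the five body types, and for each admissible way the joints $a_i$ can coincide, one must compute the pattern of the three distance-sums $X_M,Y_M,Z_M$ and confirm that it never takes the forbidden shape $\{v,v,w\}$ with $w \ge v+2$. Everything else is routine bookkeeping with the identity $d(i,j) = \ell_i+\ell_j+d_A(a_i,a_j)$ and the mate relation $d_{S'}(i,j) = \ell - d_S(i,j)$.
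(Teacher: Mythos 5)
Your proposal is correct and follows essentially the same route as the paper: both use the mate relation $d_{S'}(i,j)=\ell-d_S(i,j)$ to transfer the pattern of the three toe-distance sums between $S$ and $S'$, combined with Lemmas \ref{lem:A-not-2-connecteded-4-spider-outcomes} and \ref{lem:A-2-connecteded-4-spider-outcomes} and the finite check of the five possible body shapes (your explicit sub-claim is exactly the case analysis the paper leaves implicit). The only difference is presentational — you argue by contradiction on $S$ and rule out both options for the mate's body, while the paper first records the pattern for $S'$ and transfers it back — so the mathematical content coincides.
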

	
	\begin{proof}
		Let $S'$ be a mate of $S$. Then by Lemma  \ref{lem:A-not-2-connecteded-4-spider-outcomes} and Lemma \ref{lem:A-2-connecteded-4-spider-outcomes}, two of $d_{S'}(1,2) + d_{S'}(3,4)$, $d_{S'}(1,3) + d_{S'}(2,4)$ and $d_{S'}(1,4) + d_{S'}(2,3)$ are equal and the third is at most one more than the other two.
		Thus, two of $d_{S}(1,2) + d_{S}(3,4)$, $d_{S}(1,3) + d_{S}(2,4)$ and $d_{S}(1,4) + d_{S}(2,3)$ are equal and the third is at most one less than the other two. Hence $A$ cannot be an edge and by Lemma \ref{lem:A-not-2-connecteded-4-spider-outcomes} $A$ cannot contain a cut-vertex. The result now follows from applying Lemma \ref{lem:A-2-connecteded-4-spider-outcomes} to $S$.
	\end{proof}
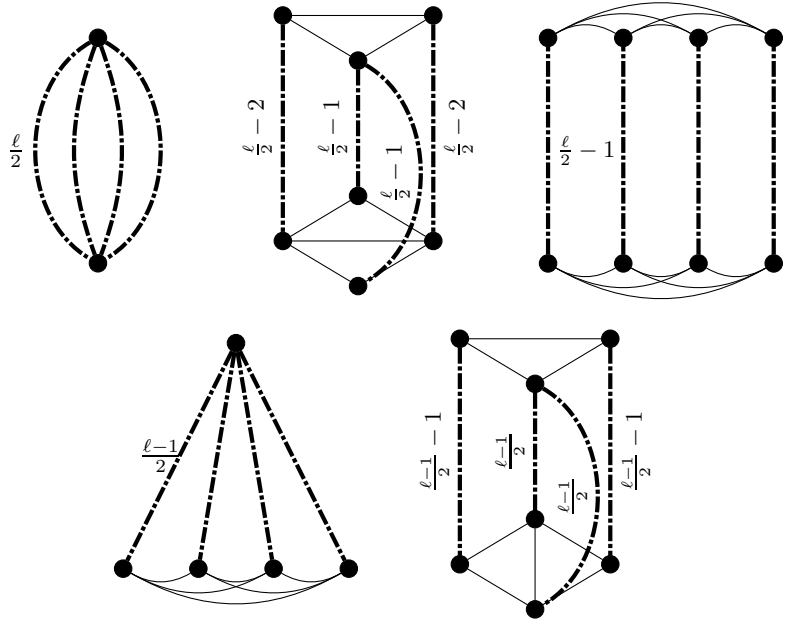
\begin{figure}[!h]
	\centering
	\begin{tikzpicture}[scale=.25]
	\def\width{1};
	\def\height{3};
	\def\between{4cm}
	\def\k{30} %amount bend left for k4
	\node(zero) at (0,0){
		
		\begin{tikzpicture}
			\def\bend{20}
			\draw node[dot] (a) at (0,0){};
			\draw node[dot] (b) at (0, -\height){};
			\foreach \x in {-1,1,3}{
				\begin{scope}[bend left=\x * \bend]
					\path[path]
					(a) edge  (b);
				\end{scope}
				%maybe should just put label on once
			};
			\begin{scope}[bend left=-3* \bend]
				\path[path]
				(a) edge node[left]{$\frac{\ell}{2}$} (b);
			\end{scope}

		\end{tikzpicture}
	};
	\node (two) at (zero.east)[anchor=east,xshift=\between]{
		\begin{tikzpicture}
			\node[dot] (a1) at (0, -.20 * \height){};
			\node[dot] (a3) at (-\width, 0){};
			\node[dot] (a4) at (\width, 0){};
			
			\node[dot] (b1) at (0, -.80 * \height){};
			\node[dot] (b3) at (-\width, -\height){};
			\node[dot] (b4) at (\width, -\height){};
			\node[dot](b2) at (0, -1.2*\height){};
			
			\path[path]
			(a3) edge node[left, above, rotate=90]{\small $\frac{\ell}{2} -2$} (b3)
			(a4) edge node[left, below, rotate=90]{\small $\frac{\ell}{2} -2$} (b4)
			(a1) edge node[left, above,  rotate=90]{\small $\frac{\ell}{2}-1$} (b1);
			\begin{scope}[bend left =60]
				\path[path]
				(a1) edge node[rotate=90, left, above]{\small $\frac{\ell}{2}-1$} (b2);
			\end{scope};
			\path
			(a1) edge (a3)
			(a3) edge (a4)
			(a1) edge (a4)
			(b1) edge (b3)
			(b1) edge (b4)
			(b2) edge (b3)
			(b2) edge (b4)
			(b3) edge (b4);

		\end{tikzpicture}
	};
	\node (three) at (two.east)[anchor= east, xshift=\between]{
		\begin{tikzpicture}
			\foreach \x in {0, ..., 3}{
				\def\name{a\x};
				\draw node[dot] (\name) at (\width * \x, 0){};
				\foreach \y in {0, ..., \x}{
					\def\othername{a\y}
					\draw[bend right = \k] (\name) to (\othername);
				}
			}
			\foreach \x in {0, ..., 3}{
				\def\name{b\x};
				\draw node[dot] (\name) at (\width * \x, -\height){};
				\def\topname{a\x};
				\draw[path] (\name) to (\topname);
				\foreach \y in {0, ..., \x}{
					\def\othername{b\y}
					\draw[bend left = \k] (\name) to (\othername);
				}
			}
			\draw node at (.9*\width, -0.5 * \height){\small $\frac{\ell}{2}-1$};
		\end{tikzpicture}
	};
	
\end{tikzpicture}

\begin{tikzpicture}
	\def\width{1};
\def\height{3};
\def\between{4cm}
\def\k{30} %amount bend left for k4
	\node (zero) at (0,0){
	\begin{tikzpicture}
	\draw node[dot] (a) at (1.5*\width, 0){};
	\foreach \x in {0, ..., 3}{
	\draw node[dot] (\x) at (\width * \x, -\height){};
	\path[path]
	%(a) edge node[left, below, rotate = 20] {\small $\frac{\ell -2}{2}$}(\x);
	(a) edge (\x);
	\foreach \y in {0, ..., \x}{
	\draw[bend left = \k] (\x) to (\y);
	}	
	}
	\draw node at (.5*\width, -0.5 * \height){$\frac{\ell -1}{2}$};			
	\end{tikzpicture}
	};

	\node (two) at (zero.east)[anchor=east,xshift=\between]{
	\begin{tikzpicture}
		\node[dot] (a1) at (0, -.20 * \height){};
		\node[dot] (a3) at (-\width, 0){};
		\node[dot] (a4) at (\width, 0){};
		
		\node[dot] (b1) at (0, -.80 * \height){};
		\node[dot] (b3) at (-\width, -\height){};
		\node[dot] (b4) at (\width, -\height){};
		\node[dot](b2) at (0, -1.2*\height){};
		
		\path[path]
		(a3) edge node[left, above, rotate=90]{\small $\frac{\ell-1}{2} -1$} (b3)
		(a4) edge node[left, below, rotate=90]{\small $\frac{\ell-1}{2} -1$} (b4)
		(a1) edge node[left, above,  rotate=90]{\small $\frac{\ell -1}{2}$} (b1);
		\begin{scope}[bend left =60]
			\path[path]
			(a1) edge node[rotate=90, left, above]{\small $\frac{\ell-1}{2}$} (b2);
		\end{scope};
		\path
		(b1) edge (b2)
		(a1) edge (a3)
		(a3) edge (a4)
		(a1) edge (a4)
		(b1) edge (b3)
		(b1) edge (b4)
		(b2) edge (b3)
		(b2) edge (b4);
		
	\end{tikzpicture}
};
\end{tikzpicture}
	\caption{An illustration of Lemma \ref{lem:mated4spider-structure}. The image shows the five possible structures of mated $4$-spiders in an $\ell$-monoholed graph for $\ell \geq 7$. The top row depicts the cases when $\ell$ is even and the bottom row depicts the cases when $\ell$ is odd. The numbers next to the paths indicate the path lengths.}
	\label{fig:mated4spiders}
\end{figure}
	
	Mated four structures have a well defined structure (see Figure \ref{fig:mated4spiders} for an illustration).
	\begin{lem} \label{lem:mated4spider-structure}
		Let $S, S'$ be mated $4$-spiders in an $\ell$-monoholed graph for some even $\ell \geq 8$. Let $A, A'$ be the body of $S, S'$, respectively. Let the toes of $A$ be $t_1, \dots, t_4$ and for each $i \in [4]$ let $a_i$ and $a_i'$ be the $t_i$-joint of $S$, $S'$, respectively. For each $i \in [k]$, let $P_i$ be the union of the $t_i$-leg of $S$ and $S'$. Then (by renumbering $t_1, \dots, t_4$ or exchanging $A$ and $A'$ if necessary) either
		\begin{enumerate}[(a)]
			\item $A, A'$ are single vertices, $\ell$ is even, and $P_i$ has length $\frac{\ell}{2}$ for each $i \in [k]$, \label{c:1-1}
			\item $A$ is a single vertex and $A'$ is a $K_4$, $\ell$ is odd and $P_i$ has length $\frac{\ell -1}{2}$ for each $i \in [k]$, \label{c:1-4}
			\item $A$ is a triangle, $a_1=a_2$ and $A'$ is a diamond with $a_1'$ non-adjacent to $a_2'$, $\ell$ is even and $P_1, P_2$ have length $\frac{\ell}{2} -1$ and $P_3, P_4$ have length $\frac{\ell}{2} -2$. \label{c:3-4}
			\item $A$ is a triangle $a_1 = a_2$ and $A'$ is a diamond with $a_3'$ non-adjacent to $a_4'$, $\ell$ is odd, $P_1, P_2$ have length $\frac{\ell -1}{2}$ and $P_3, P_4$ have length $\frac{\ell -3}{2}$. \label{c:newoddcase}
			\item $A$ and $A'$ are both equal to $K_4$, $\ell$ is even and $P_i$ has length $\frac{\ell}{2} -1$ for each $i \in [k]$.\label{c:4-4}
		\end{enumerate}
	\end{lem}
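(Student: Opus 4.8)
The plan is to extract a small linear system from the ``mates'' condition and then resolve a short finite case analysis over the possible shapes of the two bodies.

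\textbf{Setup.} First I would invoke Lemma~\ref{lem:mated-spider-top-shapes}, which already establishes that the bodies $A$ of $S$ and $A'$ of $S'$ each have no cut-vertex, that $V(A)=\{a_1,a_2,a_3,a_4\}$ and $V(A')=\{a_1',a_2',a_3',a_4'\}$, and that each of $A$ and $A'$ is a single vertex, a $K_3$, a $K_4$, or a diamond. For $i\in[4]$ let $\lambda_i$ and $\mu_i$ be the lengths of the $t_i$-legs of $S$ and $S'$, and write $p_i=|E(P_i)|=\lambda_i+\mu_i$. Since $t_i$ has degree one in $S$, any shortest $t_it_j$-path in $S$ must run out along $L_i$ to $a_i$, cross the body to $a_j$, and return along $L_j$; hence $d_S(i,j)=\lambda_i+d_A(a_i,a_j)+\lambda_j$, and symmetrically in $S'$. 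Plugging into the defining equation of mates, $d_S(i,j)+d_{S'}(i,j)=\ell$, gives for every pair $i\neq j$
\[
  p_i+p_j+d_A(a_i,a_j)+d_{A'}(a_i',a_j')=\ell .
\]
This six-equation system drives the whole argument.

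\textbf{Body distances and normal forms.} In each of a single vertex, a $K_3$, a $K_4$, or a diamond, the distance between two of its vertices is $0$ if they coincide, $2$ if they form the unique non-adjacent pair of a diamond, and $1$ otherwise. Using the freedom to renumber $t_1,\dots,t_4$ — and the fact that the vertices of a $K_3$ or $K_4$, and the two degree-three vertices of a diamond, are interchangeable — I would put the coincidence pattern of $A$, and the position of the distinguished pair of $A'$ relative to it (aligned with it, sharing one index with it, or disjoint from it), into a normal form. For each normal form the displayed system becomes a concrete integer linear system in $p_1,\dots,p_4$ and $\ell$.

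\textbf{Case analysis.} Then I would go through the at-most-ten unordered pairs (shape of $A$, shape of $A'$) and their few alignments, solving each system. In most combinations the last equation contradicts the earlier ones, so that combination cannot occur; the ones that survive are exactly configurations (a)--(e). The solution simultaneously pins down the parity of $\ell$ (even for the single-vertex/single-vertex, $K_4$/$K_4$, and $K_3$/diamond-aligned cases; odd for single-vertex/$K_4$ and $K_3$/diamond-disjoint), and the path lengths. Finally I would note that $\ell\ge 7$ makes every surviving solution satisfy $p_i\ge 2$, consistent with $\lambda_i,\mu_i\ge 1$, so the configurations are non-degenerate.

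\textbf{Main obstacle.} Conceptually the argument is routine, so the real work is bookkeeping: enumerating the coincidence/alignment patterns without over- or under-counting, and carrying out each small solve correctly so that precisely five cases remain. One point I would want to double-check is the $K_3$/diamond-aligned case: the displayed system appears to force all four paths to have length $\tfrac{\ell}{2}-1$ rather than the $\tfrac{\ell}{2}-1,\tfrac{\ell}{2}-1,\tfrac{\ell}{2}-2,\tfrac{\ell}{2}-2$ appearing in (c), and the equations should be treated as authoritative.
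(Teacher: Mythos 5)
Your proposal is correct and is essentially the paper's own argument: both rest on Lemma~\ref{lem:mated-spider-top-shapes} to restrict each body to a single vertex, $K_3$, $K_4$ or diamond, and then on the arithmetic that the mates condition imposes on toe-distances; the paper organizes this arithmetic as a comparison of the three pairing sums $d_S(1,2)+d_S(3,4)$, $d_S(1,3)+d_S(2,4)$, $d_S(1,4)+d_S(2,3)$, while you write out the full six-equation system $p_i+p_j+d_A(a_i,a_j)+d_{A'}(a_i',a_j')=\ell$, which is the same information made explicit, and the case analysis you outline does close (I checked the excluded shape pairs and alignments all lead to contradictory equations). Your flagged worry about the aligned triangle/diamond case is justified: with $a_1=a_2$ and $a_1'$ non-adjacent to $a_2'$, the $\{3,4\}$-equation reads $p_3+p_4+1+1=\ell$, so the system forces all four paths to have length $\frac{\ell}{2}-1$; with the lengths printed in outcome (c) one would get $d_S(3,4)+d_{S'}(3,4)=\ell-2$, so $S,S'$ would not even be mates, and $P_3\cup P_4$ together with the edges $a_3a_4$ and $a_3'a_4'$ would be a hole of length $\ell-2$, contradicting $\ell$-monoholedness. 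Since the paper's proof in this case only identifies the shapes and never derives the path lengths, the discrepancy is an error in the statement of (c) rather than in your method, and your equations give the corrected values. (Relatedly, the hypothesis ``even $\ell\ge 8$'' is presumably a slip for $\ell\ge 7$, since outcomes (b) and (d) assert $\ell$ odd and the lemma is invoked later for odd $\ell$.)
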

	
	\begin{proof}
		Suppose that $d_S(1,2) + d_S(3,4) = d_S(1,3) + d_S(2,4) + d_S(1,4) + d_S(2,3)$. Then by Lemma \ref{lem:mated-spider-top-shapes}, $A$ must be a single vertex or a $K_4$. It follows from the definition of mated spiders that, $d_{S'}(1,2) + d_{S'}(3,4) = d_{S'}(1,3) + d_{S'}(2,4) + d_{S'}(1,4) + d_{S'}(2,3)$ so $A'$ must be a single vertex or a $K_4$. Since $V(P_i \cup P_j)$ induces a hole in $G$ for any two distinct $i,j \in [k]$, it follows that \ref{c:1-1}, \ref{c:1-4} or \ref{c:4-4} holds.
		
		Hence by Lemma \ref{lem:mated-spider-top-shapes}, we may assume $d_S(1,2) + d_S(3,4)  \neq d_S(1,3) + d_S(2,4) = d_S(1,4) + d_S(2,3)$.
		Moreover,  $d_S(1,2) + d_S(3,4) = d_S(1,3) + d_S(2,4) \pm 1$ by Lemma \ref{lem:mated-spider-top-shapes}. By  exchanging $S$ and $S'$ if necessary, we may assume that $d_S(1,2) + d_S(3,4) = d_S(1,3) + d_S(2,4) - 1$. Then by another application of Lemma \ref{lem:mated-spider-top-shapes} we obtain that $A$ is a $K_3$ and $a_1 = a_2$ and that $A'$ is a diamond with $a_1'$ non-adjacent to $a_2'$. Thus, \ref{c:3-4} or \ref{c:newoddcase} holds (by renumbering if necessary). 
	\end{proof}
	
	\begin{lem}\label{lem:mated-k-spiders-are-2-connected}
		Let $S$ be a mated $k$-spider for some integer $k \geq 3$. Let $A$ be the body of $S$ and $a_1, \dots, a_k$ be the joints of $S$. Then $A$ has no cut-vertex and $V(A) = \{a_1, a_2, \dots, a_k\}$.
	\end{lem}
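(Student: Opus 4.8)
The plan is first to reduce the statement to the single assertion that $A$ has no cut-vertex. Suppose $w\in V(A)$ is not a joint of $S$. Since the legs of $S$ meet the body only at joints, every neighbour of $w$ in $S$ lies in $A$, so $\deg_S(w)=\deg_A(w)$; and by minimality of $S$ the graph $S\setminus w$ must be disconnected (deleting a non-toe cannot remove a toe), so $A\setminus w$ is disconnected. Hence if $V(A)\neq\{a_1,\dots,a_k\}$ then $A$ has a cut-vertex; conversely, if $A$ has no cut-vertex then by the same observation every vertex of $A$ is a joint, giving $V(A)=\{a_1,\dots,a_k\}$. So it suffices to prove that the body of a mated $k$-spider has no cut-vertex.

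Suppose not, and let $v$ be a cut-vertex of $A$; let $S'$ be a mate of $S$, with body $A'$ and joints $a_1',\dots,a_k'$. I would analyse the components of $A\setminus v$ as in the proof of Lemma \ref{lem:A-not-2-connecteded-4-spider-outcomes}: no component is joint-free (such a component could be deleted from $S$ without disconnecting it or losing a toe), and no component contains exactly one joint $a_i$ distinct from all the other $a_j$ (by Fact \ref{lem:leg-fact}, a shortest $a_iv$-path inside that component appended to the leg $L_i$ would be a longer path with all internal vertices of degree two, contradicting the maximality of $L_i$). Grouping the components, $[k]$ partitions into two nonempty sides $X,Y$ of $A\setminus v$; when $k\ge 4$ we may pick joints $a_i,a_j$ with $i,j\in X$ and $a_p,a_q$ with $p,q\in Y$. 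The case $k=3$ forces a $2$--$1$ split and must be handled by a smaller variant of the same argument, using Fact \ref{lem:leg-fact} again to rule out the $2$--$1$ configurations directly.

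Next I would extract a mated $4$-spider on the toes $t_i,t_j,t_p,t_q$ and derive a contradiction with Lemma \ref{lem:mated-spider-top-shapes}. The cut-vertex $v$ localises the geometry of $A$: a shortest $a_ia_j$-path stays inside $X\cup\{v\}$, a shortest $a_pa_q$-path stays inside $Y\cup\{v\}$, and every $a_xa_y$-path with one index in $X$ and one in $Y$ passes through $v$, so $d_A(a_x,a_y)=d_A(a_x,v)+d_A(v,a_y)$ for such pairs. Concatenating shortest sub-paths therefore produces a connected subgraph $B_0\subseteq A$ containing $v$ in which all six joint-to-joint distances equal the corresponding $d_A$-distances; here I would also use that $G$ has no short cycle (being $C_4$-free and $\ell$-monoholed with $\ell\ge 7$) so that passing to a minimal such $B_0$ does not shorten any of the relevant distances. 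Then $B_0$ together with the legs $L_i,L_j,L_p,L_q$ is a $4$-spider $\tilde S\subseteq S$ with $d_{\tilde S}$ agreeing with $d_S$ on these four toes and with $v$ a cut-vertex of its body. Carrying out the analogous construction inside $S'$ gives $\tilde S'$; since $S\cup S'$ is an induced subgraph of $G$, $\tilde S$ and $\tilde S'$ are anticomplete away from the toes, and $d_{\tilde S}(x,y)+d_{\tilde S'}(x,y)=d_S(x,y)+d_{S'}(x,y)=\ell$ for the four chosen toes, the pair $(\tilde S,\tilde S')$ is a mated $4$-spider whose body contains a cut-vertex --- contradicting Lemma \ref{lem:mated-spider-top-shapes}. (Equivalently one may contradict Lemma \ref{lem:mated4spider-structure}, none of whose five outcomes has a cut-vertex in $A$.)

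The step I expect to be the main obstacle is the extraction: ensuring that the sub-body $B_0$ carved out of $A$ genuinely carries all six pairwise joint distances at once, and that the analogous object on the mate side does too. A naive minimal connected subgraph of $A$ on four joints need not contain a shortest path between every pair, and we have no direct control over how the cut of $A$ interacts with the structure of $A'$. The way around this is to exploit the cut-vertex hypothesis to build $B_0$ on the $S$-side as an explicit concatenation of shortest sub-paths (plus the absence of short cycles in $G$ to avoid redundancy-induced shortening), and, on the $S'$-side, to argue that whatever $4$-spider on $t_i,t_j,t_p,t_q$ one selects from $S'$, mating it with $\tilde S$ pins every distance sum to $\ell$, so the $4$-spider lemmas apply verbatim. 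Everything else --- the reduction to the no-cut-vertex assertion, the component bookkeeping, and the small $k=3$ case --- should be routine given Lemmas \ref{lem:A-not-2-connecteded-4-spider-outcomes}, \ref{lem:A-2-connecteded-4-spider-outcomes}, \ref{lem:mated-spider-top-shapes} and Fact \ref{lem:leg-fact}.
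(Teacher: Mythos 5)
Your skeleton is essentially the paper's: reduce everything to showing the body has no cut-vertex (minimality of $S$ then forces $V(A)=\{a_1,\dots,a_k\}$, exactly as you argue), use Fact \ref{lem:leg-fact} plus minimality to show that each component of $A\setminus v$ carries at least two toe indices, and then play four toes straddling the cut against the $4$-spider lemmas. The paper, however, never extracts sub-spiders. Having found indices $1,2$ on one side and $3,4$ on the other with $d_S(t_1,t_2)<d_S(t_1,v)+d_S(t_2,v)$ and $d_S(t_3,t_4)<d_S(t_3,v)+d_S(t_4,v)$, it notes that every cross path passes through $v$, so $d_S(1,3)+d_S(2,4)=d_S(1,4)+d_S(2,3)\ge d_S(1,2)+d_S(3,4)+2$, and this numerical pattern is then played against Lemmas \ref{lem:A-not-2-connecteded-4-spider-outcomes} and \ref{lem:A-2-connecteded-4-spider-outcomes} via the mate, just as in the proof of Lemma \ref{lem:mated-spider-top-shapes}: since the mating condition is stated directly in terms of $d_S$ and $d_{S'}$ ($d_{S'}=\ell-d_S$ on pairs of toes), no question of preserving distances in a smaller spider ever arises. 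Note also that for $k=3$ the counting step alone already gives a contradiction (the two sides carry disjoint sets of at least two toe indices each), so no separate small-case variant is needed.

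The genuine gap in your proposal is precisely the step you flag, and your patch does not close it. Passing to a minimal connected subgraph of $S'$ containing the four toes can only \emph{increase} toe-to-toe distances, so for your $\tilde S'$ you only get $d_{\tilde S'}(i,j)\ge d_{S'}(i,j)=\ell-d_S(i,j)$, hence $d_{\tilde S}(i,j)+d_{\tilde S'}(i,j)\ge\ell$; nothing ``pins every distance sum to $\ell$'', and without equality for all six pairs the pair $(\tilde S,\tilde S')$ is not mated, so Lemmas \ref{lem:mated-spider-top-shapes} and \ref{lem:mated4spider-structure} do not apply to it. (There is also a direction slip: when you minimize, the danger is that distances lengthen, not shorten, so appealing to the absence of short cycles does not help.) The same issue threatens your $S$-side spider $\tilde S$ unless you check that all six distances survive the minimization, which you do not do, and on the mate's side you have no cut-vertex structure to exploit at all. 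The repair is to abandon the extraction and argue, as the paper does, with the distance sums of $S$ and $S'$ on the four chosen toes directly.
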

	
	\begin{proof}
		Suppose $A$ has a cut-vertex $v$ and let $A_1, A_2$ be two components of $A \setminus v$.  Choose some $a_1 \in V(A_1)$. Suppose $a_1$ is not a $t_i$-joint for any $i \neq 1$. Let $P$ be a shortest $a_1v$-path in $A$.  By Fact \ref{lem:leg-fact}, $a_1$ has degree at least two in $A$.  Hence some  $u \in V(A_1) \setminus V(P)$ is adjacent to $a_1$. Since $A$ is minimally connected with respect to vertex deletion, there exists some $i \in [k]$ such that $u$ belongs to every $a_iv$-path in $A$. Since $i \neq 1$, we may assume $i=2$. It follows that $d_S(t_1, t_2) < d_S(t_1, v) + d_S(t_2, v)$. By symmetry we may assume that $a_3, a_4 \in A$ and $d_S(t_3, t_4) < d_S(t_3, v) + d_S(t_4, v)$.
		But then, $d_S(1,2) + d_S(3,4) -2 \leq d_S(1,3) + d_S(2,4)$, contradicting Lemma \ref{lem:A-not-2-connecteded-4-spider-outcomes}. Hence $A$ has no cut-vertex. Since $A$ is minimally connected with respect to vertex deletion, $V(A) = \{a_1, a_2, \dots, a_k\}$.
	\end{proof}

	\begin{lem}\label{lem:if-even-body-of-mated-spider-is-a-threshold-graph}
		Suppose $A$ is the body of a mated $k$-spider in $G$ for some integer $k \geq 3$. Then $A$ is a threshold graph. In particular some vertex in $ V(A)$ is adjacent to every other vertex in $V(A)$ and so $A$ has diameter at most two. 
	\end{lem}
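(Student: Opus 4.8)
The plan is to invoke the theorem of Chvátal and Hammer quoted above: $A$ is a threshold graph if and only if it contains no induced $P_4$, $C_4$, or two-edge matching. By Lemma~\ref{lem:mated-k-spiders-are-2-connected} the body $A$ is $2$-connected with $V(A)=\{a_1,\dots,a_k\}$, and since $A$ is an induced subgraph of the $\ell$-monoholed graph $G$ and $\ell\geq 7$, $A$ has no induced cycle of length four. So it remains to show that $A$ has no induced $P_4$ and no induced two-edge matching; the two concluding assertions then follow at once, since if $A$ is a threshold graph with stable part $S$ and clique part $K$ joined by a half-graph, then connectivity of $A$ (with $|V(A)|\geq 2$) forces $K\neq\emptyset$ and forces every vertex of $S$ to have a neighbour in $K$, whence the vertex of $K$ whose neighbourhood in $S$ is largest (such a vertex exists because these neighbourhoods are nested) is adjacent to every other vertex of $A$; so $A$ has a universal vertex and $\operatorname{diam}(A)\leq 2$.

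For both forbidden configurations the idea is to cut the mated pair $S,S'$ down to a mated $4$-spider whose body is a path on at least two vertices, contradicting Lemma~\ref{lem:mated-spider-top-shapes}, according to which the body of a mated $4$-spider is a single vertex, a triangle, a $K_4$, or a diamond. Suppose first that $a_1,a_2,a_3,a_4$ induce a $P_4$ in the order $a_1\dd a_2\dd a_3\dd a_4$ (relabelling toes). Delete from $S$ every leg $L_j$ with $j\notin\{1,2,3,4\}$ and every vertex of $A$ other than $a_1,a_2,a_3,a_4$; the result is connected (as $a_1a_2,a_2a_3,a_3a_4$ are edges) and, being minimally so, is a $4$-spider $\hat S$. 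Since $a_1$ and $a_4$ have degree one in the retained body-path, in $\hat S$ the legs at $t_1$ and $t_2$ both terminate at $a_2$ and the legs at $t_3$ and $t_4$ both terminate at $a_3$, so the body of $\hat S$ is the single edge $a_2a_3$; performing the symmetric surgery on $S'$ produces $\hat S'$. If $\hat S,\hat S'$ form a mated pair, Lemma~\ref{lem:mated-spider-top-shapes} is contradicted. The two-edge-matching case is entirely parallel: if $a_1a_2$, $a_3a_4$ are edges of $A$ with $\{a_1,a_2\}$ anticomplete to $\{a_3,a_4\}$, additionally retain a shortest path $R$ of $A$ between $\{a_1,a_2\}$ and $\{a_3,a_4\}$, relabelled so that it runs from $a_1$ to $a_3$ and realises the minimum of $d_A$ over all four such pairs, and delete all other legs and all body vertices outside $V(R)\cup\{a_2,a_4\}$; the resulting $4$-spider has body $R$, a path on at least three vertices, again excluded by Lemma~\ref{lem:mated-spider-top-shapes}.

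The one substantive step — and the main obstacle — is verifying that $\hat S$ and $\hat S'$ are a mated pair, i.e. that $d_{\hat S}(i,j)+d_{\hat S'}(i,j)=\ell$ for all distinct $i,j$. Since in any spider the distance between two toes is the sum of the two leg-lengths and the body-distance between the two joints, this reduces to checking that the retained subgraphs of $A$ and of $A'$ realise the true pairwise distances of $\{a_1,a_2,a_3,a_4\}$ in $A$ and of $\{a_1',a_2',a_3',a_4'\}$ in $A'$ — equivalently, that these four-element metrics satisfy the four-point (tree-metric) condition. This can fail only if some joint $a_m$ or $a_m'$ with $m\geq 5$ provides a shortcut between two of the four chosen joints; examining the few ways such a shortcut can meet $\{a_1,a_2,a_3,a_4\}$, one obtains in each case either an induced $C_4$ or $C_5$ in $A$ — impossible, as $A$ is an induced subgraph of $G$ and $G$ is $\ell$-monoholed with $\ell\geq 7$ — or a fresh induced $P_4$ or two-edge matching on joints whose total pairwise $A$-distance is strictly smaller. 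Choosing the offending four-tuple to minimise that total distance therefore removes the obstruction, and the surgery above then yields the contradiction, completing the proof that $A$ is a threshold graph.
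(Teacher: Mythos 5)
Your overall strategy is the paper's: since $G$ is $\ell$-monoholed with $\ell\geq 7$ there is no induced $C_4$, so by the Chv\'atal--Hammer characterization it suffices to exclude an induced $P_4$ and an induced two-edge matching among the joints, and both are to be killed by the classification of mated $4$-spiders (Lemma \ref{lem:mated-spider-top-shapes}). The paper, however, does this with no surgery at all: for the four relevant toes it simply compares the three sums $d(1,2)+d(3,4)$, $d(1,3)+d(2,4)$, $d(1,4)+d(2,3)$ (the leg lengths cancel, so only the body distances matter) and observes that a $P_4$ or a two-edge matching on $a_1,a_2,a_3,a_4$ forces a profile in which one sum falls at least two below another, which the classification forbids.

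Your execution has a genuine gap at exactly the step you flag as the main obstacle. First, the ``symmetric surgery on $S'$'' is not well defined: nothing guarantees that $a_1',a_2',a_3',a_4'$ induce a connected subgraph of $A'$, so deleting all other body vertices of $S'$ need not leave a $4$-spider, and replacing it by a minimal connected subgraph reintroduces the distance-realization problem on the $S'$ side. Second, the proposed repair --- any shortcut through a fifth joint yields a $C_4$, a $C_5$, or an offending four-tuple of strictly smaller total $A$-distance --- fails when a joint $a_m$ is adjacent to all of $a_1,a_2,a_3,a_4$: then no hole of length at most five appears and $\{a_1,\dots,a_4,a_m\}$ contains no new induced $P_4$ or two-edge matching, yet $d_A(a_1,a_4)=2$, so your $\hat S,\hat S'$ satisfy $d_{\hat S}(1,4)+d_{\hat S'}(1,4)>\ell$ and are not mates; the minimality argument therefore does not advance, and the surgery contradiction evaporates. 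In that configuration the contradiction is still available, but only along the paper's route: the three sums become (legs)$+2$, (legs)$+3$, (legs)$+4$, pairwise distinct, whereas the bodies allowed by Lemma \ref{lem:mated-spider-top-shapes} (single vertex, $K_3$, $K_4$, diamond) always make at least two of the sums equal. So either drop the surgery in favour of the direct distance-sum comparison in $S$ and $S'$, or analyse and close the ``dominating fifth joint'' case explicitly; as written, the proof is incomplete.
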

	\begin{proof}
		Suppose $a_1 \dd a_2 \dd a_3 \dd a_4$ is a $P_4$ in $A$. Then using the notation from the definition of a spider, $d(1,2) + d(3,4) = d(1,4) + d(2,3) -2$, contradicting Lemma \ref{lem:mated-spider-top-shapes}. Suppose $a_1a_2$ and $a_3a_4$ are edges of $A$ and $\{a_1, a_2\}$ is anticomplete to $\{a_3, a_4 \}$.  Then $d(1,2) + d(3,4) \leq d(1,4) + d(2,3) -2$, contradicting Lemma \ref{lem:mated-spider-top-shapes}.
	\end{proof}

	We can now prove the structure of mated $k$-spiders.

	\begin{theorem}\label{thm:matedspiderstructure-odd-case}
		Let $G$ be an $\ell$-monoholed graph for some $\ell \geq 7$ and odd.
	Suppose	$S,S'$ are mated $k$-spiders in $G$ for some $k \geq 3$. Then $S \cup S'$ is a generalized $k$-pyramid.
	\end{theorem}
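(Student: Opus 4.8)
The plan is to fix mated $k$-spiders $S, S'$ in $G$ (with $\ell = 2n+1$ odd) and extract a generalized $k$-pyramid structure by reading off, for each pair of legs, which of the finitely many mated $4$-spider configurations of Lemma \ref{lem:mated4spider-structure} occurs. First I would set up the notation: let $A, A'$ be the bodies of $S, S'$, let $t_1,\dots,t_k$ be the common toes, $a_i, a_i'$ the $t_i$-joints, and $P_i$ the union of the $t_i$-legs of $S$ and $S'$ (a path with ends $a_i$ and $a_i'$). By Lemma \ref{lem:mated-k-spiders-are-2-connected}, $A$ and $A'$ are $2$-connected with vertex sets exactly $\{a_1,\dots,a_k\}$ and $\{a_1',\dots,a_k'\}$, and by Lemma \ref{lem:if-even-body-of-mated-spider-is-a-threshold-graph} each of $A, A'$ is a $2$-connected threshold graph (so has a dominating vertex and diameter $\le 2$). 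For distinct $i,j$, the restriction to $\{i,j\}$ is a pair of mated $4$-spiders in the degenerate sense — more precisely $P_i \cup P_j$ is a hole of length $\ell$, so $d_S(i,j) + d_{S'}(i,j) = \ell$ and $|E(P_i)| + |E(P_j)| = \ell = 2n+1$; hence for every pair, one of $P_i, P_j$ has length $n$ and the other $n-1$, which already forces each $P_i$ to have length $n$ or $n-1$, giving the third bullet of the generalized $k$-pyramid definition.

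Next I would define the partition $Q = \{\, i : |E(P_i)| = n \,\}$ and $R = \{\, i : |E(P_i)| = n-1 \,\}$. The pairwise length constraint $|E(P_i)| + |E(P_j)| = 2n+1$ immediately shows that $Q$ and $R$ are ``opposite'': no two indices in $Q$ can coexist unless... wait — actually two indices $i,j$ with $|E(P_i)| = |E(P_j)| = n$ would give sum $2n \ne 2n+1$, so $|Q| \le 1$; similarly $|R| \le 1$. That contradicts nothing only if $k \le 2$. So the right move is: the pair $(i,j)$ with $i \in Q, j \in Q$ is impossible, forcing $|Q| \le 1$ — but then $k \ge 3$ is impossible. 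This tells me I have mis-set the definitions; the resolution is that when $a_i = a_j$ the legs share their joint, so the ``hole'' $P_i \cup P_j$ is actually shorter — I need to be careful that $P_i \cup P_j$ is a hole of length $\ell$ only when $a_i \ne a_j$ \emph{and} $a_i' \ne a_j'$ and the joints are non-adjacent on both sides; when joints coincide or are adjacent, the relevant cycle through $t_i, t_j$ has length $d_S(i,j) + d_{S'}(i,j)$ which is still $\ell$ by the mate condition, but the leg lengths then satisfy $d_S(i,j) = |E(L_i^S)| + |E(L_j^S)|$ or $+1$ or $+2$ depending on whether $a_i = a_j$, $a_i \sim a_j$, etc. So the correct bookkeeping uses $d_S(i,j)$ (the toe-to-toe distance) rather than raw leg lengths, and Lemma \ref{lem:mated4spider-structure} (cases \ref{c:1-4} and \ref{c:newoddcase}, the two odd cases) is exactly the tool that classifies each pair. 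I would apply Lemma \ref{lem:mated4spider-structure} to the sub-$4$-spider on any quadruple of toes (or the $3$-toe analogue, which I would state or derive similarly) to conclude: in the odd case either all four legs have length $\frac{\ell-1}{2} = n$ with both bodies degenerate-or-$K_4$ (case \ref{c:1-4}), or two legs have length $n$ and two have length $n-1$ with $A$ a triangle and $A'$ a diamond (case \ref{c:newoddcase}).

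From this per-quadruple classification I would globalize. Define $R$ to be the set of indices whose leg (on whichever side is the ``diamond/$K_4$'' body) has length $n-1$, and $Q$ the complement. The consistency across quadruples forces: all $a_i$ for $i \in Q$ coincide to a single vertex (the apex on one side), while the $a_i$ for $i \in R$ are distinct; symmetrically on the $b$-side (writing $b_i = a_i'$). Then I would verify the threshold-graph structure on each terminating set using Lemma \ref{lem:if-even-body-of-mated-spider-is-a-threshold-graph} (which already gives $2$-connected threshold, hence $P_4$-, $C_4$- and $2K_2$-free), verify the complementation property ``$a_i \sim a_j$ iff $b_i \not\sim b_j$ for $i,j \in R$'' by again invoking case \ref{c:newoddcase} on the quadruple $\{i, j, q, q'\}$ with $q,q' \in Q$ and comparing the two possible diamond orientations, and verify the ``$Q$ complete to $R$'' conditions from $2$-connectivity of $A$ and $A'$ together with the diameter-$\le 2$ and dominating-vertex conclusions. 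Finally the leg-length assignments ($P_i$ length $n$ for $i \in Q$, length $n-1$ for $i \in R$) drop out of the same case analysis. The main obstacle I anticipate is the globalization step: Lemma \ref{lem:mated4spider-structure} only constrains four toes at a time, and promoting ``every quadruple looks like case \ref{c:1-4} or \ref{c:newoddcase}'' into a single coherent partition $(Q,R)$ with all the adjacency conditions requires a careful overlap/transitivity argument (e.g.\ ruling out that one quadruple says $i \in R$ and another forces $i \in Q$), and checking that the half-graph/complementation data is consistent across all triples simultaneously — essentially showing the ``local'' diamond orientations glue to a threshold order on each terminating set.
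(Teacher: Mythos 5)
Your proposal is a plan rather than a proof, and the part you defer is exactly where the theorem lives. After retracting the false claim that $P_i \cup P_j$ is a hole of length $\ell$ (the correct identity is $|E(P_i)|+|E(P_j)|+d_A(a_i,a_j)+d_{A'}(a_i',a_j')=\ell$, using that the bodies are threshold graphs of diameter at most two, Lemma \ref{lem:if-even-body-of-mated-spider-is-a-threshold-graph}), you never replace it with a precise bookkeeping, and everything substantive is pushed into the ``globalization'' step, which you yourself flag as the main obstacle and do not carry out. But that step is the content of the theorem: showing every $P_i$ has length $n$ or $n-1$, that no index has a multipurpose joint on both sides and that there is at most one multipurpose joint overall, that multipurpose legs have length $n$ and the remaining legs length $n-1$, that $A'(Q)$ is a clique, that the $Q$-joints are complete to the $R$-joints, and that the complementation condition ($a_i$ adjacent to $a_j$ iff $a_i'$ non-adjacent to $a_j'$) holds. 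The paper does not obtain these by gluing per-quadruple data: it proves them globally, bounding the leg lengths via holes formed by one $P_i$ together with a short path through the mate's body, and killing double-sided or repeated multipurpose joints by the observation that a theta contains holes of both parities, which is impossible when $\ell$ is odd. That parity/no-theta ingredient appears nowhere in your proposal, and your assertion that ``consistency across quadruples forces'' a single apex side with all the stated adjacencies is precisely the statement requiring proof.

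Two further concrete problems with the quadruple reduction as stated. First, to apply Lemma \ref{lem:mated4spider-structure} to four chosen toes you must exhibit a pair of \emph{mated} $4$-spiders inside $S \cup S'$; a sub-spider's body need not realize the toe-to-toe distances of $S$, so the identity $d+d'=\ell$ for the sub-spiders is not automatic and needs an argument (via the threshold, diameter-two structure of the bodies) that distances between the selected joints are preserved on both sides simultaneously --- the paper only ever performs this construction in one very controlled situation, not wholesale for every quadruple. Second, the theorem includes $k=3$, where there are no quadruples at all; your ``$3$-toe analogue'' of Lemma \ref{lem:mated4spider-structure} does not exist in the paper and is not derived, so that case is simply open in your write-up. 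Until the gluing argument, the verification that the sub-$4$-spiders are genuinely mates, and the $k=3$ case are written out, what you have is an outline of a plausible alternative route, not a proof.
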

	\begin{proof}
		Since $\ell \geq 6$ there is some integer $n \geq 2$ such that $\ell = 2n +1$.
		Let $t_1, t_2, \dots, t_k$ be the toes of $S,S'$. For each $i \in [k]$, let $a_i, a_i'$ be the $t_i$-joints of $S$ and $S'$, respectively. Let $A, A'$ be the bodies of $S$ and $S'$, respectively. For each $i \in [k]$ let $P_i$ be the union of the $t_i$-leg of $S$ and $S'$. %We begin with a series of observations about the structure of $A,A'$ and its relationship to the lengths of the paths $P_1, P_2, \dots, P_k$.
		By Lemma \ref{lem:mated4spider-structure}, if $|A| \neq 1$ then $|A| \geq 3$ and if $|A'| \geq 1$ then $|A'| \geq 3$.
		By Lemma \ref{lem:if-even-body-of-mated-spider-is-a-threshold-graph}, $A$ and $A'$ are threshold graphs and by \ref{lem:mated-k-spiders-are-2-connected} if $A, A'$ are not single vertices they are two connected threshold graphs. Moreover by \ref{lem:mated-k-spiders-are-2-connected}, $V(A) = \{ a_1, \dots, a_k\}$ and $V(A') = \{ a_1', \dots, a_k'\}$.
		\stmt{Each of $P_1, P_2, \dots, P_k$ has length at most $n$. \label{patch:len-at-most-n}}
		Suppose $P_1$ has length at least $n + 1$. By definition, for each $i \in [2,k]$ there is a $a_1a_1'$-path $R_i$ containing $P_i$ such that $P_1 \cup R_i$ is a hole.
		Hence, $R_2, R_3, \dots, R_k$ all have length at most $n$.
		But $G[V(R_2 \cup R_3)]$ contains a hole and it has length at most $2n$, a contradiction. This proves (\ref{patch:len-at-most-n}).
		\stmt{Each of $P_1, P_2, \dots, P_k$ has length at least $n-1$. \label{patch:at-least-n-minus-one}}
		Suppose $P_1$ has length at most $n-2$.
		By Fact \ref{lem:leg-fact}, we may assume $a_1'$ is equal or adjacent to $a_2'$. 
		Let $Z_{12}$ be a shortest $a_1a_2$-path in $G[A]$. Then $V(P_1 \cup P_2 \cup Z_{12})$ induces a hole of length at most $n-1 + |E(P_1)| + |E(P_2)|$. 
		Since $G$ is $\ell$-monoholed it follows from (\ref{patch:len-at-most-n}) that $|E(P_2)| = n$ and $|E(Z_{12})| = 2$. Thus we may assume $Z_{12}$ has vertices $a_1 \dd a_3 \dd a_2$ in order.
		Since $P_3$ has length at most $n$, $a_3'$ is not adjacent to $a_1'$.
		Since $G[V(A')]$ has diameter at most two by \ref{lem:mated-k-spiders-are-2-connected}, $P_3$ must have length at $n$ or there is a hole of length less than $\ell$ containing $P_1, P_3$, a contradiction.
		Hence $a_2' \neq a_3'$.
		It follows that there is a hole of length greater than $\ell$ containing $P_2, P_3$, a contradiction. This proves (\ref{patch:at-least-n-minus-one}).
		\\
		\\
		We call $a_i$ a multi-purpose joint if $a_i = a_j$ for some $i \neq j$, $i,j \in [k]$.
		We define multi-purpose joint similarly for $A'$.
		\stmt{If $a_i$ is a multipurpose joint, $a_i'$ is not a multipurpose joint. If $a_i'$ is a multipurpose joint, $a_i$ is not a multipurpose joint. \label{patch:not apexes on both sides}}
		This follows from the fact that if $\ell$ is odd $G$ cannot contain a theta. This proves (\ref{patch:not apexes on both sides}).
		\stmt{If $a_i$ or $a_i'$ is a multipurpose joint $P_i$ has length $n$. \label{patch:multipurpose-len-n}}
		We may assume $S \cup S'$ is not a $k$-pyramid.
		Suppose $a_1 = a_2$. 
		Since $S \cup S'$ is not a $k$-pyramid, $|A| \geq 1$.
		Hence $G[A]$ is a 2-connected threshold graph. So there exist distinct $a_3, a_4 \in V(A)$ such that $a_1$ is adjacent to $a_3, a_4$.
		Thus there is a four spider $R$ contained in $S$ with body contained in $G[a_1, a_2, a_3, a_4]$ and $R$ has a mate $R'$ contained in $G[A']$.
		Then by Lemma \ref{lem:mated4spider-structure}, $a_1, a_2, a_3$ are pairwise adjacent and the body of $R'$ is a diamond.
		Let $t_1', t_2, \dots, t_4$ be the toes of $R, R'$.
		For each $i \in [4]$, let $Z_i$ be union of the two $t_i$-legs of $R$ and $R'$, respectively.
		Then in particular Lemma \ref{lem:mated4spider-structure} implies $R_1, R_2$ each have length $n$.
		By definition $P_1 \subseteq R_1$ and $P_2 \subseteq R_2$ so $P_1, P_2$ both have length $n$ by (\ref{patch:len-at-most-n}).
		This proves (\ref{patch:multipurpose-len-n}).
		\stmt{$A \cup A'$ contains at most one multipurpose joint. \label{patch:one-multi-total}}
		Since $G$ is $\ell$-monoholed, the statement follows from the fact that (\ref{patch:not apexes on both sides}) and (\ref{patch:multipurpose-len-n}). 
		This proves (\ref{patch:one-multi-total}).
		\\
		\\
		We may assume $A'$ does not contain a multipurpose joint.
		Let $Q, R$ be a partition of $[k]$ such that $a_{q_1} = a_{q_2}$ for every $q_1, q_2 \in Q$ and $a_{r_1}  \neq a_{r_2}$ for any two distinct $r_1, r_2 \in R$.
		For a set $S \subseteq[k]$ let $A(S)$, $A'(S)$ denote the sets $\{a_i \ | \ i \in S\}$ and $\{a_i' \ | \ i \in S\}$, respectively.
		\stmt{If $Q \neq \emptyset$, then $A'(Q)$ is a clique. \label{patch:mini-pyramid}}
		Since every hole in $G$ has length $2n + 1$ and $G[A']$ is connected, the statement follows from (\ref{patch:multipurpose-len-n}).
		This proves (\ref{patch:mini-pyramid}).
		\stmt{$A(Q)$ is complete to $A(R)$ and $A'(Q)$ is complete to $A'(R)$ and $P_i$ has length $n-1$ for every $i \in R$. \label{patch:last}}
		Since $G[A]$, $G[B]$ are connected this follows from (\ref{patch:at-least-n-minus-one}) and (\ref{patch:multipurpose-len-n}).
		This proves (\ref{patch:last}).
		I$G[A]$, $G[B]$ are threshold graphs, so $G[A(R)]$ and $G[B(R)]$ are threshold graphs.
		For any two distinct $i,j \in R$ it follows from (\ref{patch:last}) that $a_i$ is adjacent to $a_j$ if and only if $a_i'$ is not adjacent to $a_j'$.
		Thus $S \cup S'$ is a generalized $k$-pyramid.
	\end{proof}

	\begin{theorem}\label{thm:orderly-spines-even-case}
		Let $G$ be an $\ell$-monoholed graph for some $\ell \geq 8$ and even.
	Suppose	$S,S'$ are mated $k$-spiders in $G$ for some $k \geq 3$. Then $S \cup S'$ is a $k$-theta or a generalized $k$-prism.
	\end{theorem}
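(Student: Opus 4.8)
The plan is to mirror the proof of Theorem~\ref{thm:matedspiderstructure-odd-case}, the parity now forcing a dichotomy between the two possible outcomes. Write $\ell = 2n+2$ with $n \geq 3$; let $t_1,\dots,t_k$ be the common toes of $S$ and $S'$, let $a_i$ (resp.\ $b_i$) be the $t_i$-joint of $S$ (resp.\ $S'$), and let $P_i$ be the $a_ib_i$-path that is the union of the two $t_i$-legs. By Lemmas~\ref{lem:mated-k-spiders-are-2-connected} and~\ref{lem:if-even-body-of-mated-spider-is-a-threshold-graph}, the bodies $A$, $A'$ are threshold graphs with $V(A)=\{a_1,\dots,a_k\}$ and $V(A')=\{b_1,\dots,b_k\}$, each either a single vertex or $2$-connected, and of diameter at most $2$. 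Applying Lemma~\ref{lem:mated4spider-structure} to sub-$4$-spiders on four of the toes shows that, since $\ell$ is even, only its three cases with $\ell$ even can arise; in particular $A$ is a single vertex if and only if $A'$ is.

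\textbf{The theta case.} Suppose $A=\{a\}$ and $A'=\{a'\}$. Then each $P_i$ is an $aa'$-path, and since the two spiders are anticomplete off their (distinct) common toes, the paths $P_1,\dots,P_k$ pairwise meet only in $\{a,a'\}$ and have no chords. As $a$ and $a'$ have degree at least three they are not toes, so $a\neq a'$ and $a$ is non-adjacent to $a'$; and each $P_i$ has length $\ell/2\geq 4$ (immediate from matedness, since here $d_S(i,j)=|E(L_i)|+|E(L_j)|$ and similarly for $S'$, forcing all $|E(P_i)|$ equal to $\ell/2$). Hence $S\cup S'$ is a $k$-theta.

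\textbf{The prism case.} Now $|A|,|A'|\geq 3$, so by Lemma~\ref{lem:mated-k-spiders-are-2-connected} both bodies are $2$-connected threshold graphs. Exactly as in statements~(\ref{patch:len-at-most-n}) and~(\ref{patch:at-least-n-minus-one}) of the proof of Theorem~\ref{thm:matedspiderstructure-odd-case}, every $P_i$ has length $n-1$ or $n$: a $P_i$ of length at least $n+1$ produces two $a_1b_1$-paths whose union carries a hole of length below $\ell$, and a $P_i$ of length at most $n-2$ combines with Fact~\ref{lem:leg-fact} and the diameter bound to the same effect. The workhorse is then that for $i\neq j$ the cycle through $P_i$, $P_j$, a geodesic in $A$ from $a_i$ to $a_j$, and a geodesic in $A'$ from $b_i$ to $b_j$ has no chord (using anticompleteness of the spiders off the toes together with Fact~\ref{fact:hole-and-vertex-complete-anticomplete-or-P3}), hence is a hole, so
\[
|E(P_i)|+|E(P_j)|+d_A(a_i,a_j)+d_{A'}(b_i,b_j)=\ell .
\]
Set $Q=\{\,i:|E(P_i)|=n-1\,\}$ and split the remaining indices into $T$ (those whose $a$-joint coincides with that of another index), then $S$ (the rest whose $b$-joint coincides with that of another index), then $R$ (the rest). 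One must verify that these coincidences line up into a single $T$-class and a single $S$-class, and that both cannot be large without the prescribed completeness; this is where matedness, $2$-connectedness and Lemma~\ref{lem:mated4spider-structure} re-enter. Feeding the displayed identity into the pairs then yields each clause of the definition of a generalized $k$-prism: $i,j\in R$ forces $d_A=d_{A'}=1$ (both terminating sets of $R$ are cliques); $i,j\in Q$ forces $d_A=d_{A'}=2$ (stability); $i\in Q,\ j\in R$ forces $\{d_A,d_{A'}\}=\{1,2\}$, i.e.\ $a_i\sim a_j\iff b_i\not\sim b_j$, which promotes to the required half-graph between $Q$ and $R$ via the stated characterisation of threshold graphs; and the remaining pairs give the completeness and cross-adjacency clauses involving $S$ and $T$. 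Combined with $2$-connectedness (no cut-vertex in either terminating set), this shows $S\cup S'$ is a generalized $k$-prism.

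The hard part is this last paragraph: extracting the partition $Q,R,S,T$ cleanly and then discharging every clause of the long definition of a generalized $k$-prism — in particular showing the joint-coincidences really organise into one $T$-class and one $S$-class (ruling out, say, two disjoint coincident pairs on the same side) and upgrading the pairwise relation $a_i\sim a_j\iff b_i\not\sim b_j$ to a genuine half-graph. The dichotomy and the length bounds are routine given Lemma~\ref{lem:mated4spider-structure} and the odd-case arguments.
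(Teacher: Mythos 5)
Your overall strategy is the same as the paper's (the even case is proved there exactly as a mirror of Theorem~\ref{thm:matedspiderstructure-odd-case}: bounds $n-1\le |E(P_i)|\le n$, hole-length bookkeeping over pairs of constituent paths, and the threshold/$2$-connected structure of the bodies), but your write-up stops precisely where the real work begins. Everything you defer with ``one must verify'' and ``the hard part is this last paragraph'' is the bulk of the paper's proof of this theorem: that each body contains at most one multi-purpose joint, that such a joint is complete to all other joints on its side while its partners on the other side form a stable set anticomplete to the appropriate vertices (statement~(\ref{gc:only-one-mulitpurpose-joint})), the minimum-degree-two claim~(\ref{gc:min-degree-2}), the selection of the universal vertices $a_T$ and $a'_S$, and the fact that no joint belonging to $R$ is multi-purpose~(\ref{gc:R-single-purpose-joints}). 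Your identity $|E(P_i)|+|E(P_j)|+d_A(a_i,a_j)+d_{A'}(b_i,b_j)=\ell$ does recover~(\ref{gc:if-Pi-and-Pj-both-have-length-n}) and the stability of $A(Q)$, but it cannot by itself deliver the finer clauses: for $i$ with a coincident $a$-joint and $j\in Q$ it only gives $d_A(a_i,a_j)+d_{A'}(b_i,b_j)=3$, not the required edge $a_ia_j$; it does not exclude two disjoint coincidence classes on the same side; and it does not give the anticompleteness of $A'(T)$ to $A'(Q)$. The paper obtains exactly these points from Lemma~\ref{lem:mated-spider-top-shapes} and Lemma~\ref{lem:mated4spider-structure} applied to carefully chosen sub-$4$-spiders together with Fact~\ref{lem:leg-fact}, and that is the content you have not supplied. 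As it stands this is an outline of the paper's own argument with the decisive verifications missing, so there is a genuine gap.

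Two smaller points. First, your opening dichotomy (``$A$ is a single vertex if and only if $A'$ is'', and ``$|A|,|A'|\ge 3$ in the prism case'') is justified only by applying Lemma~\ref{lem:mated4spider-structure} to four of the toes, which is vacuous when $k=3$; there you need a separate argument (for $k=3$ a single-vertex body mated with a triangle body yields a pyramid, and the pairwise hole lengths then force $\ell$ odd). The paper avoids splitting on body size altogether: its theta outcome emerges from statement~(\ref{gc:P_i-has-length-at-most-n-or-theta}), i.e.\ from some constituent path having length at least $n+1$. Second, your exact identity requires the cycle through $P_i$, $P_j$ and the two geodesics to be chordless; this is believable given the anticompleteness of the bodies, the degree-two leg interiors and the diameter bound of Lemma~\ref{lem:if-even-body-of-mated-spider-is-a-threshold-graph}, but it needs to be said, since in the delicate spots the paper is careful to extract only inequalities of the form ``$G[\cdot]$ contains a hole of length at most $\dots$'' rather than asserting the whole cycle is a hole.
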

	\begin{proof}
		Since $\ell \geq 6$ there is some integer $n \geq 2$ such that $\ell = 2n +2$. 
		Let $t_1, t_2, \dots, t_k$ be the toes of $S,S'$. For each $i \in [k]$, let $a_i, a_i'$ be the $t_i$-joints of $S$ and $S'$, respectively. Let $A, A'$ be the bodies of $S$ and $S'$, respectively. For each $i \in [k]$ let $P_i$ be the union of the $t_i$-leg of $S$ and $S'$. We begin with a series of observations about the structure of $A,A'$ and its relationship to the lengths of the paths $P_1, P_2, \dots, P_k$. 
		\stmt{\label{gc:P_i-has-length-at-most-n-or-theta}
			For every $i \in [k]$, $P_i$ has length at most $n$ or $S \cup S'$ is a $k$-theta.	}
		Suppose that $P_1$ has length at least $n +1$. For every $i \in [2, k]$, there is a $a_ia_i'$-path $R_i$ of $S \cup S'$ including $P_i$.
		Since $R_i \cup P_1$  is a hole, $R_i$ has length $\ell - |E(P_1)| \leq n + 1$ for every $i \in [2,k]$. Since $G[V(R_1\cup R_2)]$ includes a hole, $|E(R_1)|, |E(R_2)| \geq n +1 $.  Hence, $P_1$ has length $n+1$ and $R_i$ has length $n+1$ for every $i \in [2,k]$. 
		It follows that every $V(R_i \cup R_j)$ induces a hole for any two distinct $i, j \in [2,k]$. 
		
		Let $a_1, \dots, a_k$ be the joints of $S$. Suppose $S \cup S'$ is not a $k$-theta. Then we may assume the body of $S$ is not a single vertex and  $a_2 \neq a_1$. By Fact \ref{lem:leg-fact} we may assume $a_3 \neq a_1$ and is equal or adjacent to $a_2$.  But then $V(R_2 \cup R_3)$ does not induce a hole, a contradiction. This proves (\ref{gc:P_i-has-length-at-most-n-or-theta}).
		\\
		\\
		We may assume $S \cup S'$ is not a $k$-theta. Then, $P_i$ has length at most $n$ for every $i \in [k]$. We now prove a lower bound on the lengths of the paths $P_i$ for $i \in [k]$.
		%next statement
		\stmt{For every $i \in [k]$, $P_i$ has length at least $n-1$.\label{gc:P_i-has-length-at-least-n-1}}
		%proof
		Consider the path $P_1$. 
		By Fact \ref{lem:leg-fact}, we may assume $a_2$ is equal or adjacent to $a_1$. 
		Let $Q$ be an shortest $a_1'a_2$-path in the body of $S'$.  By \ref{lem:if-even-body-of-mated-spider-is-a-threshold-graph}, $Q$ has length at most two. Since $P_2$ has length at most $n$, it follows that $G[V(P_1 \cup P_2 \cup Q)]$ has a hole of length at most $|E(P_1)| + n +3$. Hence, $|E(P_1)| \geq n-1$. This proves (\ref{gc:P_i-has-length-at-least-n-1}).
		\\
		\\
		We call a vertex $v \in A \cup A'$  a \textit{multi-purpose joint} if it is the $t_i$-joint for more than one distinct $i \in [k]$. %possibly need this def outside proof
		\stmt{There is at most one multi-purpose joint in each of $A, A'$. If there is a multi-purpose joint $v \in V(A)$, then for every $i \in [k]$ for which $v$ is the $t_i$-joint of $S$, $P_i$ has length $n$ and $a_i'$ is not a multi-purpose joint of $S'$. 		
			If $a_i = a_j$ for distinct $i,j \in [k]$ then $a_i'$ is not adjacent to $a_j'$.
			Moreover, the same statements hold after exchanging $A$ and $A'$. 
			\label{gc:only-one-mulitpurpose-joint}
		}
		%proof of 10
		Suppose $A$ contains more than one multi-purpose joint.
		Without loss of generality, $a_1 = a_2$ and $a_3=a_4 \neq a_1$. Consider the spider $R$ contained in $S$ with toes $t_1, t_2, t_3, t_4$. Then the body of $R$ is an $a_1a_3$-path. But since $R$ has a mate contained in $S'$, this contradicts Lemma \ref{lem:mated-spider-top-shapes}. Hence $A$ has at most one multi-purpose joint.
		
		Suppose $a_1$ is a multi-purpose joint of $S$. Suppose $a_1$ has a non-neighbor, say $a_3$. Without loss of generality $a_1 = a_2$.
		We may assume $a_4$ is equal or adjacent to $a_3$ by Fact \ref{lem:leg-fact}. Again, consider the spider $R$ contained in $S$ with toes $t_1, t_2, t_3, t_4$.
		Let $B$ be the body of $R$. Since $R$ has a mate contained in $S'$, it follows that $V(B) \subseteq \{ a_1, a_3, a_4\}$ by Lemma \ref{lem:mated-spider-top-shapes}.
		But then $2 \leq |V(B)| \leq 3$. Hence by Lemma \ref{lem:mated-spider-top-shapes}, $B$ must be a triangle and $V(B) = \{ a_1, a_3, a_4\}$. But $a_1$ is not adjacent to $a_3$, a contradiction. Thus, $a_1$ must be adjacent to every other vertex in $V(A)$.
		
		Suppose  $P_1$ has length less than $n$. By Lemma \ref{lem:if-even-body-of-mated-spider-is-a-threshold-graph}, there is an $a_1'a_2'$-path $Q$ in $A'$ of length at most two. But then since $P_2$ has length at most $n$, the graph $G[V(P_1 \cup P_2 \cup Q)]$ contains a hole of length at most $2n+1$, a contradiction. Hence, $P_i$ has length $n$ for every $i \in [k]$ such that $a_1=a_i$.
		Since there are no holes of length $2n+1$ it follows that $a_i'$ and $a_j'$ are non-adjacent for every two distinct $i,j \in [k]$ such that $a_1 = a_i, a_j$.
		
		Finally, suppose $a_1'$ is a multi-purpose joint of $S'$. Then $a_1'$ is distinct from and non-adjacent to $a_2'$, for otherwise $G[V(P_1 \cup P_2)]$ is a hole of length at most $2n+1$, a contradiction. So we may assume $a_1' = a_3'$. Then, $a_1$ is distinct from and non-adjacent to $a_3$. But then $P_2, P_3$ and the union of shortest paths between $a_1$ and $a_3$ in $A$ and $a_2'$ and $a_3'$ in $A'$ form a hole of length at $2n + 4$, a contradiction. 
		By the symmetry between $S$ and $S'$, this proves (\ref{gc:only-one-mulitpurpose-joint}).
		
		\stmt{For any two distinct $i,j \in [k]$, if $P_i$ and $P_j$ both have length $n$, then one of the following holds:
			\begin{itemize}
				\item $a_ia_j$ and $a_i'a_j'$ are edges or
				\item $a_i = a_j$ and $a_i'$ and $a_j'$ are non-adjacent (or vice versa.)
			\end{itemize}
			\label{gc:if-Pi-and-Pj-both-have-length-n}
		}
		This follows immediately from the fact that there is a hole in $S\cup S'$ containing $P_i$ and $P_j$. This proves (\ref{gc:if-Pi-and-Pj-both-have-length-n}).
		\\
		\\		
		For a set $J \subseteq [k]$, let $A(J)$ and $A'(J)$ denote the sets $\{ a_j \ | \ j \in J\}$ and $\{a_j' \ | \ j \in J\}$, respectively. 
		\stmt{$A$, $A'$ have minimum degree at least two \label{gc:min-degree-2}}
		%proof
		By Fact \ref{lem:leg-fact} we need only show that multi-purpose joints in $A, A'$ have degree at least two in $A$ and $A'$, respectively. Suppose $a_1$ is a multi-purpose joint of degree at most one in $A$. Let $J$ be the set of integers $i \in [k]$ such that $a_1 = a_i$. By (\ref{gc:only-one-mulitpurpose-joint}), $A'(J)$ is a stable set. Since $A'$ is connected, it follows that $J \neq [k]$. Then since $A$ is connected, $a_1$ has a neighbor in $A$. Without loss of generality $a_1 = a_2$ and $a_1$ is adjacent to $a_3$. By (\ref{gc:only-one-mulitpurpose-joint}) and Fact \ref{lem:leg-fact}, we may assume $a_4 \neq a_1$ and $a_4$ is adjacent to $a_3$.
		Consider the spider $R$ contained in $S$ with toes $t_1, t_2, t_3, t_4$. Let $B$ be the body of $R$. Then $G[\{a_1, a_3, a_4\}]$ contains $B$. By definition of joint, $a_1$ is a joint of $R$ and at least one of $a_3, a_4$ is a joint of $R$. So the $B$ is a path of length one or two, contradicting Lemma \ref{lem:mated4spider-structure}. This proves (\ref{gc:min-degree-2}).
		\\
		\\
		We now have enough tools to complete the proof.
		Let $Q$ be the set of indices $i \in [k]$ such that $P_i$ has length $n-1$.
		$A(Q)$ is a stable set for if any two $a_i, a_j \in A(Q)$ are adjacent the hole in $S\cup S'$ containing $P_i \cup P_j$ has length at most $2n+1$, a contradiction. 
		At least two vertices in $A$ have no non-neighbors in $V(A)$ since $A$ is a threshold graph of minimum degree at least two by Lemma \ref{lem:if-even-body-of-mated-spider-is-a-threshold-graph} and (\ref{gc:min-degree-2}). Hence, there is some vertex in $a_T \in V(A) \setminus A(Q)$ that is adjacent to every other vertex in $V(A)$. Choose $a_T$ to multi-purpose if possible. Let $T = \{ i \ | \ a_T = a_i\}$. By (\ref{gc:only-one-mulitpurpose-joint}) none of the vertices in $A(Q) \cup A'(Q)$ are multi-purpose since the constituent paths ending in $A(Q), A(Q')$ all have length $n-1$. Thus for any $i \in [k] \setminus Q$, it follows that $a'_i \not \in A'(Q)$ and in particular $A'(Q)$ and $A'(T)$ are disjoint.
		By symmetry, there is some vertex $a'_S \in V(A) \setminus A'(Q)$ that is adjacent to every other vertex in $V(A')$. Choose $a'_S$ to be multi-purpose if possible. Let $S = \{i \ | \ a'_S = a'_i\}$. By (\ref{gc:only-one-mulitpurpose-joint}), $Q, S, T$ are pairwise disjoint.
		$A'(T)$ and $A(S)$ are stable sets for otherwise there is a hole of length $2n+1$, a contradiction.
		$A'(T)$ is anticomplete to $A'(Q)$ for otherwise we obtain a hole of length less than $2n +2$ since $a_T$ is complete to $A(Q)$, a contradiction. Similarly, $A(S)$ is anticomplete to $A(Q)$. 
		Let $R = [k] \setminus (Q \cup S \cup T)$. Then by (\ref{gc:if-Pi-and-Pj-both-have-length-n}), $A(R)$ is complete to $A(S)$ and $A'(R)$ is complete to $A'(S)$. 
		The edges between $A(Q)$ and $A(R)$ form a half graph since $A$ is a threshold graph by Lemma \ref{lem:if-even-body-of-mated-spider-is-a-threshold-graph}. Similarly, the edges between $A'(Q)$ and $A'(R)$ are a half-graph. Moreover since every hole must have length $2n+2$, for any $i \in Q$ and $j \in R$, we have that $a_i$ is adjacent to $a_j$ if and only if $a_i'$ is non-adjacent to $a_j'$.
		\stmt{No joint in $A(R) \cup A(R')$ is multi-purpose. \label{gc:R-single-purpose-joints}}
		Suppose some $a_z \in A(R)$ is multipurpose. Then by \ref{gc:only-one-mulitpurpose-joint}, $a_T$ is not multi-purpose. By choice of $a_T$ it follows that $a_z$ must have a non-neighbor in $a_i \in A$. Let $Z = \{ j \in [k] \ | \ a_j = a_z\}$. Since no joint in $A'(Z)$ is multi-purpose by (\ref{gc:only-one-mulitpurpose-joint}) and $P_z$ has length $n$, it follows that $P_i$ cannot have length $n$ by (\ref{gc:if-Pi-and-Pj-both-have-length-n}). Thus, $i \in Q$. Let $t \in A'(T)$ and let $u, v$ be distinct vertices in $A'(Z)$. Since there are complementary half-graphs between $A(Q), A(R)$ and $A'(Q), A(R)$, it follows that $u,v$ are both adjacent to $a'_i$. But then since $t$ is complete to $A'(R)$ and anticomplete to $A'(Q)$, the graph $G[\{ u,v, a_i', t\}]$ is a $C_4$, a contradiction. This proves (\ref{gc:R-single-purpose-joints}). 
		\\
		\\
		It follows that $A(R)$ and $A(R')$ are vertex sets of cliques by (\ref{gc:if-Pi-and-Pj-both-have-length-n}). 
		Hence, $S' \cup S$ is a $k$-spine.
	\end{proof}
	
	We are now ready to prove the Theorem \ref{thm:orderly-spines-exist}
	which we restate:
	\begin{thm}\label{thm:orderly-spines-exist}
		Let $G$ be an $\ell$-monoholed graph. Then one of the following conditions holds:
		\begin{enumerate}[(a)]
			\item $G$ contains a vertex that is adjacent to every other vertex in $V(G)$. 
			\item$G$ contains a clique cut-set, 
			\item $G$ is chordal,
			\item $G$ is an inflated $\ell$-hole, 
			\item $G$ contains a $k$-theta with paths of length $\frac{\ell}{2}$ (and $\ell$ is even), or
			\item There is some $k \geq 3$ such that $G$ contains $k$-spine.
		\end{enumerate}
	\end{thm}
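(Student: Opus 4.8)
The plan is to bootstrap off Theorem~\ref{thm:G-has-a-mated-spider} together with the two structure theorems for mated spiders, Theorems~\ref{thm:matedspiderstructure-odd-case} and~\ref{thm:orderly-spines-even-case}. Since $\ell \ge 7$ we in particular have $\ell \ge 6$, so Theorem~\ref{thm:G-has-a-mated-spider} applies to $G$. That theorem hands us one of five outcomes, and the first four of them are word-for-word the outcomes (a)--(d) we want, so in those cases there is nothing left to do. Hence I may assume the fifth outcome: $G$ contains a pair of mated $k$-spiders $S, S'$ for some $k \ge 3$.

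In this remaining case I would split on the parity of $\ell$. If $\ell$ is odd, then $\ell \ge 7$, so Theorem~\ref{thm:matedspiderstructure-odd-case} gives that $S \cup S'$ is a generalized $k$-pyramid, which by definition is a $k$-spine; thus outcome (f) holds, and indeed the spine obtained is of pyramid type. If $\ell$ is even, then $\ell \ge 8$, so Theorem~\ref{thm:orderly-spines-even-case} gives that $S \cup S'$ is either a $k$-theta or a generalized $k$-prism. A generalized $k$-prism is a $k$-spine, giving outcome (f) again. If instead $S \cup S'$ is a $k$-theta with constituent paths $P_1, \dots, P_k$, then for any two distinct $i,j \in [k]$ the set $V(P_i) \cup V(P_j)$ induces a hole of $G$, which must have length $\ell$; since $k \ge 3$ this forces $|E(P_i)| = \ell/2$ for every $i$, so in particular $\ell$ is even and outcome (e) holds. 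Recording the theta case as a separate outcome is only to make the path lengths explicit; it is already subsumed by outcome (f).

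I do not expect any genuine obstacle in this final assembly: all of the combinatorial difficulty has already been absorbed into Theorem~\ref{thm:G-has-a-mated-spider} (producing a mated spider once the four degenerate outcomes are excluded) and into the case analysis behind Theorems~\ref{thm:matedspiderstructure-odd-case} and~\ref{thm:orderly-spines-even-case}. The only points needing a moment's care are checking that the parity hypotheses of those two theorems ($\ell \ge 7$ and odd, respectively $\ell \ge 8$ and even) are met under $\ell \ge 7$, and the elementary observation that a $k$-theta with $k \ge 3$ sitting inside a monoholed graph has all constituent paths of the common length $\ell/2$; both are immediate.
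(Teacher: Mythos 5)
Your proposal is correct and follows essentially the same route as the paper: assume (a)--(d) fail, invoke Theorem~\ref{thm:G-has-a-mated-spider} to obtain mated $k$-spiders, and then split on the parity of $\ell$ to apply Theorem~\ref{thm:matedspiderstructure-odd-case} or Theorem~\ref{thm:orderly-spines-even-case}. Your extra remarks (verifying the parity hypotheses and noting that a $k$-theta in a monoholed graph has all constituent paths of length $\ell/2$, so the theta case is subsumed by the $k$-spine outcome) are accurate refinements of the paper's terser argument.
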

	
	\begin{proof}
		Suppose none of (a), (b), (c) or (d) holds. Then by Theorem \ref{thm:G-has-a-mated-spider} $G$ contains a pair of mated $k$-spiders $S, S'$. If $\ell$ is odd, (e) holds by Theorem \ref{thm:matedspiderstructure-odd-case}. If $\ell$ is even, (e) holds by Theorem \ref{thm:orderly-spines-even-case}.
	\end{proof}
	
	\section{On Corpora and Crowns}

	For technical reasons, our analysis would be easier if we could assume that any two constituent paths $k$-spine are vertex disjoint. However, this clearly is false; $k$-thetas, $k$-pyramids and some generalized $k$-prisms have multiple constituent paths ending at the same vertex.  Our solution is to consider only the subpaths of $k$-spines that are vertex disjoint. For example, if $v$ apex of a $k$-pyramid with constituent paths $P_1, P_2, \dots, P_k$, we will analyze the paths $P_1 \setminus v$,  $P_2 \setminus v \dots, P_k \setminus v$ instead of analyzing $P_1, \dots, P_k$.
	
	Let $F$ be a $k$-spine for some $k \geq 3$. We call any vertex $v \in V(F)$ belonging to multiple constituent paths of $F$ an apex. Note by definition of $k$-spine every apex is an end of a constituent path of $F$.
	Let $J$ be the set of apexes of $F$ and let $P_1, P_2, \dots, P_k$ denote the constituent paths of $F$. Then we call $P_1 \setminus J$, $P_2 \setminus J$, $\dots$, $P_k \setminus J$ the \textit{elemental paths} of $F$. 
	We define an analogue of terminating sets for elemental paths as follows:
	Let $A, B$ be the terminating sets of $F$. For any $i \in [k]$ and end $v$ of $P_i \setminus J$, if $v$ is equal or adjacent to a vertex in $A$ we call $v$ the \textit{$A$-end} of $P_i$. Otherwise, $v$ is the \textit{$B$-end} of $P_i$.
	We call the set of $A$-ends of elemental paths of $F$ and the set of $B$-ends of elemental paths of $F$ the \textit{elemental sides} of $F$.
	
	We call the graph obtained from $F$ by removing apexes in $F$ the \textit{core} of $F$.
	We call an inflated graph $\F$ a \textit{$k$-corpus} if the graph $F$ underlying $\F$ is a $k$-spine and the bags of $\F$ corresponding to vertices in an elemental side of $F$ or apexes of $F$ are either complete or anticomplete to each other. %check that rest used this change
	The elemental sides of $\F$ are the sets of bags corresponding to the elemental sides of $F$.
	The apexes of $\F$ are the bags corresponding to apexes of $F$. We say the core of $\F$ is the sub-inflated-graph of $\F$ containing all bags in $\F$ corresponding the core of $F$.
	Note by definition any $k$-spine is also a $k$-corpus.	
	By definition of $k$-spine, for any two constituent paths of a $k$-corpus there is a inflated $C_\ell$ containing both of them.
	
	In this section we will consider a $k$-corpus $\F$ chosen to maximize $k$ and with respect to that maximize the number of vertices in $\F$ and how vertices in $V(G) \setminus V(\F)$ interact with $\F$.
	We will make repeated use of the following easy fact.
	\begin{fact}\label{fact:corpus-two-elemental-vertices-ell-minus-2-path-between-them}
		Let $\F$ be a $k$-corpus. Suppose $X_1$ and $X_2$ are bags in the same elemental side of $\F$. Let $\q_1, \q_2$ be the elemental paths of $\F$ containing $X_1$ and $X_2$, respectively. Let $Y$ be the vertex set of the elemental side of $\F$ not containing $X_1, X_2$. Suppose $x_1 \in X_1$ and $x_2 \in X_2$ are non-adjacent. Then $\F$ contains an $x_1x_2$-path $P$ of length $\ell -2$ such that $V(P) \subseteq V(\q_1 \cup \q_2 \cup Y)$.
	\end{fact}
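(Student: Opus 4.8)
\textbf{Proof proposal for Fact~\ref{fact:corpus-two-elemental-vertices-ell-minus-2-path-between-them}.}

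The plan is to reduce this to the definition of $k$-corpus together with the structure of $k$-spines, building the desired path explicitly by concatenating three pieces: a sub-path of $\q_1$ from $x_1$ up to the terminating end, a sub-path of $\q_2$ from the terminating end down to $x_2$, and a two-edge bridge through a bag of $Y$ (or through an apex bag) that connects the two terminating ends. First I would invoke Lemma~\ref{lem:inflated-graphs-facts}\ref{fgf:strong} to pass to an underlying graph: it suffices to find, inside an underlying $k$-spine $F$ with $x_1,x_2$ among its vertices, a sub-inflated graph of $\F$ with bags of size one that is an $x_1x_2$-path of length $\ell-2$ using only vertices in the prescribed bags. So I would fix underlying elemental paths $Q_1 \ni x_1$ and $Q_2 \ni x_2$ of $F$.

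The key step is to track the lengths. Let $Y$ be the elemental side opposite $X_1,X_2$. Since $x_1$ and $x_2$ lie in the same elemental side, by the definition of $k$-spine (whether $F$ is a generalized $k$-pyramid, $k$-theta, or generalized $k$-prism) there is an inflated $C_\ell$ of $\F$ whose underlying hole $C$ passes through $Q_1$, $Q_2$ and through two elemental paths whose $Y$-ends close up the cycle. Such a hole decomposes into the path $x_1 C$ (to the $Y$-end of $Q_1$), a path across the $Y$-side, the path (to $x_2$) along $Q_2$, and a path across the $X$-side connecting the $X$-ends of $Q_1,Q_2$ through the relevant apex or complete-join structure. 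The path of $C$ from $x_1$ to $x_2$ that goes \emph{through $Y$} has total length $\ell$ minus the length of the complementary $x_1x_2$-sub-path of $C$; since $x_1$ and $x_2$ are adjacent to the $X$-ends of their elemental paths (or equal to them) and those $X$-ends are complete to a common apex/terminating vertex, that complementary sub-path has length exactly $2$. Hence the $Y$-side sub-path of $C$ from $x_1$ to $x_2$ has length $\ell-2$, and by construction its vertex set lies in $V(Q_1 \cup Q_2 \cup Y)$. Lifting back through Lemma~\ref{lem:inflated-graphs-facts}\ref{fgf:strong}, this gives the required $x_1x_2$-path $P$ in $\F$ with $V(P)\subseteq V(\q_1\cup\q_2\cup Y)$.

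The main obstacle is the bookkeeping around the $X$-side: I need to confirm that in every one of the three types of $k$-spine there really is an inflated $C_\ell$ through $\q_1$ and $\q_2$ for which the $X$-side connection has length exactly $2$ (i.e.\ the two $X$-ends are either equal, or adjacent, or joined through a single common vertex complete to both), and that $x_1$ being non-adjacent to $x_2$ guarantees we genuinely get an induced path rather than a shorter chord-shortcut. For generalized $k$-prisms this requires checking the case analysis on the defining partition $Q,R,S,T$ (the $X$-ends may be a half-graph pair, a clique pair, or joined via an apex), but in each case the relevant two terminating vertices are equal or adjacent or have a common complete neighbour, so the bound of $2$ holds; the hypothesis $x_1x_2 \notin E(G)$ together with the anticomplete/complete dichotomy in the definition of $k$-corpus rules out any chord shortening $P$ below $\ell-2$, and $\ell$-monoholedness of $G$ forbids a shorter hole, pinning the length at exactly $\ell-2$.
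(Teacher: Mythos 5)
Your proposal is correct and follows essentially the same route as the paper: the paper's (much terser) proof also just picks in each bag a representative complete to its neighboring bags (Lemma \ref{lem:inflated-graphs-facts}) and then reads the length-$(\ell-2)$ path off the hole through the two constituent paths guaranteed by the definition of $k$-spine. Your version merely spells out the length bookkeeping and the case check on the $X$-side connection that the paper leaves implicit in ``the result follows from the definition of $k$-spine.''
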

	\begin{proof}
		For any bag $B$ in an inflated graph there is a vertex $b \in B$ such that $b$ is complete to every neighboring bag of $B$ by definition of inflated graph.
		Hence, the result follows from the definition of $k$-spine.
	\end{proof}
	
	\subsection{Crowns}
	In this subsection we will begin to consider the structure of vertices outside of a maximal $k$-corpus $\mathcal{\F}$ with $k$-maximum that have that neighbors in an elemental side of a maximal $k$-body.
	In order to do this, we will introduce a new object we call a ``crown'' and prove some properties about it.
	
	Suppose $G$ is a graph where $I, J$ is a partition of $V(G)$.
	Let $P$ be a induced path of $G$  with vertices $p_1 \dd p_2 \dd p_3 \dd p_4 \dd p_5$.
	We call $P$ a \textit{mean $P_5$} if $p_1, p_3, p_4 \in I$ and $p_3, p_4 \in J$.
	Suppose $i_1, i_2, i_3, i_4, j \in V(G)$ such $j$ is adjacent to $i_1, i_2$ and $i_2$ is adjacent to $i_3, i_4$ and $G[\{ i_1, i_2, i_3, i_4, j\}]$ contains no further edges. Then if $i_1, i_2, i_3, i_4 \in I$ and $j \in J$ we call the graph induced by $\{i_1, i_2, i_3, i_4, j\}$ a \textit{mean fork}. See Figure \ref{fig:mean-crown-graphs}.
	
	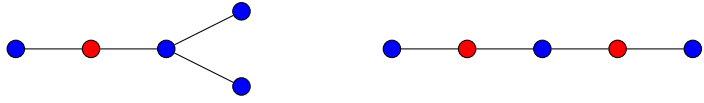
\begin{figure}[!h]
		\centering
		\begin{tikzpicture}
	\def\width{1}
	
	\node[dot, fill =blue] (i1) at (0,0) {};
	\node[dot, fill =red] (i2) at (\width,0) {};
	\node[dot, fill =blue] (i3) at (2*\width,0) {};
	\node[dot, fill =blue] (i4) at (3*\width, .5*\width) {};
	\node[dot, fill =blue] (i5) at (3*\width, -.5*\width) {};
	
	\draw (i1) to (i2);
	\draw (i2) to (i3);
	\draw (i3) to (i4);
	\draw (i3) to (i5);
	
	\def\offset{5}
	
	\node[dot, fill = blue] (p1) at (\offset, 0){};
	\node[dot, fill = red] (p2) at (\offset + \width, 0){};
	\node[dot, fill = blue] (p3) at (\offset + 2*\width, 0){};
	\node[dot, fill = red] (p4) at (\offset + 3*\width, 0){};
	\node[dot, fill = blue] (p5) at (\offset + 4*\width, 0){};
	
	\draw (p1) to (p2);
	\draw (p2) to (p3);
	\draw (p3) to (p4);
	\draw (p4) to (p5);
\end{tikzpicture}
		\caption{A mean fork is drawn at left and a mean $P_5$ is drawn at right. Blue vertices are in $I$ and red vertices are in $J$.}
		\label{fig:mean-crown-graphs}
	\end{figure}
	
	We call a graph $G$ a \textit{crown} if there is a partition of $V(G)$ into non-empty sets $I, J$ satisfying all of the following axioms:
	
	\begin{enumerate}[(i)]
		\item There is no clique $Z$ such that $G \setminus Z$ has two parts $X, Y$ such that $Y$ is non-empty and $I \subseteq V(X \cup Z)$, \label{crown-axiom:no-clique-cutset}. 
		\item For every induced $u, v \in I$, every $uv$-path $P$ with $P^* \subseteq J$ has length two, \label{crown-axiom:paths-length-two}
		\item $H[I]$ does not contain a $P_4$, \label{crown-axiom:I-is-co-graph}
		\item $H$ does not contain a mean $P_5$. \label{crown-axiom:no-mean-P5}
		\item  \label{crown-axiom:no-mean-fork}$H$ does not contain a mean fork.
	\end{enumerate}
	
	\begin{lemma}
		Let $G$ be a crown with partition $I,J$. Suppose $G$ is $\ell$-monoholed for some $\ell \geq 6$. Then for every $j \in J$, the set $N(j) \cap I$ contains two non-adjacent vertices. 
		\label{lem:crown-tops-have-two-no-adjacent-nbrs-in-bottom}
	\end{lemma}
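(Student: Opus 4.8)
The plan is to argue by contradiction: suppose some $j \in J$ has $N(j) \cap I$ a (nonempty) clique, and derive a violation of one of the crown axioms. First I would observe that $N(j) \cap I \neq \emptyset$: since $G$ has no clique cutset of the form in axiom \ref{crown-axiom:no-clique-cutset}, $j$ cannot be separated from the rest of $G$ by a clique, and in particular (taking $Z$ to be a single vertex or using connectivity) $j$ must have a neighbor, and one can push this to a neighbor in $I$ using that $I,J$ are both nonempty and $G$ is connected. So the real case to rule out is when $K := N(j) \cap I$ is a nonempty clique.

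Next I would use axiom \ref{crown-axiom:no-clique-cutset} applied cleverly: the set $K \cup \{j\}$ is a clique (since $K$ is a clique and $j$ is complete to $K$), and if $K \cup \{j\}$ were a cutset separating off a component, we would need that component to contain vertices of $I$ only on the side with $I \subseteq V(X \cup Z)$. Since $j \in J$, I would instead consider whether $K$ itself separates $j$ from $I \setminus K$. If it does, then $K$ is a clique cutset with $j$ on one side and $I \setminus K$ plus the rest on the other; arranging so that $I \subseteq V(X\cup Z)$ (put everything except $j$'s side into $X\cup Z$, noting $j \notin I$) contradicts axiom \ref{crown-axiom:no-clique-cutset} provided the $j$-side is nonempty, which it is. Hence $K$ does not separate $j$ from $I \setminus K$, so there is an induced path from $j$ to some vertex of $I \setminus K$ avoiding $K$; its first internal vertex after $j$ is a neighbor of $j$ not in $K$, hence in $J$ (as $N(j)\cap I = K$).

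Then I would trace such a shortest path $j \dd j' \dd \cdots$ with $j' \in J$ and endpoint $i^* \in I \setminus K$, and examine its short prefix. Picking $i_1, i_2 \in K$ (if $|K| \ge 2$) or handling $|K| = 1$ separately, I expect to produce either a mean $P_5$ or a mean fork on the vertices $\{i_1, i_2, j, j', \dots\}$ or a forbidden $P_4$ inside $I$, or a short hole. The case analysis hinges on how the path out of $j$ reattaches to $I$: if it reattaches at a vertex adjacent to some $i \in K$, we likely get a hole whose length (using $\ell \ge 6$ and axiom \ref{crown-axiom:paths-length-two} controlling $I$-to-$I$ paths through $J$) is forced to be too short; if it reattaches far from $K$, the induced subgraph on $j$, two vertices of $K$, and two vertices along the path gives a mean fork (when $K$'s two vertices are the "prongs" seen from $j$) or a mean $P_5$.

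The main obstacle I anticipate is the bookkeeping when $K$ is a single vertex, because then the "mean fork" configuration (which needs $i_1, i_2$ both adjacent to $j$) is unavailable, and one must instead lean harder on axioms \ref{crown-axiom:paths-length-two} and \ref{crown-axiom:no-mean-P5} together with the monoholed hypothesis — specifically, showing that whatever induced path returns from $j$ back to $I$ creates a hole of length $\ne \ell$ or a mean $P_5$. Getting the exact length of that hole to land outside $\{\ell\}$ using only $\ell \ge 6$ will require carefully choosing the shortest return path and invoking axiom \ref{crown-axiom:paths-length-two} to bound the $I$-portion; I would structure the write-up so that the $|K| \ge 2$ case (forks) and the $|K| = 1$ case (holes / mean $P_5$) are handled as two clearly separated subarguments.
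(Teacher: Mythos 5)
Your plan has a genuine gap, and it is exactly at the place where the paper's proof does something you do not: the clique-cutset axiom must be applied to the whole connected component of ``bad'' vertices of $J$ (those whose neighborhood in $I$ is a clique), not to the single vertex $j$. Applying the axiom only with $Z=K$ (or $K\cup\{j\}$) gives you a path from $j$ to $I\setminus K$ whose interior lies in $J$, but that interior may contain \emph{good} vertices of $J$, and then no forbidden configuration is forced. Concretely, your argument stalls already in the shortest case: take $K=\{k_1,k_2\}$ a clique, $i^*\in I\setminus K$ non-adjacent to $K$, and a path $j \dd j_1 \dd i^*$ in which $j_1$ is complete to $K\cup\{i^*\}$ and adjacent to $j$. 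This five-vertex configuration contains no $C_4$ or $C_5$, no $P_4$ in $I$, no mean $P_5$, no mean fork, and violates neither axiom (ii) (the only induced $I$--$I$ paths through $J$ here have length two) nor anything else local; $j_1$ is simply a good vertex sitting next to the bad vertex $j$. Note also that no mean $P_5$ or mean fork can ever use $j$ itself as its $J$-vertex, since those configurations require a $J$-vertex with two non-adjacent $I$-neighbors, which $j$ by assumption lacks; so the contradiction can never come from the prefix of the path at $j$, and your ``two clearly separated subarguments'' both run into this wall. (A smaller issue: your opening step that $N(j)\cap I\neq\emptyset$ does not follow from the axiom as sketched either; a path from $j$ to $I$ avoiding $\emptyset$ only shows some vertex of $J$ on it meets $I$, not that $j$ does.)

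The paper's proof avoids this by letting $J'$ be the set of good vertices of $J$ and taking $H$ to be a component of $G[J\setminus J']$. Axiom (i) applied to $H$ says $N(V(H))\cap(I\cup J')$ is not a clique, which yields two non-adjacent attachments $u,v\in I\cup J'$ joined by an induced path whose interior consists \emph{only of bad vertices}. That property is what powers each case: for instance, when $u\in I$ and $v\in J'$, axiom (ii) forces the path (extended by a non-neighbor $i'$ of $u$ among $v$'s two non-adjacent $I$-neighbors) to contain a common neighbor of the non-adjacent pair $u,i'$ --- but such a common neighbor would be good, contradicting that the interior is bad. Your single-path version has no analogue of this step, because the first vertex after $j$ may itself be good. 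To repair your approach you would have to iterate the cutset argument as the ``$j$-side'' grows (absorbing good neighbors like $j_1$ into the clique $Z$), which in effect reconstructs the component argument; as written, the proposal does not prove the lemma.
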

	\begin{proof}
		Let $J'$ be the set of vertices $j \in J$ where $N(j) \cap I$ contains two non-adjacent vertices. Suppose $J \neq J'$ and let $H$ be a component of $G[J \setminus J']$. 
		By Axiom \ref{crown-axiom:no-clique-cutset}, the $N(V(H)) \cap (I \cup J')$ is not the vertex set of a clique. Hence there exist non-adjacent $u, v \in I \cup J$ such that $u, v$ both have neighbors in $V(H)$.
		Let $P$ be an induced $uv$-path with $P^* \subseteq J'$.
		\stmt{$u,v$ are not both elements of $I$ \label{diadem:caseII}}
		Suppose $u, v \in I$. Then by Axiom \ref{crown-axiom:paths-length-two}, $P$ has length two. But then $P^*$ is a single vertex $j \in J'$. But then $N(j) \cap V(I)$ is not the vertex set of a clique, a contradiction. This proves (\ref{diadem:caseII}).
		\stmt{$u,v$ are not both elements of $J'$.\label{diadem:caseJ'J'}}
		Suppose $u, v \in J'$. 
		Let $I_{uv}$ denote the set of common neighbors of $u$ and $v$ in $I$. Let $I_u$ and $I_v$ denote the sets of neighbors of $u$ and $v$ in $I \setminus I_{uv}$, respectively.
		Suppose there are two non-adjacent vertices $i, i' \in I_{uv}$. Then $v \dd i \dd u \dd i' \dd v$ is a hole of length four, a contradiction. Hence $G[I_{uv}]$ is a clique.
		
		By definition of $V(H)$, $I_u$ and $I_v$ are not empty.
		Suppose there is some $i_u \in I_u$ and $i_v \in I_v$ such that $i_u$ and $i_v$ are not adjacent. By definition $T$ does not contain a common neighbor of $i_u$ and $i_v$. But then $G[V(T) \cup \{ u,v\}]$ contains an $i_ui_v$-path of length greater than two, contradicting Axiom \ref{crown-axiom:paths-length-two}. Hence $I_u$ and $I_v$ are complete to each other. 
		
		Suppose there exist non-adjacent $i, i' \in I_u$. Then there is a hole of length four induced by $i, i', u$ and some vertex in $I_v$. Hence, $G[I_u]$ is a clique. Similarly, $G[I_v]$ is a clique.
		Since $N(u) \cap I$ contains two non-adjacent vertices, it follows that $I_{uv} \neq \emptyset$.
		Since $u,v \in J'$, there is exists $i_u \in I_u$, $i_{uv}, i_{uv}' \in I_{uv}$ and $i_v \in I_v$ such that $i_u$ is not adjacent to $i_{uv}$ and $i_v$ is not adjacent to $i_{uv}'$.
		If $i_{uv} = i_{uv}'$ then $u \dd i_{uv} \dd v \dd i_v \dd i_u \dd u$ is a hole of length five, a contradiction.
		It follows that every $i' \in I_{uv}$ is adjacent to one of $i_u$ or $i_v$. Hence $i_{uv} \neq i_{uv}'$. Then $i_u \dd i_{uv}' \dd i_{uv} \dd i_u$ is a $P_4$ in $G[I]$ contradicting Axiom \ref{crown-axiom:I-is-co-graph}. This proves (\ref{diadem:caseJ'J'}).
		\\
		\\
		By (\ref{diadem:caseII}) and (\ref{diadem:caseJ'J'}),  we may assume that $u \in I$ and $v \in J'$. Then there exist non-adjacent $i, i' \in N(v) \cap I$. 
		By definition, $u \neq i, i'$. Thus, if $u$ is adjacent to both $i$ and $i'$, then $u \dd i \dd v \dd i' \dd u$ is a hole of length four, a contradiction. Hence we may assume $u$ is not adjacent to $i'$. Then the path $uPv\dd i'$ must contain a common neighbor of $u$ and $i'$ by Axiom \ref{crown-axiom:paths-length-two}. Hence, $V(P) \setminus \{ u\}$ must contain a common neighbor of $u$ and $i'$, contradicting that $P^* \subseteq J \setminus J'$. 
	\end{proof}
	
	\begin{fact}
		Let $G$ be a crown with partition $I, J$. Suppose $G$ is $\ell$-monoholed for some $\ell \geq 6$. Then for every $j \in J$, the complement of the graph induced by $N(j) \cap I$ contains exactly one non-trivial component.
		\label{fact:crown-j-has-unique-non-trivial-anticomponent-in-I}
	\end{fact}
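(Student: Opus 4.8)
The statement to prove is Fact~\ref{fact:crown-j-has-unique-non-trivial-anticomponent-in-I}: for a crown $G$ with partition $I,J$ that is $\ell$-monoholed for some $\ell\ge 6$, and for every $j\in J$, the complement of $G[N(j)\cap I]$ has exactly one non-trivial component.

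\textbf{Plan.} The plan is to fix $j\in J$, write $I_j=N(j)\cap I$, and argue about the anticomponents of $G[I_j]$ (equivalently, the components of the complement). By Lemma~\ref{lem:crown-tops-have-two-no-adjacent-nbrs-in-bottom}, $G[I_j]$ is not a clique, so it has at least one non-trivial anticomponent; the content is to show there cannot be two. So suppose for contradiction that $G[I_j]$ has two distinct non-trivial anticomponents, say containing non-edges $\{a,b\}$ and $\{c,d\}$ respectively, with $a,b,c,d$ distinct. Since $\{a,b\}$ and $\{c,d\}$ lie in different anticomponents, every vertex of $\{a,b\}$ is adjacent to every vertex of $\{c,d\}$ in $G$; in particular $a$--$c$--$b$--$d$--$a$ together with... wait, that would be a $4$-cycle $a\,c\,b\,d$, but it uses the edge $ac, cb, bd, da$ which all exist — however this is a $C_4$ only if $a\not\sim b$ and $c\not\sim d$, which is exactly our hypothesis. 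So $\{a,c,b,d\}$ induces a $4$-hole in $G$, contradicting that $G$ is $\ell$-monoholed with $\ell\ge 6$ (in fact $\ell\ge 5$ suffices). That is the whole argument.

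\textbf{Main obstacle.} The only subtlety is the degenerate case where one of the two "non-trivial anticomponents" is forced to share vertices, i.e.\ checking that we really can pick four \emph{distinct} vertices $a,b,c,d$: a non-trivial anticomponent has at least two vertices and contains at least one non-edge, and two distinct anticomponents are vertex-disjoint by definition, so $\{a,b\}\cap\{c,d\}=\emptyset$. Thus there is no real obstacle; the proof is a one-line $C_4$ argument once the bookkeeping of anticomponents is set up correctly. I would also remark explicitly why $G[I_j]$ is $C_4$-free: because $\{a,c,b,d\}\subseteq I_j\subseteq V(G)$ and $G$ has no hole of length four.

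\begin{proof}
	Fix $j\in J$ and set $I_j=N(j)\cap I$. The components of the complement of $G[I_j]$ are exactly the anticomponents of $G[I_j]$. By Lemma~\ref{lem:crown-tops-have-two-no-adjacent-nbrs-in-bottom}, $I_j$ contains two non-adjacent vertices, so $G[I_j]$ has at least one non-trivial anticomponent.

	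Suppose for a contradiction that $G[I_j]$ has two distinct non-trivial anticomponents $D_1$ and $D_2$. Since $D_1$ is non-trivial it contains two non-adjacent vertices $a,b\in D_1$, and since $D_2$ is non-trivial it contains two non-adjacent vertices $c,d\in D_2$. As $D_1$ and $D_2$ are distinct anticomponents they are disjoint, so $a,b,c,d$ are four distinct vertices of $I_j$. Moreover $D_1$ and $D_2$ being distinct anticomponents of $G[I_j]$ means there is no non-edge of $G[I_j]$ between them; hence $a,b$ are each adjacent to each of $c,d$ in $G$. Since $a$ is non-adjacent to $b$ and $c$ is non-adjacent to $d$, the set $\{a,c,b,d\}\subseteq V(G)$ induces the cycle $a\dd c\dd b\dd d\dd a$, that is, a hole of length four. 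This contradicts the assumption that $G$ is $\ell$-monoholed with $\ell\ge 6$.

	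Therefore $G[I_j]$ has exactly one non-trivial anticomponent, i.e.\ the complement of $G[N(j)\cap I]$ has exactly one non-trivial component.
\end{proof}
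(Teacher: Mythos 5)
Your proof is correct and follows essentially the same route as the paper: both derive a $C_4$ from a non-edge in each of two supposed non-trivial anticomponents of $G[N(j)\cap I]$, using that all edges between distinct anticomponents are present. The only cosmetic difference is that you justify the existence of a non-edge via Lemma \ref{lem:crown-tops-have-two-no-adjacent-nbrs-in-bottom} while the paper appeals directly to the no-clique-cutset axiom; either is fine.
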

	\begin{proof}
		Let $j \in J$ and let $H$ denote $G[N(j) \cap I]$. By Axiom \ref{crown-axiom:no-clique-cutset}, $H$ is not a clique so $H^c$ contains at least one non-trivial component.
		Suppose $H^c$ contains two nontrivial components $C_1, C_2$. Then there are some $w,x \in V(C_1)$ and $y,z \in V(C_2)$ such that $wx, yz \not \in E(H)$. Since $C_1$ and $C_2$ are different components of $H^c$, it follows that $w \dd y \dd x \dd z \dd w$ is a hole of length four in $G$, a contradiction.
	\end{proof}
	
	Suppose $G$ is an $\ell$-monoholed graph with partition $I, J$ for some $\ell \geq 6$ and $G$ is a crown. Then for every $j \in J$ let $g(j)$ denote the vertices of the nontrivial anticomponent of $N(j) \cap I$ guaranteed by Fact \ref{fact:crown-j-has-unique-non-trivial-anticomponent-in-I}. We call the set $g(j)$ the \textit{good children} of $j$.
	We call the set neighbors of $j$ in $I \setminus g(j)$ the \textit{bad children} of $j$ and denote it by $b(j)$.
	
	\begin{lemma}
		Let $G$ be an $\ell$-monoholed for some $\ell \geq 6$. Suppose $G$ contains a crown with partition $I, J$.
		Then for every two distinct $u, v \in J$ the following statements both hold:
		\begin{enumerate}[(a)]
			\item If $u, v$ are adjacent then either $N(u) \cap I \subseteq N(v) \cap I$  or $N(v) \cap N(I) \subseteq N(u) \cap I$.
			\item  If $u, v$ are not adjacent then $u$ is anticomplete to $g(v)$ and $v$ is anticomplete to $g(u)$.
		\end{enumerate}
	\label{lem:crown-good-bag-children-properties}
	\end{lemma}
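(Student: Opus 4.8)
The plan is to prove each part by contradiction, using that $G$ is $\ell$-monoholed with $\ell\ge 6$ (so $G$ has no hole of length four or five) together with the crown axioms, Lemma~\ref{lem:crown-tops-have-two-no-adjacent-nbrs-in-bottom} and Fact~\ref{fact:crown-j-has-unique-non-trivial-anticomponent-in-I}.

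For (a), suppose $u,v\in J$ are adjacent but neither of $N(u)\cap I$, $N(v)\cap I$ is contained in the other. Pick $a\in (N(u)\cap I)\setminus N(v)$ and $b\in (N(v)\cap I)\setminus N(u)$. If $a$ were adjacent to $b$ then $a\dd u\dd v\dd b\dd a$ would be a hole of length four (its only possible chords, $av$ and $ub$, are non-edges by choice of $a,b$), contradicting that $G$ is $\ell$-monoholed; so $a\not\sim b$. But then $a\dd u\dd v\dd b$ is an induced path of length three between two vertices of $I$ whose interior lies in $J$, contradicting Axiom~\ref{crown-axiom:paths-length-two}. This settles (a).

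For (b), by the symmetry of the hypothesis in $u$ and $v$ it suffices to show that if $u,v\in J$ are non-adjacent then $u$ has no neighbor in $g(v)$. Assume $a\in g(v)$ is adjacent to $u$. By Lemma~\ref{lem:crown-tops-have-two-no-adjacent-nbrs-in-bottom} and Fact~\ref{fact:crown-j-has-unique-non-trivial-anticomponent-in-I}, $g(v)$ has at least two vertices and the complement of $G[N(v)\cap I]$ restricted to $g(v)$ is connected. First, $g(v)\not\subseteq N(u)$: otherwise, picking $a'\in g(v)$ non-adjacent to $a$ (such $a'$ exists, since $g(v)$ is co-connected on $\ge 2$ vertices), the vertices $u,a,v,a'$ would induce a $C_4$. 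Since $g(v)$ is co-connected, we may therefore choose $p\in g(v)\cap N(u)$ and $q\in g(v)\setminus N(u)$ with $p\not\sim q$. Then $u\dd p\dd v\dd q$ is an induced $P_4$ (edges $up,pv,vq$; non-edges $uv,uq,pq$). Now I would extend on the $u$-side: if some $w\in N(u)\cap I$ satisfies $w\not\sim p$, then $w\not\sim v$ (else $w\dd u\dd p\dd v\dd w$ is a $C_4$), and if moreover $w\not\sim q$ then $w\dd u\dd p\dd v\dd q$ is a mean $P_5$, contradicting Axiom~\ref{crown-axiom:no-mean-P5}, while if $w\sim q$ then $\{u,w,q,v,p\}$ induces a $C_5$, again a contradiction. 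Finally one must handle the case that no such $w$ exists, i.e.\ $p$ is a bad child of $u$ ($p$ complete to the rest of $N(u)\cap I$): here one works instead with two non-adjacent good children $x,y$ of $u$ (which $p$ is adjacent to) and the vertices $v,q$, producing a $C_4$, a mean $P_5$, or a mean fork (contradicting Axiom~\ref{crown-axiom:no-mean-P5} or Axiom~\ref{crown-axiom:no-mean-fork}) according to the adjacencies of $x,y$ to $v$ and $q$. This gives the required contradiction, and by symmetry $v$ is anticomplete to $g(u)$ as well.

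The routine parts are (a) and the extraction of the induced $P_4$ in (b); the main obstacle is the last step of (b) — in particular the "bad child'' subcase — where one must verify in every configuration of adjacencies that one of the forbidden patterns ($C_4$, $C_5$, a long induced path with interior in $J$, a mean $P_5$, or a mean fork) genuinely appears. I expect this to be a finite but somewhat delicate case split relying only on $\{C_4,C_5\}$-freeness, Axioms~\ref{crown-axiom:paths-length-two}, \ref{crown-axiom:I-is-co-graph}, \ref{crown-axiom:no-mean-P5}, \ref{crown-axiom:no-mean-fork}, and Lemma~\ref{lem:crown-tops-have-two-no-adjacent-nbrs-in-bottom}.
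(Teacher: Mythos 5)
Your part (a) is exactly the paper's argument. For part (b) you reach the same conclusion with the same toolkit but organize it differently: the paper first proves that $g(u)\cap g(v)=\emptyset$ (via the same mean-$P_5$/$C_5$ dichotomy you use in your first case, applied to the quintuple $w_u\dd u\dd w\dd v\dd w_v$), and this disjointness makes the endgame a single mean fork, because a good child of $u$ adjacent to $v$ is then automatically a bad child of $v$ and hence complete to $g(v)$. You instead work directly with a mixed non-edge $p\in g(v)\cap N(u)$, $q\in g(v)\setminus N(u)$, and defer a terminal case split in the subcase where $p$ is a bad child of $u$. That deferred split does close, so your plan is sound; for the record: take non-adjacent $x,y\in g(u)$, both adjacent to $p$. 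At most one of $x,y$ is adjacent to $v$ (else $u\dd x\dd v\dd y\dd u$ is a hole of length four); if both are adjacent to $q$ then $p\dd x\dd q\dd y\dd p$ is a hole of length four; if exactly one, say $x$, is adjacent to $q$ then $q\dd x\dd p\dd y$ is a $P_4$ in $G[I]$, contradicting Axiom~\ref{crown-axiom:I-is-co-graph} (this outcome is missing from your parenthetical list for the bad-child subcase, though Axiom~\ref{crown-axiom:I-is-co-graph} is in your declared toolkit); if neither is adjacent to $q$ then either neither is adjacent to $v$ and $\{q,p,x,y,v\}$ is a mean fork with $j=v$ (contradicting Axiom~\ref{crown-axiom:no-mean-fork}), or, say, $x\sim v$ and $y\not\sim v$, and $y\dd u\dd x\dd v\dd q$ is a mean $P_5$ (contradicting Axiom~\ref{crown-axiom:no-mean-P5}). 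So the two proofs differ only in bookkeeping: the paper's disjointness claim buys a one-shot mean-fork finish, while your direct route avoids proving disjointness at the cost of this four-way split, which you correctly anticipated but should write out.
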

	\begin{proof}
		\stmt{(a) holds. \label{tiara:a}}
		Let $u, v \in J$ be adjacent vertices. Suppose there exists $i, i' \in I$ such that $iu, i'v \in E(G)$ and $iv, i'u \not \in E(G)$. Then $i,i'$ are not adjacent because otherwise $u \dd i \dd i' \dd v \dd u$ is a hole of length four, a contradiction. But then $i \dd u \dd v \dd i'$ is a path contradicting Axiom \ref{crown-axiom:paths-length-two}. 
		Thus, $N(u) \cap I \subseteq N(v) \cap I$.
		This proves (\ref{tiara:a}).
		\\
		\\
		Thus, it only remains to show that statement (b) holds.
		Let $u, v \in J$ be adjacent vertices. Since $G$ does not contain a hole of length four the set of common neighbors of $u$ and $v$ is either empty or the vertex set of a clique. 
		\stmt{$g(u)$ and $g(v)$ are disjoint. \label{tiara:disjoint-good-nbrs}}
		Suppose there exists some $w \in g(u) \cap g(v)$.
		By definition, $w$ is not adjacent to some $w_u \in g(u)$ and some $w_v \in g(v)$. It follows that $v$ is not adjacent to $w_u$ and $u$ is not adjacent to $w_v$.
		If $w_u$ and $w_v$ are adjacent then $w_u \dd u \dd w \dd v \dd w_v \dd w_u$ is a hole of length five, a contradiction. so $w_uw_v \not \in E(G)$. But then $w_u \dd u \dd w \dd v \dd w_v$ is a mean $P_5$, contradicting that $G$ is a crown. This proves (\ref{tiara:disjoint-good-nbrs})
		\\
		\\
		Suppose some good child $w$ of $u$ is a neighbor of $v$.
		By definition $w$ has a non-neighbor $x \in g(u)$. Then $x$ is not adjacent to $v$ because $G$ does not contain a hole of length four. Choose non-adjacent vertices $r,s \in g(v)$.
		By (\ref{tiara:disjoint-good-nbrs}), $w$ is a bad child of $v$. Thus, $w$ is adjacent to both $r$ and $s$. Since $G[\{ r,s, w, x\}]$ does not induce a $C_4$, we may assume $x$ is not adjacent to $r$. If $x$ is adjacent to $s$ the path $x \dd s \dd w \dd r$ violates Axiom \ref{crown-axiom:I-is-co-graph} from the definition of crown. So $x$ is not adjacent $s$.
		It follows that $u$ is not adjacent to $r,s$ because $r,s \not \in g(u)$ by (\ref{tiara:disjoint-good-nbrs}). But then $G[\{x, u, w, r,s\}]$ is a mean fork, contradicting that $G$ is a crown.
	\end{proof}
	
	\subsection{Defining a crowned $k$-corpus}
	Let $G$ be an $\ell$-monoholed graph.
	Let $\F$ be a $k$-corpus and let $X,Y$ denote the vertex sets of the two-elemental sides of $\F$.
	Let $A_X, A_Y$ contain all vertices of $\F$ in apexes adjacent to vertices in $X$ and $Y$ respectively. Possibly $A_X, A_Y$ are empty.
	Let $J_X$ be a set of vertices in $V(G) \setminus V(\F)$ with the property that $N(J_X) \cap V(\F) \subseteq X \cup A_X$ and for every $j \in J_X$ $N(j) \cap X$ contains two non-adjacent vertices.
	Define $J_Y$ similarly for $Y$.
	Let $\mathcal{R}$ be the inflated graph induced by $G[V(\F) \cup J_X \cup J_Y]$ where bags of $\F$ are bags of $\mathcal{R}$ and $\{j \}$ is a bag for every $j \in J_X \cup J_Y$.
	We call any such graph $\mathcal{R}$ a \textit{crowned $k$-corpus}.
	
	For simplicity, we will equate vertices in $J_X \cup J_Y$ with the bags containing them in our analysis.
	We refer to $\F$ as the $k$-corpus of $\mathcal{R}$. We call the apexes, elemental sets, and elemental paths of $\mathcal{F}$ the apexes, elemental sets and elemental path of $\mathcal{R}$, respectively.
	The following lemma proves $G[X \cup J_X]$ and $G[Y \cup J_Y]$ are both crowns under the partitions $X, J_X$ and $Y, J_Y$, respectively. We will refer to them as the \textit{crowns} of $\mathcal{R}$.
	
	\begin{lemma}\label{lem:the-crowns-of-a-crowned-corpus-are-crowns}
		Let $G$ be an $\ell$-monoholed graph and $\mathcal{R}$ be a crowned $k$-corpus in $G$.
		Let $X, Y, J_X, J_Y$ be as in the definition of crowned $k$-corpus.
		Then $G[X \cup J_X]$ is a crown with partition $X, J_X$ and $G[Y \cup J_Y]$ is a crown with partition $Y, J_Y$.
	\end{lemma}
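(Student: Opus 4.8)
The plan is to check, for the partition $(X,J_X)$ of $V(G[X\cup J_X])$, each of the five axioms \ref{crown-axiom:no-clique-cutset}--\ref{crown-axiom:no-mean-fork} in the definition of a crown; the statement for $(Y,J_Y)$ then follows by the same argument with the roles of the two elemental sides exchanged. Two ingredients do almost all of the work. First, the defining properties of $J_X$: every $j\in J_X$ has two non-adjacent neighbours in $X$, and $N(j)\cap V(\F)\subseteq X\cup A_X$, so $j$ has no neighbour in $Y$ and no neighbour among the interior vertices of the elemental paths of $\F$. Second, Fact \ref{fact:corpus-two-elemental-vertices-ell-minus-2-path-between-them}: any two non-adjacent vertices $x,x'$ of $X$ (necessarily lying in distinct elemental paths $\q_x\neq\q_{x'}$, since each elemental side meets each elemental path in one bag, and a bag is a clique) are joined by a path $P_{xx'}$ of length $\ell-2$ with $V(P_{xx'})\subseteq V(\q_x\cup\q_{x'}\cup Y)$; in particular $P_{xx'}$ has its interior in $V(\F)$ and disjoint from $X\cup A_X$, hence anticomplete to $J_X$.

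Axiom \ref{crown-axiom:no-clique-cutset} is immediate. Suppose a clique $Z$ of $G[X\cup J_X]$ separated off a non-empty part $Q$ with $X$ contained in the complementary part together with $Z$. Then $Q\cap X=\emptyset$, so $Q\subseteq J_X$; pick $q\in Q$, which has two non-adjacent neighbours in $X$. Since $q$ has no neighbour in the part containing $X$, both of these neighbours lie in $Z$, contradicting that $Z$ is a clique. For axiom \ref{crown-axiom:I-is-co-graph}, I would show $G[X]$ is $P_4$-free: by the definition of a $k$-corpus the bags meeting $X$ are pairwise complete or anticomplete, so $G[X]$ is a clique-blow-up of the subgraph of the underlying $k$-spine induced on the $A$-ends (or $B$-ends) of its elemental paths. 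By Lemma \ref{lem:if-even-body-of-mated-spider-is-a-threshold-graph}, Lemma \ref{lem:mated-k-spiders-are-2-connected}, and the definitions of generalized $k$-prism and generalized $k$-pyramid underlying Theorems \ref{thm:matedspiderstructure-odd-case} and \ref{thm:orderly-spines-even-case}, each terminating set of a $k$-spine induces a threshold graph; deleting apexes turns this elemental side into a threshold graph together with some extra vertices (the second vertices of the apex-ending paths) that are anticomplete to the rest, which is still $P_4$-free. A clique-blow-up of a $P_4$-free graph is $P_4$-free, so axiom \ref{crown-axiom:I-is-co-graph} holds.

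The remaining axioms \ref{crown-axiom:paths-length-two}, \ref{crown-axiom:no-mean-P5}, \ref{crown-axiom:no-mean-fork} are all handled by the same device. In each of them we are handed an induced subgraph of $G[X\cup J_X]$ that contains a short induced path $x=w_0\dd w_1\dd \cdots\dd w_t=x'$ between two vertices $x,x'$ of $X$ whose interior we can take to lie in $J_X\cup X$; if $x,x'$ are non-adjacent we close this path up with the corpus path $P_{xx'}$ of Fact \ref{fact:corpus-two-elemental-vertices-ell-minus-2-path-between-them}. Because $P_{xx'}$ has its interior in $V(\F)$, it is anticomplete to every $J_X$-vertex on the short path, and — after a short check using that distinct elemental paths are anticomplete except at their ends, and that for $\ell\geq 7$ no elemental path is a single edge — also anticomplete to any further $X$-vertex present (e.g. the middle vertex of a mean $P_5$, or $i_3,i_4$ in a mean fork). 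Hence the union is an induced cycle, and since $G$ is $\ell$-monoholed its length is exactly $\ell$, which pins down $t$: this forces $t=2$ in axiom \ref{crown-axiom:paths-length-two}, rules out a mean $P_5$ (length $\ell+2$), and, combined with an extra case split on which of the paths $i_1\dd j\dd i_2\dd i_3$ and $i_1\dd j\dd i_2\dd i_4$ one closes up, rules out a mean fork. When instead $x=x'$ or $xx'\in E(G)$, the short path closes directly into a triangle, a $C_4$, or a $C_5$; since $G$ contains no induced $C_4$ or $C_5$ (as $\ell\geq 6$), these degenerate cases are vacuous or immediately contradictory.

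The main obstacle I expect is exactly the bookkeeping hidden in the last paragraph. One must verify that the corpus path $P_{xx'}$ can always be chosen so that the resulting cycle is genuinely chordless — ruling out chords from $x,x'$ and from the extra configuration vertices to the interior of $P_{xx'}$ — and one must separately dispose of the degenerate situations where two of the relevant vertices of $X$ share a bag (so the induced paths in the configuration need not translate cleanly to distinct elemental paths) or where elemental paths of $\F$ are as short as the structure theorems allow. This is where the hypotheses on $\ell$ and the explicit list of possible $k$-spine path lengths from Theorems \ref{thm:matedspiderstructure-odd-case} and \ref{thm:orderly-spines-even-case}, together with Lemma \ref{lem:inflated-graphs-facts}, are used to keep every auxiliary path induced.
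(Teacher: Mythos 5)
Your proposal is correct and, for the substantive axioms, follows the same device as the paper: for axioms \ref{crown-axiom:paths-length-two}, \ref{crown-axiom:no-mean-P5} and \ref{crown-axiom:no-mean-fork} the paper does exactly what you describe, closing the offending configuration with the $(\ell-2)$-path of Fact \ref{fact:corpus-two-elemental-vertices-ell-minus-2-path-between-them} and using that $G$ is $\ell$-monoholed to force length two or produce a hole of length $\ell+1$ or $\ell+2$. Where you diverge is axiom \ref{crown-axiom:I-is-co-graph}: the paper again uses the long-hole trick (an induced $P_4$ in $X$ plus the corpus path between its ends gives a hole of length at least $\ell+1$), whereas you deduce $P_4$-freeness structurally, from the terminating sets of a $k$-spine being threshold graphs (plus the isolated ``replacement'' ends created by deleting apexes) together with the fact that elemental-side bags of a $k$-corpus are pairwise complete or anticomplete, so $G[X]$ is a clique blow-up of a $P_4$-free graph. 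Both work; the paper's argument is shorter and needs nothing about spines beyond Fact \ref{fact:corpus-two-elemental-vertices-ell-minus-2-path-between-them}, while yours is more robust to the exact value of $\ell$ and makes the cograph structure of the elemental sides explicit. You also verify axiom \ref{crown-axiom:no-clique-cutset} explicitly (correctly, straight from the requirement that every $j\in J_X$ has two non-adjacent neighbours in $X$), a point the paper's proof passes over in silence. Finally, the degenerate case you defer --- two configuration vertices of $X$ sharing a bag --- is not a real obstacle and is dispatched exactly as the paper does it: for the mean $P_5$ and axiom \ref{crown-axiom:paths-length-two} the relevant $X$-vertices are pairwise non-adjacent, hence in distinct (anticomplete) bags, and for the mean fork the complete-or-anticomplete condition on elemental-side bags shows that, e.g., $i_2$ and $i_3$ in one bag would force the missing edge $i_3i_4$; so no vertices of the configuration share a bag and the anticompleteness of the corpus path's interior to $J_X$ and to the extra $X$-vertex goes through as you sketched.
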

	\begin{proof}
		Let $\{X_1, X_2, \dots, X_k\}$ and $\{Y_1, Y_2, \dots Y_k\}$ denote the elemental sides of $\mathcal{R}$. For each $i \in [k]$ denote the elemental path of $\mathcal{R}$ with ends $X_i, Y_i$ as $\q_i$. 
		\stmt{For every $u, v \in X$, every induced $uv$-path $P$ with interior in $J_X$ has length two. \label{coronet:path-axiom}}
		We may assume $u$ is not adjacent to $v$. Then $u,v$ are in different bags of $\F$. Then there is a $uv$-path $M$ in $\F$ with interior anticomplete to $J_X$ of length $\ell -2$. Since $G$ is $\ell$-monoholed the statement follows. This proves (\ref{coronet:path-axiom}).
		\stmt{$G[X]$ is $P_4$-free. \label{coronet:elemental-sides-are-cographs}} %this could be taken as an outside lemma
		Suppose $v_1 \dd v_2 \dd v_3 \dd v_4$ is an induced path in $G[X]$.
		Since each bag of $\F$ induces a clique, we may assume $v_1 \in X_1$ and $v_4 \in X_2$.
		By definition of $k$-corpus, $v_2 \not \in X_2$ and $v_3 \not \in X_4$. 
		By Fact \ref{fact:corpus-two-elemental-vertices-ell-minus-2-path-between-them} there is an $v_2v_3$-path $P$ such that $V(P) \subseteq V(\q_1 \cup \q_2) \cup Y$.
		But then the union of $P$ and the path $v_1 \dd v_2 \dd v_3 \dd v_4$ is a hole of length at least $\ell +1$, a contradiction.
		This proves (\ref{coronet:elemental-sides-are-cographs}).
		\stmt{$G[X \cup J_X]$ does not contain a mean $P_5$ under the partition $X, J_X$. \label{coronet:meanP5}}
		Suppose there exist $v_1, v_2, v_3$ and $z_1, z_2 \in J_X$ such that $v_1 \dd z_1 \dd v_2 \dd z_2 \dd v_3$ is an induced path.
		Then since $v_1, v_2, v_3$ are pairwise non-adjacent we may assume $v_1 \in X_1, v_2 \in X_2, v_3 \in X_3$. By Fact \ref{fact:corpus-two-elemental-vertices-ell-minus-2-path-between-them} $v_1v_3$-path $P$ of length $\ell -2$ with $V(P) \subseteq V(\q_1 \cup \q_3 \cup J_Y)$. Hence the union of $v_1 \dd z_1 \dd v_2 \dd z_2 \dd v_3$ and $P$ is a hole and it is longer than $\ell$, a contradiction. This proves (\ref{coronet:meanP5}).
		\stmt{$G[X \cup J_X]$ does not contain a mean fork under the partition $X, J_X$. \label{coronet:mean-fork}}
		Suppose there exist $v_1, v_2, v_3, v_4 \in X$ and $z \in J_X$ such that $v_2$ adjacent to each of $z, v_3, v_4$ and $v_1$ is adjacent to $z$ and suppose there are no further edges in $G[\{ v_1, v_2, v_3, v_4, z\}]$. By definition of $k$-corpus none of $v_1, v_2, v_3, v_4$ are in the same bag. Hence, we may assume $v_i \in X_i$ for $i \in [4]$. By Fact \ref{fact:corpus-two-elemental-vertices-ell-minus-2-path-between-them}, there is a $v_1v_4$-path $P$ of length $\ell -2$ such that $V(P) \subseteq V(\q_1 \cup \q_3 \cup J_Y)$. But then the union of $P$ and the path $v_1 \dd z \dd v_2 \dd v_4$ is a hole and it has length greater than $\ell$, a contradiction. This proves (\ref{coronet:meanP5}).
		\\
		\\
		It follows that $G[X \cup J_X]$ is a crown under the partition $X, J_X$. By symmetry, $G[Y \cup J_Y]$ is a crown under the partition $Y, J_Y$.
	\end{proof}
	
	We will make repeated use of the following consequence of the definition of crowned $k$-corpus.
	\begin{fact}
		Let $G$ be an $\ell$-monoholed graph for some $\ell \geq 5$.
		Suppose $G$ contains a $k$-corpus $\F$ for some $k \geq 3$.
		The set of vertices $H$ in $V(G) \setminus V(\F)$ complete to $V(\F)$ induces a clique.
		If $\mathcal{R}$ is a crowned $k$-corpus such that the corpus of $\R$ is $\F$, the vertex set $V(\R) \setminus V(\F)$ is complete to $H$.
	\end{fact}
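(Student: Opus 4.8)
The plan is to deduce both assertions from a single fact: $G$, being $\ell$-monoholed with $\ell\ge 5$, contains no hole of length four, and each of the relevant ``outside'' vertices is forced to close a $C_4$ if it is placed wrongly.

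First I would handle the claim that $H$ induces a clique. Since $\F$ is a $k$-corpus with $k\ge 3$, its underlying graph is a $k$-spine, and by the remark following the definition of $k$-corpus any two constituent paths of $\F$ lie together in an inflated $C_\ell$; in particular $\F$ contains a hole of length $\ell\ge 5$, so $V(\F)$ has two non-adjacent vertices $c_1,c_2$. If $h_1,h_2$ were non-adjacent vertices of $H$, then since each of $h_1,h_2$ is complete to $V(\F)$ the four vertices would induce the cycle $h_1 \dd c_1 \dd h_2 \dd c_2 \dd h_1$, a hole of length four, a contradiction; hence $H$ is a clique. I would also record here that $H$ is disjoint from $J_X\cup J_Y$ (a vertex of $J_X$ has no neighbour in $Y$, which is non-empty, so it cannot be complete to $V(\F)$; similarly for $J_Y$), so that the second assertion is well-posed.

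Next I would take a crowned $k$-corpus $\R$ with corpus $\F$ and keep the notation $X,Y,J_X,J_Y,A_X,A_Y$ from the definition, so $V(\R)\setminus V(\F)=J_X\cup J_Y$. Fix $h\in H$ and $j\in J_X$ (the case $j\in J_Y$ being symmetric). By definition of $J_X$ there are non-adjacent vertices $x_1,x_2\in N(j)\cap X\subseteq V(\F)$, and $h$ is adjacent to both since $h$ is complete to $V(\F)$. If $h$ were non-adjacent to $j$, then $j \dd x_1 \dd h \dd x_2 \dd j$ would be an induced four-cycle, again contradicting $\ell\ge 5$. Hence $h$ is adjacent to $j$, and $V(\R)\setminus V(\F)$ is complete to $H$.

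The hard part will be essentially nonexistent: both statements are forced $C_4$-arguments. The only care needed is bookkeeping with the definitions — confirming that a $k$-corpus with $k\ge 3$ contains a pair of non-adjacent vertices (it contains an inflated $\ell$-hole), that every $j\in J_X\cup J_Y$ has a pair of non-adjacent neighbours inside $V(\F)$ (this is built into the definition of crowned $k$-corpus), and that $H\cap(J_X\cup J_Y)=\emptyset$ so the final ``complete to'' claim makes sense.
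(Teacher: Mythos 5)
Your proposal is correct and matches the paper's own argument, which likewise observes that a $k$-corpus contains two non-adjacent vertices, that every vertex of $V(\R)\setminus V(\F)$ has two non-adjacent neighbours in $V(\F)$, and then invokes $C_4$-freeness. The extra bookkeeping you include (e.g.\ $H\cap(J_X\cup J_Y)=\emptyset$) is fine but not needed beyond what the paper records.
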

	\begin{proof}
		By definition a $k$-corpus contains a stable set of size at least two and for every $j \in V(\R) \setminus V(\F)$, $N(j) \cap V(\F)$ contains two non-adjacent vertices.
		The result follows from the fact that $G$ is $C_4$-free.
	\end{proof}
	
	\section{Analyzing a maximal crowned corpus}

	\subsection{Vertices with neighbors in a maximal crowned corpus}
	
	\begin{thm}\label{thm:crowned-k-corpus-extra-vertex-only-can-have-nbrs-in-one-end}
		Let $G$ be an $\ell$-monoholed graph. Suppose $G$ does not contain a clique cut-set and suppose $G$ contains a $k$-spine for some $k \geq 3$. Let $\R$ be a crowned $k$-corpus in $G$ chosen to maximize $k$ and with respect to that to maximize $V(\R)$. Let $\mathcal{Z}$ be the core of $\R$.
		Let $v \in V(G) \setminus V(\mathcal{Z})$. Then either $v$ is complete to $V(\R)$, $N(v) \cap V(\mathcal{Z})$ is the vertex set of a clique or the neighbors of $v$ in $V(\mathcal{Z})$ are contained in a single elemental side of $\mathcal{R}$.
	\end{thm}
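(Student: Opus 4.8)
The plan is to argue by contradiction: suppose $v$ is not complete to $V(\R)$, $N(v)\cap V(\mathcal{Z})$ is not a clique, and $v$ has neighbors in two distinct elemental sides of $\R$. Since $N(v)\cap V(\mathcal{Z})$ is not a clique, $v$ has two non-adjacent neighbors in $V(\mathcal{Z})$; since every bag induces a clique, these lie in different elemental paths. First I would record, using Fact \ref{fact:hole-and-vertex-complete-anticomplete-or-P3} applied to suitable inflated $\ell$-holes of $\R$ (each pair of elemental paths lies in one, by the definition of $k$-spine/$k$-corpus), strong restrictions on how $v$ attaches to the core: within any inflated $C_\ell$ of $\R$ containing a given pair of elemental paths, $v$ is either complete, anticomplete, or its neighborhood is a $P_{\le 3}$ in the underlying hole. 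Combined with the $C_4$-freeness of $G$ and the ``complete-or-anticomplete'' structure of elemental sides in a $k$-corpus, this should force $v$'s neighbors in $V(\mathcal{Z})$ to be concentrated near the apex regions or near a bounded number of bags.

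The key move will be to show that if $v$ has non-adjacent neighbors $x\in X_i$ and $y\in Y_j$ (one in each elemental side), or more generally neighbors spread across the two sides, then we can build a larger crowned $k$-corpus, or a $(k+1)$-spine, contradicting maximality of $\R$. The natural mechanism: if $v$ is adjacent to vertices in an elemental side $X$ in the ``crown-like'' way (two non-adjacent neighbors in $X$, all neighbors in $X\cup A_X$), then $v$ is a legitimate new crown vertex $j\in J_X$, so by maximality it must already be in $\R$ — meaning $v\in J_X\subseteq V(\R)\setminus V(\mathcal{Z})$, and then the theorem's conclusion (``neighbors in a single elemental side'') holds for $v$ by the defining property $N(J_X)\cap V(\F)\subseteq X\cup A_X$ together with Lemma \ref{lem:crown-good-bag-children-properties} controlling $v$'s adjacencies to other crown vertices. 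So the genuinely bad case is when $v$ has neighbors in both elemental sides but fails to be a clean crown vertex on either; here I would extract an explicit long hole of the wrong length. Concretely: take a neighbor of $v$ in $X$ and a non-adjacent neighbor of $v$ in $Y$, join them by a path of length $\ell-2$ through the elemental paths and the opposite crown (Fact \ref{fact:corpus-two-elemental-vertices-ell-minus-2-path-between-them} and its variants), and close up through $v$ to get a hole of length $\ell-1$ or shorter — contradiction. One must also handle the subcase where all of $v$'s core-neighbors are adjacent to apexes; there the relevant structure is a wheel/theta inside a single inflated $C_\ell$, and Fact \ref{fact:hole-and-vertex-complete-anticomplete-or-P3} plus parity rules it out unless $v$ is complete to that $C_\ell$, pushing toward the ``$v$ complete to $V(\R)$'' outcome.

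The main obstacle I anticipate is the careful bookkeeping of the apex sets $A_X, A_Y$ and the overlapping constituent paths of the $k$-spine: because constituent paths can share endpoints (apexes), ``neighbors in a single elemental side'' must be defined and verified modulo the apexes, and a vertex adjacent only to apexes is a genuinely delicate boundary case that the proof has to dispose of separately (likely using that apexes of $\R$ are complete or anticomplete to each other, so $v$'s apex-neighborhood is a clique, and then $N(v)\cap V(\mathcal{Z})$ being a clique is exactly the allowed second outcome). A secondary obstacle is ensuring that whenever we claim ``$v$ could be added as a crown vertex'', we actually verify the two-non-adjacent-neighbors condition and the localization $N(v)\cap V(\F)\subseteq X\cup A_X$ — i.e., that $v$ has \emph{no} neighbor in $Y$ — which is precisely what we are trying to prove, so the argument must be organized so that the ``neighbor in both sides'' hypothesis is used to derive a contradiction directly (via a bad-length hole) rather than circularly.
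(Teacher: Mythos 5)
Your case split starts from the wrong negation of the conclusion, and this hides the case that is the entire substance of the paper's proof. The third outcome is that the neighbors of $v$ in the \emph{core} $V(\mathcal{Z})$ lie in a single elemental side; its failure is not only ``$v$ has neighbors in two distinct elemental sides'' but also ``$v$ has a neighbor in an interior bag of an elemental path'' (a core vertex belonging to neither elemental side). Your proposal never treats that second possibility: your maximality argument adds $v$ as a new crown vertex of $J_X$, which requires $N(v)\cap V(\F)\subseteq X\cup A_X$ and hence already presupposes that $v$ has no interior-bag neighbors, and your hole-length argument is aimed only at the two-sides configuration. The paper's proof is devoted precisely to the interior-bag case and uses a different maximality mechanism: it first localizes $v$'s attachment to $\F$ (any two neighbors of $v$ in $V(\F)$ are at underlying distance at most two, and all lie in one bag $J$ of an elemental side together with its neighboring bags, via Lemma \ref{lem:inflated-hole-vertex-neighbors-facts}), then shows that $v$ can be absorbed into the existing bag $X_1$, producing a crowned $k$-corpus with more vertices and contradicting the maximality of $|V(\R)|$, unless $X_1\cup\{v\}$ is ``mixed'' on some terminating bag $T_i$; that residual situation is then killed using the $2$-connected threshold structure of the terminating set. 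None of this is recoverable from your outline, so as it stands the argument proves at best that $v$ does not meet both elemental sides, not the stated conclusion.

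Two smaller points. First, in the two-sides case you do consider, ``close up through $v$ to get a hole of length $\ell-1$ or shorter'' is not immediate: the connecting $xy$-path of the corpus may contain further neighbors of $v$, so what you actually get is either a short hole between two consecutive $v$-neighbors on the path or the escape case that $v$ is complete to the whole path, and that escape must then be escalated (as the paper does with Lemma \ref{lem:inflated-hole-vertex-neighbors-facts}) to ``$v$ complete to $V(\F)$'' before you can reach the first outcome; also note Fact \ref{fact:corpus-two-elemental-vertices-ell-minus-2-path-between-them} as stated only applies to two vertices in the \emph{same} elemental side, so the ``variant'' you invoke needs to be proved. Second, your phrasing ``by maximality it must already be in $\R$'' should be ``by maximality no such $v$ exists outside $\R$'' -- the conclusion is the same, but the verification that $v$ satisfies the crown-vertex conditions (two non-adjacent neighbors in $X$ and no neighbors elsewhere in $\F$) is exactly what is at stake, so it cannot be waved through.
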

	\begin{proof}
		Let $\{ X_1, X_2, \dots, X_k\}$ and $\{ Y_1, Y_2, \dots, Y_k\}$ be the elemental side of $\R$.
		For each $i \in [k]$ let $\q_i$ denote the elemental path of $\R$ with ends $X_i, Y_i$.
		Suppose for some $v \in V(G) \setminus V(\z)$, $N(v) \cap V(\mathcal{Z})$ contains two non-adjacent vertices and $v$ is not complete to $V(\mathcal{R})$.
		Suppose for a contradiction $v$ contains a neighbor in an interior bag of an elemental path of $\R$. Let $\F$ be the $k$-corpus of $\R$.
		\stmt{$v$ is not complete to $V(\F)$ \label{corp:v-not-complete-to-F}}
		Suppose $v$ is complete to $V(\F)$. Then $v$ is not adjacent to some $r \in V(\R) \setminus V(\F)$. By definition of crowned $k$-corpus, we may assume $r$ has a neighbors $x, x' \in X_1 \cup X_2 \dots \cup X_k$ and $x_1$ and $x_2$ are non-adjacent. But then $r \dd x \dd v \dd x' \dd r$ is a hole of length four, a contradiction. This prove (\ref{corp:v-not-complete-to-F}).
		\stmt{The underlying distance of any two neighbors of $v$ in $V(\F)$ is at most two. 
			\label{corp:nbrs-in-F-in-a-P3}}
		Suppose $x,y$ are neighbors of $v$ in $V(\F)$ and suppose that the underlying distance between $x$ and $y$ is greater than two.
		By definition of $k$-corpus, we may assume there is some inflated hole $\mathcal{C}$ of length $\ell$ containing $x,y$ and the elemental paths $\q_1, \q_2$. 
		Then by Lemma \ref{lem:inflated-hole-vertex-neighbors-facts}, $v$ is complete to $V(\mathcal{C})$. By definition of $k$-corpus for any $w \in V(\F)$ there is an inflated hole $\mathcal{C'}$ of length $\ell$ containing $\q_1$ and $w$. It follows from Lemma \ref{lem:inflated-hole-vertex-neighbors-facts} that $v$ is complete to $V(\mathcal{C}')$. Hence $v$ is complete to $V(\F)$, contradicting (\ref{corp:v-not-complete-to-F}).
		This proves (\ref{corp:nbrs-in-F-in-a-P3}).
		\stmt{There is some bag $J$ of $\F$ such that every neighbor of $v$ in $V(\F)$ is contained in $J$ or a neighboring bag of $\F$. \label{corp:nbrs-or-v-in-F-are-in-some-bag-and-itsnbrs}}
		Suppose $v$ has neighbors in three pairwise non-adjacent bags $A, B, C$ such that no bag of $\F$ is adjacent to each of $A, B, C$.
		By (\ref{corp:nbrs-in-F-in-a-P3}) there are distinct bags $D_{AB}, D_{BC}, D_{AC}$ in $\F$ such that $A$ and $B$ are adjacent to $D_{AB}$, $B$ and $C$ are adjacent to $D_{BC}$ and $A, C$ are adjacent to $D_{AC}$.
		Let $H$ be the graph with vertex set $\{A, B, C, D_{AB}, D_{BC}, D_{AC} \}$ where two vertices in $V(H)$ are adjacent if and only if they are adjacent bags in $\F$. Then by definition $\F$ contains $H$.
		Consider the cycle $A \dd D_{AB} \dd B \dd D_{BC} \dd C \dd D_{CA} \dd A$. Since $H$ does not contain a hole of length at most 6, $D_{AB}$, $D_{BC}$ and $D_{AC}$ must be pairwise adjacent. See Figure \ref{fig:corpus-forbideen-isg}.
		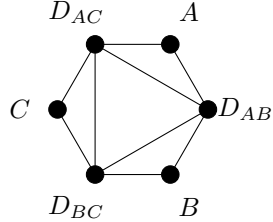
\begin{figure}[!h]
			\centering
			\begin{tikzpicture}
	\foreach \x in {0,...,5}{
		\path (0,0) ++(60*\x:1cm) coordinate (a\x);
		\node[dot] at (a\x) {};
	}
	\foreach \x in {0,...,5}{
		\path (0,0) ++(60*\x:1.5cm) coordinate (b\x);
	}
	\draw (a0) -- (a1) -- (a2) -- (a3) -- (a4) -- (a5) -- (a0) -- (a2) -- (a4)-- (a0);
	
	\node at (b1) {$A$};
	\node at (b0) {$D_{AB}$};
	\node at (b5) {$B$};
	\node at (b4) {$D_{BC}$};
	\node at (b3) {$C$};
	\node at (b2) {$D_{AC}$};
\end{tikzpicture}
			\caption{An illustration of the graph $H$ from the proof of (\ref{corp:nbrs-or-v-in-F-are-in-some-bag-and-itsnbrs}).}
			\label{fig:corpus-forbideen-isg}
		\end{figure}
		
		Since $D_{AB}. D_{BC}, D_{CA}$ have degree four in $H$ they cannot be contained in the interior of any constituent path of $\F$. 
		Hence $D_{AB}, D_{BC}, D_{CA}$ are the ends bags of some distinct constituent paths $\p_1, \p_2, \p_3$ of $\F$. Then by definition of $k$-corpus, $A, B, C$ cannot be interior bags of any constituent path of $\F$. Thus $A, B, C, D_{AB}, D_{BC}, D_{AC}$ are all contained in the same terminating set of $\F$. It follows that $H$ is a threshold graph. But the only way to partition $V(H)$ into the vertex set of a clique and a stable set is $\{A, B, C\}$ and $\{D_{AB}, D_{BC}, D_{BA} \}$ and the graph between the two sets is not a half graph, a contradiction. This proves (\ref{corp:nbrs-or-v-in-F-are-in-some-bag-and-itsnbrs}).

		\stmt{There is some bag $J$ in a elemental side of $\F$ such that every neighbor of $v$ in $\F$ is contained in $J$ or a neighboring bag of $J$. \label{corp:nbrs-of-V-in-F-are-in-some-elemental-bag-and-its-nbrs}}
		By (\ref{corp:nbrs-or-v-in-F-are-in-some-bag-and-itsnbrs}) there is some bag $J$ of $\F$ such that every neighbor of $v$ in $V(\F)$ is contained in $J$ or in a neighboring bag of $J$ in $\F$.
		
		Since $v$ has neighbors in the interior of an elemental path of $\F$, $J$ cannot be an apex of $\F$. 
		Suppose $J$ is an interior bag of some elemental path $\q_i$ of $\F$.
		Then by definition, $N(v) \cap V(\F) \subseteq V(\p_i)$. Then by definition of $k$-corpus there is some inflated hole $\C$ contained in $\F$ such that $\p_i$ is a sub-inflated graph of $\C$. 
		By applying Lemma \ref{lem:inflated-hole-vertex-neighbors-facts} we obtain that the graph obtained from $\C$ by adding $v$ to $J$ is another inflated $C_\ell$. But then the graph obtained from $\mathcal{R}$ by adding $v$ to $J$ is another crowned $k$-corpus, a contradiction. This proves (\ref{corp:nbrs-of-V-in-F-are-in-some-elemental-bag-and-its-nbrs}).
		\\
		\\
		Without loss of generality $J = X_1$. 
		Let $W_1$ denote the neighbor of $X_1$ in $\q_1$. By assumption, $v$ has a neighbor in $W_1$ and a neighbor in one of $X_2 \cup X_3 \cup \dots \cup X_k$. 
		For each $i \in [k]$ let $\p_i$ be the constituent path of $\F$ containing $\q_i$ and let $T_i$ denote the end of $\p_i$ that is equal or adjacent to $X_i$. Note that $X_i = T_i$ unless $T_i$ is an apex of $\F$.
		It follows that $X_1 = T_1$. From (\ref{corp:nbrs-of-V-in-F-are-in-some-elemental-bag-and-its-nbrs}), for any $i \in [2,k]$ if $T_i$ is an apex, $X_1$ and $X_i$ are not adjacent bags.
		\stmt{The graph $\mathcal{F'}$ obtained from $\F$ by adding $v$ to $X_1$ is an inflated graph and the underlying graphs of $\mathcal{F'}$ and $\F$ are isomorphic. \label{corp:adding-v-to-F-yields-an-infalted-graph} }
		Let $T_i$ be a neighboring bag of $X_1$ for some $i\in [2, k]$.
		By definition of $k$-corpus there is an induced inflated hole $\mathcal{C}$ in $\F$ containing $\q_1, \q_i$.
		Hence by Lemma \ref{lem:inflated-hole-vertex-neighbors-facts} and (\ref{corp:nbrs-in-F-in-a-P3}) the graph obtained from $\mathcal{C}$ by adding $v$ to $X_1$ is an inflated hole.
		Hence we need only show for any two $i,j \in [k]$ if $T_i$ and $T_j$ are neighboring bags of $X_1$ then the graph between $X_1 \cup \{ v\}$ and $T_i$ and the graph between $X_1 \cup \{ v\}$ and $T_j$ are both half graphs and they are compatible with respect to $X_1 \cup \{ v\}$.
		
		If $T_i$ and $T_j$ are not adjacent bags, $\F$  contains some inflated hole $\mathcal{C}'$ such that $\p_i, \p_j, X_1 \subseteq \mathcal{C'}$ so the result follows from Lemma \ref{lem:inflated-hole-vertex-neighbors-facts}. Hence, we may assume $T_i$ and $T_j$ are adjacent bags. But $X_1$ is complete to $T_i \cup T_j$ and the result follows.
		This proves (\ref{corp:adding-v-to-F-yields-an-infalted-graph}).
		\stmt{$X_1 \cup \{ v\}$ is mixed on one of $T_2, T_3, \dots, T_k$. \label{corp:A1-union-v-is-mixed-on-an-end-bag-of-constituent-path}}
		Since $|V(\F)|$ was chosen to be maximum, $\F'$ is not a $k$-corpus.
		By definition of $k$-corpus and (\ref{corp:nbrs-in-F-in-a-P3}), 
		There exists some $i \in [2,k]$ such that $X_1 \cup \{ v\}$ is mixed on $T_i$.
		This proves (\ref{corp:A1-union-v-is-mixed-on-an-end-bag-of-constituent-path})
		\stmt{Suppose $T_i$ is a neighboring bag of $X_1$ and for some $i \in [2,k]$ and $v$ has a non-neighbor in $x_i \in T_i$. 
			For every $j \in [2,k] \setminus \{ i\}$, if $T_j$ is a neighboring bag of $T_i$ then $v$ is anticomplete to $X_j$. \label{corp:v-has-restrictions-on-top-nbrs}}
		Suppose for some $j \in [2,k]$, $v$ has a neighbor $t_j \in T_j$ and $T_j$ is a neighboring bag of $T_i$. 
		Let $\C$ be an inflated $C_\ell$ contained in $\F$ as a sub-inflated graph such that $\p_1, \p_i \subseteq \C$. Then by definition of inflated graph $G[V(\p_1 \cup \p_i) \cup \{ v\}]$ contains a $vt_i$-path $R$ of length $\ell -1$.
		%\eventually{sub-inflated graph}
		Since $T_j$ is a neighboring bag of $T_i$, they are complete to each other by definition of $k$-corpus. But then $V(R) \cup \{ t_j\}$ induces a hole of length $\ell + 1$, a contradiction. This proves (\ref{corp:v-has-restrictions-on-top-nbrs}).
		\\
		\\
		Let $\mathcal{T}$ denote the set $\{T_1, T_2, \dots, T_k\}$. By (\ref{corp:nbrs-of-V-in-F-are-in-some-elemental-bag-and-its-nbrs}), $X_1$ is not an apex and and since $T_1 = X_1$ this implies $|\mathcal{T}| > 1$.
		Without loss of generality $v$ has a neighbor in $T_2$.
		Let $F$ be the underlying graph of $\F$.
		Then since $|\z| > 1$,  the subgraph $W$ of $F$ induced by vertices corresponding to bags in $\z$ is a 2-connected threshold graph by definition of $k$-spine.
		Hence there is some $m \in [2,k]$, such that $T_m$ is complete to every other bag in $\mathcal{Z}$.
		Then by (\ref{corp:v-has-restrictions-on-top-nbrs}), $v$ must have a neighbor in $T_m$ because $v$ has a neighbor in $T_2$ and if $2 \neq m$ $T_2$ and $T_m$ are neighboring bags.
		Thus, it follows from (\ref{corp:v-has-restrictions-on-top-nbrs}) that if $T_i$ is a neighboring bag of $X_1$ for some $i \in [k] \setminus \{ m\}$, then $v$ is complete to $V(T_i)$.
		Thus by (\ref{corp:A1-union-v-is-mixed-on-an-end-bag-of-constituent-path}) and (\ref{corp:nbrs-of-V-in-F-are-in-some-elemental-bag-and-its-nbrs}), $v$ has a non-neighbor $t_m \in T_m$. Since $W$ is a two connected threshold graph, $T_1$ has a neighboring bag $T_i$ for some $i \in [k] \setminus \{ 1, m\}$ and so $v$ is complete to $T_i$. But then $Z_m$, $T_i$ contradict (\ref{corp:v-has-restrictions-on-top-nbrs}).
	\end{proof}
	
	\subsection{Paths with neighbors in a maximal crowned corpus}
	\begin{thm}
		Let $G$ be an $\ell$-monoholed graph for some $\ell \geq 7$. Suppose that $G$ contains a $k$-spine $F$ for some $k \geq 3$. Choose $F$ to maximize $k$. Let $A, B$ be the terminating sets of $S$. Let $W$ be an induced path $w_1 \dd w_2 \dd \dots \dd w_n$ in $G \setminus S$ of length at least one satisfying:
		\begin{itemize}
			\item $W^*$ is anticomplete to $V(F)$
			\item $N(q_1) \cap V(F)$ and $N(q_2) \cap V(F)$ are both vertex sets of cliques.
		\end{itemize}
		Then $q_1, q_n$ are both anticomplete to one of $A, B$.
		\label{thm:pathcases:no-top-bottom-path}
	\end{thm}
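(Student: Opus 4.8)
Suppose the conclusion fails, so that neither $A$ nor $B$ is anticomplete to $\{q_1,q_n\}$; I will derive a contradiction. First I would record that $A$ is anticomplete to $B$: since $\ell\geq 7$ we have $\ell=2n+1$ or $2n+2$ with $n\geq 3$, every constituent path of $F$ has length at least $n-1\geq 2$, and in every type of $k$-spine the constituent paths are pairwise anticomplete except possibly at coinciding or adjacent $a$-ends or $b$-ends; hence no vertex of $A$ is adjacent to a vertex of $B$. As $N(q_1)\cap V(F)$ is a clique it therefore cannot meet both $A$ and $B$, and likewise for $N(q_n)$. Combined with the failure of the conclusion, after possibly reversing $W$ and exchanging $A$ and $B$ we may assume $q_1$ has a neighbour in $A$ and none in $B$, while $q_n$ has a neighbour in $B$ and none in $A$.

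The second step is to localise the two attachment neighbourhoods. Fix $a_i\in N(q_1)\cap A$. Using that $N(q_1)\cap V(F)$ is a clique containing $a_i$ and that the interior of any constituent path is anticomplete to everything except that path's two ends, I would show $N(q_1)\cap V(F)$ is either a clique contained in $A$, or is of the form $\{a_i,p\}$ where $p$ is the second vertex of a constituent path with $a$-end $a_i$; in both cases every neighbour of $q_1$ in $V(F)$ lies at $F$-distance at most one from $a_i$. Symmetrically, fixing $b_m\in N(q_n)\cap B$, every neighbour of $q_n$ in $V(F)$ lies within distance one of $b_m$. I would also apply Fact~\ref{fact:hole-and-vertex-complete-anticomplete-or-P3} to $q_1$, to $q_n$, and to the holes $P_i\cup P_j$ of $F$, concluding that each of $q_1,q_n$ meets each such hole in at most a single edge, with these meeting-sets on ``opposite corners'' (near the $a$-ends for $q_1$, near the $b$-ends for $q_n$).

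The core of the argument is to build induced cycles through $W$. Starting from the path $s\dd q_1\dd q_2\dd\dots\dd q_n\dd b_m$, where $s=a_i$ in the first localisation case and $s=p$ in the second, the localisation above shows this is an induced path (modulo a few degenerate situations such as a constituent path of length $n-1=2$), and closing it up inside $F$ by the relevant constituent path $P_m$ --- together with, when $i\neq m$, the path $P_i$ and the short connection between their $a$-ends --- yields a chordless cycle $C'$. Its length equals the length of $W$, plus the lengths of the constituent paths used (each in $\{n-1,n\}$), plus at most two further $0/1$ corrections according to whether the $a$- or $b$-ends in play coincide or are merely adjacent, so $|E(C')|=\ell$ pins these data down. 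I would then show that in every remaining configuration one of the following occurs: (i) rerouting through a second constituent path --- which exists because $k\geq 3$ and is reachable since the terminating sets of $F$ induce connected threshold-type graphs --- gives another chordless cycle of length $\ell\pm 1$, contradicting that $G$ is $\ell$-monoholed; or (ii) $b_m$ (respectively $a_i$) is forced to be complete to the relevant terminating set and the path $a_i\dd q_1\dd\dots\dd q_n\dd b_m$ has exactly the length of a constituent path, so that $F$ together with this path contains a $(k+1)$-spine, contradicting the maximality of $k$; or (iii) in a few degenerate subcases one directly exhibits a $C_4$ or $C_5$.

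The main obstacle is precisely this final case analysis. The cycle $C'$ is ``tight'': whenever it would come out with the wrong length there is typically a chord restoring length $\ell$, and that chord is exactly an extra neighbour of $q_1$ or $q_n$ in $F$; the bookkeeping must simultaneously track these chords, the odd/even split (the configuration ``$W$ plus a hole of $F$'' wanting to be a theta when $\ell$ is even, a prism in that case as well, and a pyramid when $\ell$ is odd), and whether the $a$-ends and $b$-ends involved are equal, adjacent, or unrelated. The localisation step of the second paragraph is the linchpin that keeps this finite: once $N(q_1)\cap V(F)$ and $N(q_n)\cap V(F)$ are known to sit within distance one of single terminals, only boundedly many attachment patterns survive, and each is dispatched by a short-hole, long-hole, or larger-spine argument.
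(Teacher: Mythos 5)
Your setup (first paragraph) matches the paper's, but from there the proposal is a plan rather than a proof, and the one concrete mechanism you offer for exploiting the maximality of $k$ does not work as stated. You claim that when the lengths come out right, ``$F$ together with this path contains a $(k+1)$-spine.'' Being a $(k+1)$-spine is a very rigid condition (the defining partition into $Q,R,S,T$ with its clique/stable-set/half-graph requirements), and the union of $F$ with one new path through $W$ need not satisfy it, so maximality of $k$ cannot be invoked this way directly. The paper's route is different and this is the key missing idea: it chooses a toe $t_i$ in the interior of each constituent path and shows that either some constituent path of length two has its middle vertex adjacent to $q_1$ or $q_n$, or else $W$ has length one and \emph{both} ends have at least two neighbours in $V(F)$ --- because otherwise $G[V(W\cup F)]$ contains a pair of \emph{mated} $(k+1)$-spiders, and then Theorems \ref{thm:matedspiderstructure-odd-case} and \ref{thm:orderly-spines-even-case} upgrade that pair to a $(k+1)$-spine, contradicting maximality. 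Mated spiders need only toes, anticomplete legs and complementary distances, which is exactly what makes the maximality argument usable; your sketch never produces them, and without this reduction the length of $W$ and the attachment pattern are not pinned down the way your ``boundedly many configurations'' remark assumes.

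Second, the case analysis that you explicitly defer (``I would then show that in every remaining configuration \dots'') is the bulk of the argument, and it does not end the way you predict. After the reduction to $|E(W)|=1$ with two attachments at each end, the paper still has to rule out adjacency of $q_1,q_n$ to second vertices of constituent paths (a pyramid/prism parity argument, not a $C_4$/$C_5$), prove every constituent path has length at most $\ell-4$, and only then finish: with $q_1$ adjacent to $a_1,a_2$, $q_n$ adjacent to $b_3,b_4$ on four distinct paths, the holes through $W$ force $\ell-3=|E(P_i)|+|E(B_{ij})|=|E(P_j)|+|E(A_{ij})|$ for all cross pairs $i\in\{1,2\}$, $j\in\{3,4\}$, whence $P_1\cup A_{12}\cup P_2\cup B_{12}$ is a hole of length $2\ell-6>\ell$. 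Your proposed dichotomy (a hole of length $\ell\pm1$, a larger spine, or a short hole) does not capture this final computation, and your induced-cycle step also leaves unresolved exactly the chords (e.g.\ $q_1$ adjacent to the neighbour of $a_1$ on $P_1$, or to another vertex of $A$ on the closing path) that consume most of the paper's effort. As it stands there is a genuine gap: the mated-spider reduction is absent and the remaining analysis is not carried out.
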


	\begin{proof}
		Suppose neither $A$ nor $B$ is anticomplete $\{ q_1, q_n \}$.
		By definition of $k$-spine, $A$ and $B$ are anticomplete to each other. We may assume that $q_1$ has a neighbor in $A$ and $q_2$ has a neighbor in $B$. 	For every two distinct $i,j \in [4]$ let $A_{ij}$ be a shortest $a_ia_j$-path in $G[A]$ and let $B_{ij}$ be a shortest $b_ib_j$-path in $G[B]$. Since we will only consider $i, j \leq 4$ in this proof this notation is not ambiguous.
		
		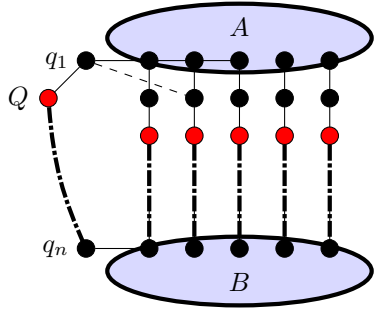
\begin{figure}[!h]
			\centering
			\begin{tikzpicture}
	\def\height{-2.5};
	\def\ewidth{50};
	\def\eheight{13};
	\coordinate (A) at (0,0);
	\coordinate (A') at (0, .3);
	\coordinate (B) at (0, \height);
	\coordinate (B') at (0, \height - .3);
	\draw[ultra thick, bag] (A') ellipse (\ewidth pt and \eheight pt);
	\draw[ultra thick, bag] (B') ellipse (\ewidth pt and \eheight pt);
	\node[above= .2 of A] {$A$};
	\node[below = .2 of B]  {$B$};
	
	\foreach \x in {-2,...,2}{
		\coordinate[left=.6*\x of A] (start);
		%\coordinate[right=.6*\x of A] (t2);
		\node (n0) at (start) {};
		%\node[dot] at (t2) {};
		\foreach \y in {1,2,3,4,5}{
			\coordinate[below = .5*\y of start] (c\y\x);
			\coordinate[below = .5*\y of start] (c\y);
			%\node[dot] (n\y) at (curr){};
		}
		\draw (c1) to (start);
		\draw (c1) to (c2);
		\node[dot] (n0) at (start){};
		\node[dot] (n1) at (c1){};
		%\node[dot] (n4) at (c4){};
		\draw (c4) to (c5);
		\node[dot, fill = red] (n2) at (c2){};
		\draw[path] (n2) to (c5);
		%\draw (n2) to (c3);
		%\draw[dotted, ultra thick] (c3) to (c4);
		\node[dot] (n5) at (c5){};
	}

	\coordinate[right =.6*-3.4 of A] (qstart);
	\node[dot] (q0) at (qstart){};
	\draw (q0) to (A);
	\node[left=.3em of qstart] {$q_1$};
	\draw[maybe edge] (q0) to (c11);
	
	\coordinate[above =\height of qstart] (qend);
	\node[dot] (bottomq)at (qend){};
%	\draw[bend right = 50] (qend) to (c53);
%	\draw[maybe edge] (qend) to (c43);
%	\draw[maybe edge] (qend) to (c42);
	\node[left=.3em of qend] {$q_n$};
	\draw (bottomq) to (n5);
	
	\coordinate[below left = .5 and .5 of qstart] (q2); 
	\draw (q0) to (q2);
	\node [dot, fill = red] (middle) at (q2){};
	\draw[path, bend right = 10] (middle) to (qend);
	\coordinate[left = .4 of q2] (temp);
	\node at (temp) {$Q$};
%	\draw (middle) to (temp);
%	\coordinate[below = .5 of temp] (next);
%	\draw[ultra thick, dotted] (next) to node[left] {$Q$} (temp);
%	\draw[bend right = 10] (next) to (qend);
	
\end{tikzpicture}
			\caption{An illustration of the a case where (i) and (ii) from (\ref{paths:topbottomcases}) both do not hold and $k = 5$. The figure contains a mated $(k+1)$-spider where the toes are the red vertices.
			}
			\label{fig:addinglongpaths}
		\end{figure}
		
		\stmt{\label{paths:topbottomcases} 
			One of the following statement holds:
			\begin{enumerate}[(i)]
				\item There exists $i  \in [k]$ and $j \in \{1, n\}$ such that $P_i$ has length two and $q_j$ is adjacent to the internal vertex of $P_i$ or
				\item $n=2$ and $q_1, q_2$ both have at least two neighbors in $V(S)$.
			\end{enumerate}
		}
		
		We may assume that $q_1$ is adjacent to $a_1$ and $q_n$ is adjacent to $b_j$ for some $j \in [k]$. Suppose, $G[(V(Q \cup S)]$ contains a pair of mated $(k+1)$-spiders. Then by Theorems \ref{thm:if-l-odd-G-has-pyramid} and \ref{thm:orderly-spines-even-case}, $G[V(Q \cup Q)]$ is a $(k+1)$-spine, a contradiction. Thus $G[V(Q \cup S)]$ does not contain a pair of mated $(k+1)$-spiders.

		For each $i \in [k]$ choose $t_i \in P_i^*$.
		Let $X_i, Y_i$ be the $a_it_i$ and $t_ib_i$-paths of $P_i$, respectively. Choose $t_i$ to be non-adjacent to both $q_1, q_n$, if possible.
		Let $X_i, Y_i$ denote the $t_ia_i$ and $t_ib_i$-paths of $P_i$, respectively. Then $V(\cup_{i=1}^k X_i)$ induces a $k$-spider $S_X$ and $V(\cup_{i=1}^k Y_i)$ induces a $k$-spider $S_Y$. Moreover, $S_X$ and $S_Y$ are mated to each other and have toes 	$t_1, t_2, \dots, t_k$.
		
		Suppose (i) does not hold. Then none of $t_1, t_2, \dots, t_k$ have a neighbor in $V(Q)$. 
		Suppose $n > 2$. Then $G[\{q_1, q_2\} \cup V(S_X)]$ contains a $(k+1)$-spider $S_X'$ with toes $q_2, t_1, t_2, \dots, t_k$ and $G[\{q_2, q_3, \dots, q_n\} \cup V(S_Y)]$ contains a $(k+1)$-spider $S_Y'$ with toes $q_2, t_1, t_2, \dots t_k$.
		$S_X'$ and $S_Y'$ cannot be mated to each other so $V(S_X') \setminus \{q_2, t_1, t_2, \dots , t_k\}$ and $V(S_Y') \setminus \{t_1, t_2, \dots, t_k\}$ are not anticomplete to each other. Since $Q^*$ is anticomplete to $V(S)$, we may assume $q_1$ has a neighbor in both $V(S_X') \setminus \{ t_1, t_2, \dots, t_k\}$ and $V(S_Y') \setminus \{t_1, t_2, \dots, t_k\}$. But then $N(q_1) \cap V(S)$ contains two non-adjacent vertices, a contradiction. Hence $n = 2$. See Figure \ref{fig:addinglongpaths} for an illustration of this case.
		
		Suppose $q_1$ has exactly one neighbor in $V(S)$. Then $G[V(S_X) \cup \{ q_1\}]$ and $G[V(S_Y) \cup \{ q_1, q_2\}]$ contain $(k+1)$-spider $S_X''$ and $S_Y''$ respectively. Moreover since $N(q_2) \cap V(S)$ is the vertex set of a clique $S_X''$ and $S_Y''$ are mates, a contradiction. This proves (\ref{paths:topbottomcases}).
		\\
		\\
		We will first show that statement (i) does not hold and then we will show (ii) cannot hold for a contradiction.
		\stmt{If some constituent path of $S$ has length two then $\ell = 8$ and every constituent path has length two or three.\label{pathcases:constituent-path-length-two-then-ell-is-8}}
		By definition of $k$-spine every constituent path has a length equal to $\frac{\ell -1}{2}$ if $\ell$ is odd or $\frac{\ell}{2}, \frac{\ell}{2} -1, \frac{\ell}{2} -2$ if $\ell$ is even and the difference in length between any two constituent paths it at most one.  Since $\ell \geq 7$ it follows that $\ell = 8$. Hence, every constituent path of $S$ has length $2$ or $3$. This proves (\ref{pathcases:constituent-path-length-two-then-ell-is-8}).
		\begin{figure}[!h]
			\centering
			\begin{tikzpicture}
	%\draw (0,0) circle[radius=2cm];
	\path (0,0) ++(90:2cm) coordinate (a1);
	\draw[path] (a1) arc[start angle=90, end angle=315,radius=2cm];
	\draw (a1) arc[start angle = 90, end angle= -45, radius = 2cm];
	% draw (A) with a label
	\fill[black] (a1) circle[radius=4pt] ++(90:1em) node {$a_1$};
	
	\path (0,0) ++(0:2cm) coordinate (b1);
	\fill[black] (b1) circle[radius=4pt] ++(0:1em) node {$b_1$};
	
	\path (0,0) ++(45:2cm) coordinate (w);
	\fill[black] (w) circle[radius=4pt] ++(0:1em) node {$w$};
	
	\path (0,0) ++(67.5:1cm) coordinate (qstart);
	\fill[black] (qstart) circle[radius=4pt] ++(150:1em) node {$q_1$};
	
	\path (0,0) ++(-40.5:1cm) coordinate (qend);
	\fill[black] (qend) circle[radius=4pt] ++(180:1em) node {$q_n$};

	\path (0,0) ++(315:2cm) coordinate (u);
	\fill[black] (u) circle[radius=4pt] ++(0:1em) node {$u$};
	
	\draw[path] (qstart) -- (qend);
	\draw (qstart) to (w);
	\draw (qstart) to (a1);
	\draw (qend) to (b1);
	\draw[maybe edge] (qend) to (u);
	\draw[maybe edge] (qend) to (w);
	
\end{tikzpicture}
			\caption{An illustration of (\ref{pathcases:no-path-with-q1-qn-covering-all-vertices-in-it}). $C$ is drawn as the outer face. Every hole in the picture must have length $\ell$, so we reach a contradiction.}
			\label{fig:pathscase-qn-adjacent-to-b1}
		\end{figure}
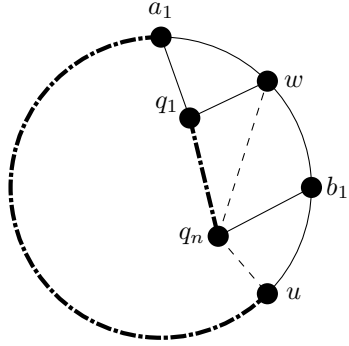
		\stmt{If for some $i \in [k]$, $P_i$ has length two and $q_1$ is adjacent to the interior vertex of $P_i$ then $q_n$ is anticomplete to $V(P_i)$. The same statement holds with $q_1$ and $q_n$ exchanged.. \label{pathcases:no-path-with-q1-qn-covering-all-vertices-in-it}}
		Suppose $P_1$ has length two. Then by (\ref{pathcases:constituent-path-length-two-then-ell-is-8}), $\ell = 8$. Let the vertices of $P_1$ be $a_1 \dd w \dd b_1$, in order.
		Suppose $q_1$ is adjacent to $a_1, w$ and $q_n$ has a neighbor in $V(P_1)$. Then $q_n$ is adjacent to $b_1$ since $N(q_n) \cap V(S)$ is the vertex set of a clique and $q_n$ has a neighbor in $B$. Since $N(q_1) \cap V(S)$ is a vertex set of a clique, $N(q_1) \cap S = \{ a_1, w\}$.
		Then $q_1 \dd q_2 \dots \dd q_n \dd b_1 \dd w \dd q_1$ is a cycle of length $|E(Q)|+ 3$.
		
		Suppose $G[V(Q) \cup \{ w, b_1\}]$ contains a hole. $|E(Q)| \geq \ell - 3 > 3$ or $G[V(Q) \cup \{ w, b_1\}]$ does not contain a hole.
		Let $C$ be a hole of $S$ containing $P_1$. Let $u$ be the neighbor of $b_1$ in $C \setminus \{ w \}$. 
		Then if $q_n$ is not adjacent to $u$, $G[V(C \cup Q) \setminus \{ x\}]$ is a hole of length $\ell + |E(Q)| -2$ and if $b_1$ is adjacent to $u$, $G[V(C \cup Q) \setminus \{ x, u\}]$ is a hole of length $\ell + |E(Q)| - 3$.
		In either case $G$ contains a hole of length greater than $\ell$, a contradiction.
		
		Hence $G[V(Q) \cup \{ w, b_1\}]$ is chordal. Hence $n = 2$ and $w$ is adjacent to $q_2$.
		Then $q_1$ is not adjacent to $u$. Then union of $a_1 \dd q_1 \dd q_2 \dd b_1$ and $C \setminus w$ is a hole of length at least $\ell + 1$, a contradiction.
		See Figure \ref{fig:pathscase-qn-adjacent-to-b1} for an illustration of this argument.
		This proves (\ref{pathcases:no-path-with-q1-qn-covering-all-vertices-in-it}).
		\begin{figure}
			\centering
			\begin{tikzpicture}[scale = .5]
	\def\height{2};
	\def\width{2};
	\node[normal node] (a1) at (-\width, 0) {$a_1$};
	\node[normal node] (a2) at (\width, 0) {$a_2$};
	\node[normal node] (a) at (0, \height){$a$};
	\node[normal node, scale=1.1] (w) at (-\width, -1.5*\height){$w$};
	\node[normal node] (b1) at (-\width, -3*\height) {$b_1$};
	\node[normal node] (b2) at (\width, -3*\height){$b_2$};
	\node[normal node, scale = 2] (a21) at (\width, -\height){};
	\node[normal node, scale = 1.1] (a22) at (\width, -2*\height){$u$};
	\node[normal node] (b) at (0, -4*\height){$b$};
	\draw (a1) to (a);
	\draw (a) to (a2);
	\draw[maybe edge] (a1) to (a2);
	\draw (a1) to (w);
	\draw (b1) to (w);
	\draw (b1) to (b);
	\draw (b2) to (b);
	\draw[maybe edge] (b1) to (b2);
	\draw (b2) to (a22);
	\draw (a22) to (a21);
	\draw (a2) to (a21);
	
	\begin{scope}[bend right = 80]
		\path
		(a22) edge node[right] {$=$?} (a21) [maybe equal, thick ];
	\end{scope}
	
	\node[red node] (q1) at (-2*\width, -1*\height){$q_1$};
	\draw (q1) to (w);
	\draw (q1) to (a1);
	\node[red node] (qn) at (\width, -5*\height){$q_n$};
	
	\begin{scope}[path, red, bend right = 40]
		\path
		(q1) edge node[left=.3] {${Q}$} (qn);
	\end{scope}
	\draw (qn) to (b2);
	\draw[maybe edge] (qn) to (b);
	\draw[maybe edge, bend right = 40] (qn) to (a22);
	
\end{tikzpicture}
			\caption{An illustration of the proof of (\ref{pathcases:not_i}). The vertices $a \in A$ and $b \in B$ exist because $G[A]$ and $G[B]$ both do not contain a cut vertex.}
		\end{figure}
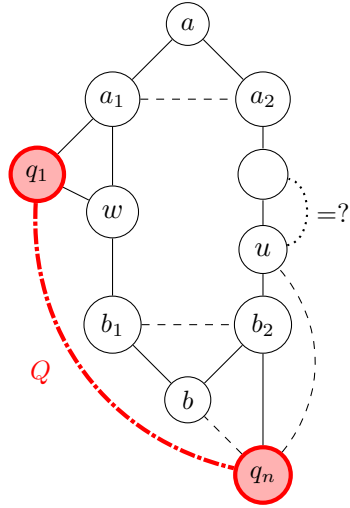
		\stmt{(i) does not hold. \label{pathcases:not_i}}
		Suppose for a contradiction that $P_1$ has length two and $q_1$ is adjacent to $a_1$ and the interior vertex of $P_1$. Then by (\ref{pathcases:constituent-path-length-two-then-ell-is-8}), $\ell = 8$.
		Let the vertices of $P_1$ be $a_1 \dd w \dd b_1$, in order. Then $N(q_1 )\cap S = \{ a_1, w \}$. 
		By (\ref{pathcases:no-path-with-q1-qn-covering-all-vertices-in-it}), $q_n$ is not adjacent to $b_1$. So we may assume $q_n$ is adjacent to $b_2$ and $a_2 \neq a_1$. Thus $q_n$ is not adjacent to $w$.
		The union $B_{12}$ and the path $b_1 \dd w \dd q_1 \dd q_2 \dd \dots \dd q_n \dd b_2$ is a cycle $C$ of length $|E(Q)| + |E(B_{12})| + 3$ and $G[V(C)]$ contains a hole. Since $B_{12}$ is a shortest $b_1b_2$-path in $G[B]$, $|E(B_{12})| \leq 2$. Thus since $\ell= 8$ it follows that $|E(Q)| \geq 3$. See Figure \ref{pathcases:not_i}.

		Let $C'$ be the cycle consisting of $P_2 \cup A_{12} \cup Q$ and the edges $q_1a_1, q_nb_2$. Then $C$ has length $|E(Q)| + |E(P_2)| + |E(A_{12})| + 2$. 
		Since $\ell = 8$, by definition of $k$-spine if $P_2$ has length two $R'$ has length two. $|E(A_{12})| \geq 1$ because $a_1 \neq a_2$. Thus $C'$ has length at least $|E(Q)| + 6 \geq 9 > \ell$, so it is not a hole. Let $u$ denote the neighbor of $b_2$ in $P_2$. Since $C'$ is not a hole, $q_n$ is adjacent to $u$ and so $N(q_n) \cap V(S) = \{b_2, u\}$.
		Then $G[V(C') \setminus \{ u\}]$ is a hole of length $|E(C')| - 1$, so $|E(Q)| \leq 3$. Thus, $|E(Q)| = 3$.
		
		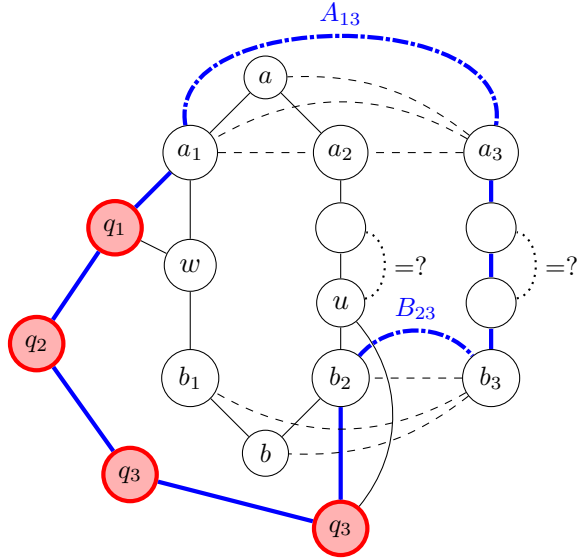
\begin{figure}
			\centering
			\begin{tikzpicture}[scale = .5]
	\def\height{2};
	\def\width{2};
	\node[normal node] (a1) at (-\width, 0) {$a_1$};
	\node[normal node] (a2) at (\width, 0) {$a_2$};
	\node[normal node] (a) at (0, \height){$a$};
	\node[normal node, scale=1.1] (w) at (-\width, -1.5*\height){$w$};
	\node[normal node] (b1) at (-\width, -3*\height) {$b_1$};
	\node[normal node] (b2) at (\width, -3*\height){$b_2$};
	\node[normal node, scale = 2] (a21) at (\width, -\height){};
	\node[normal node, scale = 1.1] (a22) at (\width, -2*\height){$u$};
	\node[normal node] (b) at (0, -4*\height){$b$};
	\draw (a1) to (a);
	\draw (a) to (a2);
	\draw[maybe edge] (a1) to (a2);
	\draw (a1) to (w);
	\draw (b1) to (w);
	\draw (b1) to (b);
	\draw (b2) to (b);
	
	\draw (b2) to (a22);
	\draw (a22) to (a21);
	\draw (a2) to (a21);
	
	\begin{scope}[bend right = 80]
		\path
		(a22) edge node[right] {$=$?} (a21) [maybe equal, thick ];
	\end{scope}
	
	\node[red node] (q1) at (-2*\width, -1*\height){$q_1$};
	\draw (q1) to (w);
	\node[red node] (qn) at (\width, -5*\height){$q_3$};
	
	\node[red node, below left= 1 and .5 of q1] (q2) {$q_2$};
	\node[red node, scale = 1, below right= 1.2 and .7 of q2] (q3) {$q_3$};
	
	\node[normal node] (a3) at (3*\width, 0){$a_3$};
	\node[normal node] (b3) at (3*\width, -3*\height){$b_3$};
	\node[normal node, scale = 2] (a31) at (3*\width, -\height){};
	\node[normal node, scale = 2] (a32) at (3*\width, -2*\height){};
	\begin{scope}[bend right = 80]
		\path
		(a32) edge node[right] {$=$?} (a31) [maybe equal, thick ];
	\end{scope}
	\draw (b3) to (a32);
	\draw (a32) to (a31);
	\draw (a31) to (a3);

	\draw (qn) to (b2);
	\draw[bend right = 40] (qn) to (a22);
	
	\begin{scope} [blue, ultra thick]
		\draw (q1) to (a1);
		\draw (q1) to (q2);
		\draw (q2) to (q3);
		\draw (q3) to (qn);
		\draw (qn) to (b2);
		%\path (b2) edge node[below] {$B_{23}$} (b3) [path];
		\draw (b3) to (a32);
		\draw (a32) to (a31);
		\draw (a31) to (a3);
		\begin{scope}[bend left = 50]
			\path
			(b2) edge node[above] {\textbf{$B_{23}$}} (b3) [path ];
		\end{scope}
		\begin{scope}[bend left = 100]
			\path
			(a1) edge node[above] {\textbf{$A_{13}$}} (a3) [path ];
		\end{scope}

	\end{scope}
	
	\draw[maybe edge] (b3) to (b2);
	\draw[maybe edge, bend left = 30] (b3) to (b1);
	\draw[maybe edge, bend left = 20] (b3) to (b);
	
	\draw[maybe edge] (a3) to (a2);
	\draw[maybe edge, bend right = 30] (a3) to (a1);
	\draw[maybe edge, bend right = 20] (a3) to (a);

\end{tikzpicture}
			\caption{An illustration for the proof of statement \ref{pathcases:not_i} of Theorem \ref{thm:pathcases:no-top-bottom-path}. $C''$ is drawn with blue edges. Note $a,a_2$ maybe equal or adjacent to vertices in $V(A_{13})$ and $b$ maybe equal or adjacent to vertices in $V(B_{23})$. $A_{23}$ and $B_{23}$ each have length at least one so $C''$ has length at least $9$, a contradiction.}
			\label{fig:pathcases:not_i_pt2}
		\end{figure}
		
		The union of the path $a_1 \dd q_1 \dd q_2 \dd q_3 \dd \dots \dd q_n \dd b_2$ and $B_{23} \cup P_3 \cup A_{13}$ is a hole $C''$ of length $|E(Q)| + |E(A_{23})| + |E(P_3)| + |E(A_{13})| + 2$.
		Since $b_2$ is the only neighbor of $q_n$ in $B$, $C$ is a hole so $\ell = E(Q)| + |E(B_{12})| + 3$. Thus $|E(B_{12}| = 2$ and in particular $b_2$ is not adjacent to $b_1$ so $b_1, b_2$ are not multipurpose from the definition of $k$-skeleton.
		Since $P_1$ has length two and $\ell = 8$, $a_1$ cannot be multipurpose from the definition of $k$-skeleton. Thus $A_{13}$ and $B_{23}$ both have length one or two.
		It follows that $|E(C'')| \geq |E(Q)| + 6 = 9 > \ell$, a contradiction. See Figure \ref{fig:pathcases:not_i_pt2}.
		This proves (\ref{pathcases:not_i}). 
		\\
		\\
		By (\ref{paths:topbottomcases}), it follows that $n = 2$ and $q_1, q_2$ both have at least two neighbors in $V(S)$.
		\stmt{Every constituent path of $S$ has length at most $\ell -4$. \label{pathcases:paths-are-at-most-l-minus-4}}
		Suppose $P_1$ has length $\ell -3$. Then by definition of $k$-spine every path of $S$ has length at least $\ell -4$. 
		$P_1 \cup A_{12} \cup P_2 \cup B_{12}$ is a hole of length $\ell - 3 + |E(P_2)| + |E(A_{12})| + |E(B_{12})|$.
		So $\ell \geq 2\ell - 7 + |E(A_{12})| + |E(B_{12})|$. So, $$7 - |E(A_{12})| - |E(B_{12})| \geq \ell = \ell - 3 + |E(P_2)| + |E(A_{12})| + |E(B_{12})| \geq 7$$ 
		Hence $|E(A_{12}| = |E(B_{12})| = 0$ and $|E(P_2)| = \ell -4$.
		Since $P_1$ and $P_2$ have different lengths $S$ is not a $k$-theta and in particular for each $i \in [k]$ at most one of $a_i, b_i$ is a multipurpose-vertex by definition of $k$-spine.
		But $|E(A_{12})| = |E(B_{12})| = 0$,  so $a_1 = a_2$ and $b_1 = b_2$, a contradiction. This proves (\ref{pathcases:paths-are-at-most-l-minus-4}).
		\stmt{For every $i \in [k]$, $q_1$ is not adjacent to $a_i$ or $q_2$ is not adjacent to $b_i$. \label{pathcases:Q-cup-Pi-not-a-cycle}}
		Suppose $a_1$ is adjacent to $q_1$ and $b_1$ is adjacent to $q_2$. Then $G[V(Q \cup P_1)]$ contains a hole so $|E(P_1)| \leq \ell -1$, contradicting (\ref{pathcases:paths-are-at-most-l-minus-4}). 
		This proves (\ref{pathcases:Q-cup-Pi-not-a-cycle}).
		\\
		\\
		Since $q_1$ has a neighbor in $A$ and $b_1$ has a neighbor in $B$ it follows from (\ref{pathcases:Q-cup-Pi-not-a-cycle}) that $|A|, |B| \geq 2$.
		\stmt{$q_1, q_2$ are both anticomplete to $P_1^* \cup P_2^* \cup \dots \cup P_k^*$.\label{pathcases:no-interior-neighbors}}
		Let $a_1'$ denote the neighbor of $a_1$ in $P_1$. Suppose $q_1$ is adjacent to both $a_1$ and $a_1'$.
		We may assume $q_2$ is adjacent to $b_2$ and thus $b_2 \neq b_1$ and $a_2 \neq a_1$ by (\ref{pathcases:Q-cup-Pi-not-a-cycle}).
		Suppose $b_1$ is adjacent to $b_2$. Let $C$ denote the union of $a_1' \dd q_1 \dd q_2 \dd b_2 \dd b_1$ and $P_1 \setminus a_1$. Then $C$ is a hole of length $|E(P_1)| + 3$. So $|E(P_1)| = \ell -3$, contradicting (\ref{pathcases:paths-are-at-most-l-minus-4}) . 
		Hence $b_1$ and $b_2$ are not adjacent. Moreover by the same argument $q_2$ is not adjacent to any neighbor of $b_1$ in $B$.
		%Moreover, for every $b_i \in B$ if $b_i$ is adjacent to $b_1$ then $b_i$ is not adjacent to $q_2$. Since $G[B]$ is not a clique it follows from the defintion of $k$-spine that $\ell$ is even.
		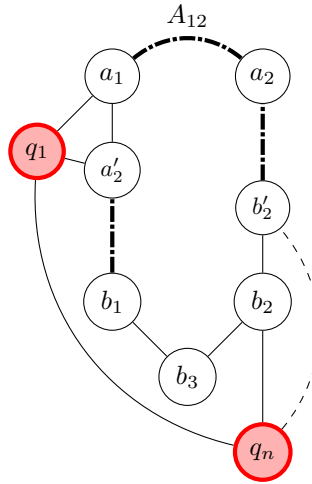
\begin{figure}[!h]
			\centering
			\begin{tikzpicture}[scale = .5]
	\def\height{2};
	\def\width{2};
	\node[normal node] (a1) at (-\width, 0) {$a_1$};
	\node[normal node] (a2) at (\width, 0) {$a_2$};
	
	\node[normal node, scale=.9] (w) at (-\width, -1.25*\height){$a_2'$};
	\node[normal node] (b1) at (-\width, -3*\height) {$b_1$};
	\node[normal node] (b2) at (\width, -3*\height){$b_2$};
	%\node[normal node, scale = 2] (a21) at (\width, -\height){};
	\node[normal node, scale = .9] (a22) at (\width, -1.75*\height){$b_2'$};
	\node[normal node] (b) at (0, -4*\height){$b_3$};

	\draw (a1) to (w);
	\draw[path] (b1) to (w);
	\draw (b1) to (b);
	\draw (b2) to (b);
	\draw (b2) to (a22);
	\draw[path] (a22) to (a2);
	%	\draw (a2) to (a21);

	\node[red node] (q1) at (-2*\width, -1*\height){$q_1$};
	\draw (q1) to (w);
	\draw (q1) to (a1);
	\node[red node] (qn) at (\width, -5*\height){$q_n$};
	
	\begin{scope}[bend right = 40]
		\path
		(q1) edge (qn);
	\end{scope}

	\begin{scope}[path, bend left = 40]
		\path
		(a1) edge node[above] {${A_{12}}$} (a2);
	\end{scope}
	
	\draw (qn) to (b2);
	\draw[bend right = 40, maybe edge ] (qn) to (a22);
	
\end{tikzpicture}
			\caption{An illustration of $G[V(C' \cup Q)]$ from the the proof of Statement \ref{pathcases:no-interior-neighbors} of Theorem \ref{thm:pathcases:no-top-bottom-path}.}
			\label{pathcases:fig_no_interior_nbrs}
		\end{figure}
		
		Since $|B| \geq 2$ and $G[B]$ is a 2-connected threshold graph we may assume $b_3$ is adjacent to both $b_1$ and $b_2$. Hence $b_3$ is not adjacent to $q_2$. 
		Let $C'$ denote the union of $P_1 \cup A_{12} \cup P_2$ and $b_1 \dd b_3 \dd b_2$. Then $C'$ is a hole.
		Since $N(q_1) \cap V(S)$ and $N(q_2) \cap V(S)$ are vertex sets of cliques $a_1', a_1$ are the only neighbors of $q_1$ in $V(C')$.
		Let $b_2'$ denote the neighbor of $b_2$ in $P_2$.
		Then $b_2, b_2'$ are the only possible neighbors of $q_2$ in $V(C')$.
		
		Suppose $q_2$ is not adjacent to $b_2'$.
		$G[V(C \cup Q)]$ is a pyramid, so it contains an odd hole
		Thus $\ell$ is odd and $q_1 \dd q_2 \dd b_2$ is a constituent path of $G[V(C \cup Q)]$ so $\ell = 6$.
		Hence $q_2b_2 \in E(G)$.
		
		Then $G[V(C \cup Q)]$ is a prism so $\ell$ is even and at least eight. Hence, every constituent path of $G[V(C \cup q]$ has length $n$ for some $n \geq 3$.
		But $q_1 \dd q_2$ is a constituent path of $G[V(C \cup Q)]$, a contradiction. See Figure \ref{pathcases:fig_no_interior_nbrs} for an illustration of this argument.
		This proves (\ref{pathcases:no-interior-neighbors}).
		\\
		\\
		Since $q_1, q_2$ both have at least two neighbors in $V(S)$ we may assume all of the following:
		\begin{itemize}
			\item $k \geq 4$,
			\item  $q_1$ is adjacent to $a_1, a_2$, 
			\item $q_1$ is not adjacent to $a_3, a_4$, 
			\item $q_2$ is adjacent to $b_3, b_4$,
			\item $q_2$ is not adjacent to $b_1, b_2$ and 
			\item $a_1, a_2, a_3, a_4, b_1, b_2, b_3, b_4$ are all distinct vertices.
		\end{itemize}
		Hence $A_{ij}$ and $B_{ij}$ both have length one or two for every two distinct $i, j \in [k]$. See Figure \ref{fig:pathcases_ending} for an illustration.
				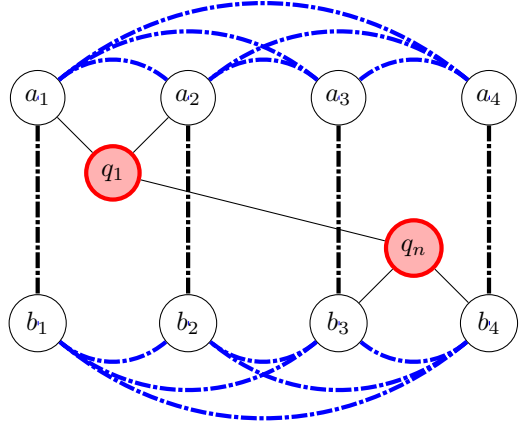
\begin{figure}[!h]
			\centering
			\begin{tikzpicture}[scale = .5]
	\def\height{6};
	\def\width{4};
	
	\foreach \x in {1,2,3,4}{
		\node[normal node] (a\x) at (\width*\x, 0) {$a_\x$};
		\node[normal node] (b\x) at (\width*\x, -\height) {$b_\x$};
		\draw[path] (a\x) to (b\x);
		\foreach \y in {1,...,\x}{
			\draw[path, blue, bend right = 40] (a\x) to (a\y);
			\draw[path, blue, bend left = 40] (b\x) to (b\y);
		}
	}

	\node[red node] (q1) at (1.5*\width, -2){$q_1$};
	\draw (q1) to (a1);
	\draw (q1) to (a2);
	\node[red node] (qn) at (3.5*\width, -4){$q_n$};
	\draw (q1) to (qn);
	\draw (qn) to (b3);
	\draw (qn) to (b4);

\end{tikzpicture}
			\caption{An illustration for the conclusion of the proof of Theorem \ref{thm:pathcases:no-top-bottom-path}. For distinct $i,j \in [4]$, $A_{ij}$ and $B_{ij}$ are drawn in blue. Note this is a simplified drawing; for distinct $i',j' \in [4]$, vertices in $V(A_{ij})$ and $V(B_{ij})$ may be equal or adjacent to vertices in $V(A_{i'j'})$ and $V(B_{i'j'})$, respectively.}
			\label{fig:pathcases_ending}
		\end{figure}
		
		Let $i \in \{1,2\}$ and $j \in \{ 3, 4\}$. 
		Then the graphs $a_i \dd P_i \dd b_i \dd B_{ij} \dd b_j \dd q_2 \dd q_1 \dd a_i$ and $b_j \dd P_j \dd a_j \dd A_{ij} \dd a_i \dd q_1 \dd q_2 \dd b_j$ are holes of length $|E(P_i)| + |E(B_{ij})| + 3$ and $|E(P_j)| + |E(A_{ij})| + 3$ respectively.
		Hence, $\ell -3 = |E(P_i)| + |E(B_{ij})| = |E(P_j)| + |E(A_{ij})|$. 
		But then, $P_1 \cup A_{12} \cup P_2 \cup B_{12}$ is a hole of length $2 \ell -6$. So $2\ell - 6 \leq \ell$. But $\ell > 6$, a contradiction.	
	\end{proof}
	\begin{corr}
		Let $G$ be an $\ell$-monoholed graph for some $\ell \geq 7$. Suppose $G$ does not contain a clique cut-set and suppose $G$ contains a $k$-spine for some $k \geq 3$. Let $\R$ be a crowned $k$-corpus in $G$ chosen to maximize $k$ and with respect to that to maximize $V(\R)$.
		Let $H, I$ be the crowns of $\R$.
		Let $W$ be an induced path $w_1 \dd w_2 \dd \dots \dd w_n$ in $G \setminus V(\R)$ of length at least one satisfying:
		\begin{itemize}
			\item $W^*$ is anticomplete to $V(\R)$
			\item $N(w_1) \cap V(\R)$ and $N(w_n) \cap V(\R)$ are both vertex sets of cliques.
		\end{itemize}
		Then $w_1, w_n$ are both anticomplete to one of $V(H), V(I)$.
		\label{corr:fml}
	\end{corr}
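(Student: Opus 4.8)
The plan is to deduce this from Theorem~\ref{thm:pathcases:no-top-bottom-path}, applied to the $k$-spine underlying the corpus of $\R$, and then to \emph{upgrade} the conclusion from the terminating sets of that spine to the full crowns of $\R$. Let $\F$ be the $k$-corpus of $\R$ and let $F$ be a graph underlying $\F$. By the definition of a $k$-corpus, $F$ is a $k$-spine; and since every $k'$-spine is in particular a crowned $k'$-corpus, the choice of $\R$ maximizing $k$ forces $F$ to be a $k$-spine with $k$ maximum, so Theorem~\ref{thm:pathcases:no-top-bottom-path} applies to $F$. The hypotheses on $W$ transfer for free: $V(F)\subseteq V(\F)\subseteq V(\R)$, so $W^*$ is anticomplete to $V(F)$, and $N(w_1)\cap V(F)$, $N(w_n)\cap V(F)$ are cliques as subsets of the cliques $N(w_1)\cap V(\R)$, $N(w_n)\cap V(\R)$. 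Thus $\{w_1,w_n\}$ is anticomplete to one of the terminating sets of $F$, say $A$; relabelling $H,I$ if needed, let $X$ be the elemental side of $\R$ on the $A$-side, so that $V(H)=X\cup J_X$ and the apexes $A_X$ adjacent to $X$ satisfy $A_X\subseteq A$ (this inclusion holds in each of the three spine types and should be checked).

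It remains to show $\{w_1,w_n\}$ is anticomplete to $V(H)=X\cup J_X$. First I would handle $X$. Note that, as above, $N(w_i)\cap V(\mathcal{Z})$ is a clique for $i\in\{1,n\}$ (a subset of $N(w_i)\cap V(\R)$), so if $w_i$ had a neighbor $x\in X$ then by Theorem~\ref{thm:crowned-k-corpus-extra-vertex-only-can-have-nbrs-in-one-end} its $\mathcal{Z}$-neighbors form a clique lying in the elemental side containing $x$, namely $X$. Running Lemma~\ref{lem:inflated-hole-vertex-neighbors-facts} on an inflated $\ell$-hole of $\F$ through the elemental path containing $x$ (such a hole contains a vertex of $A$, being an endpoint or apex of that constituent path), one gets that either $w_i$ is complete to that hole — hence adjacent to a vertex of $A$, contradicting the previous paragraph — or $w_i$'s neighbors in $\F$ lie in three consecutive bags of a single elemental path; in the latter case, varying the hole and using maximality of $|V(\R)|$ exactly as in the proof of Theorem~\ref{thm:crowned-k-corpus-extra-vertex-only-can-have-nbrs-in-one-end} lets one enlarge $\R$ (by absorbing $w_i$ into a bag, or into $J_X$), again a contradiction. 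Hence $\{w_1,w_n\}$ is anticomplete to $X$.

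Finally I would handle $J_X$. Suppose $w_i$ had a neighbor $j\in J_X$. By Lemma~\ref{lem:crown-tops-have-two-no-adjacent-nbrs-in-bottom}, $j$ has two non-adjacent neighbors $x,x'\in X$, lying in anticomplete elemental-side bags $X_1\ne X_2$; since $N(w_i)\cap V(H)$ is a clique containing $j$ and $x\not\sim x'$, $w_i$ is non-adjacent to (say) $x$. By Fact~\ref{fact:corpus-two-elemental-vertices-ell-minus-2-path-between-them} there is an $xx'$-path $R$ of length $\ell-2$ through the two elemental paths ending at $X_1,X_2$ and the opposite elemental side $Y$; since $N(j)\cap V(\F)\subseteq X\cup A_X$ and $A_X$ is disjoint from $V(R)$, the cycle $x\dd j\dd x'\dd R\dd x$ is an induced $\ell$-hole. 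Adding $w_i$ (adjacent to $j$, non-adjacent to $x$) then forces a hole of length other than $\ell$, or again a $k$-corpus-enlargement, a contradiction; so $\{w_1,w_n\}$ is anticomplete to $J_X$ as well, completing the proof. The main obstacle is this propagation from $A$ to $X\cup J_X$: one must rule out the intermediate case in which $w_i$ is adjacent only to $A$-ends of elemental paths that are apexes' neighbours (not themselves in $A$), and in the small cases $\ell\in\{7,8\}$, where elemental paths can be very short, the appeals to Lemma~\ref{lem:inflated-hole-vertex-neighbors-facts} should be replaced by direct length-counting as in the proof of Theorem~\ref{thm:pathcases:no-top-bottom-path}.
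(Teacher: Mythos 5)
Your first step (applying Theorem~\ref{thm:pathcases:no-top-bottom-path} to a graph $F$ underlying the corpus of $\R$, and transferring the hypotheses) is exactly how the paper begins, but after that your route diverges and contains a genuine gap. You try to \emph{upgrade} ``$\{w_1,w_n\}$ anticomplete to $A$'' to ``$\{w_1,w_n\}$ anticomplete to the whole crown $V(H)=X\cup J_X$'' by deriving a contradiction from a single end $w_i$ having any neighbor in $X$ or in $J_X$. No such local contradiction exists, and none should: the corollary explicitly permits both ends to have neighbors in one crown (they are then anticomplete to the other), so any argument that kills ``$w_i$ adjacent to some $j\in J_X$'' on its own must be wrong. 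Concretely, in your $J_X$ step the hole $x\dd j\dd x'\dd R\dd x$ plus $w_i$ forces nothing: $w_i$ may be adjacent to $j$ alone, or to the edge $jx'$, and by Fact~\ref{fact:hole-and-vertex-complete-anticomplete-or-P3} a vertex whose neighborhood on an $\ell$-hole is a single vertex or an edge creates no forbidden hole. In your $X$ step, maximality of $|V(\R)|$ does not let you ``absorb'' $w_i$: a vertex whose $\R$-neighborhood is a clique inside one bag has no non-adjacent pair of neighbors in $X$ (so it cannot join $J_X$) and lacks the connected-half-graph adjacencies to neighboring bags (so it cannot join a bag); moreover the relevant outcome of Lemma~\ref{lem:inflated-hole-vertex-neighbors-facts} may simply be that $G[V(\mathcal{C})\cup\{w_i\}]$ has a clique cut-set, which is a statement about that subgraph and cannot be discharged by the hypothesis that $G$ has none.

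The missing idea is that the proof must use the two ends \emph{jointly}. The paper argues by contradiction assuming $w_1$ meets $V(H)$ and $w_n$ meets $V(I)$; after Theorem~\ref{thm:pathcases:no-top-bottom-path} makes both ends anticomplete to $A$, the absence of a mated $(k+1)$-spider in $G[V(\R\cup W)]$ forces $w_n$ to be complete to the interior of some constituent path $P_1$, hence $|E(P_1)|=2$ and $\ell\in\{7,8\}$ with $a_1,b_1$ not apexes; then a crown vertex $h$ of $H$ adjacent to $w_1$ and to some $a_2$ is fed into holes through $P_1,P_2$ (and later $P_3$), and careful length counting, together with the theta/pyramid parity constraints, yields the contradiction. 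This interaction between the end attached to $H$ and the end attached to $I$ is precisely what your ``upgrade'' strategy discards, and it cannot be recovered by the one-end-at-a-time arguments you propose.
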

	\begin{proof}
		Suppose $V(H)$, $V(I)$ both have neighbors in $\{ w_1, w_n\}$.
		Then we may assume $w_1$ has a neighbor in $V(H)$ and $w_n$ has a neighbor in $V(I)$.
		Let $\F$ be the $k$-corpus of $\mathcal{R}$.
		Let $F$ be the $k$-corpus underlying $\mathcal{F}$. 
		Let $A, B$ be the terminating sets of $F$. Then by definition, we may assume $A \subseteq V(H)$ and $B \subseteq V(I)$. Let $P_1, P_2, \dots, P_k$ denote the constituent paths of $F$. For each $i\in [k]$, let the ends of $P_i$ be $a_i, b_i$ where $a_i \in A$ and $b_i \in B$.
		
		By Theorem \ref{thm:pathcases:no-top-bottom-path}, we may assume $w_1$ is anticomplete to $A$. Hence, $w$ is anticomplete to $V(F)$.
		$G[V(\R \cup W)]$ does not contain a mated $(k+1)$-spider.
		Thus by definition of crowned corpus we may assume $w_n$ is complete to interior of $V(P_1)$.
		Hence $|E(P_1)| = 2$ and thus $\ell =7$ or $\ell = 8$. Then, $a_1, b_1$ cannot be apexes of $F$.
		
		By definition $w_1$ has some neighbor $h \in V(H)\setminus V(A)$ such that $h$ has a neighbor in $A$.
		By Theorem \ref{thm:pathcases:no-top-bottom-path}, $N(h) \cap V(A)$ is not the vertex set of a clique. In particular, we may assume $h$ is adjacent to $a_2$.
		Let $C$ be a hole in $F$ containing $P_1, P_2$.
		Consider the graph $G[V(W \cup C) \cup \{ h\}]$ depicted in Figure \ref{fig:fml}.
		By definition $N(w_n) \cap V(C) = \{ v_1, b_1\}$.
		Since $a_1$ is not an apex $a_1 \neq a_2$.
		
		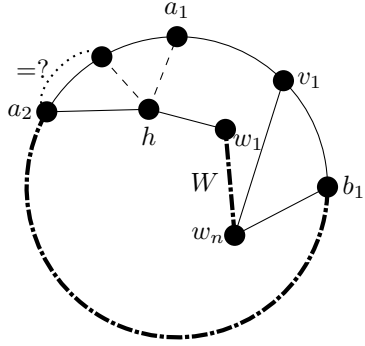
\begin{figure}[!h]
			\centering
				\begin{tikzpicture}
	%\draw (0,0) circle[radius=2cm];
	\path (0,0) ++(90:2cm) coordinate (a1);
	% draw (A) with a label
	\fill[black] (a1) circle[radius=4pt] ++(90:1em) node {$a_1$};
	
	\path (0,0) ++(0:2cm) coordinate (b1);
	\fill[black] (b1) circle[radius=4pt] ++(0:1em) node {$b_1$};
	
	\path (0,0) ++(45:2cm) coordinate (z);
	\fill[black] (z) circle[radius=4pt] ++(0:1em) node {$v_1$};
	
	\path (0,0) ++(110:1.1cm) coordinate (qstart);
	\fill[black] (qstart) circle[radius=4pt] ++(270:1em) node {$h$};
	
	\path (0,0) ++(-40:1cm) coordinate (qend);
	\fill[black] (qend) circle[radius=4pt] ++(180:1em) node {$w_n$};
	
	\path (0,0) ++(120:2cm) coordinate (u);
	\fill[black] (u) circle[radius=4pt] ++(90:1em) node {};
	
	\path (0,0) ++(150:2cm) coordinate (w);
	\fill[black] (w) circle[radius=4pt] ++(180:1em) node {$a_2$};
	
	\path (0,0) ++(50:1cm) coordinate (w1);
	\fill[black] (w1) circle[radius=4pt] ++(150:-1em) node {$w_1$};
	
	\path[path] (qend) edge node[left] {$W$} (w1);
	\draw (w1) to (qstart);
	%	\dw (qstart) to (w);
	\draw[maybe edge] (qstart) to (a1);
	\draw (qend) to (b1);
	\draw[maybe edge] (qstart) to (u);
	\draw (qstart) to (w);
	%\draw[maybe edge] (qend) to (u);
	%\draw (qend) to (w);
	\draw[path] (w) arc[start angle=150, end angle=360,radius=2cm];
	\draw (b1) arc[start angle = 0, end angle= 150, radius = 2cm];
	\draw (qend) to (z);
	
	\begin{scope}[bend right = 100]
		\path
		(u) edge node[left] {$=$?} (w) [maybe equal, thick];
	\end{scope}
\end{tikzpicture}

			\caption{The graph $G[V(W \cup C) \cup \{ h\}]$ from Corollary \ref{corr:fml} is drawn with $C$ as the outer face.}
			\label{fig:fml}
			
		\end{figure}

		If $a_1$ and $a_2$ are not adjacent by definition of $k$-spine $a_1$ and $a_2$ have a common neighbor $a_i$.
		If $a_1$ and $a_2$ are adjacent let $i =1$.
		By definition, $N(h) \cap V(C) \subseteq \{ a_1, a_2, a_i\}$.
		Then the union of $C \setminus \{ a_i , a_1, v_1, b_1\}$ and the path $a_2 \dd h \dd w_1 \dd w_2 \dd \dots \dd w_n \dd b_1$ is a hole of length $|E(C)| + |E(W)| -1 + \mathbb{X}_{i =1}$ where $\mathbb{X}_{i=1}$ is equal to $1$ if $i =1$ and zero otherwise. Since $G$ is $\ell$-monoholed it follows that $n =2 - \mathbb{X}_{i=1}$. Hence $n \leq 2$, so $n =2$. Thus, $\mathbb{X}_{i=1}  = 0$ so $a_1$ and $a_2$ are not adjacent.
		Hence $h$ is not adjacent to $a_1, a_i$ for otherwise $G[\{ a_i, a_1, v_1, b_1, w_2, w_1, h \}]$ includes a hole of length at most six, a contradiction.
		Thus $G[V(W \cup C) \cup \{ h\}]$ is a pyramid $Z$, so $\ell$ is odd.
		Let $R$ denote the $b_2b_1$-path of $C$ not containing $v_1$.
		Then $P_2 \cup R$ is a constituent path of $Z$ so it has length three.
		Since $P_2$ has length at least two, $R$ has length at most one.
		
		The neighbors of $h$ in $V(A)$ do not induce a clique, so we may assume $h$ is adjacent to $a_3$ and $a_3$ is not adjacent to $a_2$.
		Let $C'$ be a hole of $F$ containing $P_2$ and $P_3$.
		Let $a_j$ be the common neighbor of $a_2, a_3$ in $C'$.
		Then $V(C' \setminus a_j) \cup \{ h\}$ induces a hole $C''$.
		Let $R$ be a shortest path in $G[B]$ from $b_1$ to $\{ b_2, b_3\}$.
		Then $V(C' \cup B \cup W)$ induces a pyramid or a theta $H$.
		So since $\ell$ is odd, $H$ is a pyramid.
		Then $P_2$ is a constituent path of $H$, so it has length three.
		Hence $R$ has length zero, so $b_2 = b_1$.
		But then the union of $P_1, P_2, P_i$ is a theta, contradicting that $\ell$ is odd.
	\end{proof}

	\begin{thm}\label{thm:cant-add-path-to-maximal-skeleton}
	Let $\ell \geq 7$ be an integer.
		Let $G$ be an $\ell$-monoholed graph and suppose $G$ contains a $k$-spine $S$ for some $k \geq 3$. Choose $S$ to maximize $k$. Suppose there is some path $Q$ in $G \setminus V(S)$ such that there are two non-adjacent vertices in $N(V(Q)) \cap V(S)$. Then there exists some $v \in V(Q)$ such that $N(q) \cap V(S)$ contains two non-adjacent vertices.
	\end{thm}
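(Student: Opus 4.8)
The plan is to argue by contradiction: suppose $Q$ is a path in $G \setminus V(S)$ such that $N(V(Q)) \cap V(S)$ contains two non-adjacent vertices, but for every $v \in V(Q)$ the set $N(v) \cap V(S)$ is a clique. First I would pass to a minimal such $Q$ — that is, choose $Q$ together with a pair $u, v$ of non-adjacent vertices of $V(S)$ both having a neighbor in $V(Q)$, so that $Q$ is as short as possible. Minimality forces $Q$ to be an induced path, say with vertices $q_1 \dd q_2 \dd \dots \dd q_n$, such that $q_1$ is adjacent to $u$, $q_n$ is adjacent to $v$, and no interior vertex $q_i$ with $2 \leq i \leq n-1$ has a neighbor in $V(S)$ (otherwise we could shorten $Q$, using that each $N(q_i) \cap V(S)$ is a clique so cannot by itself supply a non-adjacent pair). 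Thus $W := Q$ satisfies the hypotheses of Theorem \ref{thm:pathcases:no-top-bottom-path}: $W^*$ is anticomplete to $V(F)$ — in fact to $V(S)$ — and $N(q_1) \cap V(S)$, $N(q_n) \cap V(S)$ are both cliques.

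Next, recall that $S$ is chosen with $k$ maximum, so $G$ contains no $(k+1)$-spine. Since $\ell \geq 7$, Theorems \ref{thm:matedspiderstructure-odd-case} and \ref{thm:orderly-spines-even-case} show that any pair of mated $(k+1)$-spiders in $G$ would give a $(k+1)$-spine; hence $G[V(Q \cup S)]$ contains no pair of mated $(k+1)$-spiders. Let $A, B$ be the terminating sets of $S$. By Theorem \ref{thm:pathcases:no-top-bottom-path}, the vertices $q_1, q_n$ are both anticomplete to one of $A$, $B$; say both are anticomplete to $B$. But $u$ and $v$ are non-adjacent vertices of $V(S)$ with neighbors in $\{q_1, \dots, q_n\}$, and by the previous paragraph the only vertices of $Q$ with neighbors in $V(S)$ are $q_1$ and $q_n$; moreover $N(q_1) \cap V(S)$ and $N(q_n) \cap V(S)$ are each cliques, so $u, v$ cannot both lie in $N(q_1) \cap V(S)$ nor both in $N(q_n) \cap V(S)$. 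Hence (after possibly swapping) $u \in N(q_1) \cap V(S)$ and $v \in N(q_n) \cap V(S)$. Since $q_1$ and $q_n$ are anticomplete to $B$, neither $u$ nor $v$ lies in $B$, so $u, v$ lie in the core of $S$ together with $A$; in particular every neighbor of $V(Q)$ in $V(S)$ lies in $V(S) \setminus B$.

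The remaining work is to derive a contradiction from this configuration, where all of $V(Q)$'s attachment to $S$ is concentrated on the $A$-side plus interiors near $A$ — but by hypothesis there is still a non-adjacent pair there. Here I would use the structure of a $k$-spine from Theorem \ref{thm:orderly-spines-exist}: $G[A]$ is a (threshold, hence diameter-$\leq 2$) graph, and the constituent paths all have length roughly $\ell/2$. Because $N(q_1) \cap V(S)$ is a clique contained in $V(S) \setminus B$ and $N(q_n) \cap V(S)$ likewise, and the only candidates for a non-adjacent pair of neighbors of $V(Q)$ now come from one of these two cliques or from the interiors of constituent paths, I would trace out a hole or a mated $(k+1)$-spider inside $G[V(Q \cup S)]$ — for instance by picking interior vertices $t_i$ of the constituent paths as new toes and using $q_1$ (or the whole path $Q$) to build an extra leg — contradicting either $\ell$-monoholedness or the maximality of $k$. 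The main obstacle will be the careful case analysis of exactly which vertices of $V(S) \setminus B$ are adjacent to $q_1$ versus $q_n$ and whether $n = 2$ or $n > 2$; this parallels the bookkeeping in statements (\ref{paths:topbottomcases})--(\ref{pathcases:no-interior-neighbors}) inside the proof of Theorem \ref{thm:pathcases:no-top-bottom-path}, and getting the path-length arithmetic (using $\ell \geq 7$, so every constituent path has length at least $3$) to close every subcase is where the real effort lies.
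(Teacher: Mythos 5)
Your setup follows the paper's route (minimal path, then Theorem \ref{thm:pathcases:no-top-bottom-path}), but there are two genuine gaps. First, your claim that minimality alone forces $Q^*$ to be anticomplete to $V(S)$ does not hold: if an interior vertex $q_i$ has a neighbor $w \in V(S)$, minimality only tells you that $w$ lies in a common clique with $N(q_1)\cap V(S)$ and with $N(q_n)\cap V(S)$ (i.e.\ $w$ is adjacent to both $u$ and $v$); it does not shorten $Q$. Ruling out such attachments requires an $\ell$-monoholed argument — the paper takes a hole $C$ of length $\ell$ through $u$, a common neighbor $x$, and $v$, and observes that $G[V(C\cup Q)\setminus\{x\}]$ would be a hole longer than $\ell$. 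Without this step you are not entitled to apply Theorem \ref{thm:pathcases:no-top-bottom-path}, whose hypothesis requires $W^*$ anticomplete to $V(F)$. (A similar omission: the paper also needs that $N(q_1)\cap V(S)$ and $N(q_n)\cap V(S)$ are anticomplete to each other, again via a hole-length computation, before the endgame can proceed.)

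Second, and more seriously, the heart of the proof is missing. After Theorem \ref{thm:pathcases:no-top-bottom-path} the paper does not finish by ``tracing out a hole or a mated $(k+1)$-spider with $Q$ as an extra leg''; that device is essentially how Theorem \ref{thm:pathcases:no-top-bottom-path} itself is proved, and it is not enough here. The actual conclusion requires a precise chain of structural claims: $N(q_1)\cap V(S)\subseteq P_1^*$ (so $q_1$ attaches to the interior of a single constituent path, not ``near $A$'' as you assert), $q_n$ is anticomplete to $V(P_1)$, the graph $G[V(C\cup Q)]$ for a hole $C$ through $P_1,P_2$ is a theta/prism/pyramid forcing $|E(Q)|\geq \frac{\ell}{2}-3$, hence $q_n$ must be adjacent to $a_i$ for every $i\in[2,k]$, and finally a parity and path-length analysis (splitting on whether $\alpha_1=\beta_1$) that contradicts the allowed constituent-path lengths of a generalized $k$-prism/$k$-pyramid. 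Your proposal explicitly defers exactly this case analysis (``where the real effort lies''), so as written it does not constitute a proof; it is a plan whose hardest component — the one occupying most of the paper's argument — is not carried out, and whose framing of where the attachments lie is not justified.
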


	\begin{proof}
		Choose $Q$ to be a minimal path in $G \setminus V(S)$ such that there are two non-adjacent vertices in $V(S)$ with neighbors in $V(Q)$.
		Let the vertices of $Q$ be $q_1 \dd q_2 \dd \dots \dd q_n$, in order. Suppose for each $i \in [n]$ that $N(q_i) \cap V(S)$ is empty or the vertex set of a clique. Then, $n > 1$. 
		Let $P_1, P_2, \dots, P_k$ denote the constituent paths of $S$ and let $A, B$ denote the terminating sets of $S$. For each $i \in [k]$ let $a_i \in A$, $b_i \in B$ be the ends of $P_i$.
		By the minimality of $P$ we may assume there exist non-adjacent $u, v \in V(S)$ such that $u$ is adjacent to $q_1$ and no other vertex in $V(Q)$ and $v$ is adjacent to $q_n$ and no other vertex in $V(Q)$.
		\stmt{$q_1$ and $q_n$ do not have a common neighbor in $V(S)$ and $Q^*$ is anticomplete to $V(S)$. \label{paths:ends-have-no-common-nbr}}
		Let $X$ be the set of common neighbors of $q_1$ and $q_n$ in $V(S)$ and suppose $X \neq \emptyset$, then $X$ is the vertex set of a clique. Let $x \in X$. Then $u, v$ are both adjacent to $x$. By definition of $k$-spine there is some hole $C$ of length $\ell$ containing the path $u \dd x \dd v$. It follows that $C$ contains no vertex in $X \setminus \{ x\}$. By minimality of $Q$, for every $i \in [2, n-1]$, the set of neighbors of $q_i$ in $V(S)$ is contained in $X$. But then $G[V(C \cup Q) \setminus \{x \}]$ is a hole of length greater than $\ell$, a contradiction. Thus $X = \emptyset$ and it follows that $Q^*$ is anticomplete to $V(S)$. This proves (\ref{paths:ends-have-no-common-nbr}).
		
		\stmt{ $N(q_1) \cap V(S)$ and $N(q_n)\cap V(S)$ are anticomplete.\label{paths:anticomplete-nbrs}}
		By (\ref{paths:ends-have-no-common-nbr}) we need only show that there is no edge between a neighbor of $q_1$ in $V(S)$ and a neighbor of $q_n$ in $V(S)$.
		Suppose there are some adjacent $x, y \in N(S)$ with $x$ adjacent to $q_1$ and $y$ adjacent to $q_n$.
		Then by (\ref{paths:anticomplete-nbrs}), $V(Q) \cup \{ x,y\}$ induces a hole. Hence, $Q$ has length $\ell -3$.
		Let $C$ be a hole of $S$ containing $u$ and $v$. Let $R_1, R_2$ be the two $u,v$-paths of $C$. By (\ref{paths:anticomplete-nbrs}) for $i \in \{1,2\}$ $G[V(Q \cup  R_i)]$ contains a hole $C_i$ of length $\ell$. Since $N(q_1) \cap V(C)$ and $N(q_2) \cap V(C)$ both induce cliques, $|E(C_1) \setminus E(R_1)| + |E(C_2) \setminus E(R_2) \leq \ell -2$.
		Then, $$2 \ell = |E(C_1)| + |E(C_2)| \geq 2|E(Q)| + |E(R_1)| + |E(R_2)| + 2 = 2|E(Q)| + \ell + 2 = 3 \ell -4$$ But $\ell > 4$, a contradiction. This proves (\ref{paths:anticomplete-nbrs}).
		\\
		\\
		Thus by Theorem \ref{thm:pathcases:no-top-bottom-path}, we may assume $q_1$ is anticomplete to $A \cup B$. Without loss of generality, $N(q_1) \cap V(S) \subseteq P_1^*$.
		Let $\alpha_1, \beta_1$ denote the neighbors of $q_1$ in $V(P_1)$ with minimum $P_1$-distance to $a_1$ and $b_1$, respectively.
		\stmt{$q_n$ is anticomplete to $V(P_1)$ \label{paths:qn-does-not-have-a-nbr-in-P1}}
		Suppose $q_n$ has a neighbor in $V(P_1)$. Let $x,y \in V(P_1)$ such that $q_1$ is adjacent to $x$, $q_n$ adjacent to $y$. Choose $x,y$ to maximize the $P_1$-distance between $x$ and $y$. Let $R$ be the $xy$-path contained in $P_1$.
		Let $C$ be the $\ell$-hole in $S$ containing $P_1$ and $P_2$. Then $P_1 \setminus R^* \cup Q$ is a hole of length $\ell - |E(R)| + |E(Q)| + 2$. By definition of $k$-spine, $|E(P_1) \leq \frac{\ell}{2}$. Hence, $|E(Q)| < \frac{\ell}{2} $. But $G[V(R \cup Q)]$ contains a hole of length at most $|E(R)| + |E(Q)|$ and $|E(R)| + |E(Q)| < \ell$, a contradiction. This proves (\ref{paths:qn-does-not-have-a-nbr-in-P1}).
		\\
		\\
		%.
		Without loss of generality $q_n$ has a neighbor in $V(P_2)$. Let $\alpha_2, \beta_2$ denote the neighbors of $q_n$ in $V(P_2)$ with minimum $P_2$-distance to $a_2$ and $b_2$, respectively. For $i \in \{ 1,2\}$,
		let $A_i$, $B_i$ denote the paths of $P_i$ with ends $a_i, \alpha_i$ and ends $\beta_i, b_i$, respectively.
		Hence, $|E(A_i)| + |E(B_i)|$ is equal to $|E(P_i)| -1$ or $|E(P_i)|$.
		
		Let $C$ be a hole in $S$ containing $P_1$ and $P_2$ and let $J$ be the graph induced by $V(C \cup Q)$.
		Then $J$ is a theta, prism or pyramid and the constituent paths of $J$ have length $\frac{\ell}{2}$, $\frac{\ell-1}{2}$ or $\frac{\ell}{2} -1$. Since there is a constituent path of $J$ consisting of $Q$ and at most two more edges, $|E(Q)| \geq \frac{\ell}{2}-3$. 

		Let $X$ be the path $A_1 \cup Q \cup B_2 \cup \{\alpha_1q_1, \beta_2q_n\}$ and let $Y$ be the path $A_2 \cup Q \cup B_1 \cup \{ \beta_1q_1, \alpha_1q_1\}$.
		For $i \in \{1,2\}$,  $|E(A_i)| + |E(B_i)|$ is equal to $|E(P_i)| -1$ or $|E(P_i)$ and by definition of $k$-spine  $P_1, P_2$ each have length at least $\frac{\ell}{2} -1$. Thus, one of $|E(A_1 \cup B_2)|$ or $|E(A_2 \cup B_1)|$ is at least $\frac{\ell}{2}-2$.
		Thus we may assume $|E(X)| \geq \ell -3$.
		\stmt{$q_n$ is adjacent to $a_i$ for every $i \in [2,k]$ \label{paths:qn-has-lots-of-nbrs-on-top}}
		Suppose $q_n$ is not adjacent to $a_3$.
		Since $G[A]$ is a connected threshold graph there is a $a_1a_3$-path $M$ of length at most two in $G[A]$.
		Since $G[B]$ is a connected threshold graph there is a $b_2b_3$-path $M'$ of length at most two in $G[A]$.
		Then $Q \cup X \cup P_3$ is a hole of length greater than $\ell$, since $\ell > 7$, a contradiction. 
		It follows from the fact that $N(q_n) \cap V(S)$ does not contain two non-adjacent vertices and the fact that $q$ has a neighbor in $V(P_2)$ that $q_n$ is adjacent to $a_1$. This proves (\ref{paths:qn-has-lots-of-nbrs-on-top}).
		\\
		\\
		Since $N(q_n)$ does not contain two non-adjacent vertices in $V(S)$, it follows that $N(q_n) \cap V(S) = \{ a_2, a_3, \dots, a_k\}$ and $\{a_2, a_3, \dots, a_k\}$ is the vertex set of a clique. Moreover, since $q_n$ is not adjacent to $a_1$, $|A| \geq 2$. Thus, $S$ is not a $k$-theta. Thus $S$ is a generalized $k$-prism or a generalized $k$-pyramid.
		Since $q_n$ is non-adjacent to $a_1$ and $q_n$ is adjacent to $a_2, \dots, a_k$, $a_1$ does not equal any of $a_2, a_3, \dots, a_k$.
		By Fact \ref{lem:leg-fact}, we may assume $a_1$ is adjacent to $a_3$.
		Let $C$ be a hole contained in $S$ with $P_1, P_2 \subseteq S$.
		Then $G[V(C \cup Q)]$ is a prism or a theta depending on whether $\alpha_1 = \beta_1$ and the union of $A_1$ and the edge $a_1a_3$ is a constituent path of $G[V(C \cup Q)]$.
		\stmt{$\alpha_1 \neq \beta_1$ \label{paths:end-I-hope}}
		Suppose $\alpha_1 = \beta_1$. Then $G[V(C \cup Q)]$ is a theta with constituent paths of length $\frac{\ell}{2}$ and $P_1 = A_1 \cup B_1$.
		Hence, $|E(A_1)| = \frac{\ell}{2} -1$.
		$B_1$ has length at least one, because $N(q_1) \subseteq P_1^*$.
		Hence, $P_1$ has length at least $\frac{\ell}{2}$.
		Since $G$ contains a theta, $\ell$ is even so $S$ is a generalized $k$-prism.
		But then $P_1$ has length at most $\frac{\ell}{2} -1$, a contradiction. This proves (\ref{paths:end-I-hope}).
		\\
		\\
		Thus $G[V(C \cup Q)]$ is pyramid and so $\ell$ is odd.
		Moreover, $|E(P_1)| = |E(A_1)| + |E(B_1)| + 1$.
		Since the union of $A_1$ and the edge $a_1a_3$ is a constituent path of $G[V(C) \cup Q]$, it follows that $|E(A_1)| = \frac{\ell -1}{2} -1$.
		Thus $P_1$ has length $\frac{\ell -1}{2} + 1$.
		But since $\ell$ is odd $S$ is a generalized $k$-prism and so $P_1, P_2, \dots P_k$ all have length at most $\frac{\ell -1}{2}$, a contradiction.
	\end{proof}
	
	\begin{thm} \label{thm:k-corpus-if-path-some-vertex-has-two-non-adjacent-nbrs-in-corpus}
		Let $G$ be an $\ell$-monoholed graph for some $\ell \geq 7$ and suppose $G$ contains a $k$-spine.
		Let $\R$ be a crowned $k$-corpus, chosen to maximize $k$ and with respect to that maximize $|V(\R)|$.	
		Suppose there is some path $W$ in $G \setminus V(\R)$ such that $N(V(W)) \cap V(\R)$ contains vertices from two non-adjacent bags of $\R$.
		Then there exists some $w \in V(W)$ such that $N(w) \cap V(\R)$ contains vertices from two non-adjacent bags.
	\end{thm}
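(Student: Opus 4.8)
\textbf{Proof proposal for Theorem \ref{thm:k-corpus-if-path-some-vertex-has-two-non-adjacent-nbrs-in-corpus}.}

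The plan is to reduce the statement about the crowned $k$-corpus $\R$ to the statement about its underlying $k$-spine, so that Theorem \ref{thm:cant-add-path-to-maximal-skeleton} and Corollary \ref{corr:fml} can be applied. First I would choose $W$ minimal among paths in $G \setminus V(\R)$ whose neighborhood meets two non-adjacent bags, say with vertices $w_1 \dd w_2 \dd \dots \dd w_n$; by minimality we may assume $w_1$ has a neighbor in one of these two bags and no other vertex of $W$ does, and similarly $w_n$ has a neighbor in the other bag and no other vertex of $W$ does. Suppose for contradiction that for every $i \in [n]$ the set $N(w_i) \cap V(\R)$ lies in the union of pairwise adjacent bags (equivalently, since each bag is a clique and neighboring bags are complete, $N(w_i)\cap V(\R)$ is a clique), so $n > 1$. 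The first reduction step is to push the two non-adjacent witnessing bags into the core $\mathcal{Z}$ of $\R$: if one of them is an apex or lies in a crown (i.e.\ is one of the added bags $\{j\}$ for $j \in J_X \cup J_Y$), I would use the structure of $k$-corpus (each $j$ has two non-adjacent neighbors in its elemental side by definition of crowned $k$-corpus, and apexes are ends of constituent paths) to replace it by a bag in the relevant elemental side of the underlying $k$-spine $F$, at the cost of possibly lengthening $W$ by a bounded amount — or, more cleanly, to argue directly that $W$ together with $\R$ would violate the maximality of $\R$ or Theorem \ref{thm:crowned-k-corpus-extra-vertex-only-can-have-nbrs-in-one-end}.

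The main line of argument: by Theorem \ref{thm:crowned-k-corpus-extra-vertex-only-can-have-nbrs-in-one-end} applied to each internal vertex $w_i$ (which has a clique neighborhood in $\mathcal{Z}$ by assumption), no $w_i$ with $1 < i < n$ has neighbors in two distinct elemental sides, and the endpoint analysis reduces to endpoints having clique neighborhoods confined to single elemental sides. Then I would invoke Corollary \ref{corr:fml}: its hypotheses are exactly that $W^*$ is anticomplete to $V(\R)$ and $N(w_1)\cap V(\R)$, $N(w_n)\cap V(\R)$ are cliques, and its conclusion is that $w_1, w_n$ are both anticomplete to one of the two crowns $V(H), V(I)$. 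This forces both endpoints' neighbors into a single crown, hence (by Theorem \ref{thm:crowned-k-corpus-extra-vertex-only-can-have-nbrs-in-one-end} again, if needed) into a single elemental side, so in particular $N(V(W)) \cap V(\R)$ cannot meet two non-adjacent bags unless those two bags lie within that one crown — but within a single crown $G[X \cup J_X]$, which is a crown with partition $X, J_X$ by Lemma \ref{lem:the-crowns-of-a-crowned-corpus-are-crowns}, one uses the crown axioms (Axiom \ref{crown-axiom:paths-length-two} and Lemma \ref{lem:crown-tops-have-two-no-adjacent-nbrs-in-bottom}) to derive a contradiction from the existence of such a path $W$ between two non-adjacent vertices with every $N(w_i)$ a clique: a $uv$-path with $u,v$ non-adjacent in $X$ and interior disjoint from $V(\R)$ is too long, and one in $J_X$ contradicts Axiom \ref{crown-axiom:paths-length-two} combined with Lemma \ref{lem:crown-good-bag-children-properties}.

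The hard part will be the first reduction step — controlling what happens when the witnessing bags are apexes or crown bags $\{j\}$ rather than core bags of $F$ — because Theorem \ref{thm:cant-add-path-to-maximal-skeleton} and Corollary \ref{corr:fml} are phrased in terms of the $k$-spine $S = F$ and its terminating sets, not the full crowned corpus. I expect to handle this by a careful case split on the position of the two bags (both in $\mathcal{Z}$; one an apex; one in a crown), using in each case either the maximality of $k$ and $|V(\R)|$ (a vertex whose neighborhood into $\mathcal{Z}$ is mixed on a bag but not spanning two elemental sides gets absorbed into that bag, contradicting maximality) or the fact that a path reaching two non-adjacent bags across the corpus can be pruned to a subpath $W'$ reaching two non-adjacent bags of the core, to which the earlier theorems apply directly, and then transferring the ``some vertex has two non-adjacent neighbors'' conclusion back along $W$. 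A secondary technical point is verifying that when we shorten or replace $W$ we preserve the hypothesis ``$N(w_i)\cap V(\R)$ is a clique for all $i$,'' which we are assuming throughout for contradiction, so this is automatic.
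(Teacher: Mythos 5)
Your high-level skeleton matches the paper's up to a point: minimal choice of $W$, showing $W^*$ is anticomplete to $V(\R)$, applying Corollary \ref{corr:fml} to get that $w_1,w_n$ avoid one crown, and applying Theorem \ref{thm:cant-add-path-to-maximal-skeleton} (via an underlying graph of $\F$) to control attachment to the corpus. But there is a genuine gap at the step where you claim that Corollary \ref{corr:fml} ``forces both endpoints' neighbors into a single crown, hence into a single elemental side.'' It does not. The conclusion of Corollary \ref{corr:fml} is only that $w_1,w_n$ are anticomplete to \emph{one} of the two crowns; the interiors of the elemental paths (and the apex bags) belong to neither crown, so an endpoint can still attach there. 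After one also knows that $N(V(W))\cap V(\F)$ is a clique and that not all neighbors of $V(W)$ lie in a single crown, the case that survives is exactly: $N(w_1)\cap V(\R)$ contained in $V(C_X)\setminus X$ (a crown-top vertex $h$), while $w_n$ has a neighbor $j$ in an \emph{interior} bag of an elemental path $\q_1$. This is the main case of the paper's proof and is not reachable by crown axioms, since the relevant vertices are not confined to $X\cup J_X$. Your planned finish (``within a single crown, use Axiom \ref{crown-axiom:paths-length-two} and Lemma \ref{lem:crown-good-bag-children-properties}'') simply never sees this configuration.

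Closing that case requires real work, which is where most of the paper's proof lives: one takes an inflated hole of $\R$ through $\q_1,\q_2$, builds two holes through $W$ (one using $h$'s neighbor $x_2\in X_2$ and one using the attachment $j$), compares their lengths to get $|E(L_1)|+\zeta=|E(L_2)|$ with $\zeta\in\{0,1,2\}$, uses the fact that elemental paths differ in length by at most one to force $\zeta=2$, deduces that $G[V(C\cup W)\cup\{h\}]$ is a theta with paths of length $\frac{\ell}{2}$, and then brings in a third elemental path $\q_3$ and shortest paths $Z_{13},Z_{23}$ inside $C_Y$ to reach a parity/length contradiction. Nothing in your proposal supplies a substitute for this computation, and your ``first reduction step'' (absorbing apex or crown bags into the core) does not address it either, since the problematic bag here is an interior bag of an elemental path, which is already in the core. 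A smaller point: invoking Theorem \ref{thm:crowned-k-corpus-extra-vertex-only-can-have-nbrs-in-one-end} on the $w_i$ is vacuous under your contradiction hypothesis, because ``$N(w_i)\cap V(\mathcal{Z})$ is a clique'' is already one of that theorem's allowed outcomes, so it yields no constraint.
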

	\begin{proof}
		Choose $W$ to be a minimal path in $G \setminus V(\R)$ such that $V(W)$ has neighbors in two non-adjacent bags of $\R$.
		Let the vertices of $W$ be $w_1 \dd w_2 \dd \dots \dd w_n$, in order. Suppose for each $i \in [n]$ that $N(w_i) \cap V(\R)$ is empty or the vertex set of a clique. Then, $n > 1$.
		Let $X, Y$ be the elemental sides of $\R$ and let $C_X, C_Y$ be the crowns of $\R$ where $X \subseteq C_X$, $Y \subseteq C_Y$. Let $\q_1, \q_2, \dots , \q_k$ be the elemental paths of $\R$ and lets $X_i, Y_i$ be the end bags of $\q_i$ where $X_i \in X, Y_i \in Y$.
		\stmt{The neighbors of $V(W)$ in $V(\R)$ are not contained in $V(C_X)$ and they are not contained in $V(C_Y)$. \label{omfg:not-just-in-one-crown}}
		Suppose the neighbors of $V(W)$ in $V(\R)$ are contained in $V(C_X)$.
		There exist some two nonadjacent vertices $c_1, c_2 \in C_X$ such that $w_1$ is adjacent to $c_1$ and $w_n$ is adjacent to $c_n$. By definition of crowned $k$-corpus for each $i\in {1,2}$, $c_i$ is equal or adjacent to some vertex $x_i$ in a bag of $X$.
		Then, $x_1$ and $x_2$ are non-adjacent. Then the graph induced by $V(W) \cup \{ c_1, c_2, x_1, x_2\}$ contains a $x_1x_2$-path $Z$ of length at least three.
		By \ref{fact:corpus-two-elemental-vertices-ell-minus-2-path-between-them}, $\R$ contains an $x_1x_2$-path $P$ such that $P^*$ is anticomplete to $V(C_X)$. But then $M \cup P$ is a hole of length greater than $\ell$, a contradiction. This proves (\ref{omfg:not-just-in-one-crown}).
		\stmt{$W^*$ is anticomplete to $V(\R)$ \label{omfg:no-nbrs-in-interior}}
		By minimality of $W$, the set $N(W^*) \cap V(\R)$ is complete to $N(w_1) \cap V(\R)$ and $N(w_n) \cap V(\R)$.
		Suppose some $w_i \in W^*$ has a neighbor $r \in V(\R)$. 
		
		By definition, there are some two vertices $u,v \in N(r) \cap V(\R)$ such that $u$ and $v$ are not adjacent and $w_1$ is adjacent to $u$ and $w_n$ is adjacent to $v$.
		Suppose there is some hole $C$ contained in $\R$ such that $u, v, r \in V(C)$.
		Then since $\ell \geq 7$, $W^*$ is anticomplete to $V(C \setminus v)$. Thus $W \cup C \setminus v$ is a hole of length greater than $\ell$, a contradiction. Thus $u, v, r$ are not all contained in any hole of $\R$.
		
		Let $\F$ be the $k$-corpus of $\R$. Then by definition $r \not \in V(\F)$.
		Thus we may assume $r \in V(C_X) \setminus X$.
		It follows from the minimality of $W$ that $N(V(W)) \cap V(\R)$ is contained in a single crown of $\R$, contradicting (\ref{omfg:not-just-in-one-crown}). This proves (\ref{omfg:no-nbrs-in-interior}).
		\stmt{$V(W)$ is anticomplete to one of $C_X$, $C_Y$.\label{omfg:not-int-both-crowns}}
		Suppose $N(V(W))$ contains vertices in both $C_X$, $C_Y$.  
		Then, by minimality of $W$, we may assume that $w_1$ has a neighbor in $V(C_X)$ and $w_n$ has a neighbor in $V(C_Y)$. Then, by Corollary \ref{corr:fml}, for some $i \in [2,n-1]$, $w_i$ has a neighbor in $v \in V(\R)$, contradicting (\ref{omfg:no-nbrs-in-interior}). This proves (\ref{omfg:not-int-both-crowns}).
		\stmt{Let $\F$ be the corpus of $\R$. Then $N(V(W)) \cap V(\F)$ is the vertex set of a clique.\label{omfg:applying-the-previous-path-theorem}}
		Suppose $N(V(W)) \cap V(\F)$ contains two non-adjacent vertices $u, v$ . Then by definition of inflated graph there is some graph $F$ underlying $\mathcal{F}$ with $u,v \in V(F)$. But then $W, F$ contradicts Theorem \ref{thm:cant-add-path-to-maximal-skeleton}. This proves (\ref{omfg:applying-the-previous-path-theorem}).
		\\
		\\
		By (\ref{omfg:applying-the-previous-path-theorem}), we may assume $N(w_1) \cap V(\R) \subseteq V(C_X) \setminus X$. Then by (\ref{omfg:not-int-both-crowns}), $w_n$ is anticomplete to $V(C_Y)$. Then by (\ref{omfg:not-just-in-one-crown}), we may assume $w_n$ has a neighbor $j$ in an interior bag of $\q_1$.
		Let $h$ be a neighbor of $w_1$ in $V(C_X) \setminus X$.
		By definition of crowned $k$-corpus, we may assume $h$ has a neighbor $x_2 \in X_2$.
		Let $\mathcal{C}$ be an inflated hole in $\R$ containing $\q_1, \q_2$.
		Then by definition of inflated graph there is a hole $C$ underlying $\mathcal{C}$ such that $x_2, j \in V(C)$.
		Let $x_1$ be the vertex in $V(C)$ corresponding to the bag $X_1$ and let $y_1, y_2$ be the vertices in $V(C)$ corresponding to the bags $Y_1, Y_2$.
		Then there is an $x_1x_2$-path $P_{12}$ of length at most two contained in $V(C)$.
		Moreover, by definition of crowned $k$-corpus $V(C) \cap V(C_X) \subseteq V(P_{12})$.
		Hence the neighbors of $h$ in $V(C)$ are contained in $V(P_{12})$.
		By definition $w_n$ has at most two neighbors in $V(C)$. Let $j, j'$ denote the neighbors of $w_n$ in $V(C)$. Then either $j = j'$ or $j$ is adjacent to $j'$.
		Without loss of generality $x_1, j, j', y_1, y_2, x_2, v, x_1$ occur in order in $V(C)$.
		Let $L_1$ be the $x_1j$-path of $C$ not containing $y_1$.
		Let $L_2$ be the $x_2j'$ path of $C$ not containing $x_1$.
		Then the graph induced by $V(L_1 \cup W \cup P_{12})$ includes a hole of length $|E(W)| + |E(L_1)| + 3$ or $ |E(W)| + |E(L_1)| + 4, |E(W)| + |E(L_1)| + 5$ depending on the length of $P_{12}$ and the neighbors of $h$ in $V(P_{12})$.
		The  union of $L_2$ and the path $x_2 \dd h \dd w_1 \dd w_2 \dd \dots \dd w_n \dd j'$ is a hole of length $|E(W)| + |E(L_2)| + 3$.
		
		Hence 	$|(L_1)| + \zeta = |E(L_2)|$ for some $\zeta \in \{0, 1, 2\}$.
		Let $Q_1, Q_2$ be underlying paths of $\q_1, \q_2$ contained in $C$.
		Then,
		$|E(L_2)| = |E(Q_2)| + d_{L_2}(y_1, y_2) + d_{L_2} (y_2, j')$ and $|E(L_1)| = |E(Q_1)| - d_{L_2}(y_2, j') - d_C(j, j')$.
		It follows that, 
		$$|E(Q_1)| - d_{L_2}(y_2, j') - d_C(j, j')  + \zeta = |E(Q_2)| + d_{L_2}(y_1, y_2) + d_{L_2} (y_2, j')$$ 
		So, $$\zeta = |E(Q_2)| - |E(Q_1)| + 2d_{L_2}(y_2, j') + d_{L_2}(y_1, y_2) - d_C(j,j')$$
		By definition of elemental path, $Q_1, Q_2$ differ in length by at most one.
		Hence $\zeta \geq 2$ so $\zeta = 2$.
		
		It follows that $j = j'$, $y_2j \in E(G)$ and $y_1y_2 \in E(G)$.
		Also, $P_{12}$ has length two and the only neighbor of $h$ in $V(C)$ is $x_2$.
		Then, $G[V(C \cup W) \cup \{ h\}]$ is a theta and each of paths $h_1 \dd w_1 \dd w_2 \dd \dots \dd w_n \dd j$, $ G[V(Q_1 \cup P_{12}) \setminus y_1]$ and $j \dd y_1 \dd y_2 \dd Q_2$ each have length $\frac{\ell}{2}$.
		Thus $|E(Q_2)| = \frac{\ell}{2} - 3$.
		
		Let $v$ be the central vertex of $P_{12}$.
		Since $h$ and $v$ have a common neighbor in a $X$ and $hv$ is not an edge in $G$, it follows from the definition of crown that $v \not \in V(C_X) \setminus X$.
		Hence, $v$ is in a bag $X_3$ of $X$. Let $Q_3$ be a path underlying $\q_3$ such that $v \in V(Q_3)$. Let $y_3$ be the end of $Q_3$ not equal to $v$.
		Let $Z_{23}$ be a shortest $y_2y_3$-path in $C_Y$.
		Then the union of $Q_2$, $Q_3$, $Z_{23}$, and the edge $x_2x_3$ is a hole.
		So $|E(Q_3)| + |E(Z_{23})| = \frac{\ell}{2} + 2$.
		Let $Z_{13}$ be a shortest $y_1y_2$-path in $C_Y$.
		Then the union of $Z_{13}$, $Q_3$, and the path $x_3 \dd x_2 \dd h \dd w_1 \dd \dots \dd w_n \dd j \dd y_1$ is a hole.
		So $|E(Q_3)| + |Z_{13}| = \frac{\ell}{2}$.
		But by definition of $C_Y$, the lengths of $Z_{13}, Z_{23}$ differ by at most one.
	\end{proof}

	\subsection{Everything is a crowned $k$-corpus}
	\begin{lemma}\label{lem:not-neighbors-in-just-caps-or-2-caps}
		Let $G$ be an $\ell$-monoholed graph. Suppose $G$ does not contain a clique cut-set and suppose $G$ contains a $k$-spine for some $k \geq 3$. Let $\R$ be a crowned $k$-corpus in $G$ chosen to maximize $k$ and with respect to that to maximize $V(\R)$. Let $\F$ be the $k$-corpus of $\R$.
		Suppose for some vertex $v$ in $V(G) \setminus V(\R)$, the set of vertices $N(v) \cap V(\R)$ contains two non-adjacent vertices. Then $v$ contains two non-adjacent vertices in $V(\F)$ and $v$ has a neighbor in the core of $\R$.
	\end{lemma}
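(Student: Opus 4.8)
The goal is to show that if $v \in V(G) \setminus V(\R)$ has two non-adjacent neighbors in $V(\R)$, then in fact $v$ has two non-adjacent neighbors in $V(\F)$ (the $k$-corpus of $\R$) and $v$ has a neighbor in the core $\mathcal{Z}$ of $\R$. First I would dispose of the easy part. By Theorem \ref{thm:k-corpus-if-path-some-vertex-has-two-non-adjacent-nbrs-in-corpus} applied with the trivial one-vertex path $W = v$, the hypothesis already tells us something, but more directly: since $N(v) \cap V(\R)$ contains two non-adjacent vertices $p, q$, and $\R = G[V(\F) \cup J_X \cup J_Y]$ where each $j \in J_X \cup J_Y$ has two non-adjacent neighbors in an elemental side of $\F$, I want to push the non-adjacent pair down into $V(\F)$. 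If both $p, q \in V(\F)$ we are partway done. Otherwise at least one of them, say $q$, lies in $J_X$ (WLOG). Then $q$ has two non-adjacent neighbors $x_1, x_2 \in X$. If $v$ is adjacent to both $x_1$ and $x_2$, then $v \dd x_1 \dd q \dd x_2 \dd v$ is a hole of length four (using that $x_1 x_2 \notin E(G)$ and $v q \in E(G)$), contradicting that $G$ is $C_4$-free; so $v$ is non-adjacent to one of them, say $x_2 \in V(\F)$. Now I need a second neighbor of $v$ in $V(\F)$ non-adjacent to $x_2$: I would use the other neighbor $p$. If $p \in V(\F)$ and $p$ is non-adjacent to $x_2$, done; if $p$ is adjacent to $x_2$ I would instead look for a vertex in $V(\F)$ that $v$ sees and that is far from $x_2$ in the inflated hole structure, invoking Fact \ref{fact:corpus-two-elemental-vertices-ell-minus-2-path-between-them} to produce a long path forcing a contradiction unless such a non-adjacent pair exists. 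The case analysis here --- $p \in J_X$, $p \in J_Y$, $p \in V(\F)$ --- is the main bookkeeping, but each reduces via the $C_4$-free property and Lemma \ref{lem:crown-good-bag-children-properties} (which controls how two vertices of $J_X$ relate to each other's good children) to a configuration where two genuine corpus vertices non-adjacent to each other are both neighbors of $v$.

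\textbf{The core-neighbor part.} Once we know $v$ has two non-adjacent neighbors $y_1, y_2 \in V(\F)$, I would apply Theorem \ref{thm:crowned-k-corpus-extra-vertex-only-can-have-nbrs-in-one-end}: for $v \in V(G) \setminus V(\mathcal{Z})$, either $v$ is complete to $V(\R)$, or $N(v) \cap V(\mathcal{Z})$ is a clique, or the neighbors of $v$ in $V(\mathcal{Z})$ lie in a single elemental side of $\R$. I must rule out the first two and, in the third case, show the elemental-side neighbors are actually in the core (they are, by definition the elemental sides are subsets of $V(\F)$, hence of... wait --- the elemental sides of $\F$ are the $A$-ends and $B$-ends of elemental paths, which are core vertices unless they coincide with apexes; I would need to check the apex subtlety). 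The first alternative, $v$ complete to $V(\R)$, is excluded by hypothesis since $v$ has a non-neighbor in $V(\R)$. The second, $N(v) \cap V(\mathcal{Z})$ a clique: here I would argue that the two non-adjacent neighbors $y_1, y_2 \in V(\F)$ force a non-clique intersection with the core --- if $y_1, y_2$ are themselves in $\mathcal{Z}$ we are immediately done; if one of them is an apex, I would use Fact \ref{lem:leg-fact} (every joint/apex has degree at least two in the body, or coincides with another) and the definition of $k$-spine to slide to a genuine core vertex adjacent to $v$, exploiting that $v$'s neighborhood among the apex's neighbors cannot be a clique without recreating a $C_4$ or a short hole.

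\textbf{Main obstacle.} I expect the hardest step to be the bookkeeping in the case where one or both of the original non-adjacent neighbors of $v$ lie in $J_X \cup J_Y$ rather than in $V(\F)$: one must carefully use the crown axioms (no mean $P_5$, no mean fork, $G[I]$ is $P_4$-free) together with Lemma \ref{lem:crown-tops-have-two-no-adjacent-nbrs-in-bottom} and Lemma \ref{lem:crown-good-bag-children-properties} to argue that $v$ cannot ``hide'' all its $V(\F)$-neighbors inside a clique while still reaching two non-adjacent vertices of $\R$ through $J$-vertices. A secondary subtlety is the apex issue in identifying core neighbors: an elemental side of $\F$ consists of vertices equal or adjacent to the terminating sets, and when a terminating-set vertex is an apex shared by several constituent paths, I need the definition of $k$-corpus (elemental sides are either complete or anticomplete to each other) and the 2-connected-threshold-graph structure of the body to guarantee $v$ genuinely touches an elemental path's interior-adjacent end, i.e. a core vertex. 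I would handle this by noting that if all of $v$'s core neighbors were ends coinciding with apexes, the constraint $N(v) \cap V(\mathcal{Z})$ being forced into one elemental side, combined with $v$ having two non-adjacent such neighbors, still yields core membership because apex-coinciding ends in one elemental side are pairwise complete by the $k$-corpus definition, contradicting non-adjacency.
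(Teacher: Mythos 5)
There is a genuine gap, and it comes in two places. First, a logical slip in your ``push the non-adjacent pair down into $V(\F)$'' step: from $q\in J_X$ adjacent to $v$ with non-adjacent $x_1,x_2\in X$, the $C_4$-free argument only tells you that $v$ is \emph{non-adjacent} to one of $x_1,x_2$, say $x_2$. That produces a non-neighbor of $v$ in $V(\F)$, which is useless for the conclusion; subsequently finding a neighbor $p$ of $v$ with $p$ non-adjacent to $x_2$ does not give two non-adjacent \emph{neighbors} of $v$ in $V(\F)$, so the case analysis you sketch never actually closes the main claim.

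Second, and more fundamentally, your argument for the first conclusion is purely local ($C_4$-freeness, the crown axioms, Fact \ref{fact:corpus-two-elemental-vertices-ell-minus-2-path-between-them}), whereas the paper's proof is essentially extremal. The paper assumes for contradiction that $N(v)\cap V(\F)$ is empty or a clique, uses hole-length arguments to show $v$'s neighbors are not confined to a single crown and avoid the interiors of elemental paths, and then reaches the decisive step: picking a vertex $t_i$ from an interior bag of each constituent path, the set $\{t_1,\dots,t_k,v\}$ becomes the toe set of a pair of mated $(k+1)$-spiders in $G[V(\R)\cup\{v\}]$ (after a short case analysis when $v$ meets two bags of a terminating set), which by Theorems \ref{thm:matedspiderstructure-odd-case} and \ref{thm:orderly-spines-even-case} yields a $(k+1)$-spine and contradicts the choice of $\R$ maximizing $k$. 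This use of maximality is not optional: a vertex with exactly one neighbor in $V(C_X)\setminus V(\mathcal{X})$ and one on the $Y$-side, with clique (or empty) trace on $V(\F)$, is not forbidden by any local $C_4$/hole/crown computation -- it is excluded only because it would enlarge the spine. Your proposal never invokes the extremal choice of $\R$ for this part, and your later appeal to Theorem \ref{thm:crowned-k-corpus-extra-vertex-only-can-have-nbrs-in-one-end} for the core-neighbor conclusion presupposes ruling out the alternative ``$N(v)\cap V(\mathcal{Z})$ is a clique,'' which is essentially the very statement left unproved.
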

	\begin{proof}
		Let $X, Y$ be the elemental sides of $R$ and let $C_X, C_Y$ be the crowns of $R$ where $V(X) \subseteq V(C_X)$ and $V(Y) \subseteq V(C_Y)$.
		Let $C_X \setminus X$ denote the graph formed from $C_X$ by deleting any vertices in a bag in $X$.
		Suppose $N(v) \cap V(\R)$ contains two non-adjacent vertices in $V(\R) \cap V(\F)$.
		Suppose $v$ is anticomplete to $V(\F)$ or $N(v) \cap V(\F)$ is the vertex set of a clique.
		\stmt{$N(v) \cap V(\R)$ is not contained in $V(C_X)$ or $V(C_Y)$. \label{bleh:easy}}
		Suppose every neighbor of $v$ in $V(\R)$ is contained in $V(C_X)$.
		Then $v$ has a neighbor $u \in V(C_X \setminus X)$ and a neighbor $w$ in $V(C_X)$ and $u$ and $w$ are non-adjacent. By definition of crown there are some two non-adjacent vertices in $x_1, x_2 \in X$ such that $u$ is adjacent to $x_1$ and $w$ is equal or adjacent to $x_2$.
		It follows that $G[\{v, u, w, x_1, x_2\}]$ is an $x_1x_2$-path of length at least three.
		But by Fact \ref{fact:corpus-two-elemental-vertices-ell-minus-2-path-between-them} $\F$ contains an $x_1x_2$-path $M$ of length $\ell -2$ such that $M^*$ is anticomplete to $V(C_X)$.
		Hence the union of $G[\{v, u, w, x_1, x_2\}]$ and $M$ is a hole of length greater than $\ell$, a contradiction. This proves (\ref{bleh:easy}).
		\stmt{$v$ does not have a neighbor in the interior of an elemental path of $\F$.\label{bleh:two}}
		Suppose $v$ has a neighbor in the interior of some elemental path of $\F$.
		Then for some graph $R$ underlying $\R$, the set of vertices $N(v) \cap V(R)$ contains two non-adjacent vertices and $v$ has a neighbor in the interior of an elemental path of $R$. By definition, $C_X \setminus X \subseteq R$. Without loss of generality, $v$ has a neighbor $h \in V(C_X \setminus X)$.
		Let the elemental paths of $R$ be $Q_1, Q_2, \dots, Q_k$ where for each $i \in [k]$, $Q_i \subseteq \q_i$ and $v$ has a neighbor $j \in Q_1^*$. For each $i \in [k]$ let $x_i, y_i$ denote the ends of $Q_i$ contained in $X, Y$, respectively. Let $Q_1^x$ be the $jx_1$-path of $Q_1$. 

		Let $M$ be a shortest path from $x_1$ to a vertex in $N(v) \cap V(C_X)$ in $C_X$.
		Then, $M$ has length one or two.
		$G[V(Q_1^x \cup M) \cup \{ v\}]$ is a hole of length $|E(Q_1^x)| + 3$ or $|E(Q_1^x)| + 4$.
		So $|E(Q_1^x)| \in \{ \ell -3, \ell -4\}$ and so $|E(Q_1)| \geq \ell - 3$.
		By definition, every elemental path of $R$ has length at most $\frac{\ell}{2} -1$.
		Hence $\ell \leq 4$, a contradiction. This proves (\ref{bleh:two}).
		\stmt{$G$ contains a ($k+1$)-spine. \label{bleh:contradiction}}
		We have assumed $G$ does not contain a $(k+1)$-spine.
		We may assume $v$ has a neighbor in $h \in V(C_X \setminus X)$ and a neighbor in $C_Y$.
		For each $i \in [k]$ let $t_i$ be a vertex from an interior bag of a constituent path of $\mathcal{R}$.
		Thus, if $v$ neighbors in at most one bag of each of the terminating sets of $Y$ then $\R$ then $G[V(\R) \cup \{ v\}]$ contains a pair of mated $(k+1)$-spiders with toes $t_1, t_2, \dots, t_k, v$. Hence, for some terminating set $\mathcal{B}$ of $\R$, $v$ has neighbors in two bags of $\mathcal{B}$.
		
		Let $\F$ be the corpus of $\R$ it follows that, $v$ has neighbors in two bags that are contained in $V(C_Y) \cap V(\F)$.
		Let $F$ be an underlying graph of $\F$ such that $N(h) \cap V(F)$ contains two non-adjacent vertices and $v$ has two neighbors in one of the terminating sides of $F$. This is possible by definition of inflated graph and of $\F$.
		Let the constituent paths of $F$ be $P_1, P_2, \dots, P_k$ and let $Q_1, Q_2, \dots, Q_k$ be the elemental paths of $F$ where $Q_i \subseteq P_i$ for every $i \in [k]$.
		For each $i \in [k]$ let the ends of $P_i$ be $a_i, b_i$ such that $a_i \in V(C_X)$ and $b_i \in V(C_Y)$. For each $i \in [k]$, let $x_i, y_i$ be the vertices in the elemental sides of $F$ that are contained in $V(P_i)$ where $x_i \in V(C_X)$ and $y_i \in V(C_Y)$.
		
		We may assume $h$ is adjacent to $x_1, x_2$ and $x_1$ is not adjacent to $x_2$.
		Let $B$ be the set $\{ b_1, b_2, \dots, b_k\}$. Thus $v$ has two neighbors in $B$.
		Let $M$ be the shortest path from a neighbor of $v$ in $V(B)$ to $b_1$ or $b_2$.
		Then $V(Q_1 \cup Q_2 \cup M \cup \{ h, v\})$ induces a theta or a pyramid with constituent paths of length at most three.
		Hence it is a pyramid and $\ell = 7$, $M$ has length two and $b_1$ is adjacent to $b_2$.
		Thus $F$ is a generalized $k$-pyramid and $P_1, P_2, \dots, P_k$ each have lengths in $\{ 2, 3\}$ and in particular $P_1, P_2$ both have length two.
		
		Suppose $b_3$ is a neighbor of $v$ in $B$.
		Then $h$ is anticomplete to $V(P_3)$ for otherwise $G[V(P_3) \cup \{ v, h\}]$ contains a hole of length six or less, a contradiction.
		
		Since $M$ has length two, $v$ is not adjacent to $b_1, b_2$ or any of their neighbors. Thus $b_3$ is not adjacent to $b_1, b_2$.
		Thus by definition of generalized $k$-prism, $P_3$ has length two, $a_3$ is adjacent to $a_1, a_2$.
		But then the union of $P_3$ and the path $a_3 \dd a_2 \dd h \dd v \dd b_3$ is a path of length six, a contradiction.
		Thus $G$ contains a mated $(k+1)$-spider and so it contains a $(k+1)$-spine.
		This proves (\ref{bleh:contradiction}).
		\\
		\\
		(\ref{bleh:contradiction}) contradicts the maximality of $k$.
	\end{proof}

	\begin{thm}
		Let $G$ be an $\ell$-monoholed graph. Suppose $G$ does not contain a clique cut-set and $G$ does not contain a vertex $v$ that is adjacent to every other vertex in $V(G)$.	
		Suppose $G$ contains a $k$-spine for some $k \geq 3$. Let $\R$ be a crowned $k$-corpus in $G$ chosen to maximize $k$ and with respect to that to maximize $|V(\R)|$.
		Suppose $G$ does not contain a clique cut-set and suppose $G$ contains a $k$-spine for some $k \geq 3$.
		Then $G = \R$
		\label{thm:everything-is-a-crowned-corpus}
	\end{thm}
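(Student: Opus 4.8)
The plan is to show that if $V(G) \neq V(\R)$ then we can derive a contradiction, either by producing a clique cut-set, a universal vertex, or a larger crowned $k$-corpus (or a $(k+1)$-spine), all of which are excluded by hypothesis. First I would consider the set $U = V(G) \setminus V(\R)$, assume it is non-empty, and look at how $U$ attaches to $\R$. Since $G$ has no clique cut-set, $G[V(\R)]$ is not separated from $U$ by a clique; more carefully, since $N(U) \cap V(\R)$ cannot be a clique, there must be a connected subset of $U$ with two non-adjacent neighbors in $V(\R)$, and I would take a minimal such connected subgraph, which is an induced path $W = w_1 \dd \cdots \dd w_n$ with ends having non-adjacent neighbors in $\R$ (or a single vertex).

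Next I would apply Theorem \ref{thm:k-corpus-if-path-some-vertex-has-two-non-adjacent-nbrs-in-corpus}: since $N(V(W)) \cap V(\R)$ contains vertices from two non-adjacent bags of $\R$, there is a single vertex $w \in V(W)$ with neighbors in two non-adjacent bags of $\R$. Then Lemma \ref{lem:not-neighbors-in-just-caps-or-2-caps} applies to $w$: it gives that $w$ has two non-adjacent neighbors in the corpus $\F$ and a neighbor in the core $\Z$ of $\R$. At this point Theorem \ref{thm:crowned-k-corpus-extra-vertex-only-can-have-nbrs-in-one-end} applies to $w$ (which is in $V(G) \setminus V(\Z)$): since $w$ is not complete to $V(\R)$ and $N(w) \cap V(\Z)$ is not a clique and the neighbors of $w$ in $V(\Z)$ are not contained in a single elemental side, we reach a direct contradiction. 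The key is that these three earlier results are precisely designed to be chained: the path theorem reduces to a single vertex, the lemma forces that vertex to behave ``badly'' with respect to the corpus and core, and the structure theorem for single vertices forbids exactly that behavior.

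The remaining subtlety is making sure the hypotheses line up exactly. In particular I must verify that the minimal path $W$ really does have neighbors in two \emph{non-adjacent bags} of $\R$ (not merely two non-adjacent vertices), and that the reduction in Theorem \ref{thm:k-corpus-if-path-some-vertex-has-two-non-adjacent-nbrs-in-corpus} is applicable — this is where the assumption ``no clique cut-set'' is used to guarantee that $N(U) \cap V(\R)$ is not a clique and hence that some connected piece of $U$ spans two non-adjacent bags. I would also handle the edge case where $U$ has no vertex adjacent to $V(\R)$ at all: then any vertex of $\R$ together with $U$ would be separated, so connectivity of $G$ (which follows from having no clique cut-set, or can be assumed) forces $N(U) \cap V(\R) \neq \emptyset$, and if this neighborhood is a clique it is a clique cut-set, contradiction; so indeed two non-adjacent bags are hit. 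The main obstacle I anticipate is bookkeeping around bags versus vertices and the precise statement ``contains vertices from two non-adjacent bags'': one must argue that two non-adjacent \emph{vertices} of $\R$ forces two \emph{non-adjacent bags}, which holds because each bag is a clique, and then confirm the chain of three theorems applies without gap. Once that alignment is checked, the contradiction is immediate and the theorem follows: $V(G) = V(\R)$, i.e. $G = \R$.
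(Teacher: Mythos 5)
Your overall plan — take a minimal connected attachment $W$, reduce to a single vertex via Theorem \ref{thm:k-corpus-if-path-some-vertex-has-two-non-adjacent-nbrs-in-corpus}, then kill that vertex with Lemma \ref{lem:not-neighbors-in-just-caps-or-2-caps} and Theorem \ref{thm:crowned-k-corpus-extra-vertex-only-can-have-nbrs-in-one-end} — is exactly how the paper rules out the case where $W$ is a single vertex, but it does not cover the general case, and the step you yourself flag as ``bookkeeping'' is where the argument breaks. You claim that two non-adjacent vertices of $\R$ must lie in two non-adjacent bags ``because each bag is a clique.'' That only shows they lie in \emph{distinct} bags. In an inflated graph adjacent bags are joined by a connected half-graph, not a complete join, so two non-adjacent vertices can perfectly well sit in two \emph{adjacent} bags; likewise a crown vertex $j'\in J_X$ need not be adjacent to every vertex of an elemental-side bag it attaches to. Hence you cannot invoke Theorem \ref{thm:k-corpus-if-path-some-vertex-has-two-non-adjacent-nbrs-in-corpus} for your minimal path $W$: its hypothesis is attachment to two non-adjacent \emph{bags}, not two non-adjacent vertices. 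In fact, in the paper that theorem is used in the opposite direction: combined with the minimality of $W$ (and the single-vertex exclusion) it shows that $N(V(W))\cap V(\R)$ meets \emph{no} two non-adjacent bags, so the surviving case is precisely non-adjacent attachments $j,j'$ lying in adjacent bags $J,J'$. That case is the bulk of the paper's proof: one shows no inflated $\ell$-hole of $\F$ contains both $J$ and $J'$ (else gluing $W$ to a $jj'$-path of length $\ell-1$ gives a too-long hole), deduces that $J,J'$ cannot both lie in $\F$, nor both in $\R\setminus\F$, reduces to $J$ in an elemental side and $J'=\{j'\}$ a crown vertex, and derives a $C_4$ through two non-adjacent neighbours of $j'$ in the elemental side. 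None of this appears in your proposal, so as written it proves nothing when the minimal $W$ attaches only to adjacent bags.

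Two smaller points. First, even in the single-vertex case your chain does not give a ``direct contradiction'' for a vertex that is complete to $V(\R)$, since that is one of the allowed outcomes of Theorem \ref{thm:crowned-k-corpus-extra-vertex-only-can-have-nbrs-in-one-end}; the paper disposes of such vertices separately by setting $H$ to be the set of vertices complete to $V(\R)$, noting $H$ is a clique ($G$ is $C_4$-free), and using the no-universal-vertex hypothesis to see that $V(G)\setminus(V(\R)\cup H)\neq\emptyset$, then choosing $W$ inside $G\setminus(V(\R)\cup H)$. Your choice of $W$ inside $V(G)\setminus V(\R)$ does not exclude these vertices, and your proposal never uses the no-universal-vertex hypothesis, which is a sign something is missing. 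Second, your use of the no-clique-cut-set hypothesis to produce a connected piece of the outside with two non-adjacent neighbours in $\R$ is fine and matches the paper.
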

	\begin{proof}[Proof]
		Suppose $G \neq \R$.
		Let $H$ be the set of vertices that are complete to $V(\R)$. Then $H$ is a clique since $G$ does not contain a $C_4$.
		By assumption $V(G) \setminus (V(\R) \cup H) \neq \emptyset$.
		
		Then since $G$ does not contain a clique cut-set there is connected induced subgraph $W$ of $G \setminus (V(\R) \cup H)$ such that $V(W) \cap V(\R)$ contains two non-adjacent vertices. Choose $W$ to be minimal. Then $W$ is a path.
		By Lemma \ref{lem:not-neighbors-in-just-caps-or-2-caps} and Theorem \ref{thm:crowned-k-corpus-extra-vertex-only-can-have-nbrs-in-one-end}, $W$ does not consist of a single vertex.
		Then, by Theorem \ref{thm:k-corpus-if-path-some-vertex-has-two-non-adjacent-nbrs-in-corpus}, $N(W) \cap V(\F)$ does not contain vertices from two non-adjacent bags.
		Hence there are some two adjacent bags $J, J'$ of $\R$ and $j \in J$ and $j \in J'$ such that $N(W)$ contains both $j$ and $j'$.
		Let the vertices of $W$ be $w_1 \dd w_w \dd \dots \dd w_n$, in order. Then we may assume $w_1$ is adjacent to $j$ and $w_n$ is adjacent to $j'$. By minimality $W^*$ is anticomplete to $j, j'$.
		\stmt{There is no inflated hole $\mathcal{C}$ contained in $\F$ with $J, J' \subseteq \mathcal{C}.$ \label{grrr:easy}}
		Suppose $\mathcal{C}$ is an inflated hole in $\F$ containing both $J$ and $J'$ as bags.
		Then there is some $jj'$-path $R$ contained in $\mathcal{C}$ of length $\ell -1$. But then the union of $R$ and path $j \dd w_1 \dd w_2 \dd \dots \dd w_n \dd j'$ is a hole of length greater than $\ell$, a contradiction. This proves (\ref{grrr:easy}).
		\\
		\\
		Let $\F$ be the $k$-corpus of $\R$.
		Then it follows from (\ref{grrr:easy}) and the definition of $k$-corpus that $J, J'$ cannot both be in $\F$.
		Since the bags of $\F \setminus \R$ are single vertices, $J$ and $J'$ cannot both be in $\R \setminus \F$.
		Then we may assume $J$ is a bag of an elemental side $X$ of $\R$ and $J'$ is in $\R \setminus \F$. By (\ref{grrr:easy}), $J$ is complete to every neighboring bag of $J'$ in $X$.
		By definition of crowned $k$-corpus $J'$ consists of a single vertex $j'$ and $N(j')$ contains nonadjacent vertices $x_1, x_2$ from bags in $X$. Then $j \dd x_1 \dd j' \dd x_2 \dd j$ is a hole of length four, a contradiction.
	\end{proof}

	\section{Analyzing the structure of a maximal crowned $k$-corpus}
	In this section we will prove that the crowns of a crowned $k$-corpus are ``transitive closures of trees'' for any $\ell \geq 7$ and $k \geq 3$. We begin by introducing transitive closure of trees.
	\subsection{Transitive closures of trees}
	Given a tree $T$ with root $r \in V(T)$ we call the graph obtained from $T$ by adding edges between every vertex $v \in V(T)$ all descendants of $v$ in $V(T)$ the \textit{transitive closure of $T$}. See Figure \ref{fig:trans-closure-of-tree} for an illustration. We need the following lemma about transitive closures of trees. 
	
	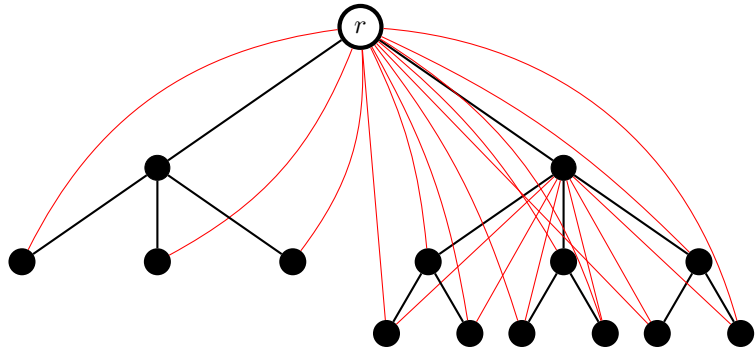
\begin{figure}[!h]
		\centering
		\begin{tikzpicture}[scale=0.5]
	\def\width{3.6};
	\def\height{2.5};
	\def\bend{30};
	\newcounter{c};
	\node[normal node, ultra thick] (r) at (5.5*\width, 2.5*\height) {$r$};
	\foreach \x in {4,7}{
		\node[normal node, fill = black] (a\x) at (\x*\width, 1*\height){};
		\draw[thick] (a\x) to (r);
	}
	\foreach \x in {3}{
		\node[normal node, thick, fill = black] (b\x) at (\x*\width, 0){};
		\draw[thick] (b\x) to (a4);
		\draw[red, bend left = 28] (b\x) to (r);
	}
	\foreach \x in {4,5}{
		\node[normal node, fill = black, thick] (b\x) at (\x*\width, 0){};
		\draw[thick] (b\x) to (a4);
		\draw[red, bend right = 20] (b\x) to (r);
	}

	\foreach \x in {6,7,8}{
		\node[normal node, thick, fill = black] (b\x) at (\x*\width, 0){};
		\draw[thick] (b\x) to (a7);
		\draw[red, bend right = 10] (b\x) to (r);
	}

	\foreach \x in {6,7,8}{
		\node[normal node, fill = black, below left = .7cm and .3cm of b\x, thick] (c\x){};
		\draw[thick] (c\x) to (b\x);
		\draw[red](a7) to (c\x);
		
		\node[normal node, fill = black, below right = .7cm and .3cm of b\x, thick] (d\x){};
		\draw[thick] (d\x) to (b\x);
		\draw[red] (a7) to (d\x);
	
	}
	
	\draw[red] (c6) to (r);
	\draw[red, bend right = 10] (d6) to (r);
	
	\draw[red, bend right = 10] (c7) to (r);
	\draw[red, bend right = 20] (d7) to (r);
	
	\draw[red] (c8) to (r);
	\draw[red, bend right = 35] (d8) to (r);

\end{tikzpicture}
		\caption{An example of a transitive closure of a tree. Let $T$ be the tree drawn in black and $r$ the root of $T$. Then the transitive closure is the union of $T$ and the red edges. }
		\label{fig:trans-closure-of-tree}
	\end{figure}
	
	\begin{lemma}
		Let $H$ be a connected graph such that $V(H)$ can be partitioned into a stable set $S$ and a set $X$ satisfying the following conditions:
		\begin{enumerate}[(a)]
			\item Every $x \in X$ has a neighbor in $S$.
			\item For any $x_1, x_2 \in X$, $x_1$ is adjacent to $x_2$ if and only if $N(x_1) \cap S \subseteq N(x_2) \cap S$ or $N(x_2) \cap S \subseteq N(x_1) \cap S$,
			\item For any $x_1, x_2 \in X$, $x_1$ is not adjacent to $x_2$ if and only if $N(x_1) \cap S$ and $N(x_2) \cap S$ are disjoint.
		\end{enumerate} 
		Then $H$ is a transitive closure of a tree and $S$ is the set of leaves of $H$.
		\label{lem:trans-closure-tree-nbr-characterization}
	\end{lemma}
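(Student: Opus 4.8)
The plan is to exhibit the tree explicitly and then verify it works. For each $x \in X$, let $d(x) = |N(x) \cap S|$. I would first observe that the hypotheses force the sets $\{N(x) \cap S : x \in X\}$ to form a laminar family: by (b) and (c), for any two $x_1, x_2 \in X$ the sets $N(x_1)\cap S$ and $N(x_2)\cap S$ are either nested or disjoint (adjacency in $X$ corresponds exactly to nesting, non-adjacency to disjointness, and these are the only two possibilities offered). A laminar family of subsets of $S$ ordered by inclusion is a forest; I will show it is in fact a single rooted tree whose nodes can be identified with $X$, with the leaves of $H$ being exactly $S$.

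The key steps, in order: (1) \emph{Laminarity.} Show $\{N(x)\cap S\}_{x\in X}$ is laminar, as above; note also each such set is non-empty by (a). (2) \emph{No two vertices of $X$ have the same neighborhood in $S$}, or if they do, handle it --- actually if $N(x_1)\cap S = N(x_2)\cap S$ then by (b) $x_1 \sim x_2$, and one can check (using connectedness and that $S$ is stable) this forces $x_1 = x_2$; I would verify this carefully since it is needed for the node set of the tree to be exactly $X$. (3) \emph{Define the tree $T$.} Since $H$ is connected and $S$ is stable with every $X$-vertex having a neighbor in $S$, there must be a vertex $r \in X$ with $N(r)\cap S = S$ (the unique maximal element of the laminar family --- uniqueness of a maximum follows from connectedness: if there were two maximal sets they would be disjoint, contradicting that $H$ is connected since $S$ is stable). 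Root $T$ at $r$. For $x \in X$, its parent is the $X$-vertex $y$ with the smallest $N(y)\cap S$ properly containing $N(x)\cap S$ (well-defined by laminarity and step (2)); for $s \in S$, its parent is the $X$-vertex $x$ minimizing $|N(x)\cap S|$ among those adjacent to $s$. (4) \emph{Verify $H$ is the transitive closure of $T$.} Two vertices of $X$ are adjacent in $H$ iff their neighborhoods are nested iff one is an ancestor of the other in $T$; a vertex $s\in S$ is adjacent to $x \in X$ iff $s \in N(x)\cap S$ iff $x$ is an ancestor of $s$ (or equals its parent) in $T$. So adjacency in $H$ is exactly the ancestor--descendant relation of $T$, i.e. $H$ is the transitive closure of $T$. (5) Conclude that $S$ is precisely the set of leaves: each $s \in S$ has degree equal to the number of $X$-vertices whose $S$-neighborhood contains $s$, which form a chain, so $s$ has a unique descendant-free... rather, $s$ is a leaf because it has no descendants; conversely each $x \in X$ has at least one child (either another $X$-vertex nested below it, or, minimally, some element of $S$ it is adjacent to, which has $x$ as an ancestor), so $x$ is not a leaf.

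The main obstacle I expect is step (3)--(4): pinning down that the laminar family has a unique maximal element (equivalently that $H$ connected $\Rightarrow$ the "parent" function is total and single-valued so that $T$ really is a tree and not a forest), and then the bookkeeping to show that the ancestor relation in $T$ reproduces adjacency in $H$ for the mixed $S$--$X$ pairs correctly --- in particular that $s \sim x$ in $H$ iff $x$ lies on the path from $s$'s parent up to $r$. These are not deep but require care with the case $N(x_1)\cap S = N(x_2)\cap S$ and with ensuring no edges of $H$ are missed or spuriously created. Everything else is routine manipulation of the three defining conditions.
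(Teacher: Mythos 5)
Your overall strategy (extract the laminar family $\{N(x)\cap S : x\in X\}$ and build the tree directly from the inclusion order) is viable and genuinely different from the paper's argument, which instead proceeds by induction on $|X|$: it shows $H[X]$ is connected, that an induced $P_3$ in $X$ forces strict containment of $S$-neighborhoods, picks $x$ maximizing $|N(x)\cap S|$, proves $x$ is adjacent to every other vertex, deletes it, applies induction to the components of $H\setminus x$, and hangs their roots under a new root corresponding to $x$. However, your step (2) contains a genuine error: it is \emph{not} true that $N(x_1)\cap S=N(x_2)\cap S$ forces $x_1=x_2$. Take $S=\{s\}$ and $X=\{x_1,x_2\}$ with $H$ the triangle on $\{x_1,x_2,s\}$: $S$ is stable, every vertex of $X$ meets $S$, and conditions (b) and (c) hold (the two $S$-neighborhoods are equal, hence nested and non-disjoint, and $x_1,x_2$ are adjacent), yet $x_1\neq x_2$. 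The conclusion of the lemma is still true here (the triangle is the transitive closure of the path $x_1$--$x_2$--$s$), but your construction as written collapses: with duplicated $S$-neighborhoods there is no unique maximal element to serve as the root (several vertices can have $N(\cdot)\cap S=S$), and the parent of $x\in X$, defined as ``the $X$-vertex $y$ with the smallest $N(y)\cap S$ properly containing $N(x)\cap S$,'' is either non-existent or not single-valued; the same ambiguity hits the parent you assign to $s\in S$.

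The gap is repairable rather than fatal, but it needs a real fix, not the verification you proposed: vertices of $X$ with identical $S$-neighborhoods are pairwise adjacent (by (b)) and have identical adjacencies to all of $S$ and, by laminarity, identical adjacencies to the rest of $X$, so each equality class must be arranged as a chain of consecutive internal nodes in the tree (this is exactly what duplicates look like in a transitive closure of a tree: an internal node with a single non-leaf child). So the tree's node set is not naturally indexed by the distinct sets $N(x)\cap S$ but by $X$ itself, with an arbitrary linear order chosen inside each equality class, and the ancestor-relation verification in your step (4) has to be redone relative to these chains. The paper's inductive proof sidesteps all of this: the extracted maximum vertex is adjacent to everything regardless of whether other vertices share its $S$-neighborhood, and the recursion silently places any duplicates below it. If you prefer your direct construction, replace step (2) by the chain construction above; as currently written, step (2) asserts a false statement on which steps (3)--(4) depend.
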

	\begin{proof}
		We proceed by induction on $|X|$.
		\stmt{ $G[X]$ is connected. \label{transylvania:connected}}
		Suppose $C_1, C_2$ are two components of $G[X]$. Then since $H$ is connected, there is some vertex  in $H$ with a neighbor in both $V(C_1)$ and $V(C_2)$, contradicting (c). This proves (\ref{transylvania:connected}).
		\stmt{If $x_1 \dd x_2 \dd x_3$ is a $P_3$ contained in $X$, then $N(x_i) \cap S \subsetneq N(x_2) \cap S$ for $i \in \{ 1,3\}$.\label{transylvania:P3}}
		Suppose $N(x_1) \cap S$ is not a proper subset of $N(x_2) \cap S$. Then by (b), $N(x_2) \cap S \subseteq N(x_1) \cap S$. But then by (b), $N(x_1) \cap S$ and $N(x_3) \cap S$ are not disjoint, contradicting (c). This proves (\ref{transylvania:P3})
		\\
		\\
		Choose $x \in X$ to maximize $|N(x) \cap S|$. 
		\stmt{$x$ is complete to $V(H) \setminus \{ x\}$. \label{transylvania:complete}}
		Suppose $x$ is not complete to $X \setminus \{ x\}$. Then since $G[X]$ is connected for some $x', x'' \in X$, the path $x \dd x' \dd x''$ is an induced $P_3$. But then by (\ref{transylvania:P3}), $|N(x') \cap S| > |N(x) \cap X|$, a contradiction. Hence $x$ is adjacent to every other vertex in $X$. Since $H$ is connected every $s \in S$ has a neighbor in $X$, so $x$ is complete to $S$ by (b) and our choice of $x$. 
		\\
		\\
		Let $H_1, \dots, H_k$ be the components of $H \setminus \{ x\}$. By induction for each $i \in [k]$, $H_i$ is the transitive closure of some tree $T_i$ with root $r_i \in V(T_i)$ and leaves $V(H_i) \cap S$. Let $T$ be the tree obtained from the union of $T_1, T_2, \dots, T_k$ by adding a new vertex $r$ adjacent to $r_1, r_2, \dots, r_k$. Then $H$ is the transitive closure of $T$ since $x$ is adjacent to $V(H) \setminus \{ x\}$.
	\end{proof}
	We need the following definition. Let $T$ be a tree with root $r$. Let $L$ denote the set of leaves of $T$ and $S$ denote the set of parents of vertices in $L$.
	Let $L_1, L_2, L_3, \dots, L_k$ be a partition of $L$.
	We say $T$ is $\{L_1, L_2, L_3, \dots, L_k\}$-friendly if all of the following statements hold:
	\begin{enumerate}[(i)]
		\item Every vertex in $S$ has neighbors in at least two of $X_1, X_2, \dots, X_k$ and
		\item For each $i \in [k]$ there is a path $P$ of $T$ from the root to a vertex in $S$ such that $N(X_i) \subseteq V(P)$.
	\end{enumerate}
	
		\begin{figure}
		\centering
		\begin{tikzpicture}
	%code for a bag!!!!
	\def\ewidth{40}
	\def\eheight{10}
	\def\rowwidth{4};
	\def\rowheight{-2};
	\def\bendamount{30};
	
	\coordinate (apex) at (2*\rowwidth,1);
	\foreach \i in {1,2}{
		\foreach \j in {1,2,3}{
			\coordinate (p\i\j) at (\i*\rowwidth, \j*\rowheight);
			\fill[bag] (p\i\j) ellipse (\ewidth pt and \eheight pt);
			\foreach \k in {1, ..., 5}{
				\coordinate[left=.5*\k of p\i\j] (temp);
				\coordinate [right = 1.5 of temp] (temp);
				\node[circle, fill = black, scale=.5] (h\i\j\k) at (temp) {};
				\foreach \x in {1, ..., \k}{
					\draw[bend left = \bendamount] (h\i\j\k) to (h\i\j\x);
				}
				
			}
		}
	}

\foreach \i in {3,4}{
	\foreach \j in {1,2,3}{
		\coordinate (p\i\j) at (\i*\rowwidth -3, \j*\rowheight);
		\coordinate (ellspot) at (\i*\rowwidth, \j*\rowheight);
		\fill[bag] (ellspot) ellipse (\ewidth pt and \eheight pt);
		\foreach \k in {1, ..., 5}{
			\coordinate[right=.5*\k of p\i\j] (temp);
			\coordinate [right = 1.5 of temp] (temp);
			\node[circle, fill = black, scale=.5] (h\i\j\k) at (temp) {};
			\foreach \x in {1, ..., \k}{
				\draw[bend right = \bendamount] (h\i\j\k) to (h\i\j\x);
			}
			
		}
	}
}

	%edges between base bags
	\foreach \i in {1}{
		\foreach \l in {2,3,4}{
			\foreach \k in {1, 2, 3, 4, 5}{
				\foreach \m in {1, ..., 5}{
					\draw[bend right = 70] (h\i3\k) to (h\l3\m);
				}
				
			}
		}
	}
	\foreach \i in {2}{
		\foreach \l in {3}{
			\foreach \k in {1, 2, 3, 4, 5}{
				\foreach \m in {1, ..., 5}{
					\draw[bend right = 70] (h\i3\k) to (h\l3\m);
				}	
			}
		}
	}
	
	\foreach \i in {3}{
		\foreach \l in {4}{
			\foreach \k in {1, 2, 3, 4, 5}{
				\foreach \m in {1, ..., 5}{
					\draw[bend right = 70] (h\i3\k) to (h\l3\m);
				}	
			}
		}
	}

	%this is where we draw edges from apex to paths...
	\foreach \i in {1,2,3}{
		\foreach \z in {1,...,5}{
			\foreach \k in {\z,...,5}{
				%\draw (a\z) to (h\i1\k);
			}
			
		}
	}
	%edges for the paths...
	\foreach \i in {1,2,3,4}{
		\foreach \k in {1,...,5}{
			\foreach \z in {\k,...,5}{
				\draw (h\i1\z) to (h\i2\k);
				\draw (h\i3\z) to (h\i2\k);
			}
			
		}
	}
	%
	%
	%
	%
	%
	%building the tree
	
	%trying to make a tree branch
	\def\smallstep{.4}
	\coordinate (p0) at (\rowwidth, -1);
	\foreach \k in {1, ..., 8}{
		\coordinate[above right = \smallstep*\k and \k of p0] (temp);
		\coordinate [left = 1.5 of temp] (temp);
		\node[circle, fill = black, scale=.5] (t\k) at (temp) {};
		%\pgfmathparse{int(\k -1)}
		%\edef\w{\pgfmathresult}
		\foreach \x in {1, ..., \k}{
			\draw[bend right = 50] (t\k) to (t\x);
		}
	}
	\newcounter{viscount}
	\foreach \k in {5,4,3,2,1}{
		\stepcounter{viscount}
		\foreach \z in {5,...,\value{viscount}}{
			\draw (h11\k) to (t\z);
		}
		
	}
	
	\newcounter{counting}
	\foreach \k in {5,4,3,2,1}{
		\stepcounter{counting}
		\foreach \z in {5,...,\value{counting}}{
			\pgfmathparse{int(\z + 3)}
			\edef\z{\pgfmathresult}
			\draw (h21\k) to (t\z);
		}
		
	}

	%trying to make a tree branch
	\def\smallstep{.4}
	\coordinate (p3) at (3*\rowwidth, -.85);
	\foreach \k in {1, ..., 7}{
		\coordinate[above left = \smallstep*\k and \k of p3] (temp);
		\coordinate [right = 6 of temp] (temp);
		\node[circle, fill = black, scale=.5] (r\k) at (temp) {};
		\foreach \x in {1, ..., \k}{
			\draw[bend left = 50] (r\k) to (r\x);
		}
	}
	\newcounter{con}
	\foreach \k in {5,4,3,2,1}{
		\stepcounter{con}
		\foreach \z in {5,...,\value{con}}{
			\draw (h41\k) to (r\z);
		}
		
	}
	
	\newcounter{monkey}
	\foreach \k in {5,4,3,2,1}{
		\stepcounter{monkey}
		\foreach \z in {5,...,\value{monkey}}{
			\pgfmathparse{int(\z + 2)}
			\edef\z{\pgfmathresult}
			\draw (h31\k) to (r\z);
		}
		
	}
	%root of tree
	
	\node[circle, fill = black, scale=.5] (apex) at (10.5,4) {};
	\node [above = .25em of apex] {\Large $r$};
	\foreach \k in {1, ..., 8}{
		\draw[bend right = 20] (apex) to (t\k);
	}
	\foreach \k in {1, ..., 7}{
		\draw[bend left = 20] (apex) to (r\k);
	}
	\foreach \k in {1,...,5}{
		\foreach \i in {1,2}{
			\draw[bend left = 30] (h\i1\k) to (apex);
		}
		\foreach \i in {3,4}{
			\draw[bend right = 30] (h\i1\k) to (apex);
		}
	}

\foreach \i in {1,2,3,4}{
	\draw (t\i) to (h215);
}

\foreach \i in {1,2}{
	\draw (r\i) to (h315);
}

\end{tikzpicture}
		\caption{An example of a crowned 4-corpus where the underlying graph of the corpus is a 4-pyramid. The top is the transitive closure of a type of tree called a caterpillar.}
		\label{fig:pyramidoid}
	\end{figure}
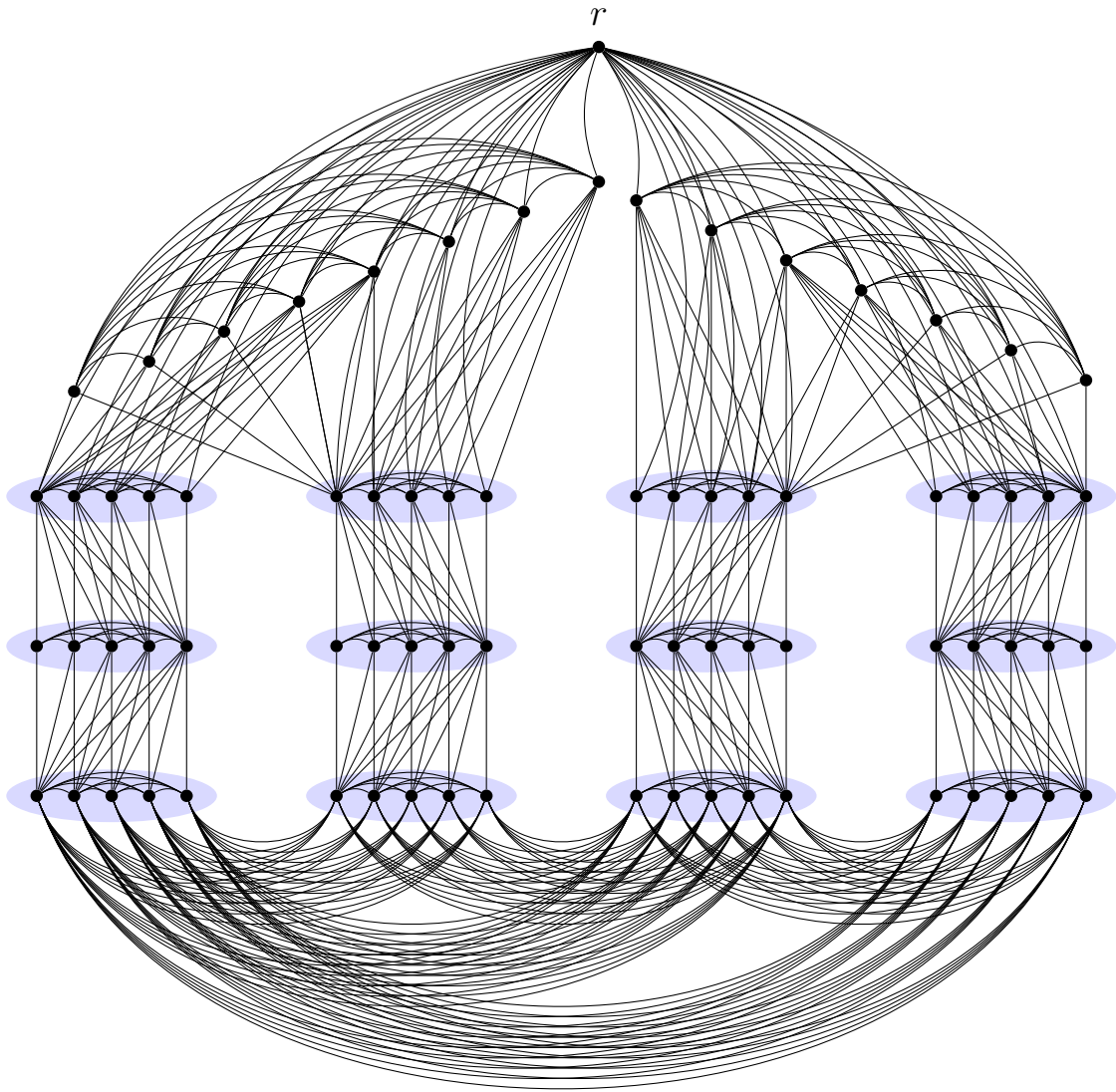

\subsection{Crowns and transitive closures of trees}
In this subsection we will prove that the crowns of a crowned $k$-corpus are transitive closures of trees for any $k \geq 3$ and $\ell \geq 7$. We begin with an observation about elemental sides of a $k$-spine.

Let $F$ be a $k$-spine for some $k \geq 3$.
Let $Q_1, Q_2, \dots, Q_k$ denote the elemental paths of $F$.
Let $X, Y$ be the elemental sides of $F$.
For $L \subseteq [k]$, let $X(L), Y(L)$ denote the set of ends of $\{ Q_i \ | \ i \in L\}$ that lie in $X$ and $Y$, respectively.
We call $I, J$ a \textit{helpful} partition of $[k]$ with respect $X$ if all of the following conditions hold.
\begin{itemize}
%	\item $Q \subseteq I$,
	\item $X(I)$ is a stable set,
	\item $X(J)$ is a clique and
	\item If $I \neq \emptyset$, every $x\in X(J)$ has at least one neighbor in $X(I)$.
\end{itemize}

\begin{lemma}
		If $F$ a $k$-spine for some $k \geq 3$,  $X$ is an elemental side of $F$ then there is a helpful partition with respect to $X$.
		\label{lem:spine-helpful}
\end{lemma}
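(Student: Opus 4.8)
The goal is to show that for any $k$-spine $F$ and any elemental side $X$ of $F$, the index set $[k]$ admits a helpful partition $I,J$ with respect to $X$: that is, $X(J)$ is a clique, $X(I)$ is stable, and (if $I\neq\emptyset$) every vertex of $X(J)$ has a neighbor in $X(I)$. The first thing I would do is unwind what $X$ actually looks like. Recall that $F$ is a generalized $k$-pyramid, a $k$-theta, or a generalized $k$-prism, and in each case $X$ is (a subset of vertices arising from) one of the \emph{elemental sides} of $F$, obtained by deleting apexes from one terminating set. In the theta case $X$ is trivially handled: the $A$-ends of the elemental paths are all distinct, pairwise non-adjacent (the apex is removed), so $I=[k]$, $J=\emptyset$ works immediately. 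So the real content is in the prism and pyramid cases, where the terminating set of $F$ carries extra structure --- by Lemma \ref{lem:if-even-body-of-mated-spider-is-a-threshold-graph}, Theorem \ref{thm:matedspiderstructure-odd-case} and Theorem \ref{thm:orderly-spines-even-case}, the relevant terminating-set graph is a threshold graph (or a $2$-connected threshold graph, or a clique, or a stable set, depending on which of $Q,R,S,T$ the indices fall in).

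\textbf{Key steps.} First I would reduce to analyzing $G[X]$ directly as a threshold graph: after deleting apex vertices, the elemental side $X$ is an induced subgraph of the terminating-set graph, hence itself a threshold graph (threshold graphs are closed under induced subgraphs, being characterized by forbidden subgraphs $P_4,C_4$, and the two-edge matching as in the cited theorem of Chv\'atal and Hammer). By that characterization, $V(X)$ partitions into a clique $K$ and a stable set $S$ with a half-graph between them. I would set $J = \{\,i\in[k] : \text{the } X\text{-end of } Q_i \text{ lies in } K\,\}$ and $I = \{\,i : \text{the } X\text{-end lies in } S\,\}$. Then $X(J)\subseteq K$ is a clique and $X(I)\subseteq S$ is stable, so conditions (i) and (ii) hold. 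For condition (iii) --- every $x\in X(J)$ has a neighbor in $X(I)$ when $I\neq\emptyset$ --- I would argue as follows: if some vertex $x$ of the clique $K$ had no neighbor in $S$ at all, then $x$ is adjacent to every other vertex of $V(X)$; one should check this does not contradict anything directly, so the cleaner route is to \emph{choose} the threshold partition so as to make $K$ as small as possible (equivalently, move any clique vertex with no neighbor in $S$ into $S$, which is legal since $S\cup\{x\}$ stays stable exactly when $x$ has no neighbor in $S$). With $K$ chosen minimal, every vertex of $K$ has a neighbor in $S$, which gives (iii). One caveat: the ``$X$-end'' of $Q_i$ might coincide for several $i$ if $F$ has multiple elemental paths ending at the same vertex; in that case those indices all land in the same part, which is consistent with a helpful partition, so this causes no problem.

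\textbf{Main obstacle.} The step I expect to require the most care is verifying that the minimal-clique threshold partition actually yields condition (iii) \emph{in the presence of the different regimes} $Q,R,S,T$ of a generalized $k$-prism/pyramid. The subtlety is that $X$ is not an arbitrary threshold graph: its clique part and stable part interact with the path-length bookkeeping, and I must make sure that ``shrinking $K$'' is compatible with $X$ still being an elemental side and that no index is orphaned. Concretely, I will need to rule out the degenerate case where $X(I)$ would be empty but $J$ forces a clique vertex to have no neighbor --- but this is exactly handled by the minimality choice, since if $X(I)=\emptyset$ then condition (iii) is vacuous. I would also double-check the boundary cases $|X|=1$ (single vertex: put it in $J$, $I=\emptyset$) and where $X$ is entirely a stable set or entirely a clique (both trivially helpful). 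Once these cases are dispatched, the proof is short; the bulk of the work is the careful case split over the shape of $F$ and invoking the structural theorems \ref{thm:matedspiderstructure-odd-case} and \ref{thm:orderly-spines-even-case} to know that the terminating-set graphs are threshold graphs in the first place.
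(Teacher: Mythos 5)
Your central idea is sound and, in fact, supplies more than the paper itself records: the paper's own proof of Lemma \ref{lem:spine-helpful} consists only of the trivial-case reduction (``we may assume $X$ is not the vertex set of a clique or a stable set''), so there is no fuller argument there to compare against. Your plan --- observe that $G[X]$ is a split graph, take a partition of $X$ into a clique $K$ and a stable set $S$ with $|K|$ minimum, and note that minimality forces every vertex of $K$ to have a neighbor in $S$, which is exactly condition (iii) of a helpful partition --- does prove the lemma, and it only needs the split property plus a minimality choice, not the half-graph refinement; also, the threshold structure of the terminating sets is built into the definitions of generalized $k$-prism and generalized $k$-pyramid, so you do not need to invoke Theorems \ref{thm:matedspiderstructure-odd-case} and \ref{thm:orderly-spines-even-case}.

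The one concrete misstep is the reduction ``after deleting apex vertices, the elemental side $X$ is an induced subgraph of the terminating-set graph.'' Outside the theta case (which you treat separately), this is false whenever $F$ has an apex: a generalized prism with $|S|\geq 2$ or $|T|\geq 2$, or a generalized pyramid with $|Q|\geq 2$. For an index $i$ whose constituent path $P_i$ ends at an apex, the corresponding end of the elemental path is the neighbor of the apex in $P_i^*$, which is not a terminating-set vertex at all; and since the elemental paths are pairwise vertex-disjoint, distinct indices never share an $X$-end, so your closing caveat about coinciding ends misdiagnoses the issue. As written, the rule ``$J=\{i:$ the $X$-end lies in $K\}$, $I=\{i:$ it lies in $S\}$'' does not assign these apex-class indices, and under the natural (wrong) reading that their end is the apex itself --- a dominating vertex of the terminating set, hence in $K$ --- they would land in $J$, making $X(J)$ fail to be a clique, because these ends are interior vertices of their paths and are anticomplete to the rest of $X$. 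The repair is short: those ends are isolated vertices of $G[X]$; adding isolated vertices preserves being a threshold (hence split) graph, so $G[X]$ is still split, and running your minimum-clique split partition on $G[X]$ itself automatically places every isolated vertex on the stable side (a clique vertex with no neighbor in the stable side contradicts minimality). With that correction, conditions (i)--(iii) follow exactly as you argue and the proof is complete.
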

\begin{proof}
We may assume $X$ is not the vertex set of a clique or a stable set.
\end{proof}

Let $\R$ be a $k$-corpus.
Let $C_X$ be a crown of $\R$.
Let $\mathcal{X}$ be an elemental side of $\R$ where $V(\mathcal{X}) \subseteq V(C_X)$.

We call $\mathcal{I}, \mathcal{J}$ a \textit{helpful} partition of the bags of $C_X$ if all of the following hold:
\begin{itemize}
	\item $\mathcal{I}$ is a (possibly empty) set of pairwise adjacent bags,
	\item $\mathcal{I} \subseteq \mathcal{X}$,
	\item $\mathcal{J} \cap \mathcal{X}$ is a set of pairwise adjacent bags,
	\item If $\mathcal{I} \neq \emptyset$, every bag $J \in \mathcal{J}$ is adjacent to at least one bag of $\mathcal{I}$.
\end{itemize}

\begin{lemma}
	Let $G$ be an $\ell$-monoholed graph for some $\ell \geq 7$.
Let $\R$ be a crowned $k$-corpus in $G$. 
	Let $C_X$ be a crown of $\R$.
	Then there is a helpful partition of the bags of $C_X$.
\end{lemma}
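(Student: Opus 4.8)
The plan is to produce a helpful partition of the bags of $C_X$ by pulling back a helpful partition of the elemental side $\mathcal{X}$ of the underlying $k$-spine and then checking that the extra ``crown'' bags (those in $V(C_X)\setminus V(\mathcal{X})$) cause no trouble. First I would invoke Lemma~\ref{lem:spine-helpful} applied to the corpus $\F$ of $\R$ and its elemental side $X$ (the underlying graph of $\mathcal{X}$): this gives a helpful partition $I,J$ of $[k]$ with respect to $X$, so that $X(I)$ is stable, $X(J)$ is a clique, and if $I\neq\emptyset$ every vertex of $X(J)$ has a neighbour in $X(I)$. Translating this to bags: let $\mathcal{I}$ be the set of bags of $\mathcal{X}$ corresponding to the elements of $X(I)$, and let $\mathcal{X}_J$ be the set of bags of $\mathcal{X}$ corresponding to $X(J)$. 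By the definition of $k$-corpus, bags in an elemental side that correspond to adjacent (resp.\ non-adjacent) vertices of the underlying $k$-spine are complete (resp.\ anticomplete); hence $\mathcal{I}$ is a set of pairwise adjacent bags, $\mathcal{X}_J$ is a set of pairwise adjacent bags, and if $\mathcal{I}\neq\emptyset$ each bag of $\mathcal{X}_J$ is adjacent to some bag of $\mathcal{I}$.

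Next I would handle the bags of $C_X$ that are not bags of $\mathcal{X}$, i.e.\ the single-vertex bags $\{j\}$ with $j\in J_X$ in the notation of the definition of a crowned $k$-corpus. The key point, already established in Lemma~\ref{lem:the-crowns-of-a-crowned-corpus-are-crowns}, is that $C_X$ is a crown with partition $X,J_X$; and every $j\in J_X$ has two non-adjacent neighbours in $X$ (this is part of the definition of crowned $k$-corpus). I would put every such bag $\{j\}$ into $\mathcal{J}$, and set $\mathcal{J}$ to be the union of $\mathcal{X}_J$ with all these crown bags. It remains to verify the four conditions of a helpful partition of the bags of $C_X$: (1) $\mathcal{I}$ is a set of pairwise adjacent bags contained in $\mathcal{X}$ --- immediate from the previous paragraph; (2) $\mathcal{J}\cap\mathcal{X}=\mathcal{X}_J$ is a set of pairwise adjacent bags --- immediate; (3) $\mathcal{I}\subseteq\mathcal{X}$ --- immediate; (4) if $\mathcal{I}\neq\emptyset$, every bag $J'\in\mathcal{J}$ is adjacent to at least one bag of $\mathcal{I}$. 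For $J'\in\mathcal{X}_J$ this follows from the helpful partition of the spine. The content is checking it for $J'=\{j\}$ a crown bag: I would argue that, since $j$ has two non-adjacent neighbours in $V(\mathcal{X})$, at least one of those two neighbours lies in a bag of $\mathcal{I}$ --- because $\mathcal{X}_J$ is a clique (so at most one of the two non-adjacent neighbours can be in a bag of $\mathcal{X}_J$), whence the other neighbour lies in a bag of $\mathcal{X}\setminus\mathcal{X}_J=\mathcal{I}$.

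A subtlety is that the helpful partition of the spine only guarantees $X(J)$ is a clique and $X(I)$ is stable when $X$ is not itself a clique or a stable set; I would split into the degenerate cases up front. If every bag of $\mathcal{X}$ is pairwise adjacent, take $\mathcal{I}=\emptyset$ and $\mathcal{J}=\{$all bags of $C_X\}$, and conditions (1)--(4) hold vacuously or trivially (with $\mathcal{I}=\emptyset$, condition (4) is void). If the bags of $\mathcal{X}$ are pairwise anticomplete, pick one bag $X_1$ of $\mathcal{X}$ to be the single element of $\mathcal{I}$ --- but here I must be careful: a crown bag $\{j\}$ then needs a neighbour in $\mathcal{I}=\{X_1\}$, which need not hold. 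So in the all-anticomplete case I would instead set $\mathcal{I}=\emptyset$ and $\mathcal{J}=\{$all bags$\}$ as well. In general, the safe choice of $\mathcal{I}$ is exactly ``all bags of $\mathcal{X}$ not in $\mathcal{X}_J$,'' and the verification above shows this works; the only delicate step is condition (4) for crown bags, which is where the hypothesis $\ell\geq 7$ is implicitly used (it is needed in Lemma~\ref{lem:the-crowns-of-a-crowned-corpus-are-crowns} to know $C_X$ is a genuine crown, and in particular that each $j\in J_X$ has two non-adjacent neighbours in $X$).

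\textbf{Main obstacle.} The only real work is condition (4) for a crown bag $\{j\}$ when $\mathcal{I}\neq\emptyset$: one must rule out the possibility that both non-adjacent neighbours of $j$ in $V(\mathcal{X})$ lie in bags of $\mathcal{X}_J$. Since $\mathcal{X}_J$ is a set of pairwise adjacent bags, two vertices in bags of $\mathcal{X}_J$ are adjacent, contradicting that the two neighbours are non-adjacent --- unless they lie in the same bag, but a bag is a clique, so again they would be adjacent. Hence at least one neighbour lies in a bag outside $\mathcal{X}_J$, i.e.\ in a bag of $\mathcal{I}$, as required. I expect this to be a short argument once the bookkeeping of which bags go into $\mathcal{I}$ versus $\mathcal{J}$ is set up correctly.
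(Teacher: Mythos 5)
Your main argument is the same as the paper's: invoke Lemma~\ref{lem:spine-helpful} on the elemental side of the underlying spine, pull the partition back to bags, place all single-vertex crown bags into $\mathcal{J}$, and verify the adjacency condition for a crown bag $\{j\}$ by observing that its two non-adjacent neighbours in $V(\mathcal{X})$ cannot both lie in the clique side, so one lies in a bag of $\mathcal{I}$. The paper's proof is exactly this, plus the observation that when $\mathcal{X}$ is a set of pairwise adjacent bags there are no crown bags at all (no vertex outside $\R$'s corpus can then have two non-adjacent neighbours in $V(\mathcal{X})$), so $(\emptyset,\mathcal{X})$ works.

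One step of yours would fail as written: in the degenerate case where the bags of $\mathcal{X}$ are pairwise anticomplete you propose $\mathcal{I}=\emptyset$ and $\mathcal{J}=$ all bags, but then $\mathcal{J}\cap\mathcal{X}=\mathcal{X}$ is a set of at least three pairwise non-adjacent bags, violating the requirement that $\mathcal{J}\cap\mathcal{X}$ be pairwise adjacent. The correct choice there is the one your own general recipe gives: take $I=[k]$, $J=\emptyset$ in the spine-level partition (legitimate, since $X(J)=\emptyset$ is vacuously a clique), i.e.\ $\mathcal{I}=\mathcal{X}$ and $\mathcal{J}=S$, the set of crown bags; every crown bag has a neighbour in $V(\mathcal{X})=V(\mathcal{I})$, so all conditions hold. (Note also that the condition ``$\mathcal{I}$ is a set of pairwise adjacent bags'' in the definition is evidently a slip for ``pairwise non-adjacent'' -- the bags of $\mathcal{I}$ correspond to the stable set $X(I)$, and for a theta-based corpus the literal wording would make helpful partitions impossible -- so your claim that $\mathcal{I}$ is pairwise adjacent ``immediately'' should be read with that correction; the paper's own proof implicitly does the same.) With that special case repaired, your proof coincides with the paper's.
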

\begin{proof}
	Let $\mathcal{X}$ be an elemental side of $\R$ where $V(\mathcal{X}) \subseteq V(C_X)$.
	Suppose $\mathcal{X}$ is a set of pairwise adjacent bags. Then by definition, $V(C_X) = V(\mathcal{X})$ so $\emptyset, \mathcal{X}$ is a helpful partition.
	Hence we may assume some two bags of $\mathcal{X}$ are adjacent.
	
	Let $W$ be the graph with the vertex set $\mathcal{X}$ and edges between pairs of vertices if and only if they are adjacent bags.
	Let $\F$ be the corpus of $\R$ and let $F$ be the underlying graph of $\F$. Then by definition, $W$ is isomorphic to the subgraph induced by the vertices of the elemental side of $F$ corresponding to $\mathcal{X}$.
	Thus by Lemma \ref{lem:spine-helpful}, there is a helpful partition $I, J$ of $\mathcal{X}$.
	Let $S$ be the set of bags of $C_X$ that are not elements of $\mathcal{X}$.
	Then since $C_X$ is a crown, every vertex in $s \in V(S)$ has the property that $N(s) \cap V(\mathcal{X})$ contains two non-adjacent vertices. So by definition of $k$-corpus, $s$ has a neighbor in a bag of $I$.
	Thus, $I, J \cup S$ is a helpful partition of the bags of $C_X$.
\end{proof}

\begin{lemma}
	Let $G$ be an $\ell$-monoholed graph for some $\ell \geq 7$.
Let $\R$ be a crowned $k$-corpus in $G$.  Let $\mathcal{X}$ be an elemental side of $\mathcal{R}$ and let $C_X$ be the crown of $\R$ containing vertices in bags $\mathcal{X}$. Suppose $X_1, X_2$ are non-adjacent bags in $\mathcal{X}$.
	Let $P$ be a path from a vertex in $X_1$ to a vertex in $X_2$ with $P^* \subseteq V(C_X) \setminus (X_1 \cup X_2)$. Then $P$ has length two.
	\label{lem:no-long-crown-path}
\end{lemma}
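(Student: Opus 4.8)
The statement claims that if $X_1, X_2$ are non-adjacent bags in an elemental side $\mathcal{X}$ of a crowned $k$-corpus $\R$, then any path $P$ from $X_1$ to $X_2$ with interior in $V(C_X) \setminus (X_1 \cup X_2)$ has length exactly two. The plan is to combine Lemma~\ref{lem:the-crowns-of-a-crowned-corpus-are-crowns} (which tells us $C_X$ is a crown under the partition $X, J_X$ where $X = V(\mathcal{X})$) with Fact~\ref{fact:corpus-two-elemental-vertices-ell-minus-2-path-between-them} (which produces a long ``reroute'' of $P$ through the $k$-corpus on the opposite elemental side).

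First I would fix vertices $x_1 \in X_1$, $x_2 \in X_2$ that are the ends of $P$; since $X_1, X_2$ are distinct non-adjacent bags, $x_1$ and $x_2$ are non-adjacent. By Fact~\ref{fact:corpus-two-elemental-vertices-ell-minus-2-path-between-them} applied with the two elemental paths $\q_1, \q_2$ containing $X_1, X_2$ respectively, there is an $x_1x_2$-path $P'$ of length exactly $\ell-2$ whose vertex set lies in $V(\q_1 \cup \q_2 \cup Y)$, where $Y$ is the vertex set of the other elemental side. The key point is that $V(P')$ is disjoint from, and anticomplete to, $V(P^*)$: by hypothesis $P^* \subseteq V(C_X) \setminus (X_1 \cup X_2)$, whereas $V(P') \subseteq V(\mathcal{F}) \subseteq V(\F)$ lies entirely in the corpus together with the opposite elemental side $Y$ — and vertices of $V(C_X) \setminus X$ are by definition of a crowned $k$-corpus confined to $X \cup A_X$ in their neighborhoods within $\F$, so they are anticomplete to $Y$ and to the interiors of elemental paths. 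I would also need to check that $P'$ does not pass through $x_1$ or $x_2$ internally, which follows from $P'$ being an induced path with those as its two ends, and that $P^*$ does not meet $\{x_1, x_2\}$, which is immediate since $P$ is a path. Hence $P \cup P'$ is a cycle, and since $P^*$ is anticomplete to $(P')^*$ and $x_1 x_2 \notin E(G)$, it is in fact an induced cycle, i.e. a hole, of length $|E(P)| + (\ell - 2)$.

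Since $G$ is $\ell$-monoholed, every hole has length exactly $\ell$, so $|E(P)| + \ell - 2 = \ell$, giving $|E(P)| = 2$, as required. The main obstacle — and the only delicate point — is verifying that $P \cup P'$ is genuinely an \emph{induced} cycle, which boils down to two facts: (1) $V(P') \cap V(P^*) = \emptyset$ and (2) no edge joins $V(P^*)$ to $V((P')^*)$. Both are consequences of the structural constraint in the definition of crowned $k$-corpus that the extra crown vertices $V(C_X) \setminus X$ have all their $\F$-neighbors inside $X \cup A_X$ (hence none in $Y$ or in the strict interiors of elemental paths away from the $X$-end), together with the fact that bags in $X$ on distinct elemental paths are non-adjacent unless they coincide or are apex-related; I would spell this out carefully using Lemma~\ref{lem:inflated-graphs-facts} to select, for each bag, a representative complete to its neighbors when building $P'$ via Fact~\ref{fact:corpus-two-elemental-vertices-ell-minus-2-path-between-them}. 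Once the induced-hole claim is established, the length computation is immediate.
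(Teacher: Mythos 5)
Your proposal is correct and follows essentially the same route as the paper's proof: apply Fact~\ref{fact:corpus-two-elemental-vertices-ell-minus-2-path-between-them} to obtain an $x_1x_2$-path of length $\ell-2$ whose interior avoids $V(C_X)$, observe that its union with $P$ is a hole, and use $\ell$-monoholedness to force $|E(P)|=2$. The only difference is that you spell out the disjointness/anticompleteness check that the paper leaves implicit, which is a welcome (and harmless) addition.
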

\begin{proof}
	$P$ has length at least two by definition of non-adjacent bags.
	Let $x_1 \in X_1$ and $x_2 \in X_2$ be the ends of $P$.
	By Fact \ref{fact:corpus-two-elemental-vertices-ell-minus-2-path-between-them}, there is an $x_1x_2$-path $R$ of length $\ell -2$ such that $R^* \subseteq V(\R) \setminus V(C_X)$.
	Thus $P \cup R$ is a hole and so it has length $\ell$.
	Hence, $P$ has length two.
\end{proof}

\begin{lemma}
	Let $G$ be an $\ell$-monoholed graph for some $\ell \geq 7$.
	Let $\R$ be a crowned $k$-corpus in $G$. Let $\mathcal{X}$ be an elemental side of $\mathcal{R}$ and let $C_X$ be the crown of $\R$ containing vertices in bags $\mathcal{X}$.
	Let $X_1, X_2, X_3$ be distinct bags of $\mathcal{X}$ such that $X_2$, $X_3$ are adjacent bags and $X_1, X_2$ are non-adjacent bags.
	Then there does not exist any $z \in V(C_X)$ such that $z$ has a neighbor in $X_1$, a neighbor in $X_2$ and a non-neighbor in $X_3$.
	\label{lem:no-weird-P4}
\end{lemma}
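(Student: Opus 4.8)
The plan is to derive a contradiction by exhibiting a hole of length different from $\ell$, using Fact \ref{fact:corpus-two-elemental-vertices-ell-minus-2-path-between-them} to build long paths through the rest of the crowned $k$-corpus. Suppose for contradiction there is some $z \in V(C_X)$ with a neighbor $x_1 \in X_1$, a neighbor $x_2 \in X_2$, and a non-neighbor $x_3 \in X_3$. The first observation I would establish is that $z$ cannot lie in a bag of $\mathcal{X}$ itself: if $z \in X_j$ for some bag $X_j \in \mathcal{X}$, then since bags induce cliques and $z$ has a neighbor in the non-adjacent bag $X_1$, the bag $X_j$ would have to be adjacent to $X_1$ (impossible if $X_j = X_1$, and if $X_j$ were a third bag adjacent to both $X_1$ and $X_2$ we would contradict properties of the elemental side); so $z$ is a vertex of $C_X$ that is not in any bag of $\mathcal{X}$, i.e. $z$ plays the role of a ``$J_X$-vertex'' of the crown.

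Next I would use the crown axioms for $C_X$ (which hold by Lemma \ref{lem:the-crowns-of-a-crowned-corpus-are-crowns}) together with Lemma \ref{lem:crown-tops-have-two-no-adjacent-nbrs-in-bottom}, so that $N(z) \cap V(\mathcal{X})$ contains two non-adjacent vertices. Combined with $z$ being adjacent to $x_2 \in X_2$ and $X_2$ being adjacent to $X_3$, I want to produce a short path and a long path between a vertex of $X_1$ (or another appropriate bag) and $x_3$ that together form a hole of the wrong length. Concretely: $x_1 \dd z \dd x_2$ is a path of length two with $x_1 \in X_1$, $x_2 \in X_2$, and since $X_2, X_3$ are adjacent bags there is $x_2' \in X_2$ complete to $X_3$ (Lemma \ref{lem:inflated-graphs-facts}\ref{fgf:complete}); then $x_2' \dd x_3$ is an edge. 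The key point is to pin down whether $x_1$ and $x_3$ are adjacent (they may be, since $X_1$ and $X_3$ could be adjacent bags) and whether $z$ is adjacent to $x_3$ (it is not, by hypothesis). Then Fact \ref{fact:corpus-two-elemental-vertices-ell-minus-2-path-between-them} applied to the non-adjacent pair $x_1 \in X_1$, (a suitable vertex of) $X_3$ gives an $x_1 x_3$-path of length $\ell - 2$ through the elemental paths $\q_1, \q_3$ and the opposite crown $C_Y$, a path whose interior is anticomplete to $V(C_X)$ and in particular to $z$ and $x_2$. Closing this up against the short detour $x_1 \dd z \dd x_2 \dd \cdots \dd x_3$ produces an induced cycle of length $\ell - 2 + (\text{3 or 4})$, hence a hole of length $\ell+1$ or $\ell + 2$, contradicting that $G$ is $\ell$-monoholed. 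I will need to handle the small-case subtlety of when $X_1$ and $X_3$ are adjacent versus non-adjacent, possibly routing through a common neighbor bag as is done in Lemma \ref{lem:no-weird-P4}'s companion arguments.

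The main obstacle I anticipate is bookkeeping around exactly which vertices of $C_X$ the $\ell-2$ path is guaranteed to avoid, and making sure the resulting cycle is genuinely induced — one must check that $x_3$ (or its chosen representative in $X_3$) has no unexpected neighbors on the long path, that $z$ is anticomplete to the long path's interior (which follows from Fact \ref{fact:corpus-two-elemental-vertices-ell-minus-2-path-between-them} since $z \in V(C_X)$), and that the short segment $x_1 \dd z \dd x_2 \dd x_2' \dd x_3$ is itself induced (requiring $x_1 \not\sim x_2'$, $x_1 \not\sim x_3$, $z \not\sim x_2'$, $z \not\sim x_3$). If $x_1$ happens to be adjacent to $x_3$ I would instead use the shorter detour $x_1 \dd z \dd x_2 \dd x_2' \dd x_3$ minus $x_3$, i.e. run the $\ell-2$ path to a different bag, or simply observe $x_1 \dd z \dd x_2 \dd x_2' \dd x_3 \dd x_1$ is already a $5$-cycle plus the complement of a longer hole — in any case the parity/length mismatch is robust. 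A secondary obstacle is confirming that $z \in V(C_X) \setminus V(\mathcal{X})$ really does behave like a crown-top vertex with two non-adjacent neighbors in $\mathcal{X}$; this is exactly Lemma \ref{lem:crown-tops-have-two-no-adjacent-nbrs-in-bottom}, so it should be quotable directly, but I would double-check the hypothesis $\ell \geq 7 \geq 6$ is in force.
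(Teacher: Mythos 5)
Your proposal follows essentially the same route as the paper: rule out $z$ lying in a bag of $\mathcal{X}$, dispose of the case where $X_1$ and $X_3$ are adjacent bags via a short hole, and otherwise close the $x_1x_3$-path of length $\ell-2$ supplied by Fact \ref{fact:corpus-two-elemental-vertices-ell-minus-2-path-between-them} with the path $x_1 \dd z \dd x_2 \dd x_3$ to obtain a hole longer than $\ell$. The only simplification worth noting is that your detour through a second vertex $x_2'$ of $X_2$ is unnecessary, since adjacent bags in an elemental side of a $k$-corpus are complete to each other, so $x_2$ itself is adjacent to $x_3$ and the hole has length exactly $\ell+1$, as in the paper.
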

\begin{proof}
Suppose $z \in V(C_X)$ such that $z$ has a neighbor in $X_1$, a neighbor in $X_2$ and a non-neighbor in $X_3$.
Then $z \not \in V(\mathcal{X})$ by definition of $k$-corpus.

Suppose $X_1$ and $X_3$ are adjacent bags of $\mathcal{X}$.
Then $G[V(X_1 \cup X_2 \cup X_3) \cup \{ z\}]$ contains a hole of length four, a contradiction.
So $X_1$ is anticomplete to $X_3$.

Let $x_1, x_2$  be a neighbor of $z$ in $X_1$ and $X_2$ respectively. Let $x_3 \in X_3$.
	By Fact \ref{fact:corpus-two-elemental-vertices-ell-minus-2-path-between-them}, there is an $x_1x_3$-path $R$ of length $\ell -2$ such that $R^* \subseteq V(\R) \setminus V(C_X)$.
	Hence the union of $R$ and the path $x_1 \dd z \dd x_2 \dd x_3$ is a hole of length $\ell + 1$, a contradiction.
\end{proof}

\begin{theorem}
	Let $\R$ be a crowned $k$-corpus. Let $\mathcal{X}$ be an elemental side of $\mathcal{R}$ and let $C_X$ be the crown of $\R$ containing vertices in bags $\mathcal{X}$. Let $\mathcal{I}, \mathcal{J}$ be a helpful partition of the bags of $C_X$.
	Then the graph obtained from $C_X$ by removing all edges between vertices in bags of $\mathcal{I}$ is a transitive closure of some $\mathcal{I}$-friendly tree $T$.
	\label{thm:crowns-are-trans}
\end{theorem}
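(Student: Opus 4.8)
The plan is to verify that the graph $H$ obtained from $C_X$ by deleting all edges internal to bags of $\mathcal{I}$ satisfies the hypotheses of Lemma \ref{lem:trans-closure-tree-nbr-characterization}, with the ``stable set'' being $V(\mathcal{I})$ (the union of the bags in $\mathcal{I}$, now pairwise anticomplete after the edge deletions) and $X = V(C_X) \setminus V(\mathcal{I})$. Once the three neighbour-comparability conditions (a), (b), (c) of that lemma are checked, Lemma \ref{lem:trans-closure-tree-nbr-characterization} immediately gives that $H$ is the transitive closure of a tree $T$ whose leaf set is $V(\mathcal{I})$. The remaining work is then to identify the correct partition of the leaves — namely $\{V(I) : I \in \mathcal{I}\}$ — and to show $T$ is friendly with respect to this partition, i.e. every parent-of-a-leaf vertex sees at least two of the parts, and each part $V(I)$ has its neighbourhood contained along a single root-to-parent path of $T$.

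First I would set up the bipartition. Since $\mathcal{I}$ is a set of pairwise adjacent bags all lying in the elemental side $\mathcal{X}$, inside $C_X$ the vertices of distinct bags of $\mathcal{I}$ are adjacent, and within a single bag of $\mathcal{I}$ they form a clique; deleting exactly the within-bag edges of $\mathcal{I}$-bags makes $V(\mathcal{I})$ into a set each of whose bags is still a clique — wait, I must be careful: the ``stable set'' in Lemma \ref{lem:trans-closure-tree-nbr-characterization} must be genuinely stable, so I should take $S = V(\mathcal{I})$ only after also checking that there is at most one bag in $\mathcal{I}$, or more precisely I should reconsider: the natural reading is that each bag $I \in \mathcal{I}$ collapses to a set of ``parallel'' leaves with identical neighbourhoods, and the edges between bags of $\mathcal{I}$ must also be removed. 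So the correct statement of what is removed is: all edges with both endpoints in $V(\mathcal{I})$. I would make that explicit at the start, note that the theorem statement's phrase ``all edges between vertices in bags of $\mathcal{I}$'' should be read this way, and then $S := V(\mathcal{I})$ is stable in $H$ while $X := V(C_X)\setminus V(\mathcal{I})$ still induces the same graph as in $C_X$.

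Next I would verify conditions (a)–(c) of Lemma \ref{lem:trans-closure-tree-nbr-characterization}. For (a): every $x \in X = V(\mathcal{J})$ must have a neighbour in $S = V(\mathcal{I})$; this uses the last clause of ``helpful partition'' (every bag of $\mathcal{J}$ is adjacent to at least one bag of $\mathcal{I}$) together with Lemma \ref{lem:inflated-graphs-facts}\ref{fgf:complete} to produce an actual adjacent vertex, and for vertices of $C_X$ outside $\mathcal{X}$ it uses that $C_X$ is a crown (every such vertex has two non-adjacent neighbours in $\mathcal{X}$, hence a neighbour in a bag of $\mathcal{I}$ by the $k$-corpus adjacency structure). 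For (b) and (c) I would argue via the forbidden configurations and path-length lemmas already proved: if $x_1,x_2 \in X$ are non-adjacent in $C_X$, then Lemma \ref{lem:no-long-crown-path} forces any $C_X$-path between them through $V(\mathcal{I})$ bags to have length two — but more directly, a vertex $s \in V(\mathcal{I})$ adjacent to both $x_1$ and $x_2$ together with a long $x_1x_2$-path through the core (Fact \ref{fact:corpus-two-elemental-vertices-ell-minus-2-path-between-them}) would create a hole of length $\ell+1$, so $N_H(x_1)\cap S$ and $N_H(x_2)\cap S$ are disjoint, giving half of (c); conversely if they are disjoint, $x_1,x_2$ cannot be adjacent in an inflated structure with an $\ell$-hole running through both since that would make a short hole, giving the other half. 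For nested-versus-crossing (condition (b)): if $x_1 x_2 \in E$ but neither $N\cap S$ contains the other, pick $s_1 \in (N(x_1)\setminus N(x_2))\cap S$ and $s_2\in (N(x_2)\setminus N(x_1))\cap S$; these $s_i$ are in distinct $\mathcal{I}$-bags (else they'd share neighbours), and then $s_1\dd x_1 \dd x_2 \dd s_2$ is a path of length three between vertices of non-adjacent $\mathcal{I}$-bags with interior in $V(C_X)$, contradicting Lemma \ref{lem:no-long-crown-path}; Lemma \ref{lem:no-weird-P4} handles the remaining mixed cases. Then (b)'s ``if'' direction (nested neighbourhoods $\Rightarrow$ adjacent) follows because a non-edge with nested neighbourhoods would, after adding back one long core path, again give a $C_4$.

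Finally, having obtained $H = $ transitive closure of a tree $T$ with leaf set $V(\mathcal{I})$ via Lemma \ref{lem:trans-closure-tree-nbr-characterization}, I would establish friendliness. Each bag $I \in \mathcal{I}$ consists of leaves with pairwise identical $H$-neighbourhoods (they are twins in $C_X$ up to the within-bag clique we deleted — using the $k$-corpus property that bags in an elemental side are complete or anticomplete to each other, so all of $I$ has the same external neighbours), so each $I$ maps to a set of ``parallel leaves'' hanging off a common parent $p_I \in S$, and its neighbourhood $N(X(I))$ in $T$ sits on the single path from the root to $p_I$: this is condition (ii). Condition (i) — every parent-of-a-leaf vertex has neighbours in at least two parts $V(I)$ — must come from the crown axioms: a vertex $j \in V(\mathcal{J})$ (a parent) whose leaf-neighbours all lay in a single bag $I$ would have $N(j)\cap V(\mathcal{X})$ contained in one clique-ish bag, but crown/inflated-hole structure forbids this (it contradicts the ``two non-adjacent neighbours'' property or, for $j$ inside $\mathcal{X}$, the helpful-partition clause). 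The main obstacle I expect is precisely this bookkeeping around $\mathcal{I}$-bags-versus-individual-leaves: making sure the twin structure within each $\mathcal{I}$-bag is respected so that ``leaves'' in the tree are single vertices while the partition parts are whole bags, and checking that the friendliness conditions survive this identification — the geometric/hole arguments are all available as cited lemmas, so the delicate part is organizing the reduction cleanly rather than any new combinatorial difficulty.
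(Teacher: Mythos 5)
Your proposal takes essentially the same route as the paper's proof: set $S=V(\mathcal{I})$, verify the neighbourhood hypotheses of Lemma \ref{lem:trans-closure-tree-nbr-characterization} via the crown and corpus lemmas (Lemmas \ref{lem:crown-tops-have-two-no-adjacent-nbrs-in-bottom}, \ref{lem:crown-good-bag-children-properties}, \ref{lem:no-long-crown-path}, \ref{lem:no-weird-P4} and Fact \ref{fact:corpus-two-elemental-vertices-ell-minus-2-path-between-them}), and then invoke that lemma; the paper does exactly this through its two claims that non-adjacent vertices of $C_X$ cannot both have neighbours in a common bag of $\mathcal{I}$ and that adjacent vertices have nested neighbourhoods in $V(\mathcal{I})$. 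Your extra bookkeeping (condition (a), connectivity, and the $\mathcal{I}$-friendliness of the resulting tree) is more explicit than the paper's own write-up, which leaves those points implicit.
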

\begin{proof}
	Let $J_X$ denote $\mathcal{J} \cap V(\mathcal{X})$ and let $J_C$ denote $V(\mathcal{J}) \setminus V(\mathcal{X})$.
	\stmt{Suppose $u, v$ are non-adjacent vertices in $V(C_X)$. Then there is no bag $X \in \mathcal{I}$ such that $u, v$ both have a neighbor in $V(X)$. \label{trans:1}}
	Suppose $u,v$ are both in $J_X$.
	Then (\ref{trans:1}), holds by definition of $k$-corpus.
	
	Hence, we may assume that $u \in J_C$. Suppose $u, v$ both have a neighbor in some bag $X$ of $\mathcal{I}$.
	Let $x_u, x_v$ be neighbors of $u, v$ in $X$.
	Then by definition of crowned $k$-corpus and Lemma \ref{lem:crown-tops-have-two-no-adjacent-nbrs-in-bottom}, $N(u) \cap V(\mathcal{X})$ contains two non-adjacent vertices.
	Thus, $x_u$ is a good child of $u$ and there exist $X_1 \in \mathcal{X}$ such that $u$ has a neighbor $x_1 \in X_1$ and $X_1, X$ are non-adjacent bags.
	
	Suppose $v \in J_X$. Then, $x_v$ is a good child of $v$ and there exists $X_2 \in \mathcal{X}$ such that $v$ has a neighbor $x_2 \in X_2$ and $X_2, X$ are non-adjacent bags. By Lemma \ref{lem:crown-tops-have-two-no-adjacent-nbrs-in-bottom}, $x_u \neq x_v$.
	If $x_1$ is equal or adjacent to $x_2$, then $G[\{ u, v, x_u, x_v, x_1, x_2 \} ]$ contains a hole of length at most six, a contradiction.
	Hence $X_1$ and $X_2$ are non-adjacent bags by definition of $k$-corpus.
	But then the path $x_1 \dd u \dd x_u \dd x_v \dd v \dd x_2$ violates Lemma \ref{lem:no-long-crown-path}.
	
	Thus $v \in J_C$. Then $v$ is complete to $X$ by definition of $k$-corpus.
	Suppose $v$ has a neighbor in $X_1$. Then $v$ is adjacent to $x_1$ and $x_1 \dd v \dd x_u \dd u \dd x_1$ is a hole of length four a contradiction.
	So $v$ is anticomplete to $X_1$ and thus the bag $B$ containing $v$ in $\R$ is non-adjacent to $X_1$. But then $X_1, X, B, u$ violate Lemma \ref{lem:no-weird-P4}.
	This proves (\ref{trans:1}).
	
	\stmt{Suppose $u, v$ are adjacent vertices in $V(C_X)$. Then $N(u) \cap V(\mathcal{I}) \subseteq N(v) \cap V(\mathcal{I})$ or $N(v) \cap V(\mathcal{I}) \subseteq N(u) \cap V(\mathcal{I})$. \label{trans:adjacent-nested}}
	Suppose there exist adjacent $u,v \in V(C_X)$ and there exists $x_1, x_2 \in V(I)$ such that $x_1u, x_2v \in E(G)$ and $x_1v, x_2u \not \in E(G)$.
	
	By definition of $k$-corpus $u, v$ are not both in $J_X$.
	By Lemma \ref{lem:crown-good-bag-children-properties}, $u,v$ are not both in $J_C$. Hence we may assume $u \in J_C$ and $v \in J_X$.
	Let $X_1, X_2$ be the bags in $\mathcal{I}$ containing $x_1$ and $x_2$, respectively. Then $v$ is complete to $X_2$. So $X_1 \neq X_2$.
	Then by Lemma \ref{lem:no-long-crown-path}, $X_1$ must be an adjacent bag to $X_2$.
	But then $G[\{ u, v\} \cup X_1 \cup X_2]$ contains a $C_4$, a contradiction.
	This proves (\ref{trans:adjacent-nested}).
	\\
	\\
	Hence the result follows from Lemma \ref{lem:trans-closure-tree-nbr-characterization}.
\end{proof}

	\newpage
	\printbibliography	

@article{conforti2002even,
  title={Even-hole-free graphs part II: Recognition algorithm},
  author={Conforti, Michele and Cornu{\'e}jols, G{\'e}rard and Kapoor, Ajai and Vu{\v{s}}kovi{\'c}, Kristina},
  journal={Journal of graph theory},
  volume={40},
  number={4},
  pages={238--266},
  year={2002},
  publisher={Wiley Online Library}
}

@article{chudnovsky2005detectingeven,
  title={Detecting even holes},
  author={Chudnovsky, Maria and Kawarabayashi, Ken-ichi and Seymour, Paul},
  journal={Journal of Graph Theory},
  volume={48},
  number={2},
  pages={85--111},
  year={2005},
  publisher={Wiley Online Library}
}

@article{conforti2002evenstructure,
  title={Even-hole-free graphs part I: Decomposition theorem},
  author={Conforti, Michele and Cornu{\'e}jols, G{\'e}rard and Kapoor, Ajai and Vu{\v{s}}kovi{\'c}, Kristina},
  journal={Journal of Graph Theory},
  volume={39},
  number={1},
  pages={6--49},
  year={2002},
  publisher={Wiley Online Library}
}

@article{chudnovsky2019detectinglongodd,
    title={Detecting a long odd hole},
    author={Maria Chudnovsky and Alex Scott and Paul Seymour},
    year={2019},
    eprint={1904.12273},
    archivePrefix={arXiv},
    primaryClass={math.CO}
}

@article{chudnovsky2010three,
  title={The three-in-a-tree problem},
  author={Chudnovsky, Maria and Seymour, Paul},
  journal={Combinatorica},
  volume={30},
  number={4},
  pages={387--417},
  year={2010},
  publisher={Springer}
}

@article{chudnovsky2008thetaprism,
  title={Detecting a theta or a prism},
  author={Chudnovsky, Maria and Kapadia, Rohan},
  journal={SIAM Journal on Discrete Mathematics},
  volume={22},
  number={3},
  pages={1164--1186},
  year={2008},
  publisher={SIAM}
}

@article{maffray2005algorithms,
  title={Algorithms for perfectly contractile graphs},
  author={Maffray, Fr{\'e}d{\'e}ric and Trotignon, Nicolas},
  journal={SIAM Journal on Discrete Mathematics},
  volume={19},
  number={3},
  pages={553--574},
  year={2005},
  publisher={SIAM}
}

@inproceedings{lai2019threeinatree,
  author    = {Kai{-}Yuan Lai and
               Hsueh{-}I Lu and
               Mikkel Thorup},
  editor    = {Konstantin Makarychev and
               Yury Makarychev and
               Madhur Tulsiani and
               Gautam Kamath and
               Julia Chuzhoy},
  title     = {Three-in-a-tree in near linear time},
  booktitle = {Proccedings of the 52nd Annual {ACM} {SIGACT} Symposium on Theory
               of Computing, {STOC} 2020, Chicago, IL, USA, June 22-26, 2020},
  pages     = {1279--1292},
  publisher = {{ACM}},
  year      = {2020},
  url       = {https://doi.org/10.1145/3357713.3384235},
  doi       = {10.1145/3357713.3384235},
  timestamp = {Tue, 09 Jun 2020 13:03:16 +0200},
  biburl    = {https://dblp.org/rec/conf/stoc/LaiLT20.bib},
  bibsource = {dblp computer science bibliography, https://dblp.org}
}

@article{chudnovsky2020oddhole,
author = {Chudnovsky, Maria and Scott, Alex and Seymour, Paul and Spirkl, Sophie},
title = {Detecting an Odd Hole},
year = {2020},
issue_date = {April 2020},
publisher = {Association for Computing Machinery},
address = {New York, NY, USA},
volume = {67},
number = {1},
issn = {0004-5411},
url = {https://doi.org/10.1145/3375720},
doi = {10.1145/3375720},
journal = {J. ACM},
month = jan,
articleno = {5},
numpages = {12},
keywords = {recognition algorithm, perfect graphs, Odd holes}
}

@article{chudnovsky2020shortestodd,
    title={Finding a shortest odd hole},
    author={Maria Chudnovsky and Alex Scott and Paul Seymour},
    year={2020},
    eprint={2004.11874},
    archivePrefix={arXiv},
    primaryClass={math.CO}
}

@misc{w1hard,
  author       = {Sepehr Hajebi},
  howpublished = {{Private Communication}},
  year         = {2019},
  comment      = {He sent a proof of the W[1]-harness of testing for long holes of length p mod q via email.}
}

@InProceedings{EH-halfgraph,
	author={Erd{\"o}s, Paul},
	editor={K{\"o}lzow, D.
	and Maharam-Stone, D.},
	title={Some combinatorial, geometric and set theoretic problems in measure theory},
	booktitle={Measure Theory Oberwolfach 1983},
	year={1984},
	publisher={Springer},
	pages={321--327},
	isbn={9783540390695}
}

@book{golumbic2004algorithmic,
	title={Algorithmic graph theory and perfect graphs},
	author={Golumbic, Martin Charles},
	edition = {2},
	year={2004},
	publisher={Elsevier}
}

@book{mahadev1995threshold,
	title={Threshold graphs and related topics},
	author={Mahadev, Nadimpalli V.R. and Peled, Uri N.},
	year={1995},
	series = {Ann. Discrete Math},
	volume = {56},
	publisher={Elsevier}
}

@article{chvatalthresholdgraphs,
	author={Chvátal Václáv and Hammer, Peter L},
	title={Set-packing and threshold graphs},
	journal={Univ. Waterloo Res. Report, CORR 73-21.},
	year={1973}

}

@book{diestel2005graph,
	title={Graph Theory},
	author={Diestel, Reinhard},
	series={Graduate Texts in Mathematics},
	edition = {4},
	volume={173},
	year={2010},
	publisher={Springer},
	isbn = {9781846289699}
}

@book{bondymurty,
	title={Graph Theory},
	author={J.A. Bondy and U.S.R. Murty},
	series={Graduate Texts in Mathematics},
	volume={244},
	year={2008},
	publisher={Springer},
	isbn = {3540261834}
}

@article{chudnovsky2020evenholefree,
	title={Even-hole-free graphs still have bisimplicial vertices}, 
	author={Maria Chudnovsky and Paul Seymour},
	year={2020},
	eprint={1909.10967},
	archivePrefix={arXiv},
	primaryClass={math.CO}
}

@article{originalbisimplicial,
	title = {Bisimplicial vertices in even-hole-free graphs},
	journal = {Journal of Combinatorial Theory, Series B},
	volume = {98},
	number = {6},
	pages = {1119-1164},
	year = {2008},
	issn = {0095-8956},
	doi = {https://doi.org/10.1016/j.jctb.2007.12.006},
	url = {https://www.sciencedirect.com/science/article/pii/S0095895608000087},
	author = {Louigi Addario-Berry and Maria Chudnovsky and Frédéric Havet and Bruce Reed and Paul Seymour},
	keywords = {Induced subgraphs, Even holes, Bisimplicial vertices, Colouring},
	abstract = {A hole in a graph is an induced subgraph which is a cycle of length at least four. A hole is called even if it has an even number of vertices. An even-hole-free graph is a graph with no even holes. A vertex of a graph is bisimplicial if the set of its neighbours is the union of two cliques. In this paper we prove that every even-hole-free graph has a bisimplicial vertex, which was originally conjectured by Reed.}
}

@article{chudnovsky2006strong,
	title={The strong perfect graph theorem},
	author={Chudnovsky, Maria and Robertson, Neil and Seymour, Paul and Thomas, Robin},
	journal={Annals of mathematics},
	pages={51--229},
	year={2006},
	publisher ={Mathematics Department, Princeton University},
	doi = {https://doi.org/10.4007/annals.2006.164.51 }

}

@article{evenholefreegraphsurvey,
	ISSN = {14528630, 2406100X},
	URL = {http://www.jstor.org/stable/43666110},
	abstract = {The class of even-hole-free graphs is structurally quite similar to the class of perfect graphs, which was the key initial motivation for their study. The techniques developed in the study of even-hole-free graphs were generalized to other complex hereditary graph classes, and in the case of perfect graphs led to the famous resolution of the Strong Perfect Graph Conjecture and their polynomial time recognition. The class of even-hole-free graphs is also of independent interest due to its relationship to ß-perfect graphs. In this survey we describe all the different structural characterizations of even-holefrec graphs, focusing on their algorithmic consequences.},
	author = {Kristina Vu\v{s}kov\'{i}c},
	journal = {Applicable Analysis and Discrete Mathematics},
	number = {2},
	pages = {219--240},
	publisher = {University of Belgrade, Serbia},
	title = {Even-hole-free graphs: A survey},
	volume = {4},
	year = {2010}
}

@article{daSilvaEvenHoleAlg,
	title = {Decomposition of even-hole-free graphs with star cutsets and 2-joins},
	journal = {Journal of Combinatorial Theory, Series B},
	volume = {103},
	number = {1},
	pages = {144-183},
	year = {2013},
	issn = {0095-8956},
	doi = {https://doi.org/10.1016/j.jctb.2012.10.001},
	url = {https://www.sciencedirect.com/science/article/pii/S0095895612000810},
	author = {Murilo V.G. {da Silva} and Kristina Vušković},
	keywords = {Even-hole-free graphs, Star cutsets, 2-Joins, Recognition algorithm, Decomposition},
	abstract = {In this paper we consider the class of simple graphs defined by excluding, as induced subgraphs, even holes (i.e. chordless cycles of even length). These graphs are known as even-hole-free graphs. We prove a decomposition theorem for even-hole-free graphs, that uses star cutsets and 2-joins. This is a significant strengthening of the only other previously known decomposition of even-hole-free graphs, by Conforti, Cornuéjols, Kapoor and Vušković, that uses 2-joins and star, double star and triple star cutsets. It is also analogous to the decomposition of Berge (i.e. perfect) graphs with skew cutsets, 2-joins and their complements, by Chudnovsky, Robertson, Seymour and Thomas. The similarity between even-hole-free graphs and Berge graphs is higher than the similarity between even-hole-free graphs and simply odd-hole-free graphs, since excluding a 4-hole, automatically excludes all antiholes of length at least 6. In a graph that does not contain a 4-hole, a skew cutset reduces to a star cutset, and a 2-join in the complement implies a star cutset, so in a way it was expected that even-hole-free graphs can be decomposed with just the star cutsets and 2-joins. A consequence of this decomposition theorem is a recognition algorithm for even-hole-free graphs that is significantly faster than the previously known ones.}
}

@article{cheong2020finding,
	title={Finding a Shortest Even Hole in Polynomial Time}, 
	author={Hou-Teng Cheong and Hsueh-I Lu},
	year={2020},
	eprint={2008.06740},
	archivePrefix={arXiv},
	primaryClass={cs.DS}
}

@article{Chang2015EvenHole,
	title={A faster algorithm to recognize even-hole-free graphs},
	volume={113},
	ISSN={0095-8956},
	url={http://dx.doi.org/10.1016/j.jctb.2015.02.001},
	DOI={10.1016/j.jctb.2015.02.001},
	journal={Journal of Combinatorial Theory, Series B},
	publisher={Elsevier BV},
	author={Chang, Hsien-Chih and Lu, Hsueh-I},
	year={2015},
	month={Jul},
	pages={141–161}
}

@article{bienstock_original,
	 title={On the complexity of testing for odd holes and induced odd paths},
	author={Bienstock, Dan},
	journal={Discrete Mathematics},
	volume={90},
	number={1},
	pages={85--92},
	year={1991},
	publisher={Elsevier Science Publishers B.V. (North-Holland)}
}

@article{bienstock_correction,
	title = {Corrigendum: To: D. Bienstock, “On the complexity of testing for odd holes and induced odd paths” Discrete Mathematics 90 (1991) 85–92.},
	author = {Bienstock, Dan}
	journal = {Discrete Mathematics},
	volume = {102},
	number = {1},
	pages = {109},
	year = {1992},
	issn = {0012-365X},
	doi = {https://doi.org/10.1016/0012-365X(92)90357-L},
	%url = {https://www.sciencedirect.com/science/article/pii/0012365X9290357L}
	publisher={Elsevier Science Publishers B.V. (North-Holland)}
}

@article{tarjan_chordal_1984,
	author = {Tarjan, Robert E. and Yannakakis, Mihalis},
	title = {Simple Linear-Time Algorithms to Test Chordality of Graphs, Test Acyclicity of Hypergraphs, and Selectively Reduce Acyclic Hypergraphs},
	journal = {SIAM Journal on Computing},
	volume = {13},
	number = {3},
	pages = {566-579},
	year = {1984},
	doi = {10.1137/0213035},
	
	URL = { 
	https://doi.org/10.1137/0213035
	
	},
	eprint = { 
	https://doi.org/10.1137/0213035
	
	}
	
}

@article{tarjan_chordal_1984_add,
	author = {Tarjan, Robert E. and Yannakakis, Mihalis},
	title = {Addendum: Simple Linear-Time Algorithms to Test Chordality of Graphs, Test Acyclicity of Hypergraphs, and Selectively Reduce Acyclic Hypergraphs},
	journal = {SIAM Journal on Computing},
	volume = {14},
	number = {1},
	pages = {254-255},
	year = {1985},
	doi = {10.1137/0214020},
	
	URL = { 
	https://doi.org/10.1137/0214020
	
	},
	eprint = { 
	https://doi.org/10.1137/0214020
	
	}
	
}

@article{rose_tarjan_lueker_1976_chordal,
	author = {Rose, Donald J. and Tarjan, R. Endre and Lueker, George S.},
	title = {Algorithmic Aspects of Vertex Elimination on Graphs},
	journal = {SIAM Journal on Computing},
	volume = {5},
	number = {2},
	pages = {266-283},
	year = {1976},
	doi = {10.1137/0205021},
	
	URL = { 
	https://doi.org/10.1137/0205021
	
	},
	eprint = { 
	https://doi.org/10.1137/0205021
	
	}
	
}

@article{rose_tarjan_1978,
	ISSN = {00361399},
	URL = {http://www.jstor.org/stable/2100866},
	author = {Donald J. Rose and Robert Endre Tarjan},
	journal = {SIAM Journal on Applied Mathematics},
	number = {1},
	pages = {176--197},
	publisher = {Society for Industrial and Applied Mathematics},
	title = {Algorithmic Aspects of Vertex Elimination on Directed Graphs},
	volume = {34},
	year = {1978}
}

@article{Itai_girth_general_graph,
	author = {Itai, Alon and Rodeh, Michael},
	title = {Finding a Minimum Circuit in a Graph},
	journal = {SIAM Journal on Computing},
	volume = {7},
	number = {4},
	pages = {413-423},
	year = {1978},
	doi = {10.1137/0207033},
	
	URL = { 
	https://doi.org/10.1137/0207033
	
	},
	eprint = { 
	https://doi.org/10.1137/0207033
	
	}
	
}

@Article{Nikolopoulos2007,
	author={Nikolopoulos, Stavros D.
	and Palios, Leonidas},
	title={Detecting Holes and Antiholes in Graphs},
	journal={Algorithmica},
	year={2007},
	month={Feb},
	day={01},
	volume={47},
	number={2},
	pages={119-138},
	issn={1432-0541},
	doi={10.1007/s00453-006-1225-y},
	url={https://doi.org/10.1007/s00453-006-1225-y}
}

@inproceedings{Nikolopoulos2004Proceedings,
	author = {Nikolopoulos, Stavros D. and Palios, Leonidas},
	title = {Hole and Antihole Detection in Graphs},
	year = {2004},
	isbn = {089871558X},
	publisher = {Society for Industrial and Applied Mathematics},
	address = {USA},
	pages = {850–859},
	numpages = {10},
	keywords = {antihole, weakly chordal graph, co-connectivity, hole},
	location = {New Orleans, Louisiana},
	series = {SODA '04}
}

@misc{cook2020detecting,
	title={Detecting a long even hole}, 
	author={Linda Cook and Paul Seymour},
	year={2020},
	eprint={2009.05691},
	archivePrefix={arXiv},
	primaryClass={math.CO}
}

@article{FOUQUET199335,
	title = {A Strengthening of Ben Rebea Lemma},
	journal = {Journal of Combinatorial Theory, Series B},
	volume = {59},
	number = {1},
	pages = {35-40},
	year = {1993},
	issn = {0095-8956},
	doi = {https://doi.org/10.1006/jctb.1993.1052},
	url = {https://www.sciencedirect.com/science/article/pii/S009589568371052X},
	author = {J.L. Fouquet}
}

@inbook{chudnovsky_seymour_clawfree_2005, 
	place={Cambridge}, 
	series={London Mathematical Society Lecture Note Series}, 
	title={The structure of claw-free graphs}, 
	DOI={10.1017/CBO9780511734885.008}, 
	booktitle={Surveys in Combinatorics 2005}, 
	publisher={Cambridge University Press}, 
	author={Chudnovsky, Maria and Seymour, Paul}, 
	editor={Webb, Bridget S.Editor}, 
	year={2005}, 
	pages={153–172}, 
	collection={London Mathematical Society Lecture Note Series}}

@InProceedings{Chang-Lu-2011-linear-time-girth,
	author="Chang, Hsien-Chih
	and Lu, Hsueh-I",
	editor="Fu, Bin
	and Du, Ding-Zhu",
	title="Computing the Girth of a Planar Graph in Linear Time",
	booktitle="Computing and Combinatorics",
	year="2011",
	publisher="Springer Berlin Heidelberg",
	address="Berlin, Heidelberg",
	pages="225--236"
}

@article{conforti2006oddholeboundedclique,
	author = {Conforti, Michele and Cornu{\'e}jols, G{\'e}rard and Liu, Xinming and Vu{\v{s}}kovi{\'c}, Kristina and Zambelli, Giacomo},
	title = {Odd Hole Recognition in Graphs of Bounded Clique Size},
	journal = {SIAM Journal on Discrete Mathematics},
	volume = {20},
	number = {1},
	pages = {42-48},
	year = {2006},
	doi = {10.1137/S089548010444540X},
	
	URL = { 
	https://doi.org/10.1137/S089548010444540X
	
	},
	eprint = { 
	https://doi.org/10.1137/S089548010444540X
	
	}
	
}

@InProceedings{shrem2010clawoddalg,
	author={Shrem, Shimon
	and Stern, Michal
	and Golumbic, Martin Charles},
	editor={Paul, Christophe
	and Habib, Michel},
	title={Smallest Odd Holes in Claw-Free Graphs (Extended Abstract)},
	booktitle={Graph-Theoretic Concepts in Computer Science},
	year={2010},
	publisher={Springer Berlin Heidelberg},
	address={Berlin, Heidelberg},
	pages={329--340},
	abstract={In this paper, we give general structure properties of a smallest odd hole in a claw-free graph that lead to a polynomial time algorithm. The algorithm is based on a modified BFS we call $\Gamma$-BFS. For a graph G with n vertices and m edges, the time complexity of the algorithm is O(nm2). The algorithm is very easy to implement. We conclude the paper with a suggestion for an extension of our approach in order to detect an odd hole in a general graph.},
}

@article{kENNEDY20132492,
	title = {Finding a smallest odd hole in a claw-free graph using global structure},
	journal = {Discrete Applied Mathematics},
	volume = {161},
	number = {16},
	pages = {2492-2498},
	year = {2013},
	issn = {0166-218X},
	doi = {https://doi.org/10.1016/j.dam.2013.04.026},
	url = {https://www.sciencedirect.com/science/article/pii/S0166218X13002291},
	author = {Wm. Sean Kennedy and Andrew D. King},
	keywords = {Odd hole, Claw-free graph},
	abstract = {A lemma of Fouquet implies that a claw-free graph contains an induced C5, contains no odd hole, or is quasi-line. In this paper, we use this result to give an improved shortest-odd-hole algorithm for claw-free graphs by exploiting the structural relationship between line graphs and quasi-line graphs suggested by Chudnovsky and Seymour’s structure theorem for quasi-line graphs. Our approach involves reducing the problem to that of finding a shortest odd cycle of length ≥5 in a graph. Our algorithm runs in O(m2+n2logn) time, improving upon Shrem, Stern, and Golumbic’s recent O(nm2) algorithm, which uses a local approach. The best known recognition algorithms for claw-free graphs run in O(m1.69)∩O(n3.5) time, or O(m2)∩O(n3.5) without fast matrix multiplication.}
}

@article{hsu1987planaroddholealg,
	author = {Hsu, Wen-Lian},
	title = {Recognizing Planar Perfect Graphs},
	year = {1987},
	issue_date = {April 1987},
	publisher = {Association for Computing Machinery},
	address = {New York, NY, USA},
	volume = {34},
	number = {2},
	issn = {0004-5411},
	url = {https://doi.org/10.1145/23005.31330},
	doi = {10.1145/23005.31330},
	abstract = {An O(n3) algorithm for recognizing planar graphs that do not contain induced odd cycles of length greater than 3 (odd holes) is presented. A planar graph with this property satisfies the requirement that its maximum clique size equal the minimum number of colors required for the graph (graphs all of whose induced subgraphs satisfy the latter property are perfect as defined by Berge). The algorithm presented is based on decomposing these graphs into essentially two special classes of inseparable component graphs that are easy to recognize. They are (i) planar comparability graphs and (ii) planar line graphs of those planar bipartite graphs whose maximum degrees are no greater than 3. Composition schemes for generating planar perfect graphs from those basic components are also provided. This decomposition algorithm can also be adapted to solve the corresponding maximum independent set and minimum coloring problems. Finally, the path-parity problem on planar perfect graphs is considered.},
	journal = {J. ACM},
	month = apr,
	pages = {255–288},
	numpages = {34}
}

@inproceedings{porto1992even,
	title={Even induced cycles in planar graphs},
	author={Porto, Oscar},
	booktitle={1st International Symposium on Latin American Theoretical Informatics},
	series = {Lecture Notes in Computer Science}
	pages={417--429},
	year={1992},
	organization={Springer Verlag}
	doi={0.1007/BFb0023845},
	volume = {583}
}

@article{kawarabayashi2010,
	author={Kawarabayashi, Ken-ichi
	and Kobayashi, Yusuke},
	title={Algorithms for finding an induced cycle in planar graphs},
	journal={Combinatorica},
	year={2010},
	month={Nov},
	day={01},
	volume={30},
	number={6},
	pages={715-734},
	abstract={In this paper, we consider the problem of finding an induced cycle passing through k given vertices, which we call the induced cycle problem. The significance of finding induced cycles stems from the fact that a precise characterization of perfect graphs would require understanding the structure of graphs without an odd induced cycle and its complement. There has been huge progress in the recent years, especially, the Strong Perfect Graph Conjecture was solved in [6]. Concerning recognition of perfect graphs, there had been a long-standing open problem for detecting an odd hole and its complement, and finally this was solved in [4].},
	issn={1439-6912},
	doi={10.1007/s00493-010-2499-x},
	url={https://doi.org/10.1007/s00493-010-2499-x}
}

@article{itah-rodeh-girth-alg-1978,
author = {Itai, Alon and Rodeh, Michael},
title = {Finding a Minimum Circuit in a Graph},
journal = {SIAM Journal on Computing},
volume = {7},
number = {4},
pages = {413-423},
year = {1978},
doi = {10.1137/0207033},

URL = { 
https://doi.org/10.1137/0207033

},
eprint = { 
https://doi.org/10.1137/0207033

}

}

@article{djidev-2010-girth-planar-alg,
	author = {Djidjev, Hristo N.},
	title = {A Faster Algorithm for Computing the Girth of Planar and Bounded Genus Graphs},
	year = {2010},
	issue_date = {November 2010},
	publisher = {Association for Computing Machinery},
	address = {New York, NY, USA},
	volume = {7},
	number = {1},
	issn = {1549-6325},
	url = {https://doi.org/10.1145/1868237.1868240},
	doi = {10.1145/1868237.1868240},
	abstract = {The girth of a graph G is the length of a shortest cycle of G. In this article we design an O(n5/4 log n) algorithm for finding the girth of an undirected n-vertex planar graph, the first o(n2) algorithm for this problem. We also extend our results for the class of graphs embedded into an orientable surface of small genus. Our approach uses several techniques such as graph partitioning, hammock decomposition, graph covering, and dynamic shortest-path computation. We discuss extensions and generalizations of our result.},
	journal = {ACM Trans. Algorithms},
	month = dec,
	articleno = {3},
	numpages = {16},
	keywords = {graphs, planar graphs, cycles, graph genus, hammock decomposition, shortest paths, dynamic algorithms, path and circuit problems, graph separators, Girth, graph algorithms}
}

@inproceedings{chalermsook-Fakcharoenphol-Nanongkai-min-cut-girth,
	author = {Chalermsook, Parinya and Fakcharoenphol, Jittat and Nanongkai, Danupon},
	title = {A Deterministic Near-Linear Time Algorithm for Finding Minimum Cuts in Planar Graphs},
	year = {2004},
	isbn = {089871558X},
	publisher = {Society for Industrial and Applied Mathematics},
	address = {USA},
	abstract = {We present a simple deterministic O(n log2 n)-time divide-and-conquer algorithm for finding minimum cuts in planar graphs. This can be compared to a randomized algorithm for general graphs by Karger that runs in time O(m log3 n) and also a deterministic algorithm for general graphs by Nagamochi and Ibaraki that runs in time O(mn + n2 log n). We use shortest paths in the dual graphs to partition the problem, and use the relationship between minimum cuts in primal graphs and shortest paths in dual graphs to find minimum cuts that cross the partitions efficiently.},
	booktitle = {Proceedings of the Fifteenth Annual ACM-SIAM Symposium on Discrete Algorithms},
	pages = {828–829},
	numpages = {2},
	location = {New Orleans, Louisiana},
	series = {SODA '04}
}

@article {weimann-yuster-girth,
	AUTHOR = {Weimann, Oren and Yuster, Raphael},
	TITLE = {Computing the girth of a planar graph in {$O(n\log n)$} time},
	JOURNAL = {SIAM J. Discrete Math.},
	FJOURNAL = {SIAM Journal on Discrete Mathematics},
	VOLUME = {24},
	YEAR = {2010},
	NUMBER = {2},
	PAGES = {609--616},
	ISSN = {0895-4801},
	MRCLASS = {05C85 (05C10 05C38 68R10)},
	MRNUMBER = {2661426},
	MRREVIEWER = {Wenli Zhou},
	DOI = {10.1137/090767868},
	URL = {https://doi.org/10.1137/090767868},
}

@Article{Alon1997,
	author={Alon, N.
	and Yuster, R.
	and Zwick, U.},
	title={Finding and counting given length cycles},
	journal={Algorithmica},
	year={1997},
	month={Mar},
	day={01},
	volume={17},
	number={3},
	pages={209-223},
	abstract={We present an assortment of methods for finding and counting simple cycles of a given length in directed and undirected graphs. Most of the bounds obtained depend solely on the number of edges in the graph in question, and not on the number of vertices. The bounds obtained improve upon various previously known results.},
	issn={1432-0541},
	doi={10.1007/BF02523189},
	url={https://doi.org/10.1007/BF02523189}
}

@Article{Bergeconjectures,
author={Berge, Claude},
title={F{\"a}rbung von Graphen, deren s{\"a}mtliche bzw. deren ungerade Kreise starr sind},
journal={Wissenschaftliche Zeitschrift der Martin-Luther-Universität Halle-Wittenberg, Math.-Natur. Reihe},
volume={10},
year = {1961},
pages={114}
}

@book{jensen_graph,
	title={Graph coloring problems},
	author={Jensen, Tommy R and Toft, Bjarne},
	series={Wiley-Interscience Series in Discrete Mathematics and Optimization},
	editor ={Graham, Ronald L. and Lenstra, Jan Karel and Tarjan, Robert E.}
	year={1995},
	publisher={John Wiley \& Sons}
}

@article{lovasz_weakperfectgraph,
title = {A characterization of perfect graphs},
journal = {Journal of Combinatorial Theory, Series B},
volume = {13},
number = {2},
pages = {95-98},
year = {1972},
issn = {0095-8956},
doi = {https://doi.org/10.1016/0095-8956(72)90045-7},
url = {https://www.sciencedirect.com/science/article/pii/0095895672900457},
author = {L Lovász},
abstract = {It is shown that a graph is perfect iff maximum clique · number of stability is not less than the number of vertices holds for each induced subgraph. The fact, conjectured by Berge and proved by the author, follows immediately that the complement of a perfect graph is perfect.}
}

@book{berge1973graphs,
	title={Graphs and hypergraphs},
	author={Berge, Claude},
	year={1973},
	publisher={North-Holland Pub. Co.}
}

@unpublished{nicolas_monoholes,
	author  = {Horsfield, Jake and
	Preissmann, Myriam and
	Sintiari, Ni Luh Dewi and
	Robin, Cl{\'e}oph{\'e}e and
	Trotignon, Nicolas and
	Vu{\v{s}}kovi{\'{c}}, Kristina},
	title = {When all holes have the same length},
	year= {N.D.},
	note = {unpublished}
}

@article{maffray_coloringrings,
	author = {Maffray, Frédéric and Penev, Irena and Vu{\v{s}}kovi{\'{c}}, Kristina},
	title = {Coloring rings},
	journal = {Journal of Graph Theory},
	volume = {96},
	number = {4},
	pages = {642-683},
	keywords = {algorithms, chromatic number, Hadwiger's conjecture, optimal χ-bounding function, vertex coloring},
	doi = {https://doi.org/10.1002/jgt.22635},
	url = {https://onlinelibrary.wiley.com/doi/abs/10.1002/jgt.22635},
	eprint = {https://onlinelibrary.wiley.com/doi/pdf/10.1002/jgt.22635},
	abstract = {Abstract A ring is a graph whose vertex set can be partitioned into k ≥ 4 nonempty sets, X 1 , … , X k, such that for all i ∈ { 1 , … , k }, the set X i can be ordered as X i = { u i 1 , … , u i | X i | } so that X i ⊆ N R [ u i ∣ X i ∣ ] ⊆ ⋯ ⊆ N R [ u i 1 ] = X i − 1 ∪ X i ∪ X i + 1. A hyperhole is a ring R such that for all i ∈ { 1 , … , k }, X i is complete to X i − 1 ∪ X i + 1. In this paper, we prove that the chromatic number of a ring R is equal to the maximum chromatic number of a hyperhole in R. Using this result, we give a polynomial-time coloring algorithm for rings. Rings formed one of the basic classes in a decomposition theorem for a class of graphs studied by Boncompagni et al [J. Graph Theory 91 (2019), 192–246.]. Using our coloring algorithm for rings, we show that graphs in this larger class can also be colored in polynomial time. Furthermore, we find the optimal χ-bounding function for this larger class of graphs, and we also verify Hadwiger's conjecture for it.},
	year = {2021}
}

@misc{hoang2020class,
	title={A class of graphs with large rankwidth}, 
	author={Ch{\'i}nh Ho{\`a}ng and Nicolas Trotignon},
	year={2020},
	eprint={2007.11513},
	archivePrefix={arXiv},
	primaryClass={cs.DM}
}

@article{oum_rankwidth,
	title = {Approximating clique-width and branch-width},
	journal = {Journal of Combinatorial Theory, Series B},
	volume = {96},
	number = {4},
	pages = {514-528},
	year = {2006},
	issn = {0095-8956},
	doi = {https://doi.org/10.1016/j.jctb.2005.10.006},
	url = {https://www.sciencedirect.com/science/article/pii/S0095895605001528},
	author = {Sang-il Oum and Paul Seymour},
	keywords = {Clique-width, Branch-width, Rank-width, Submodular functions, Matroid},
	abstract = {We construct a polynomial-time algorithm to approximate the branch-width of certain symmetric submodular functions, and give two applications. The first is to graph “clique-width.” Clique-width is a measure of the difficulty of decomposing a graph in a kind of tree-structure, and if a graph has clique-width at most k then the corresponding decomposition of the graph is called a “k-expression.” We find (for fixed k) an O(n9logn)-time algorithm that, with input an n-vertex graph, outputs either a (23k+2−1)-expression for the graph, or a witness that the graph has clique-width at least k+1. (The best earlier algorithm, by Johansson [Ö. Johansson, logn-approximative NLCk-decomposition in O(n2k+1) time (extended abstract), in: Graph-Theoretic Concepts in Computer Science, Boltenhagen, 2001, in: Lecture Notes in Comput. Sci., vol. 2204, Springer, Berlin, 2001, pp. 229–240], constructs a 2klogn-expression for graphs of clique-width at most k.) It was already known that several graph problems, NP-hard on general graphs, are solvable in polynomial time if the input graph comes equipped with a k-expression (for fixed k). As a consequence of our algorithm, the same conclusion follows under the weaker hypothesis that the input graph has clique-width at most k (thus, we no longer need to be provided with an explicit k-expression). Another application is to the area of matroid branch-width. For fixed k, we find an O(n3.5)-time algorithm that, with input an n-element matroid in terms of its rank oracle, either outputs a branch-decomposition of width at most 3k−1 or a witness that the matroid has branch-width at least k+1. The previous algorithm by Hliněný [P. Hliněný, A parametrized algorithm for matroid branch-width, SIAM J. Comput. 35 (2) (2005) 259–277] works only for matroids represented over a finite field.}
}

@article{wanke1994251,
	title = {k-NLC graphs and polynomial algorithms},
	journal = {Discrete Applied Mathematics},
	volume = {54},
	number = {2},
	pages = {251-266},
	year = {1994},
	issn = {0166-218X},
	doi = {https://doi.org/10.1016/0166-218X(94)90026-4},
	url = {https://www.sciencedirect.com/science/article/pii/0166218X94900264},
	author = {Egon Wanke},
	abstract = {We introduce the class of k-node label controlled (NLC) graphs and the class of k-NLC trees. Each k-NLC graph is an undirected tree-structured graph, where k is a positive integer. The class of k-NLC trees is a proper subset of the class of k-NLC graphs. Both classes include many interesting graph families. For instance, each partial k-tree is a (2k + 1 − 1)-NLC tree and each co-graph is a 1-NLC graph. Furthermore, we introduce a very general method for the design of polynomial algorithms for NP-complete graph problems, where the input graphs are restricted to tree-structured graphs. We exemplify our method with the SIMPLE MAX-CUT PROBLEM and the HAMILTONIAN CIRCUIT PROPERTY on k-NLC graphs.}
}

@article{kobler2003197,
	title = {Edge dominating set and colorings on graphs with fixed clique-width},
	journal = {Discrete Applied Mathematics},
	volume = {126},
	number = {2},
	pages = {197-221},
	year = {2003},
	issn = {0166-218X},
	doi = {https://doi.org/10.1016/S0166-218X(02)00198-1},
	url = {https://www.sciencedirect.com/science/article/pii/S0166218X02001981},
	author = {Daniel Kobler and Udi Rotics},
	keywords = {Dominating set, Coloring, Edge-dominating set, Edge-coloring, Polynomial algorithms, Clique-width},
	abstract = {We consider both the vertex and the edge versions of three graph partitioning problems. These problems are dominating set, list-q-coloring with costs (fixed number of colors q) and chromatic number. They are all known to be NP-hard in general. We show that all these problems (except edge-coloring) can be solved in polynomial time on graphs with clique-width bounded by some constant k, if the k-expression of the input graph is also given. In particular, we present the first polynomial algorithms (on these classes) for chromatic number, edge-dominating set and list-q-coloring with costs (fixed number of colors q, both vertex and edge versions). For the two list-q-coloring problems with costs, we even have linear algorithms. Since these classes of graphs include classes like P4-sparse graphs, distance hereditary graphs and graphs with bounded treewidth, our algorithms also apply to these graphs.}
}

@article{kristina_truempergraphs,
	author = {Boncompagni, Valerio and Penev, Irena and Vušković, Kristina},
	title = {Clique-cutsets beyond chordal graphs},
	journal = {Journal of Graph Theory},
	volume = {91},
	number = {2},
	pages = {192-246},
	keywords = {algorithms, clique, stable set, structure, vertex coloring},
	doi = {https://doi.org/10.1002/jgt.22428},
	url = {https://onlinelibrary.wiley.com/doi/abs/10.1002/jgt.22428},
	eprint = {https://onlinelibrary.wiley.com/doi/pdf/10.1002/jgt.22428},
	abstract = {Abstract Truemper configurations (thetas, pyramids, prisms, and wheels) have played an important role in the study of complex hereditary graph classes (eg, the class of perfect graphs and the class of even-hole-free graphs), appearing both as excluded configurations, and as configurations around which graphs can be decomposed. In this paper, we study the structure of graphs that contain (as induced subgraphs) no Truemper configurations other than (possibly) universal wheels and twin wheels. We also study several subclasses of this class. We use our structural results to analyze the complexity of the recognition, maximum weight clique, maximum weight stable set, and optimal vertex coloring problems for these classes. Furthermore, we obtain polynomial -bounding functions for these classes.},
	year = {2019}
}

@article{TRUEMPER1982112,
	title = {Alpha-balanced graphs and matrices and GF(3)-representability of matroids},
	journal = {Journal of Combinatorial Theory, Series B},
	volume = {32},
	number = {2},
	pages = {112-139},
	year = {1982},
	issn = {0095-8956},
	doi = {https://doi.org/10.1016/0095-8956(82)90028-4},
	url = {https://www.sciencedirect.com/science/article/pii/0095895682900284},
	author = {K Truemper},
	abstract = {A graph with {±1} labels on the edges is α-balanced if the sum of the labels on each induced cycle is congruent (mod 4) to the related entry of a given vector α. A {0, ±1} matrix is α-balanced if an associated graph with {±1} labels on the edges is α-balanced. The condition of α-balancedness for a matrix turns out to be equivalent to the requirement that certain elementary submatrices have absolute determinant as specified by the entries of α. First the graphs that may be labelled to become α-balanced for a given vector α are characterized. Subsequently a concept of almost representation of matroids is introduced, and necessary and sufficient conditions for a matroid to be almost represented by a matrix are given. Then these results on almost representation are combined with the characterization of α-balancedness to establish a new characterization of GF(3)-representable and (as a special case) regular matroids. A new characterization of totally unimodular matrices is a corollary. These results imply known characterizations of GF(3)-representable and regular matroids by R. Reid and W.T. Tutte, respectively. They also unify W.T. Tutte's work on regular matroids and P. Camion's results for totally unimodular matrices.}
}

@inproceedings{mycielski1955coloriage,
	title={Sur le coloriage des graphes},
	author={Mycielski, Jan},
	booktitle={Colloq. Math},
	volume={3},
	number={161-162},
	pages={9},
	year={1955}
}

@article{zykov1949some,
	title={On some properties of linear complexes},
	author={Zykov, Alexander Aleksandrovich},
	journal={Matematicheskii sbornik},
	volume={66},
	number={2},
	pages={163--188},
	year={1949},
	note = {in Russian, English translation in: Amer. Math. Soc. Transl. 79 (1952)}
}

@article{erdös_girth_chromatic_number_1959, title={Graph Theory and Probability}, volume={11}, DOI={10.4153/CJM-1959-003-9}, journal={Canadian Journal of Mathematics}, publisher={Cambridge University Press}, author={Erd\H{o}s, P.}, year={1959}, pages={34–38}}

@article{bonamy2014graphs,
	title={Graphs with large chromatic number induce $3k$-cycles}, 
	author={Marthe Bonamy and Pierre Charbit and Stéphan Thomassé},
	year={2014},
	eprint={1408.2172},
	archivePrefix={arXiv},
	primaryClass={cs.DM}
}

@Article{Scott2019,
	author={Scott, Alex
	and Seymour, Paul},
	title={Induced Subgraphs of Graphs With Large Chromatic Number. X. Holes of Specific Residue},
	journal={Combinatorica},
	year={2019},
	month={Nov},
	day={01},
	volume={39},
	number={5},
	pages={1105-1132},
	abstract={A large body of research in graph theory concerns the induced subgraphs of graphs with large chromatic number, and especially which induced cycles must occur. In this paper, we unify and substantially extend results from a number of previous papers, showing that, for every positive integer k, every graph with large chromatic number contains either a large complete subgraph or induced cycles of all lengths modulo k. As an application, we prove two conjectures of Kalai and Meshulam from the 1990's connecting the chromatic number of a graph with the homology of its independence complex.},
	issn={1439-6912},
	doi={10.1007/s00493-019-3804-y},
	url={https://doi.org/10.1007/s00493-019-3804-y}
}

@article{SCOTT_odd_holes_chibounded,
	title = {Induced subgraphs of graphs with large chromatic number. I. Odd holes},
	journal = {Journal of Combinatorial Theory, Series B},
	volume = {121},
	pages = {68-84},
	year = {2016},
	note = {Fifty years of The Journal of Combinatorial Theory},
	issn = {0095-8956},
	doi = {https://doi.org/10.1016/j.jctb.2015.10.002},
	url = {https://www.sciencedirect.com/science/article/pii/S0095895615001173},
	author = {Alex Scott and Paul Seymour},
	keywords = {Induced subgraphs, Odd holes, Colouring},
	abstract = {An odd hole in a graph is an induced subgraph which is a cycle of odd length at least five. In 1985, A. Gyárfás made the conjecture that for all t there exists n such that every graph with no Kt subgraph and no odd hole is n-colourable. We prove this conjecture.}
}

@article{long_oddholes_chi_boundedness,
	title = {Induced subgraphs of graphs with large chromatic number. VIII. Long odd holes},
	journal = {Journal of Combinatorial Theory, Series B},
	volume = {140},
	pages = {84-97},
	year = {2020},
	issn = {0095-8956},
	doi = {https://doi.org/10.1016/j.jctb.2019.05.001},
	url = {https://www.sciencedirect.com/science/article/pii/S0095895619300504},
	author = {Maria Chudnovsky and Alex Scott and Paul Seymour and Sophie Spirkl},
	abstract = {We prove a conjecture of András Gyárfás, that for all κ,ℓ, every graph with clique number at most κ and sufficiently large chromatic number has an odd hole of length at least ℓ.}
}

@Article{longholes_chibounded,
	author={Chudnovsky, Maria
	and Scott, Alex
	and Seymour, Paul},
	title={Induced Subgraphs of Graphs with Large Chromatic Number. III. Long Holes},
	journal={Combinatorica},
	year={2017},
	month={Dec},
	day={01},
	volume={37},
	number={6},
	pages={1057-1072},
	abstract={We prove a 1985 conjecture of Gy{\'a}rf{\'a}s that for all k, ℓ, every graph with sufficiently large chromatic number contains either a clique of cardinality more than k or an induced cycle of length more than ℓ.},
	issn={1439-6912},
	doi={10.1007/s00493-016-3467-x},
	url={https://doi.org/10.1007/s00493-016-3467-x}
}

@article{chi_bdd_survey,
	author = {Scott, Alex and Seymour, Paul},
	title = {A survey of $\chi$-boundedness},
	journal = {Journal of Graph Theory},
	volume = {95},
	number = {3},
	pages = {473-504},
	doi = {https://doi.org/10.1002/jgt.22601},
	url = {https://onlinelibrary.wiley.com/doi/abs/10.1002/jgt.22601},
	eprint = {https://onlinelibrary.wiley.com/doi/pdf/10.1002/jgt.22601},
	abstract = {Abstract If a graph has bounded clique number and sufficiently large chromatic number, what can we say about its induced subgraphs? András Gyárfás made a number of challenging conjectures about this in the early 1980s, which have remained open until recently; but in the last few years there has been substantial progress. This is a survey of where we are now.},
	year = {2020}
}

@misc{EH_C5,
	author = {Maria Chudnovsky and Alex Scott and Paul Seymour and Sophie Spirkl},
	title ={Erd\H{o}s-Hajnal for graphs with no 5-hole},
	howpublished ={\url{https://web.math.princeton.edu/~pds/papers/C5/paper.pdf}}
}

@article{EH_survey_chudnovsky,
	author = {Chudnovsky, Maria},
	title = {The Erdös–Hajnal Conjecture—A Survey},
	journal = {Journal of Graph Theory},
	volume = {75},
	number = {2},
	pages = {178-190},
	keywords = {induced subgraphs, Erdös–Hajnal conjecture},
	doi = {https://doi.org/10.1002/jgt.21730},
	url = {https://onlinelibrary.wiley.com/doi/abs/10.1002/jgt.21730},
	eprint = {https://onlinelibrary.wiley.com/doi/pdf/10.1002/jgt.21730},
	abstract = {Abstract The Erdös–Hajnal conjecture states that for every graph H, there exists a constant such that every graph G with no induced subgraph isomorphic to H has either a clique or a stable set of size at least . This article is a survey of some of the known results on this conjecture.},
	year = {2014}
}

@misc{horsfield2022holes,
      title={When all holes have the same length}, 
      author={Jake Horsfield and Myriam Preissmann and Cléophée Robin and Ni Luh Dewi Sintiari and Nicolas Trotignon and Kristina Vusković},
      year={2022},
      eprint={2203.11571},
      archivePrefix={arXiv},
      primaryClass={math.CO},
      note={Reference added December 2023},
}

@misc{cook2023graphs,
      title={Graphs with all holes the same length}, 
      author={Linda Cook and Jake Horsfield and Myriam Preissmann and Cléophée Robin and Paul Seymour and Ni Luh Dewi Sintiari and Nicolas Trotignon and Kristina Vušković},
      year={2023},
      eprint={2110.09970},
      archivePrefix={arXiv},
      primaryClass={math.CO},
      note={Reference added December 2023},
}
\end{document}